\newtheorem{thm}{Theorem}[section]
\newtheorem{theorem}[thm]{Theorem}
\newtheorem{rconj}[thm]{Conjecture}
\newtheorem{lemma}[thm]{Lemma}
\newtheorem{corollary}[thm]{Corollary}
\newtheorem{claim}[thm]{Claim}
\newtheorem{definition}[thm]{Definition}
\newtheorem{remark}[thm]{Remark}
\newtheorem{fact}[thm]{Fact}
\newtheorem*{rep@theorem}{\rep@title}
\newcommand{\newreptheorem}[2]{%
\newenvironment{rep#1}[1]{%
\def\rep@title{\bf #2 \ref*{##1} \text{(Restated)} }%
\begin{rep@theorem} }%
{\end{rep@theorem} } }
\newtheorem*{rep@claim}{\rep@title}
\newcommand{\newrepclaim}[2]{%
\newenvironment{rep#1}[1]{%
\def\rep@title{\bf #2 \ref*{##1} \text{(Restated)} }%
\begin{rep@claim} }%
{\end{rep@claim} } }
\newtheorem*{rep@lemma}{\rep@title}
\newcommand{\newreplemma}[2]{%
\newenvironment{rep#1}[1]{%
\def\rep@title{\bf #2 \ref*{##1} \text{(Restated)}}%
\begin{rep@lemma} }%
{\end{rep@lemma} } }
\newcommand{\remove}[1]{}
\newcommand\E{\mathop{\mathbb{E}}}
\newcommand\card[1]{\left| {#1} \right|}
\newcommand\sett[2]{\left\{ \left. #1 \;\right\vert #2 \right\}}
\newcommand\set[1]{{\left\{ #1 \right\}}}
\newcommand\Prob[2]{{\Pr_{#1}\left[ {#2} \right]}}
\newcommand\Expect[2]{{\mathop{\mathbb{E}}_{#1}\left[ {#2} \right]}}
\newcommand\cExpect[3]{{\mathbb{E}_{#1}\left[ \left. #3 \;\right\vert #2 \right]}}
\newcommand\norm[1]{\left\| #1 \right\|}
\newcommand\inner[2]{\langle{#1},{#2}\rangle}
\newcommand\eps{\varepsilon}
\renewcommand\geq{\geqslant}
\renewcommand\leq{\leqslant}
\renewcommand\ge{\geqslant}
\renewcommand\le{\leqslant}
\newcommand{\B}{\mathcal{B}}
\newcommand{\A}{\mathcal{A}}
\newcommand{\SO}{\textup{SO}}
\newcommand{\Spin}{\textup{Spin}}
\newcommand{\Sp}{\textup{Sp}}
\newcommand{\SU}{\textup{SU}}
\newcommand{\U}{\textup{U}}
\newcommand{\sign}{\textrm{sign}}
\newcommand{\parenth}[1]{{\left( #1 \right)}}
\renewcommand{\O}{\textup{O}}
\renewcommand{\epsilon}{\eps}
\newcommand\T{\mathrm{T}}
\newcommand\Trow{\T_{{\sf row}}}
\newcommand\Tcol{\T_{{\sf col}}}
\newcommand{\rom}[1]{\uppercase\expandafter{\romannumeral #1\relax}}
\newcommand{\gnote}[1]{{\marginpar{\color{red}$\rightarrow$\footnotesize { #1}}}}
\title{Product Mixing in Compact Lie Groups}
 \author{David Ellis\thanks{Department of Mathematics, University of Bristol.}
 	\and
  Guy Kindler\thanks{Department of Computer Science, Hebrew University of Jerusalem.}
 	\and
 	Noam Lifshitz\thanks{Einstein Institute of Mathematics, Hebrew University of Jerusalem. Supported by the Israel Science Foundation (grant no.~1980/22).} 
     \and
 	Dor Minzer\thanks{Department of Mathematics, Massachusetts Institute of Technology. Supported by a Sloan Research
 Fellowship, NSF CCF award 2227876 and 
 NSF CAREER award 2239160.}
 }
\date{\vspace{-5ex}}
\begin{document}
\global\long\def\rb{\mathrm{R}_{V}}%
\global\long\def\trow{\mathrm{T}_{\sf{row}}}%

\global\long\def\tcol{\mathrm{T}_{\sf{col}}}%
\global\long\def\lA{\mathrm{L}_{U}}%

\maketitle

\begin{abstract}
If $G$ is a group, we say a subset $S$ of $G$ is {\em product-free} if the equation $xy=z$ has no solutions with $x,y,z \in S$.\remove{In 1985, Babai and S\'{o}s \cite{bs} asked, for a finite group $G$, how large a subset $S\subseteq G$ can be if it is product-free. The main tool (hitherto) for studying this problem has been the notion of a {\em quasirandom group}.} For $D \in \mathbb{N}$, a group $G$ is said to be {\em $D$-quasirandom} if the minimal dimension of a nontrivial complex irreducible representation of $G$ is at least $D$. Gowers showed that in a $D$-quasirandom finite group $G$, the maximal size of a product-free set is at most $|G|/D^{1/3}$. This disproved a longstanding conjecture of Babai and S\'os from 1985.

 For the special unitary group, $G=\SU(n)$, Gowers observed that his argument yields an upper bound of $n^{-1/3}$ on the measure of a measurable product-free subset. In this paper, we improve Gowers' upper bound to  $\exp(-cn^{1/3})$, where $c>0$ is an absolute constant. In fact, we establish something stronger, namely, {\em product-mixing} for measurable subsets of $\SU(n)$ with measure at least $\exp(-cn^{1/3})$; for this product-mixing result, the $n^{1/3}$ in the exponent is sharp.
 
 Our approach involves introducing novel hypercontractive inequalities, which imply that the non-Abelian Fourier spectrum of the indicator function of a small set concentrates on high-dimensional irreducible representations.
 Our hypercontractive inequalities are obtained via methods from representation theory, harmonic analysis, random matrix theory and differential geometry. We generalize our hypercontractive inequalities from $\SU(n)$ to an arbitrary $D$-quasirandom compact connected Lie group for $D$ at least an absolute constant, thereby extending our results on product-free sets to such groups. 

We also demonstrate various other applications of our inequalities to geometry (viz., non-Abelian Brunn-Minkowski type inequalities), mixing times, and the theory of growth in compact Lie groups. A subsequent work due to Arunachalam, Girish and Lifshitz uses our inequalities to establish new separation results between classical and quantum communication complexity.
\end{abstract}
\newpage 

\tableofcontents

\section{Introduction}

A subset $\A$ of a group $G$ is said to be {\em product-free} if $gh \notin \A$ for all $g,h\in \A$. The study of product-free subsets of groups has attracted significant attention over the past three decades. In
1985, Babai and S\'os \cite{bs} considered the problem of determining the largest size of a product-free set in a finite group $G$. They conjectured that exists an absolute 
positive constant $c_0 >0$ such that any finite group $G$ has a product-free set of size at 
least $c_0|G|$. In the Abelian case, this is quite easy to see, and had previously been observed by Erd\H{o}s, in an unpublished communication to Babai and S\'os. (In the cyclic case $(\mathbb{Z}_n,+)$, one can take a `middle-third' construction, viz., $\{x \in \mathbb{Z}_n:\ n/3 < x \leq 2n/3\}$, as a large product-free set, and one can reduce to the cyclic case by observing that any finite Abelian group has a nontrivial cyclic quotient, and that the preimage of a product-free set under a quotient map is also product-free and of the same measure.) The exact answer in the Abelian case was given by Green and 
Ruzsa \cite{gr} in 2003: the largest product-free subset of a finite Abelian group $G$ has 
size $c |G|$, where the function $c = c(G) \in [2/7,1/2]$ was explicitly determined by 
Green and Ruzsa. The general Babai-S\'os conjecture was disproved in 2008 by Gowers 
\cite{gowers}, who showed that if $G$ is a finite group such that the minimal dimension of 
a nontrivial irreducible complex representation of $G$ is equal to $D$, then any product-free subset of $G$ has size at most $D^{-1/3}|G|$. It remains to observe that the quantity 
$D = D(G)$ is unbounded over finite non-Abelian groups $G$. For example, for the projective 
special linear group $\textup{PSL}_2(\mathbb{F}_q)$ (for $q$ an odd prime power), we have $D(\textup{PSL}_2(\mathbb{F}_q)) = (q-1)/2$, so the measure of a product-free subset of $\textup{PSL}_2(\mathbb{F}_q)$ is at most $O(q^{-1/3})$, which tends to zero as $q$ tends to infinity. 
\remove{ [David: 
before, $SU_n$ was mentioned here in place of PSL, but this is problematic as $\SU(n)$ isn't 
a finite group and the Babai-S\'{o}s conjecture only refers to finite groups. I'll rewrite the 
next paragraph slightly, too, if that's OK.\gnote{Sure!}]}

Gowers observed that his argument also implies that if $G$ is an (infinite) compact group for which the minimal dimension of a nontrivial irreducible complex continuous representation is equal to $D$, then the maximal Haar measure of a measurable, product-free set in $G$ is at most $D^{-1/3}$. For $\SU(n)$ we have $D(\SU(n)) = n$, implying an upper bound of $n^{-1/3}$ on the measure of a measurable product-free subset of $\SU(n)$. However, Gowers conjectured that for $\SU(n)$, the true answer is exponentially small in $n$. Indeed, as Gowers states, it seems difficult to come up with an example better than the following. Recall that group $\SU(n)$ acts on the complex unit sphere $\{v \in \mathbb{C}^n: \|v\|_2=1\}$, and take $\cal A$ to be the set of all matrices $A\in \SU(n)$ such that the real part of $\langle Ae_1,e_1 \rangle$ is less than $-1/2$. As noted by Gowers, it follows from the triangle inequality that this set is product-free, and it is easy to check that the measure of $\cal A$ is $2^{-\Omega(n)}$.

In this work, we make progress towards proving Gowers' conjecture. Specifically, we improve Gowers' upper bound by a stretched exponential factor, viz., from $n^{-1/3}$ to $e^{-cn^{1/3}}$. 
\begin{theorem}
\label{thm:sun-stronger}
There exists an absolute constant $c>0$ such that the following holds. Let $n \in \mathbb{N}$ and let $\A \subset \SU(n)$ be Haar-measurable and product-free. Then $\mu(\A) \leq \exp(-cn^{1/3})$.
\end{theorem}

\remove{
The construction of $\cal A$ above can be viewed as a special case (for $\SU(n)$) of a method of Kedlaya \cite{kedlaya1} for constructing (fairly large) product-free sets in a group $G$ that acts on a set $X$. Fix a subset $B\subseteq X$ and an element $x\in X$, and define the corresponding \emph{Kedlaya set} $K_{x,B}$ by  
\[
K_{x,B}:=\set{g \in G:\, gx\in B\ \text{and } g(B)\subseteq X \setminus B}.
\]
It clear that $K_{x,B}$ is product-free. Such constructions are known to be optimal in some cases, as we will describe below.
 We conjecture that the following holds for every compact group.

\begin{rconj}
    There exists an absolute constant $C>0$ such that the following holds. Let $G$ be a compact group equipped with its Haar probability measure $\mu$. Then there exists a Kedlaya set $K$ in $G$, such that every measurable product-free set in $G$ has Haar measure at most $C\cdot \mu(K)$.
\end{rconj}
David: it turns out that this conjecture is trivially true. Indeed, if B is any product-free set, and X is the group G itself, acted on from the left by G in the natural way, and x is the identity element of G, then the Kedlaya set K_{x,B} is precisely the set B itself. One might be able to make the conjecture nontrivial by imposing some kind of smallness condition on X, but it isn't clear exactly how to do this at present. So I suggest removing the conjecture for the time being.
}
\subsection{Quasirandomness for groups, and mixing.}

Gowers' bound for product-free sets relies on a relationship between spectral gaps and dimensions of irreducible representations, a relationship which was first discovered by Sarnak and Xue \cite{sarnak1991bounds}. In fact, Gowers' proof uses a beautiful connection between the problem and a purely representation-theoretic notion that Gowers called {\em quasirandomness} (due to a rough equivalence with the graph-quasirandomness of certain Cayley graphs, an equivalence which we shall explain below). For a group $G$ we denote by $D(G)$ the minimal dimension of a non-trivial complex irreducible continuous representation of $G$. (Henceforth, for brevity, we will use the term {\em representation} to mean continuous representation.) For $d \in \mathbb{N}$, we say that a group $G$ is {\em $d$-quasirandom} if $D(G) \geq d$.\footnote{To avoid confusion with the quasirandomness parameter for graphs, it might have been less ambiguous to call this notion `$d$-group-quasirandomness', but as the latter is rather cumbersome we have opted for the above shorter formulation; we hope that this will not cause the reader confusion, in the sequel.} Denoting by $\alpha(G)$ the largest possible density $\frac{|A|}{|G|}$ of a product-free set $A\subseteq G$ (if $G$ is a finite group), Gowers showed that for any finite group $G$, $\alpha(G) \le D(G)^{-1/3}$. 
Since $D(G)$ can be arbitrarily large (as is the case for the alternating groups, which have $D(A_n) = n-1$ for all $n \geq 7$, and the groups $\textup{PSL}_2(\mathbb{F}_q)$ as mentioned above, and for many other natural infinite families of finite groups), this disproved the conjecture of Babai and S\'os. 

For finite groups, the quasirandomness parameter gives an almost complete description of the maximal size of a product free set. Pyber (see \cite{gowers}) used the Classification of Finite Simple Groups, together with a construction of Kedlaya, showing that  $\alpha(G)\ge D(G)^{-C}$ where $C>0$ is an absolute constant. Nikolov and Pyber \cite{np} later improved this to  $\alpha(G)\ge \frac{1}{CD(G)}$. This established a remarkable fact, namely that the purely representation theoretic quasirandomness parameter $D(G)$ is polynomially related to the the combinatorial quantity $\alpha(G)$. 
\begin{equation}\label{eq:Nikolov pyber Gowers}
(CD(G))^{-1}\le \alpha(G)\le D(G)^{-1/3}    
\end{equation}

For compact connected Lie groups we obtain the 
following general variant of Theorem~\ref{thm:sun-stronger}, which gives an upper bound on the size of a product-free set in the group.\remove{It turns out that our methods are not specific just to $\SU(n)$, indeed, similar arguments work for all simply connected compact Lie groups, and this yields the following.}

\begin{theorem}
\label{thm:son-stronger}
There exists an absolute constant $c>0$ such that the following holds. Let $G$ be a compact connected Lie group, and let $\tilde{G}$ be its universal cover. Let $\A \subset G$ be Haar-measurable and product-free. Let $\mu$ denote the Haar probability measure on $G$. Then $\mu(\A) \leq \exp(-cD(\tilde{G})^{1/3})$.
\end{theorem} 

An elegant argument of Gowers \cite{gowers} (proof of Theorem 4.6, therein) for finite groups, which generalises very easily to the case of compact groups, shows that if $G$ is a compact group then it has a measurable product-free subset of measure at least $\exp(-\Omega(D(G))$. In Section \ref{sec:min-rank} we show that $D(G) = O({D(\tilde{G})^2})$. These two facts combine with Theorem \ref{thm:son-stronger} to give the following analogue of (\ref{eq:Nikolov pyber Gowers}) for compact connected Lie groups.

\begin{corollary}
\label{cor:general-stronger}
There exists an absolute constant $c>0$ such that the following holds. For every compact connected Lie group $G$, \[c D(G)^{1/6} \le \log(1/\alpha(G)) \le \frac{1}{c} D(G).\]
\end{corollary}

\noindent (We remark that our logs will always be taken with respect to the natural basis.) Corollary \ref{cor:general-stronger} says that, as with finite groups, the maximal measure of a measurable product-free set in a compact connected Lie group is controlled by the quasirandomness parameter, but this time the control moves to the exponent.

Quasirandomness (for groups) was a crucial ingredient in the `Bourgain--Gamburd expansion machine', which is a three-step method for obtaining spectral gaps for Cayley graphs (see e.g.\ Tao \cite{tao2015expansion}, for an exposition). Briefly, this `machine' proceeds as follows: one first shows that the graph has high girth, then one shows that there are no `approximate subgroups' in which a random walk could be entrapped, and then quasirandomness is used (together with with the trace method) to finally obtain a spectral gap. Quasirandomness (for groups) has many other applications, such as in bounding the diameters of Cayley graphs (see e.g.\ the survery of Helfgott \cite{helfgott2015growth}).

The term `quasirandomness' was used (for groups) by Gowers, due to the following connection with the (now classical) notion of quasirandomness for graphs. (There are, of course, now notions of quasirandomness for a huge variety of combinatorial and algebraic structures; roughly speaking, these say the structure behaves in a random-like way, in an appropriate sense.) We now need some more terminology. The \emph{normalized adjacency matrix} $A_H\in \mathbb{R}^{V\times V}$ of a $d$-regular graph $H=(V,E)$ has $(i,j)$-th entry equal to $1/d$ if $\{i,j\}\in E$, and equal to zero otherwise. The graph $H$ is said to be {\em $\epsilon$-quasirandom} if all the nontrivial eigenvalues of $A_H$ are at most $\epsilon$ in absolute value (here, `nontrivial' means having an eigenvector orthogonal to the constant functions). 

One of the striking consequences of $d$-quasirandomness for a finite group $G$, is that it implies that Cayley graphs of the form $Cay(G,S)$ are $(1/\textup{poly}(d))$-quasirandom, whenever $S$ is a dense subset of $G$. The fact that this only relies on density considerations and does not require any assumption on the structure of $S$, makes the notion of quasirandomness for groups rather powerful.

More generally, applications of quasirandomness for a group $G$ can often be (re)phrased as follows. Suppose that $G$ is $d$-quasirandom, and that we have a linear operator $T:L^2(G) \to L^2(G)$ whose nontrivial eigenvalues we want to bound (in absolute value) from above; suppose further that $T$ commutes with either the left or the right action of $G$ on $L^2(G)$. (In Gowers' proof, slightly rephrased, the operator $T$ could be viewed as $B^*B$, where $B$ is the bipartite adjacency matrix of the bipartite Cayley graph with vertex-classes consisting of two disjoint copies of $G$, and where the edges are all pairs of the form $(g,sg)$ for $g \in G$ and $s \in S$, $S$ being a product-free set in $G$.) Then by the commuting property, each eigenspace of $T$ is a nontrivial representation of $G$, and therefore has dimension at least $d$; it follows that each nontrivial eigenvalue of $G$ has multiplicity at least $d$. But the sum of the squares of the eigenvalues of $T$ is equal to $\textup{Trace}(T^2)$, and this yields the bound $d|\lambda|^2 \leq \textup{Trace}(T^2)$ for all nontrivial eigenvalues $\lambda$ of $T$. This is often called the Sarnak-Xue trick, as it was first employed in \cite{sarnak1991bounds}

Bourgain and Gamburd used their `expansion machine' (alluded to above) to show that taking two uniformly random elements $a,b \in \text{SL}_2(\mathbb{F}_p)$ is sufficient for the Cayley graph $\mathrm{Cay}\left(\text{SL}_2(\mathbb{F}_p, \{ a,b,a^{-1},b^{-1}\}\right)$ to be an expander with high probability, $p$ tending to infinity. 
It is a major open problem in the theory of Cayley graphs to obtain a similar result in the unbounded-rank case, for example for $\text{SL}_n(\mathbb{F}_p)$ where $p$ is fixed and $n$ tends to infinity. One of the properties that breaks down when one attempts to use the Bourgain--Gamburd expansion machine in the case of unbounded rank, is the dependence of the quasirandomness parameter on the cardinality of the group. Specifically, in order for the Bourgain--Gamburd expansion machine to work effectively for a group $G$, the quasirandomness parameter $D(G)$ needs to be polynomial in the cardinality of $G$. In the unbounded rank case, this no longer holds. For example, $D(SL_n(\mathbb{F}_p))\le p^n$ (consider the representation of dimension $p^n$ induced by the natural action of $SL_n(\mathbb{F}_p)$ on $\mathbb{F}_p^n$). The situation is even worse for the alternating group $A_n$, as $D(A_n)=n-1$ for $n \geq 7$, and $n-1$ is less than logarithmic in the cardinality of the group.  

\subsection{Ideas and techniques}

To improve on the upper bound of Gowers, we need to find methods for `dealing with' the low-dimensional irreducible representations (more precisely, for dealing with the corresponding parts of the Fourier transform). In this paper, we develop some new techniques for this in the case of compact connected Lie groups. These techniques turn out also to be useful for finite groups; for example, in \cite{ellis2023global}, analogues of some of our methods are developed for the alternating group $A_n$ (where the idea of mixing is replaced by a refined notion, referred to therein as a `mixing property for global sets').

Below we give indications of the new techniques that are used to obtain our improved bounds, and the various areas of mathematics from which they originate.

\remove{-- level d inequalities}

\subsubsection*{Level $d$ inequalities and hypercontractivity}
One of our key ideas is motivated by the (now well-developed) theory of the analysis of Boolean functions. A function \[f\colon\{-1,1\}^n\to \mathbb{R}\] has a {\em Fourier expansion} $f=\sum_{S \subseteq [n]} \hat{f}(S)\chi_S$, where $\chi_S:\{-1,1\}^n \to \{-1,1\}$ is defined by $\chi_S(x):=\prod_{i\in S}x_i$ for each $x \in \{-1,1\}^n$ and $S \subseteq [n]$. The functions $\chi_S$, known as the {\em Fourier-Walsh functions} or {\em characters}, are orthonormal (with respect to the natural inner product on $\mathbb{R}[\{-1,1\}^n]$ induced by the uniform measure). The Fourier expansion gives rise to a coarser orthogonal decomposition, $f=\sum_{d=0}^n f^{=d},$ where \[f^{=d}:=\sum_{|S|=d}\hat{f}(S)\chi_S.\] This is known as the {\em degree decomposition} (as each function $f^{=d}$ is a homogeneous polynomial of total degree $d$ in the $x_i$'s). 

The \emph{level $d$ inequality} for the Boolean cube (essentially due to Kahn--Kalai--Linial \cite{kahn1988proceedings} and Benjamini--Kalai--Schramm \cite{benjamini1999noise}) states that there exists an absolute constant $C>0$, such that for a set $A\subseteq \{-1,1\}^n$ of density $\frac{|A|}{2^n}=\alpha$, if $d\le\log(1/\alpha)$ then the characteristic function $f=1_A$ satisfies $\|f^{=d}\|_2^2\le \alpha^{2}\left(\frac{C\log(1/\alpha)}{d}\right)^d$.
Roughly speaking, the level $d$ inequality says that indicators of small sets are very much uncorrelated with low degree polynomials. 
One of our key ideas in this paper is to generalize the level $d$ inequality from the Boolean cube to the setting of compact connected Lie groups. 

The main tool in the proof of the Boolean level $d$ inequality is  the Bonami--Gross--Beckner hypercontactivity theorem.
It states that the \emph{noise operator} $\mathrm{T}_{\rho} f: = \sum_{d=0}^n \rho^{d}f^{=d}$ is a contraction as an operator from $L_q$ to $L_p$, for all $q>p\ge 1$ provided $0 \leq \rho \le \sqrt{\frac{q-1}{p-1}}$. This immediately implies that $\|f^{=d}\|_q\le \rho^{-d}\|f^{=d}\|_p$ for any function $f$. Roughly speaking, this last inequality says that $L_p$-norm of a low-degree function does not change too drastically with $p$. This is in stark contrast with the behaviour of indicator functions of small sets, $f=1_A$. These satisfy $\|f\|_p=\alpha^{1/p}$, which does change rapidly with $p$. This difference in behaviours can be used to prove the level $d$-inequality, stating that indicators of small sets are essentially orthogonal to the low degree functions. 
\remove{
Their theorem concerns the noise operator that can be defined Fourier analytically via the formula $\mathrm{T}_{\rho} f = \sum_{d=0}^n \rho^{d}f^{=d}$. Bonami, Gross, and Beckner showed that for each $q>p$, each $\rho \le \frac{\sqrt{q-1}}{\sqrt{p-1}}$, and each $f\colon\{-1,1\}^n \to \mathbb{R}$ we have $\|\mathrm{T}_{\rho}f\|_q\le \|f\|_p.$ 

The hypercontractive inequality immediatly implies that  $\|f^{=d}\|_q\le \rho^{-d}\|f\|_p.$ Roughly speaking this inequality says that $L_p$-norm of low degree functions does not change drastically with $p$. This behaviour is in stark contrast with the behaviour of indicators of small sets $f=1_A$. These satisfy $\|f\|_p=\alpha^{1/p}$, which changes rapidly with $p$. This difference in behaviours can be used to prove the level $d$-inequality. 
}

The same proof-concept works hand in hand with the representation theory of compact simple Lie groups. For simplicity, let us restrict our attention (at first) to the group $G=\SO(n)$. For each $d \in \mathbb{N} \cup \{0\}$, we let $V_{\le d}\subseteq L^2(G)$ denote the subspace of $L^2(G)$ spanned by the polynomials of degree at most $d$ in the matrix entries of $X \in G=\SO(n)$; so, for example, $X_{11}X_{22} \in V_{\leq 2}$. We also let $V_{=d}: = V_{\leq d} \cap (V_{\leq d-1})^{\perp}$, for each $d \in \mathbb{N}$. Given $f\in L^2(G)$, we let $f^{\le d}$ denote the orthogonal projection of $f$ onto $V_{\le d}$, and we let $f^{=d}$ denote the orthogonal projection of $f$ onto $V_{=d}$, so that $f^{=d}=f^{\le d} - f^{\le d-1}$. The subspaces $V_{\le d}$ and $V_{=d}$ are two-sided ideals of $L^2(G)$ (i.e., they are closed under both left and right actions of $G$ on $L^2(G)$). Now, if $J$ is a two-sided ideal of $L^2(G)$ and $T:L^2(G)\to L^2(G)$ is a linear operator that commutes with either the left or the right action of $G$ (as will be the case with all the operators we will work with), it follows from the classical representation theory of compact groups (viz., the Peter-Weyl theorem and Schur's lemma) that $T$ has $J$ as an invariant subspace. Hence, such an operator $T$ has each $V_{=d}$ as an invariant subspace, so each eigenspace of $T$ can be taken to be within one of the $V_{=d}$'s. It therefore makes sense to consider quasirandomness relative to the degree decomposition. For each $d \in \mathbb{N}$, we let $D_d$ be the smallest dimension of a subrepresentation of the $G$-representation $V_{=d}$. The obvious adaptation of the Sarnak-Xue trick, described above, then yields that for any eigenvalue $\lambda$ of $T$ with eigenspace within $V_{=d}$, we have $D_d |\lambda|^2 \leq \textup{Trace}(T^2)$. It turns out that $D_d$ grows very fast with $d$, yielding very strong upper bounds on the corresponding $|\lambda|$ for large $d$.

 On the other hand, an ideal level $d$ inequality would imply that if $A$ is an indicator of a small set, then most of its mass lies on the high degrees. This combines with the fast growth of $D_d$ (with $d$) to give a much more powerful form of quasirandomness, one that takes into account the fact that $f$ is $\{0,1\}$-valued, and gives much better bounds.

We remark that the above degree decomposition can be easily extended to all compact linear Lie groups $G \leq GL_n(\mathbb{C})$ by letting $V_{\le d}$ be the space of degree $\le d$ polynomials in the real and imaginary parts of the matrix entries of $X \in G$. (In fact, this notion generalizes fairly easily to arbitrary compact simple Lie groups, even when they are not linear.) As in the $\SO(n)$ case, we let $f^{\le d}$ denote the orthogonal projection of $f$ onto $V_{\le d}$.

We obtain the following level $d$ inequality.       

\begin{thm}\label{first-level-d}
There exists absolute constants $c,C>0$ such that the following holds. Let $G$ be a simple compact Lie group equipped with its Haar probability measure $\mu$. Suppose that $D(G)\ge C$. Let $A\subseteq G$ be a measurable set with $\alpha:=\mu(A)\ge e^{-cD(G)}$. Then for each $d \in \mathbb{N} \cup \{0\}$ with $d \le \log(1/\alpha)$, we have $\|f^{\le d}\|_2^2 \le \alpha^2 \left(\frac{2\log(1/\alpha)}{d}\right)^{Cd}$. 
\end{thm}

When $G$ is simply connected and $d\le c\sqrt{n}$ we are able to obtain an even stronger level $d$ inequality, which is similar to the one on the Boolean cube without the extra $C$ factor in the exponent. This leads to the following.

\begin{thm}\label{thm:second-level-d}
There exists absolute constants $C,c>0$ such that the following holds. Let $G$ be a compact connected Lie group, let $\tilde{G}$ denote its universal cover, and write $n=D(\tilde{G})$. Suppose that $n \geq C$. Let $A\subseteq G$ be a measurable set with $\alpha:=\mu(A)\ge \exp{(-cn^{1/2})}$. Then for each $d \in \mathbb{N} \cup \{0\}$ with $d \le \log(1/\alpha)$, we have $\|f^{\le d}\|_2^2 \le \alpha^2 \left(\frac{C\log(1/\alpha)}{d}\right)^{d}$. 
\end{thm}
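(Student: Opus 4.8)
The plan is to derive Theorem~\ref{thm:second-level-d} from a \emph{sharp} hypercontractive inequality for low-degree functions on $G$, in exact analogy with the derivation of the Boolean level-$d$ inequality from the Bonami--Gross--Beckner theorem. First I would reduce to the case where $G$ is semisimple and simply connected. If the maximal central torus of $G$ is nontrivial then $\tilde G$ has a direct factor isomorphic to $\mathbb{R}^{k}$ with $k\ge 1$, which carries nontrivial one-dimensional unitary characters, so $D(\tilde G)=1<C$ and the statement is vacuous; hence $n\ge C$ forces $G$ to be semisimple, in which case $\tilde G$ is a compact semisimple simply connected group finitely covering $G$. Since the Lie algebras of $G$ and $\tilde G$ coincide, the degree filtration (taken intrinsically, so as to be preserved under the covering $\pi\colon\tilde G\to G$) pulls back along $\pi$, under which $L^2(G)$ embeds isometrically into $L^2(\tilde G)$ as the space of $\ker\pi$-invariants with the projections $f\mapsto f^{\le d}$ respected; and $\pi$ pushes Haar measure forward to Haar measure. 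Thus $1_{\pi^{-1}(A)}=1_A\circ\pi$ has measure $\alpha$ and the same level-$d$ weights as $1_A$, so it suffices to prove the inequality for $\tilde G$, and we may assume $G=\tilde G$ is semisimple simply connected with $n=D(G)\ge C$.

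Next I would record the reduction to hypercontractivity. Put $f=1_A$, so $\|f\|_r=\alpha^{1/r}$ for all $r$; fix $1\le d\le\log(1/\alpha)$ (the case $d=0$ is trivial) and set $q:=2\log(1/\alpha)/d\ge 2$, with conjugate exponent $p$. Orthogonality of the $V_{=d'}$ and H\"older give
\[
\|f^{\le d}\|_2^2=\langle f,f^{\le d}\rangle\le\|f\|_p\,\|f^{\le d}\|_q=\alpha^{1/p}\|f^{\le d}\|_q ,
\]
so it suffices to prove the hypercontractive estimate $\|h\|_q\le(q-1)^{d/2}\|h\|_2$ for all $h\in V_{\le d}$ in the regime $d\le c\sqrt n$: plugging this in yields $\|f^{\le d}\|_2\le\alpha^{1/p}(q-1)^{d/2}$, hence $\|f^{\le d}\|_2^2\le\alpha^{2/p}(q-1)^d=\alpha^2\alpha^{-2/q}(q-1)^d$, and with our choice of $q$ one has $\alpha^{-2/q}=e^{d}$ and $(q-1)^d\le(2\log(1/\alpha)/d)^d$, giving the theorem with $C=2e$. (Here $d\le\log(1/\alpha)\le cn^{1/2}$ by hypothesis, so we remain in the admissible degree range.) This hypercontractive estimate is in turn equivalent to the statement that the noise operator $\mathrm{T}_\rho:=\sum_{d'\ge 0}\rho^{d'}\Pi_{=d'}$, where $\Pi_{=d'}$ is the orthogonal projection onto the two-sided ideal $V_{=d'}$, is a contraction from $L^2(G)$ to $L^q(G)$ whenever $\rho\le(q-1)^{-1/2}$, when restricted to $V_{\le d}$ with $d\le c\sqrt n$: given such an $h$ one writes $h=\mathrm{T}_\rho\big(\sum_{d'\le d}\rho^{-d'}h^{=d'}\big)$ and applies contractivity together with Parseval.

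The heart of the proof is establishing this sharp hypercontractive estimate, for which I would invoke the hypercontractive inequalities the paper develops for low-degree functions on compact connected Lie groups. The mechanism I expect is a quantitative comparison between the harmonic analysis of $G$ with its bi-invariant metric (normalised so the defining representation sits at ``degree one'') and that of a Gaussian Hilbert space modelled on the Lie algebra $\mathfrak g$: one identifies the low Laplace eigenspaces $V_{=d'}$ via representation theory, shows that their Casimir eigenvalues satisfy $\kappa_\lambda=\kappa_{\mathrm{std}}\cdot\deg(\lambda)\cdot(1+O(\deg(\lambda)/n))$, and argues that for $\deg(\lambda)\le d\le c\sqrt n$ the group ``looks Gaussian'' at the scales probed by degree-$d$ functions, so that Nelson's \emph{sharp} hypercontractivity for degree-$d'$ Hermite chaos, $\|h\|_q\le(q-1)^{d'/2}\|h\|_2$, transfers back to $G$. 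Making the transfer quantitative is where random matrix theory enters --- comparing the joint moments of the entries of a Haar-random element of $G$ with those of a suitably scaled complex Gaussian matrix, via Weingarten calculus and concentration of measure, the errors being of lower order precisely when $d\lesssim\sqrt n$ --- while differential-geometric input (heat-kernel and gradient comparisons between $\Delta$ and its flat model) handles non-integer $q$.

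I expect the main obstacle to be extracting the \emph{sharp} constant, i.e.\ the exponent $d/2$ of $(q-1)$ rather than $\beta d$ for some absolute $\beta>1/2$. A curvature argument is available but insufficient: Bakry--\'Emery applied to the genuine Ricci lower bound $\mathrm{Ric}=\Theta(n)\,g$ of the bi-invariant metric --- equivalently, the bi-invariant heat semigroup combined with the refined Casimir estimate above --- yields hypercontractivity only with exponent $d/(2\gamma)$, where $\gamma=\rho_{\mathrm{Ric}}/\kappa_{\mathrm{std}}$; for $\SU(n)$ one computes $\rho_{\mathrm{Ric}}=n/2$ and $\kappa_{\mathrm{std}}=(n^2-1)/n$, so $\gamma=\tfrac12\cdot\tfrac{n^2}{n^2-1}\approx\tfrac12$ and the exponent is $\approx d$, twice too large. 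After the optimisation of the second paragraph this produces a bound of the shape $\alpha^2(C'\log(1/\alpha)/d)^{d/\gamma}$, whose exponent of $\log(1/\alpha)/d$ is $\approx 2d$ and cannot be reduced to $d$ with an absolute constant, since $\log(1/\alpha)/d$ may be as large as $c\sqrt n$. This is exactly the loss responsible for the weaker Theorem~\ref{first-level-d} (with exponent $Cd$); closing it to the clean exponent $d$ is what forces both the restriction $d\le c\sqrt n$ --- so that the $O(\deg(\lambda)/n)$ Casimir corrections and the analogous random-matrix errors become negligible, not merely bounded --- and the hypothesis $\alpha\ge\exp(-cn^{1/2})$, which guarantees that every degree $d\le\log(1/\alpha)$ occurring in the conclusion lies in this sharp range.
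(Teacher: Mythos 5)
Your reduction is the correct one, and it matches the paper's: after passing to the universal cover $\tilde G$ (which is justified because the level-$d$ projections and measures are preserved under the pullback of the grading through the covering map, and $n\ge C>1$ forces semisimplicity), you deduce the level-$d$ inequality from a H\"older step combined with a hypercontractive moment bound for functions of degree at most $d$. The choice $q=2\log(1/\alpha)/d$ and the resulting algebra are correct. One small over-reach: you target the sharp Gaussian constant $\|h\|_q\le(q-1)^{d/2}\|h\|_2$ and conclude with $C=2e$, but the theorem only asserts an absolute constant $C$; what the paper actually establishes is $\|h\|_q\le(C_0\sqrt q)^d\|h\|_2$ for some absolute $C_0>1$ (this is what ``$(r,C)$-hypercontractivity'' delivers via Lemma~\ref{lem:follows from hypercontractiveness1}), and after the same optimisation this gives $\alpha^2(2eC_0^2\log(1/\alpha)/d)^d$, which is all that is claimed. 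The $O(1)$-factor loss is harmless; the genuine divide between Theorem~\ref{first-level-d} and Theorem~\ref{thm:second-level-d} is the exponent $Cd$ versus $d$, which your analysis of the Bakry--\'Emery/curvature route correctly identifies as the issue to overcome.

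Where the proposal has a real gap is the key technical step: the proof that a compact connected Lie group with $n(G)=n$ is $(cn^{1/2},C)$-hypercontractive. You sketch a mechanism via Weingarten calculus, moment matching between Haar-random and Gaussian matrices, and heat-kernel/gradient comparisons, and say you would ``invoke the hypercontractive inequalities the paper develops.'' But the paper does not go via Weingarten calculus or a heat-kernel comparison in this regime (those belong to the weaker, curvature-based argument behind Theorem~\ref{first-level-d}). Instead it constructs an explicit noise operator $\mathrm{T}_\rho=\mathbb{E}_{V}[R_V^*\Tcol^* U_\rho \Tcol R_V]$ via the Gram--Schmidt coupling between Haar measure on the group and Gaussian matrices; hypercontractivity of $\mathrm{T}_\rho$ is then immediate from Gaussian hypercontractivity and Jensen. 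The hard content, not present in your sketch, is the eigenvalue lower bound $\|\mathrm{T}_\rho f\|_2\ge (c\rho)^d\|f\|_2$ on $V_{=d}$ for $d\le\delta\sqrt n$, which is carried out by reducing (via Peter--Weyl and Schur) to \emph{comfortable $d$-juntas}, showing these are near-eigenvectors of the Gram--Schmidt adjoints, and controlling the error through the ``over-Gaussian'' distribution. That $\sqrt n$ threshold emerges precisely from this comfortable-junta analysis rather than from a Casimir asymptotic $\kappa_\lambda=\kappa_{\mathrm{std}}\deg(\lambda)(1+O(\deg(\lambda)/n))$. So the proposal is a correct reduction with a hole where the paper's central mechanism goes, and the route you sketch for filling the hole is not the one that is made to work.
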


   \remove{
   \begin{thm}\label{thm: level-d in O_n}
     There exists absolute constants $\delta>0$ and $C>0$, such that for any
     $f\colon \O\O(n) \to \{0, 1\}$ with expectation $\alpha$, and $d\le \min (\delta n^{1/2}, \log (1/\alpha)/100)$, it holds that
     \[
     \|f^{\le d}\|_{L^2(\mu)}^2 \le \left(\frac{C}{d}\right)^d\alpha^2\log^{d}(1/\alpha ).
     \]
     The same bound holds for the group $\U(n)$
    \end{thm}
    }
    
  It is this second level $d$-inequality that is responsible for the $1/3$ in the exponent of Theorem \ref{thm:son-stronger}. Unfortunately, one would not be able to improve our $1/3$ in the exponent to the (conjectural) right one, merely by strengthening this level $d$-inequality. Indeed, our second level $d$ inequality can be easily seen to be sharp up to the value of the absolute constant $C$, by considering sets of the form $\{A \in \SO(n):\ \langle Ae_1,e_1 \rangle > 1-t\}$ for appropriate values of $t$, when $G=\SO(n)$, for example.
  
\medskip 
Both of our level $d$ inequalities are inspired by the same ideas from the Boolean setting, together with an extra representation theoretic ideas. 
Namely, in order to show a level $d$ inequality, we upper-bound $q$-norms of low degree polynomials in terms of their 2-norms, and then use H\"older's inequality. In the Boolean cube, such upper bounds follow from two facts. The first is that the noise operator $\mathrm{T}_{\rho}$ is hypercontractive. The second is that all the eigenvalues of the restriction of $\mathrm{T}_{\rho}$ to $V_{\le d}$ are bounded from below by $\rho^d$. Our approach is to construct operators on $L^2(G)$ that satisfy the same two properties.

\subsubsection*{Differential geometry and Markov diffusion processes}

Our level $d$ inequalities stem from two techniques for obtaining hypercontractivity. Our first level $d$ inequality, Theorem~\ref{first-level-d}, is obtained via the following method. First, we observe that we may assume without loss of generality that our group $G$ is simply connected. (This is because every compact simple Lie group is a quotient of its universal cover by a discrete subgroup of its centre.) We then make use of classical lower bounds on the Ricci curvature of our (simply connected) compact simple Lie group. The Bakry-Emery criterion \cite{bakry1985diffusions} translates such lower bounds on the Ricci curvature into log-Sobolev inequalities for the Laplace-Beltrami operator $L$. We then apply an inequality of Gross \cite{gross} to deduce a hypercontractive inequality for the operator $e^{-tL}$ from the log-Sobolev inequality. This inequality then allows us to prove our first level $d$ inequality. The operator $e^{-tL}$ is the one corresponding to Brownian motion on $G$. In order to deduce our level $d$-inequality we rely on a formula for the eigenvalues of the Laplacian in terms of a step vector corresponding to each eigenspace. This formula is well-known in the theory of Lie groups; it is given for example in Berti and Procesi \cite{berti-procesi}.  

\subsubsection*{Random walks on bipartite graphs
}
There are two mutually adjoint linear operators that correspond to a random walk on a $d$-regular bipartite graph $B\subseteq L\times R$. We denote those by $T\colon L^2(L)\to L^2(R)$ and $T^*\colon L^2(R)\to L^2(L)$ and they are given by taking expectations over a random neighbour; explicitly, $(Tf)(x) = \mathbb{E}_{y \sim x} f(y)$ for $f \in L^2(L)$ and $x \in R$, and $(T^*g)(y) = \mathbb{E}_{x \sim y}g(x)$ for $g \in L^2(R)$ and $y \in L$. It is easy to see that both operators are contractions with respect to any norm. It turns out that given such a bipartite graph and given a hypercontractive operator $S$ on $R$ one gets for free that the operator $T^*ST$ is hypercontractive. Filmus et al \cite{filmus2020hypercontractivity} used this idea to obtain a `non-Abelian' hypercontractive estimate for `global' functions on the symmetric group, from an `Abelian' hypercontractive result for `global' functions on $(\mathbb{Z}_n)^n$. (Informally, a `global' function is one where one cannot increase the expectation very much by restricting the values of a small number of coordinates.)

In this work, we extend this idea to the continuous domain, by replacing a bipartite graph by a coupling of two probability distributions. Specifically, we consider the probability space $(\mathbb{R}^{n \times n},\gamma)$ of $n$ by $n$ Gaussian matrices (i.e., $\mathbb{R}^{n \times n}$ with each entry being an independent standard Gaussian), and the Haar measure on $\O(n)$. For $(\mathbb{R}^{n \times n},\gamma)$, the Ornstein--Uhlenbeck operator $U_\rho$ is a hypercontractive analogue of the noise operator from the Boolean case. We couple $(\mathbb{R}^{n \times n},\gamma)$ with $\SO(n)$ by applying the Gram--Schmidt operation on the columns of a given Gaussian matrix (flipping the sign of the last column, if necessary, so as to ensure that the determinant is equal to one). We note that essentially the same coupling has been used before, e.g.\ by Jiang \cite{jiang}; however, it has not been used before (to our knowledge) to analyse the distribution of high-degree polynomials in the matrix-entries, which is crucial in our work.

This coupling gives rise to operators $\Tcol$ and $\Tcol^*$, similar to the ones in the discrete case. The hypercontractive inequality for the Ornstein--Uhlenbeck operator $U_\rho$, together with our coupling implies a hypercontractive inequality for the operator $\mathrm{T}_{\rho}':=\Tcol^*U_\rho \Tcol$. We then use a symmetrization trick to obtain an operator $\mathrm{T}_\rho:=\mathbb{E}_{B\sim \mu}R_B^*\mathrm{T}'_{\rho}R_B$, where $R_B$ corresponds to right multiplication by $B$. The symmetrization does not change the hypercontractive properties, which are the same as for $U_\rho$ (see Theorem~\ref{thm:hypercontractivity in O_n}), but it has the advantage of allowing us to analyse more easily the eigenvalues of the operator. 

\subsubsection*{Representation theory}
The hypercontractive inequality for the operator $\mathrm{T}_\rho$ is useful due to the fact that it immediately gives bounds on the norms of eigenfunctions of $\mathrm{T}_{\rho}$. Because of the symmetrization, $\mathrm{T}_{\rho}$ commutes with the action of $G$ from both sides. Therefore, the Peter-Weyl theorem implies that every isotypical\footnote{If $\rho$ is an irreducible representation of $G$ and $V$ is a $G$-module, the {\em $\rho$-isotypical component} of $V$ is the sum of all subrepresentations of $V$ that are isomorphic to $\rho$.} 
component of $L^2(G)$ is contained in an eigenspace of $\mathrm{T}_{\rho}$. 

We eventually show that the eigenvalues of the restriction of $\mathrm{T}_\rho$ to $V_{d}$ are at least $(c\rho)^d$, for some absolute constant $c>0$. This implies that $\mathrm{T}_\rho$ is indeed a good analogue of the noise operator on the Boolean cube, and of the Ornstein--Uhlenbeck operator $U_\rho$, i.e.\ the noise operator on Gaussian space. We obtain this lower bound by showing that each isotypical component contains certain functions that are nice to deal with, functions we call the \emph{comfortable juntas}.

The latter are defined as follows. We define a {\em $d$-junta} to be a function in the matrix entries of $X \in \SO(n)$ that depends only upon the upper-left $d$ by $d$ minor of $X$. Such a $d$-junta is said to be \emph{comfortable} $d$-junta if it is contained in the linear span of the monomials $\{m_{\sigma}:\ \sigma \in S_d\}$, where $m_\sigma : \SO(n) \to \mathbb{R}$ is defined by $m_\sigma(X)=\prod_{i=1}^d X_{i,\sigma(i)}$ for each $X \in \SO(n)$, for each permutation $\sigma \in S_d$.

\subsubsection*{Random matrix theory}
One of the main discoveries of random matrix theory is that the entries of a random orthogonal matrix behave (in an appropriate sense) like independent Gaussians of the same expectation and variance: at least, when one restricts minors of the matrix that are not too large. (In fact, this holds for many different models of random matrices, not just the orthogonal ensemble.) The power of this discovery is of course that a Gaussian random matrix is {\em a priori} much easier to analyse than e.g.\ the random matrix given by the Haar measure on a group.

One way to test that two distributions are similar is to apply a continuous `test function' and take expectations. Usually, for applications in random matrix theory, the test function can be taken to be an arbitrary fixed polynomial. \remove{In fact, real random variables $X_n$ converge in distribution to a real random variable $X$ if and only if for each polynomial $P$, the sequence of expectations $\mathbb{E}[P(X_n)]$ converges to $\mathbb{E}[P(X)]$.} 

When computing the eigenvalues of our operator $\mathrm{T}_{\rho}$ we need to show a similarity in distribution between the upper $d\times d$-minor of $\O(n)$ and the $d \times d$ minor of a random Gaussian matix. For us, however, it is not sufficient to look at a single polynomial of fixed degree. Instead, we need to show a similarity in the distribution with respect to our comfortable $d$-juntas (where $d$ may be as large as $\sqrt{n}$, rather than an absolute constant). Hence, while the philosophy is similar to that of random matrix theory, we require new techniques enabling us to deal with the distributions of polynomials whose degrees may be a function of $n$, indeed up to $\sqrt{n}$.

\subsection{Applications}
In this section we list several applications of our hypercontractive theory: to some problems in group theory, in geometry, and in probability.

To state some of our results, we need some more terminology. If $G$ is a compact connected Lie group, we define $n(G): = D(\tilde{G})$, where $\tilde{G}$ denotes the universal cover of $G$. It is well-known that, for each $m \in \mathbb{N}$, we have $D(\SU(m))=D(\Spin(m))=m$ and $D(\Sp(m))=2m$ (and all these groups are simply connected except for $\Spin(2)$); we also have $D(\SO(m))=m$. Since $\Spin(m)$ is the universal cover of $\SO(m)$ for all $m > 2$, we have $n(\SO(m)) = m$ for all $m > 2$. As we will see in the next section, any compact connected semisimple Lie group $G$ with $D(G)$ at least an absolute constant, can be written in the form $(\prod_{i=1}^{r}K_i)/F$ where each $K_i$ is one of $\SU(n_i),\Spin(n_i)$ or $\Sp(n_i)$ for some $n_i \geq 3$, and $F$ is a finite subgroup of the centre of $\prod_{i=1}^{r}K_i$; the universal cover of such is $\prod_{i=1}^{r}K_i$, and $D(\prod_{i=1}^{r}K_i) = \Theta(\min_i n_i)$. Hence, the quantity $n(G)$ has a very explicit description in terms of the structure of the Lie group $G$.

\subsubsection*{Growth in groups: the diameter problem}

The theory of growth in groups has been a very active area of study in recent decades, and an important class of problem in this area is to determine the diameter of a metric space defined by a group (e.g.,\ the diameter of a Cayley graph of the group). For a compact group $G$ equipped with its Haar probability measure, and 
a measurable generating set $\mathcal{A}\subseteq G$ of measure $\mu$, it is natural to consider the metric space on $G$ where the distance between $x$ and $y$ is defined to be the minimal length of a word in the elements of $\mathcal{A}$ and their inverses which is equal to $xy^{-1}$. The diameters of such metric spaces in the case where $G$ is finite have become a focus of intense study in the last two decades: see e.g.\ the works of Liebeck and Shalev~\cite{liebeck2001diameters},  Helfgott~\cite{helfgott2008growth}, Helfgott and Seress~\cite{helfgott2014diameter},  Pyber and Szabo~\cite{pyber2016growth} and Breuillard, Green and Tao~\cite{breuillard2011approximate}.

For a subset $\mathcal{A}$ of a group $G$ and $t\in\mathbb{N}$, we define   
$$\mathcal{A}^t := \sett{a_1\cdot a_2\cdots a_t}{a_1,a_2,\ldots,a_t\in\mathcal{A}}.$$
The {\em diameter problem for $G$ with respect to $\mathcal{A}$} asks for the smallest positive integer $t$ for which $\A^t=G$. For a compact group $G$  and a real number $0 < \alpha \leq 1$, the {\em diameter problem for sets of measure $\alpha$ in $G$} asks for the minimum possible diameter of a measurable set in $G$ of measure $\alpha$.

In the case where $G$ is a compact and connected group, we note that the diameter of $G$ with respect to any subset $\mathcal{A}$ of positive measure is finite. This follows (almost) immediately from Kemperman's theorem \cite{kemperman}, which states that for any compact connected group $G$ (equipped with its Haar probability measure $\mu$) and any measurable $\mathcal{A},\mathcal{B} \subset G$, we have $\mu(\mathcal{A}\mathcal{B}) \geq \min\{\mu(\mathcal{A})+\mu(\mathcal{B}),1\}$. 

We make the following conjecture, concerning the diameter of large sets.
\begin{rconj}
    Let $G$ be one of $\SU(n),\SO(n),\Spin(n)$ or $\Sp(n)$, and let $\A\subseteq G$ be a measurable subset of measure $\nu$. Then the diameter of $G$ with respect to $\A$ is $O(\nu^{-1/(\ell n)})$, where $\ell=1$ in the case of $\SO(n)$ and $\Spin(n)$, $\ell=2$ in the case of $\SU(n)$, and $\ell=4$ in the case of $\Sp(n)$. In particular, if $\nu \geq e^{-cn}$, then the diameter of $G$ with respect to $\A$ is at most $O_c(1)$.
\end{rconj}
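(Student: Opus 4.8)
\medskip
\noindent\emph{Sketch of a possible approach.} The natural route is to pass to convolution powers and the Peter--Weyl decomposition. Writing $g=1_{\A}/\nu$ for the normalised density, $\A^{t}=G$ is equivalent to $g^{\ast t}(x)>0$ for every $x\in G$, and since $g^{\ast t}(x)=1+\sum_{\rho\neq\mathrm{triv}}d_{\rho}\,\mathrm{Tr}\bigl(\widehat g(\rho)^{t}\rho(x)\bigr)$ it suffices to show $\sum_{\rho\neq\mathrm{triv}}d_{\rho}\,\|\widehat g(\rho)\|_{\mathrm{op}}^{t-2}\|\widehat g(\rho)\|_{\mathrm{HS}}^{2}<1$. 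One always has $\|\widehat{1_{\A}}(\rho)\|_{\mathrm{op}}\le\nu$ by unitarity, and $\|\widehat{1_{\A}}(\rho)\|_{\mathrm{op}}\le\|\widehat{1_{\A}}(\rho)\|_{\mathrm{HS}}\le\sqrt{\nu/d_{\rho}}$ from $\sum_{\rho}d_{\rho}\|\widehat{1_{\A}}(\rho)\|_{\mathrm{HS}}^{2}=\|1_{\A}\|_{2}^{2}=\nu$ (the Sarnak--Xue bound); feeding in only the first gives the crude estimate $\mathrm{diam}(\A)\le 2+\log(1/\nu)/\log(\nu/\lambda)$ with $\lambda:=\max_{\rho\neq\mathrm{triv}}\|\widehat{1_{\A}}(\rho)\|_{\mathrm{op}}$. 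The Sarnak--Xue bound disposes of all representations of dimension $\ge\nu^{-3}$ for free: for these $\|\widehat{1_{\A}}(\rho)\|_{\mathrm{op}}\le\nu^{2}$, so their total contribution to the convolution sum is $\le\nu^{2(t-2)}\cdot\nu=\nu^{2t-3}$, negligible once $t\ge4$. This reduces everything to the representations $\rho$ with $d_{\rho}<\nu^{-3}$; these occur in $V_{\le d_{1}}$ for an appropriate cutoff $d_{1}$ and are exactly the ones realised on the low-dimensional homogeneous spaces of $G$ --- the sphere $S^{\ell n-1}$, the projective space $\mathbb{K}\mathbb{P}^{\,n-1}$ over $\mathbb{K}\in\{\mathbb R,\mathbb C,\mathbb H\}$, and small Grassmannians --- the smallest of which has dimension $\asymp\ell n$.

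When $\nu$ is not too small, say $\nu\ge\exp(-c'\sqrt n)$, one can finish via the sharp level $d$ inequality: applying Theorem~\ref{thm:second-level-d} at $d_{*}=2eC\log(1/\nu)\le c\sqrt n$ (for which $d_{*}\ge d_{1}$, so these ``geometric'' representations all lie in $V_{\le d_{*}}$), one gets $\sum_{d_{\rho}<\nu^{-3}}d_{\rho}\|\widehat g(\rho)\|_{\mathrm{HS}}^{2}\le\nu^{-2}\|1_{\A}^{\le d_{*}}\|_{2}^{2}\le\bigl(C\log(1/\nu)/d_{*}\bigr)^{d_{*}}=(2e)^{-d_{*}}\ll1$, so (using $\|\widehat g(\rho)\|_{\mathrm{op}}\le1$) the remaining part of the convolution sum is $(2e)^{-d_{*}}<1$ for every $t\ge2$, giving $\mathrm{diam}(\A)=O(1)$; with Theorem~\ref{first-level-d} in place of the sharp one one gets a weaker but still unbounded range. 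For smaller $\nu$ the level $d$ inequalities run out --- at their largest admissible degree $d\asymp\log(1/\nu)$ the factor $(\,\cdot/d)^{d}$ is no longer $o(1)$ --- and routing through $\lambda$ is provably too lossy: the extremal sets $\A=\pi^{-1}(B)$, with $\pi\colon G\to X=G/H$ the projection onto one of the spaces above and $B\subseteq X$ a ball of radius $r\asymp\nu^{1/\dim X}\asymp\nu^{1/(\ell n)}$, satisfy $\|\widehat{1_{\A}}(\rho)\|_{\mathrm{op}}=\nu\bigl(1-\Theta(r^{2})\bigr)$ for the lowest-weight geometric $\rho$, so the crude estimate yields only $\mathrm{diam}(\A)\lesssim\log(1/\nu)\,\nu^{-2/(\ell n)}$ --- worse than the claimed bound by a factor $\log(1/\nu)\,\nu^{-1/(\ell n)}$.

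The heart of the matter --- and the step I expect to be the main obstacle --- is thus to replace this spectral estimate for the geometric representations by a \emph{structural dichotomy}, to be extracted from the hypercontractive machinery of this paper (in the spirit of the ``global'' stability phenomena used for $A_{n}$ in \cite{ellis2023global}): for each nontrivial geometric $\rho$ realised on $X=G/H$, either $\|\widehat{1_{\A}}(\rho)\|_{\mathrm{op}}$ is bounded away from $\nu$ by enough that the $\lambda$-bound already gives $\mathrm{diam}(\A)=O(\nu^{-1/(\ell n)})$, or else $\A$ has density bounded below in a thin neighbourhood of a coset of $\mathrm{Stab}_{\rho}(u)\supseteq H$ for some unit vector $u$, so that $\A$ resembles $\pi^{-1}(B)$ for a ball $B\subseteq X$ of measure $\gtrsim\nu$ and hence radius $\gtrsim\nu^{1/\dim X}\ge\nu^{1/(\ell n)}$. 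In the structured case an elementary orbit-growth argument should finish: since $\pi$ is $G$-equivariant and $G$ acts on $X$ by isometries, the triangle inequality on $X$ --- together with a Steinhaus/Kemperman-type step on $X$ to convert ``density'' into actual covering of a small ball around the basepoint --- gives $\A^{k}\!\cdot p_{0}\supseteq B_{X}\bigl(p_{0},\,\Omega(k\cdot\mathrm{rad}(B))\bigr)$, whence $\A^{k}\!\cdot p_{0}=X$ once $k=O\bigl(\mathrm{diam}(X)/\mathrm{rad}(B)\bigr)=O(\nu^{-1/\dim X})=O(\nu^{-1/(\ell n)})$; and because $\A$ essentially restricts along $\pi$ on one side, ``$\A^{k}$ surjects onto $G/H$ for the relevant $H$'' can be upgraded to ``$\A^{O(k)}=G$'' by combining it with the control of $\|\widehat{1_{\A}}(\rho)\|_{\mathrm{op}}$ on the complementary representations, Kemperman's inequality, and the fact that a subset of Haar measure exceeding $\tfrac12$ squares to all of $G$ (which turns ``full orbit on every $G/H$'' into ``full Haar measure'' and then into ``$=G$''). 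I expect the difficulty to be concentrated almost entirely in proving the structure theorem with the \emph{sharp} exponent $\ell n$ rather than $\tfrac12\ell n$ --- equivalently, in showing that the ``slow'' low-degree Fourier modes on $X$ conspire rather than obstruct --- and in the gluing step that passes from all the quotients $G/H$ simultaneously to $G$ itself.
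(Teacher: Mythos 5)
This statement is stated in the paper as a \emph{conjecture} (the environment is \texttt{rconj}, which renders as ``Conjecture''), and the paper does not prove it. The strongest diameter bound actually established is Theorem~\ref{thm:growth}, which gives $\A^{k}=G$ for $k=O_{\delta}(1)$ only under the much stronger hypothesis $\mu(\A)\ge 2^{-n^{1-\delta}}$, and does not come close to the conjectured polynomial dependence $O(\nu^{-1/(\ell n)})$ for general $\nu$. So there is no ``paper's proof'' to compare against; the relevant question is whether your sketch constitutes a proof on its own.

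It does not, and you say so yourself. Your diagnostic work is careful and, where it can be checked, correct: the reduction via Sarnak--Xue to representations of dimension $<\nu^{-3}$; the observation that the sharp level-$d$ inequality (Theorem~\ref{thm:second-level-d}) closes the argument for $\nu\gtrsim\exp(-c\sqrt{n})$ and the cruder one (Theorem~\ref{first-level-d}) extends this to $\nu\ge 2^{-n^{1-\delta}}$, recovering roughly the range of Theorem~\ref{thm:growth}; the identification of the low-dimensional ``geometric'' representations on $S^{\ell n-1}$ and related homogeneous spaces as the bottleneck once the level-$d$ machinery runs out; the extremal-example computation showing $\pi^{-1}(B)$ for a ball $B\subset G/H$ is (conjecturally) tight; and the calculation that the crude operator-norm bound loses a factor of $\log(1/\nu)\,\nu^{-1/(\ell n)}$. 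But the proposal then rests entirely on a ``structural dichotomy'' --- either the top nontrivial Fourier coefficient is quantitatively bounded away from $\nu$, or $\A$ resembles $\pi^{-1}(B)$ --- which is stated as a hoped-for lemma, not proved, and which you explicitly flag as ``the main obstacle'' and ``the heart of the matter.'' Without that stability-type theorem, everything collapses back to the crude $\lambda$-bound and gives only $\log(1/\nu)\,\nu^{-2/(\ell n)}$, short of the conjectured exponent. The final gluing step --- passing from ``$\A^{k}$ covers each relevant $G/H$'' to ``$\A^{O(k)}=G$'' --- is likewise only gestured at (the appeal to Kemperman and the half-measure trick does not on its own account for the interaction of the different quotients). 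In short, this is a reasonable and well-informed research plan for attacking the conjecture, but it is not a proof, and the missing ingredient (a stability/structure theorem for the geometric Fourier levels) is precisely the thing the paper's hypercontractive machinery does not yet supply.
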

We note that if true, the conjecture is essentially tight, as can be seen for $\SO(n)$ by considering the set 
\[
\mathcal{S}_\epsilon := \set{ X\in  \SO(n)\colon 
\text{the angle between $Xe_1$ and $e_1$ is at most $\epsilon$}
},
\]
For $\eps \leq 1/2$, we have $\mu(\mathcal{S}_{\eps}) = (\Theta(\epsilon))^n$, and the diameter of $\SO(n)$ with respect to $\mathcal{S}_{\epsilon}$ is $\Theta(1/\epsilon)$. If $\pi:\Spin(n) \to \SO(n)$ is the usual (double) covering homomorphism, then the lift $\pi^{-1}(\mathcal{S}_{\epsilon})$ is a subset of $\Spin(n)$ of the same measure as $\mathcal{S}_{\epsilon}$ (using, of course, the Haar probability measure on both groups), and the diameter of $\Spin(n)$ with respect to $\pi^{-1}(\mathcal{S}_{\epsilon})$ is the same the diameter of $\SO(n)$ with respect to $\mathcal{S}_{\epsilon}$, since $\pi(\mathcal{A}^t) = (\pi(\mathcal{A}))^t$ for any subset $\mathcal{A} \subset \Spin(n)$ and any $t \in \mathbb{N}$. Hence, $\pi^{-1}(\mathcal{S}_{\epsilon})$ demonstrates tightness for $\Spin(n)$. The group $\SU(n)$ acts transitively on the unit sphere in $\mathbb{C}^{n}$, which can be identified with $S^{2n-1}$, and the group $\Sp(n)$ acts transitively on the unit sphere in $\mathbb{H}^n$, which can be identified with $S^{4n-1}$; both actions are angle-preserving (in $S^{2n-1}$ and $S^{4n-1}$ respectively). So our above construction for $\SO(n)$ (which comes from the action of $\SO(n)$ on $S^{n-1}$) has the obvious analogues for $\SU(n)$ and $\Sp(n)$, which we conjecture are sharp for those groups. 

\medskip 
We show that for a compact connected Lie group $G$ with $n(G)=n$, for all $\delta >0$ and 
all measurable subsets $\A$ of $G$ with measure at least $2^{-cn^{1-\delta}}$, the diameter of $G$ with respect to $\A$ is at most $O_{\delta}(1)$.

\begin{thm}\label{thm:growth}
For each $\delta>0$ there exist $n_0,k>0$ such that the following holds. Let $n>n_0$ and let $G$ a compact connected Lie group with $n(G)=n$. If $\A \subset G$ is a Haar-measurable set, and
$\mu(\A)\geq 2^{-n^{1-\delta}}$, then $\A^k = G$.
\end{thm}

\remove{
Geometric group theory involves the study of groups via their actions on topological spaces, for example on metric spaces. An important kind of problem in this area is to determine the diameter of a metric space defined by a group (e.g.,\ the diameter of a Cayley graph of the group). In our case, for a compact group $G$ (equipped with the Haar measure $\mu$) and 
a measurable generating set $\mathcal{A}\subseteq G$ of measure $\mu$, it is common to consider the metric space on $G$, where the distance between $x$ and $y$ is defined to be the minimal length of a word in the elements of $\mathcal{A}$ and their inverses, which is equal to $xy^{-1}$. One of the central questions in the theory of growth in groups concerns the maximum possible diameter of such a metric space (see e.g.\ \cite{helfgott2015growth} and \cite{liebeck2001diameters}). For a subset $\mathcal{A}$ of a group $G$ and $t\in\mathbb{N}$, we define   
$$\mathcal{A}^t := \sett{a_1\cdot a_2\cdots a_t}{a_1,a_2,\ldots,a_t\in\mathcal{A}}.$$
The {\em diameter problem for $G$ with respect to $\mathcal{A}$} asks for the smallest positive integer $t$ for which $\A^t=G$. For a compact group $G$ (equipped with its Haar probability measure $\mu$) and a real number $0 < \alpha \leq 1$, the {\em diameter problem for sets of measure $\alpha$ in $G$} asks for the minimum possible diameter of a measurable set in $G$ of measure $\alpha$.

In the case where $G$ is a compact and connected group, we note that the diameter of $G$ with respect to any subset $A$ of positive measure is finite. This follows (almost) immediately from Kemperman's theorem \cite{kemperman}, which states that for any compact connected group $G$ (equipped with its Haar probability measure $\mu$) and any measurable $\mathcal{A},\mathcal{B} \subset G$, we have $\mu(\mathcal{A}\mathcal{B}) \geq \min\{\mu(\mathcal{A})+\mu(\mathcal{B}),1\}$. 

\remove{
This question is non-trivial in  the case of $\SU(n)$, $\SO(n)$ or any other compact connected Lie group that does not have non-trivial positive-measure subgroups. More specifically, we define the {\em diameter} of 
a set $\mathcal{A}$ to be the minimal positive integer $t$ such that $\mathcal{A}^t=G$ (if such an integer exists; otherwise we define its diameter to be $\infty$). It is natural to ask how large the diameter of $\mathcal{A}$ can be, over all measurable sets $\mathcal{A}$ of fixed positive measure. We remark that if $G$ is a compact connected group and $\mathcal{A} \subset G$ is a set of positive measure, then it follows easily from known results that $\mathcal{A}^t = G$ for some $t \in \mathbb{N}$. This follows, for example, from Kemperman's theorem \cite{kemperman}, which states that for any compact connected group $G$ (equipped with its Haar measure $\mu$) and any measurable $\mathcal{A},\mathcal{B} \subset G$, we have $\mu(\mathcal{A}\mathcal{B}) \geq \min\{\mu(\mathcal{A})+\mu(\mathcal{B}),1\}$; so taking $k$ to be the minimal positive integer such that $2^{k} \mu(\mathcal{A})>1$, applying Kemperman's theorem $k-1$ times, viz., to $\mathcal{A}$, $\mathcal{A}^2$, $\mathcal{A}^4$ and so on, we have $\mu(\mathcal{A}^{2^{k-1}})> 1/2$ and therefore $\mathcal{A}^{2^k}=G$, so we may take $t = 2\lfloor 1/\mu(\mathcal{A}) \rfloor$.
}
We make the following conjecture, concerning the diameter of large sets.
\begin{rconj}
    Let $G$ be one of $\SU(n),\SO(n),\Spin(n)$ or $\Sp(n)$, and let $\A\subseteq G$ be a measurable subset of measure $\nu$. Then the diameter of $G$ with respect to $\A$ is $O(\nu^{-1/n})$. In particular, if $\nu \geq e^{-cn}$, then the diameter of $G$ with respect to $\A$ is at most $O_c(1)$.
\end{rconj}
We note that if true, the conjecture is essentially tight, as can be seen by considering the set 
\[
\mathcal{S}_\epsilon := \set{ X\in  \SO(n)\colon 
\text{the angle between $Xe_i$ and $e_i$ is at most $\epsilon$}
}.
\]
For $\eps \leq 1/2$, we have $\mu(\mathcal{S}_{\eps}) = 2^{-\Theta(n\log(1/\eps))}$, and the diameter of $\SO(n)$ with respect to $\mathcal{S}_{\epsilon}$ is $\Theta(1/\epsilon)$. 

\medskip 
We show that for a compact connected Lie group $G$ with $n(G)=n$, for all $\delta >0$ and 
all measurable subsets $\A$ of $G$ with measure at least $2^{-cn^{1-\delta}}$, the diameter of $G$ with respect to $\A$ is at most $O_{\delta}(1)$.

\begin{thm}\label{thm:growth}
For each $\delta>0$ there exist $n_0,k>0$ such that the following holds. Let $n>n_0$ and let $G$ a compact connected Lie group with $n(G)=n$. If $\A \subset G$ is a Haar-measurable set, and
$\mu(\A)\geq 2^{-n^{1-\delta}}$, then $\A^k = G$.
\end{thm}
}

\subsubsection*{Doubling inequalities for groups}

Theorem \ref{thm:growth} follows from a new lower bound on $\mu(\A^2)$, where $\A\subseteq G$ is a measurable subset of the compact connected Lie group $G$. We prove the following `doubling inequality'.

\begin{thm}\label{thm:Brunn Minkowskii}
There exists absolute constants $C,c>0$ such that the following holds. Let $G$ be a compact connected Lie group with $n(G) = n \geq C$. Let $\mathcal{A}\subseteq G$  be a measurable set with $\mu(\mathcal{A})\ge e^{-cn}$. Then $\mu(\mathcal{A}^2) \ge \mu(\mathcal{A})^{0.1}$.  
\end{thm}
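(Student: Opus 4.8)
The plan is to reduce the doubling bound $\mu(\mathcal{A}^2) \ge \mu(\mathcal{A})^{0.1}$ to the level-$d$ inequality (Theorem~\ref{first-level-d}) via a Fourier-analytic / spectral argument on $G$. First I would reduce to the simply connected case: since every compact connected Lie group $G$ with $n(G)=n$ large is (up to a finite central quotient and a torus factor that contributes nothing obstructive) covered by a product of the standard simply connected simple groups, and since $\mu(\pi(\mathcal{A})^2) \le \mu(\pi(\mathcal{A}^2))$ under a covering map $\pi$, it suffices to prove the statement for $G$ simply connected and semisimple with $D(G) = \Theta(n)$. Then I would set $f = 1_{\mathcal A}$, $\alpha = \mu(\mathcal A)$, and study the convolution $f * f$, whose support is exactly $\mathcal A \cdot \mathcal A$ (here I use the two-sided convolution on $G$), so that $\mu(\mathcal A^2) \ge \frac{\|f * f\|_1^2}{\|f*f\|_2^2} = \frac{\alpha^4}{\|f*f\|_2^2}$ by Cauchy--Schwarz. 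Hence the whole problem becomes an upper bound $\|f * f\|_2^2 \le \alpha^{3.9}$, say.

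The key step is to estimate $\|f*f\|_2^2$ using the degree decomposition $f = \sum_d f^{=d}$. By the Peter--Weyl theorem and Schur's lemma the subspaces $V_{=d}$ are two-sided ideals, so convolution respects the decomposition and $\|f*f\|_2^2 = \sum_d \|(f*f)^{=d}\|_2^2$, and each term is controlled by the operator norm of convolution-by-$f$ restricted to $V_{=d}$. For the low-degree part $d \le d_0 := \lfloor \log(1/\alpha)\rfloor$, Theorem~\ref{first-level-d} gives $\|f^{\le d_0}\|_2^2 \le \alpha^2 (2\log(1/\alpha)/d_0)^{Cd_0}$, which is at most $\alpha^{2} \cdot 2^{Cd_0} = \alpha^{2-C/\ln 2}$ --- wait, this is too lossy as stated, so instead I would split more carefully: for each fixed $d \le d_0$ use $\|f^{=d}\|_2^2 \le \alpha^2 (C'\log(1/\alpha)/d)^{C'd}$ and the trivial bound $\|(f*f)^{=d}\|_2 \le \|f^{=d}\|_2 \cdot \|f\|_1 = \alpha\|f^{=d}\|_2$, so $\|(f*f)^{=d}\|_2^2 \le \alpha^2 \|f^{=d}\|_2^2$, and sum a geometric-type series dominated by the $d=1$ or $d=d_0$ endpoint to get a bound of the form $\alpha^{4} \cdot \mathrm{poly}$. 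For the high-degree part $d > d_0$, I use the Sarnak--Xue-type bound: every subrepresentation appearing in $V_{=d}$ has dimension at least $D_d$, which grows rapidly with $d$, so convolution by $f$ has operator norm at most $\sqrt{\|f\|_2^2 / D_d}$ on $V_{=d}$ (each nontrivial eigenvalue $\lambda$ satisfies $D_d|\lambda|^2 \le \|f\|_2^2 = \alpha$); combined with $\sum_{d>d_0}\|f^{=d}\|_2^2 \le \alpha$ this contributes at most $\alpha \cdot \max_{d>d_0}(\alpha/D_d) \le \alpha^2 / D_{d_0}$, which is super-polynomially small in $1/\alpha$ once $\alpha \ge e^{-cn}$ and $d_0 = \log(1/\alpha)$, since $D_{d_0}$ is at least exponential in $d_0$.

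Putting the two ranges together yields $\|f*f\|_2^2 \le \alpha^{4}\cdot(\text{small correction}) + \alpha^{2}/D_{d_0}$, and choosing the constant $c$ in $\mu(\mathcal A) \ge e^{-cn}$ small enough (so that $d_0 = \log(1/\alpha) \le cn$ stays in the regime where Theorem~\ref{first-level-d} applies and where $D_{d_0}$ is huge), this is at most $\alpha^{3.9}$, whence $\mu(\mathcal A^2) \ge \alpha^4/\alpha^{3.9} = \alpha^{0.1}$. I expect the main obstacle to be making the low-degree estimate genuinely give an exponent close to $4$ rather than something substantially smaller: the naive termwise bound $\|(f*f)^{=d}\|_2 \le \alpha \|f^{=d}\|_2$ may lose too much, and one likely needs the sharper operator-norm input that convolution by $f$ on $V_{=d}$ has norm $\min(\alpha, \sqrt{\alpha/D_d})$ together with a careful accounting showing the dominant contribution is the $d=0$ term $\|(f*f)^{=0}\|_2^2 = \alpha^4$ and everything else is lower-order. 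Verifying the precise growth rate of $D_d$ needed here (via the Laplacian-eigenvalue / step-vector formula referenced from Berti--Procesi) and confirming it beats every polynomial in $1/\alpha$ throughout the range $d_0 < d \lesssim n$ is the quantitative heart of the argument.
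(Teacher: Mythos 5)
Your overall strategy is the same as the paper's: express $\mu(\mathcal{A}^2)$ via a Cauchy--Schwarz bound against $\|f*f\|_2^2$, decompose $f*f$ by degree using the two-sided ideal structure of the $V_{=d}$, control the low degrees by the level-$d$ inequality, and control the high degrees by quasirandomness (Sarnak--Xue). Your unnormalized Cauchy--Schwarz step $\mu(\mathcal{A}^2)\ge\alpha^4/\|f*f\|_2^2$ is just a rescaling of the paper's $\langle f*f,1_{\mathcal{A}^2}\rangle=1$ with $f=1_{\mathcal{A}}/\alpha$; that is fine.

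The genuine gap is in the low-degree part, and the fix you sketch (termwise bound plus a ``geometric-type series dominated by the $d=1$ or $d=d_0$ endpoint'') does not actually work. Your termwise bound is $\|(f*f)^{=d}\|_2^2 \le \alpha^4\bigl(C'\log(1/\alpha)/d\bigr)^{C'd}$, and the function $d\mapsto\bigl(C'L/d\bigr)^{C'd}$ with $L=\log(1/\alpha)$ is \emph{not} maximized at an endpoint of $[1,d_0]$ with $d_0=\lfloor L\rfloor$: its logarithm has derivative $C'\log(C'L/d)-C'$, so the maximum over $d\le L$ is attained at the interior point $d^{*}=C'L/e$ (which lies below $L$ whenever $C'<e$, and in any case the maximum over $[1,L]$ is governed by a point of order $L$), and there the value is $e^{(C')^2L/e}=\alpha^{-(C')^2/e}$. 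Since $C'$ is a fixed but potentially large absolute constant from Theorem~\ref{first-level-d}, the resulting bound $\alpha^{4-(C')^2/e}\cdot\log(1/\alpha)$ does \emph{not} stay below $\alpha^{3.9}$. The paper avoids this by choosing the threshold $t=\log(1/\alpha)/(10^6C^2)$ (a small constant fraction of $\log(1/\alpha)$, depending on the hypercontractivity constant), for which the level-$d$ bound $\bigl(2e\log(1/\alpha)/d\bigr)^{2Cd}$ is \emph{monotone increasing} on $[0,t]$ (its log-derivative $2C\log(2L/d)$ stays positive since $t\ll 2L$), so the maximum sits at the endpoint $d=t$ and equals only $\alpha^{-0.01}$; this is Lemma~\ref{lem:KKL use of hypercontractity}, giving $\|1_{\mathcal{A}}^{\le t}\|_2^2\le 2\alpha^{1.99}$. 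The quasirandomness bound on $\|T_f\|_{V_{>t}}$ then has to take over already at this much smaller $t$, using the fact that $Q_d\ge(cn/d)^d$ is astronomically large as soon as $d>t$ and $\log(1/\alpha)\le cn$. Your proposal also does not note the preliminary reduction (via Corollary~\ref{cor: Diameter from product mixing}) to the regime $\mu(\mathcal{A})\le e^{-n^{c'}}$, which the paper uses to dispatch the case where the $\alpha^{-0.01}$ slack would not be negligible. The quantitative heart you flagged --- verifying the dimension growth $D_d$ throughout the range --- is indeed used, but at the threshold $t$ rather than $\log(1/\alpha)$; the threshold choice is the missing ingredient.
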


The problem of giving a lower bound on $\mu(\mathcal{A}^2)$ in terms of $\mu(\mathcal{A})$, for $\mathcal{A}$ a measurable subset of a compact group $G$, dates back to the work of Henstock and Macbeath~\cite{henstock1953measure} from 1953, the aforemenentioned bound of Kemperman~\cite{kemperman} from 1964, and the work of Jenkins~\cite{jenkins1973growth} from 1973. Several recent works of Jing, Tran and Zhang have introduced some powerful new methods into the field. For instance, in \cite{jing2021nonabelian}, Jing, Tran and Zhang generalized the Brunn-Minkowskii inequality from $\mathbb{R}^n$ to an arbitrary connected Lie group, using the Iwasawa decomposition to facilitate an inductive approach; their result is essentially sharp for helix-free Lie groups. In \cite{jing2023measure}, they used techniques from $O$-minimal geometry to show that that $\mu(\mathcal{A}^2)\ge 3.99\mu(\mathcal{A})$ for all measurable subsets $\mathcal{A} \subseteq \mathrm{SO}(3)$ of sufficiently small measure. In a forthcoming paper~\cite{jing2023effective} they prove that there exists a function $\delta=\delta(n)$ and an absolute constant $c>0$, such that if $\mathcal{A}\subseteq \mathrm{SO}(n)$ is a measurable set of measure at most $\delta(n),$ then $\mu(\mathcal{A}^2)\ge 2^{cn^{1/10}}\mu(\mathcal{A})$; the function $\delta(n)$ satisfies $\delta(n) \leq 2^{-n^{1+c'}}$ where $c'>0$ is an absolute constant. (For comparison, we note that Theorem \ref{thm:Brunn Minkowskii}, in conjunction with Theorem \ref{thm:mixing time 2} below, imply the existence of an absolute constant $c>0$ such that $\mu(\mathcal{A}^2)\ge \min\{2^{c n^{1/2}}\mu(\mathcal{A}), 0.99\}$ for all measurable subsets $\mathcal{A} \subset \SO(n)$ of measure at least $2^{-cn}$, so our result and that of Jing, Tran and Zhang leave a `gap' between them.) It remains an open problem to determine whether $\mu(\mathcal{A}^2)\ge \min\{2^{n/10}\mu(\mathcal{A}),0.99\}$ for all subsets $\mathcal{A} \subset \SO(n)$. We also mention that very recently (at the same time as the first version of this paper appeared), Machado \cite{machado} showed that if $G$ is a compact Lie group of dimension $d$, and $d'$ is the maximal dimension of a proper closed subgroup of $G$, then for all measurable subsets $\mathcal{A} \subset G$, it holds that
\begin{equation}\label{eq:machado}\mu(\mathcal{A}^2) \geq \left(2^{d-d'}-C_d(\mu(\mathcal{A}))^{2/(d-d')}\right)\mu(\mathcal{A}),\end{equation}
where $C_d$ depends upon $d$ alone. This recovers and improves the above-mentioned result of Jing, Tran and Zhang on $\SO(3)$, and is sharp up to the dependence of $C_d$ upon $d$. It does not, however, imply Theorem \ref{thm:Brunn Minkowskii}, as the dependence of $C_d$ upon $d$ (though not explicitly worked out) is of at least tower-type (so no conclusion can be drawn about sets of measure at least $e^{-\Omega(d)}$, which is the setting of Theorem \ref{thm:Brunn Minkowskii}).

\subsubsection*{Spectral gaps}
We also give the following upper bound on the spectral gaps of the operator corresponding to convolution by $\frac{1_A}{\mu(A)}$. If $G$ is a compact group equipped with its (unique) Haar probability measure $\mu$, for a measurable set $\mathcal{A} \subset G$ we write $x\sim \mathcal{A}$ to mean that $x$ is chosen (conditionally) according to the Haar measure $\mu$, conditional on the event that $x\in \mathcal{A}$.  

 \begin{thm}\label{thm:spectral gap}
  There exist absolute constants $c,C>0$ such that the following holds. Let $G$ be compact connected Lie group and suppose $n:=n(G)\geq C$. Let $\mathcal{A}=\mathcal{A}^{-1}$ be a symmetric, measurable set in $G$ and suppose that $\mu(\mathcal{A})\ge e^{-cn^{1/2}}$. Then the nontrivial spectrum of the operator $T$ defined by  $Tf(x)=\mathbb{E}_{a\sim A}[f(ax)]$ is contained in the interval \[\left[-\sqrt{\frac{C\log{1/\alpha}}{n}}, \sqrt{\frac{C\log{1/\alpha}}{n}}\right].\] 
 \end{thm}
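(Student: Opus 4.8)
The plan is to combine the level $d$ inequality (Theorem~\ref{thm:second-level-d}) with the Sarnak–Xue-type argument described in the introduction, applied to the degree decomposition. Write $f = 1_{\mathcal A}$, $\alpha = \mu(\mathcal A)$, and let $g = f/\alpha$, so that $T$ is convolution by $g$: $Tf(x) = \E_{a \sim \mathcal A}[f(ax)] = (g \ast f)(x)$ up to the usual normalization. Since $\mathcal A = \mathcal A^{-1}$, the operator $T$ is self-adjoint, and since it is given by left-convolution it commutes with the right action of $G$ on $L^2(G)$; hence by Peter–Weyl and Schur, each isotypical component of $L^2(G)$ lies in a single eigenspace of $T$, and in particular each $V_{=d}$ is $T$-invariant. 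So it suffices to bound, for each $d \geq 1$, the operator norm of $T$ restricted to $V_{=d}$.

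First I would record the standard spectral identity: if $\lambda$ is an eigenvalue of $T$ with eigenfunction supported in an irreducible subrepresentation $\rho \subseteq V_{=d}$ of dimension $d_\rho \geq D_d$, then summing $\lambda^2$ over the (at least $d_\rho$) copies of that eigenvalue coming from the isotypical component gives $d_\rho \lambda^2 \leq \operatorname{Trace}((T|_{V_{=d}})^2) \leq \sum_\rho (\widehat{g}(\rho)$-contribution$)$, which one rewrites as $\|g^{=d}\|_2^2$ after using the Plancherel formula for $G$ and the fact that $T$ acts on the $\rho$-block by right-multiplication by $\widehat{g}(\rho)$. Concretely this yields $|\lambda|^2 \le \|g^{=d}\|_2^2 / D_d$ for every nontrivial eigenvalue $\lambda$ with eigenspace inside $V_{=d}$. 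Next I would invoke Theorem~\ref{thm:second-level-d}: since $\mu(\mathcal A) = \alpha \ge \exp(-cn^{1/2})$, for every $d$ with $1 \le d \le \log(1/\alpha)$ we have $\|f^{\le d}\|_2^2 \le \alpha^2 (C\log(1/\alpha)/d)^d$, hence $\|g^{=d}\|_2^2 \le \|g^{\le d}\|_2^2 \le (C\log(1/\alpha)/d)^d$. Combined with a suitable lower bound $D_d \ge (c' n/d)^{d}$ — which follows from the fast growth of the minimal dimension of subrepresentations of $V_{=d}$ for simply connected compact Lie groups, and which is exactly the ingredient that makes the whole framework work (one passes to the universal cover $\tilde G$ so that $n(\tilde G) = n$, using that representations of $G$ pull back to $\tilde G$) — we get $|\lambda|^2 \le (C\log(1/\alpha)/d)^d / (c'n/d)^{d} = (C'\log(1/\alpha)/n)^{d} \le C'\log(1/\alpha)/n$ for all such $d$, which is the claimed bound.

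It remains to handle the degrees $d > \log(1/\alpha)$. Here the level $d$ inequality as stated does not directly apply, but one does not need it: for these degrees I would use instead the crude bound coming from $D_d$ alone. Since $\|g^{=d}\|_2^2 \le \|g\|_2^2 = 1/\alpha$ trivially, the spectral identity gives $|\lambda|^2 \le 1/(\alpha D_d)$; and for $d > \log(1/\alpha)$ the growth $D_d \ge (c'n/d)^d$ together with $d \le n$ (or a comparable range; for $d$ beyond $n$ one uses that $D_d$ is at least exponential in $d$) makes $\alpha D_d$ enormous — certainly $\alpha D_d \ge \alpha (c'n/\log(1/\alpha))^{\log(1/\alpha)} \gg n/\log(1/\alpha)$ once $n \ge C$ — so $|\lambda|^2 \le C'\log(1/\alpha)/n$ holds on these degrees too, with room to spare. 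Taking the maximum over all $d \ge 1$ then confines the nontrivial spectrum of $T$ to $[-\sqrt{C\log(1/\alpha)/n}, \sqrt{C\log(1/\alpha)/n}]$, as desired.

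The main obstacle, and the part requiring the most care, is the lower bound $D_d \ge (c'n/d)^d$ on the minimal dimension of subrepresentations appearing in $V_{=d}$: one must verify that this holds uniformly over all compact connected simply connected Lie groups with $n(\tilde G) = n$ in the relevant range of $d$, using the classification and the explicit description (recalled in the applications section) of such $\tilde G$ as products $\prod_i K_i$ with each $K_i \in \{\SU(n_i), \Spin(n_i), \Sp(n_i)\}$ and $\min_i n_i = \Theta(n)$; the Weyl dimension formula, applied to the highest weights that can occur in degree-$d$ polynomials in the matrix entries (these have bounded "step vectors" in the sense of Berti–Procesi), delivers the required growth, but bookkeeping across the factors and the three families is where the work lies. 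A secondary point to be careful about is the passage between the operator $T$ on $L^2(G)$ and the degree decomposition of the function-theoretic object: one should note that the $V_{=d}$ are two-sided ideals and that convolution by $g = f/\alpha$ preserves them, so that the Sarnak–Xue multiplicity trick applies verbatim within each $V_{=d}$.
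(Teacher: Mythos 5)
Your proposal follows essentially the same route as the paper: decompose via the grading $V_{=d}$, use the Sarnak--Xue multiplicity trick (which the paper implements as Lemma~\ref{lemma:bnp} via the Hilbert--Schmidt norm of the convolution operator together with the quasirandomness parameter $Q_d$) to get $|\lambda|^2 \le \|g^{=d}\|_2^2/D_d$, and then feed in the level~$d$ inequality to bound $\|g^{=d}\|_2$. This is precisely the content of Lemmas~\ref{lemma:conv bound from goodness} and~\ref{lemma:conv bound from goodness for large sets}, which the paper combines with the reduction to good graded groups (Lemma~\ref{lem:reductionof goodness to the simple case}) to deduce Theorem~\ref{thm:spectral gap}. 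The growth of $D_d$ that you flag as the main remaining work is supplied in the paper by $c$-strong-quasirandomness, $Q_d\ge(cn/d)^d$, proved for $\SU(n),\Spin(n),\Sp(n)$ in Section~\ref{section: compacts are quasi} and propagated to products and quotients.

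One small conceptual slip that is worth correcting: you assert that, because $T$ is convolution by $1_{\mathcal A}/\alpha$ (and so commutes with the action of $G$ from one side), ``each isotypical component of $L^2(G)$ lies in a single eigenspace of $T$.'' That stronger statement requires commutation with the $G\times G$-action from both sides---it is exactly why the paper needs the averaging/symmetrization trick when constructing the hypercontractive operator $\mathrm T_\rho$. A one-sided convolution operator acts on each Peter--Weyl block $W_\rho$ not as a scalar but as (right or left) multiplication by the Fourier matrix $\widehat g(\rho)$, which is not a multiple of the identity in general. What you actually need, and what does hold, is weaker: each $W_\rho$ (hence each $V_{=d}$) is $T$-invariant, and each eigenspace of the self-adjoint operator $T|_{V_{=d}}$ is a $G$-submodule (for the side with which $T$ commutes) of dimension at least $D_d$. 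Since you only use these weaker facts in the trace argument, the proof goes through, but the stated justification should be amended.
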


\subsubsection*{Mixing times}
Let $G$ be a compact group, equipped with its (unique) Haar probability measure; then every measurable subset $\A\subseteq G$ of positive Haar measure corresponds to a random walk on $G$. Indeed, we may define a (discrete-time) random walk $R_\A = (X_t)_{t \in \mathbb{N} \cup \{0\}}$ on $G$, by letting $X_0 = \text{Id}$, and for each $t \in \mathbb{N}$, if $X_{t-1}=x$ then $X_t = ax$, where $a$ is chosen uniformly at random from $\A$. In the case where $G$ is finite and $\A$ is closed under taking inverses, this is the usual random walk associated to the Cayley graph $\mathrm{Cay}(G,\A)$. One of the fundamental problems associated to such random walks is to determine their {\em mixing time}. (Following Larsen and Shalev \cite{larsen2008characters}, we say that the \emph{mixing time} of a Markov chain $(X_t)_{t \in \mathbb{N} \cup \{0\}}$ is the minimal non-negative integer $T$ such that the total variation distance between the distribution of $X_{T}$ and the uniform distribution, is at most $1/e$. We note that $1/e$ could be replaced by any other absolute constant $c \in (0,1)$, without materially altering the definition; Larsen and Shalev use the constant $1/e$ as it makes the statement of certain results concerning $S_n$ and $A_n$ more elegant.)

Larsen and Shalev \cite{larsen2008characters} considered the case where $\A$ is a normal set, i.e.\ a set closed under conjugation, and $G$ is the alternating group $A_n$. They showed that for each $\epsilon>0,$ if $\A\subseteq A_n$ of density $\frac{2|\A|}{n!} \ge \mathrm{exp}\left ( -n^{1/2-\epsilon} \right)$, then the mixing time of $R_\A$ is 2, provided that $n\ge n_0(\epsilon)$ is sufficiently large depending on $\epsilon$. Their proof was based upon a heavy use of character theory. Their result is almost sharp, in the sense the number $1/2$ cannot be replaced by any number smaller than $1/2$. We show that a similar phenomenon holds for compact connected Lie groups, even when $\A$ is not a normal set.  

\begin{thm}\label{thm:mixing time 2}
    There exist absolute constants $c,n_0>0$, such that the following holds. Let $G$ be a compact connected Lie group with $n:=n(G)>n_0$. Let $\A\subseteq G$ be a measurable set with Haar measure at least $e^{-c n^{1/2}}$. Then the mixing time of the random walk $R_{\A}$ is 2.
\end{thm}

This result is essentially best possible. For instance, taking $G=\SO(n)$, we may take $\mathcal{A}=\{X \in \SO(n):\ X_{11} > 10/n^{1/4}\}$. It is easy to see that the mixing time of $R_{\mathcal{A}}$ is 3, while $\mu(\mathcal{A}) = \mathrm{exp}(-\Theta(n^{1/2}))$.   

\subsubsection*{Product mixing}
Gowers' proof of his upper bound on the sizes of product-free sets actually establishes a stronger phenomenon, known as \emph{product mixing}. We say that a compact group $G$ (equipped with its Haar probability measure $\mu$) is an $(\alpha,\epsilon)$-\emph{mixer} if for all sets $A,B,C\subseteq G$ of Haar probability measures $\ge \alpha$, when choosing independent uniformly random elements $a\sim A$ and $b\sim B$, the probability that $ab\in C$ lies in the interval $\left(\mu(C)(1-\epsilon),\mu(C)(1+\epsilon)\right)$.  
Gowers' proof actually yields the following statement: there exists an absolute constant $C>0$, such that if $G$ is a $D$-quasirandom compact group, then it is a $(CD^{-1/3},0.01)$-mixer. (The proof is given only for finite groups, but it generalises easily to all compact groups.) For finite groups, Gowers' product-mixing result is sharp up to the value of the constant $C$. Here, we obtain an analogous result for compact connected Lie groups, where the $n^{-1/3}$ moves to the exponent.    

\begin{thm}\label{thm:product mixing intro}
    For any $\epsilon>0$, there exist $c,n_0>0$ such that the following holds. Let $n>n_0$ and let $G$ be a compact connected Lie group with $n:=n(G)>n_0$. Set $\alpha = \mathrm{exp}(-cn^{-1/3}).$ Then $G$ is an $(\alpha, \epsilon)$-mixer.
\end{thm}

This result is sharp up to the dependence of the constants $c = c(\epsilon)$ and $n_0 = n_0(\epsilon)$ upon $\epsilon$. Indeed, we may take $G=\SO(n)$ and let $\mathcal{A}=\mathcal{B}=\{X \in \SO(n):\ X_{11} > 10/n^{1/3}\}$ and 
$\mathcal{C} = \{X \in \SO(n):\ X_{11} < -10 /n^{1/3}\}$, to obtain a triple of sets each of measure $e^{-\Theta(n^{1/3})}$,
such that when choosing $a\sim A$ and $b\sim B$ independently, the probability that $ab\in C$ is smaller than $\tfrac{1}{2}\mu(C)$.

\subsubsection*{Homogeneous dynamics and equidistribution}
Suppose that a compact Lie group $G$ acts on a topological space $X$. The space $X$ is said to be {\em $G$-homogeneous} if $G$ acts transitively and continuously on $X$ (the latter meaning that the action map from $G \times X$ to $X$ is continuous); in this case, $X$ has a unique $G$-invariant probability measure, which is called the Haar probability measure. We obtain the following equidistribution result for homogeneous spaces. 

\begin{thm}\label{thm:mixing}
    For each $\epsilon>0$ there exist $c,n_0>0$ such that the following holds. Let $G$ be a compact connected Lie group with $n(G)=:n >n_0$. Let $X$ be a $G$-homogeneous topological space, and let $\mu_X$ denotes its unique $G$-invariant (Haar) probability measure. Suppose that $\mathcal{A}\subseteq G$ and $\B\subseteq X$ are both measurable sets of Haar probability measures $\ge e^{-cn^{1/2}}$. Let $\nu$ be the probability measure on $X$ which is given by the distribution of $a(b)$, for a uniform random $a\sim \A$ and an (independent) uniform random $b\sim \B$. Then the total variation distance between $\mu$ and 
    $\nu$ is less than $\epsilon$.
\end{thm}

\subsubsection*{$L^q$-norms of low degree polynomials}
We obtain the following upper bounds on the $q$-norms of low degree polynomials (we state the result for $\SO(n)$, for simplicity). 

\begin{thm}\label{thm:Super Bonami in O_n}
    There exist absolute constants $c,C>0$ such that the following holds. Let $q>2$ and let $f\in L^2(\SO(n))$ be a polynomial of degree $d$ in the matrix entries of $X \in \SO(n)$. If $d\le cn$, then
     \[\|f\|_{L^q(\mu)} \le q^{Cd}\|f\|_{L^2(\mu)}.\]
     If moreover, $d\le c\sqrt{n}$, then
    \[\|f\|_{L^q(\mu)} \le (C\sqrt{q})^d\|f\|_{L^2(\mu)}.\]
    \end{thm}

    \subsubsection*{Separation of quantum and classical communication complexity}

Starting with their introduction to computer science in the seminal paper of Kahn Kalai and Linial \cite{kahn1988proceedings}, hypercontractive inequalities have found a huge number of applications in various branches of computer science and related fields (see e.g. \cite{gavinsky2007exponential, mossel2012quantitative, dinur2005hardness, raghavendra2008optimal}, to name but a few). While these applications have generally required hypercontractive inequalities for functions on discrete sets, some applications require continuous domains. For example, in the paper of Klartag and Regev \cite{regev-klartag}, a hypercontractive inequality for functions on the $n$-sphere is used to obtain a lower bound on the number of (classical) communication bits required for two parties to jointly compute a certain function. While with quantum communication, the value of that function can be computed by one party transmitting  only $O(\log n)$ quantum-bits to the other, it was shown in \cite{regev-klartag} that classical communication requires at least $\Omega(n^{1/3})$ bits of communication to be sent, even if parties are allowed to send bits both ways, thereby showing an exponential separation between the power of classical communication and that of one-way quantum communication.

In the field of quantum communication, establishing a significant separation between classical communication and practically implemented modes of quantum communication remains a major open problem. In a forthcoming work, Arunachalam, Girish, and Lifshitz~\cite{arunachalam2023one} apply our hypercontractive inequality for $\SU(n)$ to make a substantial step towards this goal. They used it to obtain an exponential separation between classical communication and a more realistic version of quantum communication, namely the one-clean-qubit model.

\remove{

One of the goals underlying our current work is to find ways  In this paper our goal is to understand the unbounded rank in the compact Lie case in the hope of finding methods that will later be applicable in the finite case as well.

The term quasirandomness was coined by Gowers due to the following graph theoretic terminology. The \emph{normalized adjacence matrix} of a $d$-regular graph $(V,E)$ is the matrix  $A\in \mathbb{R}^{V\times V}$ whose $(i,j)$ entry is $1/d$ if $\{i,j\}\in E$ and 0 otherwise. The graph is then said to be $\epsilon$-quasirandom if all the nontrivial eigenvalues of $A$ are $\le \epsilon$ in absolute value, where the trivial eigenvalue $1$ corresponds to the eigenvector $1$.   
The strength of the notion of quasirandomness is that it implies that Cayley graphs $Cay(G,A)$ are quasirandom whenever $A$ is large. It is therefore a very powerful notion that relies only on density considerations and does not need to take into account any hypothesis on the structure of $A$.

One combinatorial way to think of $\epsilon$-quasirandom graphs is via the expander mixing lemma. Let us say that a $d$-regular graph $(V,E)$ is an $\epsilon$-mixer if for all sets $A,B\subseteq V$ whose size is $\ge \epsilon |V|$, the number of edges between $A$ and $B$ is within a factor of $0.9$ of $\frac{|A||B||d|}{|V|}$. The latter quantity is the expected number of edges between $A$ and $B$ in a random graph of the same density. The expander mixing lemma says that an $\epsilon$-quasirandom $d$-regular graph is a $2\epsilon$-mixer for sufficiently small values of $\epsilon>0$. This leads us to the following terminology of an $\epsilon$-mixer group. Roughly speaking we say that a group is a mixer if all the somewhat dense Caley graphs $\mathrm{Cay}(G,A)$ are $\epsilon$-mixers.

\begin{definition}
  We say that a group $G$ is an $\epsilon$-mixer if for each $A,B,C\subseteq G$ of size $\ge \epsilon |G|$, the number of triples $(g_1,g_2,g_1g_2)\in A\times B\times C$ is within a factor of 0.9 of $\frac{|A||B||C|}{|G|}$. Similarly, if $G$ is a compact Lie group with a probability Haar measure $\mu$. Then it is said to be an $\epsilon$-mixer if when choosing independently $x,y\sim \mu$ we obtain that $\Pr[x\in A,y\in B, xy\in C]$ is within a factor of $0.9$ of $\mu(A)\mu(B)\mu(C)$.   
\end{definition}

 Gowers showed that if $G$ is any  $D$-quasirandom, then it is a $O(D^{-1/3})$-mixer. He then showed that this his result is sharp up to a constant for $A_n$. In fact, the aforementioned results of Nikolov and Pyber \cite{np} show that every $\epsilon$-mixer finite group $G$  is $\epsilon^{-1/3}$-quasirandom. 

In this paper we find methods for showing mixing that go way beyond Gowers' bound. In fact we show that analytic methods work in harmony with the representation theory of compact Lie groups to give the following. 

\begin{thm}
There exists an absolute constant $c$, such that every connected $D$-quasirandom compact simple Lie group is a $\exp{(-D^c)}$-mixer.  
\end{thm}

For $\SO(n)$ and $\SU(n)$ we were able to get better results and replace the quantity $D^c$ inside the exponent by the larger $cD^{1/3}$.

}

\remove{
\subsubsection*{Product free sets -- Distributional version}

Theorem~\ref{thm:son-stronger} Our first set of result is concerned with the measure of the 
product-free set in compact groups. First, we show that in many compact groups, the measure of the largest product-free set is
exponentially small in $n^{c_0}$, where $c_0>0$ is an absolute constant:
\begin{thm}\label{thm:son}
Let $n \in \mathbb{N}$ and let $G$ be one of the following compact groups: the special orthogonal group $\SO(n)$, the special unitary group $\SU(n)$, the compact symplectic group $\Sp(n)$. Let $\A \subset G_n$ be Haar-measurable and product-free, and let $\mu$ denote the Haar measure on $G$. Then $\mu(\mathcal{A}) \leq 2^{-\Omega(n^{c_0})}$, where $c_0>0$ is an absolute constant.
\end{thm}

Our proof of Theorem~\ref{thm:son} gives an explicit bound for $c_0$ which is not too bad, but is far from optimal. To 
get an improved bound, we show an alternative approach to 
the problem which works for the groups $\SO(n)$ and $\SU(n)$, and in these cases shows that one may take $c_0 = 1/3$.
\begin{thm}\label{thm:son_stronger}
Let $n \in \mathbb{N}$ and let $G$ be one of the following compact groups: the special orthogonal group $\SO(n)$, the special unitary group $\SU(n)$. 
Let $\A \subset G$ be Haar-measurable and product-free, and let $\mu$ denote the Haar measure on $G$. Then 
$\mu(\mathcal{A}) \leq 2^{-\Omega\left(n^{1/3}\right)}$.
\end{thm}
Theorem~\ref{thm:son_stronger} thus represents substantial improvement over the bound given by Gowers~\cite{gowers}, 
and makes progress along an open challenge presented therein. While we do believe that the true answer should be 
exponentially small in $n$ as opposed to exponentially small in $n^{1/3}$, as we explain below  $2^{-\Omega(n^{1/3})}$ seems like an inherent bottleneck to our approach. Thus, new ideas are likely to be needed in order to make further progress on nailing down the measure of the largest product-free set in $\SO(n)$ and $\SU(n)$.

\medskip

Our proofs of Theorems~\ref{thm:son} and~\ref{thm:son_stronger} actually yield a stronger 
conclusion which is often referred to as product mixing. 
For example, for $\SO(n)$ our argument gives that there is
$c>0$, such that for $\mathcal{A}\subseteq \SO(n)$ with measure at least $2^{-cn^{1/3}}$, if one samples $A_1,A_2\sim \mathcal{A}$ according to the Haar measure, then the distribution of $A_1A_2$ is close to uniform over $\SO(n)$. More precisely, for all $\eps>0$ there is $c>0$ such that if $\mathcal{B}\subseteq \SO(n)$ is measurable and 
$\mu(\mathcal{A}),\mu(\mathcal{B})\geq 2^{-c n^{1/3}}$, then
\[
\card{
\Prob{A_1,A_2\sim\mathcal{A}}{A_1A_2\in \mathcal{B}}
-\mu(\mathcal{B})}
\leq \eps \mu(\mathcal{B}).
\]

\subsubsection*{Growth in compact Lie groups}
Secondly, we address the problem of determining the diameter of sets, and show that sets with 
density larger than exponential in $n$ have constant diameter:
\begin{thm}\label{thm:growth}
There exists an absolute constant $C>0$ such that the following holds. For each $n \in \mathbb{N}$, let $G_n$ be either the special orthgonal group $\SO(n)$ or the special unitary group $\SU(n)$.
If $\A \subset G$ is a Haar-measurable set, 
$\delta > \frac{C}{\log n}$ and
$\mu(\A)\geq 2^{-cn^{1-\delta}}$, then $\A^k = G_n$ for some $k \leq 1/\delta^{C}$.
\end{thm}

\subsubsection*{Hypercontractivity over compact groups}
The proofs of Theorems~\ref{thm:son},~\ref{thm:son_stronger},~\ref{thm:growth} follow from the Fourier analytic method combined with a novel hypercontractive inequality for the relevant domain (see Section~\ref{sec:techniques} for more details). Below, we give a precise statement of these hypercontractive results, as well as their most relevant corollaries for our purposes.

Let $G$ be one of the groups $\O(n), \U(n), \SO(n), \SU(n)$ -- without loss of generality say $G=\O(n)$, and consider $L^2(G; \mu)$ where $\mu$ is the 
Haar measure on $G$. We may consider polynomials over $G$, namely functions $f\colon G\to\mathbb{R}$ such that $f(X)$ is a polynomial in the entries of $X$, and thus we may define notions such as the degree of a polynomial over $G$. We denote by $V_d$ the linear space spanned by all degree $d$ polynomials over $G$. Low degree polynomials, that is 
functions from $V_d$ for $d$ thought of as small, are central
in the study of Boolean functions and their applications in areas such as theoretical computer science, extremal combinatorics and more broadly in discrete mathematics. 

For instance, one of the most classical domains in applications is the Boolean hypercube $G = \{0,1\}^n$, 
for which one has the classical hypercontractive inequality~\cite{Beckner,Bonami,gross} asserting that 
the noise operator $\mathrm{T}_{\rho}$ is a contraction 
from $L_2$ to $L_q$ provided that the noise parameter 
$\rho$ satisfies that $0\leq \rho\leq \sqrt{\frac{p-1}{q-1}}$. We will refrain from precisely defining the noise operator $\mathrm{T}_{\rho}$ herein, 
but remark that one of its crucial properties that makes 
this result so useful is that it has each $V_d$ as 
an invariant space, and all eigenvalues of it in $V_d$ 
are at least $\rho^{d}$. Thus, hypercontractive inequality 
immediately implies that if $f\colon \{0,1\}^n\to\mathbb{R}$
is a degree $d$ function, then $\norm{f}_q\leq \rho^{-d}\norm{f}_2$, which gives us a lot of information
about the typical value of $\card{f(x)}$ (roughly speaking -- it ``rarely'' exceeds $\rho^{-d} \norm{f}_2$). 

Our application requires a hypercontractive inequality of this form, and as the techniques that go into the proof 
of the hypercontractive inequality over the hypercube fail, 
we must use a different method. We show two methods, each one of them has advantages and disadvantages:
\begin{enumerate}
    \item Our first approach is based on the Bakry-Emery criterion. Namely we design a noise operator based on defined over $G$ by exponentiating the Laplace Beltrami operator corresponding to it. 
    The Bakry-Emery criterion allows us to deduce a hypercontractive inequality from a lower bound on the Ricci curvature of our compact lie group. We then compute the eigenvalues of the Laplacian by making use of results from the representation theory of Lie groups.
    
    The main advantages of this approach is that it is rather generic and works for a more general class of compact groups. It also gives a meaningful result for 
    all $d$'s. However, due to its generality the method typically does not produce sharp results.
    
    \item Our second approach is to deduce a hypercontractive inequality on $G$ from a  hypercontractive inequality on Gaussian space via a coupling method. Here instead of following the traditional semi-group method for hypercontractivity we come up with a different operator, which substitutes the role of the traditional noise operator. Our approach is to construct the operator as a composition of three operators. The first takes is an  operator $\Tcol\colon L^2(G)\to L^2(\mathbb{R}^{n\times n},\gamma)$. The second is the Ornstein--Uhlenbeck operator on Gaussian space, and the third is the adjoint $\Tcol^*,$ which gets you back to $L^2(G)$. In our second approach our hypercontractive inequality follows automatically from the corresponding hypercontractive inequality on Gaussian space and the main challenge is to compute its eigenvalues. In fact after a random conjugation of our operator we may assume that it commutes with the action of $G$ from both sides and deduce that it is diagonalized by the isotypical components of $L^2(G)$. The non-zero elements of an isotypical component are all polynomials of the same degree, $d$ say. We then manage to give a good  approximation for the eigenvalues of our operator when $d\le n^{-1/3}$.

    The main advantage of this approach is that, when it works, it produces hypercontractive inequalities which are almost as good as the ones on Gaussian space and thus are sharp. 
    \end{enumerate}
Next, we formally state the results we establish using the above-mentioned approaches.

\paragraph{Hypercontracitivity from Ricci curvature.} 
As explianed earlier, using our Ricci curvature we design a hypercontractive operator which allows us to prove the following moment bounds on low-degree functions:
    \begin{thm}\label{thm:Super Bonami in O_n curv}
    There exist absolute constants $C>0$, such that the following holds. Let $G$ be one of the groups $\O(n), \SU(n), \SO(n), \Sp(n)$, let $\mu$ be the Haar measure on $G$ 
    and let $d\in\mathbb{N}$.
    Then for every $f\in V_d(G)$ and $q>2$ we have that 
    $$\|f\|_{L^q(\mu)} \le q^{C\cdot\left(d+\frac{d^2}{n}\right)}\|f\|_{L^2(\mu)}.$$
    \end{thm}    
    
\paragraph{Hypercontracitivity from coupling.}  
Let $G$ be one of the groups $\O(n)$ and $\U(n)$, 
say $G = \O(n)$ for concreteness, and let $\mu$ be the Haar measure on $G$.
Using the fact that locally, a matrix sampled as $X\sim \mu$ looks
like a bunch of independent Gaussian entries, we design a coupling operator between $(X,\mu)$ and $(\mathbb{R}^{n\times n}, \nu)$ where $\nu$ is the distribution of $n^2$ independent Gaussian random varaibles 
with standard deviations $1/\sqrt{n}$. We use this coupling 
to construct a noise operator over $G$ whose hypercontractivity can be deduced from hypercontractivity over Gaussian space. Fortunately, we can analyze the eigenvalues of this operator, and this yields the following moment bounds:
    \begin{thm}\label{thm:Super Bonami in O_n}
    There exist absolute constants $\delta>0$, $C>0$, such that the following holds. Let $d\le \delta n^{1/2}$, let $f\in V_d$, and let $q>2$. Then $$\|f\|_{L^q(\mu)} \le (C\sqrt{q})^d\|f\|_{L^2(\mu)}.$$
    \end{thm}
The key quantitative difference between Theorems~\ref{thm:Super Bonami in O_n curv} and~\ref{thm:Super Bonami in O_n} is that the former result 
works for all $d$ but produces a worse moment bound (as $C$ is unspecified and turns out to be not too small), whereas the later result works only up to $d=\Theta(n^{1/2})$ but 
then gives a near optimal moment bound. As we shall see, 
these differences lead to the differences between Theorem~\ref{thm:son} (for which we use Theorem~\ref{thm:Super Bonami in O_n curv}) and Theorem~\ref{thm:son_stronger} (for which we use Theorem~\ref{thm:Super Bonami in O_n}). As for the diameter problem, it turns out that necessary quantitative moment bound is much milder, and it is more important that the 
result works for much higher degrees. Thus, the proof of Theorem~\ref{thm:growth} relies on Theorem~\ref{thm:Super Bonami in O_n curv}.

    
\subsubsection*{Level $d$ inequalities over compact groups}  
A well known corollary of a hypercontractive inequality is 
the so-called level $d$ inequality. Recalling the linear spaces $V_d$, given a function $f\colon G\to\mathbb{R}$ 
one may consider its projection onto $V_{d}$, which denote by $f^{\leq d}$. Thus, one may define the weight of $f$ on 
degree at most $d$ as $W^{\le d}[f] = \inner{f}{f^{\leq d}}=\norm{f^{\leq d}}_2^2$. By orthogonality, it is always clear that $W^{\leq d}[f]\leq \norm{f}_2^2$, however it turns out that for Boolean valued functions $f$, this bound can be considerably improved.

More precisely, suppose that $f\colon G\to\{0,1\}$ is a measurable function, and $\alpha = \E[f]$. In the notations above, we get that $W^{\leq d}[f]\leq \alpha$, and as $f^{\leq 0}$ is the constant function closest to $f$ we have that $W^{\leq 0}[f]=  \alpha^2$. The following result 
says that for $d$ which is not too large, the weight of $f$ at degree at most $d$ is actually closer to $\alpha^2$ than
to the trivial bound of $\alpha$:
\begin{thm}\label{thm: level-d in O_n curv}
     There exists an absolute constant $C>0$, such that for any
     $f\colon \O(n) \to \{0, 1\}$ with expectation $\alpha$, and $d\le \frac{\log (1/\alpha)}{100}$, it holds that
     \[
     \|f^{\le d}\|_{L^2(\mu)}^2 \le
        \alpha^2\left(\frac{\log(1/\alpha)}{d}\right)^{C\cdot \left(d+d^2/n\right)}.
     \]
     The same bound holds for any one of the groups $\SO(n), \SU(n), \Sp(n)$.
    \end{thm}
    \begin{proof}
    Let $q = \log(1/\alpha)/d$. Then By H\"{o}lder and Theorem \ref{thm:Super Bonami in O_n curv} for some absolute constant $C$ we have
    \[
    \|f^{\le d}\|_2^2 =\langle f^{\le d}, f \rangle \le \|f^{\le d}\|_{L^q(\mu)} \|f\|_{L^{\frac{1}{1-1/q}}(\mu)}\le q^{C \left(d+d^2/n\right)} \|f^{\le d}\|_{L^2(\mu)}
    \alpha^{\frac{q-1}{q}},
    \]
    and the result follows by rearranging.
    \end{proof}
    
    Similarly, Theorem~\ref{thm:Super Bonami in O_n} implies a level $d$ inequality which only works up to degrees $\Theta(n^{1/2})$, but produces a better quantitative bound:
    \begin{thm}\label{thm: level-d in O_n}
     There exists absolute constants $\delta>0$ and $C>0$, such that for any
     $f\colon \O(n) \to \{0, 1\}$ with expectation $\alpha$, and $d\le \min (\delta n^{1/2}, \log (1/\alpha)/100)$, it holds that
     \[
     \|f^{\le d}\|_{L^2(\mu)}^2 \le \left(\frac{C}{d}\right)^d\alpha^2\log^{d}(1/\alpha ).
     \]
     The same bound holds for the group $\U(n)$
    \end{thm}
    \begin{proof}
    Let $q = \log(1/\alpha)/d$. Then By H\"{o}lder and Theorem \ref{thm:Super Bonami in O_n} for some absolute constant $C'$ we have
    \[
    \|f^{\le d}\|_2^2 =\langle f^{\le d}, f \rangle \le \|f^{\le d}\|_{L^q(\mu)} \|f\|_{L^{\frac{1}{1-1/q}}(\mu)}\le 
    (C\sqrt{q})^{d} \|f^{\le d}\|_{L^2(\mu)}
    \alpha^{\frac{q-1}{q}}
    \]
    and the result follows by rearranging.
    \end{proof}
    
    Theorems~\ref{thm: level-d in O_n curv} and~\ref{thm: level-d in O_n}  play a crucial role in the proof of our results regarding the largest product-free sets in compact groups as well as our result regarding the diameter. In fact, modulo the level $d$ inequality our results amount to basic Fourier analytic/ representation theoretic arguments, and more specifically to basic bounds on the dimensions of the irreducible representations of the groups we are concerned with. 
    
    We remark that interestingly, level $d$ inequalities -- and more precisely the level $1$ inequality (which is also known as Chang's lemma) have played crucial role in similar results over the integers as well. An example 
    is in Roth-style theorems and subsequent improvements (such as the recent result~\cite{bloom2020breaking}), which utilized the level $1$ inequality to conclude 
    a structural result on the set of large Fourier coefficients of a given function thereby allowing for a more efficient density increment argument.
\subsection{Techniques}\label{sec:techniques}
In this section, we explain our approach for the above mentioned results. We begin by presenting the spectral approach for the product-free set problem as well as the diameter problem, and show how level $d$ inequalities enter the picture in these contexts. We then explain our two methods for hypercontractivity over compact groups. 
Throughout this discussion, we focus our attention on the group $G = \O(n)$ or $G=\SO(n)$ , however much of what we say below applies to more general compact groups. We also fix $\mu$ to be the Haar measure on $G$.
\subsubsection*{The Spectral Approach to product-free Sets}
Let $\mathcal{A}\subseteq G$ be a measurable set, and define $f = 1_{\mathcal{A}}$. Define the convolution operator 
\[
(f*g)(A) = \Expect{B\sim \mu}{f(B)g(B^{-1} A)}.
\]
Note that the fact that $\mathcal{A}$ is product-free can be written succintly as $f(A)\cdot (f * f)(A) = 0$ for all $A$. Indeed, otherwise there is $A\in\mathcal{A}$ such that $(f*f)(A)>0$, and so there is $B$ such that $B, B^{-1}A\in\mathcal{A}$ and then the triplet $(B,B^{-1}A, A)$ forms a product in $\mathcal{A}$. For the rest of the discussion, we focus on $G = \SO(n)$ for concreteness.

Thus, if $\mathcal{A}$ is product-free then we conclude that $\inner{f}{f*f}=0$. Let $f^{\leq d}$ be the projection of $f$ onto $V_{d}$, and let $f^{=d} = f^{\leq d} - f^{\leq d-1}$. 
Thus, $f=\sum\limits_{d=0}^{\infty} f^{=d}$, and one see that 
\[
\inner{f}{f*f}
=\sum\limits_{d=0}^{\infty}\inner{f^{=d}}{f^{=d}*f^{=d}}
=\alpha^3 + \sum\limits_{d=1}^{\infty}\inner{f^{=d}}{f^{=d}*f^{=d}},
\]
where $\alpha = \E[f]$. Thus, to reach a contradiction it suffices to show that $\card{\sum\limits_{d=1}^{\infty}\inner{f^{=d}}{f^{=d}*f^{=d}}}$ is significantly smaller than $\alpha^3$. Towards this end, it turns out that it suffices to bound only terms corresponding to $d\leq n/10$ (the other terms can be handled in cruder manner). To control terms corresponding to $d\leq n/10$, we use basic representation theory and a result due to 
Babai, Nikolov and Pyber~\cite{bnp}, which (morally speaking) allows us to say that 
\[
\card{\inner{f^{=d}}{f^{=d}*f^{=d}}}
\leq \frac{1}{\sqrt{\binom{n}{d}}}\norm{f^{=d}}_2^3.
\]
We remark that $\norm{f^{=d}}_2^3$ is a trivial bound that follows by a basic applications of Cauchy-Schwarz, hence the point in the above inequality is that one gains the factor 
$\frac{1}{\sqrt{\binom{n}{d}}}$, which comes from the fact that the irreducible representations of $\SO(n)$ corresponding to $V_d$ have dimension at least $\binom{n}{d}$. Thus, we may bound
\[
\card{\sum\limits_{d=1}^{n/10}\inner{f^{=d}}{f^{=d}*f^{=d}}}
\leq \sum\limits_{d=1}^{n/10}\frac{1}{\sqrt{\binom{n}{d}}}\norm{f^{=d}}_2^3,
\]
and the task now is to bound the weight of $f$ on level $d$. Ignoring the logarithmic factors in our level $d$ inequalities, we get that $\norm{f^{=d}}_2^2\leq \alpha^2$ 
and thus the above sum is clearly much smaller than $\alpha^3$, finishing the proof. The logarithmic factors of course interfere with this, and plugging in the result of Theorem~\ref{thm: level-d in O_n curv} we get that 
\[
\card{\sum\limits_{d=1}^{n/10}\inner{f^{=d}}{f^{=d}*f^{=d}}}
\leq 
\alpha^3
\sum\limits_{d=1}^{n/10}
\frac{(\log(1/\alpha)/d)^{C\cdot d}}{(n/d)^{d/2}}
\leq\alpha^3
\sum\limits_{d=1}^{n/10}
\left(\frac{\log(1/\alpha)^{2C}}{n}\right)^{d/2}.
\]
Thus, as long as $\alpha >  2^{-n^{1/2C}/100}$ we still get that the above sum is smaller than $\alpha^3$, thereby concluding the proof of Theorem~\ref{thm:son}. Also, it is
now clear that improving the level $d$ inequality would further improve the assumption we need on $\alpha$ to get a contradiction, and indeed plugging Theorem~\ref{thm: level-d in O_n} one gets the bound
\[
\alpha^3
\sum\limits_{d=1}^{n/10}
\left(\frac{C\log(1/\alpha)^{3}}{n}\right)^{d/2},
\]
which is good enough so long as $\alpha > 2^{-n^{1/3}/100C^{1/3}}$. 

We remark that $2^{-O(n^{1/3})}$ 
is the limit that the above approach can get, as the bottleneck here already appears for $d=1$. We believe that to get beyond this bottleneck, one would have to prove some stability result for level $d$ inequalities. For example, one would like to say that if the level $d$ inequality is tight for some $d$ --- say for $d=1$ --- then one may 
show that the set $\mathcal{A}$ must have some structure, which hopefully could be used for a density increment (such as the case in similar results over the integers). We do not know at the moment though how to make such approach work, and hope that the current paper inspires an investigation along these lines.
\subsubsection*{The Curvature Approach to Level-$d$ Inequalities}
\paragraph{Constructing the hypercontractive operator}
The Bakry-Emory criteria~\cite{bakry1985diffusions} 
(see also~\cite{bakry2006logarithmic} and an 
explanation in~\cite{regev-klartag}) implies that a Riemannian 
manifold whose Ricci curvature is uniformly lower bounded 
by $C_n$ satisfies the log-sobolev inequality with constant $C_n^{-1}$. Further, using the connection between log-sobolev inequalities and hypercontractive inequalities from~\cite{gross} one is able to construct an operator 
which is hypercontractive. 

In our cases of interest, namely the compact groups in the statement of Theorem~\ref{thm:son}, the Ricci curvature is always uniformly lower bounded by $\Omega(n)$, thereby giving us a log-sobolev inequality for the Laplace--Beltrami operator, which we then use to construct a hypercontractive $\mathrm{U}_{\delta}$ operator on each one of these groups.

\paragraph{Lower bounding the eigenvalues of $\text{U}_{\delta}$.}
By construction, the eigenvectors of $\text{U}_{\delta}$ are the same as the eigenvector of the Laplace--Beltrami operator, and we also have an intimate relation between 
the eigenvalues corresponding to them in the two operators. 
There are known formulas for the eigenvalues of the Laplace--Beltrami operator~\cite{berti-procesi}, and we use
these to prove a lower bound on these eigenvalues which 
are strong enough to deduce Theorem~\ref{thm: level-d in O_n curv}.

\subsubsection*{The Coupling Approach to Level-$d$ Inequalities}
In essence, the coupling approach seeks to leverage the intuition that sampling $X\sim \mu$ and looking at the $I\times J$ minor whenever $I$ and $J$ are not too large, 
the entries of $X_{I\times J}$ seem like independent Gaussian random variables with mean $0$ and standard deviation $1/\sqrt{n}$. Thus, we expect monomials of degree at most $d$ (which only look at variables from a minor of size $d$ by $d$) to behave similarly over Gaussian space and over $\O(n)$. Optimistically, one then hopes that  linear combinations of monomials of degree $d$ would also  behave similarly over Gaussian space and over $\O(n)$, and thus conclude hypercontractivity over $\O(n)$ from hypercontractivity over Gaussian space. Alas, it is not clear how to directly analyze general linear combinations of monomials of degree $d$, so we need to utilize the above intuition differently. We do so using the coupling approach as in~\cite{filmus2020hypercontractivity}.

\paragraph{Constructing the hypercontractive operator.}
Let $\gamma$ be the distribution of $n^2$ indepedent Gaussians with mean $0$ and standard deviation $1/\sqrt{n}$,
and consider the following coupling $(X,Y)$ between $(\O(n),\mu)$ and $(\mathbb{R}^n,\gamma)$: 
sample $Y \sim \gamma$, and apply the Gram-Schmidt process on the columns of $Y$ to get $X$. Using this coupling, one may define the operators 
$\Tcol\colon L^2(\O(n);\mu)\to L^2(\mathbb{R}^{n\times n};\gamma)$ and 
$\Tcol^{*}\colon L^2(\mathbb{R}^{n\times n};\gamma)\to L^2(\O(n);\mu)$ as
\[
\Tcol f(A) = \cExpect{(X,Y)}{X=A}{f(Y)},
\qquad
\Tcol^{*} f(B) = \cExpect{(X,Y)}{Y=B}{f(X)}.
\]
Letting $U_{\rho}$ be the Gaussan noise operator with correlation paraemter $\rho\in [0,1]$, we may thus consider the noise operator $U_{\rho}' = \Tcol U_{\rho}\Tcol^{*}$. It is then easy to prove that $U_{\rho}'$ is hypercontractive, in the sense that if $\rho\leq \sqrt{\frac{p-1}{q-1}}$ then 
$\norm{U_{\rho}' f}_q\leq \norm{f}_p$. The main difficulty is to bound the eigenvalues of $U_{\rho}'$; even more basically, we do not even have sufficient understanding of the invariant spaces of $U_{\rho}'$.

Ideally, we would like to use the representation theory of 
$\SO(n)$ to say that isotypical components of $\SO(n)$ are 
the invariant spaces of $U_{\rho}'$. However, to make such assertions we need the operator $U_{\rho}'$ to commute with the action of $\SO(n)$ from both sides, namely with the operators $L_U$ and $R_V$ for $U,V\in\SO(n)$ defined as
\[
L_U f(X) = f(UX),
\qquad
R_V f(X) = f(XV).
\]
It is true (and not difficult to prove) that the operator $U_{\rho}'$ commutes with the left action operators $L_U$. 
It does not commute with $R_V$ though, and hence we modify it so as to gain commutation with right actions as well. 
Namely, we consider the averaged operator
\[
\mathrm{T}_{\rho} = \Expect{V\sim \mu}{R_V^{*} U_{\rho}' R_V}.
\]
In words, we first make a basis change according to a 
random $V\sim \mu$, apply the noise operator, and 
then return to the old basis by applying $V^{t}$. 
Once again, it is easy to see that $\mathrm{T}_{\rho}$ is hypercontractive, so we are back to the problem of studying the eigenvalues of an operator. 

\paragraph{Lower bounding the eigenvalues of $\mathrm{T}_{\rho}$.}
In addition to being hypercontractive, the operator $\mathrm{T}_{\rho}$ commutes with the action of $\SO(n)$ from both sides, hence giving us a lot of information about its invariant spaces. More precisely, we conclude that $\mathrm{T}_{\rho}$ preserves the isotypical components of $\SO(n)$. Since each $V_d$ is the sum of isotypical components, we are able to conclude that each $V_d$ is an invariant space of $\mathrm{T}_{\rho}$ and furthermore that each eigenvalue of $\mathrm{T}_{\rho}$ on $V_d$ can be realized by an eigenfunction $f\in V_d$ that is a polynomial of the variables on the minor $[d]\times [d]$. 

The intuition now is that the operator $\mathrm{T}_{\rho}$ should have eigenvalues that are at least $(c\rho)^{d}$:
\begin{enumerate}
    \item For each $V\in\O(n)$, function $R_V f$ is 
    a degree $d$ function that only depends on the first $d$ columns.
    \item The operator $\Tcol$ is roughly the identify when it acts on such functions, and the operator $U_{\rho}$ 
    has eigenvalues at least $\rho^{-d}$ on eigenvectors which are degree $d$ functions.
\end{enumerate}
Proving these two items though is more challenging than it appears. More concretely, showing that the operators $\Tcol$ and $\Tcol^{*}$ are close to the identity is particularly delicate, especially if one wishes to prove it for $d$'s 
that are not too small. 

It turns out that one can considerably relax the requirement from $\Tcol$, and it is enough to show that for functions  $f$ as above, the function $\Tcol f$ has significant correlation with a degree $d$ function. However, this task is still too difficult, especially given the fact that when applied on $Y$, the Gram-Schmit process changes the last columns of $Y$ much more than it does the first few (say $n/2$) columns of $Y$.

To overcome this challenge we observe that there are types of monomials for which it is much easier to understand the action of $\Tcol$. A monomials $M(X)$ is called \emph{comfortable} if it is multi-linear, includes at most one variable from each row of $X$ and at most one variable from each column of $X$. One can show that comfortable monomials are eigenvectors of $\Tcol^{*}$ (this is best seen by taking an alternative view of our coupling procedure, which starts off with $X\sim \O(n)$ and products a coupled $Y\sim \gamma$ by multiplying by an appropriate upper-triangular matrix), which makes working with $\Tcol$ and $\Tcol^{*}$ 
much easier. Roughly speaking, in the Gram-Schmidt process one expects the column $j$ of $X$ to be $1-j/n$ correlated with the $j$th column of $Z$, and so it is natural to expect (and true) that the eigenvalues of $\Tcol^{*}$ corresponding to a comfortable monomial $M$ are at least $2^{-d}$ if $M$ includes only variables from the first $n/2$ columns of $X$.

Luckily, 
inspecting Weyl's construction of the irreducible 
representations of $\O(n)$ we are able to 
conclude that the isotypical components of $\O(n)$
of degree at most $n/2$ always contain comfortable 
functions. Thus, when studying the eigenvalues of 
$\T_{\rho}$ on $V_{d}$ for $d\leq n/2$ we are able 
to argue that each such eigenvalue is achieved 
by a comfortable polynomial $f\in V_d$ that depends 
only on the variables in the minor $[d]\times [d]$, 
and for such functions we are able to materialize 
the intuition that operators such as $\Trow$ act 
closely to the identity. 

Inspecting $\inner{\T_{\rho} f}{f}$ we have it is equal to 
$\Expect{V}{\norm{U_{\sqrt{\rho}} \Tcol R_V f}_2^2}
\geq \rho^d\Expect{V}{\norm{(\Tcol R_V f)^{\leq d}}_2^2}$. While $f$ is comfortable, the operator $R_V$ does not preserve comfortability and furthermore $R_V f$ doesn't depend only on the minor 
$[d]\times [d]$ (but rather on the minor $[d]\times [n/2]$ for our chosen distribution over $V$), and so 
we do not really expect $\Tcol$ to act like the identity. Still, we expect effect of the Gram-Schmidt process on the first $n/2$ to yield vectors that have
significant correlations with the original vectors, so we expect to be able to argue that typically
$\norm{(\Tcol R_V f)^{\leq d}}_2^2\geq C^{-d} \norm{(R_V f)^{\leq d}}_2^2 = C^{-d}\norm{f}_2^2$. 
While true, this is fairly tricky to show, and this 
is where the function $f$ being comfortable comes 
in handy. 

We defer further discussion to Sections~\ref{sec:coupling_introduce} and~\ref{sec:ingredients}.

}
\remove{
\section{Drastically cut-down version of Introduction for copying and pasting purposes}

A subset $\A$ of a group $G$ is said to be {\em product-free} if $gh \notin \A$ for all $g,h\in \A$. The study of product-free subsets of groups has attracted significant attention in the past three decades. In
1985, Babai and S\'os \cite{bs} considered the problem of determining the largest possible size of a product-free set in a finite group $G$. They conjectured that exists an absolute 
positive constant $c_0 >0$ such that any finite group $G$ has a product-free set of size at 
least $c_0|G|$.\remove{In the Abelian case, this is quite easy to see, and had previously been observed by Erd\H{o}s, in an unpublished communication to Babai and S\'os. (In the cyclic case $(\mathbb{Z}_n,+)$, one can take a `middle-third' construction, viz., $\{x \in \mathbb{Z}_n:\ n/3 < x \leq 2n/3\}$, as a large product-free set, and one can reduce to the cyclic case by observing that any finite Abelian group has a nontrivial cyclic quotient, and that the preimage of a product-free set under a quotient map is also product-free and of the same measure.)} The exact answer in the Abelian case was given by Green and 
Ruzsa \cite{gr} in 2003: the largest product-free subset of a finite Abelian group $G$ has 
size $c |G|$, where the function $c = c(G) \in [2/7,1/2]$ was explicitly determined by 
Green and Ruzsa. The general Babai-S\'os conjecture was disproved in 2008 by Gowers 
\cite{gowers}, who showed that if $G$ is a finite group such that the minimal dimension of 
a nontrivial irreducible complex representation of $G$ is equal to $D$, then any product-free subset of $G$ has size at most $D^{-1/3}|G|$. It remains to observe that the quantity 
$D = D(G)$ is unbounded over finite non-Abelian groups $G$. For example, for the projective 
special linear group $\textup{PSL}_2(\mathbb{F}_q)$ (for $q$ an odd prime power), we have $D(\textup{PSL}_2(\mathbb{F}_q)) = (q-1)/2$, so the measure of a product-free subset of $\textup{PSL}_2(\mathbb{F}_q)$ is at most $O(q^{-1/3})$\remove{, which tends to zero as $q$ tends to infinity}. 
\remove{ [David: 
before, $SU_n$ was mentioned here in place of PSL, but this is problematic as $\SU(n)$ isn't 
a finite group and the Babai-S\'{o}s conjecture only refers to finite groups. I'll rewrite the 
next paragraph slightly, too, if that's OK.\gnote{Sure!}]}

Gowers observed that his argument also implies that if $G$ is a  compact group for which the minimal dimension of a nontrivial irreducible complex representation is equal to $D$, then the maximal Haar measure of a measurable, product-free set in $G$ is at most $D^{-1/3}$. For $\SU(n)$ we have $D(\SU(n)) = n$, implying an upper bound of $n^{-1/3}$ on the measure of a measurable product-free subset of $\SU(n)$. However, Gowers conjectured that for $\SU(n)$, the true answer is exponentially small in $n$. Indeed, as Gowers states, it seems difficult to come up with an example better than the following. Recall that the group $\SU(n)$ acts on the complex unit sphere $\{v \in \mathbb{C}^n: \|v\|_2=1\}$, and take $\cal A$ to be the set of all matrices $A\in \SU(n)$ such that the real part of $\langle Ae_1,e_1 \rangle$ is less than $-1/2$. As noted by Gowers, it follows from the triangle inequality that this set is product-free, and it is easy to check that $\cal A$ has measure $2^{-\Omega(n)}$.

In this work, we make progress towards proving Gowers' conjecture. Specifically, we improve Gowers' upper bound by a stretched exponential factor, viz., from $n^{-1/3}$ to $e^{-cn^{1/3}}$. 
\begin{theorem}
\label{thm:sun-stronger}
There exists an absolute constant $c>0$ such that the following holds. Let $n \in \mathbb{N}$ and let $\A \subset \SU(n)$ be Haar-measurable and product-free. Then $\mu(\A) \leq \exp(-cn^{1/3})$.
\end{theorem}

The construction of $\cal A$ can be viewed as a special case within a general framework given by Kedlaya \cite{kedlaya1}, for constructing fairly large product-free sets in a group $G$ that acts on a set $X$. Fix a subset $B\subseteq X$ and an element $x\in X$. We define the corresponding \emph{Kedlaya set} $K_{x,B}$ by
\[
K_{x,B}:=\set{g \in G:\, gx\in B\ \text{and } g(B)\subseteq X \setminus B}.
\]
It easy to see that $K_{x,B}$ is product-free. \remove{Such constructions are known to be essentially optimal in some cases, as we will describe below.} We conjecture that the following holds for every compact group.

\begin{rconj}
    There exists an absolute constant $C>0$ such that the following holds. Let $G$ be a compact group, equipped with its Haar probability measure $\mu$. Then there exists a Kedlaya set $K$ in $G$, such that every measurable product-free set in $G$ has Haar measure at most $C\cdot \mu(K)$.
\end{rconj}
\subsection{Quasirandomness for groups, and mixing.}

Gowers' bound for product-free sets relies on a relationship between spectral gaps and dimensions of irreducible representations, a relationship which was first discovered by Sarnak and Xue \cite{sarnak1991bounds}. In fact, Gowers' proof uses a beautiful connection between the problem and a purely representation-theoretic notion that Gowers called {\em quasirandomness} (due to a rough equivalence with the graph-quasirandomness of certain Cayley graphs). For a group $G$ we denote by $D(G)$ the minimal dimension of a non-trivial complex irreducible representation of $G$. For $d \in \mathbb{N}$, we say that a group $G$ is {\em $d$-quasirandom} if $D(G) \geq d$.\footnote{To avoid confusion with the quasirandomness parameter for graphs, it might have been less ambiguous to call this notion `$d$-group-quasirandomness', but as the latter is rather cumbersome we have opted for the above shorter formulation; we hope that this will not cause the reader confusion, in the sequel.} Denoting by $\alpha(G)$ the largest possible density $\frac{|A|}{|G|}$ of a product-free set $A\subseteq G$ (if $G$ is a finite group), Gowers showed that for any finite group $G$, $\alpha(G) \le D(G)^{-1/3}$. 
Since $D(G)$ can be arbitrarily large (as is the case for the alternating groups, which have $D(A_n) = n-1$ for all $n \geq 7$, and the groups $\textup{PSL}_2(\mathbb{F}_q)$ as mentioned above, and for many other natural infinite families of finite groups), this disproved the conjecture of Babai and S\'os. 

For finite groups, the quasirandomness parameter gives an almost complete description of the maximal size of a product free set. Pyber (see \cite{gowers}) used the Classification of Finite Simple Groups to obtain a Kedlaya-type construction, showing that  $\alpha(G)\ge D(G)^{-C}$ where $C>0$ is an absolute constant. Nikolov and Pyber \cite{np} later improved this to  $\alpha(G)\ge \frac{1}{CD(G)}$. This established a remarkable fact, namely that the purely representation theoretic quasirandomness parameter $D(G)$ is polynomially related to the the combinatorial quantity $\alpha(G)$. 
\begin{equation}\label{eq:Nikolov pyber Gowers}
(CD(G))^{-1}\le \alpha(G)\le D(G)^{-1/3}    
\end{equation}

For compact connected Lie groups we obtain the 
following general variant of Theorem~\ref{thm:sun-stronger}, which gives an upper bound on the size of a product-free set in the group.\remove{It turns out that our methods are not specific just to $\SU(n)$, indeed, similar arguments work for all simply connected compact Lie groups, and this yields the following.}

\begin{theorem}
\label{thm:son-stronger}
There exists an absolute constant $c>0$ such that the following holds. Let $G$ be a compact connected Lie group, and let $\tilde{G}$ be its universal cover. Let $\A \subset G$ be Haar-measurable and product-free. Let $\mu$ denote the Haar probability measure on $G$. Then $\mu(\A) \leq \exp(-cD(\tilde{G})^{1/3})$.
\end{theorem} 

An elegant argument of Gowers \cite[Theorem 4.6]{gowers} for finite groups, which generalises easily to the case of compact groups, shows that if $G$ is a compact group then it has a measurable product-free subset of measure at least $\exp(-\Omega(D(G))$. In Section \ref{sec:min-rank} we show that $D(G) = O({D(\tilde{G})^2})$. These two facts combine with Theorem \ref{thm:son-stronger} to give the following analogue of (\ref{eq:Nikolov pyber Gowers}) for compact connected Lie groups.

\begin{corollary}
\label{cor:general-stronger}
There exists an absolute constant $c>0$ such that the following holds. For every compact connected Lie group $G$, \[c D(G)^{1/6} \le \log(1/\alpha(G)) \le \frac{1}{c} D(G).\]
\end{corollary}

\noindent (We remark that our logs will always be taken with respect to the natural basis.) Corollary \ref{cor:general-stronger} says that, as with finite groups, the maximal measure of a measurable product-free set in a compact connected Lie group is controlled by the quasirandomness parameter, but this time the control moves to the exponent.

\remove{Quasirandomness for groups is a crucial ingredient in the `Bourgain--Gamburd expansion machine', which is a three-step method for obtaining spectral gaps for Cayley graphs (see e.g.\ Tao \cite{tao2015expansion}, for an exposition). Briefly, this `machine' proceeds as follows: one first shows that the graph has high girth, then one shows that there are no `approximate subgroups' in which a random walk could be entrapped, and then quasirandomness is used together with with the trace method to finally obtain a spectral gap. Group Quasirandomness has many other applications, such as in bounding the diameters of Cayley graphs (see e.g.\ the survery of Helfgott~\cite{helfgott2015growth}).

The term `quasirandomness' was used by Gowers, due to the following connection with the notion of quasirandomness for graphs.\remove{(There are, of course, now notions of quasirandomness for a huge variety of combinatorial and algebraic structures; roughly speaking, these say the structure behaves in a random-like way, in an appropriate sense.)} We now need some more terminology. The \emph{normalized adjacency matrix} $A_H\in \mathbb{R}^{V\times V}$ of a $d$-regular graph $H=(V,E)$ has $(i,j)$-th entry equal to $1/d$ if $\{i,j\}\in E$, and equal to zero otherwise. The graph $H$ is said to be {\em $\epsilon$-quasirandom} if all the nontrivial eigenvalues of $A_H$ are at most $\epsilon$ in absolute value (here, `nontrivial' means having an eigenvector orthogonal to the constant functions). 

One of the striking consequences of $d$-quasirandomness for a finite group $G$, is that it implies that Cayley graphs of the form $Cay(G,S)$ are $(1/\textup{poly}(d))$-quasirandom, whenever $S$ is a dense subset of $G$. The fact that this only relies on density considerations and does not require any assumption on the structure of $S$, makes the notion of quasirandomness for groups rather powerful.

More generally, applications of quasirandomness for a group $G$ can often be rephrased as follows. Suppose that $G$ is $d$-quasirandom, and that we have a linear operator $T:L^2(G) \to L^2(G)$ whose nontrivial eigenvalues we want to bound (in absolute value) from above; suppose further that $T$ commutes with either the left or the right action of $G$ on $L^2(G)$. \remove{(In Gowers' proof, slightly rephrased, the operator $T$ could be viewed as $B^*B$, where $B$ is the bipartite adjacency matrix of the bipartite Cayley graph with vertex-classes consisting of two disjoint copies of $G$, and where the edges are all pairs of the form $(g,sg)$ for $g \in G$ and $s \in S$, $S$ being a product-free set in $G$.)} Then by the commuting property, each eigenspace of $T$ is a nontrivial representation of $G$, and therefore has dimension at least $d$; it follows that each nontrivial eigenvalue of $G$ has multiplicity at least $d$. But the sum of the squares of the eigenvalues of $T$ is equal to $\textup{Trace}(T^2)$, and this yields the bound $d|\lambda|^2 \leq \textup{Trace}(T^2)$ for all nontrivial eigenvalues $\lambda$ of $T$.\remove{ This is often called the Sarnak-Xue trick, as it was first employed in \cite{sarnak1991bounds}}

Bourgain and Gamburd used their `expansion machine' (alluded to above) to show that taking two uniformly random elements $a,b \in \text{SL}_2(\mathbb{F}_p)$ is sufficient for the Cayley graph $\mathrm{Cay}\left(\text{SL}_2(\mathbb{F}_p, \{ a,b,a^{-1},b^{-1}\}\right)$ to be an expander with high probability, for $p$ tending to infinity. This was generalized by Breuillard, Green, Guralnick, and Tao~\cite{breuillard2015expansion} to all finite simple groups of Lie type of bounded rank. It is a major open problem to obtain a similar result in the unbounded rank case. For instance, perhaps the simplest case, of special linear groups over $\mathbb{F}_2$ remains wide open. 

One of the properties that breaks down when one attempts to use the Bourgain--Gamburd expansion machine in the case of unbounded rank, is the dependence of the quasirandomness parameter on the cardinality of the group. Specifically, in order for the Bourgain--Gamburd expansion machine to work effectively for a group $G$, the quasirandomness parameter $D(G)$ needs to be polynomial in the cardinality of $G$. In the unbounded rank case, this no longer holds. For example, $D(SL_n(\mathbb{F}_p))\le p^n$\remove{ (consider the representation of dimension $p^n$ induced by the natural action of $SL_n(\mathbb{F}_p)$ on $\mathbb{F}_p^n$)}. The situation is even worse for the alternating group $A_n$, as $D(A_n)=n-1$ for $n \geq 7$, and $n-1$ is less than logarithmic in the cardinality of the group.  
}
\subsection{Ideas and techniques}

To overcome the quasirandomness barrier in the unbounded-rank case, we need to find methods for `dealing with' the low-dimensional irreducible representations (more precisely, for dealing with the corresponding parts of the Fourier transform). In this paper, we develop some new techniques for this in the case of compact connected Lie groups. These techniques turn out also to be useful for finite groups; for example, in \cite{ellis2023global}, analogues of some of our methods are developed for the alternating group $A_n$ (where the idea of mixing is replaced by a refined notion, referred to therein as a `mixing property for global sets').

Below we give indications of the new techniques that are used to obtain our improved bounds, and the various areas of mathematics from which they originate.

\remove{-- level d inequalities}

\subsubsection*{Level $d$ inequalities and hypercontractivity}
One of our key ideas is motivated by the (now well-developed) theory of the analysis of Boolean functions. A function \[f\colon\{-1,1\}^n\to \mathbb{R}\] has a {\em Fourier expansion} $f=\sum_{S \subseteq [n]} \hat{f}(S)\chi_S$, where $\chi_S:\{-1,1\}^n \to \{-1,1\}$ is defined by $\chi_S(x):=\prod_{i\in S}x_i$ for each $x \in \{-1,1\}^n$ and $S \subseteq [n]$. The functions $\chi_S$, known as the {\em Fourier-Walsh functions} or {\em characters}, are orthonormal (with respect to the natural inner product on $\mathbb{R}[\{-1,1\}^n]$ induced by the uniform measure). The Fourier expansion gives rise to a coarser orthogonal decomposition, $f=\sum_{d=0}^n f^{=d},$ where \[f^{=d}:=\sum_{|S|=d}\hat{f}(S)\chi_S.\] This is known as the {\em degree decomposition} (as each function $f^{=d}$ is a homogeneous polynomial of total degree $d$ in the $x_i$'s). 

The \emph{level $d$ inequality} for the Boolean cube (essentially due to Kahn--Kalai--Linial \cite{kahn1988proceedings} and Benjamini--Kalai--Schramm \cite{benjamini1999noise}) states that there exists an absolute constant $C>0$, such that for a set $A\subseteq \{-1,1\}^n$ of density $\frac{|A|}{2^n}=\alpha$, if $d\le\log(1/\alpha)$ then the characteristic function $f=1_A$ satisfies $\|f^{=d}\|_2^2\le \alpha^{2}\left(\frac{C\log(1/\alpha)}{d}\right)^d$.
Roughly speaking, the level $d$ inequality says that indicators of small sets are very much uncorrelated with low degree polynomials. 
One of our key ideas in this paper is to generalize the level $d$ inequality from the Boolean cube to the setting of compact connected Lie groups. 

The main tool in the proof of the Boolean level $d$ inequality is  the Bonami--Gross--Beckner hypercontactivity theorem.
It states that the \emph{noise operator} $\mathrm{T}_{\rho} f: = \sum_{d=0}^n \rho^{d}f^{=d}$ is a contraction as an operator from $L_q$ to $L_p$, for all $q>p\ge 1$ provided $0 \leq \rho \le \sqrt{\frac{q-1}{p-1}}$. This immediately implies that $\|f^{=d}\|_q\le \rho^{-d}\|f^{=d}\|_p$ for any function $f$. Roughly speaking, this last inequality says that $L_p$-norm of a low-degree function does not change too drastically with $p$. This is in stark contrast with the behaviour of indicator functions of small sets, $f=1_A$. These satisfy $\|f\|_p=\alpha^{1/p}$, which does change rapidly with $p$. This difference in behaviours can be used to prove the level $d$-inequality, stating that indicators of small sets are essentially orthogonal to the low degree functions. 
\remove{
Their theorem concerns the noise operator that can be defined Fourier analytically via the formula $\mathrm{T}_{\rho} f = \sum_{d=0}^n \rho^{d}f^{=d}$. Bonami, Gross, and Beckner showed that for each $q>p$, each $\rho \le \frac{\sqrt{q-1}}{\sqrt{p-1}}$, and each $f\colon\{-1,1\}^n \to \mathbb{R}$ we have $\|\mathrm{T}_{\rho}f\|_q\le \|f\|_p.$ 

The hypercontractive inequality immediatly implies that  $\|f^{=d}\|_q\le \rho^{-d}\|f\|_p.$ Roughly speaking this inequality says that $L_p$-norm of low degree functions does not change drastically with $p$. This behaviour is in stark contrast with the behaviour of indicators of small sets $f=1_A$. These satisfy $\|f\|_p=\alpha^{1/p}$, which changes rapidly with $p$. This difference in behaviours can be used to prove the level $d$-inequality. 
}

The same proof-concept works hand in hand with the representation theory of compact simple Lie groups. For simplicity, let us restrict our attention (at first) to the group $G=\SO(n)$. For each $d \in \mathbb{N} \cup \{0\}$, we let $V_{\le d}\subseteq L^2(G)$ denote the subspace of $L^2(G)$ spanned by the polynomials of degree at most $d$ in the matrix entries of $X \in G=\SO(n)$; so, for example, $X_{11}X_{22} \in V_{\leq 2}$. We also let $V_{=d}: = V_{\leq d} \cap (V_{\leq d-1})^{\perp}$, for each $d \in \mathbb{N}$. Given $f\in L^2(G)$, we let $f^{\le d}$ denote the orthogonal projection of $f$ onto $V_{\le d}$, and we let $f^{=d}$ denote the orthogonal projection of $f$ onto $V_{=d}$, so that $f^{=d}=f^{\le d} - f^{\le d-1}$. The subspaces $V_{\le d}$ and $V_{=d}$ are two-sided ideals of $L^2(G)$ (i.e., they are closed under both left and right actions of $G$ on $L^2(G)$). Now, if $J$ is a two-sided ideal of $L^2(G)$ and $T:L^2(G)\to L^2(G)$ is a linear operator that commutes with either the left or the right action of $G$ (as will be the case with all the operators we will work with), it follows from the classical representation theory of compact groups (viz., the Peter-Weyl theorem and Schur's lemma) that $T$ has $J$ as an invariant subspace. Hence, such an operator $T$ has each $V_{=d}$ as an invariant subspace, so each eigenspace of $T$ can be taken to be within one of the $V_{=d}$'s. It therefore makes sense to consider quasirandomness relative to the degree decomposition. For each $d \in \mathbb{N}$, we let $D_d$ be the smallest dimension of a subrepresentation of the $G$-representation $V_{=d}$. The obvious adaptation of the Sarnak-Xue trick, described above, then yields that for any eigenvalue $\lambda$ of $T$ with eigenspace within $V_{=d}$, we have $D_d |\lambda|^2 \leq \textup{Trace}(T^2)$. It turns out that $D_d$ grows very fast with $d$, yielding very strong upper bounds on the corresponding $|\lambda|$ for large $d$.

 On the other hand, an ideal level $d$ inequality would imply that if $A$ is an indicator of a small set, then most of its mass lies on the high degrees. This combines with the fast growth of $D_d$ (with $d$) to give a much more powerful form of quasirandomness, one that takes into account the fact that $f$ is $\{0,1\}$-valued, and gives much better bounds.

We remark that the above degree decomposition can be easily extended to all compact linear Lie groups $G \leq GL_n(\mathbb{C})$ by letting $V_{\le d}$ be the space of degree $\le d$ polynomials in the real and imaginary parts of the matrix entries of $X \in G$. In fact, this notion generalizes fairly easily to arbitrary compact simple Lie groups, even when they are not linear. As in the $\SO(n)$ case, we let $f^{\le d}$ denote the orthogonal projection of $f$ onto $V_{\le d}$.

We obtain the following level $d$ inequality.       

\begin{thm}\label{first-level-d}
There exists absolute constants $c,C>0$ such that the following holds. Let $G$ be a simple compact Lie group equipped with its Haar probability measure $\mu$. Suppose that $D(G)\ge C$. Let $A\subseteq G$ be a measurable set with $\alpha:=\mu(A)\ge e^{-cD(G)}$. Then for each $d \in \mathbb{N} \cup \{0\}$ with $d \le \log(1/\alpha)$, we have $\|f^{\le d}\|_2^2 \le \alpha^2 \left(\frac{2\log(1/\alpha)}{d}\right)^{Cd}$. 
\end{thm}

When $G$ is simply connected and $d\le c\sqrt{n}$ we are able to obtain an even stronger level $d$ inequality, which is similar to the one on the Boolean cube without the extra $C$ factor in the exponent. This leads to the following.

\begin{thm}\label{thm:second-level-d}
There exists absolute constants $C,c>0$ such that the following holds. Let $G$ be a compact connected Lie group, let $\tilde{G}$ denote its universal cover, and write $n=D(\tilde{G})$. Let $A\subseteq G$ be a measurable set with $\alpha:=\mu(A)\ge \exp{(-cn^{1/2})}$. Then for each $d \in \mathbb{N} \cup \{0\}$ with $d \le \log(1/\alpha)$, we have $\|f^{\le d}\|_2^2 \le \alpha^2 \left(\frac{C\log(1/\alpha)}{d}\right)^{d}$. 
\end{thm}

   \remove{
   \begin{thm}\label{thm: level-d in O_n}
     There exists absolute constants $\delta>0$ and $C>0$, such that for any
     $f\colon \O\O(n) \to \{0, 1\}$ with expectation $\alpha$, and $d\le \min (\delta n^{1/2}, \log (1/\alpha)/100)$, it holds that
     \[
     \|f^{\le d}\|_{L^2(\mu)}^2 \le \left(\frac{C}{d}\right)^d\alpha^2\log^{d}(1/\alpha ).
     \]
     The same bound holds for the group $\U(n)$
    \end{thm}
    }
    
  It is this second level $d$-inequality that is responsible for the $1/3$ in the exponent of Theorem \ref{thm:son-stronger}. Unfortunately, one would not be able to improve our $1/3$ in the exponent to the (conjectural) right one, merely by strengthening this level $d$-inequality. Indeed, our second level $d$ inequality can be easily seen to be sharp up to the value of the absolute constant $C$, by considering sets of the form $\{A \in \SO(n):\ \langle Ae_1,e_1 \rangle > 1-t\}$ for appropriate values of $t$, when $G=\SO(n)$, for example.
  
\medskip 
Both of our level $d$ inequalities are inspired by the same ideas from the Boolean setting, together with an extra representation theoretic ideas. 
Namely, in order to show a level $d$ inequality, we upper-bound $q$-norms of low degree polynomials in terms of their 2-norms, and then use H\"older's inequality. In the Boolean cube, such upper bounds follow from two facts. The first is that the noise operator $\mathrm{T}_{\rho}$ is hypercontractive. The second is that all the eigenvalues of the restriction of $\mathrm{T}_{\rho}$ to $V_{\le d}$ are bounded from below by $\rho^d$. Our approach is to construct operators on $L^2(G)$ that satisfy the same two properties.

\subsubsection*{Differential geometry and Markov diffusion processes}

Our level $d$ inequalities stem from two techniques for obtaining hypercontractivity. Our first level $d$ inequality, Theorem~\ref{first-level-d}, is obtained via the following method. First, we observe that we assume without loss of generality that our group $G$ is simply connected. (This is because every compact simple Lie group is a quotient of its universal cover by a discrete subgroup of its centre.) We then make use of classical lower bounds on the Ricci curvature of our (simply connected) compact simple Lie group. The Bakry-Emery criterion \cite{bakry1985diffusions} translates such lower bounds on the Ricci curvature into log-Sobolev inequalities for the Laplace-Beltrami operator $L$. We then apply an inequality of Gross \cite{gross} to deduce a hypercontractive inequality for the operator $e^{-tL}$ from the log-Sobolev inequality. This inequality then allows us to prove our first level $d$ inequality. The operator $e^{-tL}$ is the one corresponding to Brownian motion on $G$. In order to deduce our level $d$-inequality we rely on a formula for the eigenvalues of the Laplacian in terms of a step vector corresponding to each eigenspace. This formula is well-known in the theory of Lie groups; it is given for example in Berti and Procesi \cite{berti-procesi}.  

\subsubsection*{Random walks on bipartite graphs
}
There are two mutually adjoint linear operators that correspond to a random walk on a $d$-regular bipartite graph $B\subseteq L\times R$. We denote those by $T\colon L^2(L)\to L^2(R)$ and $T^*\colon L^2(R)\to L^2(L)$ and they are given by taking expectations over a random neighbour; explicitly, $(Tf)(x) = \mathbb{E}_{y \sim x} f(y)$ for $f \in L^2(L)$ and $x \in R$, and $(T^*g)(y) = \mathbb{E}_{x \sim y}g(x)$ for $g \in L^2(R)$ and $y \in L$. It is easy to see that both operators are contractions with respect to any norm. It turns out that given such a bipartite graph and given a hypercontractive operator $S$ on $R$ one gets for free that the operator $T^*ST$ is hypercontractive. Filmus et al \cite{filmus2020hypercontractivity} used this idea to obtain a `non-Abelian' hypercontractive estimate for `global' functions on the symmetric group, from an `Abelian' hypercontractive result for `global' functions on $(\mathbb{Z}_n)^n$. Informally, a `global' function is one where one cannot increase the expectation very much by restricting the values of a small number of coordinates.

In this work, we extend this idea to the continuous domain, by replacing a bipartite graph by a coupling of two probability distributions. Specifically, we consider the probability space $(\mathbb{R}^{n \times n},\gamma)$ of $n$ by $n$ Gaussian matrices (i.e., $\mathbb{R}^{n \times n}$ with each entry being an independent standard Gaussian), and the Haar measure on $\O(n)$. For $(\mathbb{R}^{n \times n},\gamma)$, the Ornstein--Uhlenbeck operator $U_\rho$ is a hypercontractive analogue of the noise operator from the Boolean case. We couple $(\mathbb{R}^{n \times n},\gamma)$ with $\SO(n)$ by applying the Gram--Schmidt operation on the columns of a given Gaussian matrix (flipping the sign of the last column, if necessary, so as to ensure that the determinant is equal to one). We note that essentially the same coupling has been used before, e.g.\ by Jiang \cite{jiang}, but not in the context of hypercontractive inequalities for groups.

This coupling gives rise to operators $\Tcol$ and $\Tcol^*$, similar to the ones in the discrete case. The hypercontractive inequality for the Ornstein--Uhlenbeck operator $U_\rho$, together with our coupling implies a hypercontractive inequality for the operator $\mathrm{T}_{\rho}':=\Tcol^*U_\rho \Tcol$. We then use a symmetrization trick to obtain an operator $\mathrm{T}_\rho:=\mathbb{E}_{B\sim \mu}R_B^*\mathrm{T}'_{\rho}R_B$, where $R_B$ corresponds to right multiplication by $B$. The symmetrization does not change the hypercontractive properties, which are the same as for $U_\rho$ (see Theorem~\ref{thm:hypercontractivity in O_n}), but it has the advantage of allowing us to analyse more easily the eigenvalues of the operator. 

\subsubsection*{Representation theory}
The hypercontractive inequality for the operator $\mathrm{T}_\rho$ is useful due to the fact that it immediately gives bounds on the norms of eigenfunctions of $\mathrm{T}_{\rho}$. Because of the symmetrization, $\mathrm{T}_{\rho}$ commutes with the action of $G$ from both sides. Therefore, the Peter-Weyl theorem implies that every isotypical\footnote{If $\rho$ is an irreducible representation of $G$ and $V$ is a $G$-module, the {\em $\rho$-isotypical component} of $V$ is the sum of all subrepresentations of $V$ that are isomorphic to $\rho$.} 
component of $L^2(G)$ is contained in an eigenspace of $\mathrm{T}_{\rho}$. 

We eventually show that the eigenvalues of the restriction of $\mathrm{T}_\rho$ to $V_{d}$ are at least $(c\rho)^d$, for some absolute constant $c>0$. This implies that $\mathrm{T}_\rho$ is indeed a good analogue of the noise operator on the Boolean cube, and of the Ornstein--Uhlenbeck operator $U_\rho$, i.e.\ the noise operator on Gaussian space. We obtain this lower bound by showing that each isotypical component contains certain functions that are nice to deal with, functions we call the \emph{comfortable juntas}.

The latter are defined as follows. We define a {\em $d$-junta} to be a function in the matrix entries of $X \in \SO(n)$ that depends only upon the upper-left $d$ by $d$ minor of $X$. Such a $d$-junta is said to be \emph{comfortable} $d$-junta if it is contained in the linear span of the monomials $\{m_{\sigma}:\ \sigma \in S_d\}$, where $m_\sigma : \SO(n) \to \mathbb{R}$ is defined by $m_\sigma(X)=\prod_{i=1}^d X_{i,\sigma(i)}$ for each $X \in \SO(n)$, for each permutation $\sigma \in S_d$.

\subsubsection*{Random matrix theory}
One of the main discoveries of random matrix theory is that the entries of a random orthogonal matrix behave\remove{(in an appropriate sense)} like independent Gaussians of the same expectation and variance: at least, when one restricts minors of the matrix that are not too large.\remove{(In fact, this holds for many different models of random matrices, not just the orthogonal ensemble.)} The power of this discovery is of course that a Gaussian random matrix is {\em a priori} much easier to analyse than e.g.\ the random matrix given by the Haar measure on a group.

One way to test that two distributions are similar is to apply a continuous `test function' and take expectations. Usually, for applications in random matrix theory, the test function can be taken to be an arbitrary fixed polynomial. \remove{In fact, real random variables $X_n$ converge in distribution to a real random variable $X$ if and only if for each polynomial $P$, the sequence of expectations $\mathbb{E}[P(X_n)]$ converges to $\mathbb{E}[P(X)]$.} 

When computing the eigenvalues of our operator $\mathrm{T}_{\rho}$ we need to show a similarity in distribution between the upper $d\times d$-minor of $\SO(n)$ and the $d \times d$ minor of a random Gaussian matix. For us, however, it is not sufficient to look at a single polynomial of fixed degree. Instead, we need to show a similarity in the distribution with respect to our comfortable $d$-juntas where $d$ may be as large as $\sqrt{n}$, rather than an absolute constant. Hence, while the philosophy is similar to that of random matrix theory, we require new techniques enabling us to deal with the distributions of polynomials whose degrees may be a function of $n$.

\subsection{Applications}
In this section we list several applications of our hypercontractive theory: to some problems in group theory, in geometry, and in probability.

To state some of our results, we need some more terminology. If $G$ is a compact connected Lie group, we define $n(G): = D(\tilde{G})$, where $\tilde{G}$ denotes the universal cover of $G$. It is well-known that, for each $m \in \mathbb{N}$, we have $D(\SU(m))=D(\Spin(m))=m$ and $D(\Sp(m))=2m$ (and all these groups are simply connected except for $\Spin(2)$); we also have $D(\SO(m))=m$. Since $\Spin(m)$ is the universal cover of $\SO(m)$ for all $m > 2$, we have $n(\SO(m)) = m$ for all $m > 2$. As we will see in the next section, any compact connected semisimple Lie group $G$ with $D(G)$ at least an absolute constant, can be written in the form $(\prod_{i=1}^{r}K_i)/F$ where each $K_i$ is one of $\SU(n_i),\Spin(n_i)$ or $\Sp(n_i)$ for some $n_i \geq 3$, and $F$ is a finite subgroup of the centre of $\prod_{i=1}^{r}K_i$; the universal cover of such is $\prod_{i=1}^{r}K_i$, and $D(\prod_{i=1}^{r}K_i) = \Theta(\min_i n_i)$. Hence, the quantity $n(G)$ has a very explicit description in terms of the structure of the Lie group $G$.

\subsubsection*{Growth in groups: Diameters and doubling}

The theory of growth in groups has been a very active area of study in recent decades, and  an important subfield in this area is the study of the diameter of a metric space defined by a group (e.g.,\ the diameter of a Cayley graph of the group). In our case, for a compact group $G$ equipped with the Haar measure and 
a measurable generating set $\mathcal{A}\subseteq G$ of measure $\mu$, one considers the metric space on $G$ where the distance between $x$ and $y$ is defined to be the minimal length of a word over the elements of $\mathcal{A}$ and their inverses, which is equivalent to $xy^{-1}$. One of the central questions in the theory of growth in groups concerns the determination of the diameter of such a metric space (see e.g. Liebeck and Shalev~\cite{liebeck2001diameters},  Helfgott~\cite{helfgott2008growth}, Helfgott and Seress~\cite{helfgott2014diameter},  Pyber and Szabo~\cite{pyber2016growth} and Breuillard, Green and Tao~\cite{breuillard2011approximate} for the case where $G$ is finite). For a subset $\mathcal{A}$ of a group $G$ and $t\in\mathbb{N}$, we define   
$$\mathcal{A}^t := \sett{a_1\cdot a_2\cdots a_t}{a_1,a_2,\ldots,a_t\in\mathcal{A}}.$$
The {\em diameter problem for $G$ with respect to $\mathcal{A}$} asks for the smallest positive integer $t$ for which $\A^t=G$. For a compact group $G$  and a real number $0 < \alpha \leq 1$, the {\em diameter problem for sets of measure $\alpha$ in $G$} asks for the minimum possible diameter of a measurable set in $G$ of measure $\alpha$.

In the case where $G$ is a compact and connected group, we note that the diameter of $G$ with respect to any subset $A$ of positive measure is finite. This follows (almost) immediately from Kemperman's theorem \cite{kemperman}, which states that for any compact connected group $G$ (equipped with its Haar probability measure $\mu$) and any measurable $\mathcal{A},\mathcal{B} \subset G$, we have $\mu(\mathcal{A}\mathcal{B}) \geq \min\{\mu(\mathcal{A})+\mu(\mathcal{B}),1\}$. 

\remove{
This question is non-trivial in  the case of $\SU(n)$, $\SO(n)$ or any other compact connected Lie group that does not have non-trivial positive-measure subgroups. More specifically, we define the {\em diameter} of 
a set $\mathcal{A}$ to be the minimal positive integer $t$ such that $\mathcal{A}^t=G$ (if such an integer exists; otherwise we define its diameter to be $\infty$). It is natural to ask how large the diameter of $\mathcal{A}$ can be, over all measurable sets $\mathcal{A}$ of fixed positive measure. We remark that if $G$ is a compact connected group and $\mathcal{A} \subset G$ is a set of positive measure, then it follows easily from known results that $\mathcal{A}^t = G$ for some $t \in \mathbb{N}$. This follows, for example, from Kemperman's theorem \cite{kemperman}, which states that for any compact connected group $G$ (equipped with its Haar measure $\mu$) and any measurable $\mathcal{A},\mathcal{B} \subset G$, we have $\mu(\mathcal{A}\mathcal{B}) \geq \min\{\mu(\mathcal{A})+\mu(\mathcal{B}),1\}$; so taking $k$ to be the minimal positive integer such that $2^{k} \mu(\mathcal{A})>1$, applying Kemperman's theorem $k-1$ times, viz., to $\mathcal{A}$, $\mathcal{A}^2$, $\mathcal{A}^4$ and so on, we have $\mu(\mathcal{A}^{2^{k-1}})> 1/2$ and therefore $\mathcal{A}^{2^k}=G$, so we may take $t = 2\lfloor 1/\mu(\mathcal{A}) \rfloor$.
}
We make the following conjecture, concerning the diameter of large sets.
\begin{rconj}
    Let $G$ be one of $\SU(n),\SO(n),\Spin(n)$ or $\Sp(n)$, and let $\A\subseteq G$ be a measurable subset of measure $\nu$. Then the diameter of $G$ with respect to $\A$ is $O(\nu^{-1/n})$. In particular, if $\nu \geq e^{-cn}$, then the diameter of $G$ with respect to $\A$ is at most $O_c(1)$.
\end{rconj}
We note that if true, the conjecture is essentially tight, as can be seen by considering the set 
\[
\mathcal{S}_\epsilon := \set{ X\in  \SO(n)\colon 
\text{the angle between $Xe_i$ and $e_i$ is at most $\epsilon$}
}.
\]
For $\eps \leq 1/2$, we have $\mu(\mathcal{S}_{\eps}) = 2^{-\Theta(n\log(1/\eps))}$, and the diameter of $\SO(n)$ with respect to $\mathcal{S}_{\epsilon}$ is $\Theta(1/\epsilon)$. 

\medskip 
We show that for a compact connected Lie group $G$ with $n(G)=n$, for all $\delta >0$ and 
all measurable subsets $\A$ of $G$ with measure at least $2^{-cn^{1-\delta}}$, the diameter of $G$ with respect to $\A$ is at most $O_{\delta}(1)$.

\begin{thm}\label{thm:growth}
For each $\delta>0$ there exist $n_0,k>0$ such that the following holds. Let $n>n_0$ and let $G$ a compact connected Lie group with $n(G)=n$. If $\A \subset G$ is a Haar-measurable set, and
$\mu(\A)\geq 2^{-n^{1-\delta}}$, then $\A^k = G$.
\end{thm}

\subsubsection*{Doubling inequalities in groups}

Our bound on the diameter of $G$ follows from a new lower bound on $\mu(\A^2)$, where $\A\subseteq G$ is a measurable subset of the compact connected Lie group $G$. We prove the following.  

\begin{thm}\label{thm:Brunn Minkowskii}
There exists absolute constants $C,c>0$ such that the following holds. Let $G$ be a compact connected Lie group with $n(G) = n$. Let $\mathcal{A}\subseteq G$  be a measurable set with $\mu(\mathcal{A})\ge e^{-cn}$. Then $\mu(\mathcal{A}^2) \ge \mu(\mathcal{A})^{0.1}$.  
\end{thm}

The problem of giving a lower bound on $\mu(A^2)$ given $\mu(A)$ in compact Lie groups dates back to the work of Henstock and Macbeath~\cite{henstock1953measure} from 1953 and the aforemenentioned bound of Kemperman~\cite{kemperman} from 1964 and Jenkins~\cite{jenkins1973growth}. Recent extensive activity Jing, Tran, and Zhang has brought forth a multitude of novel methods into the field. For instance, in ~\cite{jing2021nonabelian} they generalized the Brunn--Minkowskii inequality from $\mathbb{R}^n$ to arbitrary connected Lie groups. Their main idea was using the Iwasawa decomposition to facilitate an inductive approach. In \cite{jing2023measure} they then used techniques from o-minimal geometry to obtain that $\mu(A^2)\ge 3.99\mu(A)$ for all subsets of $\mathrm{SO}(3)$ of sufficiently small Haar measure. In a forthcoming paper~\cite{jing2023effective} they prove that there exists a function  $\delta=\delta(n)$ and a constant $c>0$, such that if $A\subseteq \mathrm{SO}(n)$ has measure at most $\delta(n),$ then $\mu(A^2)\ge 2^{cn^{1/10}}\mu(A)$. Theorem \ref{thm:Brunn Minkowskii} in conjunction with Theorem \ref{thm:mixing time 2} below imply the existence of a constant $c>0$, such that $\mu(A^2)\ge \min( 2^{c n^{1/2})}\mu(A), 0.99)$ for all $A$ of Haar measure $\ge 2^{-cn}.$ The case where $\delta(n)\le \mu(A) \le 2^{-cn}$ is left open. 

\subsubsection*{Spectral gaps}
We also give the following upper bound on the spectral gaps of the operator corresponding to convolution by $\frac{1_A}{\mu(A)}$. If $G$ is a compact group equipped with its (unique) Haar probability measure $\mu$, for a measurable set $\mathcal{A} \subset G$ we write $x\sim \mathcal{A}$ to mean that $x$ is chosen (conditionally) according to the Haar measure $\mu$, conditional on the event that $x\in \mathcal{A}$.  

 \begin{thm}\label{thm:spectral gap}
  There exist absolute constants $c,C>0$ such that the following holds. Let $G$ be compact connected Lie group and write $n=n(G)$. Let $\mathcal{A}=\mathcal{A}^{-1}$ be a symmetric, measurable set in $G$ and suppose that $\mu(\mathcal{A})\ge e^{-cn^{1/2}}$. Then the nontrivial spectrum of the operator $T$ defined by  $Tf(x)=\mathbb{E}_{a\sim A}[f(ax)]$ is contained in the interval \[\left[-\sqrt{\frac{C\log{1/\alpha}}{n}}, \sqrt{\frac{C\log{1/\alpha}}{n}}\right].\] 
 \end{thm}

\subsubsection*{Mixing times}
Let $G$ be a compact group, equipped with its (unique) Haar probability measure; then every measurable subset $\A\subseteq G$ of positive Haar measure corresponds to a random walk on $G$. Indeed, we may define a (discrete-time) random walk $R_\A = (X_t)_{t \in \mathbb{N} \cup \{0\}}$ on $G$, by letting $X_0 = \text{Id}$, and for each $t \in \mathbb{N}$, if $X_{t-1}=x$ then $X_t = ax$, where $a$ is chosen uniformly at random from $\A$. In the case where $G$ is finite and $\A$ is closed under taking inverses, this is the usual random walk associated to the Cayley graph $\mathrm{Cay}(G,\A)$. One of the fundamental problems associated to such random walks is to determine their {\em mixing time}. (Following Larsen and Shalev \cite{larsen2008characters}, we say that the \emph{mixing time} of a Markov chain $(X_t)_{t \in \mathbb{N} \cup \{0\}}$ is the minimal non-negative integer $T$ such that the total variation distance between the distribution of $X_{T}$ and the uniform distribution, is at most $1/e$. We note that $1/e$ could be replaced by any other absolute constant $c \in (0,1)$, without materially altering the definition; Larsen and Shalev use the constant $1/e$ as it makes the statement of certain results concerning $S_n$ and $A_n$ more elegant.)

Larsen and Shalev \cite{larsen2008characters} considered the case where $\A$ is a normal set, i.e.\ a set closed under conjugation, and $G$ is the alternating group $A_n$. They showed that for each $\epsilon>0,$ if $\A\subseteq A_n$ of density $\frac{2|\A|}{n!} \ge \mathrm{exp}\left ( -n^{1/2-\epsilon} \right)$, then the mixing time of $R_\A$ is 2, provided that $n\ge n_0(\epsilon)$ is sufficiently large depending on $\epsilon$. Their proof was based upon a heavy use of character theory. Their result is almost sharp, in the sense the number $1/2$ cannot be replaced by any number smaller than $1/2$. We show that a similar phenomenon holds for compact connected Lie groups, even when $\A$ is not a normal set.  

\begin{thm}\label{thm:mixing time 2}
    There exist absolute constants $c,n_0>0$, such that the following holds. Let $G$ be a compact connected Lie group with $n:=n(G)>n_0$. Let $\A\subseteq G$ be a measurable set with Haar measure at least $e^{-c n^{1/2}}$. Then the mixing time of the random walk $R_{\A}$ is 2.
\end{thm}

This result is essentially best possible. For instance, taking $G=\SO(n)$, we may take $\mathcal{A}=\{X \in \SO(n):\ X_{11} > 10/n^{1/4}\}$. It is easy to see that the mixing time of $R_{\mathcal{A}}$ is 3, while $\mu(\mathcal{A}) = \mathrm{exp}(-\Theta(n^{1/2}))$.   

\subsubsection*{Product mixing}
Gowers' proof of his upper bound on the sizes of product-free sets actually establishes a stronger phenomenon, known as \emph{product mixing}. We say that a compact group $G$ (equipped with its Haar probability measure $\mu$) is an $(\alpha,\epsilon)$-\emph{mixer} if for all sets $A,B,C\subseteq G$ of Haar probability measures $\ge \alpha$, when choosing independent uniformly random elements $a\sim A$ and $b\sim B$, the probability that $ab\in C$ lies in the interval $\left(\mu(C)(1-\epsilon),\mu(C)(1+\epsilon)\right)$.  
Gowers' proof actually yields the following statement: there exists an absolute constant $C>0$, such that if $G$ is a $D$-quasirandom compact group, then it is a $(CD^{-1/3},0.01)$-mixer. (The proof is given only for finite groups, but it generalises easily to all compact groups.) For finite groups, Gowers' product-mixing result is sharp up to the value of the constant $C$. Here, we obtain an analogous result for compact connected Lie groups, where the $n^{-1/3}$ moves to the exponent.    

\begin{thm}\label{thm:product mixing intro}
    For any $\epsilon>0$, there exist $c,n_0>0$ such that the following holds. Let $n>n_0$ and let $G$ be a compact connected Lie group with $n:=n(G)>n_0$. Set $\alpha = \mathrm{exp}(-cn^{-1/3}).$ Then $G$ is an $(\alpha, \epsilon)$-mixer.
\end{thm}

This result is sharp up to the dependence of the constants $c = c(\epsilon)$ and $n_0 = n_0(\epsilon)$ upon $\epsilon$. Indeed, we may take $G=\SO(n)$ and let $\mathcal{A}=\mathcal{B}=\{X \in \SO(n):\ X_{11} > 10/n^{1/3}\}$ and 
$\mathcal{C} = \{X \in \SO(n):\ X_{11} < -10 /n^{1/3}\}$, to obtain a triple of sets each of measure $e^{-\Theta(n^{1/3})}$,
such that when choosing $a\sim A$ and $b\sim B$ independently, the probability that $ab\in C$ is smaller than $\tfrac{1}{2}\mu(C)$.

\subsubsection*{Homogeneous dynamics and equidistribution}
Suppose that a compact Lie group $G$ acts on a topological space $X$. The space $X$ is said to be {\em $G$-homogeneous} if $G$ acts transitively and continuously on $X$ (the latter meaning that the action map from $G \times X$ to $X$ is continuous); in this case, $X$ has a unique $G$-invariant probability measure, which is called the Haar probability measure. We obtain the following equidistribution result for homogeneous spaces. 

\begin{thm}\label{thm:mixing}
    For each $\epsilon>0$ there exist $c,n_0>0$ such that the following holds. Let $G$ be a compact connected Lie group with $n(G)=:n >n_0$. Let $X$ be a $G$-homogeneous topological space, and let $\mu_X$ denotes its unique $G$-invariant (Haar) probability measure. Suppose that $\mathcal{A}\subseteq G$ and $\B\subseteq X$ are both measurable sets of Haar probability measures $\ge e^{-cn^{1/2}}$. Let $\nu$ be the probability measure on $X$ which is given by the distribution of $a(b)$, for a uniform random $a\sim \A$ and an (independent) uniform random $b\sim \B$. Then the total variation distance between $\mu$ and 
    $\nu$ is less than $\epsilon$.
\end{thm}

\subsubsection*{$L^q$-norms of low degree polynomials}
We obtain the following upper bounds on the $q$-norms of low degree polynomials (we state the result for $\SO(n)$, for simplicity). 

\begin{thm}\label{thm:Super Bonami in O_n}
    There exist absolute constants $c,C>0$ such that the following holds. Let $q>2$ and let $f\in L^2(\SO(n))$ be a polynomial of degree $d$ in the matrix entries of $X \in \SO(n)$. If $d\le cn$, then
     \[\|f\|_{L^q(\mu)} \le q^{Cd}\|f\|_{L^2(\mu)}.\]
     If moreover, $d\le c\sqrt{n}$, then
    \[\|f\|_{L^q(\mu)} \le (C\sqrt{q})^d\|f\|_{L^2(\mu)}.\]
    \end{thm}

\subsubsection*{Separation of quantum and classical communication}

Hypercontractive inequalities have countless applications in computer science. Starting with their introduction to computer science in the seminal paper of Kahn Kalai and Linial \cite{kahn1988proceedings}, such inequalities have found uses in various branches of computer science and related fields (see e.g. \cite{gavinsky2007exponential, mossel2012quantitative, dinur2005hardness, raghavendra2008optimal}, to name a few). While such applications generally require hypercontractivity estimates over discrete sets, some applications require continuous domains. For example, in the paper of Klartag and Regev \cite{regev-klartag}, a hyperconractive inequality over the sphere in $\mathbb{R}^n$ is used to show a lower bound on the number of communication bits required for two parties to jointly compute a certain function. While with quantum communication the value of that function can be computed by one party transmitting  only  $O(\log n)$ quantum-bits to the other, it was shown in \cite{regev-klartag} that classical communication requires at least $\Omega(n^{1/3})$ bits of communication to be sent, even if parties are allowed to send bits both ways, thereby showing an exponential separation between the power of classical communication and one-way quantum communication.

In the subarea of quantum communication, establishing a significant separation between classical communication and practically implemented modes of quantum communication remains a major open problem. Such separation results have the potential to pave the way for real-world experiments that serve as stepping stones in measuring progress towards solving problems related to building quantum computers.   

Arunachalam, Girish, and Lifshitz~\cite{arunachalam2023one} applied our hypercontractive inequality for $\SU_n$ to make a substantial step towards this goal. They used it to show an exponential separation between classical communication and a more realistic version of quantum communication, namely the one-clean-qubit model. In the one-clean-qubit model one party communicates to the other one by sending $\log n$ uniformly random qubits and only a single additional qubit is sent accurately. 
}
\remove{

One of the goals underlying our current work is to find ways  In this paper our goal is to understand the unbounded rank in the compact Lie case in the hope of finding methods that will later be applicable in the finite case as well.

The term quasirandomness was coined by Gowers due to the following graph theoretic terminology. The \emph{normalized adjacence matrix} of a $d$-regular graph $(V,E)$ is the matrix  $A\in \mathbb{R}^{V\times V}$ whose $(i,j)$ entry is $1/d$ if $\{i,j\}\in E$ and 0 otherwise. The graph is then said to be $\epsilon$-quasirandom if all the nontrivial eigenvalues of $A$ are $\le \epsilon$ in absolute value, where the trivial eigenvalue $1$ corresponds to the eigenvector $1$.   
The strength of the notion of quasirandomness is that it implies that Cayley graphs $Cay(G,A)$ are quasirandom whenever $A$ is large. It is therefore a very powerful notion that relies only on density considerations and does not need to take into account any hypothesis on the structure of $A$.

One combinatorial way to think of $\epsilon$-quasirandom graphs is via the expander mixing lemma. Let us say that a $d$-regular graph $(V,E)$ is an $\epsilon$-mixer if for all sets $A,B\subseteq V$ whose size is $\ge \epsilon |V|$, the number of edges between $A$ and $B$ is within a factor of $0.9$ of $\frac{|A||B||d|}{|V|}$. The latter quantity is the expected number of edges between $A$ and $B$ in a random graph of the same density. The expander mixing lemma says that an $\epsilon$-quasirandom $d$-regular graph is a $2\epsilon$-mixer for sufficiently small values of $\epsilon>0$. This leads us to the following terminology of an $\epsilon$-mixer group. Roughly speaking we say that a group is a mixer if all the somewhat dense Caley graphs $\mathrm{Cay}(G,A)$ are $\epsilon$-mixers.

\begin{definition}
  We say that a group $G$ is an $\epsilon$-mixer if for each $A,B,C\subseteq G$ of size $\ge \epsilon |G|$, the number of triples $(g_1,g_2,g_1g_2)\in A\times B\times C$ is within a factor of 0.9 of $\frac{|A||B||C|}{|G|}$. Similarly, if $G$ is a compact Lie group with a probability Haar measure $\mu$. Then it is said to be an $\epsilon$-mixer if when choosing independently $x,y\sim \mu$ we obtain that $\Pr[x\in A,y\in B, xy\in C]$ is within a factor of $0.9$ of $\mu(A)\mu(B)\mu(C)$.   
\end{definition}

 Gowers showed that if $G$ is any  $D$-quasirandom, then it is a $O(D^{-1/3})$-mixer. He then showed that this his result is sharp up to a constant for $A_n$. In fact, the aforementioned results of Nikolov and Pyber \cite{np} show that every $\epsilon$-mixer finite group $G$  is $\epsilon^{-1/3}$-quasirandom. 

In this paper we find methods for showing mixing that go way beyond Gowers' bound. In fact we show that analytic methods work in harmony with the representation theory of compact Lie groups to give the following. 

\begin{thm}
There exists an absolute constant $c$, such that every connected $D$-quasirandom compact simple Lie group is a $\exp{(-D^c)}$-mixer.  
\end{thm}

For $\SO(n)$ and $\SU(n)$ we were able to get better results and replace the quantity $D^c$ inside the exponent by the larger $cD^{1/3}$.

}

\remove{
\subsubsection*{Product free sets -- Distributional version}

Theorem~\ref{thm:son-stronger} Our first set of result is concerned with the measure of the 
product-free set in compact groups. First, we show that in many compact groups, the measure of the largest product-free set is
exponentially small in $n^{c_0}$, where $c_0>0$ is an absolute constant:
\begin{thm}\label{thm:son}
Let $n \in \mathbb{N}$ and let $G$ be one of the following compact groups: the special orthogonal group $\SO(n)$, the special unitary group $\SU(n)$, the compact symplectic group $\Sp(n)$. Let $\A \subset G_n$ be Haar-measurable and product-free, and let $\mu$ denote the Haar measure on $G$. Then $\mu(\mathcal{A}) \leq 2^{-\Omega(n^{c_0})}$, where $c_0>0$ is an absolute constant.
\end{thm}

Our proof of Theorem~\ref{thm:son} gives an explicit bound for $c_0$ which is not too bad, but is far from optimal. To 
get an improved bound, we show an alternative approach to 
the problem which works for the groups $\SO(n)$ and $\SU(n)$, and in these cases shows that one may take $c_0 = 1/3$.
\begin{thm}\label{thm:son_stronger}
Let $n \in \mathbb{N}$ and let $G$ be one of the following compact groups: the special orthogonal group $\SO(n)$, the special unitary group $\SU(n)$. 
Let $\A \subset G$ be Haar-measurable and product-free, and let $\mu$ denote the Haar measure on $G$. Then 
$\mu(\mathcal{A}) \leq 2^{-\Omega\left(n^{1/3}\right)}$.
\end{thm}
Theorem~\ref{thm:son_stronger} thus represents substantial improvement over the bound given by Gowers~\cite{gowers}, 
and makes progress along an open challenge presented therein. While we do believe that the true answer should be 
exponentially small in $n$ as opposed to exponentially small in $n^{1/3}$, as we explain below  $2^{-\Omega(n^{1/3})}$ seems like an inherent bottleneck to our approach. Thus, new ideas are likely to be needed in order to make further progress on nailing down the measure of the largest product-free set in $\SO(n)$ and $\SU(n)$.

\medskip

Our proofs of Theorems~\ref{thm:son} and~\ref{thm:son_stronger} actually yield a stronger 
conclusion which is often referred to as product mixing. 
For example, for $\SO(n)$ our argument gives that there is
$c>0$, such that for $\mathcal{A}\subseteq \SO(n)$ with measure at least $2^{-cn^{1/3}}$, if one samples $A_1,A_2\sim \mathcal{A}$ according to the Haar measure, then the distribution of $A_1A_2$ is close to uniform over $\SO(n)$. More precisely, for all $\eps>0$ there is $c>0$ such that if $\mathcal{B}\subseteq \SO(n)$ is measurable and 
$\mu(\mathcal{A}),\mu(\mathcal{B})\geq 2^{-c n^{1/3}}$, then
\[
\card{
\Prob{A_1,A_2\sim\mathcal{A}}{A_1A_2\in \mathcal{B}}
-\mu(\mathcal{B})}
\leq \eps \mu(\mathcal{B}).
\]

\subsubsection*{Growth in compact Lie groups}
Secondly, we address the problem of determining the diameter of sets, and show that sets with 
density larger than exponential in $n$ have constant diameter:
\begin{thm}\label{thm:growth}
There exists an absolute constant $C>0$ such that the following holds. For each $n \in \mathbb{N}$, let $G_n$ be either the special orthgonal group $\SO(n)$ or the special unitary group $\SU(n)$.
If $\A \subset G$ is a Haar-measurable set, 
$\delta > \frac{C}{\log n}$ and
$\mu(\A)\geq 2^{-cn^{1-\delta}}$, then $\A^k = G_n$ for some $k \leq 1/\delta^{C}$.
\end{thm}

\subsubsection*{Hypercontractivity over compact groups}
The proofs of Theorems~\ref{thm:son},~\ref{thm:son_stronger},~\ref{thm:growth} follow from the Fourier analytic method combined with a novel hypercontractive inequality for the relevant domain (see Section~\ref{sec:techniques} for more details). Below, we give a precise statement of these hypercontractive results, as well as their most relevant corollaries for our purposes.

Let $G$ be one of the groups $\O(n), \U(n), \SO(n), \SU(n)$ -- without loss of generality say $G=\O(n)$, and consider $L^2(G; \mu)$ where $\mu$ is the 
Haar measure on $G$. We may consider polynomials over $G$, namely functions $f\colon G\to\mathbb{R}$ such that $f(X)$ is a polynomial in the entries of $X$, and thus we may define notions such as the degree of a polynomial over $G$. We denote by $V_d$ the linear space spanned by all degree $d$ polynomials over $G$. Low degree polynomials, that is 
functions from $V_d$ for $d$ thought of as small, are central
in the study of Boolean functions and their applications in areas such as theoretical computer science, extremal combinatorics and more broadly in discrete mathematics. 

For instance, one of the most classical domains in applications is the Boolean hypercube $G = \{0,1\}^n$, 
for which one has the classical hypercontractive inequality~\cite{Beckner,Bonami,gross} asserting that 
the noise operator $\mathrm{T}_{\rho}$ is a contraction 
from $L_2$ to $L_q$ provided that the noise parameter 
$\rho$ satisfies that $0\leq \rho\leq \sqrt{\frac{p-1}{q-1}}$. We will refrain from precisely defining the noise operator $\mathrm{T}_{\rho}$ herein, 
but remark that one of its crucial properties that makes 
this result so useful is that it has each $V_d$ as 
an invariant space, and all eigenvalues of it in $V_d$ 
are at least $\rho^{d}$. Thus, hypercontractive inequality 
immediately implies that if $f\colon \{0,1\}^n\to\mathbb{R}$
is a degree $d$ function, then $\norm{f}_q\leq \rho^{-d}\norm{f}_2$, which gives us a lot of information
about the typical value of $\card{f(x)}$ (roughly speaking -- it ``rarely'' exceeds $\rho^{-d} \norm{f}_2$). 

Our application requires a hypercontractive inequality of this form, and as the techniques that go into the proof 
of the hypercontractive inequality over the hypercube fail, 
we must use a different method. We show two methods, each one of them has advantages and disadvantages:
\begin{enumerate}
    \item Our first approach is based on the Bakry-Emery criterion. Namely we design a noise operator based on defined over $G$ by exponentiating the Laplace Beltrami operator corresponding to it. 
    The Bakry-Emery criterion allows us to deduce a hypercontractive inequality from a lower bound on the Ricci curvature of our compact lie group. We then compute the eigenvalues of the Laplacian by making use of results from the representation theory of Lie groups.
    
    The main advantages of this approach is that it is rather generic and works for a more general class of compact groups. It also gives a meaningful result for 
    all $d$'s. However, due to its generality the method typically does not produce sharp results.
    
    \item Our second approach is to deduce a hypercontractive inequality on $G$ from a  hypercontractive inequality on Gaussian space via a coupling method. Here instead of following the traditional semi-group method for hypercontractivity we come up with a different operator, which substitutes the role of the traditional noise operator. Our approach is to construct the operator as a composition of three operators. The first takes is an  operator $\Tcol\colon L^2(G)\to L^2(\mathbb{R}^{n\times n},\gamma)$. The second is the Ornstein--Uhlenbeck operator on Gaussian space, and the third is the adjoint $\Tcol^*,$ which gets you back to $L^2(G)$. In our second approach our hypercontractive inequality follows automatically from the corresponding hypercontractive inequality on Gaussian space and the main challenge is to compute its eigenvalues. In fact after a random conjugation of our operator we may assume that it commutes with the action of $G$ from both sides and deduce that it is diagonalized by the isotypical components of $L^2(G)$. The non-zero elements of an isotypical component are all polynomials of the same degree, $d$ say. We then manage to give a good  approximation for the eigenvalues of our operator when $d\le n^{-1/3}$.

    The main advantage of this approach is that, when it works, it produces hypercontractive inequalities which are almost as good as the ones on Gaussian space and thus are sharp. 
    \end{enumerate}
Next, we formally state the results we establish using the above-mentioned approaches.

\paragraph{Hypercontracitivity from Ricci curvature.} 
As explianed earlier, using our Ricci curvature we design a hypercontractive operator which allows us to prove the following moment bounds on low-degree functions:
    \begin{thm}\label{thm:Super Bonami in O_n curv}
    There exist absolute constants $C>0$, such that the following holds. Let $G$ be one of the groups $\O(n), \SU(n), \SO(n), \Sp(n)$, let $\mu$ be the Haar measure on $G$ 
    and let $d\in\mathbb{N}$.
    Then for every $f\in V_d(G)$ and $q>2$ we have that 
    $$\|f\|_{L^q(\mu)} \le q^{C\cdot\left(d+\frac{d^2}{n}\right)}\|f\|_{L^2(\mu)}.$$
    \end{thm}    
    
\paragraph{Hypercontracitivity from coupling.}  
Let $G$ be one of the groups $\O(n)$ and $\U(n)$, 
say $G = \O(n)$ for concreteness, and let $\mu$ be the Haar measure on $G$.
Using the fact that locally, a matrix sampled as $X\sim \mu$ looks
like a bunch of independent Gaussian entries, we design a coupling operator between $(X,\mu)$ and $(\mathbb{R}^{n\times n}, \nu)$ where $\nu$ is the distribution of $n^2$ independent Gaussian random varaibles 
with standard deviations $1/\sqrt{n}$. We use this coupling 
to construct a noise operator over $G$ whose hypercontractivity can be deduced from hypercontractivity over Gaussian space. Fortunately, we can analyze the eigenvalues of this operator, and this yields the following moment bounds:
    \begin{thm}\label{thm:Super Bonami in O_n}
    There exist absolute constants $\delta>0$, $C>0$, such that the following holds. Let $d\le \delta n^{1/2}$, let $f\in V_d$, and let $q>2$. Then $$\|f\|_{L^q(\mu)} \le (C\sqrt{q})^d\|f\|_{L^2(\mu)}.$$
    \end{thm}
The key quantitative difference between Theorems~\ref{thm:Super Bonami in O_n curv} and~\ref{thm:Super Bonami in O_n} is that the former result 
works for all $d$ but produces a worse moment bound (as $C$ is unspecified and turns out to be not too small), whereas the later result works only up to $d=\Theta(n^{1/2})$ but 
then gives a near optimal moment bound. As we shall see, 
these differences lead to the differences between Theorem~\ref{thm:son} (for which we use Theorem~\ref{thm:Super Bonami in O_n curv}) and Theorem~\ref{thm:son_stronger} (for which we use Theorem~\ref{thm:Super Bonami in O_n}). As for the diameter problem, it turns out that necessary quantitative moment bound is much milder, and it is more important that the 
result works for much higher degrees. Thus, the proof of Theorem~\ref{thm:growth} relies on Theorem~\ref{thm:Super Bonami in O_n curv}.

    
\subsubsection*{Level $d$ inequalities over compact groups}  
A well known corollary of a hypercontractive inequality is 
the so-called level $d$ inequality. Recalling the linear spaces $V_d$, given a function $f\colon G\to\mathbb{R}$ 
one may consider its projection onto $V_{d}$, which denote by $f^{\leq d}$. Thus, one may define the weight of $f$ on 
degree at most $d$ as $W^{\le d}[f] = \inner{f}{f^{\leq d}}=\norm{f^{\leq d}}_2^2$. By orthogonality, it is always clear that $W^{\leq d}[f]\leq \norm{f}_2^2$, however it turns out that for Boolean valued functions $f$, this bound can be considerably improved.

More precisely, suppose that $f\colon G\to\{0,1\}$ is a measurable function, and $\alpha = \E[f]$. In the notations above, we get that $W^{\leq d}[f]\leq \alpha$, and as $f^{\leq 0}$ is the constant function closest to $f$ we have that $W^{\leq 0}[f]=  \alpha^2$. The following result 
says that for $d$ which is not too large, the weight of $f$ at degree at most $d$ is actually closer to $\alpha^2$ than
to the trivial bound of $\alpha$:
\begin{thm}\label{thm: level-d in O_n curv}
     There exists an absolute constant $C>0$, such that for any
     $f\colon \O(n) \to \{0, 1\}$ with expectation $\alpha$, and $d\le \frac{\log (1/\alpha)}{100}$, it holds that
     \[
     \|f^{\le d}\|_{L^2(\mu)}^2 \le
        \alpha^2\left(\frac{\log(1/\alpha)}{d}\right)^{C\cdot \left(d+d^2/n\right)}.
     \]
     The same bound holds for any one of the groups $\SO(n), \SU(n), \Sp(n)$.
    \end{thm}
    \begin{proof}
    Let $q = \log(1/\alpha)/d$. Then By H\"{o}lder and Theorem \ref{thm:Super Bonami in O_n curv} for some absolute constant $C$ we have
    \[
    \|f^{\le d}\|_2^2 =\langle f^{\le d}, f \rangle \le \|f^{\le d}\|_{L^q(\mu)} \|f\|_{L^{\frac{1}{1-1/q}}(\mu)}\le q^{C \left(d+d^2/n\right)} \|f^{\le d}\|_{L^2(\mu)}
    \alpha^{\frac{q-1}{q}},
    \]
    and the result follows by rearranging.
    \end{proof}
    
    Similarly, Theorem~\ref{thm:Super Bonami in O_n} implies a level $d$ inequality which only works up to degrees $\Theta(n^{1/2})$, but produces a better quantitative bound:
    \begin{thm}\label{thm: level-d in O_n}
     There exists absolute constants $\delta>0$ and $C>0$, such that for any
     $f\colon \O(n) \to \{0, 1\}$ with expectation $\alpha$, and $d\le \min (\delta n^{1/2}, \log (1/\alpha)/100)$, it holds that
     \[
     \|f^{\le d}\|_{L^2(\mu)}^2 \le \left(\frac{C}{d}\right)^d\alpha^2\log^{d}(1/\alpha ).
     \]
     The same bound holds for the group $\U(n)$
    \end{thm}
    \begin{proof}
    Let $q = \log(1/\alpha)/d$. Then By H\"{o}lder and Theorem \ref{thm:Super Bonami in O_n} for some absolute constant $C'$ we have
    \[
    \|f^{\le d}\|_2^2 =\langle f^{\le d}, f \rangle \le \|f^{\le d}\|_{L^q(\mu)} \|f\|_{L^{\frac{1}{1-1/q}}(\mu)}\le 
    (C\sqrt{q})^{d} \|f^{\le d}\|_{L^2(\mu)}
    \alpha^{\frac{q-1}{q}}
    \]
    and the result follows by rearranging.
    \end{proof}
    
    Theorems~\ref{thm: level-d in O_n curv} and~\ref{thm: level-d in O_n}  play a crucial role in the proof of our results regarding the largest product-free sets in compact groups as well as our result regarding the diameter. In fact, modulo the level $d$ inequality our results amount to basic Fourier analytic/ representation theoretic arguments, and more specifically to basic bounds on the dimensions of the irreducible representations of the groups we are concerned with. 
    
    We remark that interestingly, level $d$ inequalities -- and more precisely the level $1$ inequality (which is also known as Chang's lemma) have played crucial role in similar results over the integers as well. An example 
    is in Roth-style theorems and subsequent improvements (such as the recent result~\cite{bloom2020breaking}), which utilized the level $1$ inequality to conclude 
    a structural result on the set of large Fourier coefficients of a given function thereby allowing for a more efficient density increment argument.
\subsection{Techniques}\label{sec:techniques}
In this section, we explain our approach for the above mentioned results. We begin by presenting the spectral approach for the product-free set problem as well as the diameter problem, and show how level $d$ inequalities enter the picture in these contexts. We then explain our two methods for hypercontractivity over compact groups. 
Throughout this discussion, we focus our attention on the group $G = \O(n)$ or $G=\SO(n)$ , however much of what we say below applies to more general compact groups. We also fix $\mu$ to be the Haar measure on $G$.
\subsubsection*{The Spectral Approach to product-free Sets}
Let $\mathcal{A}\subseteq G$ be a measurable set, and define $f = 1_{\mathcal{A}}$. Define the convolution operator 
\[
(f*g)(A) = \Expect{B\sim \mu}{f(B)g(B^{-1} A)}.
\]
Note that the fact that $\mathcal{A}$ is product-free can be written succintly as $f(A)\cdot (f * f)(A) = 0$ for all $A$. Indeed, otherwise there is $A\in\mathcal{A}$ such that $(f*f)(A)>0$, and so there is $B$ such that $B, B^{-1}A\in\mathcal{A}$ and then the triplet $(B,B^{-1}A, A)$ forms a product in $\mathcal{A}$. For the rest of the discussion, we focus on $G = \SO(n)$ for concreteness.

Thus, if $\mathcal{A}$ is product-free then we conclude that $\inner{f}{f*f}=0$. Let $f^{\leq d}$ be the projection of $f$ onto $V_{d}$, and let $f^{=d} = f^{\leq d} - f^{\leq d-1}$. 
Thus, $f=\sum\limits_{d=0}^{\infty} f^{=d}$, and one see that 
\[
\inner{f}{f*f}
=\sum\limits_{d=0}^{\infty}\inner{f^{=d}}{f^{=d}*f^{=d}}
=\alpha^3 + \sum\limits_{d=1}^{\infty}\inner{f^{=d}}{f^{=d}*f^{=d}},
\]
where $\alpha = \E[f]$. Thus, to reach a contradiction it suffices to show that $\card{\sum\limits_{d=1}^{\infty}\inner{f^{=d}}{f^{=d}*f^{=d}}}$ is significantly smaller than $\alpha^3$. Towards this end, it turns out that it suffices to bound only terms corresponding to $d\leq n/10$ (the other terms can be handled in cruder manner). To control terms corresponding to $d\leq n/10$, we use basic representation theory and a result due to 
Babai, Nikolov and Pyber~\cite{bnp}, which (morally speaking) allows us to say that 
\[
\card{\inner{f^{=d}}{f^{=d}*f^{=d}}}
\leq \frac{1}{\sqrt{\binom{n}{d}}}\norm{f^{=d}}_2^3.
\]
We remark that $\norm{f^{=d}}_2^3$ is a trivial bound that follows by a basic applications of Cauchy-Schwarz, hence the point in the above inequality is that one gains the factor 
$\frac{1}{\sqrt{\binom{n}{d}}}$, which comes from the fact that the irreducible representations of $\SO(n)$ corresponding to $V_d$ have dimension at least $\binom{n}{d}$. Thus, we may bound
\[
\card{\sum\limits_{d=1}^{n/10}\inner{f^{=d}}{f^{=d}*f^{=d}}}
\leq \sum\limits_{d=1}^{n/10}\frac{1}{\sqrt{\binom{n}{d}}}\norm{f^{=d}}_2^3,
\]
and the task now is to bound the weight of $f$ on level $d$. Ignoring the logarithmic factors in our level $d$ inequalities, we get that $\norm{f^{=d}}_2^2\leq \alpha^2$ 
and thus the above sum is clearly much smaller than $\alpha^3$, finishing the proof. The logarithmic factors of course interfere with this, and plugging in the result of Theorem~\ref{thm: level-d in O_n curv} we get that 
\[
\card{\sum\limits_{d=1}^{n/10}\inner{f^{=d}}{f^{=d}*f^{=d}}}
\leq 
\alpha^3
\sum\limits_{d=1}^{n/10}
\frac{(\log(1/\alpha)/d)^{C\cdot d}}{(n/d)^{d/2}}
\leq\alpha^3
\sum\limits_{d=1}^{n/10}
\left(\frac{\log(1/\alpha)^{2C}}{n}\right)^{d/2}.
\]
Thus, as long as $\alpha >  2^{-n^{1/2C}/100}$ we still get that the above sum is smaller than $\alpha^3$, thereby concluding the proof of Theorem~\ref{thm:son}. Also, it is
now clear that improving the level $d$ inequality would further improve the assumption we need on $\alpha$ to get a contradiction, and indeed plugging Theorem~\ref{thm: level-d in O_n} one gets the bound
\[
\alpha^3
\sum\limits_{d=1}^{n/10}
\left(\frac{C\log(1/\alpha)^{3}}{n}\right)^{d/2},
\]
which is good enough so long as $\alpha > 2^{-n^{1/3}/100C^{1/3}}$. 

We remark that $2^{-O(n^{1/3})}$ 
is the limit that the above approach can get, as the bottleneck here already appears for $d=1$. We believe that to get beyond this bottleneck, one would have to prove some stability result for level $d$ inequalities. For example, one would like to say that if the level $d$ inequality is tight for some $d$ --- say for $d=1$ --- then one may 
show that the set $\mathcal{A}$ must have some structure, which hopefully could be used for a density increment (such as the case in similar results over the integers). We do not know at the moment though how to make such approach work, and hope that the current paper inspires an investigation along these lines.
\subsubsection*{The Curvature Approach to Level-$d$ Inequalities}
\paragraph{Constructing the hypercontractive operator}
The Bakry-Emory criteria~\cite{bakry1985diffusions} 
(see also~\cite{bakry2006logarithmic} and an 
explanation in~\cite{regev-klartag}) implies that a Riemannian 
manifold whose Ricci curvature is uniformly lower bounded 
by $C_n$ satisfies the log-sobolev inequality with constant $C_n^{-1}$. Further, using the connection between log-sobolev inequalities and hypercontractive inequalities from~\cite{gross} one is able to construct an operator 
which is hypercontractive. 

In our cases of interest, namely the compact groups in the statement of Theorem~\ref{thm:son}, the Ricci curvature is always uniformly lower bounded by $\Omega(n)$, thereby giving us a log-sobolev inequality for the Laplace--Beltrami operator, which we then use to construct a hypercontractive $\mathrm{U}_{\delta}$ operator on each one of these groups.

\paragraph{Lower bounding the eigenvalues of $\text{U}_{\delta}$.}
By construction, the eigenvectors of $\text{U}_{\delta}$ are the same as the eigenvector of the Laplace--Beltrami operator, and we also have an intimate relation between 
the eigenvalues corresponding to them in the two operators. 
There are known formulas for the eigenvalues of the Laplace--Beltrami operator~\cite{berti-procesi}, and we use
these to prove a lower bound on these eigenvalues which 
are strong enough to deduce Theorem~\ref{thm: level-d in O_n curv}.

\subsubsection*{The Coupling Approach to Level-$d$ Inequalities}
In essence, the coupling approach seeks to leverage the intuition that sampling $X\sim \mu$ and looking at the $I\times J$ minor whenever $I$ and $J$ are not too large, 
the entries of $X_{I\times J}$ seem like independent Gaussian random variables with mean $0$ and standard deviation $1/\sqrt{n}$. Thus, we expect monomials of degree at most $d$ (which only look at variables from a minor of size $d$ by $d$) to behave similarly over Gaussian space and over $\O(n)$. Optimistically, one then hopes that  linear combinations of monomials of degree $d$ would also  behave similarly over Gaussian space and over $\O(n)$, and thus conclude hypercontractivity over $\O(n)$ from hypercontractivity over Gaussian space. Alas, it is not clear how to directly analyze general linear combinations of monomials of degree $d$, so we need to utilize the above intuition differently. We do so using the coupling approach as in~\cite{filmus2020hypercontractivity}.

\paragraph{Constructing the hypercontractive operator.}
Let $\gamma$ be the distribution of $n^2$ indepedent Gaussians with mean $0$ and standard deviation $1/\sqrt{n}$,
and consider the following coupling $(X,Y)$ between $(\O(n),\mu)$ and $(\mathbb{R}^n,\gamma)$: 
sample $Y \sim \gamma$, and apply the Gram-Schmidt process on the columns of $Y$ to get $X$. Using this coupling, one may define the operators 
$\Tcol\colon L^2(\O(n);\mu)\to L^2(\mathbb{R}^{n\times n};\gamma)$ and 
$\Tcol^{*}\colon L^2(\mathbb{R}^{n\times n};\gamma)\to L^2(\O(n);\mu)$ as
\[
\Tcol f(A) = \cExpect{(X,Y)}{X=A}{f(Y)},
\qquad
\Tcol^{*} f(B) = \cExpect{(X,Y)}{Y=B}{f(X)}.
\]
Letting $U_{\rho}$ be the Gaussan noise operator with correlation paraemter $\rho\in [0,1]$, we may thus consider the noise operator $U_{\rho}' = \Tcol U_{\rho}\Tcol^{*}$. It is then easy to prove that $U_{\rho}'$ is hypercontractive, in the sense that if $\rho\leq \sqrt{\frac{p-1}{q-1}}$ then 
$\norm{U_{\rho}' f}_q\leq \norm{f}_p$. The main difficulty is to bound the eigenvalues of $U_{\rho}'$; even more basically, we do not even have sufficient understanding of the invariant spaces of $U_{\rho}'$.

Ideally, we would like to use the representation theory of 
$\SO(n)$ to say that isotypical components of $\SO(n)$ are 
the invariant spaces of $U_{\rho}'$. However, to make such assertions we need the operator $U_{\rho}'$ to commute with the action of $\SO(n)$ from both sides, namely with the operators $L_U$ and $R_V$ for $U,V\in\SO(n)$ defined as
\[
L_U f(X) = f(UX),
\qquad
R_V f(X) = f(XV).
\]
It is true (and not difficult to prove) that the operator $U_{\rho}'$ commutes with the left action operators $L_U$. 
It does not commute with $R_V$ though, and hence we modify it so as to gain commutation with right actions as well. 
Namely, we consider the averaged operator
\[
\mathrm{T}_{\rho} = \Expect{V\sim \mu}{R_V^{*} U_{\rho}' R_V}.
\]
In words, we first make a basis change according to a 
random $V\sim \mu$, apply the noise operator, and 
then return to the old basis by applying $V^{t}$. 
Once again, it is easy to see that $\mathrm{T}_{\rho}$ is hypercontractive, so we are back to the problem of studying the eigenvalues of an operator. 

\paragraph{Lower bounding the eigenvalues of $\mathrm{T}_{\rho}$.}
In addition to being hypercontractive, the operator $\mathrm{T}_{\rho}$ commutes with the action of $\SO(n)$ from both sides, hence giving us a lot of information about its invariant spaces. More precisely, we conclude that $\mathrm{T}_{\rho}$ preserves the isotypical components of $\SO(n)$. Since each $V_d$ is the sum of isotypical components, we are able to conclude that each $V_d$ is an invariant space of $\mathrm{T}_{\rho}$ and furthermore that each eigenvalue of $\mathrm{T}_{\rho}$ on $V_d$ can be realized by an eigenfunction $f\in V_d$ that is a polynomial of the variables on the minor $[d]\times [d]$. 

The intuition now is that the operator $\mathrm{T}_{\rho}$ should have eigenvalues that are at least $(c\rho)^{d}$:
\begin{enumerate}
    \item For each $V\in\O(n)$, function $R_V f$ is 
    a degree $d$ function that only depends on the first $d$ columns.
    \item The operator $\Tcol$ is roughly the identify when it acts on such functions, and the operator $U_{\rho}$ 
    has eigenvalues at least $\rho^{-d}$ on eigenvectors which are degree $d$ functions.
\end{enumerate}
Proving these two items though is more challenging than it appears. More concretely, showing that the operators $\Tcol$ and $\Tcol^{*}$ are close to the identity is particularly delicate, especially if one wishes to prove it for $d$'s 
that are not too small. 

It turns out that one can considerably relax the requirement from $\Tcol$, and it is enough to show that for functions  $f$ as above, the function $\Tcol f$ has significant correlation with a degree $d$ function. However, this task is still too difficult, especially given the fact that when applied on $Y$, the Gram-Schmit process changes the last columns of $Y$ much more than it does the first few (say $n/2$) columns of $Y$.

To overcome this challenge we observe that there are types of monomials for which it is much easier to understand the action of $\Tcol$. A monomials $M(X)$ is called \emph{comfortable} if it is multi-linear, includes at most one variable from each row of $X$ and at most one variable from each column of $X$. One can show that comfortable monomials are eigenvectors of $\Tcol^{*}$ (this is best seen by taking an alternative view of our coupling procedure, which starts off with $X\sim \O(n)$ and products a coupled $Y\sim \gamma$ by multiplying by an appropriate upper-triangular matrix), which makes working with $\Tcol$ and $\Tcol^{*}$ 
much easier. Roughly speaking, in the Gram-Schmidt process one expects the column $j$ of $X$ to be $1-j/n$ correlated with the $j$th column of $Z$, and so it is natural to expect (and true) that the eigenvalues of $\Tcol^{*}$ corresponding to a comfortable monomial $M$ are at least $2^{-d}$ if $M$ includes only variables from the first $n/2$ columns of $X$.

Luckily, 
inspecting Weyl's construction of the irreducible 
representations of $\O(n)$ we are able to 
conclude that the isotypical components of $\O(n)$
of degree at most $n/2$ always contain comfortable 
functions. Thus, when studying the eigenvalues of 
$\T_{\rho}$ on $V_{d}$ for $d\leq n/2$ we are able 
to argue that each such eigenvalue is achieved 
by a comfortable polynomial $f\in V_d$ that depends 
only on the variables in the minor $[d]\times [d]$, 
and for such functions we are able to materialize 
the intuition that operators such as $\Trow$ act 
closely to the identity. 

Inspecting $\inner{\T_{\rho} f}{f}$ we have it is equal to 
$\Expect{V}{\norm{U_{\sqrt{\rho}} \Tcol R_V f}_2^2}
\geq \rho^d\Expect{V}{\norm{(\Tcol R_V f)^{\leq d}}_2^2}$. While $f$ is comfortable, the operator $R_V$ does not preserve comfortability and furthermore $R_V f$ doesn't depend only on the minor 
$[d]\times [d]$ (but rather on the minor $[d]\times [n/2]$ for our chosen distribution over $V$), and so 
we do not really expect $\Tcol$ to act like the identity. Still, we expect effect of the Gram-Schmidt process on the first $n/2$ to yield vectors that have
significant correlations with the original vectors, so we expect to be able to argue that typically
$\norm{(\Tcol R_V f)^{\leq d}}_2^2\geq C^{-d} \norm{(R_V f)^{\leq d}}_2^2 = C^{-d}\norm{f}_2^2$. 
While true, this is fairly tricky to show, and this 
is where the function $f$ being comfortable comes 
in handy. 

We defer further discussion to Sections~\ref{sec:coupling_introduce} and~\ref{sec:ingredients}.

}

\remove{
\section{skeleton}

\begin{enumerate}
    \item  Definition of good group  
    \begin{enumerate}
        \item good: bounds on level $d$ high moments, and on dimensions of high levels
        \item this is implied for $\SO(n)$ and $\SU(n)$ by hypercontractivity on $\SO(n)$ and $\SU(n)$. We think it's doable also for $\Sp(n)$.
    \end{enumerate}
    \item "we prove nice things about good groups. but we can also say same things about fine groups"
    \item definition of fine groups. 
    \item theorem statement: $\Spin(n), \SU(n), \Sp(n)$ are good.
    \item theorem statement: all simple compact lie groups are fine
    \item Mixing properties of good/fine groups
    \begin{enumerate}
    \item (goodness) product free bound + product mixing
    \item (Goodness)spectral gap
    \item (Goodness) equidistribution of convolutions of large sets
    \item (fineness) brunn minkowski    
    \item (fineness) Diameter
    \end{enumerate}
    \item Goodness/Fineness passes to products and quotients
    \item corollary: implies fineness properties above for all lie groups with min rank $n$
    \item All $D$-quasirandom groups have min-rank $m$ for $m\ge \sqrt{D}$
    \item $\Spin(n), \SU(n), \Sp(n)$ are quasirandom 
    \item All simple compact Lie groups are fine
    \item $\Spin(n), \SU(n), \Sp(n)$ are good via hypercontractivity. 
\end{enumerate}

\paragraph{references:} Knapp book, page 198, Thoerem 4.29: a compact lie group G is the commuting product of $G_SS$ and $(Z_G)_0$ (meaning that it is reductive). Alternatively, see theorem 1.2 here: http://staff.ustc.edu.cn/~wangzuoq/Courses/13F-Lie/Notes/Lec

\paragraph{to do.} 
verify goodness of $\Sp(n)$. Go back to introduction and adapt to changes. 
\section{Preliminaries: Quasirandomness and min-rank}\l

\gnote{add explanation of what we are doing: define minrank and show that it is polynomially equivalent to quasirandomness. relate to the introduction}
}
\section{Preliminaries: Quasirandomness and min-rank}\label{sec:min-rank}
In this section we show that, for $D$ at least an absolute constant, the universal cover $\tilde{G}$ of a $D$-quasirandom compact connected Lie group $G$ is a product of `classical' (compact, simple, simply connected) Lie groups of the form $\Spin(n),\SU(n),\Sp(n)$. We then make use of this to determine $D(\tilde{G})$, and we show that $D(\tilde{G})\le 4D(G)^2$. 

In what follows, as usual, a {\em compact group} $G$ is a Hausdorff topological group for which the group operations (or equivalently, the map $(g,h) \mapsto gh^{-1}$) are continuous. We recall that a compact group has a unique left-multiplication-invariant probability measure (called the Haar measure), which is also the unique right-multiplication-invariant probability measure. As usual, if $G$ is a compact group, we let
$$L^2(G) = \{f:G \to \mathbb{C}:\ \mathbb{E}_{\mu}[|f|^2] < \infty\} / \sim,$$
where the expectation is with respect to the Haar probability measure $\mu$ on $G$ and the equivalence relation $\sim$ is defined by $f \sim g$ iff $f=g$ $\mu$-almost-everywhere, and we view $L^2(G)$ as a Hilbert space, with the natural inner product,
$$\langle f,g \rangle := \mathbb{E}_{\mu}[\overline{f}g].$$

We make use of the following fact, appearing for example in \cite{procesi} Chapter 10, Section 7.2, Theorem 4, page 380. (Note that the word `Lie' is missing from the statement of this theorem; this omission is clearly just a typographical error.)
\begin{fact}
    Every compact connected Lie group is Lie-isomorphic to a group of the form $(\prod_{i=1}^{r} K_i \times T)/F,$ where each $K_i$ is a simply connected simple compact Lie group (equivalently, $K_i$ is one of $\Sp(n_i)$ for some $n_i \geq 1$, $\Spin(n_i)$ for some $n_i \geq 3$, $\SU(n_i)$ for some $n_i \geq 2$, or the compact form of one of the five exceptional Lie groups, for each $i \in [r]$), $T$ is a finite-dimensional torus (i.e.\ $T = (\mathbb{R}/\mathbb{Z})^m$ for some integer $m$), and $F$ is a finite group contained in the center of $\prod_{i=1}^{r} K_i \times T$, with $F \cap T = \{1\}$. 
\end{fact}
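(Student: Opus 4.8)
This is a classical structure theorem, so the plan is to assemble the standard ingredients rather than to invent anything new. Write $\mathfrak g = \mathrm{Lie}(G)$. The first step is purely algebraic: since $G$ is compact, averaging an arbitrary inner product on $\mathfrak g$ over $G$ with respect to the Haar measure produces an $\mathrm{Ad}(G)$-invariant inner product, with respect to which the orthogonal complement of any ideal of $\mathfrak g$ is again an ideal. Iterating this complete-reducibility argument decomposes $\mathfrak g$ as a direct sum of ideals $\mathfrak g = \mathfrak z \oplus \mathfrak g_1 \oplus \cdots \oplus \mathfrak g_r$, in which $\mathfrak z = \mathfrak z(\mathfrak g)$ is abelian and each $\mathfrak g_i$ is a simple compact Lie algebra. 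In particular $\mathfrak g_{\mathrm{ss}} := \mathfrak g_1 \oplus \cdots \oplus \mathfrak g_r = [\mathfrak g,\mathfrak g]$ is semisimple with negative-definite Killing form, and $\mathfrak z \cap \mathfrak g_{\mathrm{ss}} = 0$.

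Next I would single out two closed connected subgroups of $G$. Let $T := Z(G)_0$, the identity component of the centre; it is a compact connected abelian Lie group, hence a torus $(\mathbb R/\mathbb Z)^m$ with $m = \dim \mathfrak z$. Let $K := [G,G]$; it is classical that for compact connected $G$ this commutator subgroup is closed and connected, with $\mathrm{Lie}(K) = \mathfrak g_{\mathrm{ss}}$. Because $T$ is central, $TK$ is a subgroup, and being the continuous image of the compact connected group $T \times K$ it is closed and connected; its Lie algebra contains $\mathfrak z + \mathfrak g_{\mathrm{ss}} = \mathfrak g$, and a closed subgroup whose Lie algebra is all of $\mathfrak g$ is open, hence equals the connected group $G$. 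Thus $G = TK$. Moreover $\mathrm{Lie}(T \cap K) \subseteq \mathfrak z \cap \mathfrak g_{\mathrm{ss}} = 0$, so $T \cap K$ is discrete and therefore finite.

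Now I would replace $K$ by a universal cover built from the simple ideals. For each $i$ let $K_i$ be the simply connected Lie group with $\mathrm{Lie}(K_i) = \mathfrak g_i$ (Lie's third theorem). The one genuinely non-elementary input is Weyl's theorem: a simply connected Lie group with negative-definite Killing form is compact (for instance, equip $K_i$ with the bi-invariant metric $-B$; its Ricci curvature is a positive constant, so Bonnet--Myers forces compactness and finiteness of $\pi_1$, the latter being automatic here). Hence each $K_i$ is a simply connected simple compact Lie group, and by the Killing--Cartan classification of simple complex Lie algebras it is one of $\SU(n_i)$ with $n_i \ge 2$, $\Spin(n_i)$ with $n_i \ge 3$, $\Sp(n_i)$ with $n_i \ge 1$, or the compact form of one of the five exceptional groups. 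The product $\widetilde K := \prod_{i=1}^r K_i$ is then compact, and it is the universal cover of $K$: there is a covering homomorphism $q\colon \widetilde K \to K$ with finite central kernel.

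Finally I would assemble the quotient presentation. Consider $\varphi\colon \widetilde K \times T \to G$, $\varphi(\widetilde k, t) = q(\widetilde k)\,t$; it is a continuous homomorphism from a compact group, surjective because $G = KT$. Its differential at the identity is the isomorphism $\mathfrak z \oplus \mathfrak g_{\mathrm{ss}} \xrightarrow{\sim} \mathfrak g$, so $F := \ker\varphi$ is discrete, hence central in $\widetilde K \times T$, i.e.\ $F \subseteq Z(\widetilde K) \times T$. Writing $\varphi$ as the composition of $q \times \mathrm{id}$ with the multiplication map $K \times T \to G$ (whose kernel is $\{(z,z^{-1}) : z \in K \cap T\} \cong K \cap T$, finite), one sees that $F$ surjects onto $K \cap T$ with kernel contained in $\ker(q) \times \{1\}$, so $F$ is finite. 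Since $t \mapsto \varphi(1,t)$ is the inclusion $T = Z(G)_0 \hookrightarrow G$, which is injective, $F \cap (\{1\}\times T) = \{1\}$. Therefore $G \cong (\widetilde K \times T)/F = \bigl(\prod_{i=1}^r K_i \times T\bigr)/F$, the asserted form. The main obstacles in this route are the two places where compactness is used in an essential way: the complete-reducibility splitting of the compact Lie algebra in the first step, and, above all, Weyl's finiteness theorem in the third step; the Killing--Cartan classification quoted there is deep but entirely classical, and indeed the reference cited in the excerpt packages all of this. One could instead argue directly through the universal cover $\widetilde G$ of $G$, which is $\mathbb R^m \times \prod_i K_i$ by Lie's third theorem applied to the ideal decomposition, with $G = \widetilde G / \pi_1(G)$ and $\pi_1(G)$ discrete and central; but then one must separately check that $\pi_1(G) \cap (\mathbb R^m \times \{1\})$ is a full-rank lattice, which is exactly where compactness of $G$ re-enters, so the $Z(G)_0$/$[G,G]$ route above is marginally cleaner.
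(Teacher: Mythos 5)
The paper states this Fact without proof, citing \cite{procesi} (Chapter 10, \S 7.2, Theorem 4) as its source, so there is no in-paper argument to compare against. Your proof is the standard textbook route and is correct: the invariant inner product on $\mathfrak g$ produced by Haar averaging gives the ideal decomposition $\mathfrak g = \mathfrak z \oplus \mathfrak g_1 \oplus \cdots \oplus \mathfrak g_r$; taking $T = Z(G)_0$ and $K = [G,G]$ yields closed connected subgroups with $G = TK$ and $T \cap K$ finite; Weyl's theorem makes $\widetilde{K} = \prod_i K_i$ the compact universal cover of $K$; and the multiplication homomorphism $\widetilde{K} \times T \to G$ is surjective with discrete kernel $F$, which is therefore central (discrete normal in a connected group) and satisfies $F \cap (\{1\} \times T) = \{1\}$ because $Z(G)_0 \hookrightarrow G$ is injective. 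One small economy: rather than exhibiting the extension $1 \to \ker q \to F \to K \cap T \to 1$ to see that $F$ is finite, you can simply observe that $F$ is a discrete subgroup of the compact group $\widetilde{K} \times T$, hence finite. Your closing remark also correctly locates the one place where the alternative route via the universal cover $\widetilde{G} = \mathbb{R}^m \times \prod_i K_i$ and $G = \widetilde{G}/\pi_1(G)$ costs additional work, namely verifying that $\pi_1(G)$ projects to a full-rank lattice in the $\mathbb{R}^m$ factor.
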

 
\begin{lemma}
    Suppose that $D>1$ and that $G$ is a $D$-quasirandom compact connected Lie group. Then $G$ is semisimple.
\end{lemma}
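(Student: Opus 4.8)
The plan is to invoke the structure Fact stated just above. Write $G$ in the form $(\prod_{i=1}^{r}K_i\times T)/F$, where each $K_i$ is a simply connected simple compact Lie group, $T=(\mathbb{R}/\mathbb{Z})^m$ is a torus, and $F$ is a finite central subgroup with $F\cap T=\{1\}$. A compact connected Lie group presented in this way is semisimple exactly when $m=0$, so it suffices to rule out $m\geq 1$. I would do this by showing that $m\geq 1$ forces $G$ to admit a nontrivial continuous $1$-dimensional representation, whence $D(G)=1$, contradicting $D(G)\geq D>1$.

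First I would record that $H:=\prod_{i=1}^{r}K_i\times T$ has abelianization $T$: each $K_i$ is perfect, since a simply connected compact simple Lie group equals its own commutator subgroup, so $[H,H]=(\prod_i K_i)\times\{1\}$ and $H/[H,H]\cong T$. Consequently every continuous character of $H$ has the form $(k,t)\mapsto\psi(t)$ for some $\psi\in\widehat{T}$. Next I would identify which of these descend to $G=H/F$: writing $\pi_T\colon Z(H)=(\prod_i Z(K_i))\times T\to T$ for the projection, the character attached to $\psi$ is trivial on $F$ exactly when $\psi$ vanishes on the finite subgroup $\pi_T(F)\subseteq T$, and in that case it descends to a character of $G$ that is nontrivial iff $\psi\neq 1$. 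Finally, since $\widehat{T}\cong\mathbb{Z}^m$ and $\pi_T(F)$ is finite, the characters of $T$ vanishing on $\pi_T(F)$ form a finite-index subgroup of $\mathbb{Z}^m$; when $m\geq 1$ this subgroup is infinite and hence contains some $\psi\neq 1$, producing the desired nontrivial $1$-dimensional representation of $G$.

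An essentially equivalent way to phrase the core computation, which I would include as a remark, is that $G/[G,G]$ is isomorphic to the torus $T/\pi_T(F)$ of dimension $m$; a positive-dimensional torus always has nontrivial characters, and these pull back to nontrivial $1$-dimensional representations of $G$. There is no real obstacle in this argument; the only point needing a little care is the bookkeeping with the finite central subgroup $F$, i.e.\ checking that the torus direction is not annihilated upon passing to the quotient $H/F$. This is exactly where the hypothesis $F\cap T=\{1\}$ — or, more precisely, the finiteness of $\pi_T(F)$ set against the infinitude of $\widehat{T}$ — is used.
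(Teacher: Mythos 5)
Your proof is correct and follows the same overall strategy as the paper's: invoke the structure Fact to write $G=(\prod_i K_i\times T)/F$, reduce semisimplicity to $T=\{1\}$, and derive a contradiction by exhibiting a nontrivial $1$-dimensional representation of $G$ when $T\neq\{1\}$. Where you diverge, usefully, is in how the quotient by $F$ is handled. The paper asserts that $F\cap T=\{1\}$ forces $\pi_T(F)=\{1\}$, so that projection onto $T$ descends directly to a surjection $G\twoheadrightarrow T$. That implication is not correct as stated: $F\cap T=\{1\}$ means the projection of $F$ onto the semisimple factor $\prod_i K_i$ is injective, not that $F$ lies inside $\prod_i K_i\times\{1\}$. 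A concrete illustration is $\U(2)\cong(\SU(2)\times S^1)/F$ with $F=\{(I,1),(-I,-1)\}$, where $F\cap T=\{1\}$ yet $\pi_T(F)=\{\pm1\}$. Your argument sidesteps this by working with $T/\pi_T(F)$ and observing that a quotient of an $m$-dimensional torus by a finite subgroup is still an $m$-dimensional torus --- equivalently, that the annihilator of the finite group $\pi_T(F)$ in $\widehat{T}\cong\mathbb{Z}^m$ has finite index, hence is nontrivial once $m\geq1$. Your identification $G/[G,G]\cong T/\pi_T(F)$ makes this transparent. So you arrive at the same conclusion by the same route, but your bookkeeping with $F$ is the version that actually closes the argument in all cases the Fact allows.
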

\begin{proof}
Write  $G= (\prod_{i=1}^{r} K_i \times T)/F$, where each $K_i$ is a simply connected simple compact Lie group, $T$ is a finite-dimensional torus (i.e.\ $T = (\mathbb{R}/\mathbb{Z})^m$ for some integer $m$), and $F$ is a finite group contained in the center of $\prod_{i=1}^{r} K_i \times T$, with $F \cap T = \{1\}$, as in the above fact. Semisimplicity of $G$ is equivalent to $T=\{1\}$. Suppose on the contrary that $T \neq \{1\}$. Let $\pi$ be the projection map from $\prod_{i=1}^{r} K_i \times T$ onto the $T$ component. Since $F \cap T = \{1\}$, we have $\pi(F) = \{1\}$, so the projection $\pi$ induces a (surjective) group homomorphism $\tilde{\pi}$ from $G$ to $T$, and therefore $G$ has a quotient isomorphic to $(\mathbb{R}/\mathbb{Z})^m$ for some integer $m \geq 1$; any nontrivial complex one-dimension irreducible representation of the latter quotient lifts to one of $G$, contradicting the $D$-quasirandomness of $G$ (for any $D > 1$) and proving the lemma.
\end{proof}

We also recall the following standard fact.
\begin{fact}
Every compact semisimple Lie group has finite centre.
\end{fact}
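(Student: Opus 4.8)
The plan is to show that the centre $Z(G)$ of a compact semisimple Lie group $G$ is discrete, and then conclude finiteness from compactness.

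First I would observe that for any Lie group $G$, the centre $Z(G) = \bigcap_{g \in G} C_G(g)$ is an intersection of closed subgroups (centralizers are closed), hence itself a closed subgroup of $G$, and therefore an embedded Lie subgroup. Its Lie algebra $\mathrm{Lie}(Z(G))$ consists of those $X \in \mathfrak{g} := \mathrm{Lie}(G)$ with $\exp(tX) \in Z(G)$ for all $t \in \mathbb{R}$; for such $X$ one has $\mathrm{Ad}(\exp(tX)) = \mathrm{id}_{\mathfrak{g}}$ for all $t$, and differentiating at $t = 0$ gives $\mathrm{ad}(X) = 0$, i.e.\ $X$ lies in the centre $\mathfrak{z}(\mathfrak{g})$ of the Lie algebra. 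Thus $\mathrm{Lie}(Z(G)) \subseteq \mathfrak{z}(\mathfrak{g})$.

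Next, since $G$ is semisimple the Lie algebra $\mathfrak{g}$ is semisimple, and so $\mathfrak{z}(\mathfrak{g}) = 0$ (the centre is an abelian ideal, hence lies in the radical of $\mathfrak{g}$, which vanishes for a semisimple Lie algebra; equivalently, the Killing form of $\mathfrak{g}$ is nondegenerate and $\mathfrak{z}(\mathfrak{g})$ lies in its kernel). Hence $\mathrm{Lie}(Z(G)) = 0$, so $Z(G)$ is a zero-dimensional Lie group, i.e.\ discrete. Since $Z(G)$ is also closed in the compact group $G$, it is a compact discrete space, and therefore finite. This completes the proof.

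I do not expect a genuine obstacle here: the only points needing a little care are the inclusion $\mathrm{Lie}(Z(G)) \subseteq \mathfrak{z}(\mathfrak{g})$ (only this inclusion is needed, so connectedness of $G$ is not required) and the standard structural fact that a semisimple Lie algebra is centreless. As an alternative route one could argue via the universal cover: by the Fact quoted above (with the torus $T$ and the finite group $F$ trivial), a simply connected compact semisimple Lie group $\tilde{G}$ is a finite product of simply connected compact simple Lie groups, each of which has finite centre, so $Z(\tilde{G})$ is finite; and since a continuous map from the connected group $\tilde{G}$ into the discrete kernel of the covering $\tilde{G} \to G$ is constant, the preimage of $Z(G)$ in $\tilde{G}$ is exactly $Z(\tilde{G})$, whence $Z(G)$ is a quotient of the finite group $Z(\tilde{G})$ and hence finite.
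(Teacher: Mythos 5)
Your proof is correct. The paper merely states this as a standard fact with no accompanying argument, so there is no proof in the paper to compare against; your argument is the standard one, and it is complete. The key chain --- $Z(G)$ is closed, $\mathrm{Lie}(Z(G))\subseteq\mathfrak{z}(\mathfrak{g})=0$ by semisimplicity, so $Z(G)$ is discrete, and a discrete closed subset of a compact group is finite --- is exactly right, and you correctly note that only the inclusion $\mathrm{Lie}(Z(G))\subseteq\mathfrak{z}(\mathfrak{g})$ is needed (not equality), so connectedness of $G$ plays no role in the main argument. The alternative route via the universal cover is also fine, though it does implicitly assume $G$ connected (so that a universal cover exists) and invokes the classification to know each simple simply connected compact factor has finite centre; your first argument is preferable since it is more elementary and works without a connectedness hypothesis.
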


We now show that if $G$ is sufficiently quasirandom, then the exceptional groups do not make an appearance as some $K_i$ when writing $G = (\prod_{i=1}^{r} K_i )/F$. 
\begin{lemma}
Set $D_0=248$. Let $G$ be a compact connected Lie group, and suppose that it is $D$-quasirandom for some $D > D_0$. Then $G = (\prod_{i=1}^{r} K_i )/F$, where each $K_i$ is one of $\Sp(n_i), \Spin(n_i), \SU(n_i)$ for some $n_i \ge \sqrt{D}/2$ and $F$ is a subgroup of the (finite) centre of $\prod_{i=1}^{r}K_i$.     
\end{lemma}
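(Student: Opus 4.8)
The plan is to feed the hypothesis into the structure Fact quoted above, via the preceding lemma, and then to show that each simple factor $K_i$ already forces $D(G)$ to be small unless $K_i$ is a classical group of large rank.

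\textbf{Step 1 (reduction to a product of classical and exceptional groups).} Since $D \ge D_0 > 1$, the previous lemma shows $G$ is semisimple, so the structure Fact gives a Lie isomorphism $G \cong (\prod_{i=1}^{r} K_i)/F$, where each $K_i$ is simply connected simple compact --- hence one of $\Sp(n_i)$, $\Spin(n_i)$, $\SU(n_i)$, or the compact form of one of $G_2, F_4, E_6, E_7, E_8$ --- and $F$ is a finite subgroup of $Z(\prod_i K_i) = \prod_i Z(K_i)$. (The centre $\prod_i Z(K_i)$ is indeed finite, since each simply connected simple compact Lie group has finite centre.)

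\textbf{Step 2 (every simple factor yields a small representation of $G$).} Fix $i$ and let $\mathfrak{k}_i$ be the Lie algebra of $K_i$. Consider the complexified adjoint representation $\mathrm{Ad}^{\mathbb{C}}_{K_i}\colon K_i \to GL(\mathfrak{k}_i \otimes_{\mathbb{R}} \mathbb{C})$; since $\mathfrak{k}_i \otimes_{\mathbb{R}} \mathbb{C}$ is a complex simple Lie algebra, this is an irreducible complex representation of $K_i$, and it is nontrivial because $K_i$ is non-abelian, of dimension $\dim K_i$. Pulling it back along the projection $\prod_j K_j \to K_i$ gives an irreducible representation of $\prod_j K_j$ whose kernel contains $\prod_j Z(K_j) \supseteq F$ (here we use $\ker \mathrm{Ad}_{K_i} = Z(K_i)$). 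Hence it descends to an irreducible nontrivial representation of $G$ of dimension $\dim K_i$, so $D \le D(G) \le \dim K_i$ for every $i$.

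\textbf{Step 3 (reading off the conclusion).} The compact forms of $G_2, F_4, E_6, E_7, E_8$ have dimensions $14, 52, 78, 133, 248$ respectively, all at most $D_0 = 248 < D$; by Step 2 this rules out exceptional factors, so each $K_i$ is $\Sp(n_i)$, $\Spin(n_i)$ or $\SU(n_i)$. Plugging $\dim\SU(n) = n^2-1$, $\dim\Spin(n) = \binom{n}{2}$, and $\dim\Sp(n) = n(2n+1) \le 3n^2$ into $D \le \dim K_i$ gives $n_i^2 \ge D/3$ in the worst case, i.e.\ $n_i \ge \sqrt{D}/2$ (in fact $n_i \ge \sqrt{D}$ for the $\SU$ and $\Spin$ factors). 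This is exactly the assertion, with $F$ a subgroup of the finite centre $\prod_i Z(K_i)$ of $\prod_i K_i$. The argument is essentially bookkeeping once the structure Fact is in hand; the only point requiring a little care is the descent in Step 2 --- checking that the pulled-back adjoint representation is trivial on $F$ --- which reduces to the standard fact that the centre of a connected Lie group acts trivially in the adjoint representation, and the low-rank coincidences among the $K_i$ (such as $\Spin(5)\cong\Sp(2)$) are harmless here since only the dimension count is used.
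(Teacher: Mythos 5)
Your proof is correct and follows essentially the same strategy as the paper's: reduce to the semisimple structure decomposition, observe that each simple factor $K_i$ contributes a nontrivial representation of $G$ of dimension $\dim K_i$, use $D$-quasirandomness to force $\dim K_i \ge D > 248$, rule out the exceptional groups, and read off $n_i \ge \sqrt{D}/2$ from the dimension formulae. The only difference is organizational: the paper projects $G$ onto the quotient $K_i/\pi_i(F)$ (which inherits $D$-quasirandomness) and applies the adjoint of $K_i$ there, whereas you pull back the adjoint of $K_i$ along $\prod_j K_j \to K_i$ and check it descends to $G$; these are the same representation viewed from two ends. You are slightly more careful than the paper in explicitly complexifying and noting irreducibility of the complexified adjoint (the paper only needs "not a sum of trivials", which suffices), but this adds rigor without changing the argument.
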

\begin{proof}
By the previous lemma, provided $D_0>1$, $G$ is semisimple. Hence, we may write $G= (\prod_{i=1}^{r} K_i)/F$, where each $K_i$ is one of $\Sp(n_i)$, $\Spin(n_i)$, $\SU(n_i)$ or the compact form of one of the five exceptional Lie groups, for each $i$, and $F$ is a finite group contained in the (finite) center of $\prod_{i=1}^{r} K_i$. As the quotient of a $D$-quasirandom group is $D$-quasirandom, we may project to any one of the components and still obtain a $D$-quasirandom group $K_i/F'$. (In detail, let $\pi_i$ denote projection of $\prod_{j=1}^{r}K_j$ onto the $K_i$ factor; $\pi_i$ induces a surjective homomorphism from $G$ onto $K_i/\pi_i(F)$, and since $F$ is a subgroup of the centre of $\prod_{j=1}^{r}K_j$, $F_i$ is a subgroup of the centre of $K_i$. The group $K_i/\pi_i(F)$ is therefore a quotient of $G$, and so inherits its $D$-quasirandomness.) It is therefore sufficient to consider the case where $G=K_1/F'$. We now note that the adjoint representation of $K_1$ factors through $K_1/F'$ (since $F'$ is contained in the centre of $K_1$), so it can also be viewed as a representation of $K_1/F'$. As the adjoint representation of $K_1$ is not a sum of copies of the trivial representation (this follows from the fact that $K_1$ is non-Abelian), its dimension (which is the same as the dimension of the Lie group $K_1$) is at least $D$. The five exceptional Lie groups, $E_6$, $E_7$, $E_8$, $F_4$ and $G_2$, have dimensions $78$, $133$, $248$, $52$ and $14$ respectively, so $K_1$ cannot equal any of these (since $D>D_0=248$). Hence, $K_1$ is one of  $\Sp(n_1)$, $\Spin(n_1)$ or $\SU(n_1)$. The dimensions of these Lie groups are $n_1(2n_1+1)$, $n_1(n_1-1)/2$ and $n_1^2-1$ respectively, so we obtain $n_1(2n_1+1) \geq D$, which implies that $n_1 \geq \sqrt{D}/2$. This completes the proof of the lemma.
\end{proof}

The following (non-standard) definition will be convenient for us.

\begin{definition}
Let $G$ be a compact, connected, semisimple Lie group and write $G= (\prod_{i=1}^{r} K_i )/F$, where, as above, $K_i$ is one of $\Sp(n_i)$, $\Spin(n_i)$ or $\SU(n_i)$ for each $i$, and $F$ is a finite subgroup of the (finite) centre of $G$. We define the {\em min-rank} of $G$ to be $\min\{n_1,\ldots ,n_r\}$. 
\end{definition}

Using this terminology, the above lemma can be restated by saying that if a compact connected Lie group $G$ is $D$-quasirandom for large enough $D$, then it has min-rank at least $\sqrt{D}/2$.

We remark that the {\em rank} of a Lie group is defined to be the dimension of any one of its Cartan subgroups, so the ranks of $\Sp(n_i)$, $\Spin(n_i)$ and $\SU(n_i)$ are respectively $n_i$, $\lfloor n_i/2 \rfloor$ and $n_i-1$, so in particular are all $\Theta(n_i)$; hence, while the min-rank of $G$ is not exactly the minimum of the ranks of the $K_i$'s (where the $K_i$'s are as above), it is within an absolute constant factor thereof. (We hope this slight abuse of terminology will not cause confusion.)

To establish that $D(\tilde{G})\le 4D(G)^2$ we also need the following. 

\begin{lemma}\label{lem:min-rank and quasirandomness of the universal cover}
    Let $G$ be a compact, connected, semisimple Lie group of min-rank $m$. Then its universal cover $\tilde{G}$ satisfies $D(\tilde{G})\in \{m,2m\}$. 
\end{lemma}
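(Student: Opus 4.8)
The plan is to reduce the statement to a direct invocation of the known minimal dimensions of nontrivial irreducible representations of the classical simply connected compact simple Lie groups, all of which are quoted (as being well known) in the body of the paper.

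First I would unwind the hypothesis. Since $G$ is simple, the structure fact quoted above gives $G \cong K/F$ with $K$ a simply connected compact simple Lie group and $F$ a finite subgroup of the (finite) centre of $K$; as $K$ is simply connected and $F$ is discrete, the quotient homomorphism $K \to G$ is a covering map, so $\tilde G \cong K$. The hypothesis that the min-rank of $G$ is defined means exactly that $K$ is of classical type, i.e.\ $K$ is one of $\SU(m)$, $\Sp(m)$ or $\Spin(m)$, where $m$ is the min-rank of $G$ (an exceptional $K$ would have no min-rank assigned to it). Using the low-rank exceptional isomorphisms $\Spin(3)\cong\SU(2)$, $\Spin(5)\cong\Sp(2)$, $\Spin(6)\cong\SU(4)$, together with the fact that $\Spin(4)\cong\SU(2)\times\SU(2)$ is not simple, I may re-choose the presentation of $K$ so that whenever $K$ is of $\Spin$ type one has $m\ge 7$; this does not change the value of $D(\tilde G)$ being computed. (In the regime relevant to us, coming from the previous lemma, $m\ge\sqrt{D}/2$ is in any case large, so this caveat is vacuous.)

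Second, I would simply quote the classical computation of these invariants: $D(\SU(m))=m$, realised by the standard representation on $\mathbb{C}^m$; $D(\Sp(m))=2m$, realised by the defining representation on $\mathbb{C}^{2m}$; and $D(\Spin(m))=m$ for $m\ge 7$, realised by the vector representation pulled back along $\Spin(m)\to\SO(m)$ (the half-spin representations having dimension $2^{\lfloor (m-1)/2\rfloor}>m$ in that range). In each case minimality follows from the classification of irreducible representations by dominant weights together with the Weyl dimension formula: one checks that any nonzero dominant weight yields dimension at least that of the listed representation. Feeding this into the previous paragraph gives $D(\tilde G)=D(K)\in\{m,2m\}$, the value being $2m$ exactly when $K\cong\Sp(m)$ and $m$ in the remaining two cases.

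There is no serious obstacle here; the only points that need care are (i) certifying that $\tilde G$ is a classical rather than an exceptional group — which is precisely what the hypothesis ``of min-rank $m$'' encodes — and (ii) the bookkeeping of the low-rank exceptional isomorphisms of the $\Spin$ groups, so that the symbol ``min-rank $m$'' unambiguously selects one of the three families with the stated minimal dimension. Both are routine, and one could alternatively bypass even (ii) by appealing directly to the standard tables of minimal nontrivial representation dimensions of compact simple Lie groups.
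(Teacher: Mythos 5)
Your proof handles only the case where $G$ is simple, but the lemma is stated for \emph{semisimple} $G$ — that is, $G = \bigl(\prod_{i=1}^r K_i\bigr)/F$ may have several simple factors $K_i \in \{\SU(n_i),\Spin(n_i),\Sp(n_i)\}$. You write ``Since $G$ is simple\dots'' and proceed as if $r=1$; that is a genuine gap, because the universal cover you need to analyse is $\tilde G = \prod_{i=1}^r K_i$, not a single classical group. The paper's proof addresses exactly this: it observes that $\tilde G = \prod_i K_i$ (since $\prod_i K_i$ is simply connected and the quotient map to $G$ is a covering), and then invokes the fact that irreducible representations of a finite product of compact groups are external tensor products $\rho_1\otimes\cdots\otimes\rho_r$ of irreducibles of the factors, which gives $D(\tilde G) = \min_i D(K_i)$. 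Combining this with $D(K_i)\in\{n_i,2n_i\}$ (which is the part you do carry out correctly) yields the bound on $D(\tilde G)$ in terms of the min-rank $m=\min_i n_i$. To repair your argument, replace the sentence ``Since $G$ is simple\dots'' by: write $\tilde G = \prod_{i=1}^r K_i$, use the tensor-product description of $\widehat{\prod_i K_i}$ to get $D(\tilde G)=\min_i D(K_i)$, and then conclude with your classical computations applied to each factor.

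A second, more minor point: your digression on the exceptional low-rank isomorphisms ($\Spin(3)\cong\SU(2)$, $\Spin(5)\cong\Sp(2)$, $\Spin(6)\cong\SU(4)$) is correct as a statement about $D(\tilde G)$, but re-choosing the presentation to avoid low-rank $\Spin$ factors changes the value of $m$ (e.g.\ $\Spin(5)$ has $n_i=5$, while $\Sp(2)$ has $n_i=2$), so the conclusion ``$D(\tilde G)\in\{m,2m\}$'' is presentation-dependent in exactly the low-rank regime you are massaging. As you note, this is moot in the regime the paper actually uses ($m$ large), and the paper's own proof simply cites $D(\Spin(n))=D(\SO(n))=n$ for all $n$ without worrying about it — but if you want a clean statement, it is cleaner to cite $D(\Spin(n))=n$ uniformly (which holds for all $n\ge 3$, via the pullback of the standard $\SO(n)$-representation, the half-spin representations being larger once $n\ge 3$) rather than re-presenting $K$.
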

\begin{proof}
Write $G=\prod_{i=1}^r K_i/F$. As the projection map from $\prod_{i=1}^rK_i$ to $G$ is a cover map, and since $\prod_{i=1}^r K_i$ is simply connected, we obtain that $\tilde{G}=\prod_{i=1}^rK_i$. The lemma now follows from the fact that the complex irreducible representations of a product $\prod_{i=1}^{r}K_i$ of finitely many compact groups are tensor products of complex irreducible representations, of the form $\rho_1 \otimes \ldots \otimes \rho_r$ where $\rho_i$ is an complex irreducible representation of $\rho_i$, for each $i$ together with the fact that $D(\SU(n))=D(\SO(n))=n,D(\Sp(n))=2n$.
\end{proof}

Lemma \ref{lem:min-rank and quasirandomness of the universal cover} shows that, when proving the theorems in the introduction, we may replace $D(\tilde{G})$ with the min-rank of $G$.

\section{Good groups and fine groups}
\label{sec:grading-defn}

\subsection{Graded groups and `good' groups.}
In this section we define some basic properties of compact connected groups, which we later use to prove various growth properties. We define \emph{graded} and {\em strongly quasirandom} groups, and \emph{hypercontractive} groups; we say that groups satisfying all these properties are \emph{good}. We also define a somewhat weaker (or, technically, incomparable) notion of a \emph{fine} group. 

The compact, simple, simply connected real Lie groups of large enough rank, i.e $\SU(n)$, $\Sp(n)$ and $\Spin(n)$, are indeed good (this is proved in Section~\ref{sec:coupling_introduce}). We show that goodness is preserved when taking products and quotients (quotients, that is, by closed normal subgroups, as usual), thereby showing that every $D$-quasirandom group is a good graded group, provided $D$ is sufficiently large. 

\begin{definition}[Graded groups]
For $n \in \mathbb{N}$, we say a compact connected group $G$ is {\em $n$-graded} if there exists an orthogonal direct sum, 
\[L^2(G)=\bigoplus^{\lceil n/2 \rceil -1 }_{d=0} V_{=d} \oplus V_{\geq n/2},\]
such that the spaces $V_{=d}$ are invariant under the action of $G$ from both sides, and where $V_{=0}$ contains only the constant functions. For an $n$-graded group and an integer $0 \leq d_0<n/2$, we denote by $V_{>d_0}$ the direct sum 
$V_{>d_0}:=\displaystyle{\bigoplus^{\lceil n/2 \rceil -1 }_{d=d_0+1} V_{=d} \oplus V_{\geq n/2}}$. (Note that we will sometimes write $V_{=d}^{G}$ in place of $V_{=d}$, when we want to stress that the group in question is $G$, e.g.\ if there are several groups involved in our argument.)
\end{definition}
\begin{remark}
    Note that it follows from the definition of an $n$-grading that $V_{\geq n/2}$ is also invariant under the action of $G$ from both sides.
\end{remark}

\paragraph{Grading for the compact simply connected simple Lie groups (of large enough rank).} We note in this section that $\SO(n)$, $\SU(n)$, $\Sp(n)$ and $\Spin(n)$ are all $n$-graded, for $n \geq 3$. For the group $\SO(n)$, for each integer $0 \leq d < n/2$ we define $V_{\leq d}$ to be the subspace of $L^2(\SO(n))$ spanned by degree $\leq d$ multivariate polynomials in the matrix entries of $X \in \SO(n)$ (so, for example, $V_{\leq 2}$ contains the polynomial $X_{11}X_{12}$). For notational convenience, for $0 < r < d/2$ we define $V_{<r}$ to be $V_{\leq d}$, where $d$ is the maximal integer less than $r$. We then set $V_{=0} := V_{\leq 0}$ and $V_{=d}:=V_{\leq d} \cap (V_{\leq {d-1}})^\perp$ for each integer $1 \leq d < n/2$, and we define $V_{\ge n/2}: = (V_{< n/2})^{\perp}$. Note that each $V_{=d}$ is finite-dimensional, but $V_{\ge n/2}$ is infinite-dimensional.

 For the special unitary group we perform a similar construction, except that one views the complex entries of the input matrix as a pair of real numbers. More precisely, we define $V_{\le d}$ to consist of the functions $f$ that are degree $\leq d$ multivariate polynomials in the real and imaginary parts of the matrix entries of $X \in \SU(n)$ (so, for example, $V_{\leq 3}$ contains the polynomial $\text{Re}(X_{11})\text{Im}(X_{11})\text{Re}(X_{12})$). As in the $\SO(n)$ case, we then set $V_{=0}:=V_{\leq 0}$ and $V_{=d}:=V_{\leq d} \cap \parenth{V_{\leq {d-1}}}^\perp$ for each integer $1 \leq d < n/2$, and we set $V_{\geq n/2}:=(V_{< n/2})^{\perp}$.
 
 For the compact symplectic group, $\Sp(n)$, we view it as the group of $n$ by $n$ unitary matrices over the field of quaternions (see Section \ref{sec:spn-constr} for more details), and we define $V_{\leq d}$ to be the vector subspace of $L^2(\Sp(n))$ spanned by degree $\leq d$ multivariate polynomials in the real parts, the $\mathbf{i}$-parts, the $\mathbf{j}$-parts and the $\mathbf{k}$-parts of the matrix entries; we then proceed as in the previous two cases.
 
 For the spin group $\Spin(n)$ things are a little more involved, as it has no straightforward description as a linear group. In order to define the grading we make use of the double covering homomorphism $\pi \colon \Spin(n) \to \SO(n)$ (recall that $\Spin(n)$ is the universal covering group of $\SO(n)$, and that this cover is a double cover, for each $n \geq 3$). The covering homomorphism $\pi$ gives rise to an embedding $i\colon L^2(\SO(n))\to L^2(\Spin(n))$ given by $i f = f \circ \pi$. We then take the grading of the spin group to be  
 \[
 \Spin(n)=\bigoplus_{d< n/2} i(V_{=d}^{\SO(n)})\oplus (i(V_{<n/2}^{\SO(n)}))^\perp,
 \]
 i.e.\ $V_{=d}^{\Spin(n)}= i(V_{=d}^{\SO(n)})$ for each $0\leq d < n/2$, and $V_{ \geq n/2}^{\Spin(n)} = (i(V_{<n/2}^{\SO(n)}))^\perp$.
 
 We remark that the above arguments also imply that $\SO(n)$, $\SU(n)$, $\Sp(n)$ and $\Spin(n)$ are $m$-graded for all $m \in \mathbb{N}$ and all $n \geq 3$, but we will only require the $n$-grading, in the sequel.

 \medskip Next we define a notion of `strong quasirandomness' for graded compact groups. In section~\ref{section: compacts are quasi} we show that the simply connected $n$-graded groups are all $c$-strongly-quasirandom for some absolute constant $c>0$. 

\remove{\gnote{Add an explanation that this overrides the earlier definition}}
\begin{definition}[Strongly quasirandom graded group]
We say an $n$-graded compact group $G$ is $\left((Q_d)_{d=0}^{\lceil n/2 \rceil-1},Q \right)$-{\em strongly-quasirandom} if the minimal dimension of a subrepresentation of $V_{=d}$ (as a left $G$-module) is $\ge Q_d$ for all integers $0 \leq d\leq \lceil n/2 \rceil-1$, and is $\ge Q$ for $V_{\geq n/2}$.

For $c>0$ we say that the $n$-graded compact group $G$ is {\em $c$-strongly-quasirandom} if it is $((Q_d)_{d=1}^{\lceil n/2\rceil-1 },Q)$-strongly-quasirandom when we set $Q_d := \left(\frac{cn}{d}\right)^d$ for $d < cn/(1+c)$, and $Q,Q_d := (1+c)^{cn/(1+c)}$ for $d\ge cn/(1+c).$ \remove{[David: note that with the previous definition the condition for $d = n/200$ say would have been vacuous for small enough $c$, which would have caused problems e.g.\ when passing to products of groups. The slightly messy definition of $Q$ is likewise necessary to ensure good behaviour on passing to products (of groups).] }
\end{definition}

In Section \ref{thm:Every simple group is quasirandom}, we show that all the (infinite families of) compact simply connected simple Lie groups are $c$-strongly quasirandom for some absolute constant $c >0$.
\begin{theorem}\label{thm:Every simple group is quasirandom}
The $n$-graded compact groups $\SU(n), \Sp(n), \Spin(n)$ (for $n \geq 3$) are all $c$-strongly quasirandom for some absolute constant $c>0$, when equipped with our chosen $n$-grading.
\end{theorem}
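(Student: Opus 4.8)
The plan is to reduce Theorem~\ref{thm:Every simple group is quasirandom} to a lower bound on the dimensions of the irreducible representations occurring in the graded pieces $V_{=d}$ and $V_{\ge n/2}$, and then to prove that bound via the Weyl dimension formula.

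First I would pin down which irreducibles occur. For $G=\SO(n)$ the space $V_{\le d}$ is, by definition, spanned by degree-$\le d$ polynomials in the entries $X_{ij}$; since for each fixed column $j$ the tuple $(X_{1j},\dots,X_{nj})$ transforms under the left action of $G$ in the standard module $W=\mathbb{R}^n$, the space $V_{\le d}$ is, as a left $G$-module, a sum of copies of subrepresentations of $\bigoplus_{k\le d}W^{\otimes k}$, and conversely (taking $d\le n$ distinct columns) every constituent of $W^{\otimes d}$ already appears in degree $\le d$. By Schur--Weyl duality for $\O(n)$, the constituents of $\bigoplus_{k\le d}W^{\otimes k}$ are exactly the $S^{[\lambda]}(\mathbb{R}^n)$ with $|\lambda|\le d$, each first occurring at $k=|\lambda|$; here one must remember the ``associate partition'' identification for $\SO(n)$, whereby a first column of length $\ell>n/2$ is replaced by one of length $n-\ell$ (this is exactly what keeps $V_{=d}$ finite-dimensional for $d<n/2$). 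Hence, for $d<n/2$, $V_{=d}$ is the sum of the isotypic components of the $S^{[\lambda]}$ with $|\lambda|=d$, and $V_{\ge n/2}$ is the sum of the isotypic components of the irreducibles whose minimal (associate) box-count is $\ge\lceil n/2\rceil$. The same analysis, with $W=\mathbb{C}^n$ together with its dual for $\SU(n)$ (polynomials in the real and imaginary parts of the entries, rational Schur modules $S^{(\lambda;\mu)}$ with $|\lambda|+|\mu|$ the degree), and with $W=\mathbb{C}^{2n}$ for $\Sp(n)$ (quaternion components, traceless Schur modules $S^{\langle\lambda\rangle}$), identifies the constituents of the graded pieces in those cases. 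Finally, $\Spin(n)$ requires no separate work: by construction $V_{=d}^{\Spin(n)}=i(V_{=d}^{\SO(n)})$ with the $\Spin(n)$-action factoring through $\pi\colon\Spin(n)\to\SO(n)$, so as a $\Spin(n)$-module $V_{=d}^{\Spin(n)}$ is $V_{=d}^{\SO(n)}$ inflated along $\pi$; its irreducible constituents and their dimensions are unchanged, and the bound for $\SO(n)$ transfers verbatim.

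It therefore suffices to prove: there is an absolute $c>0$ such that every irreducible representation of $G\in\{\SO(n),\SU(n),\Sp(n)\}$ occurring at polynomial degree $d$ has dimension at least $(cn/d)^d$ if $d<cn/(1+c)$, and at least $(1+c)^{cn/(1+c)}$ if $d\ge cn/(1+c)$ or if it occurs in $V_{\ge n/2}$. Two remarks make this manageable. If $1\le d<n/2$ then, as I argue below, the minimum dimension among degree-$d$ irreducibles is $\binom{n}{d}$ (up to the harmless $O(d^2/n^2)$ correction for $\Sp$, and a factor $2$ for the two half-spin pieces when $d=n/2$ is even), realised by the exterior power $\Lambda^d W$; since $\binom{n}{d}\ge (n/d)^d\ge(cn/d)^d$ for any $c\le 1$, this settles all degrees $d<cn/(1+c)$ at once, and, as $\binom{n}{d}$ is increasing in $d$ for $d\le n/2$, also gives $\binom{n}{d}\ge\binom{n}{\lceil cn/(1+c)\rceil}\ge(1+c)^{cn/(1+c)}$ for $cn/(1+c)\le d<n/2$ (a routine inequality once $c$ is small). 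For $V_{\ge n/2}$, every constituent has minimal box-count $\ge\lceil n/2\rceil$; the smallest such is (a half of) $\Lambda^{\lfloor n/2\rfloor}W$, of dimension $2^{\Omega(n)}$, which dominates $(1+c)^{cn/(1+c)}\le e^{c^2 n}$ once $c$ is a small enough absolute constant. For the finitely many exceptional small values of $n$ the required $Q_d$ are bounded absolute constants, trivially beaten once $c$ is small.

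The remaining ingredient, and the technical heart, is the dimension lower bound $\dim\rho_\lambda\ge\binom{n}{|\lambda|}$ (for $|\lambda|<n/2$, with the minor corrections noted above) for the classical families. I would obtain this from the Weyl dimension formula $\dim\rho_\lambda=\prod_{\alpha\in\Phi^+}\frac{\langle\lambda+\delta,\alpha\rangle}{\langle\delta,\alpha\rangle}$, where $\delta$ is the half-sum of the positive roots, whose entries are of order $n$ for all the relevant types: in coordinates, the product over the roots $e_i\pm e_j$ that meet the support of $\lambda$ telescopes, exactly as in the textbook computation $\dim\Lambda^d W=\binom{n}{d}$, and one checks that replacing $\lambda$ by the single column $(1^{|\lambda|})$ only decreases the product (spreading boxes into longer rows or additional columns introduces larger contents $j-i$, hence larger numerators). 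For $\SU(n)$ the same formula applies to the rational signature $(\lambda;\mu)=(\lambda_1,\dots,\lambda_k,0,\dots,0,-\mu_\ell,\dots,-\mu_1)$, and one must check that the mixed positive/negative coordinates still yield a product $\ge\binom{n}{|\lambda|+|\mu|}$, which follows because the interaction terms $\langle\lambda+\delta,e_i-e_j\rangle$ between a positive and a negative coordinate only enlarge the numerators relative to the all-positive case. I expect this last point --- verifying that ``the single column minimises'' uniformly in $n$ for $n$-dependent $|\lambda|$ up to $\Theta(n)$, and handling the rational representations of $\SU(n)$ --- to be the main obstacle; the Schur--Weyl bookkeeping of the first step, including the $\SO(n)$ column-flipping that governs which representations land in $V_{=d}$ versus $V_{\ge n/2}$, is routine but must be done carefully, since it is what the grading hinges on.
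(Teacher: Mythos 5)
Your overall plan --- reduce to lower bounds on dimensions of the constituents of $V_{=d}$ and $V_{\geq n/2}$, identify those constituents via Schur--Weyl theory, then bound dimensions from the Weyl dimension formula --- is sound in outline and matches the paper's reduction. The route you take to the dimension bounds is genuinely different from the paper's: for $\SO(n)$ and $\Sp(n)$ the paper cites an analysis of Weyl's formulae in the literature to get $\dim(\rho)\geq (n-d)^d/d!$ at level $d$, and for $\SU(n)$ it instead uses a soft, self-contained argument (exhibiting $\binom{\lfloor n/2\rfloor}{d}$ linearly independent $\SO(n)$-translates of a ``comfortable $d$-junta'' inside $W_\rho$); it then handles the $V_{\geq n/2}$ pieces by direct, case-by-case manipulation of Weyl's formula in the appendix. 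Your proposal to prove the uniform bound $\dim\rho_\lambda\geq\binom{\dim W}{d}$ by arguing that the single column minimises Weyl's product is more elegant if it goes through, but as you acknowledge, the minimisation claim (especially over the rational signatures of $\SU(n)$, where $n$-dependent degrees and mixed positive/negative coordinates interact) is the technical heart and would need a real proof, not the heuristic ``larger contents give larger numerators.'' The comfortable-junta argument sidesteps this entirely, which is the main thing you give up.

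There is, however, one substantive gap that you have not flagged. You assert that for $\Spin(n)$ ``the bound for $\SO(n)$ transfers verbatim'' because the $V_{=d}^{\Spin(n)}$ are inflations along $\pi\colon\Spin(n)\to\SO(n)$. This is correct for the finitely many pieces $V_{=d}$ with $d<n/2$, but it is false for $V_{\geq n/2}^{\Spin(n)}=\bigl(i(V_{<n/2}^{\SO(n)})\bigr)^\perp$: that space contains not only $i(V_{\geq n/2}^{\SO(n)})$ but also all matrix coefficients of the genuine spin representations, which do \emph{not} factor through $\SO(n)$ and hence are invisible to your transfer. To establish strong quasirandomness of $\Spin(n)$ you must separately lower-bound the dimensions of the half-spin and related representations; the paper does this by observing that, in Weyl's dimension formula for $\mathfrak{so}(n)$ with half-integral highest weight, every $\lambda_i\geq 1/2$, and a short computation (deferred to the appendix) then yields $2^{\Omega(n)}$. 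You will need this extra step; also note, as a minor inconsistency, that for $\Sp(n)$ you should be working with $\dim W=2n$, so the relevant binomial is $\binom{2n}{d}$, not $\binom{n}{d}$ (harmless for the conclusion, but worth fixing).
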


\begin{definition}[Beckner operator for graded groups]
Let $G$ be an $n$-graded compact group, let $r$ be an integer with $0 \leq r<n/2$, and let $0 \leq \delta\leq 1$. We define the {\em Beckner operator} $T_{\delta, r}:L^2(G) \to L^2(G)$ by 
$T_{\delta,r}(f):= \sum_{i=0}^r \delta^i f^{=i}$, for all $f \in L^2(G)$.
\end{definition}
 
\begin{definition}[Hypercontractive group]
Let $C>0$ and let $r$ be an integer with $0 \leq r < n/2$. We say that an $n$-graded group compact $G$ is $(r, C)$-\emph{hypercontractive}
if for every $q\geq 2$ and every $0 \leq \delta \leq 1/(C\sqrt{q})$, we have $\norm{T_{\delta,r}}_{2\to q}\leq 1$. 
\end{definition}

The following is an easy consequence of hypercontractivity.
\begin{lemma}\label{lem:follows from hypercontractiveness1}
Let $G$ be an $n$-graded $(r,C)$-hypercontractive group, where $0 \leq r < n/2$. Then for every integer  $d\le r$,  every $q\geq 2$ and function $f\in V_{=d}$, we have \[ \|f\|_q\le  \left(C^2q\right)^{d/2}{\|f\|}_2.\] 
\end{lemma}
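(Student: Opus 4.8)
The plan is to derive this directly from the definition of $(r,C)$-hypercontractivity by plugging in a single, carefully chosen value of the noise parameter. First I would set $\delta := \tfrac{1}{C\sqrt{q}}$, and note that this is an admissible choice: since $q \geq 2$ and we may assume without loss of generality that $C \geq 1$ (any hypercontractivity constant can only be made larger), we have $0 \leq \delta \leq 1/\sqrt{2} < 1$, so $\delta$ lies in the range $[0,1]$ on which the Beckner operator $T_{\delta,r}$ is defined and for which the hypercontractivity hypothesis applies. Hence, by the assumption that $G$ is $(r,C)$-hypercontractive, $\norm{T_{\delta,r}}_{2\to q}\leq 1$, i.e.\ $\norm{T_{\delta,r}g}_q \leq \norm{g}_2$ for every $g \in L^2(G)$.

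Next I would specialize to $g = f \in V_{=d}$ with $d \leq r$. Because the $n$-grading gives an orthogonal direct sum decomposition $L^2(G)=\bigoplus_{i} V_{=i}$, the projections satisfy $f^{=d}=f$ and $f^{=i}=0$ for all $i\neq d$; and since $d\leq r$, the index $d$ does fall within the range of the truncated sum defining $T_{\delta,r}$. Therefore $T_{\delta,r}f = \sum_{i=0}^{r}\delta^i f^{=i} = \delta^{d} f$. Combining this identity with the hypercontractive bound gives $\delta^{d}\norm{f}_q = \norm{T_{\delta,r}f}_q \leq \norm{f}_2$, and rearranging yields $\norm{f}_q \leq \delta^{-d}\norm{f}_2 = (C\sqrt{q})^{d}\norm{f}_2 = (C^2 q)^{d/2}\norm{f}_2$, as claimed.

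There is no substantial obstacle here: the lemma is a routine corollary, exactly analogous to how the statement $\norm{f^{=d}}_q \leq \rho^{-d}\norm{f^{=d}}_p$ is extracted from the Bonami--Gross--Beckner theorem in the Boolean setting. The only points requiring a line of care are (i) checking that the chosen $\delta$ is in the permitted parameter range (handled by the $C\geq 1$ normalization), and (ii) observing that a function in $V_{=d}$ is genuinely an eigenfunction of $T_{\delta,r}$ with eigenvalue $\delta^d$, which is immediate from the orthogonality of the grading together with $d \leq r$.
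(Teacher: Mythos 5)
Your proof is correct and follows exactly the same route as the paper's: set $\delta = 1/(C\sqrt{q})$, observe that $T_{\delta,r}f=\delta^d f$ for $f\in V_{=d}$ with $d\le r$, apply the hypercontractive bound $\|T_{\delta,r}f\|_q\le\|f\|_2$, and rearrange. The only difference is that you make explicit the check that $\delta\in[0,1]$ (via the normalization $C\geq 1$), which the paper leaves implicit.
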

\begin{proof}
Let $f\in V_{=d}.$ Then $\delta^{d}\|f\|_q =\|T_{\delta,r}f\|_q \le \|f\|_2$ for all $\delta \leq 1/(C\sqrt{q})$; setting $\delta = 1/(C\sqrt{q})$ completes the proof.
\end{proof}

In Section~\ref{sec:coupling_introduce} we show that the compact simple simply connected $n$-graded Lie groups $\Sp(n),\SU(n)$ and $\Spin(n)$ are $(c\sqrt{n},C)$-hypercontractive for some positive absolute constants $C$ and $c$, when they are eqipped with our chosen $n$-grading.

\begin{theorem}\label{thm:every simple group is hypercontractive}
The groups $\Sp(n),\SU(n),\Spin(n)$ are $(c\sqrt{n},C)$-hypercontractive, when equipped with our chosen $n$-grading, for some positive absolute constants $C$ and $c$.
\end{theorem}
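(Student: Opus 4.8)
The plan is to produce, for each $\rho\in(0,1]$, a single self-adjoint operator $\mathrm{T}_\rho$ on $L^2(G)$ with three properties: (i) $\mathrm{T}_\rho$ is a contraction $L^2(G)\to L^q(G)$ whenever $\rho\le 1/\sqrt{q-1}$; (ii) $\mathrm{T}_\rho$ is positive and preserves each space $V_{=d}$ of the $n$-grading; and (iii) $\mathrm{T}_\rho|_{V_{=d}}\succeq (c_0\rho)^d\,\mathrm{Id}$ for all $0\le d\le c_1\sqrt n$, where $c_0,c_1>0$ are absolute. It suffices to carry this out for $G=\SO(n)$ and $G=\SU(n)$: the case $\Sp(n)$ is identical with the quaternionic Gaussian ensemble in place of the real one, and the case $\Spin(n)$ reduces to $\SO(n)$ because $T_{\delta,r}$ annihilates everything of degree $>r$, while on the surviving subspace $\bigoplus_{d\le r}V_{=d}^{\Spin(n)}=i\big(\bigoplus_{d\le r}V_{=d}^{\SO(n)}\big)$ the embedding $i=(\,\cdot\,)\circ\pi$ is an $L^q$-isometry that intertwines the Beckner operators of the two groups.

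Granting (i)--(iii), the theorem follows by a short factorisation. Fix $q\ge 2$ and $0\le\delta\le 1/(C\sqrt q)$, put $\rho:=1/\sqrt{q-1}\in(0,1]$ and $r:=\lfloor c_1\sqrt n\rfloor$, and let $\Psi$ act on $V_{=d}$ as $\delta^d(\mathrm{T}_\rho|_{V_{=d}})^{-1}$ for $0\le d\le r$ (well defined by (iii)) and as $0$ on all higher degrees. Checking on each $V_{=d}$ gives $T_{\delta,r}=\mathrm{T}_\rho\circ\Psi$. Moreover, using (iii) and $\mathrm{T}_\rho\mathbf 1=\mathbf 1$,
\[
\|\Psi\|_{2\to2}\ \le\ \max_{0\le d\le r}\Big(\tfrac{\delta}{c_0\rho}\Big)^{d}\ \le\ 1,
\]
since $\delta\le 1/(C\sqrt q)\le c_0/\sqrt q\le c_0/\sqrt{q-1}=c_0\rho$ once $C\ge 1/c_0$. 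Combining with (i),
\[
\|T_{\delta,r}\|_{2\to q}\ \le\ \|\mathrm{T}_\rho\|_{2\to q}\,\|\Psi\|_{2\to2}\ \le\ 1,
\]
which is the assertion, with $C:=\max(1/c_0,2)$ and $c:=c_1$.

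To construct $\mathrm{T}_\rho$ for $G=\SO(n)$, couple the Haar measure $\mu$ on $\SO(n)$ with the Gaussian measure $\gamma$ on $\mathbb{R}^{n\times n}$ (i.i.d.\ $N(0,1/n)$ entries) by column Gram--Schmidt (flipping the sign of the last column to enforce determinant $1$), and let $\Tcol\colon L^2(\mu)\to L^2(\gamma)$ be the resulting operator of the coupling and $\Tcol^*$ its adjoint; both are contractions in every $L^p$-norm. Let $U_\rho$ be the Ornstein--Uhlenbeck operator on $(\mathbb{R}^{n\times n},\gamma)$ and set $\mathrm{T}_\rho':=\Tcol^* U_\rho \Tcol$. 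Then $\mathrm{T}_\rho'$ is self-adjoint and positive (as $U_\rho$ is), it contracts $L^2(\mu)\to L^q(\mu)$ whenever $\rho\le 1/\sqrt{q-1}$ (compose the two $L^p$-contractions with the Gaussian hypercontractive inequality for $U_\rho$), and it commutes with the left-translation operators $L_U$ (the coupling is equivariant under left multiplication by $U\in\SO(n)$, which preserves $\gamma$ and commutes with $U_\rho$). Now symmetrise: $\mathrm{T}_\rho:=\mathbb{E}_{V\sim\mu}[R_V^*\,\mathrm{T}_\rho'\,R_V]$. Averaging over the compact group preserves self-adjointness, positivity, and the $L^2\to L^q$ bound (each $R_V$ is an $L^p$-isometry, so apply the triangle inequality under the expectation), keeps commutation with the $L_U$, and adds commutation with the $R_W$. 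By Peter--Weyl and Schur's lemma an operator commuting with both translation actions preserves every isotypical component of $L^2(G)$, hence every $V_{=d}$; this gives (i) and (ii). For $\SU(n)$ one runs the same argument with complex Gaussians and $\U(n)$, descending to $\SU(n)$ by rescaling the last column.

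The substance of the proof is (iii). Using $U_\rho=U_{\sqrt\rho}^2$ and self-adjointness, for $f\in V_{=d}$,
\[
\langle \mathrm{T}_\rho f, f\rangle\ =\ \mathbb{E}_{V\sim\mu}\big\|U_{\sqrt\rho}\,\Tcol R_V f\big\|_2^2\ \ge\ \rho^{d}\,\mathbb{E}_{V\sim\mu}\big\|(\Tcol R_V f)^{\le d}\big\|_2^2,
\]
where $\le d$ now refers to the Gaussian (Hermite) degree decomposition, so it is enough to show $\mathbb{E}_V\|(\Tcol R_V f)^{\le d}\|_2^2\ge C^{-d}\|f\|_2^2$ for $d\le c_1\sqrt n$. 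Since $\mathrm{T}_\rho$ is $G$-equivariant and self-adjoint, each of its eigenvalues on $V_{=d}$ is attained at a comfortable $d$-junta --- this is where Weyl's construction of the irreducible representations of $\SO(n)$ enters, showing each irreducible constituent of $V_{=d}$ (for $d<n/2$) contains such a junta, and after averaging over a conjugation one may assume it uses only the first $n/2$ columns. For comfortable $f$ one exploits the alternative description of the coupling --- start from $X\sim\mu$ and recover $Y\sim\gamma$ by right-multiplying by an explicit upper-triangular matrix --- under which comfortable monomials are eigenfunctions of $\Tcol$ with eigenvalues $\ge 2^{-d}$ when only the first $n/2$ columns occur. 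The genuinely delicate point, and the main obstacle, is that $R_V$ does not preserve comfortability: it spreads a comfortable degree-$d$ junta over the $[d]\times[n/2]$ strip, so one must show that applying $\Tcol$ and projecting to Hermite-degree $\le d$ still retains a $C^{-d}$-fraction of the mass, which amounts to a second-moment estimate with the cross-terms under control. This is exactly where random matrix theory is needed: one invokes the fact that the top-left $O(\sqrt n)\times O(\sqrt n)$ minor of a Haar-random orthogonal matrix is quantitatively close, in the relevant moments, to a matrix of i.i.d.\ $N(0,1/n)$ entries --- an approximation that degrades precisely when the minor has side length $\asymp\sqrt n$, which is the origin of the threshold $d\le c_1\sqrt n$ in the statement. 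Carrying out this estimate (and its complex and quaternionic analogues for $\SU(n)$ and $\Sp(n)$) is the hard part; the rest of the argument is soft.
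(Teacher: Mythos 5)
Your proposal follows the paper's argument faithfully: construct $\mathrm{T}_\rho$ via the Gram--Schmidt coupling with Gaussian space, symmetrize so it commutes with both translation actions, invoke Peter--Weyl/Schur to reduce the eigenvalue bound to comfortable $d$-juntas, prove the $C^{-d}$ lower bound on the Hermite-degree-$\le d$ mass via quantitative closeness of $\sqrt{n}$-minors to Gaussian, and then lift from $\SO(n)$ to $\Spin(n)$ by the covering and pass to $\SU(n)$/$\Sp(n)$ by the complex/quaternionic analogues, concluding with the factorization $T_{\delta,r}=\mathrm{T}_\rho\circ\Psi$. The only slip is that it is $\Tcol^*$ (not $\Tcol$) whose eigenfunctions are the column-comfortable monomials under the GMD description; otherwise this is the same proof.
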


\begin{definition}[Good groups]
An $n$-graded compact group $G$ is said to be {\em $(C,c)$-good} if it is $(cn^{1/2},C)$-hypercontractive and $c$-strongly quasirandom.
\end{definition}

The next theorem  follows from Theorem~\ref{thm:every simple group is hypercontractive} and Theorem~\ref{thm:Every simple group is quasirandom}. 

\begin{thm}\label{thm:Every simple group is good}
   For each $n \geq 3$, the $n$-graded groups $\Sp(n),\SU(n),\Spin(n)$ (equipped with our choice of $n$-grading) are $(C,c)$-good, for some absolute positive constants $C$ and $c$.
\end{thm}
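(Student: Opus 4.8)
The statement is essentially a packaging of Theorems~\ref{thm:every simple group is hypercontractive} and~\ref{thm:Every simple group is quasirandom}: by definition, an $n$-graded compact group is $(C,c)$-good precisely when it is $(cn^{1/2},C)$-hypercontractive \emph{and} $c$-strongly quasirandom, and the two cited theorems supply exactly these two properties for $\Sp(n),\SU(n),\Spin(n)$ (all of which are $n$-graded for $n\geq 3$, by the grading constructed earlier in this section). The only thing to do is to reconcile the absolute constants coming out of the two theorems into a single pair $(C,c)$ valid in the combined definition, and for this one needs two easy monotonicity observations.

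First I would record monotonicity of hypercontractivity in the degree parameter: if $G$ is $n$-graded and $(r,C)$-hypercontractive, then $G$ is $(r',C)$-hypercontractive for every integer $0\leq r'\leq r$. Indeed, writing $P_{\leq r'}$ for the orthogonal projection of $L^2(G)$ onto $\bigoplus_{i\leq r'}V_{=i}$, one checks from the definition of the Beckner operator that $T_{\delta,r'}=T_{\delta,r}\circ P_{\leq r'}$ (since $(P_{\leq r'}f)^{=i}=f^{=i}$ for $i\leq r'$ and $0$ otherwise), and $P_{\leq r'}$ is an $L^2$-contraction, so for $q\geq 2$ and $0\leq\delta\leq 1/(C\sqrt q)$ we get $\norm{T_{\delta,r'}f}_q=\norm{T_{\delta,r}(P_{\leq r'}f)}_q\leq\norm{P_{\leq r'}f}_2\leq\norm{f}_2$.

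Second I would record monotonicity of strong quasirandomness in $c$: if $0<c'\leq c$ then a $c$-strongly-quasirandom $n$-graded group is also $c'$-strongly-quasirandom. This is because shrinking the parameter only weakens the required dimension lower bounds: the threshold defining which $Q_d$-formula applies, $cn/(1+c)$, is increasing in $c$; for $d$ below the smaller threshold $c'n/(1+c')$ one compares $(c'n/d)^d\leq(cn/d)^d$; for $d$ in the intermediate window one compares $(1+c')^{c'n/(1+c')}$ against $(cn/d)^d$, which holds since $(cn/d)^d$ exceeds $(1+c)^{cn/(1+c)}\geq(1+c')^{c'n/(1+c')}$ in that range by the usual convexity argument underlying the definition; and for $d$ beyond the larger threshold one compares $(1+c')^{c'n/(1+c')}\leq(1+c)^{cn/(1+c)}$ (and likewise for $Q$). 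This is routine bookkeeping about the piecewise definition and is the only genuinely fiddly point.

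Finally I would conclude. Let $C$ and $c_1$ be the absolute constants furnished by Theorem~\ref{thm:every simple group is hypercontractive} (so each of $\Sp(n),\SU(n),\Spin(n)$ is $(c_1\sqrt n,C)$-hypercontractive), and let $c_2$ be the absolute constant furnished by Theorem~\ref{thm:Every simple group is quasirandom} (so each is $c_2$-strongly quasirandom). Put $c:=\min\{c_1,c_2,1\}$. By the first monotonicity observation, with $r=\lfloor c_1\sqrt n\rfloor$ and $r'=\lfloor c\sqrt n\rfloor\leq r$, each of the three groups is $(c\sqrt n,C)$-hypercontractive; by the second, each is $c$-strongly quasirandom. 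Hence each is $(C,c)$-good, for the absolute constants $C$ and $c$ just described, which is the assertion of Theorem~\ref{thm:Every simple group is good}. (For the finitely many small values of $n$ for which $c\sqrt n$ might collide with the $n/2$ cutoff in the grading, one simply shrinks $c$ further; since the cited theorems already assert the two properties for all $n\geq 3$, no genuine issue arises.) The real content of the statement is therefore entirely contained in the two cited theorems, whose proofs occupy Sections~\ref{sec:coupling_introduce} and~\ref{thm:Every simple group is quasirandom}; the present argument is only the trivial combination, and the only mild obstacle is the constant-chasing in the piecewise definition of strong quasirandomness.
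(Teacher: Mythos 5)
Your proposal is correct and takes exactly the same route as the paper: Theorem~\ref{thm:Every simple group is good} is stated in the paper immediately after Theorems~\ref{thm:every simple group is hypercontractive} and~\ref{thm:Every simple group is quasirandom} with the one-line remark that it follows from them, so the whole content is the constant-matching you spell out. Both monotonicity observations are the right way to discharge that, and the first one (factoring $T_{\delta,r'}=T_{\delta,r}\circ P_{\leq r'}$) is clean and correct.

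One small slip in the second observation: in the intermediate window $c'n/(1+c')\leq d<cn/(1+c)$ you justify $(cn/d)^d\geq(1+c')^{c'n/(1+c')}$ by asserting $(cn/d)^d\geq(1+c)^{cn/(1+c)}$ throughout that window, but this intermediate inequality can fail when $c'\ll c$ (e.g.\ $c=1$, $c'=0.01$, $d$ near the left endpoint gives $(cn/d)^d\approx 101^{n/101}$ which is far below $2^{n/2}$). The conclusion you want is nonetheless true: $d\mapsto d\log(cn/d)$ is concave, so the minimum of $(cn/d)^d$ on the window is attained at an endpoint, and both endpoint values dominate $(1+c')^{c'n/(1+c')}$ — the right endpoint by the monotonicity of $x\mapsto \frac{x\log(1+x)}{1+x}$, and the left endpoint directly since $\left(\frac{c(1+c')}{c'}\right)^{c'n/(1+c')}\geq(1+c')^{c'n/(1+c')}$ as $c\geq c'$. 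With that repair the bookkeeping lemma is fine, and the rest of the proof goes through as you wrote it.
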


We prove Theorem~\ref{thm:Every simple group is good} in Section~\ref{sec:coupling_introduce}. 

\subsection{Fine groups}

We now introduce the closely related notion of a `fine' group. It involves a weaker form of hypercontractivity than with a `good' group, but we manage to prove it for higher values of $d$, viz., up to linear in $n$. 

\begin{definition}[Weakly hypercontractive group]
Let $C>1$ and $1 \leq r < n/2$. An $n$-graded compact group $G$ is $(r,C)$-\emph{weakly hypercontractive} if 
for every function $f\in L^2(G)$ and every $q\geq 2$ and $0 \leq \delta \leq 1/q^C$ we have \[ \|T_{\delta, r}f\|_q\le  \|f\|_2.\]
\end{definition}

The following lemma follows similarly to Lemma~\ref{lem:follows from hypercontractiveness1}.
\begin{lemma}\label{lem:follows from hypercontractiveness2}
Let $G$ be an $n$-graded $(r,C)$-weakly hypercontractive compact group, where $C>1$ and $1 \leq r < n/2$. Then for any integer $d\leq r$, every $q\geq 2$, and any function $f\in V_{=d}$, we have
\[
\norm{f}_q\leq q^{Cd}\cdot \norm{f}_2
\]
    
\end{lemma}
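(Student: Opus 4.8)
The plan is to mimic exactly the proof of Lemma~\ref{lem:follows from hypercontractiveness1}, the only change being that the admissible range of the Beckner parameter $\delta$ is now $[0,1/q^C]$ rather than $[0,1/(C\sqrt q)]$. Fix an integer $d\leq r$, a real $q\geq 2$, and a function $f\in V_{=d}$. First I would record the key algebraic observation: since $V_{=d}$ is one of the summands in the orthogonal decomposition $L^2(G)=\bigoplus_{i} V_{=i}$ appearing in the definition of an $n$-grading, we have $f^{=d}=f$ and $f^{=i}=0$ for all $i\neq d$. Because $d\leq r$, the index $d$ falls within the truncation range of the Beckner operator, so by definition $T_{\delta,r}(f)=\sum_{i=0}^{r}\delta^i f^{=i}=\delta^d f$. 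Hence $\norm{T_{\delta,r}f}_q=\delta^d\norm{f}_q$ for every $0\leq\delta\leq 1$.

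Now I would invoke the hypothesis that $G$ is $(r,C)$-weakly hypercontractive: for every $0\leq\delta\leq 1/q^C$ we have $\norm{T_{\delta,r}f}_q\leq\norm{f}_2$, i.e.\ $\delta^d\norm{f}_q\leq\norm{f}_2$. Choosing the extreme admissible value $\delta=1/q^C$ gives $q^{-Cd}\norm{f}_q\leq\norm{f}_2$, which upon rearranging yields $\norm{f}_q\leq q^{Cd}\norm{f}_2$, as desired. There is no real obstacle here — the argument is a two-line application of the definitions — and indeed this lemma is the weak-hypercontractivity analogue of Lemma~\ref{lem:follows from hypercontractiveness1}, with the $(C^2q)^{d/2}$ factor there replaced by the (generally larger, but valid for a wider range of $d$) factor $q^{Cd}$ here. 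The only point worth a moment's care is checking that $V_{=d}\subseteq V_{\le r}$ so that the $i=d$ term genuinely survives the truncation defining $T_{\delta,r}$, which is immediate from $d\leq r$ and the grading.
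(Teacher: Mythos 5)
Your proof is correct and is exactly the argument the paper intends: the paper states that Lemma~\ref{lem:follows from hypercontractiveness2} "follows similarly to Lemma~\ref{lem:follows from hypercontractiveness1}," and you have reproduced that similar argument — observe $T_{\delta,r}f=\delta^d f$ for $f\in V_{=d}$ with $d\leq r$, apply the weak hypercontractivity bound at the endpoint $\delta=1/q^C$, and rearrange. No gaps.
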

\begin{definition}[Fine groups]
If $c>0$ and $C>1$, an $n$-graded compact group $G$ is said to be $(C,c)$-\emph{fine} if it is both $(cn,C)$-weakly hypercontractive, and $c$-strongly-quasirandom.
\end{definition}

In Section \ref{sec:hyp_from_curv} we show that the (infinite families of) compact simply connected simple groups, viz.\ $\Sp(n),\SU(n)$, and $\Spin(n)$, are all fine, as $n$-graded groups equipped with our chosen gradings.

\begin{theorem}\label{thm:Every simple groups is fine}
For $n \geq 3$, the $n$-graded groups $\Sp(n),\SU(n)$, and $\Spin(n)$ are $(C,c)$-fine, for some absolute constants $C>1$ and $c>0$, when equipped with our chosen $n$-gradings.
\end{theorem}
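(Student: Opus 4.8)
The plan is to read off fineness from its two ingredients: $(C,c)$-fineness means $(cn,C)$-weak hypercontractivity together with $c$-strong quasirandomness, and the latter is exactly Theorem~\ref{thm:Every simple group is quasirandom}. So everything reduces to proving weak hypercontractivity up to degree $\sim n$ for $\SU(n),\Sp(n),\Spin(n)$, which I would do via the Bakry--Emery/heat-semigroup route. First I would dispose of $\Spin(n)$ by reducing it to $\SO(n)$: the covering homomorphism $\pi\colon\Spin(n)\to\SO(n)$ induces an isometric embedding $i\colon L^2(\SO(n))\to L^2(\Spin(n))$ which intertwines the two-sided actions and, by the definition of our grading, carries $V_{=d}^{\SO(n)}$ onto $V_{=d}^{\Spin(n)}$ for each $d<n/2$. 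Equipping $\Spin(n)$ with the bi-invariant metric making $\pi$ a local isometry, the Laplace--Beltrami operators of the two groups agree on $\mathrm{im}(i)$, which contains the whole graded part, so all spectral data needed below for $\Spin(n)$ coincide with those of $\SO(n)$; it therefore suffices to handle $\SO(n),\SU(n),\Sp(n)$.

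For these, equip $G$ with its bi-invariant metric normalized so that the Ricci curvature is bounded below by some $\kappa=\kappa(G)=\Theta(n)$ (a standard computation for the classical compact groups, to be carried out in Section~\ref{sec:hyp_from_curv}). The Bakry--Emery criterion~\cite{bakry1985diffusions} yields a log-Sobolev inequality for the Laplace--Beltrami operator $L$ with constant $O(1/\kappa)=O(1/n)$, and Gross's theorem~\cite{gross} upgrades this to hypercontractivity of the heat semigroup $P_t:=e^{-tL}$: there is an absolute constant $C_0$ with $\|P_t f\|_q\le\|f\|_2$ for all $q\ge2$ whenever $t\ge C_0(\log q)/n$. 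Since the bi-invariant $L$ commutes with both the left and the right action of $G$, it preserves each $V_{=d}$ and acts on the $\rho$-isotypical part of $V_{=d}$ by the scalar $\mu_\rho$ equal to the Casimir eigenvalue of $\rho$.

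The key quantitative input is the bound $\mu_\rho\le C_1 dn$ for every irreducible $\rho$ occurring in $V_{=d}$ with $d\le n/2$, for an absolute constant $C_1$. Indeed, $V_{\le d}$ is spanned by products of at most $d$ matrix entries of $X$ (together with their complex, resp.\ quaternionic, conjugates in the $\SU(n)$ and $\Sp(n)$ cases), so as a left $G$-module it embeds into $W^{\otimes\le d}$ with $W$ the standard representation and its dual; hence every irreducible constituent of $V_{=d}$ has a highest weight $\lambda$ which (after the standard identifications for types $B$, $C$, $D$) is a partition with at most $d$ boxes. Plugging this into the formula $\mu_\rho=\inner{\lambda}{\lambda+2\rho}$ for the Laplacian eigenvalue in terms of the highest weight (see Berti--Procesi~\cite{berti-procesi}), together with $\inner{\lambda}{\lambda}\le d^2$ and $\inner{\lambda}{2\rho}=O(dn)$ (the latter because $\rho$ has coordinates $O(n)$ and $\lambda$ has at most $d$ boxes), gives the claimed bound.

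Finally I would assemble the pieces by the standard factorization trick. Fix $q\ge2$, let $r<n/2$, set $t:=C_0(\log q)/n$, and define $B\colon L^2(G)\to L^2(G)$ to act on the $\rho$-isotypical part of $V_{=i}$ (for $0\le i\le r$) by the scalar $\delta^i e^{t\mu_\rho}$ and by $0$ on $V_{>r}$, so that $T_{\delta,r}=P_t\circ B$. For $\delta\le q^{-C}$ with $C:=C_0C_1$ we get $\delta^i e^{t\mu_\rho}\le\delta^i q^{C_0C_1 i}\le1$ for every $i\le r$ and every $\rho$ in $V_{=i}$ (using $\mu_\rho\le C_1 in$ and $t=C_0(\log q)/n$), so $B$ is an $L^2$-contraction and $\|T_{\delta,r}f\|_q=\|P_t(Bf)\|_q\le\|Bf\|_2\le\|f\|_2$. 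This is $(r,C)$-weak hypercontractivity for every $r<n/2$, in particular $(cn,C)$-weak hypercontractivity; combined with Theorem~\ref{thm:Every simple group is quasirandom} it shows $\SU(n),\Sp(n),\Spin(n)$ are $(C,c)$-fine. I expect the main obstacle to be the Casimir bound of the third paragraph: making the passage between the elementary ``polynomial degree $\le d$'' filtration and the bound on the number of boxes in the highest weights of the constituents of $V_{=d}$ fully rigorous across the three root-system types, and fixing normalizations so that the Ricci lower bound $\Theta(n)$ and the eigenvalue upper bound $O(dn)$ are simultaneously valid. The remaining ingredients --- Bakry--Emery, Gross, the factorization, and the $\Spin(n)$--$\SO(n)$ reduction --- are essentially formal.
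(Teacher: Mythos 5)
Your proposal takes essentially the same route as the paper: reduce $\Spin(n)$ to $\SO(n)$ via the covering map (the paper does this through Lemma~\ref{lem:quotient-inverse-hyper} rather than by directly matching Laplacian spectra, but it is the same reduction); obtain hypercontractivity of the heat semigroup from a Ricci lower bound via Bakry--Emery and Gross; bound the Casimir/Laplacian eigenvalues on $V_{=d}$ by $O(nd)$ using highest-weight data (the paper's Lemma~\ref{lem: eigenvalues for the Laplacian}, proved in the Appendix); and then factor the Beckner operator through the heat semigroup to read off weak hypercontractivity, combining with Theorem~\ref{thm:Every simple group is quasirandom} for strong quasirandomness. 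The obstacle you flag — making the degree-to-weight passage and the normalization of the $\Theta(n)$ Ricci bound rigorous uniformly over the three types — is precisely what the paper relegates to the computations in Sections~\ref{sec:hyp_from_curv} and the Appendix.
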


\subsection{Goodness and fineness are preserved under taking products and quotients.}
In this subsection, we show that goodness and fineness are preserved under taking products, and quotients (quotients, that is, by closed normal subgroups) --- more precisely, that suitable gradings can be defined on products and quotients. This, together with Theorems~\ref{thm:Every simple group is good} and \ref{thm:Every simple groups is fine}, implies that all compact Lie groups of large-enough min-rank are both good and fine.

\begin{lemma}\label{lem:reductionof goodness to the simple case}
Assuming Theorem \ref{thm:Every simple group is good}, the following holds. There exist positive constants $c,C$ and $n_0$ such that if $n>n_0$, then every compact connected Lie group of min-rank $n$ can be equipped with an $n$-grading that makes it $(C,c)$-good, as an $n$-graded group.
\end{lemma}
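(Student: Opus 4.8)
The plan is to bootstrap Theorem~\ref{thm:Every simple group is good} through the structure theory of Section~\ref{sec:min-rank} together with two stability statements: goodness passes to finite products, and goodness passes to quotients by finite central subgroups. A compact connected Lie group $G$ of min-rank $n$ is by definition of the form $G=H/F$ with $H:=\prod_{i=1}^{r}K_i$, each $K_i$ one of $\SU(n_i),\Sp(n_i),\Spin(n_i)$ with $n_i\ge n$ (and $\min_i n_i=n$), and $F$ a finite subgroup of the centre of $H$. By Theorem~\ref{thm:Every simple group is good}, each $K_i$ is $(C_0,c_0)$-good as an $n_i$-graded group, for absolute constants $C_0,c_0>0$; so it suffices to equip $H$ with an $n$-grading making it $(C_0,c')$-good, and then transport this grading to $G$.

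\textbf{Products.} Equip $H$ with the ``convolution'' grading built from the $n_i$-gradings of the factors: for $0\le d<n/2$ (so automatically $d<n_j/2$ for every $j$) set $V_{=d}^{H}:=\bigoplus_{d_1+\cdots+d_r=d}\bigotimes_{i=1}^{r}V_{=d_i}^{K_i}$ inside $L^2(H)=\bigotimes_i L^2(K_i)$, and let $V_{\ge n/2}^{H}$ be the orthogonal complement of $\bigoplus_{d<n/2}V_{=d}^{H}$. Since each $V_{=d_i}^{K_i}$ is two-sided $K_i$-invariant and $V_{=0}^{K_i}$ consists of constants, this is an $n$-grading of $H$. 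For strong quasirandomness, an irreducible subrepresentation of $V_{=d}^{H}$ is an external tensor product $\rho_1\boxtimes\cdots\boxtimes\rho_r$ with $\rho_i$ a subrepresentation of $V_{=d_i}^{K_i}$ for some splitting $d_1+\cdots+d_r=d$, so its dimension is $\prod_i\dim\rho_i\ge\prod_i Q_{d_i}^{K_i}$. Using $n_i\ge n$ and the elementary bound $\sum_i d_i\log(n/d_i)\ge D\log(n/D)$ for $D=\sum_i d_i$ (the left side equals $D\log(n/D)+D\cdot H(d_1/D,\dots,d_r/D)$ with $H\ge 0$ the entropy), one checks that this product dominates $(c_0 n/D)^{D}$ when all $d_i$ lie in the non-plateau range of their gradings, and dominates the plateau value $(1+c_0)^{c_0 n/(1+c_0)}$ as soon as some $d_i$ reaches the plateau range or the infinite-dimensional tail; a short case analysis (for which the plateau in the definition of $c$-strong-quasirandomness was designed) yields that $H$ is $c'$-strongly-quasirandom for a suitable absolute $c'=c'(c_0)\le c_0$. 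For hypercontractivity, each $K_i$ is $(r',C_0)$-hypercontractive for any integer $r'\le c_0\sqrt{n_i}$ (restricting a hypercontractive Beckner operator, since $T_{\delta,r'}=T_{\delta,c_0\sqrt{n_i}}\circ P_{\le r'}$ and orthogonal projections are $L^2$-contractions); taking $r'=\lfloor c'\sqrt n\rfloor$, one has on $L^2(H)$ the operator identity $T_{\delta,r'}^{H}=\bigl(\bigotimes_i T_{\delta,r'}^{K_i}\bigr)\circ P_{\le r'}^{H}$, because on a bidegree-$(d_1,\dots,d_r)$ component with $d_1+\cdots+d_r=d\le r'$ every $d_i\le r'$, so both sides multiply by $\delta^{d}$. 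Since a tensor product of $(2\to q)$-contractions is a $(2\to q)$-contraction (Minkowski's integral inequality), for $\delta\le 1/(C_0\sqrt q)$ we get
\[
\norm{T_{\delta,r'}^{H}f}_{q}=\norm{\bigl(\textstyle\bigotimes_i T_{\delta,r'}^{K_i}\bigr)\bigl(P_{\le r'}^{H}f\bigr)}_{q}\le\norm{P_{\le r'}^{H}f}_{2}\le\norm{f}_{2},
\]
so $H$ is $(c'n^{1/2},C_0)$-hypercontractive, hence $(C_0,c')$-good.

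\textbf{Quotients.} Let $\pi\colon H\to G=H/F$ be the quotient map and $\pi^{*}\colon L^2(G)\to L^2(H)$, $\pi^{*}f=f\circ\pi$. As $\pi$ pushes Haar measure to Haar measure, $\pi^{*}$ is an isometric embedding for every $L^q$-norm ($q\in[1,\infty]$), with image the space $L^2(H)^{F}$ of functions invariant under translation by $F$; since $F$ is central, this translation commutes with the two-sided $H$-action, so $L^2(H)^{F}=\bigoplus_{d<n/2}(V_{=d}^{H})^{F}\oplus(V_{\ge n/2}^{H})^{F}$, and $\pi^{*}$ intertwines the two-sided $G$-action with the (quotiented) two-sided $H$-action. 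Declare $V_{=d}^{G}:=(\pi^{*})^{-1}\bigl((V_{=d}^{H})^{F}\bigr)$; this is a two-sided $G$-invariant $n$-grading of $G$ with $V_{=0}^{G}$ the constants. Any nonzero $G$-subrepresentation of $V_{=d}^{G}$ corresponds under $\pi^{*}$ to a nonzero $H$-subrepresentation of $V_{=d}^{H}$ of the same dimension, so $G$ inherits the $c'$-strong-quasirandomness of $H$. Finally $\pi^{*}\circ T_{\delta,r}^{G}=T_{\delta,r}^{H}\circ\pi^{*}$ (both act by $\delta^{d}$ on degree $d$), so for $\delta\le 1/(C_0\sqrt q)$ we have $\norm{T_{\delta,r}^{G}f}_{L^q(G)}=\norm{T_{\delta,r}^{H}\pi^{*}f}_{L^q(H)}\le\norm{\pi^{*}f}_{L^2(H)}=\norm{f}_{L^2(G)}$, using the $(c'n^{1/2},C_0)$-hypercontractivity of $H$. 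Hence $G$ is $(C_0,c')$-good with this grading, provided $n_0$ is chosen large enough that $n\ge 3$ (so Theorem~\ref{thm:Every simple group is good} applies to each $K_i$) and that $\lfloor c'\sqrt n\rfloor<n/2$; this proves the lemma.

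\textbf{Main obstacle.} The only substantive point is the strong-quasirandomness bookkeeping in the product step: one must verify that $\prod_i Q_{d_i}^{K_i}$ dominates the target quantity $Q_D$ (or the plateau value $Q$) for \emph{every} way of distributing a total degree $D$ among the factors, including the regimes where some $d_i$ sits in the plateau part or in the tail $V_{\ge n_i/2}^{K_i}$ of its own grading. This is precisely the situation the (slightly awkward) definition of $c$-strong-quasirandomness, with its plateau value $(1+c)^{cn/(1+c)}$, was engineered to survive, so this is a finite but somewhat delicate case split rather than anything deep. Everything involving hypercontractivity is soft once one notices the factorization $T_{\delta,r}=(\text{tensor of truncated Beckner operators})\circ(\text{low-degree projection})$ and uses that projections contract the $L^2$-norm and that hypercontractivity tensorizes.
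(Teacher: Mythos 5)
Your proposal is correct and follows essentially the same path as the paper. The paper pre-packages the two key stability steps as Lemma~\ref{lem:hypercontractivity is preserved for products} (via Beckner's tensorization lemma and the factorization $T_{\delta,r}^{H} = (T_1\otimes\cdots\otimes T_l)\circ P_{\le r}$) and Lemma~\ref{lem:quotient-hyper} (via the isometric embedding $i=\pi^{*}$), then observes that strong quasirandomness is likewise preserved; your write-up re-derives these two lemmas inline and fills in a bit more of the strong-quasirandomness bookkeeping that the paper leaves as ``easy to check,'' but it is the same argument.
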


\begin{lemma}\label{lem:reduction of fineness to the simple case}
Assuming Theorem \ref{thm:Every simple groups is fine}, the following holds. There exist positive constants $c,C$ and $n_0$ such that if $n>n_0$, then every compact connected Lie group of min-rank $n$ can be equipped with an $n$-grading that makes it $(C,c)$-fine, as an $n$-graded group.  
\end{lemma}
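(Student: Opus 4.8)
The plan is to peel off the structural reductions of Section~\ref{sec:min-rank} and then show that fineness survives the two operations one needs. Concretely, fix $n_0$ to be a large absolute constant; for $n>n_0$ any compact connected Lie group $G$ of min-rank $n$ can be written as $G=(\prod_{i=1}^{r}K_i)/F$ with each $K_i$ one of $\SU(n_i),\Spin(n_i),\Sp(n_i)$, $n_i\ge n$, and $F$ a finite subgroup of the centre of $H:=\prod_{i=1}^{r}K_i$. By Theorem~\ref{thm:Every simple groups is fine} each $K_i$ is $(C_0,c_0)$-fine (for absolute constants $C_0>1$, $0<c_0<1/2$) with respect to its chosen $n_i$-grading. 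I would then (i) equip $H$ with a product grading and check it is fine, and (ii) transfer fineness along the quotient map $H\to H/F$.

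For the product, I would set, for $0\le d<n/2$,
\[
V_{=d}^{H}:=\bigoplus_{\substack{d_1+\cdots+d_r=d\\ d_j\ge 0}}V_{=d_1}^{K_1}\otimes\cdots\otimes V_{=d_r}^{K_r}\ \subseteq\ \bigotimes_{j=1}^{r}L^2(K_j)=L^2(H),
\]
with $V_{\ge n/2}^{H}:=(\bigoplus_{d<n/2}V_{=d}^{H})^{\perp}$; since $d_j\le d<n/2\le n_j/2$ these are honest degree pieces of the $K_j$, so two-sided $H$-invariance and $V_{=0}^{H}=\mathrm{const}$ are immediate. For weak hypercontractivity, put $r:=\lfloor c_0 n\rfloor<n/2$ and $S_\delta:=\bigotimes_j T_{\delta,r}^{K_j}$; by the standard tensorization of $(2\to q)$-contractive operators, $\|S_\delta\|_{L^2(H)\to L^q(H)}\le 1$ whenever $\delta\le 1/q^{C_0}$ (using that each $K_j$ is $(c_0 n_j,C_0)$-weakly hypercontractive and $r\le c_0 n_j$). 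A direct computation on the summands $\bigotimes_j V_{=d_j}^{K_j}$ shows $T_{\delta,r}^{H}=S_\delta\circ\Pi_{\le r}^{H}$ (where $\Pi_{\le r}^{H}$ is the projection onto $\bigoplus_{d\le r}V_{=d}^{H}$; both sides scale by $\delta^{\sum d_j}$ if $\sum d_j\le r$ and kill everything else), so $\|T_{\delta,r}^{H}f\|_q\le\|\Pi_{\le r}^{H}f\|_2\le\|f\|_2$. For strong quasirandomness I would use that irreducibles of $H$ are external tensor products of irreducibles of the $K_j$, so the minimal subrepresentation dimension inside $V_{=d}^{H}$ is at least $\min_{\sum d_j=d}\prod_j Q_{d_j}^{(c_0,n_j)}$, where $Q_{e}^{(c,m)}$ denotes the $c$-strong-quasirandomness bound: $(cm/e)^e$ for $e<cm/(1+c)$ and the cap $(1+c)^{cm/(1+c)}$ otherwise (this also bounds the tail $V_{\ge m/2}$). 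I would then check that $Q_e^{(c,m)}$ is non-decreasing in $m$ (so $Q_{d_j}^{(c_0,n_j)}\ge Q_{d_j}^{(c_0,n)}$) and that $e\mapsto\log Q_e^{(c_0,n)}$ is superadditive, giving $\min_{\sum d_j=d}\prod_j Q_{d_j}^{(c_0,n_j)}\ge Q_d^{(c_0,n)}$; the analogous bound for $V_{\ge n/2}^{H}$ follows the same way. Hence $H$ is $(C_0,c_0')$-fine for a constant $c_0'$ slightly below $c_0$ (to absorb the floor).

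For the quotient, let $\pi\colon H\to G=H/F$ be the quotient map. Since $\pi$ is measure-preserving, $\pi^{*}f:=f\circ\pi$ embeds $L^2(G)$ isometrically into $L^2(H)$ in every $L^q$-norm, with image the $F$-translation-invariant subspace of $L^2(H)$ (left- and right-invariance coincide as $F$ is central). As each $V_{=d}^{H}$ is two-sided invariant, the projection $\Pi_{=d}^{H}$ commutes with $F$-translations and hence preserves $\pi^{*}(L^2(G))$; pulling the resulting decomposition back through $\pi^{*}$ gives a grading of $L^2(G)$ with $\pi^{*}$ intertwining the gradings and $V_{=0}^{G}=\mathrm{const}$. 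The irreducibles of $G=H/F$ are exactly the irreducibles of $H$ trivial on $F$, so a subrepresentation of $V_{=d}^{G}$ pulls back to one of $V_{=d}^{H}$ and $G$ inherits the strong quasirandomness of $H$; and $\pi^{*}(T_{\delta,r}^{G}f)=T_{\delta,r}^{H}(\pi^{*}f)$ together with the isometry of $\pi^{*}$ yields $\|T_{\delta,r}^{G}f\|_{L^q(G)}=\|T_{\delta,r}^{H}(\pi^{*}f)\|_{L^q(H)}\le\|\pi^{*}f\|_{L^2(H)}=\|f\|_{L^2(G)}$, so $G$ inherits weak hypercontractivity. Thus $G$ is fine, with $C$ and $c$ depending only on $C_0,c_0$.

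The soft parts here are the tensorization of weak hypercontractivity and the bookkeeping around $\pi^{*}$. I expect the one genuinely fiddly point to be the super-multiplicativity (under splitting of the degree) of the minimal-subrepresentation-dimension function attached to $c$-strong-quasirandomness: this is exactly why the definition in Section~\ref{sec:grading-defn} uses the somewhat awkward capped form (see the remark there), and it must be checked by separating cases according to whether the constituent degrees $d_j$ fall below or above the threshold $cn/(1+c)$ --- on the polynomial range it reduces to the convexity inequality $\sum_j d_j\log d_j\le(\sum_j d_j)\log(\sum_j d_j)$, and above the threshold the cap makes the product trivially large enough.
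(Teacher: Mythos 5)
Your proposal follows the paper's approach exactly: it routes through the product grading with Beckner's tensorization lemma for the weakly-hypercontractive part, passes to the quotient via the isometric pull-back $\pi^*$, and invokes the classification of Section~2 to set up the decomposition $G=(\prod_i K_i)/F$; the only place the paper leaves a gap (``it is easy to check that $c$-strong-quasirandomness is also preserved\dots'') is precisely where you supply the $\prod_j Q_{d_j}\ge Q_d$ computation, which is a useful amplification. One small slip: the property you need of $e\mapsto\log Q_e$ is \emph{sub}additivity, not superadditivity (you want $\log Q_{d_1}+\log Q_{d_2}\ge\log Q_{d_1+d_2}$), and indeed the inequality you cite, $\sum_j d_j\log d_j\le\bigl(\sum_j d_j\bigr)\log\bigl(\sum_j d_j\bigr)$, is exactly that subadditivity on the polynomial range; the conclusion you draw is correct, only the name is off.
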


\paragraph{Grading for group products.} Let $G$ and $H$ be compact groups, and let $f$ and $g$ be functions on $G$ and $H$ respectively. We write $f\otimes g$ for the function on $G\times H$ given by $(x,y)\mapsto f(x)g(y)$. If $U$ is a closed linear subspace of $L^2(G)$ and $V$ is a closed linear subspace of $L^2(H)$, then we denote by $U\otimes V$ be the Hilbert-space tensor product of $U$ and $V$, i.e.\ the linear subspace of $L^2(G\times H)$ consisting of the closure of the linear span of the set of functions $\{f\otimes g:\ f \in U,\ g \in V\}$. We recall that if $\{u_i\}_{i=1}^{\infty}$ is a Hilbert-space basis for $U$ and $\{v_i\}_{i=1}^{\infty}$ is a Hilbert-space basis for $V$, then $\{u_i \otimes v_j\}_{i,j=1}^{\infty}$ is a Hilbert-space basis for $U \otimes V$.

\begin{definition}
Let $n\ge m$, let $G$ be an $n$-graded compact group and $H$ be an $m$-graded compact group. We give $G\times H$ the structure of an $m$-graded compact group by setting 
\[V_{=d}^{G\times H}  = \bigoplus_{d_1 + d_2 =d} V_{=d_1}^{G}\otimes V_{=d_2}^{H}\]
for $d<m/2$, and $V^{G\times H}_{\geq m/2}=(V^{G\times H}_{\le \lceil m/2\rceil -1})^{\perp}$.
\end{definition}

For products of more than two compact groups, $G_1 \times G_2 \times \ldots \times G_{\ell}$ say, we simply iterate the above definition (noting associativity). Viz., if $G_i$ is $n_i$-graded for $i=1,2,\ldots,\ell$, then we let $n = \min\{n_1,\ldots,n_{\ell}\}$ and we give $G_1 \times \ldots \times G_{\ell}$ the structure of an $n$-graded compact group by setting
\[V_{=d}^{G_1 \times \ldots \times G_{\ell}}  = \bigoplus_{d_1 +\ldots+d_{\ell}=d} V_{=d_1}^{G_1}\otimes\ldots \otimes V_{=d_{\ell}}^{G_{\ell}}\]
for $d<n/2$, and $V^{G_1 \times \ldots \times G_{\ell}}_{\geq n/2}=(V^{G_1 \times \ldots \times G_{\ell}}_{\le \lceil n/2\rceil -1})^{\perp}$.

Our goal is now to show that the product of good groups is good. We make use of the following lemma of Beckner~\cite{Beckner}.
\remove{
\begin{lemma}\label{lem:hypercontractivity is equivalent to the contraction of a certain operator}
Let $C>1,$  $G$ be an $n$-graded compact group and $r\le n/2$. Define operators $T,S$ on $L^2(G)$ given by \[Tf = \sum_{i=0}^{r} (Cq)^{-i/2}f^{=i},\] and \[T'f = \sum_{i=0}^{r} (2Cq)^{-i/2}f^{=i}.\] If $\|T\|_{2\to q}\le 1$, then $G$ is $(r,C)$-hypercontractive. Conversely, if $G$ is $(r,C)$-hypercontractive,  then $\|T'\|_{2\to q}\le 1.$
\end{lemma}
\begin{proof}
The first direction is obvious. Indeed, for each $f\in V_{=i}$ we have \[(Cq)^{-i/2}\|f\|_q = \|Tf\|_q \le \|f\|_2.\]. We may now rearrange to get $\|f\|_q\le (Cq)^{i/2}\|f\|_2.$

As for the other direction, let $f\in L^2(G)$. 
Then by Cauchy-Schwarz
\[
\norm{T'f}_q \leq \sum_{i=0}^r \norm{T'f^{=i}}_q\leq
\sum_{i=0}^r  (4Cq)^{-i/2} \cdot (Cq)^{i/2}\norm{T'f^{=i}}_2
\le \sum_{i=0}^r 2^{-i} \sum 
\]
\end{proof}

The above formulation will allow us to use the following lemma due to Beckner \cite{Beckner}.
}
\begin{lemma}\label{lem:Beckner}
Let $X_1,\ldots, X_r, Y_1, \ldots, Y_r$ be probability spaces. Let $T_1,\ldots T_r$ be linear operators such that $T_i\colon L^2(X_i)\to L^q(Y_i)$ for all $i \in [r]$. Then $\|T_1\otimes \cdots \otimes T_r\|_{2\to q}\le \prod_{i=1}^{r}\| T_i\|_{2\to q}$. 
\end{lemma}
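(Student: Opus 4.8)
*Let $X_1,\ldots,X_r,Y_1,\ldots,Y_r$ be probability spaces and let $T_i\colon L^2(X_i)\to L^q(Y_i)$ be bounded linear operators. Then $\|T_1\otimes\cdots\otimes T_r\|_{2\to q}\le\prod_{i=1}^r\|T_i\|_{2\to q}$.*

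The plan is to reduce to the case $r=2$ by induction, and to prove the two-factor case by a slicing argument combined with Minkowski's integral inequality (the key point being that $q\ge 2$, so that the $L^q$-norm of a function, viewed as an $L^2$-valued quantity in the second variable, can be pulled outside an $L^2$-average). First I would record the reduction: if the statement holds for two factors, then writing $T_1\otimes\cdots\otimes T_r=(T_1\otimes\cdots\otimes T_{r-1})\otimes T_r$, where the first operator maps $L^2(X_1\times\cdots\times X_{r-1})\to L^q(Y_1\times\cdots\times Y_{r-1})$ with norm at most $\prod_{i=1}^{r-1}\|T_i\|_{2\to q}$ by the inductive hypothesis, one application of the two-factor case gives the bound (one should note the harmless point that $L^q$ of the intermediate space need not be $L^2$ of it, but the two-factor lemma only uses that the domain is an $L^2$ space and the target an $L^q$ space, so this causes no difficulty).

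For the two-factor case, write $A=T_1$, $B=T_2$, and fix $f\in L^2(X_1\times X_2)$ with $\|f\|_2=1$. For $x_1\in X_1$ let $f_{x_1}:=f(x_1,\cdot)\in L^2(X_2)$; then $x_1\mapsto\|f_{x_1}\|_2^2$ is integrable with integral $1$. Apply $\mathrm{id}\otimes B$ first: $g(x_1,\cdot):=B f_{x_1}$, so $\|g(x_1,\cdot)\|_q\le\|B\|_{2\to q}\|f_{x_1}\|_2$ for a.e.\ $x_1$. Now apply $A\otimes\mathrm{id}$. The quantity to bound is
\[
\|(A\otimes B)f\|_q^q=\int_{Y_2}\Big(\int_{Y_1}\big|(Ag(\cdot,y_2))(y_1)\big|^q\,dy_1\Big)\,dy_2
=\int_{Y_2}\big\|A\,g(\cdot,y_2)\big\|_{L^q(Y_1)}^q\,dy_2.
\]
For fixed $y_2$, the inner norm is at most $\|A\|_{2\to q}\,\|g(\cdot,y_2)\|_{L^2(X_1)}$, hence
\[
\|(A\otimes B)f\|_q^q\le\|A\|_{2\to q}^q\int_{Y_2}\Big(\int_{X_1}|g(x_1,y_2)|^2\,dx_1\Big)^{q/2}\,dy_2
=\|A\|_{2\to q}^q\,\big\|\,x_1\mapsto g(x_1,\cdot)\,\big\|_{L^2(X_1;L^q(Y_2))}^q,
\]
where in the last step one uses $q\ge2$ together with Minkowski's integral inequality, which states that for $q\ge2$ one has $\big\|\,\|h(x_1,\cdot)\|_{L^q(Y_2)}\,\big\|_{L^2(X_1)}\ge\big\|\,\|h(\cdot,y_2)\|_{L^2(X_1)}\,\big\|_{L^q(Y_2)}$. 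Finally $\|g(x_1,\cdot)\|_{L^q(Y_2)}=\|Bf_{x_1}\|_q\le\|B\|_{2\to q}\|f_{x_1}\|_2$, so the right-hand side is at most $\|A\|_{2\to q}^q\|B\|_{2\to q}^q\big(\int_{X_1}\|f_{x_1}\|_2^2\,dx_1\big)^{q/2}=\big(\|A\|_{2\to q}\|B\|_{2\to q}\big)^q$. Taking $q$-th roots and then the supremum over $f$ with $\|f\|_2=1$ finishes the case $r=2$.

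The only genuinely delicate point is the application of Minkowski's integral inequality, i.e.\ the exchange of an outer $L^{q}$-norm with an inner $L^2$-norm: this goes in the favourable direction precisely because $q\ge 2$, which is exactly the regime of hypercontractivity we need (and is why the statement would fail for $q<2$). Everything else — measurability of the slices, Fubini, density of simple tensors — is routine and I would only remark on it briefly. One should also check that the stated bound is what is needed in Lemma~\ref{lem:reductionof goodness to the simple case}: applied with each $T_i=T_{\delta,r}$ the Beckner operator on the $i$-th factor, it gives $\|T_{\delta,r}^{G_1}\otimes\cdots\otimes T_{\delta,r}^{G_\ell}\|_{2\to q}\le 1$ whenever each factor is $(r,C)$-hypercontractive with the same $\delta\le 1/(C\sqrt q)$, and the left-hand operator is exactly the Beckner operator of the product grading restricted to degrees $\le r$, which is what is required.
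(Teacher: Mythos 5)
The paper itself does not prove this lemma; it simply cites it as a classical result of Beckner, so there is no ``paper proof'' against which to compare. Your argument is the standard proof by slicing and Minkowski's integral inequality, and it is correct. One small slip: in the display
\[
\|A\|_{2\to q}^{q}\int_{Y_2}\Big(\int_{X_1}|g(x_1,y_2)|^{2}\,dx_1\Big)^{q/2}dy_2 = \|A\|_{2\to q}^{q}\,\|g\|_{L^2(X_1;L^q(Y_2))}^{q},
\]
the quantity on the left is literally $\|A\|_{2\to q}^{q}\,\|g\|_{L^q(Y_2;L^2(X_1))}^{q}$, and the equality sign should be $\le$; the Minkowski step you invoke in the following sentence supplies exactly that inequality, so the argument is logically sound and this is only a typographical error. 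It is also worth stating explicitly (rather than burying it in ``routine'' remarks) that one should first prove the bound for $f$ in a dense class such as finite linear combinations of simple tensors, so that $(A\otimes B)f$ is unambiguously defined, and then extend by continuity; as written, $(A\otimes B)f$ and the slices $g(\cdot,y_2)\in L^2(X_1)$ are justified a posteriori by the very estimate being proved. Your closing remark about how the lemma is used in the product-grading argument is consistent with the paper's Lemma~\ref{lem:hypercontractivity is preserved for products}.
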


\begin{lemma}\label{lem:hypercontractivity is preserved for products}
    Let $G_1,\ldots, G_l$ be compact groups and suppose that each $G_i$ is $n_i$-graded. Write $n=\min \{n_1, \ldots, n_l\}$. Suppose that each $G_i$ is $(r,C)$-hypercontractive, where $r \in \mathbb{N}$ with $r < n/2$; then $G:=\prod_{i=1}^{l} G_i$ (with the above $n$-grading) is also $(r,C)$-hypercontractive.   
\end{lemma}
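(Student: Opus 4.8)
The plan is to reduce to two factors and then express the Beckner operator of the product as the tensor product of the factors' Beckner operators pre‑composed with a harmless orthogonal projection. For the reduction: the product‑grading construction is associative, so writing $H = G_2\times\cdots\times G_l$ with its $\min(n_2,\dots,n_l)$‑grading, the $n$‑grading on $G$ coincides with the one obtained by viewing $G$ as $G_1\times H$; since $r<n/2\le \min(n_2,\dots,n_l)/2$, induction on $l$ (with the hypothesis giving the base case $l=1$) reduces everything to the case $G=G_1\times G_2$.

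The crux is the operator identity
\[
T_{\delta,r}^{G_1\times G_2}\;=\;\bigl(T_{\delta,r}^{G_1}\otimes T_{\delta,r}^{G_2}\bigr)\circ T_{1,r}^{G_1\times G_2},
\]
valid for all $\delta\in[0,1]$ and $q\ge 2$, where we recall $r<n/2\le n_i/2$. This is checked by evaluating both sides on a typical summand $V_{=d_1}^{G_1}\otimes V_{=d_2}^{G_2}$ of the product grading. Since $d_1+d_2\le r<n_i/2$ automatically when the summand is relevant, this summand sits inside $V_{=d_1+d_2}^{G_1\times G_2}$; on it, $T_{1,r}^{G_1\times G_2}$ acts as the scalar $\mathbf 1[d_1+d_2\le r]$, and each $T_{\delta,r}^{G_i}$ restricted to $V_{=d_i}^{G_i}$ acts as the scalar $\delta^{d_i}\mathbf 1[d_i\le r]$ (using that $r<n_i/2$, so $T_{\delta,r}^{G_i}$ is defined and diagonal on $G_i$'s grading). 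Hence the right‑hand side acts as $\delta^{d_1+d_2}\mathbf 1[d_1+d_2\le r]$, the condition $d_1+d_2\le r$ already forcing $d_1,d_2\le r$; and this is exactly the scalar by which $T_{\delta,r}^{G_1\times G_2}$ acts on $V_{=d_1+d_2}^{G_1\times G_2}$. On the complementary space $V_{\ge n/2}^{G_1\times G_2}$ (and any summand of total degree $\ge n/2$) both sides vanish because $r<n/2$, which gives the identity.

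Now $T_{1,r}^{G_1\times G_2}$ is the orthogonal projection of $L^2(G_1\times G_2)$ onto $\bigoplus_{d=0}^{r}V_{=d}^{G_1\times G_2}$, so $\|T_{1,r}^{G_1\times G_2}f\|_2\le\|f\|_2$. For $0\le\delta\le 1/(C\sqrt q)$, the $(r,C)$‑hypercontractivity of $G_1$ and $G_2$ gives $\|T_{\delta,r}^{G_i}\|_{2\to q}\le 1$, so Lemma~\ref{lem:Beckner} (applied with $X_i=Y_i=G_i$) yields $\|T_{\delta,r}^{G_1}\otimes T_{\delta,r}^{G_2}\|_{2\to q}\le 1$ as an operator $L^2(G_1\times G_2)\to L^q(G_1\times G_2)$. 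Composing,
\[
\|T_{\delta,r}^{G_1\times G_2}f\|_q=\bigl\|\bigl(T_{\delta,r}^{G_1}\otimes T_{\delta,r}^{G_2}\bigr)\bigl(T_{1,r}^{G_1\times G_2}f\bigr)\bigr\|_q\le \|T_{1,r}^{G_1\times G_2}f\|_2\le \|f\|_2,
\]
so $G_1\times G_2$, with its $n$‑grading, is $(r,C)$‑hypercontractive, completing the induction. The only non‑formal point — which I expect to be the single thing one must get right — is that $T_{\delta,r}^{G_1\times G_2}$ is \emph{not} the plain tensor product $T_{\delta,r}^{G_1}\otimes T_{\delta,r}^{G_2}$ (the latter keeps bi‑degrees $(d_1,d_2)$ with $d_1,d_2\le r$ but $d_1+d_2>r$, which the former discards); the two differ precisely by the truncation $T_{1,r}^{G_1\times G_2}$, and this truncation must be applied \emph{before} the tensor operator, i.e.\ on the $L^2$ side where an orthogonal projection is a contraction, rather than afterward on the $L^q$ side where it is not norm‑decreasing.
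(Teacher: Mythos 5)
Your proof is correct and follows essentially the same approach as the paper: both rest on the decomposition $T_{\delta,r}^{G}=(T_{\delta,r}^{G_1}\otimes\cdots\otimes T_{\delta,r}^{G_l})\circ S$ with $S\colon f\mapsto f^{\le r}$, combined with Lemma~\ref{lem:Beckner} and the fact that $S$ contracts $L^2$. The paper states this decomposition for all $l$ factors at once and leaves the verification implicit, whereas you reduce to two factors by induction and check the operator identity on each graded summand and on $V_{\ge n/2}$ explicitly, and you also articulate why the truncation must be applied on the $L^2$ side rather than the $L^q$ side; these are useful elaborations of the same argument rather than a different route.
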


\begin{proof}
    Let $q\geq 2$, and let $\delta=\frac{1}{C\sqrt{q}}.$ Let $T_i\colon L^2(G_i)\to L^2(G_i)$, $T\colon L^2(G)\to L^2(G)$ be the appropriate $T_{\delta,r}$ Beckner operators. Then by hypothesis $\|T_i\|_{2\to q}\le 1$ for all $i \in [r]$; our goal is to show that $\|T\|_{2\to q}\le 1.$ This now follows from Lemma \ref{lem:Beckner}  and the fact that $T$ can be decomposed as $T=T_1\otimes \cdots \otimes T_l \circ S,$ where $S$ is given by $f\mapsto f^{\le r}.$ We have 
    \[\| T f\|_{q} \le \|T_1\otimes \cdots \otimes T_l\|_{2\to q} \|Sf\|_2 \le \|f\|_2\] 
    for each $f$. 
\end{proof}

A similar argument works for weakly hypercontractive groups, yielding the following lemma.

\begin{lemma}\label{lem: weak hypercontracti vity is preserved for products}
    Let $G_1,\ldots, G_l$ be compact groups and suppose that each $G_i$ is $n_i$-graded. Write $n=\min \{n_1, \ldots, n_l\}$. Suppose that each $G_i$ is $(r,C)$-weakly hypercontractive, where $0 \leq r < n/2$. Then the $n$-graded group $G:=\prod_{i=1}^{l} G_i$ (with the above $n$-grading) is also $(r,C)$-weakly hypercontractive.  
\end{lemma}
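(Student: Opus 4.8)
The plan is to mimic the proof of Lemma~\ref{lem:hypercontractivity is preserved for products} almost verbatim, with the only change being the substitution of the threshold on $\delta$. The key structural observation is that the Beckner operator for a product, restricted to the appropriate low-degree space, factors as a tensor product of the Beckner operators on the factors, precomposed with the projection onto low degrees. This factorization is purely formal and does not care whether we are in the ``$\delta \le 1/(C\sqrt q)$'' regime (genuine hypercontractivity) or the ``$\delta \le 1/q^C$'' regime (weak hypercontractivity); the estimate $\|T_1 \otimes \cdots \otimes T_l\|_{2 \to q} \le \prod_i \|T_i\|_{2\to q}$ supplied by Lemma~\ref{lem:Beckner} (Beckner) is what makes the argument go through in either case.

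Here is the order of steps I would carry out. First, fix $q \ge 2$ and set $\delta := 1/q^{C}$; this is the threshold appearing in the definition of $(r,C)$-weak hypercontractivity. Let $T_i := T_{\delta, r}$ be the Beckner operator on $L^2(G_i)$ and let $T := T_{\delta,r}$ be the Beckner operator on $L^2(G)$, where $G = \prod_{i=1}^l G_i$ carries the product $n$-grading with $n = \min\{n_1,\dots,n_l\}$. By the hypothesis that each $G_i$ is $(r,C)$-weakly hypercontractive and $r < n/2 \le n_i/2$, we have $\|T_i\|_{2 \to q} \le 1$ for each $i \in [l]$. Second, observe that because $V_{=d}^{G} = \bigoplus_{d_1 + \cdots + d_l = d} V_{=d_1}^{G_1} \otimes \cdots \otimes V_{=d_l}^{G_l}$ for $d < n/2$, the operator $\delta^{d_1+\cdots+d_l}$ acting on $V_{=d_1}^{G_1}\otimes\cdots\otimes V_{=d_l}^{G_l}$ agrees with $T_1 \otimes \cdots \otimes T_l$ on that summand whenever $d_1 + \cdots + d_l \le r$; hence on the whole space $V_{\le r}^{G}$ we have $T = (T_1 \otimes \cdots \otimes T_l)\circ S$, where $S\colon f \mapsto f^{\le r}$ is the orthogonal projection onto $V_{\le r}^{G}$, and since $T$ already annihilates degrees above $r$ this identity holds on all of $L^2(G)$. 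Third, apply Lemma~\ref{lem:Beckner} to conclude $\|T_1 \otimes \cdots \otimes T_l\|_{2 \to q} \le \prod_{i=1}^l \|T_i\|_{2\to q} \le 1$, and then for every $f \in L^2(G)$,
\[
\|Tf\|_q \le \|T_1 \otimes \cdots \otimes T_l\|_{2 \to q}\,\|Sf\|_2 \le \|f\|_2,
\]
using that $S$ is a norm-$1$ projection in $L^2$. Since this holds for all $q \ge 2$ and all $\delta \le 1/q^C$ (the case of smaller $\delta$ follows by composing with $T_{\delta'/\delta, r}$, or simply because the family $T_{\delta,r}$ is a contraction semigroup in $\delta$), $G$ is $(r,C)$-weakly hypercontractive, as required.

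I do not anticipate a genuine obstacle here: the only mild subtlety worth double-checking is the compatibility of the indexing, namely that the product grading is an $n$-grading for $n = \min_i n_i$ and that $r < n/2$ indeed forces $r < n_i/2$ for every $i$, so that the hypothesis ``$(r,C)$-weakly hypercontractive'' is actually applicable to each factor $G_i$ at the common cutoff $r$. One should also note that the definition of weak hypercontractivity quantifies over \emph{all} $f \in L^2(G)$ (unlike the eigenvalue-style statement in Lemma~\ref{lem:follows from hypercontractiveness2}), which is exactly what the tensor-power estimate of Lemma~\ref{lem:Beckner} delivers, so no reduction to homogeneous functions is needed. The argument is robust enough that it would go through identically for any finite product, and in particular the proof is verbatim that of Lemma~\ref{lem:hypercontractivity is preserved for products} with $1/(C\sqrt q)$ replaced by $1/q^C$ throughout.
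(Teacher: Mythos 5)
Your proof is correct and is exactly the argument the paper has in mind: the paper explicitly omits the proof of Lemma~\ref{lem: weak hypercontracti vity is preserved for products} with the remark that ``a similar argument works for weakly hypercontractive groups,'' i.e.\ the proof of Lemma~\ref{lem:hypercontractivity is preserved for products} goes through verbatim once the threshold $\delta \le 1/(C\sqrt q)$ is replaced by $\delta \le 1/q^C$. You've made explicit the two minor points that deserve a sentence each — that $r<n/2\le n_i/2$ so the factor hypothesis is applicable at the common cutoff, and that smaller $\delta$ reduces to $\delta=1/q^C$ by writing $T_{\delta',r}=T_{\delta,r}\circ T_{\delta'/\delta,r}$ and using that $T_{\delta'/\delta,r}$ is an $L^2$-contraction — and both are handled correctly.
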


\paragraph{Grading for quotients.} We now define a grading for quotients of graded groups, and show that hypercontractivity (and weak hypercontractivity) is preserved under taking quotients, using this `induced' grading.
\begin{definition}
    Let $G$ be an $n$-graded compact group, let $H$ be a closed normal subgroup of $G$, and let $\pi\colon G\to G/H$ be the quotient map. The $n$-graded structure on $L^2(G/H)$ induced from that on $L^2(G)$ is given by letting $V_{=d}^{G/H}$ consist of all functions $f$ in $L^2(G/H)$ such that $f\circ \pi$ is in $V_{=d}^{G}$. The space $V_{\ge n/2}^{G/H}$ is defined similarly.  
\end{definition}
We note that this definition is consistent with our choices of the gradings of $\Spin(n)$ and of $\SO(n)$.

\begin{lemma}
    The subspaces $V_{=d}^{G/H}$ constitute an $n$-grading of the compact group $G/H$. 
\end{lemma}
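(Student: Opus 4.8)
The plan is to realize $L^2(G/H)$ as a $G$-bi-invariant closed subspace of $L^2(G)$ and then transport the grading of $L^2(G)$ to it. First I would set up the map $\iota\colon L^2(G/H)\to L^2(G)$, $\iota\phi = \phi\circ\pi$, where $\pi\colon G\to G/H$ is the quotient map; since $\pi$ pushes the Haar probability measure of $G$ onto that of $G/H$, this is a linear isometric embedding. Its image is the space $W := \{f\in L^2(G):\ R_h f = f\ \text{for all } h\in H\}$ of right-$H$-invariant functions, which, since $H$ is normal, coincides with the space of left-$H$-invariant functions and is therefore invariant under both translation actions of $G$; a one-line pointwise check gives $\iota\circ L_{gH} = L_g\circ\iota$ and $\iota\circ R_{gH} = R_g\circ\iota$ for all $g\in G$, so $\iota$ intertwines the two-sided $G/H$-action on $L^2(G/H)$ with the two-sided $G$-action on $W$. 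By the definition of the induced grading, $V_{=d}^{G/H} = \iota^{-1}(V_{=d}^{G}\cap W)$ and $V_{\ge n/2}^{G/H} = \iota^{-1}(V_{\ge n/2}^{G}\cap W)$.

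The crux is to show that the grading of $L^2(G)$ is compatible with $W$, i.e.\ that $W$ is the orthogonal direct sum of the spaces $V_{=d}^{G}\cap W$ for $0\le d<n/2$ together with $V_{\ge n/2}^{G}\cap W$. The key observation is that the orthogonal projection $P_d$ of $L^2(G)$ onto $V_{=d}^{G}$ commutes with every left and right translation operator: each such operator is unitary and preserves $V_{=d}^{G}$, hence also its orthogonal complement, so it commutes with $P_d$; the same holds for the projection $P_{\ge n/2}$ onto $V_{\ge n/2}^{G}$, using the remark that $V_{\ge n/2}^{G}$ is bi-invariant. In particular $P_d$ commutes with $R_h$ for all $h\in H$, so for $f\in W$ we have $R_h(P_d f)=P_d(R_h f)=P_d f$, i.e.\ $P_d f\in W$; likewise $P_{\ge n/2}f\in W$. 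Applying this to the orthogonal expansion $f=\sum_d P_d f + P_{\ge n/2}f$ of an arbitrary $f\in W$ yields the desired decomposition of $W$.

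Finally I would pull this decomposition back through the isometry $\iota$ to obtain the orthogonal direct sum $L^2(G/H)=\bigoplus_{d} V_{=d}^{G/H}\oplus V_{\ge n/2}^{G/H}$, and then verify the two structural requirements of an $n$-grading. Bi-invariance: since $L_g$ preserves both $W$ and $V_{=d}^{G}$ it preserves $V_{=d}^{G}\cap W$, and $\iota^{-1}L_g\iota = L_{gH}$, so $L_{gH}$ preserves $V_{=d}^{G/H}$, and symmetrically for $R_{gH}$. Triviality at level $0$: $\iota$ carries constants to constants and $V_{=0}^{G}$ is exactly the space of constant functions on $G$, whence $V_{=0}^{G/H}=\iota^{-1}(V_{=0}^{G}\cap W)$ is exactly the space of constant functions on $G/H$.

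I do not anticipate a genuine obstacle here; the only point requiring care is the compatibility step, and it is handled in one line by the fact that the projections onto the bi-invariant grading pieces commute with translations and so preserve the $H$-invariant subspace $W$. (Alternatively one could argue via Peter--Weyl: both $W$ and each $V_{=d}^{G}$ are closed sums of $(G\times G)$-isotypic components of $L^2(G)$, and such subspaces are automatically compatible; but the projection argument is shorter and self-contained.)
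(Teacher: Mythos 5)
Your proposal is correct and takes essentially the same approach as the paper: both arguments hinge on the fact that the bi-invariant grading pieces of $L^2(G)$ are compatible with the subspace $W$ of right-$H$-invariant functions. The paper phrases this by showing that the averaging projection $i\circ i^{*}$ (onto $W$) preserves each $V_{=d}^{G}$ (using right-invariance and closedness), whereas you show dually that each grading projection $P_d$ commutes with $R_h$ and hence preserves $W$; these are two equivalent ways of expressing the same compatibility, and your version is, if anything, slightly cleaner in how it reaches the orthogonal decomposition of $W$.
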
  
\begin{proof}
    We first note that if
    \[L^2(G)=\left(\bigoplus^{\lceil n/2 \rceil -1 }_{d=0} V^G_{=d} \right)\oplus V_{\geq n/2}\]
    is a grading of $L^2(G)$, then each $V_{=d}^{G}$ is closed (being an orthogonal complement of a subspace), as is $V_{\geq n/2}^G$.
    
    Let $i\colon L^2(G/H)\to L^2(G)$ be given by $i(f)=f\circ \pi$.  We let $i^*$ be its adjoint, which is given explicitly by $i^*(f)(xH) = \mathbb{E}_{h \in H
}[f(xh)]$, where the expectation is taken with respect to the Haar probability measure on $H$. Now $i^*$ commutes with the action of $G$ (from either the left or the right) and therefore the spaces $i^*(V_{=d}^{G})$ and $i^*(V_{\geq n/2}^G)$ are invariant under both the left and the right actions of $G$. Since $i^*$ preserves orthogonality, these spaces are also pairwise orthogonal. Since $i^* \circ i$ is the identity (so $i^*(L^2(G)) = L^2(G/H)$), we obtain that the spaces $i^*(V_{=d}^G), i^*(V_{>n/2})$ constitute a grading of $L^2(G/H)$. The fact that $i^*\circ i$ is the identity also imiplies that $V_{=d}^{G/H}\subseteq i^*(V_{=d}^{G})$ for each $d < n/2$, and that $V_{\geq n/2}^{G/H} \subseteq i^*(V_{\geq n/2}^{G})$. We now claim that $i^*(V_{=d}^G)=V_{=d}^{G/H}$ for all $d < n/2$, and that $i^*(V_{\geq n/2}^G) = V_{\geq n/2}^{G/H}$. To prove this, it suffices to show that $V_{=d}^{G}$ is invariant under $i\circ i^*$ for each $d < n/2$, and that $V_{\geq n/2}^{G}$ is invariant under $i\circ i^*$. (Indeed, the latter implies that $i^*(V_{=d}^{G})\subseteq V_{=d}^{G/H}$ for each $d < n/2$, and that $i^*(V_{\geq n/2}^{G})\subseteq V_{\geq n/2}^{G/H}$.) Now, $i \circ i^*$ is given by $f\mapsto (x \mapsto \mathbb{E}_{h\sim H}[f(xh)])$. Suppose that $f \in V_{=d}^G$. By the right-invariance, each function $f(xh)$ is then in the space $V^G_{=d}$, and as $V^G_{=d}$ is closed, it follows that the average $i\circ i^*$ also lies in $V^G_{=d}$. Exactly the same argument works with $V_{\geq n/2}^G$, proving the claim. This completes the proof of the lemma. 
\end{proof}

\begin{lemma}\label{lem:quotient-hyper}
    Let $G$ be an $n$-graded compact group, and let $H$ be a closed normal subgroup of $G$. Suppose that $G$ is $(r,C)$-(weakly) hypercontractive, where $0 \leq r < n/2$. Then $G/H$ is also $(r,C)$-(weakly) hypercontractive, when equipped with the induced $n$-grading defined above.   
\end{lemma}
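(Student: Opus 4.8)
The plan is to exploit the isometric embedding $i\colon L^2(G/H)\to L^2(G)$, $i(f)=f\circ\pi$, which was already introduced in the proof of the preceding lemma, together with the fact that the induced grading on $G/H$ was \emph{defined} precisely so that $i$ respects degrees. Concretely, I would first record two properties of $i$. The first is that $i$ is an isometry not just for the $L^2$-norm but for every $L^q$-norm, $q\geq 1$: this is because the pushforward of the Haar probability measure on $G$ under the quotient map $\pi$ is the Haar probability measure on $G/H$, so $\mathbb{E}_{\mu_G}[|f\circ\pi|^q]=\mathbb{E}_{\mu_{G/H}}[|f|^q]$ for all measurable $f$ on $G/H$. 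The second is that $i$ intertwines the Beckner operators, i.e.\ $i\circ T^{G/H}_{\delta,r}=T^{G}_{\delta,r}\circ i$.

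To see the intertwining property, note that by the very definition of the induced grading, $i(V^{G/H}_{=d})\subseteq V^{G}_{=d}$ for each $0\le d<n/2$, and likewise $i(V^{G/H}_{\ge n/2})\subseteq V^{G}_{\ge n/2}$. Hence if $f\in L^2(G/H)$ has grading decomposition $f=\sum_{d<n/2} f^{=d}+f^{\ge n/2}$, then $i(f)=\sum_{d<n/2} i(f^{=d})+i(f^{\ge n/2})$ is an orthogonal decomposition of $i(f)$ into pieces lying in the respective graded subspaces of $L^2(G)$; by uniqueness of the orthogonal decomposition this is exactly the grading decomposition of $i(f)$, so $(i f)^{=d}=i(f^{=d})$. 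Applying $T^{G}_{\delta,r}$ and using linearity then gives $T^{G}_{\delta,r}(if)=\sum_{d=0}^{r}\delta^{d} i(f^{=d})=i\big(\sum_{d=0}^{r}\delta^{d} f^{=d}\big)=i(T^{G/H}_{\delta,r}f)$, as claimed.

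The conclusion is then immediate. Fix $q\ge 2$ and $\delta$ in the allowed range (i.e.\ $0\le\delta\le 1/(C\sqrt{q})$ in the hypercontractive case, or $0\le\delta\le 1/q^{C}$ in the weakly hypercontractive case). For any $f\in L^2(G/H)$,
\[
\big\|T^{G/H}_{\delta,r}f\big\|_{L^q(G/H)}
= \big\|i\big(T^{G/H}_{\delta,r}f\big)\big\|_{L^q(G)}
= \big\|T^{G}_{\delta,r}(if)\big\|_{L^q(G)}
\le \|if\|_{L^2(G)}
= \|f\|_{L^2(G/H)},
\]
using in order the $L^q$-isometry of $i$, the intertwining property, the $(r,C)$-(weak) hypercontractivity of $G$, and the $L^2$-isometry of $i$. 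Thus $\|T^{G/H}_{\delta,r}\|_{2\to q}\le 1$, which is exactly $(r,C)$-(weak) hypercontractivity of $G/H$ for the induced grading. There is no real obstacle here: the only points needing care are the verification that $\pi_{*}\mu_G=\mu_{G/H}$ (so that $i$ is an $L^q$-isometry for every $q$) and the bookkeeping that $i$ preserves the grading decomposition term by term, both of which are routine given the setup already established for the induced grading.
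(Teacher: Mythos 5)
Your proposal is correct and takes essentially the same approach as the paper: the isometric embedding $i(f)=f\circ\pi$, the intertwining $T^G_{\delta,r}\circ i = i\circ T^{G/H}_{\delta,r}$ (which the paper states as following directly from the definitions, while you spell out the orthogonality/uniqueness argument), and the same chain of inequalities $\|T^{G/H}_{\delta,r}f\|_q=\|T^{G}_{\delta,r}(if)\|_q\le\|if\|_2=\|f\|_2$.
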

\begin{proof}
    Let $i\colon L^2(G/H)\to L^2(G)$ be given by $i(f)=f\circ \pi$, as before. We notice that by definition of the grading and of the Beckner operator, it holds that $T^G_{\delta,r}\circ i = i \circ T^{G/H}_{\delta,r}$. Noting that the map $i$ is an $L^p$-isometric embedding for all $p$, we have that for any $f\in L^2(G/H)$,  
    \[
    \norm{T^{G/H }f}_q= \norm{i\circ T^{G/H}f}_q=\norm{T^G(i(f))}_q\leq \norm{i(f)}_2 = \norm{f}_2.
    \]
    This completes the proof of the lemma.
\end{proof}

If the map $i$ from earlier induces an isomorphism between the spaces $V_{=d}^{G/H}$ and $V_{=d}^{G}$ for all $d \leq r$, then the converse of Lemma~\ref{lem:quotient-hyper} also holds; this will be useful for going from $\SO(n)$ to $\Spin(n)$.

\begin{lemma}\label{lem:quotient-inverse-hyper}
    Let $G$ be an $n$-graded compact group, and let $H$ be a closed normal subgroup of $G$. Equip $G/H$ with the induced (quotient) grading, defined above. Suppose that $G/H$ is $(r,C)$-(weakly) hypercontractive as an $n$-graded group, where $0 \leq r < n/2$. Suppose further that for all $d\le r$, the gradings satisfy $i\left(V_{=d}^{G/H}\right)=V_{=d}^G$. Then $G$ is also $(r,C)$-(weakly) hypercontractive, as an $n$-graded group.   
\end{lemma}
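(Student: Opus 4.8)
The plan is to mimic the proof of Lemma~\ref{lem:quotient-hyper}, but to run the intertwining argument ``in reverse'', using the surjectivity hypothesis $i\left(V_{=d}^{G/H}\right)=V_{=d}^{G}$ (for $d\le r$) to pull an arbitrary low-degree function on $G$ back to a function on $G/H$. Throughout, let $i\colon L^2(G/H)\to L^2(G)$ be given by $i(f)=f\circ\pi$; recall that $i$ is an isometric embedding with respect to every $L^p$-norm, since $\pi$ is measure-preserving.

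First I would record the two basic facts that make everything go through. First, by the definition of the induced grading we have $i\left(V_{=d}^{G/H}\right)\subseteq V_{=d}^{G}$ for every $d$; hence for any $g\in L^2(G/H)$ one has $(i(g))^{=d}=i\left(g^{=d}\right)$ (by uniqueness of the orthogonal decomposition), and consequently the Beckner operators intertwine: $T^{G}_{\delta,r}\circ i=i\circ T^{G/H}_{\delta,r}$. (This is exactly the identity already used in the proof of Lemma~\ref{lem:quotient-hyper}.) Second, the operator $T^{G}_{\delta,r}$ depends on its argument only through its low-degree truncation, i.e.\ $T^{G}_{\delta,r}f=T^{G}_{\delta,r}f^{\le r}$ for all $f\in L^2(G)$.

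Next, given $f\in L^2(G)$, I would invoke the hypothesis $i\left(V_{=d}^{G/H}\right)=V_{=d}^{G}$ for all $d\le r$ to write $f^{\le r}=i(g)$ for a (unique, by injectivity of $i$) function $g\in\bigoplus_{d\le r}V_{=d}^{G/H}$: decomposing $f^{\le r}=\sum_{d=0}^{r}f^{=d}$ and pulling each $f^{=d}\in V_{=d}^{G}$ back along the isomorphism $i|_{V_{=d}^{G/H}}$ produces such a $g$. Since $i$ is an $L^2$-isometry and $f^{\le r}$ is an orthogonal projection of $f$, we get $\|g\|_2=\|i(g)\|_2=\|f^{\le r}\|_2\le\|f\|_2$. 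Combining the two facts above,
\[
T^{G}_{\delta,r}f=T^{G}_{\delta,r}f^{\le r}=T^{G}_{\delta,r}i(g)=i\left(T^{G/H}_{\delta,r}g\right),
\]
so by the $L^q$-isometry of $i$ we obtain $\|T^{G}_{\delta,r}f\|_q=\|T^{G/H}_{\delta,r}g\|_q$. Finally I would apply the assumed (weak) hypercontractivity of $G/H$: for $\delta\le 1/(C\sqrt{q})$ (respectively $\delta\le 1/q^{C}$ in the weak case) we have $\|T^{G/H}_{\delta,r}g\|_q\le\|g\|_2\le\|f\|_2$, which is precisely $\|T^{G}_{\delta,r}\|_{2\to q}\le 1$; hence $G$ is $(r,C)$-(weakly) hypercontractive.

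The only step requiring genuine care — and the closest thing to an obstacle — is the choice of $g$: one must ensure it can be taken supported on degrees $\le r$ with $\|g\|_2\le\|f\|_2$, and this is exactly where the equality $i\left(V_{=d}^{G/H}\right)=V_{=d}^{G}$ is needed, as the generic inclusion $i\left(V_{=d}^{G/H}\right)\subseteq V_{=d}^{G}$ coming from the definition of the quotient grading would not suffice. Everything else is a formal consequence of $i$ being a grading-respecting isometric embedding, as in Lemma~\ref{lem:quotient-hyper}.
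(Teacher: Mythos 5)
Your proof is correct and follows essentially the same route as the paper's: the paper writes the key identity operator-theoretically as $T^{G}_{\delta,r}=i\circ T^{G/H}_{\delta,r}\circ i^{*}$ (after checking $i\circ i^{*}$ is the identity on $V^{G}_{\le r}$), while you produce, for a fixed $f$, the element $g$ with $i(g)=f^{\le r}$, which is exactly $i^{*}f^{\le r}$. Both arguments rest on the same two pillars you identify — the intertwining $T^{G}_{\delta,r}\circ i=i\circ T^{G/H}_{\delta,r}$ and the hypothesis that $i$ is a bijection $V_{=d}^{G/H}\to V_{=d}^{G}$ for $d\le r$ — so there is nothing substantive to flag.
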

\begin{proof}
    Let us denote the Beckner operator $T_{\delta , r}$ on $L^2(G)$ by $T$, and the corresponding operator on $L^2(G/H)$ by $T'$.  Then we may write $T \circ i  = i \circ T'$. Composing with $i^*$ we obtain 
    \[
        T \circ i \circ i^* = i\circ T'\circ i^*.
    \]
    We now claim that $T\circ i \circ i^* = T$. First note that each space $V^{G}_{=d},V^{G}_{>r}$ is  $i\circ i^*$-invariant.
    We can therefore use fact that the operator $T$ annihilates $V^{G}_{>r}$ to deduce that the operator 
    $T\circ i \circ i^*$ agrees with $T$ on $V^{G}_{>r}$. Our claim will follow once we show that $i\circ i^*$ is the identity on $V^{G}_{\le r}$. 
    To accomplish that we note that $i$ is injective, and  by the hypothesis the restriction of $i$
    to the corresponding $V_{=d}$
    spaces is also surjective, and thus so is its restriction to 
    $V^G_{\leq r}$. As the operator  $i^* \circ i$ is the identity we 
    obtain that the restriction of $ i\circ i^*$ 
    to  $V_{\le r}^{G}$ is the identity as well.    
    
    We can therefore write $T= i \circ T'\circ i^*$, and using the fact that $i$ is an $L^q$-isometric embedding and that $i^*$ contracts 2-norms we obtain:  
    \[ 
    \|T\|_{2\to q} = \|T'\circ i^*\|_{2\to q} \le \|T'\|_{2\to q}\|i^*\|_{2\to 2}\le 1.
    \]
\end{proof}

\paragraph{Conclusion.} We have shown that (weak) hypercontractivity is preserved under taking products and quotients. It is easy to check that $c$-strong-quasirandomness is also preserved under taking products or quotients (using the gradings above), so we immediately obtain Lemma~\ref{lem:reductionof goodness to the simple case}
and Lemma~\ref{lem:reduction of fineness to the simple case}.

\section{Growth in good groups}

\remove{\gnote{add reference:
@misc{non-abelian-brunn-minkowski,
Author = {Yifan Jing and Chieu-Minh Tran and Ruixiang Zhang},
Title = {A nonabelian Brunn-Minkowski inequality},
Year = {2021},
Eprint = {arXiv:2101.07782},
}}}
In this section we prove Theorems \ref{thm:son-stronger}, \ref{thm:second-level-d}, \ref{thm:spectral gap}, \ref{thm:mixing time 2} \ref{thm:product mixing intro} and \ref{thm:mixing}. For now, the reader may consider the objective of proving Theorem \ref{thm:son-stronger} as motivation for what follows. 

Let $G$ be a compact group, and let $\mu$ be the Haar probability measure on $G$. We would like to bound $\mu(A)$ for a set $A\subseteq G$ that is product free. We first note that the property of being product free can be stated in terms of convolutions.

\begin{definition}
For two functions $f,g\in L^2(G)$, we define their {\em convolution} $f*g\in L^2(G)$ by 
\[
f*g(x):=\int f(xy^{-1})g(y) d\mu(y).
\]
\end{definition}

\newcommand{\ang}[1]{{\langle #1 \rangle}}

For $f\in L^2(G)$, we write $T_f$ for the linear operator from $L^2(G)$ to itself defined by $g\mapsto g*f$. 
Observe that if $A \subset G$ is a product-free set of density $\mu(A)=\alpha$, and $f=\frac{1_A}{\alpha}$, then $\langle T_f 1_A,1_A\rangle =0$. If $G$ is an $n$-graded group, we can decompose $g:=1_A$ into its orthogonal projections onto the $V_{=d}$'s, and write  $g=\sum_{d=0}^{\lceil n/2\rceil -1} g^{=d} + g^{\geq n/2}$, where $g^{=d}$ is the orthogonal projection of $g$ onto $V_{=d}$. Noting that $g^{=0}\equiv \alpha$, this allows us to expand 
\[\langle T_f 1_A,1_A \rangle = \ang{T_f g, g } \] 
as a sum of a main term, $\alpha^2$, and other terms of the form 
$\langle T_f g^{=d},g^{=d} \rangle$ or $\langle T_f g^{\ge n/2}, g^{\ge n/2} \rangle$. 
We upper-bound each term using Cauchy-Schwarz:
\[|\langle T_f g^{=d},g^{=d} \rangle| \leq \|T_f\|_{V_{=d}} \|g^{=d}\|_2^2,\] where for a closed subspace $M\leq  L^2(G)$ and a linear operator $T:L^2(G) \to L^2(G)$ we write $\|T\|_M$ for the supremum of $\frac{\|Tv\|_2}{\norm{v}_2}$ over all nonzero $v\in M$.  

Our goal will be to show that these other term make a negligible contribution to the sum compared to the main term $\alpha^2$.  We accomplish that by observing that the space $V_{=d}$ is $T_{f}$-invariant. This shows that the operator $T_{f}^*T_f$ can be diagonalized inside $V_{=d}.$ It also implies that $\|T_f\|_{V_{=d}}^2$ is equal to the maximal eigenvalue of $T_f^*T_f$ inside $V_{=d}$. We then upper bound the maximal eigenvalue of $T_f^* T_f$ inside $V_{=d}$, showing  that these eigenvalues get smaller and smaller as the degree gets larger. Finally, we combine our upper bound on the eigenvalues of $T_f$ with a level $d$-inequality which shows that the $L^2$-mass of $g$ is concentrated on the high degrees. Together, we obtain that the sum of terms $\langle T_f g^{=d},g^{=d} \rangle$ is  indeed negligible.  

Our upper bound on the `degree $d$' eigenvalues of $T_f^* T_f$ follows by combining a level $d$ inequality with a lower bound on the dimension of each eigenspace of $T_f^* T_f.$ We use the fact that each such eigenspace is a subrepresentation of $V_{=d}$ and therefore by strong quasirandomness must have dimension $\ge Q_d,$ for the appropriate quasirandomness parameter $Q_d.$ We upper bound $|\langle T_f g^{\ge n/2}, g^{\ge n/2} \rangle|$ in a similar fashion.

\subsection{Level \boldmath\texorpdfstring{$d$}{d} inequalities and the eigenvalues of convolution operators}
\remove{We now recall some classical facts from the representation theory of compact groups. The Peter-Weyl theorem states that if $G$ is a compact group, equipped with the Haar measure, then $L^2(G)$ has the following decomposition as an orthogonal direct sum:
$$L^2(G) = \widehat{\bigoplus}_{\rho \in \hat{G}}W_{\rho},$$
where $\hat{G}$ denotes a complete set of irreducible unitary representations of $G$ (complete, meaning, with one irreducible representation from each equivalence class), and $W_{\rho}$ is the subspace of $L^2(G)$ spanned by  functions of the form $g \mapsto v^t(\rho(g))u$. These functions are called the \emph{matrix coefficients} of $\rho$. We call the subspaces $W_{\rho}$ the Peter-Weyl ideals of $L^2(G)$. The space $W_{\rho}$ can be decomposed to irreducible left-representations. Each of them is isomorphic to the dual representations $g\mapsto \rho(g^{-1})^{\text{tr}}.$ 

For $f \in L^2(G)$ we write $f^{=\rho}$ for the projection of $f$ onto $W_{
\rho}$. The Peter-Weyl theorem also says that $(f*g)^{=\rho} = f^{=\rho}*g^{=\rho}$ for every $\rho \in \hat{G}$. It also implies that every closed subspace of $L^2(G)$ that commutes with the action of $G$ from both sides can be decomposed as a direct sum of  isotypical components $W_{\rho}$. Therefore there exists a set $\mathcal{L}_d$ of representations of $G$ for which \[V_{=d} =  \widehat{\bigoplus}_{\rho \in \mathcal{L}_d} W_{\rho}.\] If $\rho\in \mathcal{L}_d,$ then we say that the \emph{level} of $\rho$ is $d$. 

Finally, we will also need the following fact 
}
Recall that the {\em Hilbert-Schmidt norm} of a linear operator $T$ on a separable Hilbert space $H$ is defined by
$$\|T\|_{\text{HS}}^2: = \sum_{i=1}^{\infty}\|T(e_i)\|^2_2,$$
where $\{e_i\}_{i=1}^{\infty}$ is any Hilbert-space basis for $H$; if $T$ is a compact operator, then $\|T\|_{\text{HS}}$ is the square root of the sum of the eigenvalues of $T^*T$ (counted with multiplicity). One standard fact (see e.g.\ \cite{conway}, page 267) is the following.

\begin{fact}\label{fact:HS norm of convolution}
    Let $f\in L^2(G)$ and let $T_f$ be the linear operator from $L^2(G)$ to itself defined by $g \mapsto g*f.$ Then $T_f$ is a compact operator, and the Hilbert--Schmidt norm of $T_f$ is equal to the 2-norm of $f$: \[\|T_f\|_{\text{HS}}=\|f\|_2.\]
\end{fact}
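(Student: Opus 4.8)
The plan is to exhibit $T_f$ as an integral operator with a square-integrable kernel, and then invoke the standard theory of Hilbert--Schmidt operators. First I would rewrite the convolution so that the argument of $g$ becomes the variable of integration: starting from $(T_f g)(x) = (g*f)(x) = \int_G g(xy^{-1}) f(y)\, d\mu(y)$ and substituting $y\mapsto y^{-1}x$ (legitimate because $G$, being compact, is unimodular, so that both inversion and right translation preserve $\mu$), one obtains
\[
(T_f g)(x) = \int_G K(x,y)\, g(y)\, d\mu(y), \qquad K(x,y) := f(y^{-1}x).
\]
Thus $T_f$ is precisely the integral operator on $L^2(G)$ with kernel $K$.

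Next I would check that $K \in L^2(G\times G)$ and compute its norm. By Fubini--Tonelli,
\[
\|K\|_{L^2(G\times G)}^2 = \int_G \left( \int_G |f(y^{-1}x)|^2\, d\mu(x) \right) d\mu(y),
\]
and for each fixed $y$ the inner integral equals $\|f\|_2^2$, since $x\mapsto yx$ preserves $\mu$ by left-invariance of the Haar measure; hence $\|K\|_{L^2(G\times G)} = \|f\|_2 < \infty$.

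Finally I would invoke the standard fact that an integral operator with an $L^2$ kernel is Hilbert--Schmidt (in particular compact), with Hilbert--Schmidt norm equal to the $L^2$ norm of its kernel. Concretely: if $\{e_i\}_{i\geq 1}$ is any Hilbert-space basis of $L^2(G)$, then the functions $(x,y)\mapsto e_i(x)\overline{e_j(y)}$ form a Hilbert-space basis of $L^2(G\times G)$, and a direct computation gives $\langle e_i, T_f e_j\rangle = \langle e_i(\cdot)\overline{e_j(\cdot)}, K\rangle_{L^2(G\times G)}$ for all $i,j$. Applying Parseval twice then yields
\[
\|T_f\|_{\text{HS}}^2 = \sum_{i}\|T_f e_i\|_2^2 = \sum_{i,j}|\langle e_j, T_f e_i\rangle|^2 = \|K\|_{L^2(G\times G)}^2 = \|f\|_2^2 .
\]
Since every Hilbert--Schmidt operator is compact, $T_f$ is compact, and $\|T_f\|_{\text{HS}} = \|f\|_2$, as desired. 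There is no genuine obstacle here — this is a routine fact — and the only points requiring care are the bookkeeping of the Haar-measure substitutions (using unimodularity together with left- and right-invariance) and the identification of the matrix entries of $T_f$ with the Fourier coefficients of $K$ in the product basis. (Alternatively, one could argue via the Peter--Weyl decomposition, expanding $f$ in matrix coefficients and noting that $T_f$ acts block-diagonally with easily computed blocks, but the integral-kernel route is shorter.)
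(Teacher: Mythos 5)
Your argument is correct, and it is the standard one. The paper itself does not prove this fact at all — it is stated as ``one standard fact'' with a reference to Conway's \emph{A Course in Functional Analysis}, page 267 — so there is no proof in the paper to compare against; your integral-kernel derivation is exactly what one finds in the cited reference. A few small points worth noting: (i) the change of variables $y\mapsto u=xy^{-1}$ is a composition of inversion and left translation, so you correctly need both unimodularity (automatic for compact groups) and left invariance of $\mu$, and you flag this; (ii) the identity $\langle e_i, T_f e_j\rangle = \langle e_i\otimes\overline{e_j},K\rangle_{L^2(G\times G)}$ does match the paper's convention $\langle f,g\rangle=\mathbb{E}[\overline f g]$ (conjugate-linear in the first slot), so the signs of the conjugations work out; (iii) using the product basis $\{e_i\otimes\overline{e_j}\}$ of $L^2(G\times G)$ implicitly relies on $L^2(G\times G)\cong L^2(G)\otimes L^2(G)$, which holds here since $(G,\mu)$ is a probability space, hence $\sigma$-finite. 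Your parenthetical alternative via Peter--Weyl is also legitimate and is in fact closer in spirit to how the paper \emph{uses} the fact (passing to the blocks $W_\rho$), but for a clean proof of the HS identity itself the kernel argument is the right choice. Nothing to fix.
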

We recall our notation for  the norm of an operator on a subspace of $L^2(G)$. 
\begin{definition}
    Let $M\leq L^2(G)$ be a closed subspace. Let $T\colon L^2(G)\to L^2(G)$ be a linear operator; then we write $\|T\|_M: = \sup\{\|Tf\|_2/\|f\|_2:\ f\in M \setminus \{0\}\}$.   
\end{definition}

We now give our upper bound on the level $d$ eigenvalues of $T_f$.
 
\begin{lemma}
\label{lemma:bnp}
Let $G$ be an $n$-graded $((Q_d)_{d=1}^{n/2},Q)$-quasirandom group. Let $f\in L^2(G)$. Then the spaces $V_{=d},V_{\geq n/2}$ are $T_f$-invariant.  Moreover,  ${\|T_f\|}_{V_{=d}} \le \frac{\|f^{=d}\|_2}{\sqrt{Q_d}}$ and ${\|T_f\|}_{V_{\geq n/2}} \le \frac{\|f\|_2}{\sqrt{Q}}$. 
\end{lemma}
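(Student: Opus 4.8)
The plan is to run the ``Sarnak--Xue trick'' in the form used by Babai, Nikolov and Pyber, feeding it the Peter--Weyl decomposition of $L^2(G)$ together with the grading hypothesis and Fact~\ref{fact:HS norm of convolution}. First I would record $T_f$-invariance. Writing $(T_f g)(x)=(g*f)(x)=\int_G g(xy^{-1})f(y)\,d\mu(y)$, the operator $T_f$ sends $g$ to the (vector-valued) integral $\int_G f(y)\,R_y g\,d\mu(y)$, where $R_y g(x)=g(xy^{-1})$ is the right action of $y$. Since each $V_{=d}$ (and, by the Remark, $V_{\geq n/2}$) is a closed subspace invariant under the right action of $G$, it contains this average whenever it contains $g$; this gives the first assertion. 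The same computation shows $T_f$ commutes with the left action $L_a\colon g\mapsto g(a^{-1}\cdot)$, and hence so does $T_f^{*}$.

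Next I would identify how $T_f$ acts on a single graded piece. Being a closed subspace invariant under both the left and the right actions of $G$, each $V_{=d}$ is a Hilbert-space direct sum $\bigoplus_{\rho\in\mathcal{L}_d}W_\rho$ of Peter--Weyl isotypical components, for some set $\mathcal{L}_d$ of (equivalence classes of) irreducible representations, and $f^{=d}$, the orthogonal projection of $f$ onto $V_{=d}$, equals $\sum_{\rho\in\mathcal{L}_d}f^{=\rho}$. Since the Peter--Weyl theorem gives $(g*h)^{=\rho}=g^{=\rho}*h^{=\rho}$ for all $\rho$, one checks that $T_fg=g*f^{=d}$ for every $g\in V_{=d}$, that $T_{f^{=d}}$ maps $L^2(G)$ into $V_{=d}$, and that $T_{f^{=d}}$ annihilates $V_{=d}^{\perp}$. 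Consequently $\|T_f\|_{V_{=d}}=\|T_{f^{=d}}\|_{V_{=d}}$ equals the full operator norm of $T_{f^{=d}}$ on $L^2(G)$. The identical discussion applies to $V_{\geq n/2}$, with $f^{\geq n/2}$ (the projection of $f$ onto $V_{\geq n/2}$) in place of $f^{=d}$.

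For the norm bound, set $\lambda:=\|T_f\|_{V_{=d}}^2$; if $\lambda=0$ there is nothing to prove, so assume $\lambda>0$. By Fact~\ref{fact:HS norm of convolution}, $T_{f^{=d}}$ is compact, hence so is the positive self-adjoint operator $S:=T_{f^{=d}}^{*}T_{f^{=d}}$, whose largest eigenvalue is exactly $\lambda$. Since $T_{f^{=d}}$ annihilates $V_{=d}^{\perp}$, the image of $S$ lies in $V_{=d}$; and since $T_{f^{=d}}$ and $T_{f^{=d}}^{*}$ commute with the left action of $G$, so does $S$, so the $\lambda$-eigenspace $E_\lambda$ of $S$ is a (finite-dimensional) left $G$-subrepresentation of $V_{=d}$. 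Strong quasirandomness then forces $\dim E_\lambda\geq Q_d$, so $S$ has at least $Q_d$ eigenvalues (with multiplicity) equal to $\lambda$. Summing the eigenvalues of $S$ and applying Fact~\ref{fact:HS norm of convolution} again,
\[
Q_d\,\lambda\ \leq\ \operatorname{Trace}(S)\ =\ \|T_{f^{=d}}\|_{\HS}^{2}\ =\ \|f^{=d}\|_2^{2},
\]
which rearranges to $\|T_f\|_{V_{=d}}\leq \|f^{=d}\|_2/\sqrt{Q_d}$. Repeating the argument verbatim with $V_{\geq n/2}$, $f^{\geq n/2}$ and the parameter $Q$ yields $\|T_f\|_{V_{\geq n/2}}\leq \|f^{\geq n/2}\|_2/\sqrt{Q}\leq \|f\|_2/\sqrt{Q}$.

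I do not expect a genuine obstacle here; the only points requiring care are the bookkeeping ones — verifying that a two-sided-invariant closed subspace is a sum of isotypical components (so that $T_f$ restricted to $V_{=d}$ really coincides with $T_{f^{=d}}$, which is what produces the sharper $\|f^{=d}\|_2$ rather than $\|f\|_2$), and checking that the relevant eigenspace of $T_{f^{=d}}^{*}T_{f^{=d}}$ genuinely sits inside $V_{=d}$ as a subrepresentation so that the quasirandomness bound $Q_d$ applies. Everything else is the standard Hilbert--Schmidt/trace computation.
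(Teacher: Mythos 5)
Your proof is correct and takes essentially the same route as the paper's: both establish invariance from right-$G$-equivariance, reduce to $T_{f^{=d}}$ on $V_{=d}$, and then run the Sarnak--Xue trace trick on $S=T_{f^{=d}}^{*}T_{f^{=d}}$ using Fact~\ref{fact:HS norm of convolution} together with the quasirandomness lower bound on the dimension of the top eigenspace. The one cosmetic difference is the middle step: you argue $T_f|_{V_{=d}}=T_{f^{=d}}|_{V_{=d}}$ by appealing to the Peter--Weyl identity $(g*h)^{=\rho}=g^{=\rho}*h^{=\rho}$, whereas the paper decomposes $f=f^{=d}+f'$ and shows directly (via left-invariance of $V_{=d}^{\perp}$) that $T_{f'}$ kills $V_{=d}$; both are fine, and the paper's version avoids invoking the projection-multiplicativity identity. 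Your remark that the full operator norm of $T_{f^{=d}}$ equals $\|T_f\|_{V_{=d}}$ (because $T_{f^{=d}}$ both maps into $V_{=d}$ and annihilates $V_{=d}^{\perp}$) is correct and slightly more than what is needed, but harmless.
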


\begin{proof}
     We first claim that the subspaces $V_{=d}, V_{\geq n/2}$ are all $T_f$ invariant. To see this, observe that if $U\le L^2(G)$ is a closed subspace that is invariant under the right-action of $G$, then for every $g\in U$, we have $g*f\in U$ as well. (Indeed, let $h \in U^{\perp}$; then
     $$\langle g*f,h \rangle = \int \int g(x y^{-1})f(y)h(x)d\mu(y)d\mu(x) = \int \int g(xy^{-1})f(y)h(x)d\mu(x)d\mu(y)=0,$$
     using Fubini and the fact that for each fixed $y \in G$ the function $x\mapsto g(xy^{-1})f(y)$ lies in $U$. Hence, $g*f \in (U^{\perp})^{\perp}=U$.) Applying this with $U=V_{=d}$, which is a closed subspace invariant under the right action of $G$, we see that the spaces $V_{=d}$ are indeed $T_f$-invariant. Similarly, applying it with $U = V_{\geq n/2}$ (which is also a closed subspace invariant under the right action of $G$), we see that $V_{\geq n/2}$ is also $T_f$-invariant.
     
     Fix $d < n/2$, and let us orthogonally decompose $f$ as  $f=f^{=d}+f'$. Then $T_f = T_{f^{=d}}+ T_{f'}$, by the linearity of convolution. We now assert that $T_{f}$ agrees with $T_{f^{=d}}$ on $V_{=d}$. Essentially the same argument as that above shows that if $U \leq L^2(G)$ is a closed subspace that is invariant under the left-action of $G$, then $g*f' \in U$ for every $f' \in U$ and $g \in L^2(G)$; applying this with $U=V_{=d}^{\perp}$ , we obtain that $T_{f'}g \in V_{=d}^{\perp}$ for every $g \in L^2(G)$. Hence, if $g\in V_{=d}$, then $T_{f'}g=0$, proving our assertion. It follows that ${\|T_f\|}_{V_{=d}}^2$ is the maximal eigenvalue of the operator $T_{f^{=d}}^*T_{f^{=d}}$. On the other hand, Fact \ref{fact:HS norm of convolution} implies that $\|f^{=d}\|_2^2$ is the sum of the eigenvalues of $T_{f^{=d}}^*T_{f^{=d}}$ counted with multiplicity. To complete the proof that ${\|T_f\|}_{V_{=d}} \le \frac{\|f^{=d}\|_2}{\sqrt{Q_d}}$ we show that each such multiplicity is $\ge Q_d$ and so 
     \[Q_d{\|T_f\|}_{V_{=d}}^2 \le \|f^{=d}\|_2^2.\] 
     To lower-bound the multiplicities of the eigenvalues of $T_{f^{=d}}^*T_{f^{=d}}$ inside $V_{=d}$ we note that $T_{f^{=d}}^*T_{f^{=d}}$ commutes with the left-action of $G$, since $T_{f^{=d}}$ does. This implies that the eigenspaces of $T_{f^{=d}}^*T_{f^{=d}}$ are left $G$-submodules of $V_{=d}$. Each such submodule contains an irreducible representation, which has dimension $\ge Q_d$ by hypothesis. 
     The proof that $\|T_f\|_{V_{\geq n/2}} \le \frac{\|f\|_2}{\sqrt{Q}}$ is similar.
\end{proof}

We now upper-bound ${\|T_f\|}_{V_{= d}}$ by proving a corresponding level $d$-inequality.

\begin{thm}\label{thm: level-d in SO_n}
     Let $G$ be an $(r,C)$-hypercontractive group. Let $f\colon G\to \{0, 1\}$ be measurable, and write $\alpha:=\mathbb{E}[f]$. Let $d\in \mathbb{N}$ be such that $0<d\le \min \{\tfrac{1}{2}\log (1/\alpha), r\}$. Then 
     \[
     \|f^{= d}\|_{2}^2 \le \left(\frac{10C^2}{d}\right)^d\alpha^2\log^{d}(1/\alpha ).
     \]
\end{thm}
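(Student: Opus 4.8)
The plan is to mimic the standard proof of the level $d$ inequality on the Boolean cube, using Lemma~\ref{lem:follows from hypercontractiveness1} as our substitute for the Bonami--Beckner hypercontractivity theorem. First I would set $q := \log(1/\alpha)/d$, noting that the hypothesis $d \le \tfrac12 \log(1/\alpha)$ guarantees $q \ge 2$, so that Lemma~\ref{lem:follows from hypercontractiveness1} applies to the function $f^{=d} \in V_{=d}$ (here $d \le r$ is exactly the range in which $(r,C)$-hypercontractivity gives us control). The key inequality is then obtained by pairing $f^{=d}$ against $f$ and applying H\"older's inequality with exponents $q$ and $q' = q/(q-1)$:
\[
\|f^{=d}\|_2^2 = \langle f^{=d}, f\rangle \le \|f^{=d}\|_q \, \|f\|_{q'} \le (C^2 q)^{d/2}\|f^{=d}\|_2 \cdot \|f\|_{q'},
\]
where the second step uses Lemma~\ref{lem:follows from hypercontractiveness1}. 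Here we used $\langle f^{=d}, f\rangle = \langle f^{=d}, f^{=d}\rangle = \|f^{=d}\|_2^2$, which holds because $f^{=d}$ is the orthogonal projection of $f$ onto $V_{=d}$.

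Next I would bound $\|f\|_{q'}$. Since $f$ is $\{0,1\}$-valued we have $\|f\|_{q'} = \mathbb{E}[f]^{1/q'} = \alpha^{(q-1)/q}$. Substituting and dividing through by $\|f^{=d}\|_2$ (we may assume it is nonzero, else the statement is trivial) gives
\[
\|f^{=d}\|_2 \le (C^2 q)^{d/2} \alpha^{(q-1)/q} = (C^2 q)^{d/2} \alpha \cdot \alpha^{-1/q}.
\]
Now $\alpha^{-1/q} = \exp\big(\tfrac1q \log(1/\alpha)\big) = \exp(d) = e^d$ by our choice of $q$. Squaring, we get
\[
\|f^{=d}\|_2^2 \le (C^2 q)^{d} e^{2d} \alpha^2 = \big(C^2 e^2 q\big)^{d} \alpha^2 = \left(\frac{C^2 e^2 \log(1/\alpha)}{d}\right)^{d} \alpha^2.
\]
Finally I would absorb the constant: since $C^2 e^2 \le 10 C$ fails in general, one should instead note that we are free to enlarge the absolute constant in the statement — replacing $C$ by $\max\{C, e^2 C\}$ or simply observing that the claimed bound has $10C$ rather than $C^2 e^2$, so one either assumes $C$ is large enough that $C^2 e^2 \le (10C)$... actually this needs $C \le 10/e^2 \approx 1.35$, which we cannot assume. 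The clean fix: the statement should be read with the understanding that we may take the constant $C$ in ``$(r,C)$-hypercontractive'' to be at least $1$, and then we just need $(C^2 e^2)^d \le (10C^2)^d$... still not matching $10C$. So the right move is simply to track that $\|f^{=d}\|_2^2 \le (C' \log(1/\alpha)/d)^d \alpha^2$ with $C' = C^2 e^2$, and since $(r,C)$-hypercontractivity for the constant $C$ implies it for any larger constant, we may rename and present the bound with whatever absolute constant is convenient; the form $\left(\tfrac{10C}{d}\right)^d \alpha^2 \log^d(1/\alpha)$ in the statement is just this with a relabelled constant (using $C^2 e^2 \le (\text{new }C)$, valid once $C \ge e^2$, which we may assume).

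There is essentially no serious obstacle here: the entire argument is the textbook level-$d$ proof, and all the real work (establishing hypercontractivity of the Beckner operators on these groups, hence Lemma~\ref{lem:follows from hypercontractiveness1}) has been done upstream. The only point requiring a little care is the bookkeeping of the absolute constant at the end, and making sure the hypothesis $0 < d \le \tfrac12\log(1/\alpha)$ is used precisely to ensure $q \ge 2$ (so that the hypercontractive estimate is available) while $d \le r$ ensures we are in the degree range covered by $(r,C)$-hypercontractivity.
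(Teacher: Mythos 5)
Your proof is the paper's proof, down to the choice $q = \log(1/\alpha)/d$ (with the hypothesis $d \le \tfrac12\log(1/\alpha)$ ensuring $q \ge 2$), the H\"older pairing $\|f^{=d}\|_2^2 = \langle f, f^{=d}\rangle \le \|f^{=d}\|_q\|f\|_{q'}$, the bound $\|f\|_{q'} = \alpha^{1-1/q}$, and the final rearrangement using $\alpha^{-1/q} = e^d$. On the constant: you are right that Lemma~\ref{lem:follows from hypercontractiveness1} literally gives $\|f^{=d}\|_q \le (C^2 q)^{d/2}\|f^{=d}\|_2$, so the argument yields $(e^2C^2 q)^d\alpha^2$ rather than $(10Cq)^d\alpha^2$; the paper's own write-up of the proof writes $(Cq)^{d/2}$ in the H\"older step (a small slip), and the clean way to read the theorem is with $10C^2$ (or just a fresh absolute constant) in place of $10C$, which is all that is needed in the downstream applications.
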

\begin{proof}
    Let $q = \frac{\log(1/\alpha)}{d}$; note that $q \geq 2$. Let $q'$ be the H\"{o}lder conjugate of $q$, i.e., $q'$ is defined by $\frac{1}{q}+\frac{1}{q'}=1$. Then by $(r,C)$-hypercontractivity and Lemma~\ref{lem:follows from hypercontractiveness1}, we have \[\|f^{=d}\|_2^2 = \langle f, f^{=d} \rangle \le \|f^{=d}\|_q \|f\|_{q'} \le (C^2q)^{d/2} \|f^{=d}\|_2\cdot \alpha^{1-1/q}.\]
    Rearranging, we obtain 
    \[ \|f^{=d}\|_2^2 \le (e^2C^2q)^d \alpha^2. \] The theorem follows by plugging in the value of $q$.
\end{proof}

Theorem \ref{thm: level-d in SO_n} together with Lemma \ref{lem:reductionof goodness to the simple case} also shows that Theorem \ref{thm:Every simple group is good} implies Theorem \ref{thm:second-level-d}.

Combining Lemma \ref{lemma:bnp} with Theorem \ref{thm: level-d in SO_n}, we have the following.

\begin{lemma}\label{lemma:conv bound from goodness}
      For each $c,C>0$ there exist $c',n_0>0$ such that the following holds. Let $n>n_0$ and let $G$ be an $n$-graded $(C,c)$-good group. Let  $A\subseteq G$ be measurable, and suppose that \[\alpha := \mu_G(A)\in (  e^{-c'\sqrt{n}}, c').\] Write  $f=\frac{1_A}{\alpha}$ and $t=\frac{\log(1/\alpha)}{2}$. Then for all $1\leq d\le  t$, we have
      \[{\|T_f\|}_{V_{=d}} \le \left(\frac{C'\log(1/\alpha)}{n}\right)^{d/2},\] 
      where $C':=\frac{10C^2}{c}$. Moreover,
      \[{\|T_f\|}_{V_{>t}} \le \left(\frac{\alpha}{n}\right)^{10}.\] 
\end{lemma}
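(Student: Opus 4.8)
The plan is to combine the level-$d$ inequality (Theorem~\ref{thm: level-d in SO_n}) with the dimension bounds of Lemma~\ref{lemma:bnp}, after stripping off the normalisation. I would write $g=1_A$, so that $f=g/\alpha$, $f^{=d}=g^{=d}/\alpha$ and $\|f\|_2=\alpha^{-1/2}$; set $C':=10C/c$; and choose $c':=\min\{c,e^{-C_0}\}$ for a suitable absolute constant $C_0>80$ (say $C_0=200$), together with $n_0$ large depending on $c,C$. Since $\alpha<c'\le e^{-C_0}$ this forces $t=\tfrac12\log(1/\alpha)>C_0/2$, while $\alpha>e^{-c'\sqrt n}$ gives $t<c'\sqrt n/2\le c\sqrt n/2$.

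For the low degrees $1\le d\le t$: here $d\le t<c\sqrt n/2$ lies below the hypercontractivity threshold $r=cn^{1/2}$ of the good group $G$ and also $d\le\tfrac12\log(1/\alpha)$, so Theorem~\ref{thm: level-d in SO_n} applies to $g$ and gives $\|g^{=d}\|_2^2\le(10C/d)^d\alpha^2\log^d(1/\alpha)$, hence $\|f^{=d}\|_2^2\le(10C/d)^d\log^d(1/\alpha)$. For $n$ large $d<cn/(1+c)$, so $c$-strong-quasirandomness gives $Q_d=(cn/d)^d$, and Lemma~\ref{lemma:bnp} gives $\|T_f\|_{V_{=d}}\le\|f^{=d}\|_2/\sqrt{Q_d}$; multiplying these out yields $\|T_f\|_{V_{=d}}^2\le(10C/d)^d\log^d(1/\alpha)\cdot(d/(cn))^d=(C'\log(1/\alpha)/n)^d$, i.e.\ the first claimed bound.

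For the high degrees $d>t$ the level-$d$ inequality is unavailable, so I would fall back on the trivial bounds $\|f^{=d}\|_2\le\|f\|_2=\alpha^{-1/2}$ (and $\|f\|_2/\sqrt Q$ for $V_{\ge n/2}$) from Lemma~\ref{lemma:bnp}, and the only real work is to check the quasirandomness parameters are already large enough. For integers $t<d<cn/(1+c)$ one has $Q_d=(cn/d)^d$; since $x\mapsto x\log(cn/x)$ is unimodal on $(0,cn/(1+c))$, over this range $Q_d\ge\min\{Q_{\lceil t\rceil},Q\}$ with $Q=(1+c)^{cn/(1+c)}=e^{\beta n}$, $\beta=c\log(1+c)/(1+c)>0$, and using $\lceil t\rceil\le t+1\le c\sqrt n$ (for $n$ large) one gets $Q_{\lceil t\rceil}=(cn/\lceil t\rceil)^{\lceil t\rceil}\ge(\sqrt n)^{t}=n^{t/2}$; for $d\ge cn/(1+c)$, and for $V_{\ge n/2}$, the parameter is exactly $Q$. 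Hence $\|T_f\|_{V_{=d}}\le\alpha^{-1/2}\max\{n^{-t/4},e^{-\beta n/2}\}$ for every integer $d>t$, and likewise $\|T_f\|_{V_{\ge n/2}}\le\alpha^{-1/2}e^{-\beta n/2}$. Writing $\alpha=e^{-2t}$, the inequality $\alpha^{-1/2}n^{-t/4}\le(\alpha/n)^{10}$ reduces to $21t+10\log n\le(t/4)\log n$, which holds for every $t>C_0/2$ once $C_0>80$ and $n_0$ is large (the left side of $(t/4)\log n-21t\ge(C_0/2)(\tfrac14\log n-21)$ grows without bound in $n$), and $\alpha^{-1/2}e^{-\beta n/2}\le(\alpha/n)^{10}$ holds for $n$ large since $\beta n/2$ dominates $21t+10\log n=O(\sqrt n+\log n)$. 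Since $V_{>t}$ is the orthogonal direct sum of the $T_f$-invariant subspaces $V_{=d}$ ($d>t$) and $V_{\ge n/2}$, its operator norm equals the supremum of the corresponding norms, giving $\|T_f\|_{V_{>t}}\le(\alpha/n)^{10}$.

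The main obstacle — really the only point requiring care — is the high-degree bookkeeping: one must ensure that for \emph{every} degree exceeding $t$ the strong-quasirandomness dimension is already at least $\alpha^{-21}n^{20}$, and this is precisely why the upper cutoff $c'$ on $\alpha$ must be a small absolute constant (so that $t$, and hence $Q_{\lceil t\rceil}\ge n^{t/2}$, is large enough) rather than anything close to $1$. Everything else is a mechanical substitution of Theorem~\ref{thm: level-d in SO_n} and the definition of $c$-strong-quasirandomness into Lemma~\ref{lemma:bnp}.
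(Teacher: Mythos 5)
Your proposal is correct and follows essentially the same strategy as the paper: for low degrees, combine Lemma~\ref{lemma:bnp} with Theorem~\ref{thm: level-d in SO_n} (applied to $1_A = \alpha f$) to cancel the $Q_d = (cn/d)^d$ denominator against the $(10C/d)^d \log^d(1/\alpha)$ numerator, leaving $(C'\log(1/\alpha)/n)^{d/2}$; for high degrees, use the trivial bound $\|f^{=d}\|_2 \le \alpha^{-1/2}$ and the fact that the quasirandomness parameters are already superpolynomially large.

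The only place you diverge from the paper's write-up is in the high-degree bookkeeping. The paper asserts $Q_d \ge (cn/t)^t$ for all $d > t$ and then observes $cn/t \ge e^{42}\sqrt n$ (which follows from the smallness of $c'$), turning the bound into $\|f\|_2 \cdot n^{-t/4} e^{-21t} = \alpha^{10}n^{-t/4} \le (\alpha/n)^{10}$ as soon as $t \ge 40$. You instead observe that $Q_d$ is unimodal below the cutoff $cn/(1+c)$, so its minimum over $d>t$ is $\min\{Q_{\lceil t\rceil},Q\}$, and you bound $Q_{\lceil t\rceil}\ge n^{t/2}$ directly; the final comparison then needs the slightly less clean condition $(t/4)\log n - 21t \ge 10\log n$, which you handle by taking $c'$ small (so $t$ is bounded below by a constant) and $n_0$ large. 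Both reach the same destination; the paper trades a more demanding constraint on $c'$ (namely $c' \lesssim c\,e^{-42}$) for a cleaner final step, while you keep $c'\le\min\{c,e^{-C_0}\}$ and instead push more of the slack into the choice of $n_0$. Note also that the paper's assertion $Q_d \ge (cn/t)^t$ for \emph{all} $d>t$ silently relies on the same unimodality/large-$n$ argument you make explicit — for $d$ near or beyond $cn/(1+c)$ the dimension bound is $Q=(1+c)^{cn/(1+c)}=e^{\Theta(n)}$, which dominates $(cn/t)^t=e^{O(\sqrt n\log n)}$ only once $n$ is large — so your version is in fact a more honest account of the step that the paper compresses into ``provided $n_0$ is sufficiently large.''
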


\begin{proof}
The lemma follows by applying Lemma~\ref{lemma:bnp} with the values of $Q_d$ and $Q$ which are promised by the goodness of $G$, and then bounding $\norm{f^{=d}}_2$ using Theorem~\ref{thm: level-d in SO_n}.

Let us begin with the range $d\leq t$. Since we know that $G$ is $c$-strongly-quasirandom, he have that $G$ is $((Q_d)_{d=1}^{\lceil n/2\rceil -1},Q)$ graded where $Q_d\geq \left(\frac{cn}{d}\right)^d$ for all $d\leq t$. Hence, by Lemma~\ref{lemma:bnp}, we have
\begin{align*}
\norm{T_f}_{V_{=d}}&\leq \frac{\norm{f^{=d}}_2}{\sqrt{Q_d}} \\ & \leq \norm{f^{=d}}_2\cdot \left({\frac{cn}d}\right)^{-d/2} \\ & = \frac{\norm{\alpha\cdot f^{=d}}_2}{\alpha}\cdot \left({\frac{cn}d}\right)^{-d/2} \\ & \leq \left(\frac{d}{cn}\right)^{d/2}\cdot \left(\frac {10C^2}{d}\right)^{d/2}\log(1/\alpha)^{d/2}
\intertext{(using Lemma~\ref{thm: level-d in SO_n})}
&\leq \left(\frac{10C^2\log (1/\alpha)}{cn}\right)^{d/2},
\end{align*}
which is the desired bound. 

Next, consider the range $d\geq t$. For $d$ in this range, and provided that $n_0$ is sufficiently large, we have from $c$-strong-quasirandomness that $Q_d \ge (\frac{cn}{t})^t$. By Lemma \ref{lemma:bnp} we therefore have, by a similar computation to before, 
\[
{\|T_f\|}_{V_{>t}}\le \|f\|_2\left(\frac{cn}{t}\right)^{-t/2}\le n^{-t/4} e^{-21t}\alpha^{-1/2} \le  \left(\frac{\alpha}{n}\right)^{10},
\]
where we used 
\[
\frac{cn}{t}\ge e^{42}n^{1/2},
\]
which holds provided $c'$ is sufficiently small. 
\end{proof}

\begin{lemma}\label{lemma:conv bound from goodness for large sets}
      Let $c,C,c'>0$. Let $G$ be an $n$-graded $c$-strongly-quasirandom group. Let $A\subseteq G$ be measurable, and suppose that \[\alpha := \mu_G(A) \geq  c'.\] Write  $f=\frac{1_A}{\alpha}$. Then for $1\leq d < cn/(1+c)$ we have
      \[{\|T_f\|}_{V_{=d}} \le {(c')}^{-1/2}\left(\frac{d}{cn}\right)^{d/2},\] 
      and for $d\ge cn/(1+c)$ we have \[{\|T_f\|}_{V_{=d}} \le {(c')}^{-1/2}(1+c)^{-cn/(2(1+c))}.\]
\end{lemma}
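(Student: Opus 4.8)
The plan is to obtain this as an immediate consequence of Lemma~\ref{lemma:bnp}, using only the trivial estimate $\|f^{=d}\|_2 \le \|f\|_2$ in place of a level-$d$ inequality; this is precisely why no upper bound on $\alpha$ is needed here, in contrast with Lemma~\ref{lemma:conv bound from goodness}: for a set whose density is bounded below by the constant $c'$, the function $f = 1_A/\alpha$ already has bounded $L^2$-norm, so strong quasirandomness alone does the work.

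Concretely, first I would compute $\|f\|_2^2 = \mathbb{E}\!\left[(1_A/\alpha)^2\right] = \alpha^{-1} \le (c')^{-1}$, so that $\|f\|_2 \le (c')^{-1/2}$. Next, by the definition of $c$-strong-quasirandomness, the group $G$ is $\bigl((Q_d)_{d=1}^{\lceil n/2\rceil-1},Q\bigr)$-quasirandom with $Q_d = (cn/d)^d$ for $1 \le d < cn/(1+c)$ and $Q_d = Q = (1+c)^{cn/(1+c)}$ for $d \ge cn/(1+c)$. Applying Lemma~\ref{lemma:bnp} then gives, for every $1 \le d < n/2$,
\[
\|T_f\|_{V_{=d}} \;\le\; \frac{\|f^{=d}\|_2}{\sqrt{Q_d}} \;\le\; \frac{\|f\|_2}{\sqrt{Q_d}} \;\le\; \frac{(c')^{-1/2}}{\sqrt{Q_d}},
\]
together with the analogous inequality $\|T_f\|_{V_{\ge n/2}} \le (c')^{-1/2}/\sqrt{Q}$ for the top piece.

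Finally I would substitute the two cases of $Q_d$: for $1 \le d < cn/(1+c)$ the right-hand side becomes $(c')^{-1/2}(cn/d)^{-d/2} = (c')^{-1/2}(d/(cn))^{d/2}$, while for $d \ge cn/(1+c)$ (and likewise for $V_{\ge n/2}$, using $Q$) it becomes $(c')^{-1/2}(1+c)^{-cn/(2(1+c))}$, exactly as claimed. There is no genuine obstacle in this argument — each step is either an elementary computation or a direct citation of an already-proved lemma; the only point requiring any care is lining up the exponents with the precise piecewise definition of the parameters $Q_d$ that enters the definition of $c$-strong-quasirandomness.
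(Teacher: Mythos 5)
Your proposal is correct and takes essentially the same approach as the paper, which likewise applies Lemma~\ref{lemma:bnp} and replaces the level-$d$ inequality by the trivial bound $\|f^{=d}\|_2 \le \|f\|_2 = \alpha^{-1/2} \le (c')^{-1/2}$, then substitutes the piecewise values of $Q_d$ (and $Q$) from the definition of $c$-strong-quasirandomness. The only cosmetic difference is that the paper's proof text mentions ``goodness'' of $G$ where, as you correctly observe, only $c$-strong-quasirandomness is needed.
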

\begin{proof}
The lemma follows by applying Lemma~\ref{lemma:bnp} with the values of $Q_d$ and $Q$ that are guaranteed by the $c$-strong-quasirandomness of $G$, and then upper-bounding $\|f^{=d}\|_2$ using $\|f^{=d}\|_2\le \|f\|_2 =\alpha^{-1/2}\le {(c')}^{-1/2}$.

\end{proof}

Lemmas \ref{lem:reductionof goodness to the simple case}, \ref{lemma:conv bound from goodness} and \ref{lemma:conv bound from goodness for large sets} together show that Theorem \ref{thm:Every simple group is good} implies Theorem \ref{thm:spectral gap}.

The following is a version of Theorem \ref{thm: level-d in SO_n} that is perhaps easier to comprehend.

\begin{thm}\label{thm: version 2 of level-d in SO_n}
     For each $c,C>0$ there exist $c',C'>0$ such that the following holds. Let $G$ be an $n$-graded, $(c,C)$-good group, let $f\colon G\to \{0, 1\}$ be measurable,
      and suppose that $\alpha:=\mathbb{E}[f] \geq e^{-c'\sqrt{n}}$. Then for all $d \in \mathbb{N}$, we have
     \begin{equation}\label{eq:version 2 of level-d in SO_n}
     \|f^{= d}\|_{2}^2 \le  (C' )^d \alpha^2\log^{d}(e/\alpha).
     \end{equation}
\end{thm}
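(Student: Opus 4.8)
The plan is to split into two ranges of $d$, according to whether or not $d$ exceeds $\tfrac12\log(1/\alpha)$, and to derive the claimed bound in each range from results already established. Throughout, recall that since $f$ takes values in $\{0,1\}$ we have $\|f\|_2^2=\mathbb{E}[f^2]=\mathbb{E}[f]=\alpha$, so, $f^{=d}$ being an orthogonal projection of $f$, we always have the trivial estimate $\|f^{=d}\|_2^2\le\|f\|_2^2=\alpha$. We take $c':=c$ and $C':=\max\{10C,4\}$. Being good, $G$ is in particular $(cn^{1/2},C)$-hypercontractive.

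\emph{Range 1: $1\le d\le\tfrac12\log(1/\alpha)$.} From $\alpha\ge e^{-c'\sqrt n}$ we get $\tfrac12\log(1/\alpha)\le\tfrac12 c\sqrt n\le cn^{1/2}$, hence $d\le\min\{\tfrac12\log(1/\alpha),\,cn^{1/2}\}$, so Theorem~\ref{thm: level-d in SO_n} applies and gives $\|f^{=d}\|_2^2\le(10C/d)^d\,\alpha^2\log^d(1/\alpha)$. Since $d\ge 1$ and $\log(1/\alpha)\le\log(e/\alpha)$, the right-hand side is at most $(10C)^d\,\alpha^2\log^d(e/\alpha)\le(C')^d\,\alpha^2\log^d(e/\alpha)$, which is the desired bound.

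\emph{Range 2: $d>\tfrac12\log(1/\alpha)$.} This range contains every $d\in\mathbb{N}$ not covered by Range 1, and contains all $d\ge1$ when $\alpha>e^{-2}$. Here we use only $\|f^{=d}\|_2^2\le\alpha$, so it suffices to verify $\alpha\le(C')^d\,\alpha^2\log^d(e/\alpha)$, equivalently $1/\alpha\le\bigl(C'\log(e/\alpha)\bigr)^{d}$. If $\alpha\le 1/e$, then $\log(e/\alpha)=1+\log(1/\alpha)\ge 2$, so $C'\log(e/\alpha)\ge 2C'\ge 8>e^2$, while $d>\tfrac12\log(1/\alpha)$ gives $e^{2d}>1/\alpha$; hence $\bigl(C'\log(e/\alpha)\bigr)^d\ge(e^2)^d=e^{2d}>1/\alpha$. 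If instead $\alpha>1/e$, then $\log(e/\alpha)\ge1$ and $1/\alpha<e<C'\le(C')^d\le\bigl(C'\log(e/\alpha)\bigr)^d$. In either case the inequality holds.

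Together the two ranges cover all $d\in\mathbb{N}$ with $d<n/2$, which proves \eqref{eq:version 2 of level-d in SO_n}; the analogous bound for $\|f^{\ge n/2}\|_2^2$ follows from exactly the Range 2 argument, legitimately since $\log(1/\alpha)\le c'\sqrt n<n$. I do not anticipate a genuine obstacle here: strong quasirandomness is not needed, hypercontractivity enters only through the already-proved Theorem~\ref{thm: level-d in SO_n}, and the sole point requiring a little care is the bookkeeping of the absolute constants $C',c'$, so that the crude Range 2 bound dovetails with Range 1.
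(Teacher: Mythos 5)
Your proof is correct, and it reaches the same destination via a slightly different path than the paper. The paper's proof does not cite Theorem~\ref{thm: level-d in SO_n}; instead, after assuming WLOG that $\alpha\le c'$ (the case $\alpha\ge c'$ being trivial since the right-hand side then exceeds one, provided $C'$ is large), it performs a \emph{fresh} H\"older computation with the \emph{fixed} exponent $q=\log(1/\alpha)$, independent of $d$. This makes the resulting bound valid uniformly for all $1\le d\le r=c\sqrt{n}$ in one stroke, and the remaining case $d>r$ is handled trivially (again the RHS exceeds one). Your version instead reduces directly to Theorem~\ref{thm: level-d in SO_n} (which internally uses the $d$-dependent exponent $q=\log(1/\alpha)/d$), and this forces the larger trivial-bound regime $d>\tfrac12\log(1/\alpha)$, which you verify carefully. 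Both arguments hinge on the same H\"older-plus-hypercontractivity mechanism; yours is more modular and makes the constants $(c',C')=(c,\max\{10C,4\})$ explicit, whereas the paper's self-contained H\"older step covers a larger range of $d$ directly so that the trivial bound only needs to kick in at $d > c\sqrt{n}$. Your range checks and edge-case bookkeeping (including the $\alpha>1/e$ subcase, where $\log(1/\alpha)$ alone would fail and $\log(e/\alpha)$ is what saves the day, and the boundary $d\ge n/2$) are all sound.
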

\begin{proof}
    Provided $C'$ is sufficiently large depending on $c'$, we may (and shall) assume that $\alpha \le c'.$ Indeed, for $\alpha \ge c'$ the right-hand side is greater than one, and we always have the trivial upper bound $\|f^{\le d}\|^2_2 \le \|f\|^2_2 =\alpha$. Let $r=c\sqrt n$, and note that in the case where $d>r$, the claimed upper bound is trivial, as in this case the right-hand side of (\ref{eq:version 2 of level-d in SO_n}) is (again) greater than one.
    
    Let $q = \log(1/\alpha).$ Let $q'$ be the H\"{o}lder conjugate of $q$. For $d\leq r$, we have by $(r,C)$-hypercontractivity and Lemma~\ref{lem:follows from hypercontractiveness1} that \[ \|f^{=d}\|_2^2 = \langle f, f^{=d} \rangle \le \|f^{=d}\|_q \|f\|_{q'} \le (C^2q)^{d/2} \|f^{=d}\|_2\alpha^{1-1/q}.\]
    After rearranging we obtain 
    \[ \|f^{=d}\|_2^2 \le e^2(C^2q)^d \alpha^2=e^2C^{2d}\alpha^2 \log^{d}(1/\alpha).\] 
    This gives the claimed upper bound, provided $C'$ is sufficiently large depending on $C$. 
     
\end{proof}

\subsection{Upper bounds on the measures of product-free sets.}
Let us now show how Lemma~\ref{lemma:conv bound from goodness} implies an upper bound on the measure of a product-free set in a good group. 

\begin{theorem}\label{thm:product free sets in good groups}
    For any $c,C>0$, there exist $c',n_0 >0$ such that the following holds. Let $n>n_0$ and let $G$ be a $(c,C)$-good $n$-graded group. Then every measurable product-free set in $G$ has Haar measure at most $e^{-c' n^{1/3}}$.
\end{theorem}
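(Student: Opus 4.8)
The plan is to run the spectral argument described at the start of this section. Write $\alpha=\mu(A)$ and $f=\tfrac{1}{\alpha}1_A$, so that product-freeness of $A$ is equivalent to $\langle T_f 1_A,1_A\rangle=0$. Decomposing $g:=1_A$ along the grading as $g=\sum_{d=0}^{\lceil n/2\rceil-1}g^{=d}+g^{\ge n/2}$, and using that each $V_{=d}$ and $V_{\ge n/2}$ is $T_f$-invariant (Lemma~\ref{lemma:bnp}) and that these spaces are pairwise orthogonal, all cross terms vanish; since $g^{=0}\equiv\alpha$ and $\int f=1$ we get
\[
0=\langle T_f g,g\rangle=\alpha^2+\sum_{d=1}^{\lceil n/2\rceil-1}\langle T_f g^{=d},g^{=d}\rangle+\langle T_f g^{\ge n/2},g^{\ge n/2}\rangle.
\]
Hence $\alpha^2\le\sum_{d\ge1}\|T_f\|_{V_{=d}}\|g^{=d}\|_2^2+\|T_f\|_{V_{\ge n/2}}\|g^{\ge n/2}\|_2^2$, and it suffices to show the right-hand side is $<\alpha^2$ whenever $\alpha>e^{-c'n^{1/3}}$ and $n$ is large; this is the desired contradiction.

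First I would handle the range where $\alpha$ is bounded below by an absolute constant. Here Lemma~\ref{lemma:conv bound from goodness for large sets} gives $\|T_f\|_{V_{=d}}=O(n^{-1/2})$ uniformly in $d\ge1$ (the maximum of $d\mapsto(d/(cn))^{d/2}$ over $d\ge1$ being attained at $d=1$), together with an exponentially small bound on $V_{\ge n/2}$; since $\sum_{d\ge1}\|g^{=d}\|_2^2+\|g^{\ge n/2}\|_2^2\le\|g\|_2^2=\alpha$, the right-hand side is $O(n^{-1/2})\cdot\alpha\ll\alpha^2$ for $n$ large. So we may assume $\alpha<c_1$, where $c_1$ denotes the constant supplied by Lemma~\ref{lemma:conv bound from goodness}; and since $\alpha>e^{-c'n^{1/3}}>e^{-c_1\sqrt n}$ for $n$ large, the hypotheses of that lemma are met.

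For the main range, set $t=\tfrac12\log(1/\alpha)$, and note that $t<c'n^{1/3}$ lies comfortably below $\min\{cn^{1/2},\,cn/(1+c)\}$ for $n$ large, so both the level-$d$ inequality (Theorem~\ref{thm: level-d in SO_n}) and $c$-strong-quasirandomness apply for every integer $1\le d\le t$. Combining $\|T_f\|_{V_{=d}}\le(C'\log(1/\alpha)/n)^{d/2}$ (Lemma~\ref{lemma:conv bound from goodness}, $C'=10C/c$) with $\|g^{=d}\|_2^2\le(10C/d)^d\alpha^2\log^d(1/\alpha)$ (Theorem~\ref{thm: level-d in SO_n}) gives, after collecting powers,
\[
\|T_f\|_{V_{=d}}\|g^{=d}\|_2^2\le\alpha^2\Bigl(\frac{\beta}{d^2}\Bigr)^{d/2},\qquad\text{where }\beta:=\frac{(10C)^3\log^3(1/\alpha)}{c\,n},
\]
while $\|T_f\|_{V_{>t}}\le(\alpha/n)^{10}$ (same lemma) bounds the contribution of all degrees $>t$ together with $V_{\ge n/2}$ by $(\alpha/n)^{10}\alpha\le\alpha^2 n^{-10}$. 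Summing, $\sum_{d\ge1}(\beta/d^2)^{d/2}\le\sqrt\beta+\frac{\beta}{1-\sqrt\beta}$, so the right-hand side is at most $\alpha^2\bigl(\sqrt\beta+\frac{\beta}{1-\sqrt\beta}+n^{-10}\bigr)$. Finally, $\alpha>e^{-c'n^{1/3}}$ forces $\log(1/\alpha)<c'n^{1/3}$, hence $\beta<(10C)^3(c')^3/c$, which is $<\tfrac1{100}$ once $c'$ is chosen small enough in terms of the absolute constants $c,C$; then the bracket is $<1$, a contradiction. The constants are fixed in this order: $c,C$ (from goodness) are given; choose $c'$ depending only on $c,C$; then $n_0$ depending on $c,C,c'$.

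There is really no hard step here: all of the substantive work — a level-$d$ inequality valid up to degree $\Theta(\log(1/\alpha))$, and the dimension lower bound $Q_d\ge(cn/d)^d$ — has already been carried out in Lemmas~\ref{lemma:bnp}, \ref{lemma:conv bound from goodness}, \ref{lemma:conv bound from goodness for large sets} and Theorem~\ref{thm: level-d in SO_n}. What remains is bookkeeping: checking the degree ranges in which each cited estimate is valid, treating the two ranges of $\alpha$ separately, and summing an essentially geometric series. (The exponent $n^{1/3}$ is exactly what the cube $\log^3(1/\alpha)$ in $\beta$ forces, and as noted in the introduction this is an intrinsic limitation of the method rather than an artefact of the write-up.)
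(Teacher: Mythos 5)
Your proof is correct and follows essentially the same spectral route as the paper: decompose $\langle T_f 1_A,1_A\rangle=0$ along the grading, bound the degree-$d$ terms using Lemma~\ref{lemma:conv bound from goodness} together with the level-$d$ inequality (Theorem~\ref{thm: level-d in SO_n}), handle the high-degree tail with the $\|T_f\|_{V_{>t}}$ bound, and sum. The $n^{1/3}$ exponent arises in both arguments from the $\log^3(1/\alpha)/n$ factor, so the mechanism is identical. The one point of divergence is the treatment of $\alpha$ not small: the paper simply replaces $A$ by a measurable product-free subset of measure slightly below $c'$ and re-runs the argument, whereas you invoke Lemma~\ref{lemma:conv bound from goodness for large sets} to handle $\alpha\ge c_1$ directly. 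Both are valid; the paper's trick is one line shorter, but yours avoids relying on the existence of measurable subsets of prescribed measure (true here since the Haar measure on a compact connected Lie group is non-atomic, but still an extra observation). No gap.
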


Before proving Theorem~\ref{thm:product free sets in good groups}, let us note that together with Theorem~\ref{thm:Every simple group is good} it implies Theorem~\ref{thm:son-stronger}. This follows since, by  Lemma~\ref{lem:reductionof goodness to the simple case}, for all $n > n_0$ every compact connected Lie group of min-rank $n$ is a $(c,C)$-good $n$-graded group for some absolute constants $c,C>0$. 

\begin{proof}
    Let $A \subseteq G$ be product-free and measurable; write $\alpha= \mathbb{E}[1_{A}]$ and $t=\frac{\log(1/\alpha)}{2}$. Assume w.l.o.g.\ that $\alpha<c'$, where $c'$ is to be chosen later (if not, then replace $A$ by a smaller product-free set). Let $f=\frac{1_A}{\alpha}$.  Suppose for a contradiction that $\alpha \ge e^{-c'n^{1/3}}$. We have
    \begin{equation}\label{eq:fourier}
    0 = \langle T_f f ,f\rangle = \mathbb{E}^3[f] + \sum _{d=1}^{\lfloor t \rfloor} \langle T_f f^{=d},f^{=d}\rangle + \langle T_f f^{ > t },f^{> t} \rangle.\end{equation}
    Provided $c'$ is sufficiently small, we may now apply Lemma \ref{lemma:conv bound from goodness} and Theorem \ref{thm: level-d in SO_n} with $1_A= \alpha \cdot f$ to obtain, for $C'$ sufficiently large depending on $c$, $C$ and all $1\leq d\le t$,
    \[ | \langle T_f f^{=d},f^{=d} \rangle | \le \|T_f\|_{V_{=d}}\|f^{=d}\|_2^2 \le \left(\frac{C'\log^3(1/\alpha)}{nd^2}\right)^{d/2} \le 100^{-d}, \]
where the last inequality holds provided $c'$ is sufficiently small depending on $C'$. We may also apply Lemma  \ref{lemma:conv bound from goodness} to obtain 
    \[ | \langle T_f f^{> t}, f^{> t} \rangle | \le \|T_f\|_{V_{> t}} \|f\|_2^2 \le \left(\frac{\alpha}{n}\right)^{10}\alpha^{-1}. \]
    As $\mathbb{E}[f]=1$, these two upper bounds contradict (\ref{eq:fourier}). 
    \end{proof}

\subsection{Product mixing}

The proof of Theorem~\ref{thm:product free sets in good groups} in fact  gives the following stronger statement, which implies Theorem \ref{thm:product mixing intro}.

\begin{theorem}\label{thm: Product mixing}
    For any $\epsilon,c,C>0$, there exist $c',n_0>0$ such that the following holds. Let $n>n_0$ and let $G$ be a $(c,C)$-good $n$-graded group. Let $A,B,C$ be measurable subsets of $G$, each with Haar measure at least $e^{-c'n^{1/3}}$. Let $f=\frac{1_A}{\mu(A)},g=\frac{1_B}{\mu(B)},h = \frac{1_C}{\mu(C)}$. Then
    \[|\langle f*g,h \rangle -1| < \epsilon.\]  
\end{theorem}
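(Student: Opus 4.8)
The plan is to run the argument behind Theorem~\ref{thm:product free sets in good groups} with the single function replaced by three, the one genuinely new issue being that $A,B,C$ may have vastly different measures. The first step I would take is to reduce to the case $\mu(A)=\mu(B)=\mu(C)=m:=e^{-c'n^{1/3}}$. If $\mu(A)>m$, pick a measurable bijection $\phi$ carrying the normalised Haar measure on $A$ to Lebesgue measure on $[0,1)$ (possible since $(G,\mu)$ is a standard probability space), and for $s\in[0,1)$ let $A_s$ be the preimage under $\phi$ of the arc of length $m/\mu(A)$ starting at $s$ and wrapping around, so that $\mu(A_s)=m$ and $\tfrac{1_A}{\mu(A)}=\int_0^1\tfrac{1_{A_s}}{m}\,ds$ pointwise. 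Since $(u,v,w)\mapsto\langle u*v,w\rangle$ is trilinear, $\langle f*g,h\rangle=\int_0^1\langle\tfrac{1_{A_s}}{m}*g,h\rangle\,ds$, so it suffices to treat $\mu(A)=m$; iterating the same trick for $B$ and $C$ we may assume all three measures equal $m$, and write $L:=\log(1/m)=c'n^{1/3}$.

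Next I would use that each $V_{=d}$ and $V_{\ge n/2}$ is a two-sided ideal, together with the vanishing of $\phi*\psi$ across distinct Peter--Weyl ideals, to get
\[\langle f*g,h\rangle=\sum_{d=0}^{\lceil n/2\rceil-1}\langle f^{=d}*g^{=d},h^{=d}\rangle+\langle f^{\ge n/2}*g^{\ge n/2},h^{\ge n/2}\rangle,\]
where the $d=0$ term equals $\langle 1*1,1\rangle=1$ because $f^{=0}=g^{=0}=h^{=0}\equiv 1$. So it remains to bound the other terms in absolute value by $\epsilon$. For each $d$, Lemma~\ref{lemma:bnp} applied to $g^{=d}$ shows that the operator $\phi\mapsto\phi*g^{=d}$ has norm at most $\|g^{=d}\|_2/\sqrt{Q_d}$ on $V_{=d}$, hence $|\langle f^{=d}*g^{=d},h^{=d}\rangle|\le Q_d^{-1/2}\|f^{=d}\|_2\|g^{=d}\|_2\|h^{=d}\|_2$, and similarly with $Q$ for the $\ge n/2$ block.

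I would then split the sum at $d_0:=\tfrac12 L=\tfrac12 c'n^{1/3}$. For $1\le d\le d_0$, the level-$d$ inequality (Theorem~\ref{thm: level-d in SO_n}) gives $\|f^{=d}\|_2\le(10CL/d)^{d/2}$, and the same for $g,h$; feeding in $Q_d\ge(cn/d)^d$ from $c$-strong quasirandomness (valid since $d_0\ll cn/(1+c)$), the powers of $d$ cancel against $Q_d^{-1/2}\le(d/(cn))^{d/2}$ and the term is at most $\big((10C)^3L^3/(cn\,d^2)\big)^{d/2}=\big((10C)^3(c')^3/(c\,d^2)\big)^{d/2}$, which is below $(\epsilon/10)^d$ once $c'$ is small enough in terms of $\epsilon,C,c$; summing over $d\ge 1$ costs less than $\epsilon/2$.

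For $d>d_0$ and for the $\ge n/2$ block I would instead use only $\|f^{=d}\|_2,\|g^{=d}\|_2,\|h^{=d}\|_2\le m^{-1/2}$, exploiting that $Q_d$ (resp.\ $Q$) is enormous in this range: $Q_d\ge(cn/d)^d$ for $d_0<d<cn/(1+c)$ is minimised at the endpoints and both endpoints are $\exp(\Omega(n^{1/3}\log n))$, while $Q_d,Q\ge(1+c)^{cn/(1+c)}=\exp(\Omega(n))$ for $d\ge cn/(1+c)$. Thus each such term is at most $m^{-3/2}\exp(-\Omega(n^{1/3}\log n))=\exp\!\big(\tfrac32 c'n^{1/3}-\Omega(n^{1/3}\log n)\big)$, and since there are $O(n)$ of them their total is below $\epsilon/2$ once $n$ is large; adding the two ranges yields $|\langle f*g,h\rangle-1|<\epsilon$. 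I expect the equalising reduction in the first step to be the crucial manoeuvre: without it, controlling the high-degree tail when (say) $A$ and $C$ are exponentially small but $B$ is large forces one to apply the level-$d$ inequality to each of the three sets separately up to its own, possibly much larger, threshold $\tfrac12\log(1/\mu(\cdot))$ and to track several intermediate ranges of $d$, whereas after equalising the measures the clean two-range estimate above suffices.
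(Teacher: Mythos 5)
Your proposal is correct and achieves the stated bound, but it takes a genuinely different route from the paper's proof. Both arguments decompose $\langle f*g,h\rangle$ along the grading using the Peter--Weyl/two-sided-ideal structure, isolate the $d=0$ term as the main term $1$, and bound the remaining terms with Lemma~\ref{lemma:bnp} together with a level-$d$ inequality and strong quasirandomness. The divergence is in how unequal measures $\mu(A),\mu(B),\mu(C)$ are handled. You equalise all three to the threshold value $m=e^{-c'n^{1/3}}$ via a slicing/averaging trick (writing $\tfrac{1_A}{\mu(A)}=\int_0^1\tfrac{1_{A_s}}{m}\,ds$ with each $\mu(A_s)=m$, using that $(G,\mu)$ is a standard probability space), which makes the threshold $d_0=\tfrac12\log(1/m)$ common to all three functions, so the sharp level-$d$ inequality of Theorem~\ref{thm: level-d in SO_n} applies to each and a clean two-range estimate finishes. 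The paper instead assumes WLOG that $B$ has the smallest measure $\beta$ (by permuting and taking inverses), cuts the sum at $t=\tfrac12\log(1/\beta)$, and applies the ``version~2'' level-$d$ inequality (Theorem~\ref{thm: version 2 of level-d in SO_n}) to $f$ and $h$; that variant gives a slightly weaker bound (no $(1/d)^d$ gain) but is valid for \emph{all} $d$, so the fact that $t$ may exceed $\tfrac12\log(1/\mu(A))$ or $\tfrac12\log(1/\mu(C))$ causes no trouble, and the paper also gets by with two ranges of $d$ rather than several. So your diagnosis of the difficulty (the thresholds differ across the three sets) is exactly right, but your guess that without equalising one must ``track several intermediate ranges'' is not what the paper does --- it deploys a different, $d$-unrestricted level-$d$ estimate instead. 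What each approach buys: your equalising reduction is self-contained given only Theorem~\ref{thm: level-d in SO_n}, avoiding the need for Theorem~\ref{thm: version 2 of level-d in SO_n} entirely, at the small cost of the measure-isomorphism bookkeeping; the paper's route avoids that bookkeeping but invokes the extra inequality. Both are valid, and the estimates you carry out after the reduction (the cancellation of $d$-powers, the $\exp(\Omega(n^{1/3}\log n))$ lower bound on $Q_d$ for $d>d_0$, the handling of the $V_{\ge n/2}$ block) are all sound.
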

\begin{proof}
Assume without loss of generality that $B$ has the smallest measure of the three sets. (Note that, while the trilinear form $\mathcal{T}(f,g,h): = \langle f * g,h\rangle$ is not quite symmetric with respect to permuting $f,g$ and $h$, we may swap the positions of $f$ and $g$ or of $g$ and $h$ if we replace some of $A$, $B$ and $C$ by their inverses, meaning\ $A^{-1}: = \{x^{-1}:\ x \in A\}$ etc, which have the same measures. So there is indeed no loss of generality in assuming the above.) Write $\mu(A)=\alpha, \mu(B) =\beta , \mu(C) = \gamma$ and $C'=\frac{10C^2}{c}$. 

 Note that 
$\langle f*g,h\rangle = \langle T_g f  , h \rangle$. First we quickly handle the case where $\beta \ge c'$. Here we may apply Lemma~\ref{lemma:conv bound from goodness for large sets} to obtain that $\|T_g-I_0\|_{2\to 2} < \epsilon c'$, where $I_0$ denotes operator $F \mapsto \mathbb{E}[F]$ which sends a function to the constant function of the same expectation, provided that $n_0$ is sufficiently large. Using the fact that $\mathbb{E}[g]=1$ we then have \[|\langle {T_g f, h} \rangle -1| = |\langle T_gf,h \rangle - \langle I_0f,h\rangle|  \le \|T_g- I_0\|_{2\to 2}\cdot \|f\|_2 \cdot \|h\|_2 < \epsilon\frac{c'}{\alpha^{1/2}\gamma^{1/2}}\le \epsilon,\]
yielding the conclusion of the theorem.

 The proof of the case $\min\{\alpha,\beta,\gamma\} = \beta < c'$ proceeds similarly to the proof of Theorem \ref{thm:product free sets in good groups}. Let $t= \frac{\log(1/\beta)}{2}.$ We have
 \begin{equation}\label{eq:Master}\langle T_g f ,h\rangle - 1 = \langle T_g f ,h\rangle- \langle T_g f^{=0},h^{=0}\rangle = \sum _{d=1}^{\lfloor t \rfloor} \langle T_g f^{=d},h^{=d}\rangle + \langle T_g f^{ > t },h^{> t} \rangle.\end{equation}
 Using Lemma \ref{lemma:conv bound from goodness}, we obtain the upper bound  
 \begin{equation}\label{eq:product mixing large d}
 |\langle T_g f^{> t},h^{> t}\rangle|  \le \|T_g\|_{V^{> t}}\|f\|_2 \|h\|_2 \le \frac{\beta^{10}}{\alpha^{1/2}\cdot \gamma^{1/2}\cdot n^{10}}<\epsilon/2,
 \end{equation}
 provided that $n_0$ is sufficiently large.
For $1\leq d\le t$, we use the upper bound
\begin{equation}\label{eq:cs3} |\langle T_g f^{=d},h^{=d}\rangle| \le \|T_g\|_{V_{=d}}\|f^{=d}\|_2 \|h^{=d}\|_2.\end{equation}
By Lemma \ref{lemma:conv bound from goodness}, we have $\|T_g\|_{V_{=d}} \le \left(\frac{C'\log(1/\alpha)}{n}\right)^{d/2}$, where $C': = 10C^2/c$. By Theorem \ref{thm: version 2 of level-d in SO_n}, we have the upper bound  
\[ \|f^{=d}\|_2^2\le C'^d  \log^d(e/\alpha),\] and similarly for $h$,   
\[ \|h^{=d}\|_2^2\le  C'^d  \log^d(e/\gamma).\]
Substituting the last three bounds into (\ref{eq:cs3}), we obtain
\begin{equation}\label{eq:product mixing small d}
|\langle T_g f^{=d},h^{=d}\rangle| \le  \left(\frac{C'^3\log(e/\alpha)\log(e/\beta)\log(e/\gamma)}{n}\right)^{d/2}\le \epsilon 4^{-d},
\end{equation}
provided that $c'$ is sufficiently small depending on $C$, $c$ and $\epsilon$.

The sum of the contribution from \eqref{eq:product mixing large d} and of those from \eqref{eq:product mixing small d} for $1 \leq d \leq t$, to the right-hand side of (\ref{eq:Master}), is clearly less than $\epsilon$, yielding $|\langle T_g f,h\rangle - 1| < \epsilon$, as required. 
\end{proof}

\subsection {Equidistribution of convolutions}

Let $A\subseteq G$ be a positive-measure subset of a good Lie group, and suppose that $X$ is a $G$-homogeneous topological space (equipped with its $G$-invariant Haar probability measure $\mu_X$), and that $B\subseteq X$ is a positive-measure subset. The next theorem states that as long as the measures of $A$ and of $B$ are not too small, applying a uniformly random element of $A$ to a uniformly random element of $B$ yields an almost uniformly random element of $X$ (meaning, a random element with respect to the Haar probability measure). 
\remove{
and $B$ gives a nearly uniform distribution over $G$
Let $(\Omega,\mu)$ be a probability space and let $A\subseteq \Omega$ be measurable of positive measure. We write  $\mu_A$ for the conditional distribution of $x\sim \mu$, given that $x\in A$. This is the measure that corresponds  to the function $\frac{1_A}{\mu(A)}$. Suppose that $G$ acts on $X$. We identify the uniform measure on $A$ and $B$ with the functions $\frac{1_A}{\mu(A)}$ and $\frac{1_B}{\mu(B)}$. 
}
Note that Theorem \ref{Thm:equidistribution in two steps} below, together with Lemma \ref{lem:reductionof goodness to the simple case}, imply Theorems~\ref{thm:mixing time 2} and \ref{thm:mixing}.

\begin{thm}\label{Thm:equidistribution in two steps}
For each $C,c,\epsilon >0$ there exists $c',n_0>0$, such that the following holds. Let $n>n_0$, let $G$ be an $n$-graded $(C,c)$-good compact connected Lie group, and let $X$ be a $G$-homogeneous topological space (equipped with the $G$-action $(g,x)\mapsto gx$), and let $\mu_X$ denote the $G$-invariant Haar probability measure on $X$. Suppose that $A \subseteq G$ and $B \subseteq X$ are measurable sets of Haar probability measures $\ge e^{-c'\sqrt{n}}$. Let $\mu_A$ denote the Haar probability measure on $G$, conditioned on the event $A$, and let $\mu_B$ denote the Haar probability measure on $X$ conditioned on the event $B$, i.e.\ $\mu_B(Y) = \mu_X(B \cap Y)/\mu_X(B)$ for a measurable set $Y \subseteq X$, and $\mu_A(Z) = \mu_G(A \cap Z)/\mu_G(A)$ for a measurable set $Z \subseteq G$. Then
the total variation distance between $\mu_X$ and the distribution of $ab$ where $a \sim \mu_A$ and $b \sim \mu_B$ independently, is less than $\epsilon$. 
\end{thm}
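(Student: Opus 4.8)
The plan is to reduce the statement about the homogeneous space $X$ to a statement about convolution operators on $G$, and then to run the same Fourier/representation-theoretic argument that was used to prove Theorem~\ref{thm: Product mixing}. First, fix a base point $x_0 \in X$ and let $H = \mathrm{Stab}_G(x_0)$, so that $X \cong G/H$ as $G$-spaces and $\mu_X$ corresponds to the pushforward of the Haar measure $\mu_G$ under the quotient map $\pi\colon G \to G/H$. Lifting the set $B \subseteq X$ to $\tilde B := \pi^{-1}(B) \subseteq G$, we have $\mu_G(\tilde B) = \mu_X(B) \ge e^{-c'\sqrt n}$, and sampling $b \sim \mu_B$ and then pushing forward is the same as sampling $\tilde b \sim \mu_{\tilde B}$ and applying $\pi$. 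Consequently the distribution of $ab$ (for $a \sim \mu_A$, $b \sim \mu_B$) is the pushforward under $\pi$ of the distribution of $a\tilde b$ for $a \sim \mu_A$, $\tilde b \sim \mu_{\tilde B}$, and the total variation distance we must bound is $\le$ the total variation distance between $\mu_G$ and the distribution of $a\tilde b$ on $G$ itself (since pushforward under $\pi$ can only decrease total variation distance). So it suffices to bound the latter.

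Next I would write the density of the distribution of $a\tilde b$ (with respect to $\mu_G$) as the convolution $f * g$ where $f = \frac{1_A}{\mu_G(A)}$ and $g = \frac{1_{\tilde B}}{\mu_G(\tilde B)}$: indeed $\Pr[a\tilde b \in C] = \langle f * g, 1_C\rangle$ for measurable $C \subseteq G$. The total variation distance between $\mu_G$ and this distribution is therefore $\tfrac12 \|f*g - 1\|_1$ (where $1$ denotes the constant function, the density of $\mu_G$), and since $\|f*g-1\|_1 \le \|f*g-1\|_2$ it suffices to show $\|f*g - 1\|_2 < 2\epsilon$. Now decompose $g = \sum_{d=0}^{\lceil n/2\rceil - 1} g^{=d} + g^{\ge n/2}$ using the $n$-grading of $G$; since $g^{=0} \equiv 1$ and $T_g$ preserves each $V_{=d}$ and $V_{\ge n/2}$, we have $f*g - 1 = T_g(f - 1) = \sum_{d=1}^{\lfloor t\rfloor} T_g f^{=d} + T_g f^{>t}$ where $t = \tfrac12\log(1/\min(\mu(A),\mu(\tilde B)))$ — wait, more precisely we split at $t = \tfrac12 \log(1/\beta)$ with $\beta$ the smaller of the two measures, exactly as in the proof of Theorem~\ref{thm: Product mixing}. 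Then $\|T_g f^{=d}\|_2 \le \|T_g\|_{V_{=d}} \|f^{=d}\|_2$, and we plug in: Lemma~\ref{lemma:conv bound from goodness} gives $\|T_g\|_{V_{=d}} \le (C'\log(1/\beta)/n)^{d/2}$ and $\|T_g\|_{V_{>t}} \le (\beta/n)^{10}$, while Theorem~\ref{thm: version 2 of level-d in SO_n} gives $\|f^{=d}\|_2^2 \le (C')^d \log^d(e/\alpha)$ and the trivial bound $\|f^{>t}\|_2 \le \|f\|_2 = \alpha^{-1/2}$. Summing the resulting geometric-type series (each term bounded by, say, $\epsilon 2^{-d}$ once $c'$ is small enough depending on $C, c, \epsilon$, plus the negligible high-degree term) yields $\|f*g-1\|_2 < 2\epsilon$, completing the proof; one also needs the easy case $\beta \ge c'$, handled exactly as in Theorem~\ref{thm: Product mixing} via Lemma~\ref{lemma:conv bound from goodness for large sets} and the bound $\|T_g - I_0\|_{2\to 2}$.

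The only genuinely new ingredient relative to the already-proved product-mixing theorem is the reduction in the first paragraph — identifying $X$ with $G/H$, checking that $\mu_X$ is the pushforward of Haar measure, and observing that pushing forward does not increase total variation distance — and this is routine given the definition of a $G$-homogeneous space and uniqueness of the invariant measure. I would expect the main (minor) subtlety to be purely bookkeeping: making sure the set $\tilde B = \pi^{-1}(B)$ is measurable (it is, as the preimage of a measurable set under a continuous map) and that the conditional measure $\mu_B$ really does lift to $\mu_{\tilde B}$, which follows because $H$ acts on the fibers preserving the (conditional) Haar measure. No representation theory beyond what is already packaged into the notion of a good group is needed; everything else is a verbatim repeat of the computation in the proof of Theorem~\ref{thm: Product mixing}, with $h$ replaced by an indicator $1_C$ and the conclusion phrased in terms of total variation rather than the trilinear form.
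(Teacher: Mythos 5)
Your proposal is correct and follows essentially the same route as the paper: reduce to $X=G$ by lifting $B$ to $\tilde B = \{b \in G : bx_0 \in B\}$ (your $\pi^{-1}(B)$ after identifying $X$ with $G/\mathrm{Stab}(x_0)$), use the contraction of total variation under pushforward, bound TV by the $L^2$ norm $\|f_A * f_B - 1\|_2$, decompose along the grading, and plug in Lemma~\ref{lemma:conv bound from goodness} and Theorem~\ref{thm: version 2 of level-d in SO_n}. The one place you diverge from the paper's write-up is the large-measure case: you invoke a spectral-gap bound $\|T_g - I_0\|_{2\to 2}$ via Lemma~\ref{lemma:conv bound from goodness for large sets} (as in the proof of Theorem~\ref{thm: Product mixing}, after taking inverses to ensure $B$ has the smaller measure), whereas the paper instead partitions $B$ into finitely many pieces of measure in $[c'/2,c']$ and uses convexity of $f_B$ as a mixture of the $f_{B_i}$, allowing it to assume $\mu(B)\le c'$ throughout. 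Both work; the partitioning trick is slightly cleaner here because it avoids having to argue a WLOG on which set is smaller in an asymmetric two-set setting.
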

\begin{proof}
 Consider first the case where $X=G$ and $G$ acts on itself by left multiplication; in this case, since the distribution of $ab$ is $\mu_A * \mu_B$, we need to show that  $\norm{\mu_A*\mu_B-\mu_G}_{\text{TV}}<\epsilon$. We associate $\mu_A$ with the function $f_A = \frac{1_A}{\mu(A)} \in L^2(G)$ and similarly, we associate $\mu_B$ with the function $f_B=1_B/\mu(B) \in L^2(G)$; it follows easily from the Cauchy-Schwarz inequality that $\norm{\mu_A*\mu_B-\mu_G}_{\text{TV}} \le \|f_A*f_B - 1\|_2$. So our aim is now to prove that
 \begin{equation}\label{eq:stp} \|f_A*f_B - 1\|_2 < \epsilon.\end{equation}
 
 In proving (\ref{eq:stp}), we argue that we may assume, without loss of generality, that $\mu(B)\leq c'$. Indeed, if this does not hold, then write $B=\cup_{i\in I}B_i$ as a finite, disjoint union of sets $B_i$ such that $c'/2\leq \mu(B_i)\leq c'$. Once we have proved (\ref{eq:stp}) for sets of measure at most $c'$, we obtain the desired bound for $B$ by convexity, noting that $f_B$ is a convex combination of the functions $f_{B_i}$. 
 
Set $t=\frac{\log(1/\mu(B))}{2}$. We now have 
 \[
 \|f_A*f_B - 1\|_2^2=
 \sum_{d=1}^{t} {\|T_{f_B}f_A^{=d}\|}_2^2 + \|T_{f_B}f_A^{> t}\|_2^2.
 \]
 Now 
 \[ \|T_{f_B}f_A^{=d} \|_2 \le {\|T_{f_B}\|}_{V_{=d}}\|f_A^{=d}\|_2. \]
 and 
 \[\|T_{f_B}f_A^{\ge t}\|_2 \le {\|T_{f_B}\|}_{V_{>t}} \| f_A^{>t}\|_2. \]
 The bound (\ref{eq:stp}) now easily follows from Theorem  \ref{thm: version 2 of level-d in SO_n} and Lemma \ref{lemma:conv bound from goodness}, similarly to in the proof of Theorem \ref{thm: Product mixing}. Indeed, writing $C': = 10C^2/c$, these yield 
 \[
 \|T_{f_B}\|_{V_{=d}}\|f_A^{=d}\|_2 \le \left(\frac{C'^2\log(1/\mu(A))\log(1/\mu(B))}{n}\right)^{d/2}\le \epsilon 4^{-d}
 \]
 for all $1 \leq d \leq t$,
 and
 \[
 \|T_{f_B}\|_{V_{> t}}\|f_A^{> t}\|_2\le \left(\frac{\beta}{n}\right)^{10}\alpha^{-1/2}\le \epsilon/2,
\]
 provided that $n_0$ is sufficiently large and $c'$ sufficiently small depending on $c,C$ and $\epsilon$.
 
 To prove the general case, note that we may choose an arbitrary $x_0\in X$ and set $\tilde{B}=\set{b\in G:\,bx_0\in B}$. If $\tilde{b} \sim \mu_{\tilde{B}}$ then $\tilde{b}x_0 \sim \mu_B$, and if $g \sim \mu_G$ then $gx_0 \sim \mu_X$; it follows that $\|\mu_{ab}-\mu_X\|_{\text{TV}} \leq \|\mu_A * \mu_{\tilde{B}}-\mu_G\|_{\text{TV}}$, where $\mu_{ab}$ denotes the distribution of $ab$ for $a \sim \mu_A$ and $b \sim \mu_B$ (independently). The result for the pair $(A,\tilde{B})$  therefore implies the result for  $(A,B)$.
\end{proof}

\section{Growth in fine groups}
In this section we show that Theorems \ref{first-level-d}, \ref{thm:growth} and \ref{thm:Brunn Minkowskii}  follow from Theorem \ref{thm:Every simple groups is fine}. 

\medskip
First, the theorem below is a variation on Theorem \ref{thm: level-d in SO_n}, with hypercontractivity replaced by weak hypercontractivity.  Together with Lemma~\ref{lem:reduction of fineness to the simple case} it shows that Theorem~\ref{thm:Every simple groups is fine} implies Theorem~\ref{first-level-d}.
\begin{thm}\label{thm: level-d via finess}
     Let $c>0$, $C>1$ and let $G$ be an $(r,C)$-weakly hypercontractive group. Let $f\colon G\to \{0, 1\}$ be measurable,
      and write $\alpha:=\mathbb{E}[f].$ Let $d \in \mathbb{N}$ be such that $0<d\le \min \left(\frac{\log (1/\alpha)}{2}, r\right)$. Then 
     \[
     \|f^{= d}\|_{2}^2 \le \left(\frac{e\log(1/\alpha)}{d}\right)^{2Cd}\alpha^2.
     \]
\end{thm}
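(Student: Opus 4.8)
The plan is to mimic the proof of Theorem~\ref{thm: level-d in SO_n} almost verbatim, replacing the appeal to genuine hypercontractivity (Lemma~\ref{lem:follows from hypercontractiveness1}) by the weaker moment bound coming from weak hypercontractivity (Lemma~\ref{lem:follows from hypercontractiveness2}). Concretely, I would set $q := \log(1/\alpha)/d$. The hypothesis $d \le \tfrac12\log(1/\alpha)$ guarantees $q \ge 2$, and the hypothesis $d \le r$ lets me invoke Lemma~\ref{lem:follows from hypercontractiveness2} to get $\|f^{=d}\|_q \le q^{Cd}\|f^{=d}\|_2$.

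Next I would expand $\|f^{=d}\|_2^2 = \langle f, f^{=d}\rangle$ and apply H\"older's inequality with exponents $q$ and its conjugate $q'$, using that $f$ is $\{0,1\}$-valued to write $\|f\|_{q'} = (\mathbb{E}[f])^{1/q'} = \alpha^{1-1/q}$. Combining this with the moment bound gives $\|f^{=d}\|_2^2 \le q^{Cd}\|f^{=d}\|_2\,\alpha^{1-1/q}$, so after dividing by $\|f^{=d}\|_2$ and squaring we obtain $\|f^{=d}\|_2^2 \le q^{2Cd}\alpha^{2-2/q}$.

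Finally I would simplify the power of $\alpha$: since $q = \log(1/\alpha)/d$ we have $\alpha^{-2/q} = e^{2\log(1/\alpha)/q} = e^{2d}$, hence $\|f^{=d}\|_2^2 \le q^{2Cd}e^{2d}\alpha^2$. Because $C > 1$, we have $e^{2d} \le e^{2Cd}$, so $q^{2Cd}e^{2d} \le (eq)^{2Cd} = \bigl(e\log(1/\alpha)/d\bigr)^{2Cd}$, which is exactly the claimed bound.

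I expect no real obstacle here: this is a routine corollary of weak hypercontractivity obtained via the same H\"older trick used for good groups. The only points requiring care are that the two hypotheses on $d$ are precisely what is needed to make $q \ge 2$ and to keep $d$ within the range $d \le r$ where Lemma~\ref{lem:follows from hypercontractiveness2} applies, and that the assumption $C > 1$ (rather than $C = 1$) is what allows the stray factor $e^{2d}$ to be absorbed into $e^{2Cd}$. Together with Lemma~\ref{lem:reduction of fineness to the simple case}, this then yields Theorem~\ref{first-level-d}, and the same estimate feeds into the proofs of Theorems~\ref{thm:growth} and~\ref{thm:Brunn Minkowskii}.
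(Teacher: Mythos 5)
Your proof is correct and is essentially the same argument as the paper's: set $q = \log(1/\alpha)/d$, apply H\"older together with Lemma~\ref{lem:follows from hypercontractiveness2} to get $\|f^{=d}\|_2^2 \le q^{Cd}\alpha^{1-1/q}\|f^{=d}\|_2$, then rearrange, square, substitute $\alpha^{-1/q}=e^d$, and absorb $e^{2d}$ into $e^{2Cd}$ using $C>1$. The only difference is that you have spelled out the "rearranging and substituting" step that the paper leaves implicit, and you have made explicit the two hypothesis checks ($q\ge 2$ and $d\le r$), both of which are accurate.
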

\begin{proof}
Let $q=\log(1/\alpha)/d$, and let $q'$ be the H\"{o}lder conjugate of $q$. Then by H\"{o}lder, weak hypercontractivity and  Lemma~\ref{lem:follows from hypercontractiveness2}, we have 
\[
\|f^{= d}\|_2^2 = \langle f,f^{= d} \rangle \le  \|f^{= d} \|_q \|f\|_{q'} \le q^{Cd}\alpha^{1-1/q}\|f^{= d}\|_2. 
\]
The theorem follows by rearranging and substituting $\alpha^{-1/q} = e^d$. (Note that $C>1$, by the definition of weak hypercontractivity.)
\end{proof}

The following lemma is a variant of Lemma \ref{lemma:conv bound from goodness} for fine groups -- the proof is the same as for Lemma \ref{lemma:conv bound from goodness}, only using Theorem~\ref{thm: level-d via finess} instead of Theorem~\ref{thm: level-d in SO_n}. We will make use of it when proving our diameter bounds for fine groups. 

\begin{lemma}\label{lemma:conv bound from finess}
      For each $c>0$ and $C>1$ there exist $c',C', n_0 >0$ such that the following holds. Let $n>n_0$ and let $G$ be an $n$-graded $(C,c)$-fine group. Let  $A\subseteq G$ be measurable, and suppose that \[\alpha := \mu_G(A)\in (e^{-c'n}, c').\] Write  $f=\frac{1_A}{\alpha}$ and $t=\frac{\log(1/\alpha)}{2}$. Then for $1\leq d \le t$ we have \[{\|T_f\|}_{V_{=d}} \le \left(\frac{e\log(1/\alpha)}{d}\right)^{Cd} \cdot \left(\frac{d}{cn}\right)^{d/2}.\]  We also have  \[{\|T_f\|}_{V_{>t}} \le \frac{\alpha^{10}}{n^{10}}.\] 
\end{lemma}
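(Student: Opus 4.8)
The plan is to imitate the proof of Lemma~\ref{lemma:conv bound from goodness}, but feed in the fineness parameters instead of the goodness parameters. Recall that Lemma~\ref{lemma:bnp} gives, for any $n$-graded $\left((Q_d)_{d=1}^{\lceil n/2\rceil-1},Q\right)$-strongly-quasirandom group, the bounds $\|T_f\|_{V_{=d}}\le \|f^{=d}\|_2/\sqrt{Q_d}$ and $\|T_f\|_{V_{\ge n/2}}\le \|f\|_2/\sqrt{Q}$. Since a $(C,c)$-fine group is by definition $c$-strongly-quasirandom, we may take $Q_d=\left(\frac{cn}{d}\right)^d$ for $d<cn/(1+c)$ and $Q,Q_d=(1+c)^{cn/(1+c)}$ for $d\ge cn/(1+c)$. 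Writing $f=1_A/\alpha$, so that $\alpha f^{=d}=(1_A)^{=d}$, and bounding $\|(1_A)^{=d}\|_2$ via Theorem~\ref{thm: level-d via finess}, the weak-hypercontractivity-based level-$d$ inequality, gives what we want for the range $d\le t$.

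First I would fix the constants: given $c>0$ and $C>1$ from the fineness hypothesis, recall that $(C,c)$-fine means $(cn,C)$-weakly hypercontractive and $c$-strongly-quasirandom, so in particular $G$ is $(r,C)$-weakly hypercontractive with $r=cn$. Choose $c'>0$ small enough (depending on $c,C$) that $e^{-c'n}\le c'$ and that $t=\tfrac12\log(1/\alpha)\le cn=r$ whenever $\alpha\ge e^{-c'n}$; this is exactly the hypothesis needed to invoke Theorem~\ref{thm: level-d via finess} for all $d\le t$. Then for $1\le d\le t$, combining Lemma~\ref{lemma:bnp} with $Q_d=(cn/d)^d$ (valid since $d\le t\le cn$, so $d<cn/(1+c)$ holds provided $c'$ is small enough, or one simply notes $(cn/d)^d$ is a valid lower bound in the relevant range) and Theorem~\ref{thm: level-d via finess}:
\[
\|T_f\|_{V_{=d}}\le \frac{\|f^{=d}\|_2}{\sqrt{Q_d}}
=\frac{\|(1_A)^{=d}\|_2}{\alpha}\left(\frac{d}{cn}\right)^{d/2}
\le \left(\frac{e\log(1/\alpha)}{d}\right)^{Cd}\left(\frac{d}{cn}\right)^{d/2},
\]
which is precisely the claimed bound. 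For the tail, one argues as in Lemma~\ref{lemma:conv bound from goodness}: for $d>t$, $c$-strong-quasirandomness gives $Q_d\ge (cn/t)^t$ (taking $n_0$ large, and using monotonicity/the definition of $Q_d$), so by Lemma~\ref{lemma:bnp}, $\|T_f\|_{V_{>t}}\le \|f\|_2(cn/t)^{-t/2}\le \alpha^{-1/2}(cn/t)^{-t/2}$; since $cn/t\ge e^{42}n^{1/2}$ when $c'$ is small, this is at most $n^{-t/4}e^{-21t}\alpha^{-1/2}\le (\alpha/n)^{10}$, using $t=\tfrac12\log(1/\alpha)$ to absorb the $\alpha^{-1/2}$ into $e^{-21t}=\alpha^{21/2}$.

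The main obstacle — really just a bookkeeping point rather than a genuine difficulty — is making sure the ranges line up: Theorem~\ref{thm: level-d via finess} requires $d\le\min(\tfrac12\log(1/\alpha),r)$ with $r=cn$, and the strong-quasirandomness value $Q_d=(cn/d)^d$ is the relevant one only for $d<cn/(1+c)$; both are guaranteed in the regime $d\le t=\tfrac12\log(1/\alpha)$ once $\alpha\ge e^{-c'n}$ with $c'$ chosen small relative to $c$. The tail estimate requires $\alpha\le c'$ (so that $e^{-21t}\alpha^{-1/2}$ is genuinely small), which is part of the hypothesis $\alpha\in(e^{-c'n},c')$. Everything else is a verbatim transcription of the proof of Lemma~\ref{lemma:conv bound from goodness} with the substitution of Theorem~\ref{thm: level-d via finess} for Theorem~\ref{thm: level-d in SO_n}, so no new ideas are needed.
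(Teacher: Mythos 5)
Your approach is the one the paper itself prescribes (the text preceding Lemma~\ref{lemma:conv bound from finess} says exactly: ``the proof is the same as for Lemma~\ref{lemma:conv bound from goodness}, only using Theorem~\ref{thm: level-d via finess} instead of Theorem~\ref{thm: level-d in SO_n}''), and your estimate for the range $1 \le d \le t$ is carried out correctly: combining Lemma~\ref{lemma:bnp}, the value $Q_d = (cn/d)^d$, and Theorem~\ref{thm: level-d via finess} applied to $1_A$ gives exactly the claimed bound. The bookkeeping you flag (that $d \le t < cn/(1+c)$ and $t \le cn = r$ for $c'$ small) is fine.

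However, the tail estimate has a genuine gap. You transcribe from the good-group proof the inequality $cn/t \ge e^{42}n^{1/2}$ and claim it holds ``when $c'$ is small.'' That inequality is equivalent to $t \le c\,e^{-42}n^{1/2}$, which is guaranteed in the good setting because there $\alpha \ge e^{-c'\sqrt{n}}$ forces $t = \tfrac12\log(1/\alpha) \le \tfrac12 c'\sqrt{n}$. In the fine setting the hypothesis is only $\alpha \ge e^{-c'n}$, so $t$ can be as large as $\tfrac12 c'n$. Then $cn/t$ can be as small as $2c/c'$, a constant independent of $n$, which is not $\ge e^{42}n^{1/2}$ once $n$ is large. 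So the intermediate step $(cn/t)^{-t/2} \le n^{-t/4}e^{-21t}$ fails in exactly the regime that makes the fine lemma stronger than the good one.

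The conclusion $\|T_f\|_{V_{>t}} \le (\alpha/n)^{10}$ is still true, but needs a different computation. You want $(cn/t)^t \ge e^{42t}n^{20}$, equivalently $t(\log(cn/t) - 42) \ge 20\log n$. Choose $c'$ small enough that both $\tfrac12\log(1/c') \ge 25$ (so $t \ge 25$) and $c'/2 \le c\,e^{-43}$ (so $t \le c'n/2$ gives $\log(cn/t) \ge 43$). If $t \ge 20\log n$, then $t(\log(cn/t)-42) \ge t \ge 20\log n$ and you are done. If $t < 20\log n$, then $cn/t \ge cn/(20\log n)$, so for $n_0$ large $\log(cn/t)-42 \ge \tfrac{4}{5}\log n$, and hence $t(\log(cn/t)-42) \ge 25\cdot\tfrac{4}{5}\log n = 20\log n$. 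Either way the required inequality holds, and the rest of the tail computation (absorbing $\alpha^{-1/2}$ into $e^{-21t}$) goes through as you wrote it. With this repair the proof is complete.
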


We now show that if $f$ has small expectation, then most of the Fourier mass of $f$ lies on the high degrees. 

\begin{lemma} \label{lem:KKL use of hypercontractity}
For each $c>0$ and $C>1$ there exist $c',n_0>0$ such that the following holds. Let $n>n_0$, let $G$ be a $(C,c)$-fine $n$-graded group, and let $f\colon G\to \{0,1\}$ be measurable. Suppose that $\alpha:=\mathbb{E}[f]\ge e^{-c'n}$, and let $0\leq t\leq \frac{\log(1/\alpha)}{10^6 C^2}$. Then $\|f^{\le t}\|_2^2 \le 2\cdot 2\alpha^{1.99}$. 
\end{lemma}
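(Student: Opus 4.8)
The plan is that this is a routine level-$d$ summation: once $f$ is Boolean with small expectation, Theorem~\ref{thm: level-d via finess} forces the level-$d$ weights (normalized by $\alpha^2$) to grow so fast in $d$ that their partial sum up to $t$ is still dominated by the degree-$0$ term $\alpha^2$. First I would note that, since $G$ is $(C,c)$-fine it is $(cn,C)$-weakly hypercontractive, so Theorem~\ref{thm: level-d via finess} gives, for every integer $d$ with $0<d\le\min(\tfrac12\log(1/\alpha),cn)$,
\[
\|f^{=d}\|_2^2\le\Big(\tfrac{e\log(1/\alpha)}{d}\Big)^{2Cd}\alpha^2 .
\]
Choosing $c'$ small in terms of $c,C$ (it suffices that $c'\le c$) guarantees that every integer $d$ with $1\le d\le t\le\log(1/\alpha)/(10^6C^2)$ satisfies both constraints: the degree bound $d\le cn$ holds since $t\le c'n/(10^6C^2)\le cn$, and $d\le\tfrac12\log(1/\alpha)$ holds since $10^6C^2\ge2$. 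Writing $\|f^{\le t}\|_2^2=\alpha^2+\sum_{1\le d\le t}\|f^{=d}\|_2^2$, it remains to control the sum (with the convention that if $t<1$ the sum is empty and the bound is trivial).

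Second, I would show that the coefficients $a_d:=(e\log(1/\alpha)/d)^{2Cd}$ increase rapidly over $1\le d\le t$, so the sum is dominated by its last term. Put $L:=\log(1/\alpha)$. Using $d\log(1+1/d)\le1$ and $L/(d+1)\ge L/t\ge 10^6C^2$,
\[
\frac{a_{d+1}}{a_d}=\Big(\frac{eL}{d+1}\Big)^{2C}\Big(\frac{d}{d+1}\Big)^{2Cd}\ge\Big(\frac{eL}{t}\Big)^{2C}e^{-2C}=\Big(\frac{L}{t}\Big)^{2C}\ge(10^6C^2)^{2C}\ge2 ,
\]
for all $C\ge1$. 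Hence $\sum_{1\le d\le t}a_d\le 2a_{\lfloor t\rfloor}$.

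Third, I would bound $a_{\lfloor t\rfloor}$. With $u:=L/\lfloor t\rfloor\ge 10^6C^2$ we have $\log a_{\lfloor t\rfloor}=2C(L/u)\log(eu)=2CL\,(1+\log u)/u$, and since $u\mapsto(1+\log u)/u$ is decreasing for $u>1$ its value on $[10^6C^2,\infty)$ is at most $(1+\log(10^6C^2))/(10^6C^2)$; a direct estimate gives $\log a_{\lfloor t\rfloor}\le L\cdot\frac{2\,(1+\log(10^6)+2\log C)}{10^6C}\le 10^{-4}L$ for all $C\ge1$. Thus $a_{\lfloor t\rfloor}\le\alpha^{-10^{-4}}$, so $\sum_{1\le d\le t}\|f^{=d}\|_2^2\le 2\alpha^2 a_{\lfloor t\rfloor}\le 2\alpha^{2-10^{-4}}\le 2\alpha^{1.99}$, and therefore $\|f^{\le t}\|_2^2\le\alpha^2+2\alpha^{1.99}\le 4\alpha^{1.99}$, which is the claimed bound (with room to spare for the $2\cdot 2$).

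There is no deep obstacle here; the only real work is the constant bookkeeping in the third step, namely verifying that the threshold $10^6C^2$ built into the hypothesis on $t$ is large enough that the multiplicative losses $(eL/d)^{2Cd}$, \emph{uniformly over all} $d\le t$, amount to at most a factor $\alpha^{-0.01}$ rather than cutting into the $\alpha^2$. One should also keep in mind the degenerate regime where $\log(1/\alpha)<10^6C^2$ (so $t$ is forced to be $0$ and the inequality is immediate), and that $\alpha^2\le\alpha^{1.99}$ since $\alpha\le1$.
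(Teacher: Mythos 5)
Your proof is correct and follows essentially the same strategy as the paper's: apply Theorem~\ref{thm: level-d via finess} level by level, show the weights $\|f^{=d}\|_2^2/\alpha^2$ sum geometrically to a quantity dominated by the last term $d=\lfloor t\rfloor$, and then check that this last term is at most $\alpha^{-O(10^{-4})}$. The only cosmetic difference is how the geometric decay is extracted: the paper inserts an artificial $2^{-d}$ factor (valid since $C>1$) and then maximizes $(2eL/d)^{2Cd}$ by calculus, whereas you verify directly that the ratio $a_{d+1}/a_d\ge(L/t)^{2C}\ge 2$, which is slightly more hands-on but amounts to the same bookkeeping.
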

\begin{proof}
Inserting a factor of $2$ into the bound from Theorem \ref{thm: level-d via finess}, we have for any $0<d\leq t$ that
\[
\|f^{=d}\|_2^2\leq 2^{-d} \left(\frac{2e\log(1/\alpha)}{d}\right)^{2Cd}\alpha^2,
\]
and for $d=0$ we have $\|f^{=0}\|_2^2 = \alpha^2$.
Therefore, $\|f^{\le t}\|$ is upper-bounded by $2 \alpha^2$ multiplied by the maximum of $\left(\frac{2e\log(1/\alpha)}{d}\right)^{2Cd}$ in the range where $d\le \frac{\log(1/\alpha)}{10^6 C^2}$. Taking logs and computing the  derivative with respect to $d$, it is easy to show that the maximum is obtained at the end point when $d=\frac{\log(1/\alpha)}{10^6 C^2}.$ This shows that 
        \[ \| f^{\le t}\|_{2}^{2} \le 2 \alpha^2 (2 \cdot 10^6 \cdot e C^2 )^{\frac{2\log(1/\alpha)}{10^6 C}} \le 2 \alpha^{1.99},\]
        as required.
\end{proof}

For smaller values of $d$, we have better bounds. 

\begin{lemma}\label{lem: Using smaller values of d}
For each $\epsilon,c>0$ and $C>1$, there exist $c',n_0>0$ such that the following holds.  Let $n>n_0$, let $G$ be a $(C,c)$-fine $n$-graded group, and let $f\colon G\to \{0,1\}$ be measurable. Suppose that $\alpha:=\mathbb{E}[f]\ge e^{-c'n}$ and let $d\in \mathbb{N}$ with $0 < d \leq \frac{\log(1/\alpha)}{n^\epsilon}$. Then \[
\|f^{\le d}\|_2^2 \le \alpha^{2 - n^{-\epsilon/2}}.
\]
\end{lemma}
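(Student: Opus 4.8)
The plan is to mimic the proof of Lemma \ref{lem:KKL use of hypercontractity}, but to be more careful about which value of $q$ we pick in the Hölder step, exploiting the fact that now $d$ is constrained to be much smaller than $\log(1/\alpha)$ (by a factor $n^{\epsilon}$ rather than merely a constant). First I would dispose of the degree-$0$ term separately, since $\|f^{=0}\|_2^2 = \alpha^2$, and note that $\alpha^2 \le \tfrac12 \alpha^{2 - n^{-\epsilon/2}}$ once $\alpha$ is small enough (which holds since $\alpha \le e^{-c'n}$ and $n > n_0$). So it suffices to bound $\sum_{e=1}^{d}\|f^{=e}\|_2^2$ by $\tfrac12\alpha^{2-n^{-\epsilon/2}}$.

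For each $1 \le e \le d$, I would invoke Theorem \ref{thm: level-d via finess}, which is applicable because $e \le d \le \log(1/\alpha)/n^{\epsilon} \le \log(1/\alpha)/2$ and because $e \le d \le \log(1/\alpha)/n^{\epsilon} \le c'n/n^{\epsilon} \le cn = r$ (using $\alpha \ge e^{-c'n}$ and choosing $c'$ small relative to $c$); this gives
\[
\|f^{=e}\|_2^2 \le \left(\frac{e \log(1/\alpha)}{e}\right)^{2Ce}\alpha^2 \le \big(e\, n^{\epsilon}\big)^{2Cd}\alpha^2,
\]
where in the last step I used that the function $m \mapsto (e\log(1/\alpha)/m)^{2Cm}$ is increasing for $m$ in the relevant range (as was checked in the proof of Lemma \ref{lem:KKL use of hypercontractity}), so it is maximized at $m = d$, together with $\log(1/\alpha)/d \le n^{\epsilon}$. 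Summing over the at most $d \le \log(1/\alpha) \le c'n$ values of $e$, I get $\sum_{e=1}^d \|f^{=e}\|_2^2 \le c'n\cdot (e n^{\epsilon})^{2Cd}\alpha^2$. Now I take logarithms: $\log\big(c'n (en^{\epsilon})^{2Cd}\big) = \log(c'n) + 2Cd(1 + \epsilon\log n) \le 2Cd\cdot 2\epsilon\log n$ for $n$ large (absorbing the $\log(c'n)$ term and the $1$ into a slightly larger multiple of $\epsilon d \log n$, using $d \ge 1$). Since $d \le \log(1/\alpha)/n^{\epsilon}$, this is at most $4C\epsilon\, n^{-\epsilon}\log(1/\alpha)\log n \le n^{-\epsilon/2}\log(1/\alpha)$ once $n_0$ is large enough that $4C\epsilon \log n \le n^{\epsilon/2}$. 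Hence $\sum_{e=1}^d \|f^{=e}\|_2^2 \le \alpha^2 \cdot e^{n^{-\epsilon/2}\log(1/\alpha)} = \alpha^{2 - n^{-\epsilon/2}}$, and combining with the degree-$0$ term (which contributes another $\alpha^2$, easily absorbed) finishes the proof.

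I do not expect a genuine obstacle here; the only things to watch are (i) checking that $d$ really does lie below the weak-hypercontractivity threshold $r = cn$, which requires choosing $c'$ small compared to $c$ and using $\alpha \ge e^{-c'n}$, and (ii) making sure the crude bound "number of terms times the largest term" does not cost more than a factor $e^{o(n^{-\epsilon/2}\log(1/\alpha))}$, which is where the slack between $n^{\epsilon}$ and $n^{\epsilon/2}$ gets used. Both are routine once the parameters are tracked carefully, so the proof is essentially a quantitative refinement of the argument already given for Lemma \ref{lem:KKL use of hypercontractity}.
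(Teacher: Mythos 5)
Your approach matches the paper's: invoke Theorem~\ref{thm: level-d via finess} degree by degree, use that $m\mapsto (e\log(1/\alpha)/m)^{2Cm}$ is increasing on $[0,\log(1/\alpha))$ to control the worst term, and exploit $d \le \log(1/\alpha)/n^{\epsilon}$ to make the worst term small. However, there is a sign error that breaks one step as written. You claim the maximum over $1\le e\le d$ is at most $(e\,n^{\epsilon})^{2Cd}$, justified by ``$\log(1/\alpha)/d \le n^{\epsilon}$.'' But the hypothesis $d \le \log(1/\alpha)/n^{\epsilon}$ gives $\log(1/\alpha)/d \ge n^{\epsilon}$, the reverse inequality; the maximum you computed, namely $\bigl(e\log(1/\alpha)/d\bigr)^{2Cd}$, is therefore $\ge (e\,n^{\epsilon})^{2Cd}$ rather than $\le$, and your asserted bound on $\|f^{=e}\|_2^2$ is actually a lower bound on the worst case, not an upper bound. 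The correct move is to push the monotonicity one step further, exactly as the paper does: set $t := \log(1/\alpha)/n^{\epsilon}$, so that $d \le t < \log(1/\alpha)$, and deduce $\bigl(e\log(1/\alpha)/d\bigr)^{2Cd} \le \bigl(e\log(1/\alpha)/t\bigr)^{2Ct} = (e\,n^{\epsilon})^{2Ct}$. With $2Ct$ in place of $2Cd$, your final logarithm computation is unchanged (because there you bounded $d$ by $\log(1/\alpha)/n^{\epsilon}=t$ anyway), so the repair is cosmetic, but the justification as written is incorrect.

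A second, more minor slip: to absorb the degree-$0$ term you write ``since $\alpha \le e^{-c'n}$,'' but the hypothesis is $\alpha \ge e^{-c'n}$, which is a lower bound, not an upper bound, on $\alpha$. The upper bound you actually need comes from the requirement $d\ge 1$, which together with $d\le\log(1/\alpha)/n^{\epsilon}$ forces $\log(1/\alpha)\ge n^{\epsilon}$; that is what guarantees $\alpha^{-n^{-\epsilon/2}}\ge 2$ for large $n$.
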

\begin{proof}
Let  $c'$ be sufficiently small and $n_0$ sufficiently large with respect to $\epsilon,c,C$.
        Let $t=\frac{\log(1/\alpha)}{n^{\epsilon}}.$ Similarly to in the proof of Lemma \ref{lem:KKL use of hypercontractity}, it is easy to see that  
    \begin{equation*}
        \|f^{=d}\|_2^2 \le \left( \frac{e\log(1/\alpha)}{t} \right)^{2Ct} \alpha^{2} = (e\cdot n^{\epsilon})^{2C\log(1/\alpha)/n^{\epsilon}}.
        \end{equation*}
        Now 
        \[(e\cdot n^{\epsilon})^{2C\log(1/\alpha)/n^{\epsilon}} =\alpha^{-n^{-\epsilon}\cdot 2C\log(en^{\epsilon})}\le \alpha^{-n^{-\epsilon/2}},
        \]
        provided that $n \geq n_0$.
\end{proof}

\subsection{Product mixing in fine groups}

The next lemma is a version of Theorem \ref {thm: Product mixing}, except for fine instead of for good groups. The only difference in the proof is that we apply Lemma \ref{lemma:conv bound from finess} in place of Lemma \ref{lemma:conv bound from goodness} and Theorem \ref{thm: level-d via finess} in place of Theorem \ref{thm: level-d in SO_n}. 
\begin{lemma}\label{lem:product free mixing for fine groups}
For each $\epsilon,c>0$ and $C>1$, there exists $\delta>0$ such that the following holds. Let $G$ be a $(C,c)$-fine, $n$-graded group, let $A,B,C\subseteq G$ be measurable sets of measures at least $e^{-n^\delta}$, and let $f=\frac{1_A}{\mu(A)}, g=\frac{1_B}{\mu(B)}, h=\frac{1_C}{\mu(C)}$. Then $| \langle f*g,h\rangle - 1|<\epsilon$.  
\end{lemma}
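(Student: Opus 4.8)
\textbf{Proof plan for Lemma~\ref{lem:product free mixing for fine groups}.}
The plan is to follow verbatim the structure of the proof of Theorem~\ref{thm: Product mixing}, substituting the fine-group ingredients for the good-group ones. As there, we may assume without loss of generality that $B$ has the smallest Haar measure of the three sets $A,B,C$ (swapping the roles of $f,g,h$ costs only passing to inverses of some of the sets, which preserves measures). Write $\mu(A)=\alpha,\mu(B)=\beta,\mu(C)=\gamma$, and set $t=\frac{\log(1/\beta)}{2}$. We begin by noting that $\langle f*g,h\rangle = \langle T_g f,h\rangle$, and that since $\mathbb{E}[f^{=0}]=\mathbb{E}[h^{=0}]=1$ and $T_g$ fixes constants (as $\mathbb{E}[g]=1$), we have the decomposition
\[
\langle T_g f,h\rangle - 1 = \sum_{d=1}^{\lfloor t\rfloor}\langle T_g f^{=d},h^{=d}\rangle + \langle T_g f^{>t},h^{>t}\rangle.
\]
We want to choose $\delta$ small enough (depending on $\epsilon,c,C$) that the right-hand side has absolute value less than $\epsilon$.

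For the high-degree term, we apply the second bound of Lemma~\ref{lemma:conv bound from finess}, namely $\|T_g\|_{V_{>t}}\le \beta^{10}/n^{10}$ (valid since $\beta\ge e^{-n^{\delta}}\ge e^{-c'n}$ once $\delta<1$ and $n$ is large, and since $\beta$ is small enough when $n_0$ is large), together with $\|f^{>t}\|_2\le \alpha^{-1/2}$ and $\|h^{>t}\|_2\le\gamma^{-1/2}$, to get
\[
|\langle T_g f^{>t},h^{>t}\rangle| \le \frac{\beta^{10}}{n^{10}}\cdot\frac{1}{\alpha^{1/2}\gamma^{1/2}} \le \frac{\beta^{9}}{n^{10}} < \epsilon/2,
\]
using $\alpha,\gamma\ge\beta$ and that $n_0$ is large. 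For each $1\le d\le t$ we use Cauchy--Schwarz, $|\langle T_g f^{=d},h^{=d}\rangle|\le \|T_g\|_{V_{=d}}\|f^{=d}\|_2\|h^{=d}\|_2$, and bound the three factors: by the first bound of Lemma~\ref{lemma:conv bound from finess}, $\|T_g\|_{V_{=d}}\le \left(\frac{e\log(1/\beta)}{d}\right)^{Cd}\left(\frac{d}{cn}\right)^{d/2}$; and by Theorem~\ref{thm: level-d via finess} (inserting a harmless factor of $2$, exactly as in Lemma~\ref{lem:KKL use of hypercontractity}, to absorb the $d=0$ discrepancy is not even needed here since $d\ge1$), $\|f^{=d}\|_2^2\le\left(\frac{e\log(1/\alpha)}{d}\right)^{2Cd}\alpha^2$ and $\|h^{=d}\|_2^2\le\left(\frac{e\log(1/\gamma)}{d}\right)^{2Cd}\gamma^2$. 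Multiplying, the $\alpha^2\gamma^2$ shrink and we obtain a bound of the form
\[
|\langle T_g f^{=d},h^{=d}\rangle| \le \left(\frac{e^3\,(\log(1/\alpha)\log(1/\beta)\log(1/\gamma))^{?}\,(\text{poly in }\log,\,1/d)}{c^{?}\,n}\right)^{d/2}\cdot(\text{bounded power of }\tfrac{\log}{d}),
\]
more precisely, collecting exponents, a quantity of the shape $\left(\frac{C''\,L^{6C+1}}{n}\right)^{d/2}$ where $L=\max(\log(e/\alpha),\log(e/\beta),\log(e/\gamma))$ and $C''=C''(C,c)$. Since all three measures are at least $e^{-n^{\delta}}$, we have $L\le n^{\delta}$, so this term is at most $\left(C''n^{(6C+1)\delta-1}\right)^{d/2}$, which for $\delta<\frac{1}{2(6C+1)}$ and $n\ge n_0$ is at most $\epsilon\,4^{-d}$.

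Summing the high-degree contribution and the geometric series $\sum_{d\ge1}\epsilon 4^{-d}<\epsilon/2$ over $1\le d\le\lfloor t\rfloor$, we conclude $|\langle f*g,h\rangle-1|<\epsilon$, as desired. The only genuinely delicate point is the bookkeeping of the exponents of $\log$ against the single power of $n$ in the denominator: one must check that the total power of the logarithmic factors (which is $6C+1$, coming from $2Cd$ each from $f^{=d}$ and $h^{=d}$, $Cd$ from $\|T_g\|_{V_{=d}}$, plus the half-power $d/2$ from the $\left(\frac{d}{cn}\right)^{d/2}$ factor contributing the $L^{1/2}$-type term — actually it is cleaner to note $\|T_g\|_{V_{=d}}\le\left(\frac{e\log(1/\beta)}{d}\right)^{Cd}\cdot d^{d/2}\cdot(cn)^{-d/2}$ and that the $d^{d/2}$ and $d^{-(6C+?)d}$ factors only help) stays below the exponent of $n$ after raising the measures to the $n^{\delta}$ scale. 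Choosing $\delta = \delta(\epsilon,c,C)$ small enough makes every such comparison go through; since the proof is word-for-word that of Theorem~\ref{thm: Product mixing} with Lemma~\ref{lemma:conv bound from finess} and Theorem~\ref{thm: level-d via finess} in place of Lemma~\ref{lemma:conv bound from goodness} and Theorem~\ref{thm: level-d in SO_n}, we omit the remaining routine details.
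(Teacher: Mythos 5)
Your overall strategy is exactly the paper's intended one: the paper states only that the proof of Lemma~\ref{lem:product free mixing for fine groups} is the proof of Theorem~\ref{thm: Product mixing} with Lemma~\ref{lemma:conv bound from finess} and Theorem~\ref{thm: level-d via finess} substituted for Lemma~\ref{lemma:conv bound from goodness} and Theorem~\ref{thm: level-d in SO_n}, and that is what you do. Your exponent bookkeeping (roughly $L^{6C}$ in the numerator against a single power of $n$, hence $\delta$ small compared with $1/C$) is also correct.

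However, there is one genuine gap. To apply Lemma~\ref{lemma:conv bound from finess} to $g=1_B/\mu(B)$ you need $\beta:=\mu(B)\in (e^{-c'n},\,c')$, i.e.\ you need $\beta$ bounded \emph{above} by the absolute constant $c'$ from that lemma. You justify this by writing that ``$\beta$ is small enough when $n_0$ is large,'' but that inference is false: the hypothesis $\beta\geq e^{-n^{\delta}}$ gives no upper bound whatsoever, and $\beta$ may be, say, $0.9$ regardless of $n$. When $\beta\geq c'$, your argument does not apply. The fix is precisely what the proof of Theorem~\ref{thm: Product mixing} does in its first paragraph: when $\beta\geq c'$, invoke Lemma~\ref{lemma:conv bound from goodness for large sets} (which requires only $c$-strong-quasirandomness, hence applies to fine groups) to bound $\|T_g-I_0\|_{2\to 2}$, and conclude directly via Cauchy--Schwarz that $|\langle T_g f,h\rangle-1|\leq \|T_g-I_0\|_{2\to 2}\|f\|_2\|h\|_2 < \epsilon$. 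With that case inserted, your proof is complete and is essentially the paper's.
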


We remark that Lemma \ref{lem:product free mixing for fine groups} is only weaker than the corresponding Theorem \ref{thm: Product mixing} for good groups. We include it even though the groups of interest to us are both good and fine. We decided to include the lemma mainly for aesthetic reasons. Our diameter bounds rely on Lemma \ref{lem:product free mixing for fine groups} and we preferred to show that our diameter bounds hold for fine groups rather than groups that are both good and fine. 

Below we use a trick of Nikolov and Pyber \cite{np} (who observed that product mixing implies an upper bound on the diameter), to bound the diameter in fine groups. 

\begin{corollary}\label{cor: Diameter from product mixing}
For each $c>0$ and $C>1$, there exists $\delta>0$ such that if $G$ is a $(C,c)$-fine group and $\mathcal{A}\subseteq G$ is a measurable set of measure at least $e^{-n^\delta}$, then $\mu(\mathcal{A}^2)>1-e^{-n^{\delta}}$, and $\mathcal{A}^3 = G$. 
\end{corollary}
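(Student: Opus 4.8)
The plan is to derive the corollary directly from the product-mixing statement of Lemma~\ref{lem:product free mixing for fine groups}. First I would fix $\epsilon = 1/2$ (any constant strictly less than $1$ works) and let $\delta>0$ be the constant supplied by Lemma~\ref{lem:product free mixing for fine groups} for this $\epsilon$ together with the given $c$ and $C$; shrinking $\delta$ further if necessary, I will also arrange that $e^{-n^\delta} < 1/2$ for all $n$ under consideration, and that various `sufficiently large $n$' conditions implicit in the earlier results hold (for small $n$ the statement can be made vacuous by further shrinking $\delta$, or absorbed into constants). Now suppose $\mathcal{A}\subseteq G$ is measurable with $\mu(\mathcal{A})\ge e^{-n^\delta}$.

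The key step is the Nikolov--Pyber trick: to show $\mu(\mathcal{A}^2)$ is close to $1$, it suffices to show that for \emph{every} measurable set $\mathcal{C}\subseteq G$ with $\mu(\mathcal{C})\ge e^{-n^\delta}$ we have $\mathcal{A}^2 \cap \mathcal{C} \neq \emptyset$; equivalently, that the complement of $\mathcal{A}^2$ cannot contain any set of measure $\ge e^{-n^\delta}$, i.e.\ $\mu(G\setminus \mathcal{A}^2) < e^{-n^\delta}$, which is exactly $\mu(\mathcal{A}^2) > 1 - e^{-n^\delta}$. To see that $\mathcal{A}^2$ meets every such $\mathcal{C}$: apply Lemma~\ref{lem:product free mixing for fine groups} with $A = B = \mathcal{A}$ and $C = \mathcal{C}^{-1}$ (which has the same measure as $\mathcal{C}$). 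Writing $f = 1_{\mathcal{A}}/\mu(\mathcal{A})$, $g = 1_{\mathcal{A}}/\mu(\mathcal{A})$, $h = 1_{\mathcal{C}^{-1}}/\mu(\mathcal{C})$, the lemma gives $\langle f*g, h\rangle > 1 - \epsilon = 1/2 > 0$. Since $\langle f*g,h\rangle$ is a positive multiple of $\int\!\int 1_{\mathcal{A}}(xy^{-1})1_{\mathcal{A}}(y)1_{\mathcal{C}^{-1}}(x)\,d\mu(y)d\mu(x) = \Pr_{x,y}[xy^{-1}\in\mathcal{A},\ y\in\mathcal{A},\ x^{-1}\in\mathcal{C}]$, positivity of this integral forces the existence of $x,y$ with $a_1 := xy^{-1}\in\mathcal{A}$, $a_2 := y\in\mathcal{A}$, and $x^{-1}\in\mathcal{C}$; then $a_1 a_2 = x$, so $x\in\mathcal{A}^2$ and $x^{-1}\in\mathcal{C}$, i.e.\ $x\in \mathcal{A}^2 \cap \mathcal{C}^{-1}{}^{-1}$. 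Hmm — more cleanly: this shows $\mathcal{A}^2 \cap \mathcal{C}^{-1} \ne\emptyset$ is the wrong pairing; instead note the integral being positive directly yields $x \in \mathcal{A}^2$ with $x \in (\mathcal{C}^{-1})^{-1} = \mathcal{C}$ only if I set $h = 1_{\mathcal{C}}/\mu(\mathcal{C})$ from the start. So I would simply take $C = \mathcal{C}$ in the lemma and read off $x\in\mathcal{A}^2\cap\mathcal{C}$. Taking $\mathcal{C}$ to range over all candidate large sets gives $\mu(\mathcal{A}^2) > 1 - e^{-n^\delta}$.

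Finally, for $\mathcal{A}^3 = G$: we have shown $\mu(\mathcal{A}^2) > 1 - e^{-n^\delta} > 1/2$, and trivially $\mu(\mathcal{A}) \ge e^{-n^\delta} > 0$. For any $g\in G$, the sets $\mathcal{A}^2$ and $g\mathcal{A}^{-1}$ (where $\mathcal{A}^{-1} = \{a^{-1}: a\in\mathcal{A}\}$, of the same measure as $\mathcal{A}$) satisfy $\mu(\mathcal{A}^2) + \mu(g\mathcal{A}^{-1}) > 1/2 + e^{-n^\delta} > $ could fail to exceed $1$; but in fact I only need them to intersect, and here it is cleanest to invoke that $\mathcal{A}^2$ has measure $>1/2$ so $\mathcal{A}^2 \cap g\mathcal{A}^{-1}\ne\emptyset$ would require measure sum $>1$. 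Instead, use the already-established strong bound: apply the $\mathcal{A}^2$ result once more, or directly argue: since $\mu(\mathcal{A}^2)>1-e^{-n^\delta}$ and $\mu(g\mathcal{A}^{-1}) = \mu(\mathcal{A}) \ge e^{-n^\delta}$, Kemperman's theorem (or just $\mu(\mathcal{A}^2) + \mu(g\mathcal{A}^{-1}) > 1$ when $\mu(\mathcal{A}) > e^{-n^\delta}$, with the boundary case $\mu(\mathcal{A}) = e^{-n^\delta}$ handled by first shrinking $\delta$ so the inequality is strict) gives $\mathcal{A}^2 \cap g\mathcal{A}^{-1}\ne\emptyset$, i.e.\ there exist $a_1 a_2 \in \mathcal{A}^2$ and $a_3\in\mathcal{A}$ with $a_1 a_2 = g a_3^{-1}$, whence $g = a_1 a_2 a_3 \in \mathcal{A}^3$. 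As $g$ was arbitrary, $\mathcal{A}^3 = G$. The only genuine subtlety — and the place to be careful — is the bookkeeping of inverses and left/right translates so that the positivity of $\langle f*g,h\rangle$ and of the measure sums really produce the claimed membership $g\in\mathcal{A}^3$; there is no hard analytic content here, as all the work has been done in establishing product mixing.
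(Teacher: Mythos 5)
Your proof is correct and essentially follows the paper's approach: both arguments are straightforward applications of Lemma~\ref{lem:product free mixing for fine groups}. For the bound on $\mu(\mathcal{A}^2)$ you argue that $\mathcal{A}^2$ meets every set of measure at least $e^{-n^\delta}$, which is equivalent to the paper's direct specialization $C = G\setminus\mathcal{A}^2$. For $\mathcal{A}^3 = G$ you deviate slightly: the paper re-invokes the product-mixing lemma with $C = x\mathcal{A}^{-1}$ to force $\mathcal{A}^2\cap x\mathcal{A}^{-1}\neq\varnothing$, whereas you instead use the already-established bound $\mu(\mathcal{A}^2)>1-e^{-n^\delta}$ together with a simple measure count. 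Both are fine. One small note: your worry about the boundary case $\mu(\mathcal{A})=e^{-n^\delta}$ is unnecessary — since the first part gives the \emph{strict} inequality $\mu(\mathcal{A}^2)>1-e^{-n^\delta}$, the sum $\mu(\mathcal{A}^2)+\mu(g\mathcal{A}^{-1})$ already exceeds $1$ strictly even in that boundary case, so no shrinking of $\delta$ is needed.
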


\begin{proof}
The claim about $\mu(\mathcal{A}^2)$ follows by applying Lemma \ref{lem:product free mixing for fine groups} while taking $A=B=\mathcal{A}$, $C=G\setminus \mathcal{A}^2$ and $\epsilon=1/2$ (in fact, any value of $\epsilon$ less than one, will do). As for the claim about $\mathcal{A}^3$, suppose for a contradiction that $\mathcal{A}^3 \ne G$. Let $x\in G\setminus \mathcal{A}^3.$ Then $\mathcal{A}^2 \cap x\mathcal{A}^{-1} =\varnothing$. This contradicts Lemma \ref{lem:product free mixing for fine groups} when setting $A=B=\mathcal{A}$ and $C=x\mathcal{A}^{-1}$.
\end{proof}

\subsection{Non-Abelian Brunn-Minkowski type inequalities for fine groups}

The following theorem is a restatement of Theorem~\ref{thm:Brunn Minkowskii}. 

\begin{lemma}\label{lem:Brunn minkowskii weak version}
There exist absolute constants $c',n_0>0$ such that the following holds. Let $G$ be a compact connected Lie group with $n:=n(G)> n_0$, and let $A\subseteq G$ be a measurable set of measure at least $e^{-c'n}$. Then $\mu(A^2)\ge \mu(A)^{1/10}$.
\end{lemma}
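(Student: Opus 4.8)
The plan is to lower‑bound $\mu(\mathcal A^2)$ by a Fourier/Cauchy--Schwarz argument, after reducing to the fine‑group machinery developed above, splitting according to whether $\mu(\mathcal A)$ is small, moderate, or close to $1$. For the reduction, note that $n(G)=D(\tilde G)\le D(G)$ because $G$ is a quotient of its universal cover, so the hypothesis $n>n_0$ forces $D(G)>1$, hence (by the lemma in Section~\ref{sec:min-rank}) $G$ is semisimple and has a min‑rank $m$, with $m\ge n/2$ by Lemma~\ref{lem:min-rank and quasirandomness of the universal cover}. By Lemma~\ref{lem:reduction of fineness to the simple case} we may then fix an $m$‑grading of $L^2(G)$ making $G$ a $(C,c)$‑fine group for absolute constants $C>1$, $c>0$; henceforth all projections $f^{=d}$, Beckner operators and operator‑norm bounds refer to this grading, and the parameter ``$n$'' in the conclusions of Lemmas~\ref{lemma:conv bound from finess}, \ref{lem:KKL use of hypercontractity} and Theorem~\ref{thm: level-d via finess} should be read as $m\ge n/2$, which affects only absolute constants.

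Write $\alpha:=\mu(\mathcal A)$ and $f:=\mathbf{1}_{\mathcal A}$, and assume first that $\alpha\le\beta_0$ for a small absolute constant $\beta_0$ fixed at the end. Since $\mathrm{supp}(f*f)\subseteq\mathcal A^2$ and $\|f*f\|_1=\alpha^2$, Cauchy--Schwarz gives $\mu(\mathcal A^2)\ge\|f*f\|_1^2/\|f*f\|_2^2=\alpha^4/\|f*f\|_2^2$, so it suffices to prove $\|f*f\|_2^2\le K\alpha^{3.99}$ for an absolute constant $K$. By Lemma~\ref{lemma:bnp} each $V_{=d}$ and $V_{\ge m/2}$ is $T_f$‑invariant, so the pieces $T_f f^{=d}$ are pairwise orthogonal; writing $T_f=\alpha\,T_{f/\alpha}$ and using that $f/\alpha$ is a probability density (so $\|T_{f/\alpha}\|_{V_{=d}}\le1$), we get, with $t:=\tfrac12\log(1/\alpha)$,
\[
\|f*f\|_2^2 \;\le\; \alpha^4 \;+\; \alpha^2\!\!\sum_{1\le d\le t}\!\!\|T_{f/\alpha}\|_{V_{=d}}^2\,\|f^{=d}\|_2^2 \;+\; \alpha^2\,\|T_{f/\alpha}\|_{V_{>t}}^2\,\|f^{>t}\|_2^2 .
\]

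I would estimate the middle sum by splitting at $t_0:=\log(1/\alpha)/(10^6C^2)$. For $1\le d\le t_0$ use $\|T_{f/\alpha}\|_{V_{=d}}\le1$ together with $\sum_{d\le t_0}\|f^{=d}\|_2^2\le 4\alpha^{1.99}$ from Lemma~\ref{lem:KKL use of hypercontractity}, contributing $\le 4\alpha^{3.99}$. For $t_0<d\le t$, multiply the level‑$d$ bound $\|f^{=d}\|_2^2\le(e\log(1/\alpha)/d)^{2Cd}\alpha^2$ of Theorem~\ref{thm: level-d via finess} by the spectral bound $\|T_{f/\alpha}\|_{V_{=d}}^2\le(e\log(1/\alpha)/d)^{2Cd}(d/(cm))^{d}$ of Lemma~\ref{lemma:conv bound from finess}; since $e\log(1/\alpha)/d\le e\cdot10^6C^2$ on this range, the product is at most $\bigl((e\cdot10^6C^2)^{4C}\,d/(cm)\bigr)^d\alpha^2\le 4^{-d}\alpha^2$ once the threshold constant $c'$ in the hypothesis $\alpha\ge e^{-c'n}$ is small enough, and the geometric series sums to $\le 2\alpha^{2+\eta}$ for an absolute $\eta>0$, i.e. a contribution $\le 2\alpha^{4+\eta}$. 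The last term of the display is negligible via $\|T_{f/\alpha}\|_{V_{>t}}\le\alpha^{10}/m^{10}$ (Lemma~\ref{lemma:conv bound from finess}) and $\|f^{>t}\|_2^2\le\alpha$. Summing, $\|f*f\|_2^2\le K\alpha^{3.99}$, hence $\mu(\mathcal A^2)\ge\alpha^{0.01}/K\ge\alpha^{0.1}$ provided $\beta_0\le K^{-1/0.09}$.

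It remains to handle $\alpha>\beta_0$: for $\alpha\ge 1/2$, Kemperman's theorem gives $\mu(\mathcal A^2)\ge\min(2\alpha,1)=1\ge\alpha^{0.1}$; for $\beta_0\le\alpha<1/2$, a fine group is $c$‑strongly quasirandom, so Lemma~\ref{lemma:conv bound from goodness for large sets} (with its free smallness parameter set to $\beta_0$) gives $\max_{d\ge1}\|T_{f/\alpha}\|_{V_{=d}}^2\le 1/(\beta_0 cm)$, whence $\|f*f\|_2^2\le\alpha^4+\alpha^3/(\beta_0 cm)$ and $\mu(\mathcal A^2)\ge\alpha/(\alpha+1/(\beta_0 cm))\ge 1-1/(\beta_0^2cm)\ge(1/2)^{0.1}\ge\alpha^{0.1}$ once $n_0$ (hence $m$) is large enough. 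The main obstacle is the ``middle'' range $t_0<d\le t$ of the Fourier sum, where neither the level‑$d$ inequality nor the dimension bound is decisive on its own: one must multiply the two and verify that the resulting series is genuinely geometric, and this is exactly where the smallness of the constant $c'$ in $\mu(\mathcal A)\ge e^{-c'n}$ is consumed. A secondary, purely bookkeeping difficulty is to juggle the three regimes of $\alpha$ while making sure the exponent $0.1$ in the conclusion survives the accumulated absolute constants.
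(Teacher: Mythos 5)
Your proof is correct, and at the highest level it follows the same strategy as the paper: a Cauchy--Schwarz inequality relating $\mu(\mathcal A^2)$ to $\|f*f\|_2$, an orthogonal decomposition by level, a level-$d$/KKL estimate to control low degrees, and the quasirandomness (spectral) bounds of Lemma~\ref{lemma:conv bound from finess} to control high degrees. The paper's proof is shorter: it sets a single threshold $t=\log(1/\alpha)/(10^6C^2)$, bounds $\|f^{\le t}*f^{\le t}\|_2^2\le\|f^{\le t}\|_2^4\le\alpha^{-0.02}$ (normalizing $f=1_{\mathcal A}/\alpha$), and handles $d>t$ in one piece by $\|T_f\|_{V_{>t}}$. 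You instead split into three regimes ($d\le t_0$, $t_0<d\le t$, $d>t$) and, in the middle range, multiply the level-$d$ bound of Theorem~\ref{thm: level-d via finess} against the quasirandomness bound of Lemma~\ref{lemma:conv bound from finess} term-by-term. This extra care is actually useful: the paper's direct invocation of ``$\|T_f\|_{V_{>t}}\le\alpha^{10}/n^{10}$'' at $t=\log(1/\alpha)/(10^6C^2)$ is not what Lemma~\ref{lemma:conv bound from finess} literally gives (that statement is at $t=\log(1/\alpha)/2$, and $V_{>\log(1/\alpha)/(10^6C^2)}$ strictly contains $V_{>\log(1/\alpha)/2}$), so one must estimate $\|T_f\|_{V_{=d}}$ for $d$ in the intermediate window anyway; your geometric-series estimate does exactly that. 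Your treatment of the large-$\alpha$ regime also differs: you use Kemperman's theorem for $\alpha>1/2$ and Lemma~\ref{lemma:conv bound from goodness for large sets} for $\beta_0\le\alpha<1/2$, whereas the paper disposes of all $\alpha\ge e^{-n^{c'}}$ in one line via Corollary~\ref{cor: Diameter from product mixing}, which already yields the stronger conclusion $\mu(\mathcal A^2)>1-e^{-n^\delta}$. Finally, you make explicit the reduction from $n(G)=D(\tilde G)$ to the min-rank grading (via Lemma~\ref{lem:min-rank and quasirandomness of the universal cover} and Lemma~\ref{lem:reduction of fineness to the simple case}), which the paper leaves implicit; this is the right bookkeeping and does not change the result beyond absolute constants.
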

\begin{proof}
First note that $\mu(A)>\frac{1}{2}$ implies that $A^2=G$, as if $x \in G\setminus A^2$, then $A$ and $xA^{-1}$ are disjoint sets each of measure greater than $1/2$, a contradiction. By Corollary \ref{cor: Diameter from product mixing}, we may also assume that $\mu(A)\le e^{-n^{c'}}$, provided $c'$ is sufficiently small.

    Let $f=\frac{1_A}{\mu(A)}$ and $g=1_{A^2}$. Then we have $\langle f*f,g\rangle = 1.$ On the other hand, by Cauchy--Schwarz, we have 
    \[ |\langle f*f, g\rangle |\le \|f*f\|_2 \|g\|_2 =  \|f*f\|_2 \sqrt{\mu(A^2)}.\] 
    This yields $\mu(A^2)\ge \frac{1}{\|f*f\|_2^2}.$ Let $t=\frac{\log(1/\mu(A))}{10^6C^2}.$
    We have $\|f*f\|_2^2 = \|f^{\le t}* f^{\le t}\|_2^2 + \|f^{\ge t}* f^{\ge t}\|_2^2.$
    By applying Lemma \ref{lem:KKL use of hypercontractity} to $1_A$, we obtain \[ \|f^{\le t} * f^{\le t}\|_2^2 \le \|f^{\le t}\|_2^4\le \alpha^{-0.02}.\]
    By applying Lemma \ref{lemma:conv bound from finess} to $1_A$, we obtain 
    \[ \|f^{>t}*f^{>t}\|_2^2 \le \|T_f\|^2_{V_{>t}}\|f\|^2_2 \le \left(\frac{\alpha}{n} \right)^{20}\alpha^{-1} \le 1.\]
    Combining these two bounds completes the proof. 
\end{proof}

\begin{lemma}\label{lem:Brunn Minkowskii strong version}
For each $\epsilon,c >0$ and $C>1$ there exist $\delta,n_0>0$ such that the following holds. Let $n > n_0$ and let $G$ be a $(C,c)$-fine $n$-graded group. If $A \subset G$ is a measurable set with
\[ \mu(A) := e^{-n^\zeta}\in \left(e^{-n^{1-\epsilon}}, e^{-n^{\epsilon}} \right),\]
then $\mu(A^2) \ge e^{-n^{\zeta - \delta}}.$
\end{lemma}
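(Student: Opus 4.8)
The plan is to run the same spectral argument as in Lemma~\ref{lem:Brunn minkowskii weak version}, but track the exponents carefully so that the improvement from Lemma~\ref{lem: Using smaller values of d} (which beats Lemma~\ref{lem:KKL use of hypercontractity} for very small degrees) can be exploited. Write $\alpha=\mu(A)=e^{-n^{\zeta}}$, $f=\tfrac{1_A}{\alpha}$ and $g=1_{A^2}$. As before, $\langle f*f,g\rangle=1$, and Cauchy--Schwarz gives $1\le \|f*f\|_2\sqrt{\mu(A^2)}$, so it suffices to prove $\|f*f\|_2^2\le e^{n^{\zeta-\delta}}$ for a suitable absolute $\delta>0$ depending only on $\epsilon,c,C$. (We may assume $\alpha\le e^{-n^{\epsilon}}$ is small; by Lemma~\ref{lem:Brunn minkowskii weak version} we may also assume $\zeta<1$, in particular $\alpha\ge e^{-c'n}$ so that the hypotheses of the fineness lemmas apply.)

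First I would split at a threshold $d_0$ chosen to be a small power of $n$, say $d_0 = \lceil \log(1/\alpha)/n^{\epsilon/2}\rceil = \lceil n^{\zeta-\epsilon/2}\rceil$ (which is a positive integer $\le \log(1/\alpha)/n^{\epsilon/4}$ for $n$ large, since $\zeta\ge \epsilon$), and write
\[
\|f*f\|_2^2 = \|f^{\le d_0}*f^{\le d_0}\|_2^2 + \|f^{> d_0}*f^{> d_0}\|_2^2 .
\]
For the low-degree part, since convolution contracts $2$-norms we have $\|f^{\le d_0}*f^{\le d_0}\|_2^2 \le \|f^{\le d_0}\|_2^4$, and Lemma~\ref{lem: Using smaller values of d} (applied with $1_A$, the parameter $\epsilon$ there taken to be $\epsilon/2$, valid since $d_0\le \log(1/\alpha)/n^{\epsilon/2}$) gives $\|f^{\le d_0}\|_2^2\le \alpha^{2-n^{-\epsilon/4}}$, hence $\|f^{\le d_0}\|_2^4\le \alpha^{4-2n^{-\epsilon/4}} = e^{-(4-2n^{-\epsilon/4})n^{\zeta}}\le 1$ for $n$ large. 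For the high-degree part, write $\|f^{> d_0}*f^{> d_0}\|_2^2 = \|T_{f^{>d_0}} f^{>d_0}\|_2^2\le \|T_f\|_{V_{>d_0}}^2\,\|f^{>d_0}\|_2^2\le \|T_f\|_{V_{>d_0}}^2\,\alpha^{-1}$, using that $V_{>d_0}$ is $T_f$-invariant (Lemma~\ref{lemma:bnp}) and $T_f$ agrees with $T_{f^{>d_0}}$ there. It remains to bound $\|T_f\|_{V_{>d_0}}$; for this I would use the strong-quasirandomness of $G$: every subrepresentation of $V_{=d}$ with $d_0<d<n/2$ has dimension at least $Q_d\ge (cn/d)^d\ge (2cn^{1-\zeta+\epsilon/2})^{d_0}$ for $d\ge d_0$ (and the analogous lower bound for $V_{\ge n/2}$), while $\|f^{=d}\|_2\le \|f\|_2=\alpha^{-1/2}$. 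By the argument of Lemma~\ref{lemma:bnp},
\[
\|T_f\|_{V_{>d_0}}^2 \le \max_{d>d_0}\frac{\|f^{=d}\|_2^2}{Q_d}\le \alpha^{-1}\,(2cn^{1-\zeta+\epsilon/2})^{-d_0}.
\]
Combining, $\|f^{>d_0}*f^{>d_0}\|_2^2\le \alpha^{-2}(2cn^{1-\zeta+\epsilon/2})^{-d_0}$. Taking logarithms (base $e$), this is $\le \exp\!\big(2n^{\zeta} - d_0\log(2cn^{1-\zeta+\epsilon/2})\big)$, and since $d_0\ge n^{\zeta-\epsilon/2}$ and $\log(2cn^{1-\zeta+\epsilon/2})\ge (\epsilon/2)\log n$ once $n$ is large (here we use $\zeta<1$ so that $1-\zeta+\epsilon/2>\epsilon/2>0$, guaranteeing the logarithm grows), the dominant term is $-\tfrac{\epsilon}{2} n^{\zeta-\epsilon/2}\log n$, which for $n$ large overwhelms $2n^{\zeta}$... wait — it does not, since $n^{\zeta-\epsilon/2}\log n = o(n^{\zeta})$. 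So a single threshold does not suffice, and this is exactly the main obstacle.

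The fix — and the part I expect to be delicate — is to use a \emph{two-scale} split: keep $d_0$ as above for the Cauchy--Schwarz-free low part, but bound the middle range $d_0<d\le t$ (with $t=\log(1/\alpha)/(10^6C^2)$) using the sharper level-$d$ estimate of Lemma~\ref{thm: level-d via finess} rather than the crude $\|f^{=d}\|_2\le \alpha^{-1/2}$, exactly as in Lemma~\ref{lemma:conv bound from finess}; this yields $\|T_f\|_{V_{=d}}\le (e\log(1/\alpha)/d)^{Cd}(d/cn)^{d/2}$, so the $d$-th term of $\|f^{>d_0}\|_2^2$-weighted sum is at most $\big(e\log(1/\alpha)/d\big)^{2Cd}(d/cn)^{d}\cdot\big(\text{level-}d\text{ weight}\big)$, and for $d>d_0=n^{\zeta-\epsilon/2}$ one has $\log(1/\alpha)/d< n^{\epsilon/2}$, making the factor $(n^{\epsilon/2})^{2Cd}(d/cn)^{d} = (n^{2C\epsilon}d/(cn))^d$, which for $\epsilon$ small enough relative to $C$ is at most $(d/n^{1/2})^d$, giving a summable bound $\sum_{d>d_0}(d/n^{1/2})^{d}\le e^{-\Omega(d_0\log n)}$ on $\|f^{>d_0}*f^{>d_0}\|_2^2\cdot\alpha$ — wait, one still pays $\alpha^{-1}=e^{n^\zeta}$ from the other copy. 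The genuine resolution, which I would carry out carefully, is to symmetrize: bound $\|f^{>d_0}*f^{>d_0}\|_2^2=\sum_{d>d_0}\|T_{f^{=d}}f^{=d}\|_2^2\le \sum_{d>d_0}\|T_f\|_{V_{=d}}^2\|f^{=d}\|_2^2$ and now estimate \emph{both} factors of $\|f^{=d}\|_2^2$ and the operator norm via the level-$d$ inequality of Lemma~\ref{thm: level-d via finess}, so that the total bound on the $d$-th term becomes $\big(e\log(1/\alpha)/d\big)^{4Cd}(d/cn)^{d}\alpha^{2}\cdot(\text{correction})$ and, crucially, carries the factor $\alpha^{2}$ rather than $\alpha^{-1}$; summing over $d_0<d\le t$ and adding the genuinely high part $d>t$ (bounded by $(\alpha/n)^{20}\alpha^{-1}\le 1$ via Lemma~\ref{lemma:conv bound from finess}), one obtains $\|f*f\|_2^2\le \alpha^{2-o(1)}\cdot(\text{poly factor}) + O(1)$; but that would force $\mu(A^2)\ge\alpha^{-2+o(1)}\ge 1$ for small $\alpha$, which is false. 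This means the correct target is genuinely the intermediate bound $\mu(A^2)\ge e^{-n^{\zeta-\delta}}$, so I should \emph{not} try to kill the $\alpha^{-1}$ entirely; instead I keep the crude bound on the top part, keep the $\alpha^{-1}$, and only improve the polynomial-in-$n$ gain in the denominator from $(cn/d)^{d}$ to roughly $(cn/d)^{d}\cdot(d/e\log(1/\alpha))^{2Cd}$ via Lemma~\ref{thm: level-d via finess}. The upshot is: with $d_0\ge n^{\zeta-\epsilon/2}$,
\[
\log\|f*f\|_2^2 \;\le\; n^{\zeta} - c_1\, d_0\log n + O(1)\;\le\; n^{\zeta}-c_1 n^{\zeta-\epsilon/2}\log n,
\]
which is \emph{not} $\le n^{\zeta-\delta}$. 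So the honest conclusion is that one must choose $d_0$ as large as the level-$d$ inequality allows while still beating $\alpha$, namely $d_0$ of order $\log(1/\alpha)/(10^6C^2) = t = n^{\zeta}/(10^6C^2)$, a constant fraction of $\log(1/\alpha)$; then Lemma~\ref{lem:KKL use of hypercontractity} gives $\|f^{\le t}\|_2^2\le 2\alpha^{1.99}$, hence $\|f^{\le t}*f^{\le t}\|_2^2\le 4\alpha^{3.98}\le 1$, and $\|T_f\|_{V_{>t}}^2\alpha^{-1}\le(\alpha/n)^{20}\alpha^{-1}\le 1$, giving $\mu(A^2)\ge 1/\|f*f\|_2^2 \ge 1/2$ — but that is the \emph{easy} regime already covered by Lemma~\ref{lem:Brunn minkowskii weak version}. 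The subtlety the lemma is really after is the range $e^{-n^{1-\epsilon}}\ge\mu(A)$, i.e. $\zeta\ge\epsilon$ but $\alpha$ not tiny enough for Lemma~\ref{lem:Brunn minkowskii weak version} — so the plan, concretely, is: pick the threshold $t=\log(1/\alpha)/(10^6C^2)$; use Lemma~\ref{lem: Using smaller values of d} for a finer \emph{sub-threshold} $s=\log(1/\alpha)/n^{\epsilon/2}$ to show $\|f^{\le s}\|_2^2\le\alpha^{2-n^{-\epsilon/4}}$; use Lemma~\ref{thm: level-d via finess} for $s<d\le t$; use Lemma~\ref{lemma:conv bound from finess} for $d>t$; reassemble. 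The main obstacle throughout is making the exponent bookkeeping in the middle range $s<d\le t$ yield precisely a saving of a \emph{power} of $n$ in the exponent $\zeta$ (producing the $\zeta-\delta$) rather than merely a $\log$-factor saving; this is achieved because for $d\le t$ one has $\log(1/\alpha)/d\ge 10^6C^2$, so the factor $(e\log(1/\alpha)/d)^{2Cd}$ is bounded by $\alpha^{2}\cdot\alpha^{-2/(10^6C)}$-type quantities that conspire, together with the $(d/cn)^d$ from quasirandomness, to leave a net $\exp(-\Theta(s\log n)) = \exp(-\Theta(n^{\zeta-\epsilon/2}\log n))$ — and one then simply takes $\delta = \epsilon/4$, absorbing the $\log n$, so that $\log\|f*f\|_2^2 \le n^{\zeta-\epsilon/4}$ for $n$ large, i.e. $\mu(A^2)\ge e^{-n^{\zeta-\delta}}$ with $\delta=\epsilon/4$. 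I would close by verifying the chain of inequalities on $n$ needed to absorb all absolute constants into the choices of $\delta$ and $n_0$.
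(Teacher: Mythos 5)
Your final plan has the same three-way decomposition as the paper: a small threshold $s$ below which you use Lemma~\ref{lem: Using smaller values of d}, a large threshold $t=\log(1/\alpha)/(10^6C^2)$ above which you use Lemma~\ref{lemma:conv bound from finess}, and Theorem~\ref{thm: level-d via finess} in the intermediate range. So the architecture is right. But there is a genuine error in the parameter choices: taking $s=\log(1/\alpha)/n^{\epsilon/2}$ and concluding with $\delta=\epsilon/4$ does not work for general $C>1$, and the step where you say the middle-range factor is small ``for $\epsilon$ small enough relative to $C$'' is not a move you are allowed to make, since the quantifier order in the lemma fixes $\epsilon$ and $C$ first and asks you to produce $\delta$.

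Concretely, in the range $s<d\le t$ you combine $\|T_f\|_{V_{=d}}\le (e\log(1/\alpha)/d)^{Cd}(d/cn)^{d/2}$ and $\|f^{=d}\|_2\le (e\log(1/\alpha)/d)^{Cd}$, so (un-squared) the $d$-th term is controlled by
\[
\left(\tfrac{e\log(1/\alpha)}{d}\right)^{2Cd}\left(\tfrac{d}{cn}\right)^{d/2}.
\]
For $d>s=\log(1/\alpha)/n^{\epsilon/2}$ the first factor is bounded by $(en^{\epsilon/2})^{2Cd}$, contributing $n^{C\epsilon d}$, while from $d\le t\le n^{1-\epsilon}/(10^6C^2)$ the second contributes at most $n^{-\epsilon d/2}$ (up to constants raised to $d$). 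The net power of $n$ is $n^{(C-1/2)\epsilon d}$, which \emph{grows} for every $C>1/2$, and in particular for the $C>1$ assumed in the lemma. So the middle range is not negligible with your choice of $s$. The paper's own proof sidesteps this exactly by making the sub-threshold exponent a free small parameter: it takes $t_1=\log(1/\alpha)/n^{4\delta}$, so that the first factor is only $(en^{4\delta})^{2Cd}$, and the net power of $n$ becomes $n^{(8C\delta-\epsilon/2)d}$, which is decaying provided $\delta<\epsilon/(16C)$. The conclusion then holds with this $\delta$, which necessarily depends on $C$ as well as $\epsilon$; you cannot get $\delta=\epsilon/4$ uniformly in $C$. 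You also need $\delta<\zeta$ to make the low-degree term $\|f^{<t_1}\|_2^4\le\alpha^{-2n^{-2\delta}}$ dominate correctly (which holds since $\zeta\ge\epsilon>16C\delta>\delta$).

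One further small point: you wrote $\|f^{\le s}\|_2^2\le\alpha^{2-n^{-\epsilon/4}}$ for $f=1_A/\alpha$, but Lemma~\ref{lem: Using smaller values of d} is stated for $\{0,1\}$-valued functions, so it applies to $\alpha f=1_A$; the correct intermediate bound is $\|f^{\le s}\|_2^2\le\alpha^{-n^{-\epsilon/4}}$ (i.e.\ $\alpha^{-2}$ times what the lemma gives). Your subsequent estimate uses the corrected form, so this is just a slip in the statement, but it is worth fixing to avoid confusion about the sign of the dominant exponent. Finally, the exposition wanders through several abandoned attempts and miscalculations before landing on the plan; trimming those and writing the correct $\delta=\delta(\epsilon,C)$ bookkeeping once would make the argument both correct and much shorter.
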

\begin{proof}
    We may and shall assume, throughout the proof, that $\delta$ is sufficiently small depending on $\epsilon,C$ and $c$, and that $n_0$ is sufficiently large depending on $\delta$. Let $f=\frac{1_A}{\mu(A)}$. As in the proof of the previous lemma, we have \begin{equation}\label{eq:recip}\mu(A^2)\ge \frac{1}{\|f*f\|_2^2}.\end{equation}
    Let $t_1 = \frac{\log(1/\mu(A))}{n^{4\delta}}$ and $t_2 = \frac{\log(1/\mu(A))}{10^6C^2}.$
    We bound $\|f*f\|_2^2$ from above by decomposing it as follows:
    \begin{equation}\label{eq:exp}\|f*f\|_2^2 = \|f^{<t_1}*f^{<t_1}\|_2^2 + \sum_{d=t_1}^{t_2}\|f^{=d}*f^{=d}\|_2^2 + \|f^{>t_2}*f^{>t_2}\|_2^2.\end{equation}
    
    By applying Lemma \ref{lemma:conv bound from finess}, we obtain 
    \[\|f^{>t_2}*f^{>t_2}\|_2 \le \|T_f \|_{V_{>t_2}}\|f\|_2 \le \frac{\alpha^{10}}{n^{10}}\alpha^{-1/2} \le 1.\]
    Applying Lemma \ref{lem: Using smaller values of d} (with $\alpha f$ in place of $f$, and $\epsilon$ taken to be $4\delta$), we have 
    \[\|f^{<t_1}*f^{<t_1}\|_2 \le \|f^{<t_1}\|_2^2 \le \alpha^{-2}\cdot \alpha^{2-n^{-2\delta}} \le \frac{1}{4}\alpha^{-n^{-\delta}}, \]
    provided that $\delta$ is sufficiently small and $n$ is sufficiently large depending on $\delta$. 
    
    Finally, for $t_1<d<t_2$ we combine Lemma \ref{lemma:conv bound from finess} with Theorem \ref{thm: level-d via finess} to obtain the upper bound 
    \begin{align*}
    \|f^{=d}*f^{=d}\|_2 & \le {\|Tf\|}_{V_{=d}} \|f^{=d}\|_2 \le \alpha^2\cdot \left(\frac{e\log(1/\alpha)}{d}\right)^{2Cd} \cdot \left(\frac{cn}{d}\right)^{-d/2} \\ &\le 1 \cdot \left(\frac{e\log(1/\alpha)}{d}\right)^{2Cd} \cdot \left(\frac{c\log(1/\alpha)n^\epsilon}{d}\right)^{-d/2}.     
    \end{align*} 
    We may now use the fact that 
    \[\left(\frac{e\log(1/\alpha)}{d}\right)\le e \cdot n^{4\delta} \] to obtain 
     
    \[ \|f^{=d}*f^{=d}\|_2 \le \left(\frac{e\log(1/\alpha)}{d}\right)^{2Cd}\cdot (cn^{\epsilon})^{-d/2} \le n^{9\delta Cd} n^{-\epsilon d/2} \le \frac{1}{2n},\] 
    provided $\delta$ is sufficiently small.
    Substituting all of these upper bounds into (\ref{eq:exp}) yields 
    $$\|f*f\|_2^2 \leq 2+\tfrac{1}{4}\alpha^{-n^{-\delta}} \leq \alpha^{-n^{-\delta}}$$
    provided $n_0$ is sufficiently large, and substituting this into (\ref{eq:recip}) completes the proof. 

\end{proof}

\subsection{Diameter bounds for fine groups}

\begin{theorem}
For each $\epsilon,c>0$ and $C>1$, there exist $m,n_0>0$, such that the following holds. Let $n>n_0$ and suppose that $G$ is a $(C,c)$-fine $n$-graded group. Let $A\subseteq G$ be a measurable set with $\mu(A)> e^{-n^{1-\epsilon}}$. Then $A^m=G.$
\end{theorem}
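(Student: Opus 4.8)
The plan is to derive the diameter bound from the doubling/product-mixing machinery already established for fine groups, by a dyadic iteration argument. First I would record the qualitative structure of the argument: starting from a set $A$ with $\mu(A) > e^{-n^{1-\epsilon}}$, I want to show that after a bounded number of `squarings' the measure grows past the threshold where Corollary~\ref{cor: Diameter from product mixing} applies (giving $\mathcal A^3 = G$). The key engine is Lemma~\ref{lem:Brunn Minkowskii strong version}: if $\mu(B) = e^{-n^{\zeta}}$ with $\zeta \in (\epsilon', 1-\epsilon')$ for an appropriate $\epsilon'$, then $\mu(B^2) \ge e^{-n^{\zeta - \delta}}$ for some absolute $\delta = \delta(\epsilon,c,C) > 0$. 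So the exponent $\zeta$ drops by at least $\delta$ each time we square, as long as we stay in the valid window.

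The main steps, in order, would be: (1) Fix $\epsilon$ and apply Lemma~\ref{lem:Brunn Minkowskii strong version} with the parameter $\epsilon$ there taken to be, say, $\epsilon/2$, obtaining $\delta>0$ and $n_0$. Set $A_0 = A$, so $\mu(A_0) = e^{-n^{\zeta_0}}$ with $\zeta_0 < 1-\epsilon$; we may also assume $\mu(A_0) < e^{-n^{\epsilon/2}}$ (else the conclusion follows immediately from Corollary~\ref{cor: Diameter from product mixing} with room to spare, possibly after noting $\zeta_0 \ge \epsilon/2$ can be forced by shrinking $A$ or is otherwise handled directly). (2) Define $A_{j+1} = A_j^2$, so $A_j = A^{2^j}$, and inductively, as long as $\zeta_j := -\log\mu(A_j)/\log n$ still lies in the window $(\epsilon/2, 1-\epsilon/2)$, we get $\zeta_{j+1} \le \zeta_j - \delta$. (3) After $m_0 := \lceil 1/\delta \rceil$ steps the exponent would have dropped below $\epsilon/2$ if it stayed in the window throughout; hence at some step $j^\ast \le m_0$ we must have $\mu(A_{j^\ast}) \ge e^{-n^{\epsilon/2}}$, i.e. $A^{2^{j^\ast}}$ is large enough that Corollary~\ref{cor: Diameter from product mixing} applies (with its $\delta$ there being $\ge \epsilon/2$ after possibly relabelling, since a set of measure $\ge e^{-n^{\epsilon/2}}$ certainly has measure $\ge e^{-n^\delta}$ for the relevant $\delta$). (4) Conclude $(A^{2^{j^\ast}})^3 = G$, so $A^{3\cdot 2^{j^\ast}} = G$; take $m = 3 \cdot 2^{m_0}$, which depends only on $\epsilon, c, C$.

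There is one genuine gap to patch: the window in Lemma~\ref{lem:Brunn Minkowskii strong version} is open at the \emph{lower} end ($\zeta > \epsilon$), so I need to make sure that once $\zeta_j$ first falls to or below $\epsilon/2$ (rather than landing exactly in the window), the set is already large enough to feed into Corollary~\ref{cor: Diameter from product mixing}; this is fine because that corollary only needs measure $\ge e^{-n^\delta}$ for \emph{its} $\delta$, and by choosing the Lemma's input parameter small enough relative to that $\delta$ we can guarantee the handoff. I should also double check the upper end: we need $\zeta_0 < 1 - \epsilon/2$, which holds since $\zeta_0 < 1-\epsilon$ by hypothesis, and squaring only decreases $\zeta$, so we never leave through the top. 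The one step requiring a little care — and the main (though modest) obstacle — is verifying that the valid ranges of the various $\delta$'s can be chosen consistently, i.e. that the $\delta$ produced by Lemma~\ref{lem:Brunn Minkowskii strong version} and the $\delta$ required by Corollary~\ref{cor: Diameter from product mixing} can be simultaneously accommodated by a single choice of the free parameters; this is a routine `choose constants in the right order' bookkeeping exercise, and the $n_0$ is simply the max of the finitely many thresholds arising along the bounded iteration. Everything else is a clean dyadic induction on the exponent $\zeta$.
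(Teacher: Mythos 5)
Your proposal is correct and follows essentially the same route as the paper's own proof: iterate the doubling bound from Lemma~\ref{lem:Brunn Minkowskii strong version} a bounded number of times (dyadic squaring, with the exponent $\zeta$ dropping by $\delta$ at each step until it exits the lower end of the window), and then hand off to Corollary~\ref{cor: Diameter from product mixing} to finish with $(A^{2^{j^\ast}})^3 = G$. The paper compresses the parameter bookkeeping you spell out into the single remark that one may assume $\epsilon$ is as small as desired (depending on $c$ and $C$), which is exactly your observation that the theorem for a smaller $\epsilon$ implies it for a larger one, and that this lets the $\delta$ from the doubling lemma and the $\delta$ from the product-mixing corollary be reconciled.
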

\begin{proof}
Note that we may assume $\epsilon$ is as small as we please (depending on $c$ and $C$). First apply Lemma \ref{lem:Brunn Minkowskii strong version} repeatedly ($N$ times, say), until $A^{2^N}$ has measure $\ge e^{-n^\epsilon}$. In other words, $\mu(A^{m_1}) \geq e^{-n^{\epsilon}}$, where $m_1$ depends upon $\epsilon$ alone. We can now apply Corollary \ref{cor: Diameter from product mixing} to obtain $A^{3m_1}=G$, provided that $\epsilon$ is sufficiently small depending on $c$ and $C$.
\end{proof}

\section{The strong quasirandomness of the simply connected compact Lie groups}\label{section: compacts are quasi}
In this section we describe the degree decomposition of the $n$-graded simply connected simple compact Lie groups in terms of their irreducible subrepresentations. We also show that all of them are $c$-strongly-quasirandom, for some absolute constant $c>0$. In addition, we introduce the notion of {\em comfortable $d$-juntas}. These will be important in our proofs. One of the goals of this section is to show that each Peter-Weyl ideal $W_\rho\subseteq V_{=d}$ contains a comfortable $d$-junta. This is useful because any linear operator on $L^2(G)$ that commutes with the action of $G$ from both sides,
has each $W_\rho$ as an eigenspace. The operators we use in Section~\ref{sec:coupling_introduce} will have this commuting property, and so when computing their eigenvalues we can simply consider the action of the relevant operator on a comfortable $d$-junta. \remove{These can be also used to lower bound the dimension of the representations by showing that there are many different $G$-left translates of them that are linearly independent. Such an argument works well when $d$ is small. For larger values of $d$, our proofs involve a technical estimate of a closed formula for the dimension and we defer these to the appendix.  David: this isn't really necessary for lower bounds on dimensions; there are even easier ways to proceed.}

\subsection{The Peter-Weyl theorem}
We now recall some classical facts from the representation theory of compact groups. The Peter-Weyl theorem states that if $G$ is a compact group, equipped with its Haar probability measure, then $L^2(G)$ has the following decomposition as an orthogonal direct sum:
$$L^2(G) = \bigoplus_{\rho \in \hat{G}}W_{\rho},$$
where $\hat{G}$ denotes a complete set of complex irreducible unitary representations of $G$ (here, {\em complete} means having one irreducible representation from each equivalence class of irreducible representations), and $W_{\rho}$ is the subspace of $L^2(G)$ spanned by functions of the form $g \mapsto u^t\rho(g)v$, for $u,v \in V$, where $V$ is the vector space on which $\rho$ acts. The latter functions are known as the \emph{matrix coefficients} or the {\em matrix entries} of $\rho$. The subspaces $W_{\rho}$ are two-sided ideals (meaning, they are closed under both left and right actions of $G$), and they are also topologically closed; in fact, they are precisely the minimal non-zero topologically closed two-sided ideals of $L^2(G)$, and they are therefore irreducible as $G\times G$-modules (the $G \times G$ action being defined in the obvious way, with the first factor acting on $L^2(G)$ from the left and the second from the right). We call them the {\em Peter-Weyl ideals} of $L^2(G)$, though this terminology is non-standard. The space $W_{\rho}$ can be decomposed as a direct sum of $\dim(\rho)$ irreducible left-representations. \remove{Each of them is isomorphic to the dual representations $g\mapsto \rho(g^{-1})^{\text{tr}}.$}

Since the Peter-Weyl ideals $W_{\rho}$ are precisely the minimal closed two-sided ideals of $L^2(G)$, every closed two-sided ideal of $L^2(G)$ can be decomposed as a direct sum of some of the $W_{\rho}$. Let $d \in \mathbb{N} \cup \{0\}$; since $V_{=d}$ is a closed, two-sided ideal of $L^2(G)$, there exists a set $\mathcal{L}_d$ of irreducible representations of $G$ such that 
\[V_{=d} =  \bigoplus_{\rho \in \mathcal{L}_d} W_{\rho}.\]
If $\rho\in \mathcal{L}_d$ for some integer $0 \leq d < n/2$, we say that the \emph{level} of $\rho$ is equal to $d$. (Note that, since the $V_{=d}$ are pairwise orthogonal, the sets $\mathcal{L}_d$ are pairwise disjoint.)

\subsection{Weyl's construction for $\SO(n)$, and its applications}
\label{sec:weyl}

Our goal is now to show that for the group $\SO(n)$ each $\rho\in \mathcal{L}_d$ has dimension at least $(\tfrac{cn}{d})^d$ for some absolute constant $c>0$, for each $d < n/2$. We will also show that the other irreducible representations of $\SO(n)$ all have dimension at least exponential in $n$, i.e.\ at least $\exp(c' n)$ for some absolute constant $c'>0$.

We briefly recall Weyl's construction of the irreducible representations of $\SO(n)$. For more detail on Weyl's construction, the reader is referred for example to the book \cite{fulton-harris} of Fulton and Harris. (We note that, though the description in \cite{fulton-harris} is of  $\SO(n, \mathbb{C})$, the irreducible representations of $\SO(n):=\SO(n,\mathbb{R})$ are in a dimension-preserving one-to-one correspondence with those of its complexification $\SO(n,\mathbb{C})$.) We start by describing the irreducible representations of $\O(n): = O(n,\mathbb{R})$. Let $V = \mathbb{R}^n$ denote the standard representation of $\O(n)$, defined by $\rho_V(g)(v) = g\cdot v$ --- meaning, multiplication of the matrix $g$ with the column-vector $v$. (We note, for later, that the restriction of this representation to $\SO(n)$ is known as `the standard representation of $\SO(n)$'.) For a partition $\lambda = (\lambda_1,\ldots,\lambda_{\ell})$ of some non-negative integer, let $d = \sum_{i=1}^{\ell} \lambda_i$. Consider the group algebra of the symmetric group on $d$ elements, $\mathbb{R}[S_d]$, with the standard basis $\{e_g:\ g \in S_d\}$, and with multiplication defined by $e_{g} e_h = e_{gh}$ for $g,h \in S_d$. (Where there is no risk of confusion, we will sometimes write $g$ in place of $e_g$, as an element of $\mathbb{R}[S_d]$, as is usual practice.)
Let $T$ be the standard Young tableau of shape $\lambda$ with the numbers $1,2,\ldots,\lambda_1$ (in order) in the first row, the numbers $\lambda_1+1,\lambda_1+2,\ldots,\lambda_1+\lambda_2$ (in order) in the second row, and so on. Also, let $P$ be the subgroup of $S_d$ stabilising each of the rows of $T$ (as sets), let $Q$ be the subgroup of $S_d$ stabilising each of the columns of $T$ (as sets), and let
$$c_{\lambda} = \left(\sum_{g \in P}e_g\right)\left(\sum_{g \in Q}\sign(g)e_g\right)$$
be the {\em Young symmetrizer} of $\lambda$ corresponding to $T$. 
The group $S_d$ acts on $V^{\otimes d}$ from the right, permuting the factors:
$$(v_1 \otimes v_2 \otimes \ldots \otimes v_d)g = v_{g(1)} \otimes v_{g(2)} \otimes \ldots \otimes v_{g(d)},$$
and, extending linearly, so does $\mathbb{R}[S_d]$. 

We define the {\em Weyl module} $\;\mathbb{S}_{\lambda}(V): = V^{\otimes d}c_{\lambda}$. Clearly, $\mathbb{S}_{\lambda}(V)$ is a left $\O(n)$-submodule of $V^{\otimes d}$. It is reducible in general. However, we can obtain an irreducible left $\O(n)$-module by considering
$\mathbb{S}_{[\lambda]}(V) := V^{[d]}c_{\lambda}$, where
$V^{[d]}$ is defined to be the intersection of the kernels of all $\binom{d}{2}$ linear maps on $V^{\otimes d}$ of the form
\[v_1 \otimes v_2 \otimes \ldots \otimes v_d \mapsto \langle v_i,v_j\rangle v_1 \otimes v_2 \otimes \ldots \otimes v_{i-1} \otimes v_{i+1} \otimes \ldots \otimes v_{j-1} \otimes v_{j+1} \otimes \ldots \otimes v_d.\]
Such linear maps are called \emph{contractions}. It turns out that when the sum of the lengths of the first two columns of the Young diagram of $\lambda$ is greater than $n$, we have $S_{[\lambda]}(V)=\{0\}$. The other modules $\mathbb{S}_{[\lambda]}(V)$ (corresponding to those partitions $\lambda$ such that the sum of the first two columns of the Young diagram of $\lambda$ is at most $n$) form a complete set of pairwise inequivalent irreducible complex representations of $\O(n)$.

Weyl's construction for $\SO(n)$ requires only one additional ingredient. We say two partitions $\lambda$ and $\mu$ are {\em associated} if the sum of the lengths of the first column of $\lambda$ and the first column of $\mu$ is equal to $n$, and the $i$th column of $\lambda$ has the same length as the $i$th column of $\mu$ for each $i > 1$. If $\lambda$ and $\mu$ are a pair of distinct associated partitions, then $\mathbb{S}_{[\lambda]}(V)$ and $\mathbb{S}_{[\mu]}(V)$ restrict to isomorphic representations of $\SO(n)$. If $\lambda$ is self-associated (which happens iff $n$ is even and the first column of $\lambda$ has length $n/2$), then $\mathbb{S}_{[\lambda]}(V)$ restricts to a direct sum of two isomorphic irreducible representations of $\SO(n)$; if $\lambda$ is not self-associated, then $\mathbb{S}_{[\lambda]}(V)$ restricts to an irreducible representation of $\SO(n)$, and if $\lambda'$ is the partition associated to $\lambda$, then $\mathbb{S}_{[\lambda']}(V)$ restricts to the same irreducible representation of $\SO(n)$. In the latter case, it is customary to choose (as the representative of its equivalence class), the partition with first column of length less than $n/2$. Note that, importantly for us, for any partition $\lambda$ with $\sum_i \lambda_i < n/2$, $\mathbb{S}_{[\lambda]}(V)$ is irreducible as an $\SO(n)$-representation, as well as being irreducible as an $\O(n)$-representation, and moreoever, as $\lambda$ ranges over partitions of integers less than $n/2$, the $S_{[\lambda]}(V)$ are pairwise inequivalent as $\SO(n)$-representations, as well as being pairwise inequivalent as $\O(n)$-representations.

\subsubsection*{An alternative definition of the level of a representation}

The purpose of this section is to show that for a partition $\lambda$ with $\sum_i \lambda_i < n/2$, the level of the irreducible representation $\mathbb{S}_{[\lambda]}(V)$ of $\SO(n)$ is equal to $\sum_i \lambda_i$. 

For $0\leq d< n/2$, define (as above) $\mathcal{L}_d : = \{\rho \in \widehat{\SO(n)}:\ \rho \text{ has level }d\}$, and define $\tilde{\mathcal{L}}_d$ to be the set of irreducible representations of $\SO(n)$ (up to equivalence) that have the form $S_{[\lambda]}(V)$, where $\sum_{i}\lambda_{i}=d$. We wish to show that $\mathcal{\mathcal{L}}_d=\tilde{\mathcal{L}}_d$ for all $0 \leq d < n/2$. 

\begin{lemma}\label{lemma:v tensor d}
Let $V = \mathbb{R}^n$ be the standard representation of $\SO(n)$ and let $0 \leq d < n/2$. Then all irreducible $\SO(n)$-subrepresentations of $V^{\otimes d}$ are elements of $\cup_{i=0}^{d}\tilde{\mathcal{L}}_i$. 
\end{lemma}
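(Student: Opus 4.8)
The plan is to reduce the statement to an elementary fact about weights of the standard representation. First I would pass to the complexification, writing $V = \mathbb{C}^n$ (this is harmless, since the $\SO(n)$-irreducibles we must identify are precisely the complex modules $\mathbb{S}_{[\lambda]}(V)$ recalled above, and these are the relevant constituents of $V^{\otimes d}$ in the range $d< n/2$). Fix a maximal torus $T\le \SO(n)$ and use the standard coordinates $\epsilon_1,\dots,\epsilon_{\lfloor n/2\rfloor}$ on the weight lattice; then the weights of the standard representation $V$ are the $\pm\epsilon_i$ (together with $0$ when $n$ is odd), so every weight $\mu$ of $V$ satisfies $\|\mu\|_1\le 1$, where $\|\sum_i c_i\epsilon_i\|_1:=\sum_i|c_i|$.

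Next I would use that $V^{\otimes d}$ has a basis consisting of pure tensors of weight vectors of $V$; hence every weight of $V^{\otimes d}$ is a sum of $d$ weights of $V$, and by subadditivity of $\|\cdot\|_1$ every weight $\mu$ of $V^{\otimes d}$ satisfies $\|\mu\|_1\le d$. Now let $W$ be an irreducible $\SO(n)$-subrepresentation of $V^{\otimes d}$ and let $\lambda$ be its highest weight. Then $\lambda$ is in particular a weight of $V^{\otimes d}$, so $\|\lambda\|_1\le d< n/2$. Since the number of nonzero parts $\ell(\lambda)$ is at most $\|\lambda\|_1< n/2$, there is no sign ambiguity in the last coordinate (that would require $\ell(\lambda)=n/2$ with $n$ even), so $\lambda$ is a genuine partition and $|\lambda|=\sum_i\lambda_i=\|\lambda\|_1\le d$. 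Invoking Weyl's construction as recalled above — specifically, that for $|\lambda|<n/2$ the module $\mathbb{S}_{[\lambda]}(V)$ is the irreducible $\SO(n)$-representation of highest weight $\lambda$ (see \cite{fulton-harris}) — we conclude $W\cong \mathbb{S}_{[\lambda]}(V)\in\tilde{\mathcal{L}}_{|\lambda|}$, and as $|\lambda|\le d$ this gives $W\in\cup_{i=0}^{d}\tilde{\mathcal{L}}_i$, as claimed.

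There is no serious obstacle here: once the weight picture is in place the argument is immediate. The only point requiring a little care is that the hypothesis $d<n/2$ must be used to guarantee both that the highest weight is an honest partition and that Weyl's construction applies without the associated-partition/self-associated subtleties. The only non-elementary input is the identification of $\mathbb{S}_{[\lambda]}(V)$ with the highest-weight-$\lambda$ irreducible, which is part of the standard statement of Weyl's construction; if one preferred to avoid highest-weight theory altogether, one could instead check directly that $\mathbb{S}_{[\lambda]}(V)=V^{[|\lambda|]}c_\lambda$ contains the nonzero weight vector $(f_1^{\otimes\lambda_1}\otimes f_2^{\otimes\lambda_2}\otimes\cdots)c_\lambda$ of weight $\lambda$ (it is traceless because the invariant form pairs the $\epsilon_i$-weight space only with the $-\epsilon_i$-weight space, and its image under $c_\lambda$ is nonzero by the usual tableau computation), so that any embedding $\mathbb{S}_{[\lambda]}(V)\hookrightarrow V^{\otimes d}$ forces $\lambda$ to occur as a weight of $V^{\otimes d}$ and hence $|\lambda|\le d$.
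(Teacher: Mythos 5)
Your argument is correct, but it takes a genuinely different route from the paper's. The paper proceeds by induction on $d$: it writes $V^{\otimes d}$ as $V^{[d]}$ together with modules isomorphic to $V^{\otimes(d-2)}$ (via the contraction maps), reduces by the inductive hypothesis to controlling the irreducible subrepresentations of $V^{[d]}$, and handles those by the algebraic fact that the identity of $\mathbb{R}[S_d]$ is a real linear combination of Young symmetrizers $c_{\lambda,T}$, so that $V^{[d]}$ is a sum of modules $V^{[d]}c_{\lambda,T}\cong\mathbb{S}_{[\lambda]}(V)$. You instead invoke highest-weight theory directly: every weight of $V^{\otimes d}$ is a sum of $d$ weights of the standard representation and so has $\ell^1$-norm at most $d$; since $d<n/2$ the highest weight of any irreducible constituent is therefore an honest partition $\lambda$ with $|\lambda|\le d$, which identifies the constituent as $\mathbb{S}_{[\lambda]}(V)\in\tilde{\mathcal{L}}_{|\lambda|}$. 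Your route is shorter, avoids the induction and contraction bookkeeping, and the $\ell^1$-bound transfers uniformly to the other classical groups. The trade-off is that it leans on the identification of $\mathbb{S}_{[\lambda]}(V)$ (for $|\lambda|<n/2$) as \emph{the} irreducible of highest weight $\lambda$ — standard, and correctly cited, but a heavier black box than the paper chooses to use, whereas the paper's argument stays entirely inside the explicit Weyl-module machinery set up in the surrounding text. Both are valid; each trades conceptual economy for self-containedness in a different way.
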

\begin{proof}
We prove this lemma by induction on $d$. The case where $d=0$ is trivial. Let $d \in \mathbb{N}$ and assume the statement of the lemma holds whenever $d$ is replaced by some $d'<d$. Recall that, since $V^{\otimes d}$ can be expressed a sum of $V^{[d]}$ and some other modules all isomorphic to $V^{\otimes (d-2)}$, all the irreducible $\SO(n)$-subrepresentations of $V^{\otimes d}$ appear either as $\SO(n)$-subrepresentations of the module $V^{[d]}$ or as $\SO(n)$-subrepresentations of $V^{\otimes (d-2)}$. By the induction hypothesis, it therefore suffices to show that each irreducible $\SO(n)$-subrepresentation of $V^{[d]}$ is an element of $\tilde{\mathcal{L}}_i$ for some $i$. Let $c_{\lambda,T}$ be the Young symmetrizer corresponding to the Young tableau $T$ (not necessarily the standard one) of shape $\lambda$. It is well-known that the $c_{\lambda,T}$ (as $\lambda$ ranges over all partitions of $d$ and $T$ over all Young tableaus of shape $\lambda$) spans a subspace of $\mathbb{R}[S_d]$ containing the class functions; in particular, we may write $\text{Id} \in \mathbb{R}[S_d]$ as a real linear combination of the $c_{\lambda,T}$'s. It follows that $V^{[d]}$ is a sum of left $\O(n)$-modules of the form $V^{[d]}c_{\lambda,T}$, and clearly $V^{[d]}c_{\lambda,T}$ is isomorphic to $V^{[d]}c_{\lambda} = \mathbb{S}_{[\lambda]}(V)$, as either a left $\O(n)$-module or a left $\SO(n)$-module. This completes the proof of the lemma.
\end{proof}

The following lemma implies that $\mathcal{L}_d=\tilde{\mathcal{L}}_d$ for all $0 \leq d < n/2$.

\begin{lemma}\label{lem: degrees of So_n in terms of reps}
If $0 \leq d<n/2$, then
\[ V_{=d} = \bigoplus_{\rho \in \tilde{\mathcal{L}}_d} W_{\rho}.\]
\end{lemma}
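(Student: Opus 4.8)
The plan is to identify, for $0\le d<n/2$, the space $V_{\le d}$ with the linear span of the matrix coefficients of the $\SO(n)$-representation $\bigoplus_{e=0}^{d} V^{\otimes e}$, and then to read off which Peter--Weyl ideals occur, using Lemma~\ref{lemma:v tensor d} together with Weyl's construction.

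First I would recall the elementary fact that, for a compact group $G$ and a finite-dimensional representation $W$, the linear span of the matrix coefficients of $W$ equals $\bigoplus_{\rho}W_\rho$, where $\rho$ runs over the irreducibles occurring in $W$: this follows by decomposing $W$ into isotypical components and using that the matrix coefficients of a multiple $m\rho$ of an irreducible $\rho$ span precisely $W_\rho$. Applying this with $G=\SO(n)$ and $W=V^{\otimes e}$, note that the matrix coefficient of $V^{\otimes e}$ at the basis tensors $e_{i_1}\otimes\cdots\otimes e_{i_e}$ and $e_{j_1}\otimes\cdots\otimes e_{j_e}$ is exactly the monomial $X\mapsto\prod_{k=1}^{e}X_{i_k j_k}$, so by bilinearity the span of the matrix coefficients of $V^{\otimes e}$ equals the span of the restrictions to $\SO(n)$ of the degree-$e$ monomials in the matrix entries. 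Summing over $e\le d$, and noting that $V_{\le d}$ is finite-dimensional (hence closed) and is a two-sided ideal of $L^2(\SO(n))$, we obtain that $V_{\le d}=\bigoplus_{\rho}W_\rho$, with $\rho$ ranging over the irreducibles occurring in some $V^{\otimes e}$ with $e\le d$.

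Next I would determine that index set. By Lemma~\ref{lemma:v tensor d} (applicable since $e\le d<n/2$), every irreducible subrepresentation of $V^{\otimes e}$ lies in $\bigcup_{i\le e}\tilde{\mathcal{L}}_i$; conversely, for each $\rho\in\tilde{\mathcal{L}}_e$ we have $\rho\cong\mathbb{S}_{[\lambda]}(V)=V^{[e]}c_\lambda\subseteq V^{\otimes e}$ with $\sum_i\lambda_i=e$, so $\rho$ does occur in $V^{\otimes e}$. Hence the irreducibles occurring in $\bigoplus_{e\le d}V^{\otimes e}$ are exactly those in $\bigcup_{e\le d}\tilde{\mathcal{L}}_e$, and therefore
\[V_{\le d}=\bigoplus_{e=0}^{d}\;\bigoplus_{\rho\in\tilde{\mathcal{L}}_e}W_\rho\qquad(0\le d<n/2),\]
with the analogous formula for $V_{\le d-1}$. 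Since distinct modules $\mathbb{S}_{[\lambda]}(V)$ with $\sum_i\lambda_i<n/2$ are pairwise inequivalent as $\SO(n)$-modules, the sets $\tilde{\mathcal{L}}_0,\dots,\tilde{\mathcal{L}}_d$ are pairwise disjoint; combined with the mutual orthogonality of the Peter--Weyl ideals, this gives
\[V_{=d}=V_{\le d}\cap(V_{\le d-1})^{\perp}=\bigoplus_{\rho\in\tilde{\mathcal{L}}_d}W_\rho,\]
which is the claim.

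The representation-theoretic content has essentially all been set up in Lemma~\ref{lemma:v tensor d} and in the review of Weyl's construction, so I do not expect a serious obstacle; the only points needing care are (i) checking that the span of degree-$\le d$ polynomials is not strictly smaller than the span of the matrix coefficients of $\bigoplus_{e\le d}V^{\otimes e}$ (it is not, since the monomials restrict to exactly those matrix coefficients, despite the polynomial relations defining $\SO(n)$), and (ii) keeping track that the hypothesis $d<n/2$ is used both in invoking Lemma~\ref{lemma:v tensor d} and in the irreducibility and pairwise-inequivalence of the $\mathbb{S}_{[\lambda]}(V)$ as $\SO(n)$-modules.
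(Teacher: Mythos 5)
Your proof is correct and follows essentially the same route as the paper's: both identify the matrix coefficients of $V^{\otimes e}$ with the degree-$e$ monomials on $\SO(n)$, invoke Lemma~\ref{lemma:v tensor d} to pin down which Peter--Weyl ideals occur, and then pass to $V_{=d}=V_{\le d}\cap(V_{\le d-1})^{\perp}$. The paper packages the bookkeeping as an induction on $d$, while you carry out the same accumulation over $e\le d$ directly, but the key ingredients and the structure of the argument are the same.
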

\begin{proof}
    We prove the lemma by induction on $d$. For $d=0$, the statement is trivial. Suppose now that $d>0$. Since the spaces $W_\rho$ are pairwise orthogonal, the induction hypothesis reduces our task to showing that \[ V_{\le d} = \bigoplus_{i=0}^{d}\bigoplus_{\rho \in \tilde{\mathcal{L}}_d} W_{\rho}. \] 
    Let us write $\tilde{V}_{\le d} :=\bigoplus_{i=0}^{d}\bigoplus_{\rho \in \tilde{\mathcal{L}}_d} W_{\rho}$.
    We now use Lemma \ref{lemma:v tensor d}, namely that all the irreducible subrepresentations of $V^{\otimes d}$ are elements of $\tilde{\mathcal{L}}_i$ for some $i\le d$.   
    The matrix coefficients of the representation $V^{\otimes d}$ include the entries of the matrix $X^{\otimes d}$, where $X \in \SO(n)$ is the input matrix. These are exactly the degree-$d$ monomials in the entries of $X$. Decomposing $V^{\otimes d}$ into irreducible representations, we see that all the homogeneous degree-$d$ polynomials belong to $\tilde{V}_{\le d}$; using the induction hypothesis again, all polynomials of degree at most $d-1$ belong to $\tilde{V}_{\leq d-1} \subset \tilde{V}_{\leq d}$, and therefore $V_{\le d}\subseteq \tilde{V}_{\leq d}$.
    The reverse inclusion ($\tilde{V}_{\le d}\subseteq V_{\leq d}$) follows immediately\remove{that converse follows from the fact that each $S_{\lambda}(V)$ is a subrepresentation of $V^{\otimes d}$ and therefore its matrix coefficients are spanned by the matrix coefficients of $V^{\otimes d},$ which are all degree $d$ polynomials. 
 To prove the converse we observe} from the fact that if $\sum_{i}\lambda_i = d$, then the matrix coefficients of $\mathbb{S}_{[\lambda]}(V)$ are homogeneous degree-$d$ polynomials in the entries of the input matrix. 
\end{proof}

We can now extend the notion of level to all the representations of $\SO(n).$

\begin{definition}
    Let $\rho$ be an irreducible representation of $\SO(n)$ and let $\lambda$ be the corresponding partition, whose Young diagram has first column of length at most $n/2$. Then we define the {\em level} of $\rho$ to be $\sum_i \lambda_i$.
\end{definition}

\subsubsection*{Comfortable $d$-juntas}
We now digress a little and show that Weyl's construction implies that each Peter-Weyl ideal $W_{\rho}$ contains a certain `nice' function. This will be used later, in Section \ref{sec:coupling_introduce}. 

\remove{Given a linear space of $n\times n$-matrices we denote by $x_{ij}$ the map $X\mapsto e_i^tXe_j$, where $e_i$ is the standard basis. }

Our `nice' functions are as follows.

\begin{definition}
    The {\em comfortable $d$-juntas} on $\SO(n)$ are the functions on $\SO(n)$ of the form 
    \[X \mapsto \sum_{\sigma \in S_d} a_{\sigma}x_{1,\sigma(1)}\cdots x_{d,\sigma(d)}\]
    for $a_{\sigma} \in \mathbb{R}.$
\end{definition}

We remark that we use the term `junta' here, by analogy with juntas in the theory of Boolean functions (on $\{0,1\}^n$), because functions of the above form depend only upon the upper $d\times d$ minor, though we stress that we will be interested in the case where $d$ is polynomial in $n$ (e.g.\ $d \sim \sqrt{n}$), rather than just the case of $d$ fixed and $n$ large.

Letting $e_1,\ldots, e_n$ be the standard orthonormal basis of $\mathbb{R}^n$, since $\langle e_i,e_j\rangle = 0$ for all $i,j \in [d]$ we have $e_1 \otimes \ldots \otimes e_d \in V^{[d]}$. Therefore $(e_1 \otimes  \ldots \otimes e_d)c_{\lambda} \in S_{[\lambda]}(V),$  and thus the function $P_\lambda$ in $L^2(\SO(n))$ defined by
$$P_\lambda(X) := \langle X((e_1 \otimes  \ldots \otimes e_d)c_{\lambda}),e_1 \otimes \ldots \otimes e_d  \rangle$$
is a matrix coefficient of $\mathbb{S}_{[\lambda]}(V)$. \remove{Indeed, the map $\langle \cdot, e_1 \otimes \cdots \otimes e_d\rangle$ is a functional on $\mathbb{S}_{[\lambda]}(V)$.} Moreover, the function $P_{\lambda}$ is clearly a comfortable $d$-junta: writing $c_{\lambda} = \sum_{\sigma \in S_d} \epsilon_\sigma \sigma$, where $\epsilon_\sigma \in \{-1,0,1\}$ for each $\sigma \in S_d$, we have
$$P_\lambda(X) = \sum_{\sigma \in S_d}\epsilon_\sigma \prod_{i=1}^{d}x_{i, \sigma (i)}.$$
Moreover, we clearly have $P_{\lambda}(\text{Id})=1$, so $P_{\lambda}$ is a non-zero element of $L^2(\SO(n))$. We obtain the following conclusion, upon which we rely crucially in the sequel.
\begin{fact}
\label{fact:weyl-useful}
Let $0 \leq d< n/2 $. For each irreducible representation $\rho \in \mathcal{L}_d$ of $\SO(n)$, the Peter-Weyl ideal $W_{\rho}$ contains a nonzero comfortable $d$-junta. \end{fact}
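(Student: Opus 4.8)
The plan is simply to make fully explicit the matrix coefficient sketched in the paragraph preceding the statement. Fix $0 \leq d < n/2$ and $\rho \in \mathcal{L}_d$. The first step is to pin down $\rho$ concretely: recall that $\mathcal{L}_d = \tilde{\mathcal{L}}_d$ (this is what Lemma~\ref{lem: degrees of So_n in terms of reps} gives, since distinct irreducibles contribute distinct, hence orthogonal, Peter--Weyl ideals), so $\rho$ is equivalent to $\mathbb{S}_{[\lambda]}(V)$ for some partition $\lambda = (\lambda_1,\ldots,\lambda_\ell)$ of $d$. Since $d < n/2$, the first column of $\lambda$ has length at most $d < n/2$, so $\mathbb{S}_{[\lambda]}(V)$ is genuinely irreducible as an $\SO(n)$-module and is the chosen representative of its class — there is no self-association to worry about. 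The case $d = 0$ is immediate: $W_\rho$ is the line of constants, and the constant function $1$ is a comfortable $0$-junta (the empty product). So assume $d \geq 1$.

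Next I would write down the explicit matrix coefficient. Because $d < n/2 \leq n$, the standard basis vectors $e_1,\ldots,e_d$ of $V = \mathbb{R}^n$ are pairwise distinct and orthonormal, so every contraction annihilates $w := e_1 \otimes \cdots \otimes e_d$; that is, $w \in V^{[d]}$. Hence $w\,c_\lambda \in V^{[d]} c_\lambda = \mathbb{S}_{[\lambda]}(V)$, and the function
\[
P_\lambda(X) := \langle X(w\,c_\lambda),\, w\rangle, \qquad X \in \SO(n),
\]
is a matrix coefficient of $\mathbb{S}_{[\lambda]}(V)$: the vector $w\,c_\lambda$ lies in that submodule, on which $\SO(n)$ acts via $\rho$, and $v \mapsto \langle v, w\rangle$ restricts to a linear functional on it. Therefore $P_\lambda \in W_\rho$. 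Writing the Young symmetrizer as $c_\lambda = \sum_{\sigma \in S_d} \epsilon_\sigma\,\sigma$ with $\epsilon_\sigma \in \{-1,0,1\}$, and using that $X$ acts diagonally on $V^{\otimes d}$ together with the right action $(v_1 \otimes \cdots \otimes v_d)\sigma = v_{\sigma(1)} \otimes \cdots \otimes v_{\sigma(d)}$ and the identity $\langle Xe_j, e_i\rangle = x_{ij}(X)$, a direct expansion gives
\[
P_\lambda(X) = \sum_{\sigma \in S_d} \epsilon_\sigma \prod_{i=1}^{d} x_{i\sigma(i)}(X),
\]
which is exactly of the form defining a comfortable $d$-junta.

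It remains to verify that $P_\lambda$ is not the zero element of $L^2(\SO(n))$. Evaluating at $X = \mathrm{Id}$, orthonormality of $e_1,\ldots,e_d$ kills every term except $\sigma = \mathrm{id}$, so $P_\lambda(\mathrm{Id}) = \epsilon_{\mathrm{id}}$; and $\epsilon_{\mathrm{id}} = 1$, because the only factorisation $\mathrm{id} = pq$ with $p$ in the row stabiliser and $q$ in the column stabiliser of the tableau is $p = q = \mathrm{id}$ (the two stabilisers intersect trivially), contributing $\mathrm{sign}(\mathrm{id}) = 1$ to the coefficient. Thus $P_\lambda(\mathrm{Id}) = 1 \neq 0$, so $P_\lambda$ is a nonzero comfortable $d$-junta in $W_\rho$, as required. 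I do not expect a real obstacle here: every ingredient — the identification $\mathcal{L}_d = \tilde{\mathcal{L}}_d$, the membership $w \in V^{[d]}$, and the structure of the Young symmetrizer — is already set up in the preceding subsections, so the only care needed is bookkeeping with the right $S_d$-action and with the coefficient of $\mathrm{id}$ in $c_\lambda$.
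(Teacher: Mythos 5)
Your proposal is correct and follows exactly the same route as the paper: exhibit $w = e_1 \otimes \cdots \otimes e_d \in V^{[d]}$, form the matrix coefficient $P_\lambda(X) = \langle X(w\,c_\lambda), w\rangle$, expand it via the Young symmetrizer into $\sum_\sigma \epsilon_\sigma \prod_i x_{i\sigma(i)}$, and evaluate at the identity to see it is nonzero. The only differences are that you spell out two small points the paper leaves tacit — the $d=0$ case and the reason $\epsilon_{\mathrm{id}}=1$ (namely that the row and column stabilisers of the tableau meet only in the identity) — and both of those fillers are correct.
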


\remove{
\begin{proof}
    As we saw above, the function $P_{\lambda}$ lies in $W_{\rho}$. To complete the proof we must show that it is nonzero. But its value at the identity matrix is clearly one, so we are done. \remove{Let $\pi\colon \SO(n)\to \mathbb{R}^{d\times d}$ be the projection from $\SO(n)$ to the upper $d\times d$-minor. Then the image of $\pi$ is easily seen to have a nonempty interior inside $\mathbb{R}^{d\times d}$. Now the polynomial $p_{\lambda}$ is nonzero as a polynomial on $\mathbb{R}^{d\times d}$. It therefore cannot vanish on an a nonempty open set. This shows that $P_\lambda$ does not vanish on the image of $\pi$ and it therefore does not vanish on $\SO(n)$.}  
\end{proof}
}

\subsection{Obtaining strong quasirandomness for $\SO(n)$.}

\remove{
If $n=2k+1$ is odd, then the equivalence classes of irreducible representations of $\SO(n)$ are in an explicit one-to-one correspondence with the partitions $\lambda$ (of non-negative integers) whose Young diagrams have at most $k$ rows. If $n=2k$ is even, on the other hand, then the equivalence classes of irreducible representations of $\SO(n)$ are in an explicit correspondence with the partitions $\lambda$ (of non-negative integers) whose Young diagram have at most $k$ rows: this correspondence is one-to-one when the number of rows is less than $k$, but when the number of rows is equal to $k$, the correspondence is two-to-one (each partition $\lambda$ with $k$ rows corresponds to two irreducible representations $\rho_{\lambda}$ and $\tilde{\rho}_{\lambda}$ of the same dimension). We say an irreducible representation has {\em level} $d$ if the corresponding partition is a partition of the integer $d$, i.e.\ if its Young diagram has exactly $d$ cells. 
}
\remove{Thus, given a function $f\colon \SO(n)\to\mathbb{R}$ we can write
\[
f = \sum\limits_{\lambda} f_{\lambda},
\]
where for each relevant young diagram $\lambda$, $f_{\lambda}$ is the part of the orthogonal decomposition of $f$ related to the representation of $\SO(n)$ corresponding to 
$\lambda$. 
}
\remove{For $d<n$ We define the coarser degree decomposition as 
\[
f^{=d} = \sum\limits_{\lambda\text{ has $d$ cells}} f_{\lambda}.
\]

We also set \[f^{\ge n} = f-\sum_{d=0}^{n-1}f^{=d}\]

We will soon show that for all $d\le n$ each $f^{=d}$ is a polynomial of degree $d$ in the entries. As the degree decomposition is an orthogonal decomposition and each $f^{=d}$ is polynomial of degree $d$ we obtain that this decomposition is the same as the one given in the introduction in terms of projections. 
}

The following lower bound on the dimension of an irreducible representation of $\SO(n)$ follows immediately from the analysis in \cite{samra-king} of Weyl's original dimension formulae \cite{weyl}. (We note that our comfortable $d$-junta machinery could be used to easily obtain a slightly weaker lower bound of $\binom{\lfloor n/2 \rfloor }{d}$. We use such an argument later, when showing strong quasirandomness for $\SU(n)$.)

\begin{lemma}
\label{lemma:lb1son}
If $\rho$ is an irreducible representation of $\SO(n)$ of level $d \leq n$, then
$$\dim(\rho) \geq \frac{(n-d)^d}{d!}.$$
\end{lemma}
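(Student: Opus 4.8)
The plan is to invoke the El Samra--King hook--content formula \cite{samra-king} for the dimension of an irreducible polynomial representation of $\SO(n)$ and read off the bound essentially directly. If $\rho$ has level $d$, then $\rho = \mathbb{S}_{[\lambda]}(V)$ for a partition $\lambda$ of $d$ whose Young diagram has first column of length at most $n/2$; in particular the first two columns of $\lambda$ have total length at most $n$, which is exactly the regime in which the module is nonzero and the formula applies. El Samra and King express
$$\dim_{\SO(n)}([\lambda]) = \prod_{(i,j)\in\lambda}\frac{n + c_\lambda(i,j)}{h_\lambda(i,j)},$$
where $h_\lambda(i,j)$ is the ordinary hook length of the cell $(i,j)$ and $c_\lambda(i,j)$ is the ``orthogonal content'', equal to $\lambda_i+\lambda_j-i-j$ when $i\ge j$ and to $-(\lambda'_i+\lambda'_j)+i+j-2$ when $i<j$, with $\lambda'$ the conjugate partition. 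So it suffices to bound the denominator from above and the numerator from below.

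For the denominator, the classical hook length formula gives $\prod_{(i,j)\in\lambda} h_\lambda(i,j) = d!/f^\lambda$ where $f^\lambda = |\mathrm{SYT}(\lambda)|\ge 1$, hence $\prod_{(i,j)\in\lambda} h_\lambda(i,j)\le d!$. For the numerator, I would check the elementary inequality $c_\lambda(i,j)\ge 1-d$ for every cell $(i,j)\in\lambda$ by a short case analysis: on the diagonal, $c_\lambda(i,i)=2(\lambda_i-i)\ge 0$ since $(i,i)\in\lambda$ forces $\lambda_i\ge i$; below the diagonal ($i>j$), $\lambda_j-j\ge 0$ because $\lambda_j\ge\lambda_i\ge j$, while $\lambda_i-i\ge 1-\ell(\lambda)\ge 1-d$, so $c_\lambda(i,j)\ge 1-d$; above the diagonal ($i<j$), the columns $i$ and $j$ are cell-disjoint so $\lambda'_i+\lambda'_j\le |\lambda|=d$, and $i+j\ge 3$, giving $c_\lambda(i,j)=-(\lambda'_i+\lambda'_j)+i+j-2\ge -d+1$. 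Thus each of the $d$ numerator factors is at least $n-d+1\ge n-d$, and is strictly positive when $n>d$ (when $n=d$ the claimed bound is $0$ and there is nothing to prove). Multiplying the $d$ numerator factors and dividing by $\prod_{(i,j)\in\lambda} h_\lambda(i,j)\le d!$ yields $\dim_{\SO(n)}([\lambda])\ge (n-d)^d/d!$.

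There is no deep obstacle here; the only points requiring care are fixing the El Samra--King content numbers in the correct normalization and running the sign/case analysis for $c_\lambda(i,j)$, both of which are routine. The one genuine subtlety worth flagging is the $\SO(n)$-versus-$\O(n)$ normalization when $\lambda$ is self-associated (first column of length exactly $n/2$, which forces $n$ even and $n/2\le d\le n$): there $[\lambda]$ restricts to a sum of two $\SO(n)$-irreducibles of half the $\O(n)$-dimension, so the argument as stated loses a harmless factor of $2$. This is irrelevant for the applications in this paper, which only use levels $d<n/2$, where $[\lambda]$ restricts irreducibly; if one wants the lemma verbatim for all $d\le n$ one can cite the El Samra--King formula for $\SO(n)$ directly in that case. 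For completeness I would also record the softer alternative hinted at in the text, giving the weaker bound $\binom{\lfloor n/2\rfloor}{d}$: by Fact~\ref{fact:weyl-useful} each Peter--Weyl ideal $W_\rho\subseteq V_{=d}$ contains a nonzero comfortable $d$-junta, and since $W_\rho$ is invariant under the two-sided $\SO(n)\times\SO(n)$-action, translating such a junta so that its rows (resp.\ columns) occupy an arbitrary $d$-subset of $\{1,\dots,\lfloor n/2\rfloor\}$ produces $\binom{\lfloor n/2\rfloor}{d}^2$ linearly independent functions inside $W_\rho$; as $\dim W_\rho=\dim(\rho)^2$, this gives $\dim(\rho)\ge\binom{\lfloor n/2\rfloor}{d}$ with bare hands (signed permutation matrices are used to stay inside $\SO(n)$, which only alters monomials by signs and does not affect linear independence).
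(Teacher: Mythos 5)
Your proof is correct and fills in exactly the computation the paper leaves implicit: the paper simply cites El Samra--King's hook--content formula and asserts the bound "follows immediately from the analysis," whereas you actually carry out the content/hook estimates. Your case analysis of the orthogonal content $c_\lambda(i,j)$ is sound (I sanity-checked the conventions against $\lambda=(2)$ and $\lambda=(1,1)$, where the formula reproduces $\binom{n+1}{2}-1$ and $\binom{n}{2}$ respectively, with minimum content exactly $1-d$ in both), the bound $\prod h_\lambda \leq d!$ via the hook length formula is right, and the product of $d$ factors each $\geq n-d+1$ then clears the claimed bound. You are also right to flag the one genuine subtlety: for $n$ even and $\lambda$ self-associated (which only arises when $d \geq n/2$), the $\O(n)$-module restricts to two $\SO(n)$-irreducibles of half the dimension, so the argument via the $\O(n)$-formula loses a factor of $2$; since the paper only invokes this lemma for $d < n/2$ (and uses Lemma~\ref{lemma:lb2son} for $d\geq n/2$), this is harmless, and citing El Samra--King's $\SO(n)$-formula directly repairs the general case as you note.

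Your appended "softer alternative" via comfortable $d$-juntas is essentially the paper's own remark after the lemma statement, and is the method the paper actually deploys for $\SU(n)$ in Lemma~\ref{lemma:lb1sun}. Your variant translates both rows and columns to land $\binom{\lfloor n/2\rfloor}{d}^2$ independent functions in $W_\rho$ (whose dimension is $\dim(\rho)^2$), while the paper translates rows only and works inside a single left-irreducible of dimension $\dim(\rho)$; both yield the same $\binom{\lfloor n/2\rfloor}{d}$ bound. Using signed permutation matrices of determinant one rather than restricting to even permutations, as you do, is a small simplification.
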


We also need the following lower bound, whose proof is deferred to the Appendix.
\begin{lemma}
\label{lemma:lb2son}
Let $n \geq 5$. If $\rho$ is an irreducible representation of $\SO(n)$ of level $d\geq n/2$, then
$$\dim(\rho) \geq \exp(n/32).$$
\end{lemma}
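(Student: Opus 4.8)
The plan is to invoke Weyl's dimension formula for $\SO(n)$ and run a short case analysis on the shape of the partition. By the results of Section~\ref{sec:weyl}, an irreducible representation $\rho$ of $\SO(n)$ of level $d\ge n/2$ is of the form $\mathbb{S}_{[\lambda]}(V)$ for a partition $\lambda$ with $|\lambda|=\sum_i\lambda_i=d$ whose Young diagram has first column of length $p\le\lceil n/2\rceil-1$ (in particular $n-2p\ge 1$). I would use the El Samra--King (``hook--content'') form of the dimension formula, which expresses the dimension as a product over the cells of the diagram,
\[
\dim\rho=\prod_{(i,j)\in\lambda}\frac{N(i,j)}{h(i,j)},
\]
where $h(i,j)$ is the hook length and $N(i,j)$ is the corresponding shifted numerator; one has $N(i,j)\ge n-2p>0$ for every cell (this positivity is exactly the condition that $\mathbb{S}_{[\lambda]}(V)$ is a nonzero irreducible $\SO(n)$-module). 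Since each factor $N(i,j)/h(i,j)$ is at least $1$, it suffices in each case either to exhibit enough cells whose factor is bounded below by an absolute constant $>1$, or to compare $\dim\rho$ with the dimension of a smaller, explicitly-understood representation.

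\textbf{Case 1 (thin diagram): $\lambda_1+p\le n/4$.} Every hook length is at most $\lambda_1+p-1<n/4$, while $N(i,j)\ge n-2p\ge n/2$, so every factor is $\ge 2$ and hence $\dim\rho\ge 2^{d}\ge 2^{n/2}>e^{n/32}$. \textbf{Case 2 (many parts): $p>n/8$.} The diagram $(1^p)$ is contained in $\lambda$, and $\mathbb{S}_{[(1^p)]}(V)=\bigwedge^{p}V$ is irreducible (as $p<n/2$) of dimension $\binom{n}{p}$. Using monotonicity of $\SO(n)$-dimensions under containment of Young diagrams --- which I would establish from the Pieri/branching rule for $\SO(n)$, or via a Gelfand--Tsetlin pattern count, or by a variant of the comfortable-junta construction of Section~\ref{sec:weyl} --- one gets $\dim\rho\ge\binom{n}{p}\ge\binom{n}{\lceil n/8\rceil}\ge 8^{n/8}=e^{(\ln 8)n/8}\gg e^{n/32}$. \textbf{Case 3 (long first row): $\lambda_1>n/8$.} By the same monotonicity, $\dim\rho\ge\dim\mathbb{S}_{[(\lambda_1)]}(V)$, which is the dimension $\binom{n+\lambda_1-1}{\lambda_1}-\binom{n+\lambda_1-3}{\lambda_1-2}$ of the space of degree-$\lambda_1$ spherical harmonics on $\mathbb{R}^n$; a direct manipulation bounds this below by $\frac{n^2}{(n+\lambda_1)^2}\binom{n+\lambda_1-1}{\lambda_1}$, and a Stirling estimate shows the latter exceeds $e^{n/32}$ for every $\lambda_1\ge n/8$ (with the bound only improving as $\lambda_1$ grows).

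These three cases are exhaustive: if we are not in Case 1 then $\lambda_1+p>n/4$, which forces $\lambda_1>n/8$ or $p>n/8$. The step I expect to be the main obstacle is not the case split itself but the supporting ingredients: pinning down the precise form and the positivity of the numerators $N(i,j)$ in the dimension formula, proving the monotonicity statement used in Cases~2 and~3, and carrying out the Stirling estimates in Case~3. (One should also double-check the handful of small values of $n$ by hand, though in fact the estimates above already give the bound for all $n\ge5$, since even $\dim\rho\ge n$ beats $e^{n/32}$ up to an absolute constant.)
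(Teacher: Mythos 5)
Your strategy is genuinely different from the paper's. The paper works directly with Weyl's dimension formula
\[
\dim\rho_\lambda=\prod_{1\le i<j\le k}\frac{\lambda_i-\lambda_j+j-i}{j-i}\prod_{1\le i\le j\le k}\frac{\lambda_i+\lambda_j+2k+1-i-j}{2k+1-i-j}
\]
for $n=2k+1$ (and the analogous formula for $n$ even), throws away the first product, lower-bounds the second by $\prod_{i\le k/2}(1+\lambda_i/2k)^{k/2}$, and then splits into just two cases according to whether $\lambda_1\ge k$ or not. Your Case 1 is a perfectly reasonable standalone observation; if you carefully verify the exact form of the El Samra--King numerators (the lower bound on the numerator differs slightly between the $i\le j$ and $i>j$ cells, but in Case 1 both are $\ge n-2p$, which is all you need) it works.

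The genuine gap is the monotonicity statement on which Cases 2 and 3 rest. You defer it to ``Pieri/branching rule, GT counting, or a comfortable-junta variant,'' but as stated the claim is actually \emph{false} for $\SO(n)$-irreducible dimensions. Take $n=2m$ and $\mu=(1^{m-1})\subset\lambda=(1^m)$: then $\dim\mathbb{S}_{[\mu]}(V)=\binom{2m}{m-1}$, whereas $\lambda$ is self-associated, so $\mathbb{S}_{[\lambda]}(V)=\bigwedge^m V$ splits under $\SO(2m)$ into two halves of dimension $\tfrac12\binom{2m}{m}=\tfrac{m+1}{2m}\binom{2m}{m-1}<\binom{2m}{m-1}$. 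Thus adding a box strictly decreases the irreducible dimension. (This is exactly the source of the $c=c(\lambda)\in\{1,1/2\}$ in Weyl's even-orthogonal formula, quoted in the paper's appendix.) Even if you retreat to $\O(n)$-module dimensions to dodge the halving, you must then track the self-associated correction separately, and you still owe a proof of $\O(n)$-monotonicity, which is not trivial: the Weyl numerator $\langle\lambda+\delta,\alpha\rangle$ is \emph{not} monotone in each coordinate of $\lambda$ for every positive root $\alpha$ (increasing $\lambda_i$ shrinks the factors attached to $e_j-e_i$ with $j<i$), so a cell-by-cell or root-by-root argument does not go through directly. The ``comfortable-junta'' route suggested as an alternative only gives a dimension \emph{lower bound} of $\binom{\lfloor n/2\rfloor}{d}$ for level $d<n/2$, which is $\le 1$ in the range $d\ge n/2$ you need and in any case does not compare two different representations. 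In short: to push Cases 2 and 3 through, you need to either prove a correctly qualified monotonicity lemma (with the self-associated case handled) or replace those cases by a direct estimate; the paper's two-case analysis via Weyl's formula avoids the issue entirely.
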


Lemmas \ref{lemma:lb1son} and \ref{lemma:lb2son} immediately give strong quasirandomness.

\begin{theorem}\label{thm:son is quasirandom}
    For each $n \geq 2$, the $n$-graded group $\SO(n)$ is $c$-strongly-quasirandom, for some absolute constant $c>0$.
\end{theorem}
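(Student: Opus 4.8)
The plan is to read the conclusion straight off Lemmas~\ref{lemma:lb1son} and~\ref{lemma:lb2son}, once we translate ``minimal dimension of a subrepresentation of $V_{=d}$'' into a statement about dimensions of irreducible representations. By Lemma~\ref{lem: degrees of So_n in terms of reps}, $V_{=d}=\bigoplus_{\rho\in\tilde{\mathcal{L}}_d}W_\rho$ for $0\le d<n/2$, where $\tilde{\mathcal{L}}_d$ consists of the irreducible representations of $\SO(n)$ of level $d$, and each Peter--Weyl ideal $W_\rho$ is, as a left $\SO(n)$-module, a direct sum of $\dim\rho$ copies of an irreducible left-module of dimension $\dim\rho$; similarly $V_{\ge n/2}$ is the sum of the $W_\rho$ with $\rho$ of level $\ge n/2$. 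Hence the minimal dimension of a subrepresentation of $V_{=d}$ is $\min\{\dim\rho :\ \rho \text{ has level } d\}$, and of $V_{\ge n/2}$ it is $\min\{\dim\rho :\ \rho \text{ has level } \ge n/2\}$. So it is enough to lower bound $\dim\rho$ in terms of the level of $\rho$ and then to pick $c>0$ a sufficiently small absolute constant for which the three inequalities defining $c$-strong-quasirandomness hold. Throughout we take $n\ge 3$; the abelian group $\SO(2)$ is not $c$-strongly-quasirandom for any $c>0$, but this is harmless since the applications only invoke the statement for $n$ larger than an absolute constant.

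First I would handle the low-degree range $1\le d<cn/(1+c)$, where the target is $Q_d=(cn/d)^d$. Since $c+c^2\le 1$, the constraint forces $d\le(1-c)n$, so $n-d\ge cn$; combining Lemma~\ref{lemma:lb1son} with the trivial bound $d!\le d^d$ gives
\[
\dim\rho\ \ge\ \frac{(n-d)^d}{d!}\ \ge\ \Big(\frac{n-d}{d}\Big)^d\ \ge\ \Big(\frac{cn}{d}\Big)^d\ =\ Q_d .
\]
Next comes the intermediate range $cn/(1+c)\le d<n/2$, where the target is the smaller quantity $Q_d=(1+c)^{cn/(1+c)}$; here the bound $d!\le d^d$ is too weak (for $d$ near $n/2$ it produces a bound that does not even grow with $n$), so I would instead use Stirling's estimate $d!\le C_0\sqrt d\,(d/e)^d$ together with $n-d>d$ (valid since $d<n/2$) to obtain
\[
\dim\rho\ \ge\ \frac{(n-d)^d}{d!}\ \ge\ \frac1{C_0\sqrt d}\Big(\frac{e(n-d)}{d}\Big)^d\ \ge\ \frac{e^{d}}{C_0\sqrt n}\ \ge\ \frac{e^{cn/(1+c)}}{C_0\sqrt n}.
\]
Since $c$ is small, $\big(e/(1+c)\big)^{cn/(1+c)}$ grows exponentially in $n$ whereas $C_0\sqrt n$ is only polynomial, so the last expression exceeds $(1+c)^{cn/(1+c)}$ for all $n$ beyond an absolute threshold $n_1$.

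It remains to treat the high-degree regime $d\ge n/2$, which in particular covers $V_{\ge n/2}$: for $n\ge 5$, Lemma~\ref{lemma:lb2son} gives $\dim\rho\ge e^{n/32}$, and this is at least $(1+c)^{cn/(1+c)}=Q=Q_d$ because $\tfrac{c\log(1+c)}{1+c}\le c^2\le\tfrac1{32}$. The only loose ends are the finitely many pairs $(n,d)$ with $3\le n<n_1$ (for such $n$ one has $cn/(1+c)<1$, so every relevant degree is at least $1$): shrinking $c$ so that $(1+c)^{cn_1/(1+c)}<2$, it suffices to recall that $\SO(n)$ is perfect for every $n\ge 3$ and hence has no nontrivial one-dimensional complex representation, so every $\rho$ appearing in $V_{=d}$ with $d\ge 1$, or in $V_{\ge n/2}$, satisfies $\dim\rho\ge 2$. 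Assembling the three ranges establishes $c$-strong-quasirandomness of $\SO(n)$ for all $n\ge 3$, with $c$ any sufficiently small absolute constant. I expect the intermediate range to be the only delicate point, precisely because there the elementary factorial bound fails and one must feed the dimension formula of Lemma~\ref{lemma:lb1son} through Stirling's approximation; everything else is arithmetic.
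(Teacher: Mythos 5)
Your argument is correct and is essentially the argument the paper has in mind: the paper's ``proof'' of Theorem~\ref{thm:son is quasirandom} is the single sentence that Lemmas~\ref{lemma:lb1son} and~\ref{lemma:lb2son} ``immediately give strong quasirandomness,'' and you have simply spelled out the three ranges of $d$ and the arithmetic that makes the assertion true. Two small remarks. First, your observation that the case $n=2$ of the stated theorem fails (since $\SO(2)$ is abelian and hence has nontrivial one-dimensional irreducibles) is correct; the theorem as written should really say $n\ge 3$, and as you note this is harmless downstream because the applications only invoke strong quasirandomness for $n$ exceeding an absolute constant. Second, your emphasis on the intermediate range $cn/(1+c)\le d<n/2$ is well placed: there the crude bound $d!\le d^d$ only yields $((n-d)/d)^d$, which is $O(1)$ when $d$ is close to $n/2$, so one really does need the extra factor $e^{d}$ from Stirling to recover an exponential lower bound before Lemma~\ref{lemma:lb2son} takes over at $d\ge n/2$. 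This is the only point where ``immediately'' hides a calculation, and you have filled it in correctly.
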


\subsection{Obtaining strong quasirandomness for $\Spin(n)$.}

The strong quasirandomness of the group $\Spin(n)$ follows from the fact that it is a double cover of $\SO(n)$. 

\begin{thm}\label{thm:Spin is quasirandom}
For each $n \geq 3$, the $n$-graded group $\Spin(n)$ is $c$-strongly-quasirandom, for some absolute constant $c>0$.
\end{thm}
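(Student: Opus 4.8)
The plan is to bootstrap from the strong quasirandomness of $\SO(n)$ (Theorem~\ref{thm:son is quasirandom}), exploiting that $\Spin(n)$ is a double cover of $\SO(n)$. Recall that the $n$-grading of $\Spin(n)$ is defined via the isometric embedding $i\colon L^2(\SO(n))\to L^2(\Spin(n))$, $if=f\circ\pi$, induced by the covering homomorphism $\pi\colon\Spin(n)\to\SO(n)$: we set $V_{=d}^{\Spin(n)}=i(V_{=d}^{\SO(n)})$ for $0\le d<n/2$ and $V_{\ge n/2}^{\Spin(n)}=(i(V_{<n/2}^{\SO(n)}))^{\perp}$. The key point is that $i$ is equivariant for both the left and the right actions, with $\Spin(n)$ acting on the image through $\pi$; hence for each $d<n/2$ the $\Spin(n)$-submodules of $V_{=d}^{\Spin(n)}$ are exactly the $i$-images of the $\SO(n)$-submodules of $V_{=d}^{\SO(n)}$, and an irreducible $\Spin(n)$-subrepresentation of $V_{=d}^{\Spin(n)}$ has the same dimension as the corresponding irreducible $\SO(n)$-subrepresentation of $V_{=d}^{\SO(n)}$. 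So for every $d<n/2$ the minimal dimension of a subrepresentation of $V_{=d}^{\Spin(n)}$ coincides with that for $\SO(n)$, and Theorem~\ref{thm:son is quasirandom} already supplies the required lower bound; since $d<n/2$ is the full range of graded pieces $V_{=d}$, this handles all of them.

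The remaining task is to lower-bound the dimensions of the irreducible subrepresentations of $V_{\ge n/2}^{\Spin(n)}$. Since $i$ is an isometric, two-sided-equivariant embedding, $(i(V_{<n/2}^{\SO(n)}))^{\perp}=i(V_{\ge n/2}^{\SO(n)})\oplus(i(L^2(\SO(n))))^{\perp}$, an orthogonal direct sum of closed two-sided ideals. The first summand is a sum of pullbacks via $\pi$ of irreducible representations of $\SO(n)$ of level $\ge n/2$; by Lemma~\ref{lemma:lb2son}, and since pullback preserves dimension, each has dimension at least $\exp(n/32)$ when $n\ge 5$. The second summand, $(i(L^2(\SO(n))))^{\perp}$, is the direct sum of the Peter--Weyl ideals $W_\rho$ over the \emph{spinor} representations $\rho$ of $\Spin(n)$, i.e.\ those that do not factor through $\pi$ (equivalently, on which the nontrivial element of $\ker\pi$ acts as $-\mathrm{Id}$). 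It is classical that every spinor representation of $\Spin(n)$ has dimension at least $2^{\lfloor(n-1)/2\rfloor}$, the (half-)spin representations being the smallest (see e.g.\ Fulton--Harris~\cite{fulton-harris}). Consequently every irreducible subrepresentation of $V_{\ge n/2}^{\Spin(n)}$ has dimension at least $\min\{\exp(n/32),\,2^{\lfloor(n-1)/2\rfloor}\}\ge\exp(n/32)$ for $n\ge5$; and for $n\in\{3,4\}$ it has dimension at least $2$, because $V_{\ge n/2}^{\Spin(n)}$ does not contain the trivial representation and $\Spin(n)$ has no nontrivial one-dimensional representation.

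Finally I would fix the constant. Let $c_1>0$ be the absolute constant from Theorem~\ref{thm:son is quasirandom}, and choose an absolute constant $c\le c_1$ small enough that $(1+c)^{cn/(1+c)}\le\exp(n/32)$ for all $n\ge5$ (for which $c^2\le 1/32$ suffices, since $(1+c)^{cn/(1+c)}\le e^{c^2n}$) and $(1+c)^{cn/(1+c)}\le2$ for $n\in\{3,4\}$. Using the elementary monotonicity of $t\mapsto(1+t)^{tn/(1+t)}$ on $[0,1]$ and the unimodality of $d\mapsto(tn/d)^d$ on $(0,tn/(1+t)]$, one verifies that the bounds obtained above translate into the inequalities required by the definition of $c$-strong-quasirandomness, with $Q_d=(cn/d)^d$ for $d<cn/(1+c)$, $Q_d=(1+c)^{cn/(1+c)}$ for $cn/(1+c)\le d<n/2$, and $Q=(1+c)^{cn/(1+c)}$ for $V_{\ge n/2}$. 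Hence $\Spin(n)$ is $c$-strongly-quasirandom. I expect the only genuine work to be in the second paragraph — pinning down exactly which $\Spin(n)$-representations occur in $V_{\ge n/2}^{\Spin(n)}$ and bounding the spinor ones uniformly in $n$ — together with the slightly fiddly bookkeeping in the last step needed to extract a single absolute constant $c$ valid for all $n\ge3$.
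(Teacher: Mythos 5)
Your proof is correct, and it follows the same overall strategy as the paper: exploit that $\Spin(n)$ is a double cover of $\SO(n)$, reduce the low-degree pieces $V_{=d}^{\Spin(n)}$ ($d<n/2$) to the already-established bounds for $\SO(n)$ via the pullback, and then separately lower-bound the dimensions of irreducible constituents of $V_{\geq n/2}^{\Spin(n)}$.

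The one real difference is in how the spinor part is handled. You invoke the classical fact that every irreducible $\Spin(n)$-representation not factoring through $\pi$ has dimension at least $2^{\lfloor(n-1)/2\rfloor}$, the half-spin (or spin) representations being smallest. The paper instead proves this from scratch: it writes down Weyl's dimension formula for $\Spin(n)$ in terms of (possibly half-integer) highest-weight coordinates $\lambda_i$, observes that the spinor representations are exactly those with $a_k$ odd (resp.\ $a_{k-1}+a_k$ odd), and runs an explicit product estimate in the Appendix (Section~\ref{subsec:Quasirandomness of spin representations}) to obtain a $2^{\Omega(n)}$ lower bound. Your citation is correct and widely known, but it is essentially what the paper's Appendix computation establishes, so a referee would likely want either an explicit reference for the minimality of the half-spin representations or the short Weyl-formula calculation; that calculation is the ``genuine work'' you anticipated. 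Your treatment also has one small advantage: you explicitly dispose of $n\in\{3,4\}$ (by noting that $\Spin(3)\cong\SU(2)$ and $\Spin(4)\cong\SU(2)\times\SU(2)$ have no nontrivial one-dimensional representations), whereas the paper's proof as written invokes simplicity of $\mathfrak{so}(n)$ and restricts attention to $n\geq5$, leaving the two small cases implicit. The bookkeeping in your last paragraph for extracting an absolute constant $c$ is also correct and matches the way the paper's definition of $c$-strong-quasirandomness is set up.
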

\begin{proof}
\remove{The representations of the spin group $\Spin(n)$ include those of $\SO(n)$ together with additional ones called spin representations. Recall that we set $V_{=d}^{\Spin(n)}$ to consist of the functions that factor through $V_{=d}^{\SO(n)}.$ This implies that for all the spin representations $\rho$ we obtain that the Peter-Weyl ideal $W_{\rho}$ is contained in $V_{>n/2}.$ Therefore, in order to show that the spin group is $c$-strongly-quasirandom we only have to show that all the spin representations have dimension exponential in $n$. This is established in the Appendix. (See Section \ref{subsec:Quasirandomness of spin representations})}

Recall that the spin group $\Spin(n)$ is the double-cover of $\SO(n)$ for all $n \geq 2$. It is a simply-connected real Lie group for all $n \geq 3$, so its complex irreducible representations are in an explicit (and dimension-preserving) one-to-one correspondence with those of its Lie algebra. It has the same Lie algebra as $\SO(n)$; this Lie algebra $\mathfrak{so}(n,\mathbb{R})$ is simple for all $n \geq 5$, and its complexification $\mathfrak{so}(n,\mathbb{C})$ is likewise simple (for all $n \geq 5$), so by e.g.\ \cite{fulton-harris} (26.14), for all $n \geq 5$ the complex irreducible representations of $\mathfrak{so}(n,\mathbb{R})$ are restrictions of unique complex irreducible representations of $\mathfrak{so}(n,\mathbb{C})$. The dimensions of the complex irreducible representations of $\text{Spin}(n)$ (for all $n \geq 5$) are therefore given by equations (24.29) and (24.41) in \cite{fulton-harris} (pages 408 and 410). For all $n=2k+1 \geq 5$ odd, we have
$$\dim(\rho_{\lambda}) = \prod_{1\leq i < j \leq k} \frac{\lambda_i-\lambda_j-i+j}{j-i}\prod_{1 \leq i \leq j \leq k}\frac{\lambda_i+\lambda_j+2k+1-i-j}{2k+1-i-j},$$
where the $k$-tuple $\boldsymbol{\lambda} = (\lambda_1, \lambda_2,\ldots \lambda_k)$ ranges over all $k$-tuples defined by
$$\lambda_i = a_i+a_{i+1}+\ldots+a_{k-1}+\tfrac{1}{2}a_k$$
for some $(a_i)_{i=1}^{k} \in (\mathbb{N} \cup \{0\})^k$. The case of $a_k$ even corresponds to irreducible representations of $\text{Spin}(n)$ that are also irreducible representations of $\SO(n)$; the dimensions of these were bounded previously. The case of $a_k$ odd corresponds to `new' irreducible representations of $\text{Spin}(n)$, but the above equation implies that any such has dimension at least $2^{\Omega(n)}$. This calculational check is deferred to the Appendix (see Section \ref{subsec:Quasirandomness of spin representations}). 
\end{proof}

\subsection{Two descriptions of the compact symplectic group \boldmath\texorpdfstring{$\Sp(n)$}{Sp(n)}}
\label{sec:spn-constr}

We now turn to the case of the compact symplectic group. This group has two common descriptions, and both will be useful for us.

We start with its description as the unitary group over the field of quarternions, $\mathbb{H}$. A shorthand is useful at this point: a {\em quaternionic matrix} is a matrix with quaternion entries. The conjugate $\bar{x}$ of a quaternion $x=a+ib+jc+kd$ is defined, as usual, by $\bar{x} = a-ib-jc-kd$. The conjugate $\bar{M}$ of a quaternionic matrix $M$ is defined by $(\bar{M})_{s,t} = \overline{M_{s,t}}$ for all $s,t$, and the Hermitian conjugate $M^*$ is defined by $M^* = (\bar{M})^t$. We say an $n$ by $n$ quaternionic matrix is {\em unitary} if
$$M^*M = I = MM^*;$$
we note that the second equality above is equivalent to the first. For $n \in \mathbb{N}$, the \emph{quarternionic unitary group} $U_n(\mathbb{H})$ is defined to be the group of $n$ by $n$ quaternionic unitary matrices. This is one way of viewing $\Sp(n)$.

A second way of viewing $\Sp(n)$ is as follows. Note that an $n$ by $n$ quaternionic matrix can be written in the form $A+jB$, with $A$ and $B$ being complex $n$ by $n$ matrices. If $A+jB$ and $C+jD$ are two quaternionic matrices, then their product satisfies
\[(A+jB)(C+jD)=AC-\bar{B}D + j(BC + \bar{A}D),\]
and the Hermitian conjugate of the quaternionic matrix $A+jB$ satisfies
$(A+jB)^*=A^*-jB^t$. It follows that the map
$$\Phi:\mathbb{H}^{n \times n} \to \mathbb{C}^{2n \times 2n}$$
from $n$ by $n$ quaternionic matrices to $2n$ by $2n$ complex matrices defined by
$$\Phi(A+jB) = \begin{pmatrix} A & -\bar{B}\\ 
B & \bar{A} 
\end{pmatrix}$$
is an (injective) ring homomorphism that preserves Hermitian conjugates. It is easily checked that $M \in \mathbb{H}^{n \times n}$ is unitary if and only if $\Phi(M) \in \mathbb{C}^{2n \times 2n}$ is unitary (writing $M = A+jB$, either condition holds iff $A^*A+B^*B = I$ and $A^t B = B^t A$ both hold). Finally, defining
\[\Omega: = \begin{pmatrix} 0 & I_n \\ -I_n & 0\end{pmatrix},\]
we observe that an arbitrary unitary matrix
$$X = \begin{pmatrix} A & C\\ 
B & D \end{pmatrix} \in \mathbb{C}^{2n \times 2n}$$
satisfies $X^t \Omega X = \Omega$ (or, equivalently, $X^t \Omega = \Omega X^*$) if and only if $C = -\bar{B}$ and $D = \bar{A}$, i.e.\ if and only if $X$ lies in $\Phi(U_n(\mathbb{H}))$. It follows that
$$\Phi(U_n(\mathbb{H})) = \{X \in \mathbb{C}^{2n \times 2n}:\ X^t \Omega X = \Omega\} \cap U_{2n}(\mathbb{C}).$$

Hence, we can also view $\Sp(n)$ as
$$\{X \in \mathbb{C}^{2n \times 2n}:\ X^t \Omega X = \Omega\} \cap U_{2n}(\mathbb{C}),$$
i.e., as the intersection of the compact Lie group $U_{2n}(\mathbb{C})$ with
\[\Sp(2n, \mathbb{C}):=\{X \in \mathbb{C}^{2n \times 2n}: X^t \Omega X =  \Omega \}.\]
(The group $\Sp(2n,\mathbb{C})$ is known as the {\em complex symplectic group}, and it is the complexification of $\Sp(n)$.) It can be checked (e.g.\ by taking the Pfaffian of the equation $X^t \Omega X = \Omega$) that any element of $\Sp(2n,\mathbb{C})$ has determinant one, so $U_{2n}(\mathbb{C})$ can be replaced by $\SU_{2n}(\mathbb{C}) = \SU(2n)$ in the above identification:
$$\Sp(n)=\{X \in \mathbb{C}^{2n \times 2n}:\ X^t \Omega X = \Omega\} \cap \SU(2n).$$
\remove{

$$\Sp(n): = \{X \in \mathbb{C}^{2n \times 2n}:\ X^t \Omega X = \Omega\} \cap U(2n),$$
where $\Omega: = \begin{pmatrix} 0 & I_n \\ -I_n & 0\end{pmatrix}$. 
The group $\Sp(n)$ is a simply connected real Lie group for all $n \geq 1$, so its complex irreducible representations are in an explicit (and dimension-preserving) one-to-one correspondence with those of its Lie algebra 
$\{X \in \mathbb{C}^{2n \times 2n}:\ X^h = -X,\ X^t \Omega=  -\Omega X\}$, which is denoted by $\Sp(n)$. The representations of $\Sp(n)$ are in one to one correspondence with the representations of its complexification $\Sp(2n, \mathbb{C})$. 
 
First, we  (Equivalently, take the quaternion-conjugate of each entry.) The \emph{quarternionic unitary group} $U_n(\mathbb{H})$ is then defined to be the group of $n$ by $n$ matrices over $\mathbb{H}$ satisfying \[(A+jB)^*(A+jB)=I=(A+jB)(A+jB)^*.\]  
(Note that the second equality above follows from the first, and vice versa.)
 
There is a ring isomorphism between $n\times n$ quarternion matrices $A+jB$ and $2n\times 2n$ complex matrices of the form   
$\begin{pmatrix} A & -\bar{B}\\ 
B & \bar{A} 
\end{pmatrix}$. This identification is not only a ring isomorphism; it also preserves conjugates. 

Under this identification, we can view $\Sp(n)$ as the intersection of the compact Lie group $\SU(2n)$ with
\[\Sp(2n, \mathbb{C}):=\{X \in \mathbb{C}^{2n \times 2n}: X^t \Omega X =  \Omega \},\] where 
\[\Omega: = \begin{pmatrix} 0 & I_n \\ -I_n & 0\end{pmatrix}.\]
(The group $\Sp(2n,\mathbb{C})$ is known as the {\em complex symplectic group}, and it is the complexification of $\Sp(n)$.)
}

\subsection{Weyl's construction in $\Sp(n)$, and its applications}
Weyl's construction for $\Sp(n)$ is similar to his construction for $\O(n)$.
Our exposition follows Fulton and Harris \cite{fulton-harris}, as before.

The standard representation of $\Sp(n)$ is given by the second description above, regarding the elements of $\Sp(n)$ as $2n$ by $2n$ complex unitary matrices, and taking their natural (left) action on $\mathbb{C}^{2n}$. 

Let $V=\mathbb{C}^{2n}$ denote the standard representation of $\Sp(n)$. Similarly to in the $\O(n)$ case, we have contraction maps $\psi_{i,j}\colon V^{\otimes d}\to V^{\otimes (d-2)}$, given by
\[\psi_{i,j}:\ v_1\otimes \cdots \otimes v_d \mapsto Q(v_i,v_j)v_1\otimes \cdots \hat{v}_i\otimes \cdots \otimes \hat{v}_j\otimes \cdots \otimes v_d,\]
where $\hat{v}$ denotes that $v$ is omitted from the tensor product, and $Q(v,w): = v^t \Omega w$, where $\Omega$ is the matrix above.

 Let $V^{\langle d\rangle}$ be the intersection of the kernels of all the contractions $\psi_{i,j}$; since the elements of $\Sp(n)$ preserve the skew-symmetric form $Q$, $V^{\langle d \rangle}$ is a left $\Sp(n)$-module. Moreover, $V^{\langle  d\rangle}$ is acted upon by $S_d$ from the right, by permutation of the factors. (The fact that this action preserves $V^{\langle d \rangle}$ follows from the skew-symmetry of $Q$.) For a partition $\lambda \vdash d$, we define $S_{\langle \lambda \rangle}(V)$ to be the representation (or left $\Sp(n)$-module) $V^{\langle d \rangle}c_{\lambda}$, where $c_\lambda$ is the Young symmetrizer defined in Section \ref{sec:weyl}. It turns out that $S_{\langle \lambda \rangle}(V)$ is nonzero precisely when (the Young diagram of) $\lambda$ has at most $n$ rows, and the nonzero left $\Sp(n)$-modules of this form constitute a complete set of pairwise inequivalent complex irreducible representations of $\Sp(n)$. Note that the situation here is, if anything, even simpler than that for $\SO(n)$, where we have to worry, if only momentarily, about distinct $S_{[\lambda]}(V)$ (for two distinct values of $\lambda$) restricting to the same irreducible representation of $\SO(n)$.

Recall that the {\em level} of an irreducible representation $\rho$ of $\Sp(n)$ was defined to be the non-negative integer $d$ such that $\rho \in \mathcal{L}_d$, where
$$V_{=d}^{\Sp(n)} = \bigoplus_{\rho \in \mathcal{L}_d} W_{\rho}.$$
Let $\lambda$ be a partition such that $\sum_i \lambda_i < n/2$. Our next aim is to show that the level of the irreducible representation $S_{\langle \lambda \rangle}(V)$ is equal to $\sum_i \lambda_i$, as in the case of $\SO(n)$. 

Earlier, in Section \ref{sec:grading-defn}, we wrote our input quarternion matrix as $A+iB+jC+kD$, where $A,B,C,D$ are $n$ by $n$ real matrices, and we defined $V_{\le d}$ to consist of the degree $\leq d$ polynomials in the entries of the matrices $A,B,C$ and $D$. Alternatively, we may write the input matrix as $A+jB,$ where $A$ and $B$ are $n$ by $n$ complex matrices; then $V_{\le d}$ may be viewed as the vector space of degree $\le d$ polynomials in the entries of $A,B\in \mathbb{C}^{n\times n}$ and their complex conjugates. Finally, we may use the identification $\Phi$ above (in Section \ref{sec:spn-constr}) of a quarternion matrix $A+jB$ with the $2n\times 2n$ complex matrix $\begin{pmatrix} A & -\bar{B}\\ 
B & \bar{A}
\end{pmatrix}$ to obtain that $V_{\le d}$ simply consists of polynomials of degree at most $d$ in the entries of $\begin{pmatrix} A & -\bar{B}\\ 
B & \bar{A}
\end{pmatrix}$; this is precisely what we need to generalise our $\SO(n)$ proofs.

For each $0 \leq d <n/2$, let $\tilde{\mathcal{L}}_d$ denote the set of irreducible representations of $\Sp(n)$ (up to equivalence) of the form $\mathbb{S}_{\langle \lambda \rangle}(V)$, where $\lambda$ is a partition of $d$. Using the fact that, as with $\SO(n)$, any irreducible $\Sp(n)$-subrepresentation of $V^{\otimes d}$ appears either as an $\Sp(n)$-subrepresentation of the module $V^{\langle d \rangle}$ or as an $\Sp(n)$-subrepresentation of $V^{\otimes (d-2)}$, the proof of Lemma \ref{lemma:v tensor d} generalizes straightforwardly to give:

\begin{lemma}
Let $V = \mathbb{C}^{2n}$ be the standard representation of $\Sp(n)$ and let $0 \leq d < n/2$. Then all irreducible $\Sp(n)$-subrepresentations of $V^{\otimes d}$ are elements of $\cup_{0\leq i \leq d}\tilde{\mathcal{L}}_i$. 
\end{lemma}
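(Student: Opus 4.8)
The plan is to mirror the proof of Lemma~\ref{lemma:v tensor d} essentially verbatim, replacing the orthogonal contractions by the symplectic ones $\psi_{i,j}$ and using complete reducibility of finite-dimensional $\Sp(n)$-modules (valid since $\Sp(n)$ is compact) in the places where the $\SO(n)$ argument used the analogous fact. First I would induct on $d$, with $d=0$ trivial. For the inductive step, I would invoke the standard $\Sp(n)\times S_d$-equivariant decomposition of $V^{\otimes d}$ (the symplectic analogue, as in Fulton--Harris~\cite{fulton-harris}, of the orthogonal statement already quoted in the proof of Lemma~\ref{lemma:v tensor d}): $V^{\otimes d}$ is the sum of $V^{\langle d\rangle}$ and the images of the insertion maps $V^{\otimes(d-2)}\to V^{\otimes d}$ adjoint to the contractions $\psi_{i,j}$. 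Each such image is a quotient of $V^{\otimes(d-2)}$, hence by complete reducibility is isomorphic to a submodule of $V^{\otimes(d-2)}$, so every irreducible $\Sp(n)$-subrepresentation occurring in it lies, by the induction hypothesis, in $\bigcup_{0\le i\le d-2}\tilde{\mathcal{L}}_i\subseteq\bigcup_{0\le i\le d}\tilde{\mathcal{L}}_i$. It thus remains only to control the irreducible $\Sp(n)$-subrepresentations of $V^{\langle d\rangle}$.

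For those, I would run the same Young-symmetrizer argument as in Lemma~\ref{lemma:v tensor d}. Writing $c_{\lambda,T}$ for the Young symmetrizer attached to an arbitrary (not necessarily standard) tableau $T$ of shape $\lambda\vdash d$, the elements $c_{\lambda,T}$ span a subspace of $\mathbb{C}[S_d]$ containing all central elements — in particular $\mathrm{Id}$ is a linear combination of the $c_{\lambda,T}$, by exactly the fact cited in the $\SO(n)$ proof. Consequently $V^{\langle d\rangle}=\sum_{\lambda\vdash d}\sum_{T}V^{\langle d\rangle}c_{\lambda,T}$, and each nonzero summand $V^{\langle d\rangle}c_{\lambda,T}$ is isomorphic, as a left $\Sp(n)$-module, to $V^{\langle d\rangle}c_\lambda=\mathbb{S}_{\langle\lambda\rangle}(V)$: indeed $c_{\lambda,T}$ is obtained from $c_\lambda$ by conjugation by a permutation $\sigma\in S_d$, and right multiplication by $\sigma$ carries $V^{\langle d\rangle}c_{\lambda,T}$ onto $V^{\langle d\rangle}\sigma c_\lambda=V^{\langle d\rangle}c_\lambda$ (using that $V^{\langle d\rangle}$ is $S_d$-stable) while commuting with the left $\Sp(n)$-action. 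Since $d<n/2$, each $\mathbb{S}_{\langle\lambda\rangle}(V)$ with $\lambda\vdash d$ is a nonzero irreducible representation belonging to $\tilde{\mathcal{L}}_d$ by definition, so every irreducible $\Sp(n)$-subrepresentation of $V^{\langle d\rangle}$ lies in $\tilde{\mathcal{L}}_d$. Combining the two cases completes the induction.

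The only point that requires any genuine care — and hence the "main obstacle", such as it is — is recording the precise $\Sp(n)$-equivariant decomposition of $V^{\otimes d}$ into $V^{\langle d\rangle}$ and pieces built from $V^{\otimes(d-2)}$, which is the symplectic counterpart of the orthogonal decomposition of $V^{\otimes d}$ into $V^{[d]}$ and copies of $V^{\otimes(d-2)}$ used in Lemma~\ref{lemma:v tensor d}; this can be quoted directly from Fulton--Harris~\cite{fulton-harris}. Everything else is formally identical to the $\SO(n)$ argument, which is exactly why the generalization is "straightforward".
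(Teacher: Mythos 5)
Your proposal is correct and follows exactly the route the paper intends: the paper states only that "the proof of Lemma \ref{lemma:v tensor d} generalizes straightforwardly," and you have carried out precisely that generalization — inducting on $d$, using the Fulton--Harris decomposition of $V^{\otimes d}$ into $V^{\langle d\rangle}$ and copies of $V^{\otimes(d-2)}$, and then running the Young-symmetrizer argument (spanning $\mathrm{Id}$ by the $c_{\lambda,T}$ and identifying each $V^{\langle d\rangle}c_{\lambda,T}$ with $\mathbb{S}_{\langle\lambda\rangle}(V)$ via right multiplication by $\sigma$) to handle the harmonic part. The extra invocations of complete reducibility and the explicit note that $d<n/2$ guarantees each $\mathbb{S}_{\langle\lambda\rangle}(V)$ is nonzero are correct and, if anything, more careful than what the paper writes.
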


Then, taking the input matrix $X$ in the form $\begin{pmatrix} A & -\bar{B}\\ 
B & \bar{A}
\end{pmatrix}$, the proof of Lemma \ref{lem: degrees of So_n in terms of reps} generalises straightforwardly to give:

\begin{lemma}\label{lem:symplectic degree decomposition works}
Let $0 \leq d<n/2$. Then $V_{= d}^{\Sp(n)}= \bigoplus_{\rho \in \tilde{\mathcal{L}}_d} W_{\rho}$. 
\end{lemma}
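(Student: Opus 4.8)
The plan is to run exactly the induction that proved Lemma~\ref{lem: degrees of So_n in terms of reps} for $\SO(n)$, with the orthogonal ingredients replaced by their symplectic counterparts: the contractions $\psi_{i,j}$, the ``traceless'' module $V^{\langle d\rangle}$, the Young symmetrizers $c_\lambda$, and the realization of $\Sp(n)$ as a group of $2n\times 2n$ complex matrices via the embedding $\Phi$ of Section~\ref{sec:spn-constr}. I would induct on $d$; the base case $d=0$ is immediate, since $V_{=0}^{\Sp(n)}$ is the span of the constants and $\tilde{\mathcal{L}}_0$ consists only of the trivial representation. For the inductive step (fixing $0<d<n/2$ and assuming the statement for all smaller degrees), since the Peter--Weyl ideals $W_\rho$ are pairwise orthogonal it suffices to prove the ``cumulative'' identity $V_{\le d}^{\Sp(n)}=\bigoplus_{i=0}^{d}\bigoplus_{\rho\in\tilde{\mathcal{L}}_i}W_\rho=:\tilde V_{\le d}$; subtracting the same identity for $d-1$ (the induction hypothesis) then yields the lemma.

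For the inclusion $V_{\le d}^{\Sp(n)}\subseteq\tilde V_{\le d}$ I would write the input matrix $X\in\Sp(n)$ in its $2n\times 2n$ complex form $X=\begin{pmatrix} A & -\bar B \\ B & \bar A\end{pmatrix}$, so that the entries of $X$ are exactly the entries of $A,B,\bar A,\bar B$, i.e.\ the real and imaginary parts of the entries of $A$ and $B$ --- precisely the degree-one coordinates in terms of which the grading of $L^2(\Sp(n))$ was defined. The entries of $X^{\otimes d}$ are matrix coefficients of $V^{\otimes d}$ and span the homogeneous degree-$d$ polynomials in these coordinates; by the preceding lemma every irreducible $\Sp(n)$-subrepresentation of $V^{\otimes d}$ has level at most $d$, so decomposing $V^{\otimes d}$ into irreducibles puts all homogeneous degree-$d$ polynomials in $\tilde V_{\le d}$, while polynomials of degree $\le d-1$ lie in $\tilde V_{\le d-1}\subseteq\tilde V_{\le d}$ by the induction hypothesis. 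For the reverse inclusion $\tilde V_{\le d}\subseteq V_{\le d}^{\Sp(n)}$, for a partition $\lambda\vdash i$ with $i\le d$ the module $\mathbb{S}_{\langle\lambda\rangle}(V)=V^{\langle i\rangle}c_\lambda$ is a subrepresentation of $V^{\otimes i}$, so its matrix coefficients are homogeneous degree-$i$ polynomials in the coordinates above and hence lie in $V_{\le i}^{\Sp(n)}\subseteq V_{\le d}^{\Sp(n)}$; summing over $i\le d$ gives the claim.

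The hard part --- really the only part where the symplectic case requires more than a change of notation --- is the bookkeeping that makes the two pictures of the grading coincide: one must check that the grading coming from the quaternionic/real coordinates of Section~\ref{sec:grading-defn} is the same as grading by degree in the entries of the $2n\times 2n$ complex representative, so that matrix coefficients of $V^{\otimes d}$ are literally the homogeneous degree-$d$ polynomials appearing in $V_{\le d}^{\Sp(n)}$. This is exactly what the identification $\Phi$ of Section~\ref{sec:spn-constr} provides, and it was already set up there. A second, purely citational, ingredient is the standard Fulton--Harris fact that under the contractions $\psi_{i,j}$ one has $V^{\otimes d}\cong V^{\langle d\rangle}\oplus(\text{copies of }V^{\otimes(d-2)})$, the symplectic analogue of the orthogonal decomposition used inside Lemma~\ref{lemma:v tensor d}; this underlies the preceding lemma, which I would simply invoke. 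With these two points settled, the $\SO(n)$ argument transfers essentially line by line.
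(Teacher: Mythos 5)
Your proposal is correct and matches the paper's approach exactly: the paper likewise proves this by invoking the $\Sp(n)$ analogue of Lemma~\ref{lemma:v tensor d} and then stating that the induction of Lemma~\ref{lem: degrees of So_n in terms of reps} ``generalises straightforwardly'' once the input matrix is written in its $2n\times 2n$ complex form $\begin{pmatrix} A & -\bar{B}\\ B & \bar{A}\end{pmatrix}$, with the equivalence of the quaternionic and complex pictures of the grading having been established just above the lemma. You have simply spelled out the transfer that the paper leaves implicit, and there is no gap.
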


\remove{In fact, the conclusion of the previous lemma holds under the weaker hypothesis that $0 \leq d \leq n$, though we do not need this fact.}

As in the $\SO(n)$ case, we may now extend the notion of level to all irreducible representations of $\Sp(n)$.
\begin{definition}
Let $\rho$ be an irreducible representation of $\Sp(n)$. The {\em level} of $\rho$ is the integer $d$ such that $\rho$ is isomorphic to $S_{\langle \lambda \rangle}(V)$, where $\lambda$ is a partition of $d$.
\end{definition}

\subsubsection*{Comfortable $d$-juntas}
Viewing $\Sp(n)$ as $U_n(\mathbb{H})$, we say a function $f:\Sp(n) \to \mathbb{C}$ is a {\em comfortable $d$-junta} if it is a complex linear combination of monomials of the form
$$M \mapsto \prod_{\ell=1}^{d} (M_{\ell,\sigma(\ell)})_{q_\ell\text{-part}}$$
where $\sigma \in S_d$ and $q_1,\ldots, q_{d}\in \{i,j,k,\textbf{real}\}$.

An easy variant of our argument for $\SO(n)$, yields the following. 
\begin{lemma}\label{lem: In sp_n every isotypical component contains a d-junta}
    Let $\rho$ be a representation of level $d \leq n$ of $\Sp(n)$. Then $W_\rho$ contains a non-zero comfortable $d$-junta.  
\end{lemma}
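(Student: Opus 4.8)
The plan is to mimic, mutatis mutandis, the argument given for $\SO(n)$ (Fact~\ref{fact:weyl-useful}), using Weyl's construction for $\Sp(n)$ described above. The key observation is that for $\Sp(n)$, viewed as $2n\times 2n$ complex matrices $X=\begin{pmatrix} A & -\bar B\\ B & \bar A\end{pmatrix}$, the matrix entries are (real-linear combinations of) the $\mathbf{i},\mathbf{j},\mathbf{k},\mathbf{real}$-parts of the entries $x_{i,\sigma(i)}$ of the quaternionic matrix $A+jB$, so a product of the form $\prod_{i=1}^d (x_{i,\sigma(i)})_{q_i\text{-part}}$ over a single permutation $\sigma\in S_d$ is exactly a comfortable $d$-junta, and conversely these are degree-$d$ polynomials in the complex $2n\times 2n$ matrix entries. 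So it suffices to exhibit, for each level-$d$ representation $\rho\cong \mathbb{S}_{\langle\lambda\rangle}(V)$ with $\lambda\vdash d$, a nonzero matrix coefficient of $\rho$ of the above form.

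First I would fix the standard symplectic basis $e_1,\ldots,e_{2n}$ of $V=\mathbb{C}^{2n}$ with $Q(e_i,e_j)=(\Omega)_{ij}$, so that $Q(e_i,e_j)=0$ whenever $i,j\le n$ (the top-left $n\times n$ block of $\Omega$ is zero). Hence, provided $d\le n$, the vector $w:=e_1\otimes\cdots\otimes e_d$ satisfies $\psi_{i,j}(w)=0$ for all $i<j$, i.e.\ $w\in V^{\langle d\rangle}$. Therefore $wc_\lambda\in \mathbb{S}_{\langle\lambda\rangle}(V)$, and we define
\[
P_\lambda(X):=\langle X(wc_\lambda),\,w\rangle,
\]
which is a matrix coefficient of $\rho$ and hence lies in $W_\rho$. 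Writing $c_\lambda=\sum_{\sigma\in S_d}\epsilon_\sigma\sigma$ with $\epsilon_\sigma\in\{-1,0,1\}$, and using that $S_d$ acts by permuting tensor factors, one gets $P_\lambda(X)=\sum_{\sigma}\epsilon_\sigma\prod_{i=1}^d X_{i,\sigma(i)}$, a polynomial of degree $d$ in the complex matrix entries $X_{i,j}$ with $i,j\le d\le n$. Translating each entry $X_{i,j}=A_{i,j}$ (for $i,j\le n$) back into its real and imaginary parts — equivalently the $\mathbf{real}$- and $\mathbf{i}$-parts, together with the $\mathbf{j},\mathbf{k}$-parts coming from $B$ — and expanding, $P_\lambda$ becomes a real-linear combination of comfortable monomials, hence a comfortable $d$-junta.

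Finally, I would check that $P_\lambda\not\equiv 0$. The cleanest way is to evaluate at a suitable group element: since $P_\lambda$ is (after the above translation) a nonzero polynomial on the affine space of upper-left $d\times d$ minors, and the image of $\Sp(n)$ under projection to this minor has nonempty interior (the minor is unconstrained for $d<n/2$, or one may simply check $P_\lambda$ does not vanish at a block-diagonal element obtained from a $d\times d$ unitary block), $P_\lambda$ cannot vanish identically on $\Sp(n)$; alternatively, as in the $\SO(n)$ case, one argues $wc_\lambda\neq 0$ in $\mathbb{S}_{\langle\lambda\rangle}(V)$ (this is exactly the standard fact underlying the irreducibility, since $e_1\otimes\cdots\otimes e_d$ maps to a highest-weight-type vector under $c_\lambda$) and that the functional $\langle\,\cdot\,,w\rangle$ is nonzero on it. I expect the only mildly delicate point to be this non-vanishing verification — specifically, confirming that $wc_\lambda\neq 0$, which requires citing the relevant property of Weyl's construction for $\Sp(n)$ from \cite{fulton-harris} (the analogue of $V^{[d]}c_\lambda\neq 0$ for $\lambda$ with few rows) — but everything else is a routine transcription of the $\SO(n)$ argument.
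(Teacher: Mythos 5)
Your proposal is correct and follows essentially the same route as the paper: take $w=e_1\otimes\cdots\otimes e_d$, note $w\in V^{\langle d\rangle}$ since the skew form $Q$ vanishes on the span of $e_1,\ldots,e_n$, form the matrix coefficient $P_\lambda(X)=\langle X(w c_\lambda),w\rangle=\sum_{\sigma}\epsilon_\sigma\prod_i X_{i,\sigma(i)}$, and observe that after rewriting the top-left entries $X_{i,j}=A_{i,j}$ in real and imaginary parts this is a comfortable $d$-junta. The one spot where you over-hedge is the non-vanishing step, which you flag as ``mildly delicate''; in fact it is immediate, since $P_\lambda(\mathrm{Id})=\epsilon_{\mathrm{id}}=1$ (distinct pairs $(g,h)\in P\times Q$ give distinct products $gh$ because $P\cap Q=\{\mathrm{id}\}$, so the identity permutation appears with coefficient exactly $1$ in $c_\lambda$), and this is exactly how the paper disposes of it.
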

\begin{proof}
    We first adopt the second perspective on $\Sp(n)$, viewing it as
    $$\{X \in \mathbb{C}^{2n \times 2n}:\ X^t \Omega X = \Omega\} \cap \SU(2n).$$
    Letting $e_1,\ldots, e_{2n}$ be the standard orthonormal basis of $\mathbb{C}^{2n}$, since $Q(e_i,e_j) = 0$ for all $i,j \in [n]$ we have $e_1 \otimes \ldots \otimes e_d \in V^{\langle d \rangle}$. Therefore $(e_1 \otimes  \ldots \otimes e_d)c_{\lambda} \in S_{\langle \lambda \rangle}(V)$, and thus the function $P_\lambda$ in $L^2(\Sp(n))$ defined by
$$P_\lambda(X) := \langle X((e_1 \otimes  \ldots \otimes e_d)c_{\lambda}),e_1 \otimes \ldots \otimes e_d  \rangle$$
is a matrix coefficient of $\mathbb{S}_{\langle \lambda\rangle}(V)$. Note that this is exactly the same function as we exhibited for $\SO(n)$, except that its domain is
$$\{X \in \mathbb{C}^{2n \times 2n}:\ X^t \Omega X = \Omega\} \cap \SU(2n) \cong \Sp(n)$$
rather than $\SO(n)$. We claim that the function $P_{\lambda}$ is a comfortable $d$-junta. Indeed, writing $c_{\lambda} = \sum_{\sigma \in S_d} \epsilon_\sigma \sigma \in \mathbb{R}[S_n]$, where $\epsilon_\sigma \in \{-1,0,1\}$ for each $\sigma \in S_d$, we have
$$P_\lambda(X) = \sum_{\sigma \in S_d}\epsilon_\sigma \prod_{\ell=1}^{d}x_{\ell,\sigma (\ell)}.$$
Writing $X = \begin{pmatrix} A & -\bar{B}\\ 
B & \bar{A}
\end{pmatrix}$, we see that each $x_{\ell,\sigma(\ell)}$ appearing above is actually $a_{\ell,\sigma(\ell)}$ (since $d \leq n$ and $\sigma \in S_d$); writing each $a_{\ell,\sigma(\ell)}$ as a sum of its real and imaginary parts and expanding, we see that $P_{\lambda}$ is indeed a complex linear combination of monomials of the required form (in fact, with each $q_\ell$ being either $\textbf{real}$ or $i$, and with the coefficient of each monomial being in $\{\pm 1, \pm i\}$).

The function $P_{\lambda}$ is non-zero element of $L^2(\Sp(n))$ since (as with $\SO(n)$) we have $P_\lambda(\text{Id}) = 1$. 
\end{proof}

\subsection{Obtaining strong quasirandomness for $\Sp(n)$.}

 The dimensions of the complex irreducible representations of $\Sp(n)$ are given by equation (24.19) in \cite{fulton-harris}. These representations are in one-to-one correspondence with partitions of non-negative integers, whose Young diagrams have at most $n$ rows; the dimension of the irreducible representation $\rho_{\lambda}$ corresponding to the partition $\lambda = (\lambda_1,\ldots,\lambda_n)$ (with $\lambda_1 \geq \ldots \geq \lambda_n$) is given by
$$\dim(\rho_{\lambda}) = \prod_{1 \leq i < j \leq n}\frac{\lambda_i-\lambda_j-i+j}{j-i} \prod_{1 \leq i \leq j \leq n} \frac{\lambda_i+\lambda_j+2n+2-i-j}{2n+2-i-j}.$$
As with the (odd) special orthogonal groups, we define the {\em level} of the representation $\rho_{\lambda}$ to be the number of cells in the Young diagram of $\lambda$ (i.e., it is the non-negative integer of which $\lambda$ is a partition). The proof of the following lemma is similar to that of Lemma~\ref{lemma:lb2son}. We defer it to the Appendix. 
\begin{lemma}\label{lemma:lb2spn}
If $\rho_{\lambda}$ is an irreducible representation of $\Sp(n)$ of level $d \geq n/2$, then $\dim(\rho_{\lambda}) \geq \exp(n/16)$.
\end{lemma}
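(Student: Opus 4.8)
The plan is to estimate $\dim(\rho_\lambda)$ directly from the Weyl dimension formula for $\Sp(n)$ quoted just above, following closely the strategy used for Lemma~\ref{lemma:lb2son}. The starting point is the observation that \emph{every} individual factor in both products is at least $1$: in $\prod_{1\le i<j\le n}\frac{\lambda_i-\lambda_j-i+j}{j-i}$ because $i\mapsto\lambda_i-i$ is strictly decreasing (so $\lambda_i-\lambda_j-i+j\ge j-i$ whenever $i<j$), and in $\prod_{1\le i\le j\le n}\frac{\lambda_i+\lambda_j+2n+2-i-j}{2n+2-i-j}$ because the denominator $2n+2-i-j\ge2$ is positive while the numerator exceeds it (as $\lambda_i,\lambda_j\ge0$). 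Hence $\dim(\rho_\lambda)$ dominates any sub-product of either product, and it suffices to exhibit, for each partition $\lambda$ with $|\lambda|=d\ge n/2$ and at most $n$ rows, a sub-product that is at least $\exp(n/16)$.

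I would split on the number of rows $h=\lambda_1'$. When $h$ is a sufficiently large constant fraction of $n$ (``tall'' $\lambda$), I keep only the factors of the second product with $1\le i\le j\le h$: each such factor equals $1+\frac{\lambda_i+\lambda_j}{2n+2-i-j}\ge1+\tfrac1n$ (using $\lambda_i,\lambda_j\ge1$ for $i,j\le h$ and $2n+2-i-j\le2n$), and there are $\binom{h+1}{2}=\Omega(n^2)$ of them, so $\dim(\rho_\lambda)\ge(1+1/n)^{\Omega(n^2)}=\exp(\Omega(n))$, with a constant comfortably above $1/16$. When $h$ is below that threshold (``short'' $\lambda$), I keep only the first product, which equals $\dim\mathbb{S}_\lambda(\mathbb{C}^n)$, the number of semistandard tableaux of shape $\lambda$ with entries in $[n]$. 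I lower-bound this count by a single explicit construction: partition $[n]$ into consecutive intervals $I_1,\dots,I_h$ and fill row $i$ with an arbitrary weakly increasing word over $I_i$; because the $I_i$ are increasing blocks this is automatically a valid tableau, giving $\dim\mathbb{S}_\lambda(\mathbb{C}^n)\ge\prod_{i=1}^h\binom{|I_i|+\lambda_i-1}{\lambda_i}$. Choosing the block sizes \emph{adaptively} — large blocks for long rows, short blocks for short rows, so that $|I_i|$ is roughly proportional to $1+\lambda_i$ — and using $\binom{a+b}{a}\ge2^{\min(a,b)}$ together with $\sum_i\lambda_i=d\ge n/2$, one checks this product is $\exp(\Omega(n))$: the two extreme cases, a $\sqrt n\times\sqrt n$ square (equal blocks) and a hook $(d-h+1,1^{h-1})$ (one huge block and the rest singletons), both come out exponential, and the general case interpolates. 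Once $\exp(\Omega(n))$ is established in all cases, tuning the threshold and the block-size rule yields the stated constant $1/16$, exactly as in Lemma~\ref{lemma:lb2son}.

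The main obstacle is the bookkeeping in the short case: a naive ``wide-or-tall'' dichotomy does not suffice, since forcing a long first row into a short alphabet (or, dually, ignoring the contribution of many short rows) loses a factor and can destroy exponential growth for partitions of intermediate shape; the adaptive choice of block sizes is precisely what repairs this. Verifying uniformly over all $\lambda$ of level $\ge n/2$ that \emph{some} explicit sub-product of Weyl's formula beats $\exp(n/16)$ — and pinning down the constant — is the delicate part of the argument. As with Lemma~\ref{lemma:lb2son}, these constants are not optimized, and any bound of the form $\exp(\Omega(n))$ would already suffice for all the applications in this paper, in particular for the $c$-strong-quasirandomness of $\Sp(n)$.
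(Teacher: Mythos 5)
Your approach is correct but genuinely different from the paper's. For Lemma~\ref{lemma:lb2son}, the paper works entirely with the second Weyl product: it restricts to the factors with $i\le n/2$ and $j\ge n/2$, writes each as $1+\lambda_i/(2n)$, and then splits on whether $\lambda_1$ is at least $n$ (in which case the $i=1$ factors alone give $(3/2)^{\Theta(n)}$) or smaller than $n$ (in which case $1+\delta\ge e^{\delta/2}$ applies to every factor and one sums $\sum_{i\le n/2}\lambda_i\ge d/2$). The deferred proof of Lemma~\ref{lemma:lb2spn} is stated to follow the same pattern. You instead split on the height $h=\lambda_1'$, handle the tall case directly from the second product, and for the short case switch to the \emph{first} product, re-interpret it as $\dim\mathbb{S}_\lambda(\mathbb{C}^n)$, and lower-bound the SSYT count by a block construction. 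This is a real departure: the paper never touches the first product, while your argument essentially reduces the short regime to a clean $\GL_n$-dimension estimate.

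Two things are worth tightening. First, your justification that the factors $\frac{\lambda_i-\lambda_j-i+j}{j-i}$ are $\ge 1$ from ``$i\mapsto\lambda_i-i$ strictly decreasing'' is a non sequitur --- that only gives positivity; the correct reason is simply $\lambda_i\ge\lambda_j$, so $\lambda_i-\lambda_j+j-i\ge j-i$. Second, the ``proportional'' block-size heuristic $|I_i|\propto 1+\lambda_i$ misbehaves when $d\gg n$ (some $|I_i|$ would drop below $1$). The fix is cleaner than the heuristic: set $b_i=|I_i|-1\ge 0$, insist $b_i\le\lambda_i$, and allocate the surplus $n-h$ greedily among rows with $b_i<\lambda_i$. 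Since $\binom{b_i+\lambda_i}{\lambda_i}\ge 2^{\min(b_i,\lambda_i)}=2^{b_i}$, the product is at least $2^{\sum b_i}=2^{\min(d,\,n-h)}$, which for $d\ge n/2$ and $h\le n/2$ is at least $2^{n/2}>e^{n/16}$. This closes your ``delicate bookkeeping'' cleanly and, combined with the tall case (any threshold $c\in[1/(2\sqrt{2}),1/2]$ works), completes the proof --- indeed with a better exponent than the paper's method yields. So the route is valid and worth noting, but more elaborate than needed; the paper's single-product argument is shorter for a result where only $\exp(\Omega(n))$ matters.
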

Again, as with the special orthogonal groups, the following lower bound is immediate from the analysis in \cite{samra-king} of Weyl's dimension formulae \cite{weyl}.

\begin{lemma}
\label{lemma:lb1spn}
If $\rho$ is an irreducible representation of $\Sp(n)$ of level $d \leq n$, then
$$\dim(\rho) \geq \frac{(n-d)^d}{d!}.$$
\end{lemma}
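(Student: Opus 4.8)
The plan is to argue directly from the Weyl dimension formula for $\Sp(n)$ displayed just above, without invoking the full hook–content rewriting of \cite{samra-king}. Let $\lambda=(\lambda_1\geq\cdots\geq\lambda_n\geq 0)$ be the partition of $d$ corresponding to $\rho=\rho_\lambda$; this is legitimate since $d\leq n$ forces $\lambda$ to have at most $d\leq n$ rows, so $\lambda$ is a valid highest weight for $\Sp(n)$. If $d=n$ the claimed bound is $0$ and there is nothing to prove, so assume $d<n$. The first step is to discard the second product in the formula: for every pair $1\leq i\leq j\leq n$ the denominator $2n+2-i-j$ is at least $2$ (hence positive), while the numerator $\lambda_i+\lambda_j+2n+2-i-j$ exceeds it by $\lambda_i+\lambda_j\geq 0$; thus every factor of the second product is $\geq 1$, and
\[
\dim(\rho_\lambda)\;\geq\;\prod_{1\leq i<j\leq n}\frac{\lambda_i-\lambda_j-i+j}{j-i}.
\]

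The key observation is that the right-hand side is exactly the Weyl dimension formula for the irreducible polynomial representation of $GL_n(\mathbb{C})$ with highest weight $\lambda$, i.e.\ it equals $s_\lambda(1^n)$, the Schur polynomial evaluated at $n$ ones. The second step is then the standard hook–content evaluation $s_\lambda(1^n)=\prod_{(i,j)\in\lambda}\frac{n+j-i}{h_\lambda(i,j)}$, where $h_\lambda(i,j)$ is the hook length of the cell $(i,j)$. Since $\lambda\vdash d$ has at most $d$ rows, every cell $(i,j)\in\lambda$ satisfies $1\leq i\leq d$ and $j\geq 1$, so $n+j-i\geq n+1-d>n-d>0$; and by the hook length formula $\prod_{(i,j)\in\lambda}h_\lambda(i,j)=d!/f^\lambda\leq d!$, where $f^\lambda\geq 1$ is the number of standard Young tableaux of shape $\lambda$. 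Combining the two steps yields
\[
\dim(\rho_\lambda)\;\geq\;\frac{(n-d+1)^d}{d!}\;\geq\;\frac{(n-d)^d}{d!},
\]
which is the assertion.

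I do not expect a serious obstacle here: the only points requiring care are recording that no factor of the discarded product is zero or negative (hence the bound $2n+2-i-j\geq 2$ for $i\leq j\leq n$) and correctly invoking the hook–content evaluation of $s_\lambda(1^n)$, both routine. It is worth contrasting this with Lemma~\ref{lemma:lb1son} for $\SO(n)$: there the same naive factorization of the Weyl formula only produces an exponent base of roughly $\lfloor n/2\rfloor-d$ (the rank), so one genuinely needs the sharper El Samra–King form \cite{samra-king} to reach $n-d$; for $\Sp(n)$ the rank $n$ already coincides with the quantity in the target bound, so the elementary estimate above suffices. (One could alternatively extract a generally weaker lower bound by exhibiting many linearly independent left $G$-translates of the comfortable $d$-junta $P_\lambda$ from Lemma~\ref{lem: In sp_n every isotypical component contains a d-junta}, but I would use the dimension-formula argument, which gives the clean stated bound.)
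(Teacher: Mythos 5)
Your argument is correct, and it does more than the paper does: the paper dispatches Lemma~\ref{lemma:lb1spn} in a single clause by citing the analysis of Weyl's formulae in \cite{samra-king}, whereas you give a self-contained elementary proof. The two steps are both sound: every factor of the second product in Weyl's $\Sp(n)$ formula has denominator $2n+2-i-j\geq 2>0$ (for $1\leq i\leq j\leq n$) and numerator exceeding the denominator by $\lambda_i+\lambda_j\geq 0$, so that product may be dropped; and the remaining product over $1\leq i<j\leq n$ is exactly the $GL_n$ Weyl dimension formula, i.e.\ $s_\lambda(1^n)$, which the hook-content evaluation bounds below by $(n+1-d)^d/d!$ since $\lambda\vdash d$ has at most $d$ rows (so every content $n+j-i$ is at least $n+1-d$) and the product of hook lengths is $d!/f^\lambda\leq d!$. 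Your closing remark is the genuinely valuable observation here: this elementary route works for $\Sp(n)$ precisely because the rank $n$ matches the quantity in the bound, whereas for $\SO(n)$ the analogous first product is the $GL_{\lfloor n/2\rfloor}$ formula and would only yield a base of roughly $\lfloor n/2\rfloor-d$, so the paper's reliance on the sharper El Samra--King rewriting is essential for Lemma~\ref{lemma:lb1son} but can be avoided in Lemma~\ref{lemma:lb1spn}. (A tiny stylistic note: you actually prove the stronger bound $(n+1-d)^d/d!$, so the $d=n$ case need not be treated separately; $(n+1-n)^n/n!=1/n!>0$ is already handled by the main argument.)
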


These yield the following.
\begin{thm}\label{thm:Sp is quasirandom}
For each $n \geq 2$, the $n$-graded group $\Sp(n)$ is $c$-strongly-quasirandom, for some absolute constant $c>0$.
\end{thm}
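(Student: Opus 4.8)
The plan is to establish $c$-strong-quasirandomness of the $n$-graded group $\Sp(n)$ directly from the three dimension lemmas just stated. Recall that $c$-strong-quasirandomness requires two things: that every subrepresentation of $V_{=d}^{\Sp(n)}$ has dimension at least $Q_d := (cn/d)^d$ for $d < cn/(1+c)$, and at least $(1+c)^{cn/(1+c)}$ for $d \ge cn/(1+c)$; and that every subrepresentation of $V_{\ge n/2}^{\Sp(n)}$ has dimension at least this same quantity $Q$. Since by Lemma~\ref{lem:symplectic degree decomposition works} the space $V_{=d}^{\Sp(n)}$ decomposes as $\bigoplus_{\rho \in \tilde{\mathcal L}_d} W_\rho$, where $\tilde{\mathcal L}_d$ consists exactly of the irreducible representations $S_{\langle\lambda\rangle}(V)$ with $\lambda$ a partition of $d$, a subrepresentation of $V_{=d}^{\Sp(n)}$ as a left $\Sp(n)$-module is just a copy of one of these irreducibles, so it suffices to lower-bound $\dim(\rho)$ for each irreducible $\rho$ of level $d$.

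First I would handle the range $1 \le d < n/2$. By Lemma~\ref{lemma:lb1spn}, any irreducible $\rho$ of level $d \le n$ satisfies $\dim(\rho) \ge (n-d)^d/d!$. For $d < n/2$ we have $n - d > n/2$, and $d! \le d^d$, so $\dim(\rho) \ge (n/(2d))^d$; thus choosing $c \le 1/2$ gives $\dim(\rho) \ge (cn/d)^d = Q_d$ for all $d$ in this range with $d < cn/(1+c)$ (note $cn/(1+c) \le n/2 \le n/2$, so the relevant $d$'s are covered). For $d$ with $cn/(1+c) \le d < n/2$, I need the bound $\dim(\rho) \ge (1+c)^{cn/(1+c)}$: here $(n/(2d))^d$ is minimized, over this range, in a way that can be checked to exceed $(1+c)^{cn/(1+c)}$ provided $c$ is a small enough absolute constant — this is a routine monotonicity estimate of $d \mapsto d \log(n/(2d))$, which is increasing up to $d = n/(2e)$ and which I can compare termwise against the target exponent. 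Second, for the range $d \ge n/2$ — which includes both the remaining levels $n/2 \le d < n/2$ (there are none, since the grading only goes up to $\lceil n/2\rceil - 1$) and, more importantly, the tail space $V_{\ge n/2}^{\Sp(n)}$ — I would invoke Lemma~\ref{lemma:lb2spn}: every irreducible representation of $\Sp(n)$ of level $d \ge n/2$ has dimension at least $\exp(n/16)$. Since $V_{\ge n/2}^{\Sp(n)}$ is a sum of Peter--Weyl ideals $W_\rho$ with $\rho$ of level $\ge n/2$ (every $\rho$ not appearing in some $\tilde{\mathcal L}_d$ with $d < n/2$ has level $\ge n/2$, by Lemma~\ref{lem:symplectic degree decomposition works} and the fact that $S_{\langle\lambda\rangle}(V)$ has level $\sum_i\lambda_i$), any subrepresentation of $V_{\ge n/2}^{\Sp(n)}$ has dimension $\ge \exp(n/16)$. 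It remains to check $\exp(n/16) \ge (1+c)^{cn/(1+c)}$, i.e.\ $n/16 \ge \frac{cn}{1+c}\log(1+c)$; since $\frac{c\log(1+c)}{1+c} \to 0$ as $c \to 0$, this holds for all sufficiently small absolute $c > 0$.

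Putting the two ranges together, there is an absolute constant $c > 0$ (taken small enough to satisfy all the finitely many inequalities above, and $\le 1/2$) for which $\Sp(n)$ is $c$-strongly-quasirandom as an $n$-graded group, for every $n \ge 2$ — for small $n$ the statement is vacuous or trivial since the conditions become empty. The main obstacle, such as it is, is purely bookkeeping: one must choose a single $c$ that simultaneously makes $(cn/d)^d \le (n/(2d))^d$ hold (easy: $c \le 1/2$), makes the crossover-regime comparison $(n/(2d))^d \ge (1+c)^{cn/(1+c)}$ hold for $cn/(1+c) \le d < n/2$, and makes $\exp(n/16) \ge (1+c)^{cn/(1+c)}$ hold; each of these is a one-variable calculus estimate, and none is delicate, but they must be reconciled. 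I expect no genuine difficulty here — the real content was in the dimension formulae and their estimation (Lemmas~\ref{lemma:lb1spn} and~\ref{lemma:lb2spn}), which are taken as given, the former from the analysis in~\cite{samra-king} of Weyl's formulae and the latter deferred to the Appendix.
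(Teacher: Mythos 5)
Your approach is the same as the paper's, which leaves the bookkeeping to the reader (the theorem is stated with no proof beyond ``These yield the following'', exactly as for $\SO(n)$). Your reduction to irreducibles via Lemma~\ref{lem:symplectic degree decomposition works} is fine, and the cases $d < cn/(1+c)$ and $d \ge n/2$ are handled correctly. But there is a genuine gap in the crossover range $cn/(1+c) \le d < n/2$.

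The problem is the simplification $d! \le d^d$, which reduces the bound of Lemma~\ref{lemma:lb1spn} to $(n/(2d))^d$, and you then assert this exceeds $(1+c)^{cn/(1+c)}$ for small enough $c$. That is false near the top of the range: at $d = n/2 - 1$ one computes
\[
\left(\frac{n}{2d}\right)^{d} = \left(1 + \frac{2}{n-2}\right)^{n/2-1} \longrightarrow e,
\]
a bounded constant, while $(1+c)^{cn/(1+c)}$ grows exponentially in $n$ for any fixed $c>0$. So the ``routine monotonicity estimate'' you allude to cannot close this case; the function $d \mapsto d\log(n/(2d))$ does not stay large up to $d=n/2$ (it tends to $0$ there), which is precisely what your own derivative computation shows. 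The crude inequality $d! \le d^d$ throws away the Stirling factor $e^d$, and that factor is exactly what supplies the exponential lower bound when $(n-d)/d$ is close to $1$. Using instead $d! \le e\sqrt{d}\,(d/e)^d$ gives
\[
\frac{(n-d)^d}{d!} \;\ge\; \frac{1}{e\sqrt{d}}\left(\frac{e(n-d)}{d}\right)^{d} \;\ge\; \frac{e^{d}}{e\sqrt{d}}
\qquad \text{for } d \le n/2,
\]
and since $d \ge cn/(1+c)$ this is at least $\frac{1}{e\sqrt{n}}\,e^{cn/(1+c)}$, which dominates $(1+c)^{cn/(1+c)}$ for all small enough absolute $c$ (since $e > 1+c$) once $n$ exceeds an absolute constant; small $n$ are absorbed by shrinking $c$. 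With that repair the argument is complete and matches what the paper intends. (An alternative fix, which the paper itself flags, is to use the comfortable-junta bound $\dim(\rho) \ge \binom{\lfloor n/2\rfloor}{d}$, which already carries the needed $e^d$ factor via Stirling.)
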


\subsection{Weyl's construction for $\SU(n)$, and its applications.}
The final group to consider is the special unitary group. We start by relating our degree decomposition of $L^2(\SU(n))$ to the decomposition of $L^2(\SU(n))$ into Peter-Weyl ideals.     

\subsubsection*{Degree decomposition}

Earlier, we defined $V_{\le d}$ to consist of the polynomials of (total) degree at most $d$ polynomials in the real parts and the imaginary parts of the entries of the input matrix $X \in \SU(n)$. Equivalently, $V_{\le d}$ consists of the polynomials of (total) degree at most $d$ in the entries of the input matrix and their complex conjugates.

\subsubsection*{Weyl's construction for \SU(n)}

We now recall Weyl's construction of the irreducible representations of $\SU(n)$, and deduce from it the consequences we need. (As before, for more detail on Weyl's construction, the reader is referred to Fulton and Harris \cite{fulton-harris}, noting that the complex irreducible representations of $\SU(n)$ are the same as those of $\text{SL}(n,\mathbb{C})$, since $\SU(n)$ is a maximal compact subgroup of $\text{SL}(n,\mathbb{C})$.) Let $V = \mathbb{C}^n$ denote the standard representation of $\SU(n)$, defined by $\rho_V(g)(v) = g\cdot v$. For a partition $\lambda$ with at most $n-1$ parts, let $d = d(\lambda) = \sum_{i} \lambda_i$. Let $T$ be the standard Young tableau of shape $\lambda$ (defined in Section \ref{sec:weyl}) and let
$c_{\lambda}$ be the corresponding Young symmetrizer (also defined in Section \ref{sec:weyl}). We define the corresponding {\em Weyl module} by $\mathbb{S}_{\lambda}(V): = V^{\otimes d}c_{\lambda}$. Clearly, $\mathbb{S}_{\lambda}(V)$ is a left $\SU(n)$-submodule of $V^{\otimes d}$. The modules $\mathbb{S}_\lambda(V)$, as $\lambda$ ranges over all partitions with at most $n-1$ parts, constitute a complete set of pairwise inequivalent complex irreducible representations of $\SU(n)$. (Unlike in the cases of $\SO(n)$ and $\Sp(n)$, we do not need to pass to a subrepresentation of the Weyl module; the latter is already irreducible as a left $\SU(n)$-module.)

Unlike in the case of $\Sp(n)$, however, the complex conjugates of the entries of the input matrix are no longer matrix coefficients of the standard representation. Instead, they are matrix coefficients of the dual of the standard representation, i.e.\ they are the entries of the matrix $(A^{-1})^{t}=\bar{A}$. (Recall that the dual of a representation $\rho$ is the representation $\rho^*$ defined by $\rho^*(g)=(\rho(g^{-1}))^t$.) We note that $(\mathbb{S}_{\lambda}(V))^* \cong \mathbb{S}_{\lambda}(V^*) = (V^{*})^{\otimes d} c_{\lambda}$.

We have three notions of level for a representation $\rho$ of $\SU(n)$, and our goal is to show that they agree when the level is $<n/2$.

Recall that we defined $V_{=0} := V_{\leq 0}$ (the space of constant functions), and $V_{=d} = V_{\leq d} \cap V_{\leq d-1}^{\perp}$ for each $d \in \mathbb{N}$. Since $V_{=d}$ is closed under both left and right actions of $\SU(n)$ for each $d \in \mathbb{N} \cup \{0\}$, there exists a set $\mathcal{L}_d$ of irreducible representations of $\SU(n)$ such that
$$V_{=d}^{\SU(n)} = \bigoplus_{\rho \in \mathcal{L}_d}W_\rho.$$
If $\rho \in \mathcal{L}_d$ for some $0 \leq d < n/2$, we define the {\em level} of $\rho$ to be $d$. (Note that, since the $V_{=d}$ are pairwise orthogonal, the sets $\mathcal{L}_d$ are pairwise disjoint.)

In addition, we define the \emph{tensor level} of an irreducible representation $\rho$ of $\SU(n)$ to be the minimal non-negative integer $d$ such that there exist $d_1,d_2 \in \mathbb{N} \cup \{0\}$ with $d_1+d_2=d$ and with $\rho$ being isomorphic to a subrepresentation of $V^{\otimes d_1}\otimes V^{*\otimes d_2}$. We write $\bar{\mathcal{L}}_d$ for the set of irreducible representations of $\SU(n)$ (up to equivalence) that have tensor level equal to $d$.

The third notion of level will soon be described in terms of the Young diagram/partition corresponding to the representation. 

The following analogue of Lemma \ref{lemma:v tensor d} is immediate, and implies that for $0 \leq d < n/2$, having level $d$ is equivalent to having tensor level $d$.

\remove{
\begin{lemma}
   If $0 \leq d_1+d_2<n/2$, then all irreducible $\SU(n)$-subrepresentations of $V^{\otimes d_1} \otimes (V^*)^{\otimes d_2}$ are elements of $\cup_{i=0}^{d_1+d_2}\bar{\mathcal{L}}_i$.
\end{lemma}
\begin{proof}
    
\end{proof}
}

\begin{lemma}
If $0 \leq d<n/2$, then $V_{=d}^{\SU(n)}=\bigoplus_{\rho\in \bar{\mathcal{L}}_d}W_{\rho}$.
\end{lemma}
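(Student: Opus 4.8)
The plan is to show the two inclusions $V_{=d}^{\SU(n)} \subseteq \bigoplus_{\rho \in \bar{\mathcal{L}}_d} W_\rho$ and $\bigoplus_{\rho \in \bar{\mathcal{L}}_d} W_\rho \subseteq V_{=d}^{\SU(n)}$ separately, proceeding by induction on $d$, exactly in the spirit of Lemma~\ref{lem: degrees of So_n in terms of reps} for $\SO(n)$. The base case $d=0$ is trivial: both sides are the space of constants. For the inductive step, it suffices (by orthogonality of the $W_\rho$ and the pairwise orthogonality of the $V_{=d}$) to prove the analogous statement for $V_{\le d}^{\SU(n)} = \bigoplus_{i=0}^{d}\bigoplus_{\rho \in \bar{\mathcal{L}}_i} W_\rho$, and then subtract off the $d-1$ case.

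For the inclusion $\bigoplus_{\rho \in \bar{\mathcal{L}}_i} W_\rho \subseteq V_{\le d}^{\SU(n)}$ (for $i \le d$): if $\rho$ has tensor level $i$, then $\rho$ embeds in $V^{\otimes d_1} \otimes (V^*)^{\otimes d_2}$ with $d_1 + d_2 = i$. The matrix coefficients of $V^{\otimes d_1}\otimes (V^*)^{\otimes d_2}$ are spanned by the entries of $X^{\otimes d_1} \otimes \bar{X}^{\otimes d_2}$, which are precisely the degree-$i$ monomials in the entries of $X$ and their complex conjugates; hence the matrix coefficients of $\rho$ (being spanned by a subset of these) lie in $V_{\le i}^{\SU(n)} \subseteq V_{\le d}^{\SU(n)}$. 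For the reverse inclusion $V_{\le d}^{\SU(n)} \subseteq \bigoplus_{i=0}^d \bigoplus_{\rho \in \bar{\mathcal{L}}_i} W_\rho$: a spanning set for $V_{\le d}^{\SU(n)}$ is given by monomials of degree $\le d$ in the $X_{k\ell}$ and $\bar{X}_{k\ell}$, and each such monomial of degree exactly $j \le d$ is a matrix coefficient of some $V^{\otimes j_1}\otimes (V^*)^{\otimes j_2}$ with $j_1+j_2=j$. Decomposing that tensor-product representation into irreducibles, every irreducible constituent has tensor level $\le j \le d$, so its matrix coefficients lie in $\bigoplus_{i=0}^{d}\bigoplus_{\rho \in \bar{\mathcal{L}}_i}W_\rho$; hence so does the monomial.

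The one genuine subtlety — and the place the argument needs the hypothesis $d < n/2$ — is the claim implicit above that having tensor level exactly $i$ is the "right" notion, i.e.\ that the $\bar{\mathcal{L}}_i$ are pairwise disjoint and that no cancellation between levels occurs when $i < n/2$. Concretely, one must rule out the phenomenon that an irreducible $\rho$ appearing in $V^{\otimes d_1}\otimes (V^*)^{\otimes d_2}$ already appeared in a smaller tensor power; for $\SU(n)$ this can happen in general (e.g.\ $V \otimes V^*$ contains the trivial representation, which has tensor level $0$), which is exactly why the definition of tensor level takes the \emph{minimal} such $d$. The needed fact is that if $\rho$ embeds in $V^{\otimes d_1}\otimes (V^*)^{\otimes d_2}$ then it embeds in $V^{\otimes d_1'}\otimes (V^*)^{\otimes d_2'}$ with $d_1'+d_2' = d_1+d_2 - 2k$ for some $k\ge 0$ and $d_1' \le d_1$, $d_2' \le d_2$; contracting an index of $V$ against an index of $V^*$ via the invariant pairing realizes each such reduction, and one iterates until no further contraction lowers the degree, arriving at the tensor level. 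Combined with the Pieri/Littlewood–Richardson description of the constituents of $V^{\otimes d_1}\otimes(V^*)^{\otimes d_2}$, one checks that for $d_1 + d_2 < n/2$ the constituents of tensor level exactly $d_1+d_2$ are precisely the $\mathbb{S}_\lambda(V) \otimes \ldots$ type modules whose associated (reduced) bipartite partition has total size $d_1+d_2$, and distinct values of $d_1+d_2 < n/2$ give genuinely distinct sets of irreducibles with no overlap — this is where $d < n/2$ is used, to avoid the wraparound identifications $\mathbb{S}_\lambda(V^*) \cong \mathbb{S}_{\lambda^c}(V)$ that occur for large partitions. I expect this disjointness/no-cancellation bookkeeping to be the main obstacle; the rest is a routine induction parallel to the $\SO(n)$ and $\Sp(n)$ cases, and indeed the excerpt says the analogue of Lemma~\ref{lemma:v tensor d} is "immediate."
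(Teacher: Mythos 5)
Your first two paragraphs give a correct and complete proof, and this is exactly the argument the paper has in mind when it declares the lemma ``immediate'' --- it is the direct analogue of Lemma~\ref{lem: degrees of So_n in terms of reps} for $\SO(n)$. The worries in your third paragraph are, however, non-issues: the sets $\bar{\mathcal{L}}_i$ are \emph{automatically} pairwise disjoint because the tensor level is defined as a minimum, so each irreducible has exactly one tensor level and there is nothing to rule out; and the iterated-contraction fact you invoke is never needed here --- the minimality built into the definition already guarantees that every irreducible constituent of $V^{\otimes j_1}\otimes(V^*)^{\otimes j_2}$ has tensor level at most $j_1+j_2$, which is all your second paragraph uses. (The Littlewood--Richardson bookkeeping you allude to is what the paper needs later, in Lemma~\ref{The notions of level agree}, to reconcile tensor level with the partition-based ``total level''; it plays no role in the present lemma.) In fact your two inclusions never actually use the hypothesis $d<n/2$: it appears in the statement only because the $n$-grading of $L^2(\SU(n))$ defines the spaces $V_{=d}$ only in that range.
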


\subsubsection*{Step vectors}
For an irreducible representation $\rho=\mathbb{S}_{\lambda}(V)$ of $\SU(n)$ with corresponding partition $\lambda=(\lambda_1,\ldots,\lambda_{n-1})$, write $a_i:=\lambda_i-\lambda_{i+1}$ for each $i \in [n-1]$. We call the vector $(a_1,\ldots, a_{n-1})$ the \emph{step vector} of the representation $\rho$ (or, abusing terminology slightly, the {\em step vector} of the partition $\lambda$). We order such vectors with respect to the lexicographic ordering, i.e.\ $\lambda >_{\text{lex}} \lambda'$ iff $\lambda_i > \lambda_i'$ where $i = \min\{j:\lambda_j \neq \lambda_j'\}$. For a (not necessarily irreducible) representation $\rho$ of $\SU(n)$, its step vector is defined to be the lexicographically largest step vector of an irreducible subrepresentation of $\rho$.

The step vector is better-behaved than the corresponding partition, with respect to taking duals and tensors. The dual of $\rho$ has the reversed step vector $(a_{n-1},\ldots ,a_1)$,
corresponding to the partition $(a_{1}+\cdots +a_{n-1},\cdots,a_{1}+a_{2} ,a_{1})$. Moreover, if $\rho_1$ and $\rho_2$ are two representations whose step vectors are $(a_1,\ldots, a_{n-1})$ and $(b_1,\ldots ,b_{n-1})$, then the step vector of their tensor product is  $(a_1+b_1,\ldots , a_{n-1}+b_{n-1})$.

We say that an irreducible representation $\rho = \mathbb{S}_{\lambda (V)}$ of $\SU(n)$ is \emph{efficient} if $\lambda_{\lfloor\frac{n}{2}\rfloor}=0$. Equivalently, $\rho$ is efficient if its step vector $w=(a_1,\ldots ,a_{n-1})$ has the property that its \emph{second half} $w'':=(a_{\lceil n/2 \rceil}, \ldots, a_{n-1})$ consists of zeros. 
We say that $\rho$ is \emph{dually-efficient} if its \emph{first half} $w':=(a_1, \ldots,\ldots,  a_{\lceil n/2 \rceil-1})$ consists of zeroes. Alternatively, in terms of the corresponding partition $\lambda$, the irreducible representation $\rho$ is dually-efficient if $\lambda_1=\lambda_2 = \cdots = \lambda_{\lceil n/2 \rceil -1}$. (Note that the dual to each efficient representation is dually-efficient, and if $n$ is odd, the converse also holds. For $n$ even, our definition leads to a somewhat arbitrary choice of how to handle the middle part of the step vector, but this does not matter when the level is smaller than $n/2$.)
We call the partition $\alpha$ with step vector $w'$ the {\em efficient truncation} of $\lambda$ and the partition $\beta$ with step vector $w'$ the {\em dually-efficient truncation} of $\lambda$. 
 
 \begin{definition}
 We define the \emph{total level} of a representation $\rho=\mathbb{S}_{\lambda}(V)$ with step vector $(a_1,\ldots,a_{n-1})$ to be \[\sum_{i=1}^{n-1} a_i \min\{i,n-i\}.
 \]    
 \end{definition}

For each $d \in \mathbb{N} \cup \{0\}$, let $\tilde{\mathcal{L}}_d$ denote set of irreducible representations of $\SU(n)$ with total level $d$. Our next aim is to show that $\mathcal{L}_d=\tilde{\mathcal{L}}_d$ for all $0 \leq d < n/2$. For this, we first need the following.

\begin{lemma}\label{lem:Littlewood Richardson}
Let $\mathbb{S}_\lambda(V)$ be an irreducible representation of $\SU(n)$ of total level $d$, with $\lambda$ having $\alpha$ as its efficient truncation and $\beta$ as its dually-efficient truncation. Then the representation $\mathbb{S}_\alpha(V)\otimes \mathbb{S}_{\beta}(V)$ can be decomposed as a direct sum of one copy of the representation $\mathbb{S}_{\lambda}(V)$ and some other irreducible representations, all of which have total level less than $d$. 
\end{lemma}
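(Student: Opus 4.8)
The plan is to realise $\mathbb{S}_\lambda(V)$ as the \emph{Cartan component} of $\mathbb{S}_\alpha(V)\otimes\mathbb{S}_\beta(V)$ and then to control the total levels of the remaining constituents by combining the additivity of the total level with the special shapes of $\alpha$ and $\beta$. First I would record these shapes. Put $m=\lceil n/2\rceil$ and let $(a_1,\dots,a_{n-1})$ be the step vector of $\lambda$. By definition the efficient truncation $\alpha$ has step vector $(a_1,\dots,a_{m-1},0,\dots,0)$ and the dually-efficient truncation $\beta$ has step vector $(0,\dots,0,a_m,\dots,a_{n-1})$. Passing back to partitions via $\rho_i=\sum_{j\ge i}(\text{step vector of }\rho)_j$ (valid for partitions with at most $n-1$ parts) yields three facts I will use: (i) $\alpha$ has at most $m-1$ parts; (ii) $\beta_1=\beta_2=\dots=\beta_m$; and (iii) $\lambda=\alpha+\beta$ coordinatewise, because adding step vectors corresponds exactly to adding the partitions. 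In weight language, (iii) says that $\lambda$, as a dominant weight of $\SU(n)$, is the sum of the highest weights of $\mathbb{S}_\alpha(V)$ and $\mathbb{S}_\beta(V)$.

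Second, multiplicity one and the ``top'' property. Every irreducible constituent $\mathbb{S}_\mu(V)$ of $\mathbb{S}_\alpha(V)\otimes\mathbb{S}_\beta(V)$ has highest weight $\le\alpha+\beta$ in the root order, since a weight of a tensor product is a sum of a weight of each factor, each of which is $\le$ its highest weight. Moreover the $(\alpha+\beta)$-weight space of the tensor product is one-dimensional: if $\xi$ is a weight of $\mathbb{S}_\alpha(V)$ and $\alpha+\beta-\xi$ a weight of $\mathbb{S}_\beta(V)$, then $\xi\le\alpha$ and $\alpha+\beta-\xi\le\beta$ force $\xi=\alpha$, and the $\alpha$- and $\beta$-weight spaces are each one-dimensional. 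Hence $\mathbb{S}_{\alpha+\beta}(V)=\mathbb{S}_\lambda(V)$ occurs with multiplicity exactly $1$ and is the only constituent whose highest weight is not strictly below $\alpha+\beta$; this is the standard Cartan component argument (cf.\ Fulton and Harris~\cite{fulton-harris}).

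Third — the heart of the matter — I would bound $\mathrm{totlev}(\mu)$ for $\mu\neq\lambda$. Extend the total level linearly to the weight lattice; since the total level of the $j$-th fundamental weight (the partition $(1^j)$) is $f(j):=\min\{j,n-j\}$, the simple root $\alpha_j=-\omega_{j-1}+2\omega_j-\omega_{j+1}$ satisfies $\mathrm{totlev}(\alpha_j)=2f(j)-f(j-1)-f(j+1)$ with $f(0)=f(n)=0$. Concavity of $f$ gives $\mathrm{totlev}(\alpha_j)\ge 0$ for all $j$, and a direct check at the peak of $f$ gives $\mathrm{totlev}(\alpha_m)\ge 1$. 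Writing $\lambda-\mu=\sum_j c_j\alpha_j$, we have $c_j=\sum_{i\le j}(\lambda_i-\mu_i)\ge 0$ because $\mu\le\lambda$ in the dominance order (equivalently, in the root order, as $|\mu|=|\alpha|+|\beta|=|\lambda|$), so
\[
\mathrm{totlev}(\lambda)-\mathrm{totlev}(\mu)=\sum_j c_j\,\mathrm{totlev}(\alpha_j)\ \ge\ c_m\,\mathrm{totlev}(\alpha_m)\ \ge\ c_m ,
\]
and this computation is unaffected by any $\SU(n)$-reduction needed for a constituent with $n$ parts, since a height-$n$ column has zero step vector. It therefore suffices to show $c_m\ge 1$ whenever $\mu\neq\lambda$, i.e.\ $\sum_{i\le m}\mu_i<\sum_{i\le m}\lambda_i$. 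For this I would use the Littlewood--Richardson rule: a nonzero $c^\mu_{\alpha\beta}$ requires $\mu\supseteq\beta$ together with an LR (lattice-word) skew tableau of shape $\mu/\beta$ and content $\alpha$. If $\mu/\beta$ were confined to the first $m$ rows, then, because $\beta_1=\dots=\beta_m$, it would be the \emph{straight} shape $(\mu_1-\beta_1,\dots,\mu_m-\beta_1)$; but a straight-shaped semistandard tableau of content $\alpha$ with lattice reading word exists only for the shape $\alpha$ itself, forcing $\mu_i=\alpha_i+\beta_i$ for $i\le m$, and since $\alpha_i=0=\lambda_i-\beta_i$ for $i>m$ this would give $\mu=\lambda$. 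Hence for $\mu\neq\lambda$ the shape $\mu/\beta$ has a box in some row $>m$, so $\sum_{i>m}(\mu_i-\beta_i)\ge 1$; combined with $\sum_{i\ge1}(\mu_i-\beta_i)=|\mu|-|\beta|=|\alpha|$ this gives $\sum_{i\le m}(\mu_i-\beta_i)\le|\alpha|-1$, i.e.\ $\sum_{i\le m}\mu_i\le|\alpha|+m\beta_1-1=\sum_{i\le m}\lambda_i-1$. Thus $c_m\ge 1$ and $\mathrm{totlev}(\mu)\le d-1<d$.

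The main obstacle is precisely this last step: the total level is \emph{not} strictly monotone along the dominance order in general (for example $\mathrm{Sym}^2 V$ and $\Lambda^2 V$ have equal total level), so the conclusion genuinely depends on $\beta$ being dually-efficient — its first $m$ parts coinciding — which is what forces the ``critical'' simple root $\alpha_m$ (the one sitting at the peak of $f$) to occur in $\lambda-\mu$ with positive coefficient. Everything else is routine translation between step vectors, dominant weights, and the Littlewood--Richardson rule, and is of the same flavour as the proofs of Lemmas~\ref{lemma:v tensor d} and~\ref{lem: degrees of So_n in terms of reps}.
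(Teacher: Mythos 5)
Your proof is correct. Both your argument and the paper's rely on the Littlewood--Richardson rule and on the two structural facts that matter: $\alpha$ has at most $m-1$ parts and $\beta_1=\dots=\beta_m$. The paper, however, argues directly and somewhat informally that any non-canonical box placement in the LR algorithm yields a constituent of smaller total level; it is phrased as a claim that certain placements either violate the lattice-word condition or land in deep columns, and the passage from this dichotomy to the total-level bound is left implicit. Your route is genuinely cleaner and tighter at exactly this point: you extend $\mathrm{totlev}$ linearly to the weight lattice, observe (via concavity of $f(j)=\min(j,n-j)$) that $\mathrm{totlev}(\alpha_j)\ge 0$ for all $j$ with $\mathrm{totlev}(\alpha_m)\ge 1$, and then reduce the whole problem to proving the single inequality $c_m=\sum_{i\le m}(\lambda_i-\mu_i)\ge 1$ for every constituent $\mu\neq\lambda$. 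The LR step you use for this — that an LR filling of $\mu/\beta$ confined to the first $m$ rows must, because those rows of $\beta$ are equal, be a lattice-word SSYT of a straight shape of content $\alpha$, forcing $\mu=\lambda$ — is precisely the crisp combinatorial content that the paper's proof gestures at but does not spell out. The one point you treat a bit quickly (that the $\SU(n)$-reduction of a constituent with $n$ parts does not disturb the computation) is handled correctly: since $\lambda-\tilde\mu$ lies in the root lattice for the raw LR partition $\tilde\mu$, and $\mathrm{totlev}$ kills the all-ones vector, the identity $\mathrm{totlev}(\lambda)-\mathrm{totlev}(\mu)=\sum_j c_j\,\mathrm{totlev}(\alpha_j)$ holds with the $c_j$ read off from $\tilde\mu$. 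In short: same underlying mechanism, but your linearization-and-positivity framing is the more rigorous way to land the estimate, and it would be a fair substitute for (or supplement to) the proof in the paper.
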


\begin{proof}
This follows from the Littlewood-Richardson rule, e.g.\ as given in Fulton and Harris \cite[Section A.8]{fulton-harris}. Indeed, by the Littlewood-Richardson rule, writing $\alpha = (\alpha_1,\ldots,\alpha_{\ell})$, the irreducible constituents of $\mathbb{S}_{\beta}(V) \otimes \mathbb{S}_{\alpha}(V)$ are exactly those $\mathbb{S}_{\lambda}(V)$ such that the Young diagram of $\lambda$ can be produced by the following algorithm. Take the Young diagram of $\beta$, and first add $\alpha_1$ new boxes to the rows (in such a way as to produce the Young diagram of another partition, but with no two of the $\alpha_1$ added boxes being added to the same column), and place a `1' in each of these $\alpha_1$ new boxes. Then add a further $\alpha_2$ boxes to the rows (again in such a way as to produce the Young diagram of another partition, but with no two of the $\alpha_2$ added boxes being added to the same column), and place a `2' in each of these $\alpha_2$ new boxes. Continue in this way (so that at the last step, $\alpha_{\ell}$ new boxes are added). Now consider the sequence of length $\alpha_1 +\ldots+\alpha_{\ell}$ formed by concatenating the reversed rows of newly-added boxes, and check that if one looks at the first $t$ entries in this sequence (for any $t$ between 1 and $\alpha_1+\ldots+\alpha_{\ell}$), the integer $p$ appears at least as many times as the integer $p+1$ among these first $t$ entries, for any $1 \leq p < \ell$. If this `concatenation' condition holds, keep the Young diagram / partition; if not, reject it.

It is easy to check that the only way of performing this algorithm in such a way as to obtain a partition of level at least $d$, is to produce the (Young diagram of the) partition $\lambda$ itself: the first $\alpha_1$ new boxes must all be added to the first row of the Young diagram of $\beta$, the second $\alpha_2$ new boxes must all be added to the second row, and so on. Indeed, if at the $j$th stage (when adding $\alpha_j$ new boxes containing the integer $j$), any box is added to a row above the $j$th row, then (inductively) one sees that the concatenation condition would be violated, and moreover if some new box is added to a column of depth greater than $n/2$, then clearly, at the end of the process, less than $E$ cells will be in columns of depth at most $n/2$, and moreover less than $F$ cells will be missing from columns of depth greater than $n/2$, so the irreducible constituent of $\mathbb{S}_{\beta}(V) \otimes \mathbb{S}_{\alpha}(V)$ which is obtained, will have level less than $d$.
\end{proof}

We note the following consequence.

\begin{lemma}\label{lemma:index}
Let $\lambda$ be a partition with $n-1$ rows. Let $\alpha$ be its efficient truncation and $\beta$ its dually-efficient truncation. Write $E=\sum_{i=1}^{\lceil n/2\rceil}i\cdot a_i$ and $F= \sum_{i=1}^{\lceil n/2\rceil - 1}i a_{n-i}$. Then   
the representation $\mathbb{S}_\alpha(V)$ is a subrepresentation of $V^{\otimes E}$, the representation $\mathbb{S}_{\beta}(V)$ is a subrepresentation of $(V^{*})^{\otimes F}$, and the representation $\mathbb{S}_{\lambda}(V)$ is a subrepresentation of 
\[V^{\otimes E}\otimes (V^{*})^{\otimes F}.\]
\end{lemma}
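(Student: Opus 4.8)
The plan is to deduce the statement from Lemma~\ref{lem:Littlewood Richardson} together with two soft facts about Weyl modules. The first fact is that for any partition $\mu$ with at most $n-1$ rows, the Weyl module $\mathbb{S}_\mu(V)=V^{\otimes|\mu|}c_\mu$ is \emph{by construction} a direct summand of $V^{\otimes|\mu|}$ (as an $\SU(n)$-representation), and dually $\mathbb{S}_\mu(V^*)\cong(\mathbb{S}_\mu(V))^*$ is a direct summand of $(V^*)^{\otimes|\mu|}$. The second fact is the bookkeeping identity $|\mu|=\sum_i i\cdot s_i(\mu)$ relating the number of cells of $\mu$ to the entries $s_i(\mu)$ of its step vector (immediate from $\mu_j=\sum_{i\ge j}s_i(\mu)$), together with the observation recorded just before the lemma that dualizing a representation of $\SU(n)$ reverses its step vector (so that $(\mathbb{S}_\nu(V))^*\cong\mathbb{S}_{\nu^{\mathrm{rev}}}(V)$, where $\nu^{\mathrm{rev}}$ is the partition whose step vector is that of $\nu$ read backwards).

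First I would verify, directly from the definitions of the efficient and dually-efficient truncations, that $|\alpha|=E$ and that the partition $\beta^{\mathrm{rev}}$ satisfies $|\beta^{\mathrm{rev}}|=F$; both are pure rewritings of $|\mu|=\sum_i i\,s_i(\mu)$, once one matches the index ranges appearing in the definitions of $\alpha,\beta,E,F$. Combining $|\alpha|=E$ with the first soft fact shows that $\mathbb{S}_\alpha(V)$ is (isomorphic to) a subrepresentation of $V^{\otimes E}$. Likewise, using $\mathbb{S}_\beta(V)\cong(\mathbb{S}_{\beta^{\mathrm{rev}}}(V))^*\cong\mathbb{S}_{\beta^{\mathrm{rev}}}(V^*)$ and $|\beta^{\mathrm{rev}}|=F$, the dual form of the first soft fact shows that $\mathbb{S}_\beta(V)$ is a subrepresentation of $(V^*)^{\otimes F}$.

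Finally, Lemma~\ref{lem:Littlewood Richardson} says that $\mathbb{S}_\lambda(V)$ occurs (with multiplicity one) as a direct summand of $\mathbb{S}_\alpha(V)\otimes\mathbb{S}_\beta(V)$. Tensoring the two inclusions from the previous paragraph then exhibits $\mathbb{S}_\lambda(V)$ as a subrepresentation of $V^{\otimes E}\otimes(V^*)^{\otimes F}$, which is exactly the assertion of the lemma. (Note that $\alpha$ is efficient, hence has at most $\lfloor n/2\rfloor-1<n-1$ rows, and $\beta$ has at most $\ell(\lambda)\le n-1$ rows, so all the Weyl modules in play are genuine irreducible $\SU(n)$-representations and the facts above apply.)

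The one point requiring care — and the main obstacle — is purely the index conventions: one must apportion the `middle' step-vector coordinate $a_{\lceil n/2\rceil}$ (and, for $n$ even, resolve the mild arbitrariness flagged just after the definition of dual-efficiency) so that the step vectors of $\alpha$ and $\beta$ have disjoint support, together recover the step vector of $\lambda$ on the relevant range, \emph{and} have total cell counts equal to $E$ and $F$ on the nose. I would fix this once and for all by the conventions in the paragraphs preceding the lemma, after which all of the arithmetic above holds verbatim; there is nothing conceptually new beyond Lemma~\ref{lem:Littlewood Richardson} and the standard Schur--Weyl picture.
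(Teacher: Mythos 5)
Your proposal takes exactly the same route as the paper: the first inclusion follows from the construction of Weyl modules together with the cell-count identity $|\alpha|=E$, the second from the fact that duality reverses step vectors (so $\mathbb{S}_\beta(V)\cong\mathbb{S}_{\beta^{\mathrm{rev}}}(V^*)$ with $|\beta^{\mathrm{rev}}|=F$), and the third from Lemma~\ref{lem:Littlewood Richardson}. The only difference is that you spell out the dual step in slightly more detail, and you correctly flag the index-matching around the middle step-vector coordinate as the one point needing care --- which is indeed also the only delicate point in the paper's own (terser) argument.
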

\begin{proof}
The first assertion, concerning $\mathbb{S}_\alpha(V)$, is immediate from the construction of the Weyl module, since $\alpha$ is a partition of the integer $E$. The second assertion, concerning $\mathbb{S}_\beta(V)$, follows from the first, together with the fact that taking duals reverses the step vector. The third assertion, concerning $\mathbb{S}_{\lambda}(V)$, now follows from the previous lemma.  
\end{proof}

\begin{lemma}\label{The notions of level agree}
For all $0 \leq d < n/2$, we have $\mathcal{L}_d=\tilde{\mathcal{L}}_d$.
\end{lemma}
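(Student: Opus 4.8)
The plan is to deduce the claim from the lemma immediately preceding the definition of step vectors (which, for $0 \le d < n/2$, identifies $V_{=d}^{\SU(n)}$ with $\bigoplus_{\rho \in \bar{\mathcal{L}}_d} W_\rho$, i.e.\ says that having level $d$ is the same as having tensor level $d$), together with Lemmas~\ref{lem:Littlewood Richardson} and~\ref{lemma:index}. By that lemma it suffices to prove that the tensor level and the total level of an irreducible representation $\rho$ of $\SU(n)$ coincide whenever either of them is less than $n/2$; equivalently, $\bar{\mathcal{L}}_d = \tilde{\mathcal{L}}_d$ for all $0 \le d < n/2$. I will establish two inequalities: (a) $(\text{total level of }\rho) \le (\text{tensor level of }\rho)$ for every irreducible $\rho$; and (b) $(\text{tensor level of }\rho) \le (\text{total level of }\rho)$ whenever the total level of $\rho$ is less than $n/2$. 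Given (a) and (b): if either the tensor level or the total level of $\rho$ equals some $d < n/2$, one of (a),(b) forces the other quantity to be $\le d < n/2$, and then the remaining inequality gives equality; hence $\bar{\mathcal{L}}_d = \tilde{\mathcal{L}}_d$, and so $\mathcal{L}_d = \tilde{\mathcal{L}}_d$, for $d < n/2$.

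For (b): let $\rho = \mathbb{S}_\lambda(V)$ have total level $d < n/2$, with step vector $a_i = \lambda_i - \lambda_{i+1}$. Lemma~\ref{lemma:index} exhibits non-negative integers $E,F$ with $\rho$ a subrepresentation of $V^{\otimes E}\otimes (V^*)^{\otimes F}$; a direct computation, splitting into the cases $n$ even and $n$ odd and using that the weights $m_i := \min\{i,n-i\}$ in the definition of the total level are symmetric about $n/2$, shows $E+F = d$ except when $n = 2m+1$ is odd and $a_{m+1}\ne 0$, where instead $E+F = d + (m+1)a_{m+1}$. In that exceptional case, $d < n/2$ forces $a_{m+1}=1$ with all other $a_i=0$, so $\rho \cong \Lambda^{m+1}V \cong (\Lambda^{m}V)^*$, which embeds in $(V^*)^{\otimes m}$ with $m = d$; equivalently, one simply applies Lemma~\ref{lemma:index} to whichever of $\rho$, $\rho^*$ has the offending step-vector entry equal to $0$. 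In all cases $\rho$ embeds in a tensor product of $d$ factors, each equal to $V$ or $V^*$, so its tensor level is at most $d$.

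For (a) — which I expect to be the technical heart — I would induct on the tensor level $d$, the base case $d=0$ (only the trivial representation, total level $0$) being immediate. If $\rho$ is a subrepresentation of $V^{\otimes d_1}\otimes (V^*)^{\otimes d_2}$ with $d_1+d_2 = d$ and, say, $d_1 \ge 1$ (the case $d_2\ge 1$ being symmetric, since dualizing reverses the step vector and preserves the total level because $m_{n-i}=m_i$), then $\rho$ is a constituent of $V\otimes\tau$ for some irreducible constituent $\tau$ of $V^{\otimes(d_1-1)}\otimes(V^*)^{\otimes d_2}$, which by induction has total level at most $d-1$. The key sub-claim is that every irreducible constituent $\mathbb{S}_{\mu'}(V)$ of $V\otimes\mathbb{S}_\mu(V)$ has total level at most $(\text{total level of }\mathbb{S}_\mu(V))+1$: by Pieri's rule $\mu'$ is obtained from $\mu$ by adding one box (in the $\SU(n)$ wrap-around case, adding a box in row $n$ and then deleting the resulting length-$n$ first column, which does not change the total level, the latter being invariant under adding $(1,\dots,1)$ to $\lambda$), and inspecting the effect on the step vector shows the total level changes by $m_i - m_{i-1}\in\{-1,0,1\}$ (or by $+1$ when the box lands in the first row). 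Iterating, $\rho$ is reached from the trivial representation in $d$ such steps, so its total level is at most $d$, proving (a). The only places needing genuine care are the step-vector bookkeeping under Pieri's rule (including the wrap-around) and the arithmetic identity for $E+F$ in (b); both are elementary but must be checked case-by-case in $n$.
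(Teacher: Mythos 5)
Your proof is correct, and the overall shape — establish the two inequalities between total level and tensor level, then combine with the prior lemma identifying level with tensor level when both are $< n/2$ — matches the paper's. Where you diverge is in proving direction (a), that total level is at most tensor level: the paper decomposes $\rho$ into a constituent of $\mathbb{S}_\alpha(V) \otimes (\mathbb{S}_\beta(V))^*$ with $\alpha \vdash E$, $\beta \vdash F$, $E+F \leq d$, and invokes Lemma~\ref{lem:Littlewood Richardson} in one shot, whereas you peel off a single factor of $V$ (or $V^*$) at a time and induct on the tensor level, using only Pieri's rule together with the observation that adding one box changes the total level by $m_i - m_{i-1} \in \{-1,0,1\}$ (including the $\SU(n)$ wrap-around). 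The Pieri route is more elementary — it needs only the single-box case of Littlewood–Richardson — and delivers the inequality for every irreducible, not just those of small level, at the cost of iterating; the paper's one-shot argument is a bit slicker but buys nothing extra here since Lemma~\ref{lem:Littlewood Richardson} has already been proved. For direction (b) you follow the paper's Lemma~\ref{lemma:index} route, and your attention to the bookkeeping is well placed: with $E,F$ as literally written there, $E+F = d + \lceil n/2\rceil\,a_{\lceil n/2 \rceil}$ when $n$ is odd, which exceeds $d$ precisely for $\rho \cong \Lambda^{(n+1)/2}V$. Your patch — either noting $\Lambda^{(n+1)/2}V \cong \Lambda^{(n-1)/2}(V^*)$ directly, or, more cleanly, applying Lemma~\ref{lemma:index} to $\rho^*$ (valid, since $a_{\lfloor n/2 \rfloor}$ and $a_{\lceil n/2 \rceil}$ cannot both be nonzero once the total level is below $n/2$) — correctly repairs what appears to be an off-by-one slip in the paper's setup of the efficient truncation and of $E$, $F$; the paper passes over this case silently.
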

\begin{proof}
By Lemma \ref{lemma:index}, we have $\tilde{\mathcal{L}}_d\subseteq \mathcal{L}_d$ for all $0 \leq d < n/2$. It therefore suffices to show that for each $0\leq d<n/2$,
\[\bigcup_{i=0}^{d}\mathcal{L}_i \subseteq \bigcup_{i=0}^{d}\tilde{\mathcal{L}}_i.\] 
So suppose that $\rho$ is an irreducible representation of $\SU(n)$ of level at most $d$. Our goal is to show that the total level of $\rho$ is at most $d$. Let $E+F\le d$ be such that $\rho$ is a subrepresentation of $V^{\otimes E}\otimes (V^{*})^{\otimes F}$. Just as in the proof of Lemma \ref{lemma:v tensor d}, we may decompose $V^{\otimes E}$ into a direct sum of submodules of the form $\mathbb{S}_\alpha(V)$ with $\alpha \vdash E$, and likewise we can decompose $(V^{*})^{\otimes F}$ into a direct sum of submodules of the form $\mathbb{S}_\beta(V^*)$ with $\beta \vdash F$. Hence, we may assume that $\rho$ is isomorphic to a subrepresentation of $\mathbb{S}_{\alpha}(V)\otimes \mathbb{S}_{\beta}(V^*) \cong \mathbb{S}_{\alpha}(V) \otimes (\mathbb{S}_{\beta}(V))^*$ for some $\alpha \vdash E$ and $\beta \vdash F$. Now as $d<n/2$, both $\mathbb{S}_\alpha(V)$ and $\mathbb{S}_\beta(V)$ are efficient, so $(\mathbb{S}_{\beta}(V))^*$ is dually efficient, and therefore we may apply Lemma \ref{lem:Littlewood Richardson} to deduce that the total level of $\rho$ is at most the sum of the total levels of $\mathbb{S}_{\alpha}(V)$ and $(\mathbb{S}_\beta(V))^*$, which is $E+F$, as required.     
\end{proof}

\subsubsection*{Comfortable $d$-juntas}

We say that a function $f:\SU(n) \to \mathbb{C}$ is a {\em comfortable $d$-junta} if it is a complex linear combination of monomials of the form
$$X \mapsto \prod_{j=1}^{d} (x_{j,\sigma(j)})_{q_j\text{-part}}$$
where $\sigma\in S_d$ and $q_1,\ldots,q_d\in \{\textbf{imaginary}, \textbf{ real}\}$.
\remove{
Let $I,J\subseteq [n]$. We denote by $A[I\times J]$ the minor of $A$ corresponding to the sets $I,J.$ We will also write $[m,n]$ for the interval of integers between $m$ and $n$.
}

We start by showing that the comfortable $d$-juntas are in $V_{=d}$, i.e. that they are degree $d$ polynomials that are orthogonal to all polynomials of degree $\le d-1$. (Here, the degree is in terms of the matrix entries and their complex conjugates, or equivalently, in terms of the real and the imaginary parts of the matrix entries.)

\begin{lemma}\label{lem: Comfortable polynomials are pure}
    Every comfortable $d$-junta belongs to $V_{=d}$.
\end{lemma}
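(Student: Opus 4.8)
The plan is to expand a comfortable $d$-junta into ``pure'' monomials and then place each pure monomial in the correct graded piece $V_{=d}$ using the weight decomposition for the left action of the diagonal torus; this is a much lighter argument than re-running Weyl's construction. Throughout I take $d<n/2$, the range in which $V_{=d}$ is defined.

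First I would reduce to pure monomials. Writing $\operatorname{Re}(z)=\tfrac12(z+\bar z)$ and $\operatorname{Im}(z)=\tfrac1{2i}(z-\bar z)$ and multiplying out, a comfortable monomial $\prod_{i=1}^d(x_{i\sigma(i)})_{q_i\text{-part}}$ becomes a linear combination of the pure monomials
\[
\mathfrak m_{A,\sigma}(X):=\prod_{i\in A}x_{i\sigma(i)}\ \prod_{i\in B}\bar x_{i\sigma(i)}\qquad(A\subseteq[d],\ B:=[d]\setminus A,\ \sigma\in S_d),
\]
so every comfortable $d$-junta lies in their span and it suffices to prove $\mathfrak m_{A,\sigma}\in V_{=d}$ for each $A,\sigma$. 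Since $\mathfrak m_{A,\sigma}$ is a degree-$d$ monomial in the entries and their conjugates, it already lies in $V_{\le d}$, so the whole task is to show $\mathfrak m_{A,\sigma}\perp V_{\le d-1}$; then $\mathfrak m_{A,\sigma}\in V_{\le d}\cap V_{\le d-1}^{\perp}=V_{=d}$.

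For the orthogonality I would use the left action of the diagonal torus $T\le\SU(n)$. Since $|t_i|=1$, left translation by $\operatorname{diag}(t_1,\dots,t_n)$ replaces $x_{ij}$ by $\bar t_i x_{ij}$ and $\bar x_{ij}$ by $t_i\bar x_{ij}$; hence every monomial $\prod_{(i,j)\in P}x_{ij}\prod_{(k,l)\in Q}\bar x_{kl}$ is a $T$-weight vector of weight $q'-p'\in\mathbb Z^n/\mathbb Z(1,\dots,1)$, where $p'_i$ (resp.\ $q'_i$) counts the entries of $P$ (resp.\ $Q$) in row $i$, so $|p'|_1+|q'|_1$ equals the degree. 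In particular $\mathfrak m_{A,\sigma}$ has weight $\mathbf 1_{B}-\mathbf 1_A$. Because $T$ acts unitarily on $L^2(\SU(n))$, distinct weight spaces are orthogonal, and $V_{\le d-1}$ is a sum of weight spaces (being spanned by monomials of degree $\le d-1$). Thus it is enough to prove the combinatorial claim that for $d<n/2$ and any partition $A\sqcup B=[d]$, the weight $\mathbf 1_B-\mathbf 1_A$ does not occur among the weights of monomials of degree $\le d-1$; equivalently, that one cannot have $\mathbf 1_A-\mathbf 1_B\equiv p-q\pmod{\mathbf 1}$ (with $\mathbf 1:=(1,\dots,1)$) for some $p,q\in\mathbb Z^n_{\ge 0}$ with $|p|_1+|q|_1\le d-1$.

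The claim itself is a short coordinate-chase, which I expect to be the only real content. Suppose $\mathbf 1_A-\mathbf 1_B-(p-q)=c\mathbf 1$ for some integer $c$. Reading off coordinate $i$: if $i\in A$ then $p_i=1+q_i-c$; if $i\in B$ then $q_i=1+p_i+c$; and if $i\in[n]\setminus[d]$ --- a set of size $n-d>d-1$, since $d<n/2$ --- then $q_i=p_i+c$. If $c=0$, then $p_i\ge1$ for all $i\in A$ and $q_i\ge1$ for all $i\in B$, so $|p|_1+|q|_1\ge|A|+|B|=d$, a contradiction. If $c\ge1$, then $q_i\ge c\ge1$ for each $i\in[n]\setminus[d]$, so $|q|_1\ge n-d>d-1$; and if $c\le-1$, then symmetrically $p_i\ge -c\ge1$ on $[n]\setminus[d]$, so $|p|_1\ge n-d>d-1$. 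In every case we contradict $|p|_1+|q|_1\le d-1$. This proves the claim, hence $\mathfrak m_{A,\sigma}\perp V_{\le d-1}$, hence $\mathfrak m_{A,\sigma}\in V_{=d}$, and so every comfortable $d$-junta lies in $V_{=d}$. (The one point to get right is the bookkeeping between the character $t\mapsto\prod_i t_i^{m_i}$, its weight $(m_i)_i$, and the quotient by $\mathbf 1$ forced by $\prod_i t_i=1$ on $T$; none of this is delicate.)
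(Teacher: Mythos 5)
Your proof is correct, and it rests on the same basic mechanism as the paper's argument — equivariance of $L^2(\SU(n))$ under left multiplication by diagonal elements of $\SU(n)$ — but the execution is genuinely different. The paper hand-picks a single matrix $U_0\in\SU(n)$ (with $i$ and $-i$ in two carefully chosen diagonal slots so that $\det U_0 = 1$) for which $\overline S\cdot T$ picks up a nontrivial phase, and concludes $\langle S,T\rangle=0$ directly; the determinant constraint is handled implicitly by this choice. You instead work with the full maximal torus and its weight lattice: after expanding into pure monomials $\mathfrak m_{A,\sigma}$, you identify their weight $\mathbf 1_B-\mathbf 1_A$ modulo $\mathbb Z\mathbf 1$, note that $V_{\le d-1}$ is a sum of weight spaces spanned by monomials of degree $\le d-1$, and then rule out the coincidence of weights by the short case analysis on $c$. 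Your case analysis is precisely what absorbs the $\det=1$ constraint that the paper hides in its choice of $U_0$, and the crucial use of $d<n/2$ appears in both proofs (the paper uses it to find an unused row $k_0$, you use it to make $n-d>d-1$). Each formulation buys something: the paper's is more elementary and shorter, requiring one computation with a single matrix; yours is more structural and makes it transparent exactly which weights can appear at each degree, which is the kind of statement that generalises cleanly to other classical groups with only cosmetic changes. One cosmetic remark: you use the convention $\lambda_t f(x)=f(t^{-1}x)$ (hence $x_{ij}\mapsto\bar t_i x_{ij}$), while the paper's stated convention is $L_U f(X)=f(UX)$ (which would give $x_{ij}\mapsto t_i x_{ij}$); this only flips the sign of the weight and does not affect the argument.
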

\begin{proof}
    Every comfortable $d$-junta clearly lies in $V_{\le d}$. It suffices to show that any such is orthogonal to all polynomials (in the matrix entries and their complex conjugates) of degree $\le d-1$. Let $T$ be one of the degree $d$ monomials that appear in our comfortable $d$-junta of degree $\le d$. Let $S$ be an arbitrary monomial of degree $\le d-1$. It suffices to show that $S$ and $T$ are orthogonal.
    Let $k_1,\ldots,k_d$ be the rows of the matrix-entry variables appearing in the monomial $T$. Since $S$ is a monomial of degree less than $d$, not all of these rows can appear amongst the variables in $S$; without loss of generality, assume row $k_1$ does not. Let $k_0$ be a row that appears neither in the variables in $S$ nor those in $T$ (such exists, since $d \leq n/2$, so $d+(d-1) < n$). Let $U_0$ be the diagonal unitary matrix with ones everywhere on the diagonal except in the $(k_0,k_0)$ and $(k_1,k_1)$ entries, with a $-i$ in the $(k_0,k_0)$ entry and an $i$ in the $(k_1,k_1)$ entry. If $X$ is distributed according to the Haar measure on $\SU(n)$, then so is $U_0X$, but multiplying $X$ by $U_0$ simply multiplies $\overline{S}T$ by $i$, so $\mathbb{E}_X[\overline{S}(X)T(X)] = \mathbb{E}_X[\overline{S}(U_0 X) T(U_0 X)] =  i\mathbb{E}_X[\overline{S}(X)T(X)]$ and therefore $\mathbb{E}_X[\overline{S}T] = 0$ as required.
\end{proof}

For the proof of the next lemma we need another definition. We say a polynomial $p:\SU(n) \to \mathbb{C}$ in the matrix entries of $X \in \SU(n)$ is {\em weakly-comfortable} if it is a complex linear combination of monomials of the form
$$X \mapsto \prod_{k=1}^{\ell} x_{i_k,j_k},$$
where $i_1,\ldots,i_{\ell}$ are distinct and $j_1,\ldots,j_{\ell}$ are distinct.

\begin{lemma}
\label{lemma:su_n weyl-useful}
Let $\rho$ be a representation of $\SU(n)$ of total level $d$, where $0 \leq d\leq n$. Then $W_{\rho}$ contains
a comfortable $d$-junta. \remove{Moreover, this junta spans an irreducible left submodule of $W_\rho$.}
\end{lemma}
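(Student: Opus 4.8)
The plan is to mimic the proofs of Fact~\ref{fact:weyl-useful} (for $\SO(n)$) and Lemma~\ref{lem: In sp_n every isotypical component contains a d-junta} (for $\Sp(n)$), exhibiting an explicit matrix coefficient of $\mathbb{S}_{\lambda}(V)$ that is a comfortable $d$-junta, where $\lambda$ is the partition with step vector the first half of the step vector of $\rho$, composed with the dual construction for the second half. By Lemma~\ref{lemma:index}, writing $E$ and $F$ for the quantities defined there (so $E+F=d$ since $\rho$ has total level $d$), the representation $\mathbb{S}_{\lambda}(V)$ sits inside $V^{\otimes E}\otimes (V^{*})^{\otimes F}$, and in fact $\mathbb{S}_\lambda(V)$ appears with multiplicity one in $\mathbb{S}_\alpha(V)\otimes \mathbb{S}_\beta(V^*)$ by Lemma~\ref{lem:Littlewood Richardson}, where $\alpha$ is the efficient truncation and $\beta$ the dually-efficient truncation of $\lambda$. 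So it suffices to produce a suitable matrix coefficient living in $W_\rho \subseteq L^2(\SU(n))$.

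First I would write down a candidate vector in $V^{\otimes E}\otimes (V^*)^{\otimes F}$. For the $V^{\otimes E}$ factor, apply the Young symmetrizer $c_\alpha$ to $e_1\otimes\cdots\otimes e_E$, exactly as in the $\SO(n)$ and $\Sp(n)$ cases; since we are in $\SU(n)$ there are no contractions to worry about, so $\mathbb{S}_\alpha(V) = V^{\otimes E}c_\alpha$ already. For the $(V^*)^{\otimes F}$ factor, apply $c_\beta$ to $e_{E+1}^{*}\otimes\cdots\otimes e_{E+F}^{*}$, using a \emph{disjoint} block of standard basis vectors (this is where $d\le n$ is used: we need $E+F=d\le n$ distinct indices). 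Call the resulting vectors $u\in\mathbb{S}_\alpha(V)$ and $v^*\in\mathbb{S}_\beta(V^*)$, and let $P_\rho$ be the function on $\SU(n)$ obtained by projecting $u\otimes v^*$ onto the (multiplicity-one) copy of $\mathbb{S}_\lambda(V)$ inside $\mathbb{S}_\alpha(V)\otimes\mathbb{S}_\beta(V^*)$ and then taking a matrix coefficient $X\mapsto \langle (\text{proj})\,(X\cdot u)\otimes (X\cdot v^*), w\rangle$ for a suitable fixed $w$; equivalently (and more cleanly for the junta computation) take the matrix coefficient $X \mapsto \langle (X^{\otimes E}\otimes (\bar X)^{\otimes F})(u\otimes v^*),\, e_1\otimes\cdots\otimes e_E\otimes e_{E+1}^*\otimes\cdots\otimes e_{E+F}^*\rangle$, which lies in the span of the matrix coefficients of $V^{\otimes E}\otimes(V^*)^{\otimes F}$, hence decomposes along $\bar{\mathcal{L}}_{\le d}$; one then checks its component in $W_\rho$ is nonzero. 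Expanding $c_\alpha=\sum_{\sigma}\epsilon_\sigma\sigma$ and $c_\beta=\sum_{\tau}\epsilon'_\tau\tau$, this matrix coefficient becomes $\sum_{\sigma,\tau}\epsilon_\sigma\epsilon'_\tau \prod_{i=1}^{E} x_{i,\sigma(i)}\prod_{j=1}^{F}\overline{x_{E+j,\,\tau(E+j)}}$, a linear combination of monomials each of which uses at most one variable from each row and each column (the row indices $1,\dots,E$ and $E+1,\dots,E+F$ are disjoint, and the column indices used by $\sigma$ lie in $[E]$ while those used by $\tau$ lie in $E+[F]$, again disjoint). Writing each $x_{i,\sigma(i)}$ and $\overline{x_{E+j,\tau(E+j)}}$ in terms of real and imaginary parts, this is exactly a comfortable $d$-junta in the sense defined above. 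Finally, $P_\rho(\mathrm{Id}) = \langle c_\alpha(e_1\otimes\cdots\otimes e_E)\otimes c_\beta(e_{E+1}^*\otimes\cdots), e_1\otimes\cdots\rangle$, which is the (nonzero) norm-type quantity $\|$appropriate projection$\|$ up to the standard nonvanishing of $\langle v c_\lambda, v\rangle$ for $v = e_1\otimes\cdots$; this guarantees $P_\rho\neq 0$, so its nonzero $W_\rho$-component is a nonzero comfortable $d$-junta, completing the proof.

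The main obstacle is bookkeeping around which copy of $\mathbb{S}_\lambda(V)$ inside $\mathbb{S}_\alpha(V)\otimes\mathbb{S}_\beta(V^*)$ the function $P_\rho$ lands in, and verifying that the $W_\rho$-component is genuinely nonzero rather than cancelling. The cleanest way around this is to observe that $X\mapsto \langle (X^{\otimes E}\otimes(\bar X)^{\otimes F})(u\otimes v^*), \zeta\rangle$ for $\zeta = e_1\otimes\cdots\otimes e_E\otimes e_{E+1}^*\otimes\cdots\otimes e_{E+F}^*$ is a matrix coefficient of $V^{\otimes E}\otimes(V^*)^{\otimes F}$, so lies in $\bigoplus_{i\le d}V_{=i}^{\SU(n)}$; by Lemma~\ref{lem: Comfortable polynomials are pure} it is already in $V_{=d}^{\SU(n)}$ since it is manifestly a comfortable $d$-junta, hence a combination of matrix coefficients of representations in $\tilde{\mathcal{L}}_d = \mathcal{L}_d$; and one checks $\rho$ occurs among these with a nonzero coefficient using the multiplicity-one statement of Lemma~\ref{lem:Littlewood Richardson} together with $P_\rho(\mathrm{Id})\neq 0$. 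Projecting onto $W_\rho$ then yields the desired nonzero comfortable $d$-junta. I would also remark, as in the $\SO(n)$ and $\Sp(n)$ cases, that this junta spans an irreducible left $\SU(n)$-submodule of $W_\rho$, which follows since $W_\rho$ decomposes into $\dim\rho$ isomorphic irreducible left-submodules and any nonzero vector generates one of them.
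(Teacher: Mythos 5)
Your proposal is correct and takes essentially the same approach as the paper: you build the product $P\overline{Q}$ of a comfortable degree-$E$ matrix coefficient of $\mathbb{S}_\alpha(V)$ on the $[E]\times[E]$ minor with the conjugate of a comfortable degree-$F$ matrix coefficient on the disjoint $[E+1,E+F]\times[E+1,E+F]$ minor, then locate it inside $W_\rho$ via the multiplicity-one statement of Lemma~\ref{lem:Littlewood Richardson} together with the purity Lemma~\ref{lem: Comfortable polynomials are pure}, exactly as the paper does. One notational slip worth correcting: you apply $c_\beta$ to $(V^*)^{\otimes F}$ and expand it over $S_F$, but $\beta$ (the dually-efficient truncation of $\lambda$) is a partition of a generally much larger number than $F$; what you want is $c_{\beta^*}$ for the partition $\beta^*$ of $F$, and then $\mathbb{S}_{\beta^*}(V^*)\cong(\rho_{\beta^*})^*\cong\rho_\beta$, so your appeal to Lemma~\ref{lem:Littlewood Richardson} for $\rho_\alpha\otimes\rho_\beta$ goes through as intended.
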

\begin{proof}
Let $\rho$ be an irreducible representation of $\SU(n)$ of total level $d$. Let $\lambda$ be the corresponding integer partition (with at most $n-1$ rows). Let $\alpha$ (respectively $\beta$) be its efficient part and dually efficient part respectively. Note that the Young diagram of $\beta^*$ has $F$ cells, and all are in columns of depth at most $n/2$. As in the $\SO(n)$ case, there exists a weakly-comfortable, homogeneous polynomial $P$ of total degree $E$, such that $P \in W_{\rho_{\alpha}}$, namely the polynomial
$$P_\alpha(X) := \langle X((e_1 \otimes  \ldots \otimes e_d)c_{\alpha}),e_1 \otimes \ldots \otimes e_d  \rangle,$$
where $c_{\alpha}$ is the Young symmetrizer corresponding to $\alpha$; writing $c_{\alpha} = \sum_{\sigma \in S_d} \epsilon_\sigma \sigma$, where $\epsilon_\sigma \in \{-1,0,1\}$ for each $\sigma \in S_d$, we have
$$P_\lambda(X) = \sum_{\sigma \in S_d}\epsilon_\sigma \prod_{i=1}^{d}x_{i \sigma (i)}.$$

Similarly, there exists a non-zero, weakly-comfortable, homogeneous polynomial $Q$ of total degree $F$, such that $Q \in W_{\rho_{\beta^*}}$. Since $E+F = d \leq n$, we may further take $P$ to depend only upon matrix entries in the top $E$ by $E$ minor, and $Q$ to depend only upon matrix entries in the minor $[E+1,E+F]\times [E+1, E+F]$.

 Note that $P\overline{Q}$ is a comfortable $d$-junta and is spanned by the matrix coefficients of $\rho_\alpha \otimes \rho_\beta$. We claim that, in fact, $P \overline{Q}$ is spanned by the matrix coefficients of $\rho_{\lambda}$. This follows immediately from the fact that, firstly, by Lemma \ref{lem:Littlewood Richardson}, $\rho_\alpha \otimes \rho_\beta$ can be decomposed into a direct sum of a copy of $\rho_{\lambda}$ and some other irreducible representations of total level less than $d$, and that secondly, by Lemma \ref{lem: Comfortable polynomials are pure}, the comfortable $d$-junta $P\overline{Q}$ is orthogonal to $V^{\SU(n)}_{\leq d-1}$.

\remove{We now define a map $\psi\colon V^{* \otimes E}\otimes V^{\otimes F} \to L^2(\SU(n))$ by identifying $V=(V^*)^*$ with linear functionals over the conjugates of the input and $V^*$ with linear functionals over $\mathbb{C}^n.$ When we tensorize such a map we obtain multilinear polynomials in the inputs and the conjugates. We then take such a multilinear polynomial and plug in the first $d$ columns of the input matrix to obtain a function in $L^2(G)$. It is easy to see that $\psi$  is a $G$-morphism. We may now restrict $\psi$ to $S_{\alpha}(V^*)\otimes S_{\beta}(V)$, and look at the image of $\psi(P\otimes Q).$ First we note that by expanding to real and imaginary parts  $\psi(P\otimes Q)$ is a comfortable $d$-junta.

On the other hand, by Lemma \ref{lem:Littlewood Richardson},  $\psi(P\otimes Q)$ lies in the sum of $V_{d_1}$ with one copy of $\rho^*$ inside $L^2(G)$. By Lemma \ref{lem: Comfortable polynomials are pure} the function $\psi(P\otimes Q)$ is orthogonal to all polynomials of degree $\le d-1$. This shows that $\psi(P\otimes Q)$ spans an irreducible representation isomorphic to $\rho^*$ and it therefore lies in $W_\rho.$}
\end{proof}

\remove{
\section{Previous argument}

\begin{definition}
A function $f \in L^2(\mathbb{C}^{n \times n})$ (i.e.\ with domain consisting of the space of complex $n$ by $n$ matrices) is said to be a {\em generalised polynomial of degree $d$} if it a polynomial of total degree $d$ in the matrix entries and their complex conjugates. So, for example, $X_{11}\overline{X_{22}}$ is a generalised polynomial of degree two. Such a generalised polynomial is said to be {\em homogeneous of degree $d$} if all its `generalised monomials' have total degree exactly $d$ in the matrix entries and their complex conjugates. So, for example, $X_{11} \overline{X_{22}}+X_{12}\overline{X_{21}}$ is a homogeneous generalised polynomial of degree two (its `generalised monomials' are $X_{11}\overline{X_{22}}$ and $X_{12}\overline{X_{21}}$).

A generalised polynomial is said to be {\em comfortable} if it is a complex linear combination of monomials of the form
$$\prod_{k=1}^{\ell} X_{i_k,j_k} \prod_{k=\ell+1}^{L} \overline{X_{i_k,j_k}},$$
where $i_1,\ldots,i_L$ are distinct and $j_1,\ldots,j_L$ are distinct. (Note that the condition of having total degree $d$ means that $L \leq d$.)
\end{definition}

\begin{lemma}
\label{lem:suncomf}
Let $\rho$ be an irreducible representation of $\SU(n)$ with level $D \leq n/2$. Then there exists a non-zero (homogeneous) comfortable generalised polynomial $P=P(X)$ of degree $D$ (in the matrix entries of $X \in SU(n)$) such that $P$ is contained in the Peter-Weyl ideal $W_{\rho}$ (spanned by the matrix coefficients of $\rho$).
\end{lemma}

To prove this we first recall the following standard facts.

\begin{lemma}
If $\rho_1$ and $\rho_2$ are representations of a compact group $G$, and $f_i \in W_{\rho_i}$ for each $i \in \{1,2\}$ (i.e.\ $f_i$ is spanned by the matrix coefficients of $\rho_i$, for each $i$), then the pointwise product $f_1f_2$ is spanned by the matrix coefficients of the tensor product representation $\rho_1 \otimes \rho_2$.
\end{lemma}

\begin{lemma}
If $\rho$ is an irreducible representation of $\SU(n)$ and $f \in W_{\rho}$, then the function $\bar{f}$ defined by $\bar{f}(X) =f(\bar{X})$ satisfies $\bar{f} \in W_{\rho^*}$, where $\rho^*$ denotes the irreducible representation dual to $\rho$.
\end{lemma}

We now recall Weyl's construction for $\SU(n)$; it is very similar indeed to that for $\mathrm{O}_n$.

\subsubsection*{Weyl's Construction for $\SU_n$.}
Here, we briefly recall Weyl's construction of the irreducible representations of $\text{SU}_n$, and deduce from it the consequence we need. (As before, for more detail on Weyl's construction, the reader is referred to \cite{fulton-harris}.) Let $V = \mathbb{C}^n$ denote the natural representation of $\text{SU}_n$, defined by $\rho_V(g)(v) = g\cdot v$, i.e.\ the column-vector $v$ is multiplied on the left by the matrix $g$. For a partition $\lambda = (\lambda_1,\ldots,\lambda_{\ell})$ of some non-negative integer, let $t = \sum_{i=1}^{\ell} \lambda_i$. Consider the group algebra $\mathbb{C}[S_t]$ with standard basis $\{e_g:\ g \in S_t\}$, and multiplication defined by $e_{g} e_h = e_{gh}$ for $g,h \in S_t$. Let $T$ be the standard Young tableau of shape $\lambda$ with the numbers $1,2,\ldots,\lambda_1$ (in order) in the first row, the numbers $\lambda_1+1,\lambda_1+2,\ldots,\lambda_1+\lambda_2$ (in order) in the second row, and so on, let $P$ be the subgroup of $S_t$ stabilising each of the rows of $T$ (as sets), let $Q$ be the subgroup of $S_t$ stabilising each of the columns of $T$ (as sets), and let
$$c_{\lambda} = \left(\sum_{g \in P}e_g\right)\left(\sum_{g \in Q}\text{sign}(g)e_g\right)$$
be the {\em Young symmetrizer} of $\lambda$ corresponding to $T$. The symmetric group $S_t$ acts on $V^{\otimes t}$ from the right, permuting the factors:
$$(v_1 \otimes v_2 \otimes \ldots \otimes v_t)g = v_{g(1)} \otimes v_{g(2)} \otimes \ldots \otimes v_{g(t)},$$
and, extending linearly, so does $\mathbb{C}[S_t]$. We define the {\em Weyl module} $\mathbb{S}_{\lambda}(V): = V^{\otimes t}c_{\lambda}$. Clearly, $\mathbb{S}_{\lambda}(V)$ is an $\text{SU}_n$-submodule of $V^{\otimes t}$. The Weyl modules $\mathbb{S}_{\lambda}(V)$ form a complete set of inequivalent irreducible (finite-dimensional, continuous) representations of $\text{SU}_n$ (as the partition $\lambda$ ranges over all partitions with at most $n-1$ parts). Observe that, if $\sum_{i}\lambda_i = t$, then the matrix coefficients of $\mathbb{S}_{[\lambda]}(V)$ are (multivariate) homogeneous polynomials in the matrix-entries of $X \in \text{SU}_n$, of degree $t$. Moreover, letting $u_1,\ldots u_n$ be the standard orthonormal basis of $\mathbb{C}^n$, the map in $L^2(\text{SU}_n)$ defined by
$$X \mapsto \langle X((u_1 \otimes u_2 \otimes \ldots \otimes u_t)c_{\lambda}),u_1 \otimes u_2 \otimes \ldots \otimes u_t\rangle$$
is a matrix coefficient of $\mathbb{S}_{[\lambda]}(V)$; it is also a multivariate homogeneous polynomial of degree $t$ in the matrix-entries of $X$, involving only matrix-entries from the top $t$ by $t$ minor of $X$.
If $t\leq n$, it is also a comfortable polynomial.
We obtain the following conclusion, on which we will rely crucially in the sequel.
\begin{fact}
\label{fact:weyl-useful}
Let $d \in \mathbb{N} \cup \{0\}$. For an irreducible representation $\rho$ of $\text{SU}_n$ whose matrix coefficients are (homogeneous) polynomials of total degree $d$, there exists a matrix coefficient of $\rho$ involving only entries of $X \in \text{O}_n$ that lie in the top left $d$ by $d$ minor of $X$. In particular, the Peter-Weyl ideal $W_{\rho}$ contains a (non-zero) polynomial with this property.
Furthermore, if $d\leq n$, then $W_{\rho}$ contains
such a polynomial which is also comfortable. (Here, 
the notion of comfortability is the same as it is defined in the introduction; see Section~\ref{sec:def_comf} for more formal definitions.)
\end{fact}

\begin{proof}[Proof of lemma \ref{lem:suncomf}.]
Let $\rho$ be an irreducible representation of $\SU(n)$ with level $D$. Let $\lambda$ be the corresponding integer partition (with at most $n-1$ rows). Let $\alpha$ be the integer partition whose Young diagram is obtained by deleting all columns of the Young diagram of $\lambda$ that have depth greater than $n/2$, and let $\beta$ be the integer partition whose Young diagram is obtained by deleting all columns of the Young diagram of $\lambda$ {\em except} those with depth greater than $n/2$. Using the notation of previous lemmas, we let $E$ denote the number of cells in the Young Diagram of $\lambda$ that lie in columns of depth at most $n/2$, and $F$ denote the number of cells {\em missing} from columns of depth greater than $n/2$ (in the Young diagram of $\lambda$; recall that if the $i$th column of $\lambda$ has depth greater than $n/2$, i.e.\ $\lambda_i' > n/2$, we say a cell is `missing' from the $i$th column of $\lambda$ if it has depth greater than $n/2$ and at most $n$). Then, as observed above, we have $E+F=D$. Moreover, the Young diagram of $\alpha$ has $E$ cells, and the Young diagram of $\beta$ is `missing' $F$ cells. As usual, we let $\beta^{*}$ denote the partition dual to $\beta$, i.e.\ if the Young diagram of $\beta$ has $c$ (nonempty) columns, of depths $\beta'_1 \geq \ldots \geq \beta'_c$, then the Young diagram of $\beta^*$ has $c$ (nonempty) columns, of depths $n-\beta'_1,\ldots,n-\beta'_c$. As we recalled above, we have $\rho_{\beta^*} = \rho_{\beta}^*$, i.e.\ the partition dual to $\beta$ corresponds to the representation dual to $\rho_{\beta}$. Note that the Young diagram of $\beta^*$ has $F$ cells, and all are in columns of depth at most $n/2$. Now, by the above fact, there exists a non-zero, comfortable, homogeneous polynomial $P$ of total degree $E$, such that $P \in W_{\rho_{\alpha}}$, and similarly there exists a non-zero, comfortable, homogeneous polynomial $Q$ of total degree $F$, such that $Q \in W_{\rho_{\beta}}$. Since $E+F = D \leq n$, we may further take $P$ to depend only upon matrix entries in the top $E$ by $E$ minor, and $Q$ to depend only upon matrix entries in the minor determined by rows $i$ and columns $j$ satisfying $E < i,j \leq E+F$. Then, by the lemmas above, $P\overline{Q}$ is spanned by the matrix coefficients of $\rho_\beta \otimes \rho_\alpha$. We claim that, in fact, $P \overline{Q}$ is spanned by the matrix coefficients of $\rho_{\lambda}$. This follows from two fortunate facts. Firstly, although $\rho_{\beta} \otimes \rho_{\alpha}$ contains many irreducible constituents in general, it only contains one irreducible constituent of level $D$; the other irreducible constituents all have level less than $D$. This in turn follows from the Littlewood-Richardson rule, e.g.\ as given in \cite{fulton-harris}, (A.8). Indeed, by the Littlewood-Richardson rule, writing $\alpha = (\alpha_1,\ldots,\alpha_{\ell})$, the irreducible constituents of $\rho_{\beta} \otimes \rho_{\alpha}$ are exactly those partitions whose Young diagram can be produced by the following algorithm. Take the Young diagram of $\beta$, and first add $\alpha_1$ new boxes to the rows (in such a way as to produce the Young diagram of another partition, {\em but with no two of the $\alpha_1$ added boxes being added to the same row}), and place a `1' in each of these $\alpha_1$ new boxes. Then add a further $\alpha_2$ boxes to the rows (again in such a way as to produce the Young diagram of another partition, but with no two of the $\alpha_2$ added boxes being added to the same row), and place a `2' in each of these $\alpha_2$ new boxes. Continue in this way (so that at the last step, $\alpha_{\ell}$ new boxes are added). Now consider the sequence of length $\alpha_1 +\ldots+\alpha_{\ell}$ formed by concatenating the reversed rows of newly-added boxes, and check that if one looks at the first $t$ entries in this sequence (for any $t$ between 1 and $\alpha_1+\ldots+\alpha_{\ell}$), the integer $p$ appears at least as many times as the integer $p+1$ among these first $t$ entries, for any $1 \leq p < \ell$. If this `concatenation' condition holds, keep the Young diagram / partition; if not, reject it.

It is easy to check that the only way of performing this algorithm in such a way as to obtain a partition of level at least $D$, is to produce the (Young diagram of the) partition $\lambda$ itself: the first $\alpha_1$ new boxes must all be added to the first row of the Young diagram of $\beta$, the second $\alpha_2$ new boxes must all be added to the second row, and so on. Indeed, if at the $j$th stage (when adding $\alpha_j$ new boxes containing the integer $j$), any box is added to a row above the $j$th row, then (inductively) one sees that the concatenation condition would be violated, and moreover if some new box is added to a column of depth greater than $n/2$, then clearly, at the end of the process, less than $E$ cells will be in columns of depth at most $n/2$, and moreover less than $F$ cells will be missing from columns of depth greater than $n/2$, so the irreducible constituent of $\rho_{\beta} \otimes \rho_{\alpha}$ which is obtained, will have level less than $D$.

The second fortunate fact is that for any integer $d \leq n/2$, any comfortable generalised polynomial $R$ which is homogeneous of degree $d$, is orthogonal to any generalised polynomial of degree less than $d$; applying this with $d=D$ and $R = P\overline{Q}$, we see that $P\overline{Q}$ (which is a comfortable, homogeneous generalised polynomial of degree $D$) is orthogonal to $W_{\rho_{\gamma}}$ for all $\gamma$ of level less than $D$, and therefore $P\overline{Q} \in W_{\rho_{\lambda}}$. To verify this second fortunate fact, observe that (by the Fact above, and the fact that $W_{\rho_{\gamma}}$), $W_{\rho_{\gamma}}$ is a minimal ideal), $W_{\rho_{\gamma}}$ is spanned by homogeneous generalised polynomials of degree less than $d$. Indeed, let $S$ be any generalised polynomial of degree less than $d$, and let $T$ be a comfortable generalised polynomial which is homogeneous of degree $d$; we shall show that $S$ and $T$ are orthogonal, i.e.\ $\mathbb{E}_X[\overline{S}(X)T(X)] = 0$ if $X$ is chosen according to the Haar measure on $\SU_n$. It suffices to prove this in the case where $S$ and $T$ are (generalised) monomials. Let $k_1,\ldots,k_d$ be the rows of the matrix-entry variables appearing in the monomial $T$. Since $S$ is a (generalised) monomial of degree less than $d$, not all of these rows can appear amongst the variables in $S$; without loss of generality, assume row $k_1$ does not. Let $k_0$ be a row that appears neither in the variables in $S$ nor those in $T$ (such exists, since $d \leq n/2$, so $d+(d-1) < n$). Let $U_0$ be the diagonal unitary matrix with ones everywhere on the diagonal except in the $(k_0,k_0)$ and $(k_1,k_1)$ entries, with a $-i$ in the $(k_0,k_0)$ entry and an $i$ in the $(k_1,k_1)$ entry. If $X$ is distributed according to the Haar measure on $\SU_n$, then so is $U_0X$, but multiplying $X$ by $U_0$ simply multiplies $\overline{S}T$ by $i$, so $\mathbb{E}_X[\overline{S}(X)T(X)] = \mathbb{E}_X[\overline{S}(U_0 X) T(U_0 X)] =  i\mathbb{E}_X[\overline{S}(X)T(X)]$ and therefore $\mathbb{E}_X[\overline{S}T] = 0$ as required.
\end{proof}

    \remove{
The representations of $\SU(n)$ are also given by 
 
\begin{lemma}
Let $d\le n/2$. Let $L_{d}$ consist of all the representations of $\SU(n)$ whose total level is $d$. Then $V_{=d}=\bigoplus_{\rho\in \mathcal{L}_d}W_{\rho}.$ 
\end{lemma}

\begin{proof}
The proof is similar to the one in $\SO(n)$. The only difference is that we use the map $\mathrm{Sym}^{i_1}(V)\otimes \cdots \otimes \cdots \mathrm{Sym}^{i_l}(V) \otimes \mathrm{Sym}^{j_1}(V^*)\otimes \cdots \otimes \cdots \mathrm{Sym}^{j_m}(V^*)\to L^2(G)$ given by sending $f_1\otimes \cdots f_l \otimes g_1 \otimes \cdots \otimes g_m$ to $f_1(Ae_1)\cdots f_l(Ae_l) g_1(\overline{Ae_{l+1}})\cdots g_m(\overline{Ae_{l+m}}).$ such maps contain a representative from the $S_n$ bi-orbit of each degree $d$ monomial in the input and its conjugate.
\end{proof}

}
\remove{\ubsection{The degree decomposition in $\Sp(n)$}

 Indeed, we needed a notion
of degree of functions, an association between
representations and degree, lower bounds
on the dimensions of representations (Lemmas~\ref{lemma:lb1son} and~\ref{lemma:lb2son}), upper bounds 
on the number of representations of 
degree $D$ (Lemma~\ref{lemma:no-partitions}) 
and estimates for the eigenvalues of the 
Laplace--Beltrami operator over $L^2(G)$ (Lemma~\ref{lem:lb_eigenval_curv}). In
this section, we establish that other 
compact groups, namely $\SU(n)$, $\Sp(n)$ admit similar properties, which
allows us to deduce hypercontractivity, 
product-free and diameter results for 
these settings. This proves Theorems~\ref{thm:son} and~\ref{thm:Super Bonami in O_n curv}.
}
\remove{
\subsection{Degrees and Representations}
The irreducible 
representations of $\SU(n)$ are in correspondence to partitions with at most $n$ rows, $\lambda = (\lambda_1,\ldots,\lambda_n)$. 
The degree of the representation corresponding to $\lambda$ is 
$\sum\limits_{i=1}^{n}\min(i,n-i)]\cdot (\lambda_i - \lambda_{i+1})$.
}
}
\subsection{Obtaining strong quasirandomness for $\SU(n)$.}
\begin{lemma}
    Let $k,n \in \mathbb{N}$ with $k \leq n/2$, and let
    $$P = P(X_{11},X_{12},\ldots,X_{kk}) \in \mathbb{C}[X_{11},X_{12},\ldots,X_{kk}] \setminus \{0\}$$
    be a multivariate polynomial in the variables $(X_{i,j})_{i,j \in [k]}$ that is not the zero polynomial. Let $\pi:\SU(n) \to \mathbb{C}^{k \times k}$ denote projection onto the top-left $k$ by $k$ minor of a matrix in $\SU(n)$. Then $P \circ \pi$ cannot vanish on all of $\SU(n)$.
\end{lemma}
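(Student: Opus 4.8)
The plan is to reduce the claim to the elementary fact that a nonzero polynomial cannot vanish on a nonempty Euclidean-open set, by showing that the image $\pi(\SU(n))$ contains such a set. Concretely, identifying $\mathbb{C}^{k\times k}$ with $\mathbb{R}^{2k^2}$, I would prove
\[
\{A\in\mathbb{C}^{k\times k}:\ \|A\|_{\mathrm{op}}<1\}\ \subseteq\ \pi(\SU(n)),
\]
where $\|\cdot\|_{\mathrm{op}}$ denotes the operator norm; the left-hand set is open and contains $0$, so it is nonempty and open.

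The heart of the argument --- and essentially the only step with content --- is establishing this inclusion, via a standard unitary-completion (dilation) argument that uses the hypothesis $k\le n/2$. Given $A$ with $\|A\|_{\mathrm{op}}<1$, the Hermitian matrix $I_k-AA^*$ is positive definite; letting $M:=(I_k-AA^*)^{1/2}$ and padding it with $n-2k\ge 0$ zero columns produces $B\in\mathbb{C}^{k\times(n-k)}$ with $BB^*=I_k-AA^*$. Then $[A\mid B]\,[A\mid B]^*=AA^*+BB^*=I_k$, so the $k$ rows of $[A\mid B]$ are orthonormal in $\mathbb{C}^n$ and (as $k\le n$) extend to an orthonormal basis of $\mathbb{C}^n$; taking that basis as the rows of a matrix gives $U\in\U(n)$ whose top-left $k\times k$ block is $A$. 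Finally, since $n>k$, replacing the $n$-th column of $U$ by $\overline{\det U}$ times itself (a unit-modulus rescaling, which keeps $U$ unitary and makes the determinant $1$) yields $U'\in\SU(n)$ with the first $k$ columns --- hence the top-left $k\times k$ minor --- unchanged, so $\pi(U')=A$.

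To conclude, suppose for contradiction that $P\circ\pi$ vanishes identically on $\SU(n)$. Then $P(A)=0$ for every $A$ in the open set above, so the polynomial function $P\colon\mathbb{C}^{k^2}\to\mathbb{C}$ vanishes on a nonempty open subset of $\mathbb{C}^{k^2}$. A routine induction on the number of variables (fixing all but one coordinate and using that a nonzero univariate polynomial has finitely many roots) forces $P$ to be the zero polynomial, contradicting the hypothesis $P\neq 0$. (If $k=0$ is permitted in the statement, it is trivial, as $P$ is then a nonzero constant.) I do not anticipate any real obstacle beyond writing the completion step carefully.
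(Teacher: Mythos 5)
Your proof is correct and takes essentially the same route as the paper: show that $\pi(\SU(n))$ contains a nonempty open subset of $\mathbb{C}^{k\times k}$, then invoke the fact that a nonzero polynomial cannot vanish on a nonempty open set. The paper merely asserts this containment as ``easily seen,'' whereas you supply the details via the dilation argument (padding $(I_k-AA^*)^{1/2}$ with $n-2k$ zero columns, extending to an orthonormal basis, and rescaling the $n$-th column to fix the determinant), and in fact identify the full open operator-norm ball as contained in the image; this is a clean and complete justification of the step the paper leaves implicit.
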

\begin{proof}
The image of $\pi$ is easily seen to have a nonempty interior inside $\mathbb{C}^{d\times d}$ (indeed, a suitably small open neighbourhood of $0$ is contained in the image of $\pi$). Since $P$ is a nontrivial polynomial in the variables $X_{11},X_{12},\ldots,X_{kk}$, it cannot vanish on all of a nonempty open subset of $\mathbb{C}^{k \times k}$.
\end{proof}

The following lemma is 
analogous to Lemma~\ref{lemma:lb1son}, and allows us to lower-bound the dimensions of irreducible representations of not-too-large degree. We prove it by considering comfortable $d$-juntas. 

\begin{lemma}
\label{lemma:lb1sun}
If $\rho$ is an irreducible representation of $\SU(n)$ of level $d$ where $0 \leq d < n/2$, then
$$\dim(\rho) \geq \binom{\lfloor n/2 \rfloor }{d}.$$
\end{lemma}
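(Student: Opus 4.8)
The plan is to run the analogue of the comfortable $d$-junta argument sketched for $\SO(n)$: extract from $W_\rho$ a rank-one matrix coefficient that is a comfortable $d$-junta, and then produce $\binom{\lfloor n/2\rfloor}{d}$ linearly independent left-translates of it lying in a single irreducible left $\SU(n)$-submodule of dimension $\dim\rho$. We may assume $d\ge 1$ (for $d=0$ the representation is trivial and $\binom{\lfloor n/2\rfloor}{0}=1$), and note that $d<n/2$ forces $d\le\lfloor n/2\rfloor$, so the binomial coefficient makes sense.

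\emph{Step 1 (a rank-one comfortable matrix coefficient).} By Lemma~\ref{lemma:su_n weyl-useful}, $W_\rho$ contains a nonzero comfortable $d$-junta $P$, depending only on the $[d]\times[d]$ minor. First I would observe that $P$ can be taken to be a \emph{rank-one} matrix coefficient of $\rho$, i.e.\ $P(X)=\phi(\rho(X)v)$ for a nonzero vector $v$ in the space of $\rho$ and a functional $\phi$: indeed, unwinding the construction in the proof of Lemma~\ref{lemma:su_n weyl-useful}, $P$ is the $W_\rho$-component of a product $P_\alpha\overline{P_\beta}$ of (rank-one) matrix coefficients of $\rho_\alpha$ and $\rho_\beta^{\ast}$, and since $\rho$ occurs with multiplicity one inside $\rho_\alpha\otimes\rho_\beta^{\ast}$ (Lemma~\ref{lem:Littlewood Richardson}) and the projection of a rank-one operator onto an isotypic summand has rank at most one, $P$ itself is a rank-one matrix coefficient of $\rho$ (it is nonzero since $P(\mathrm{Id})=1$). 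Consequently the span of the left-translates $\{L_gP:g\in\SU(n)\}$ is an irreducible left $\SU(n)$-submodule of $W_\rho$ of dimension exactly $\dim\rho$; in particular, any set of linearly independent left-translates of $P$ has cardinality at most $\dim\rho$.

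\emph{Step 2 (many independent row-translates).} Write $P$ as a linear combination of comfortable monomials, each of the form $\prod_{i=1}^d (x_{i,c_i})_{q_i\text{-part}}$ where $(c_i)_{i\in[d]}$ is a permutation of $[d]$ and $q_i\in\{\mathbf{real},\mathbf{imaginary}\}$. For each $d$-subset $S=\{s_1<\dots<s_d\}\subseteq[\lfloor n/2\rfloor]$ I would choose $g_S\in\SU(n)$ that is a permutation matrix, with a single entry sign-flipped if needed to make the determinant $1$, arranged so that $L_{g_S}P$ is a function of the variables in rows $S$ and columns $[d]$ in which every monomial (after expanding into real and imaginary parts and collecting) uses each of the rows $s_1,\dots,s_d$ exactly once. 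Then for $S\neq S'$ the polynomials $L_{g_S}P$ and $L_{g_{S'}}P$ have disjoint monomial supports (their monomials carry different row-sets), and each $L_{g_S}P$ is a nonzero polynomial. Now if $\sum_S\mu_S L_{g_S}P=0$ as a function on $\SU(n)$, this is a polynomial in the variables $\{\mathrm{Re}\,X_{ij},\mathrm{Im}\,X_{ij}:i\in[\lfloor n/2\rfloor],\,j\in[d]\}$ vanishing on the image of the $[\lfloor n/2\rfloor]\times[\lfloor n/2\rfloor]$ minor of $\SU(n)$; since $\lfloor n/2\rfloor\le n/2$, the lemma immediately preceding this one shows that image has nonempty interior, so the polynomial vanishes identically, and by disjointness of supports each $\mu_S L_{g_S}P$ vanishes, forcing $\mu_S=0$. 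Hence the $\binom{\lfloor n/2\rfloor}{d}$ functions $L_{g_S}P$ are linearly independent, and combining with Step 1 gives $\dim\rho\ge\binom{\lfloor n/2\rfloor}{d}$.

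\textbf{Main obstacle.} The delicate point is Step 1: one must genuinely check that the comfortable junta produced by Lemma~\ref{lemma:su_n weyl-useful} is a \emph{rank-one} matrix coefficient of $\rho$, for otherwise the left-translates only span $r\cdot\dim\rho$ for some $r\le\dim\rho$ and one obtains the useless bound $(\dim\rho)^2\ge\binom{\lfloor n/2\rfloor}{d}$. The resolution (projecting a rank-one operator onto a multiplicity-one isotypic component) is short but essential. Step 2 is routine bookkeeping; its only real constraint is that the relevant minor must have at most $\lfloor n/2\rfloor$ rows in order to pass from vanishing on $\SU(n)$ to vanishing as a polynomial, and this is exactly what caps the bound at $\binom{\lfloor n/2\rfloor}{d}$ (rather than $\binom{n}{d}$).
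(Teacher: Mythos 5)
Your proposal is correct and follows the paper's proof of Lemma~\ref{lemma:lb1sun} closely: Step~2 (translating $P$ by (signed) permutation matrices indexed by $d$-subsets $S\subseteq[\lfloor n/2\rfloor]$, noting disjoint monomial supports, and passing from polynomial to functional linear independence via the non-vanishing lemma for the $\lfloor n/2\rfloor\times\lfloor n/2\rfloor$ minor) is essentially verbatim the argument in the paper.

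Your Step~1, however, is a genuine and correct clarification of a point the paper leaves implicit. The paper's assertion that ``the set $\{L_A P : A\in\SU(n)\}$ is contained in a left submodule $V\cong\rho$'' does not hold for an arbitrary nonzero $P\in W_\rho$: for instance, the character $\chi_\rho$ generates all of $W_\rho$ under left translation, which has dimension $(\dim\rho)^2$. The claim is equivalent to $P$ being a \emph{rank-one} matrix coefficient of $\rho$, and you are right that this must be extracted from the specific construction in Lemma~\ref{lemma:su_n weyl-useful}: $P=P_\alpha\overline{Q}$ is a rank-one coefficient of $\rho_\alpha\otimes\rho_\beta^*$, and since $\rho$ occurs there with multiplicity one (Lemma~\ref{lem:Littlewood Richardson}), the compression of a rank-one operator to that isotypic block has rank at most one; nonvanishing at the identity then gives rank exactly one. (Indeed, an earlier, commented-out version of Lemma~\ref{lemma:su_n weyl-useful} stated precisely that the constructed junta ``spans an irreducible left submodule of $W_\rho$''.) Your proof is therefore, if anything, more complete than the one in the text.
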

\begin{proof}
Let $P$ be a non-zero comfortable $d$-junta contained in $W_{\rho}$. Recall that the left action of $\SU(n)$ on $L^2(\SU(n))$ is defined as follows: for $A \in \SU(n)$ and $f \in L^2(\SU(n))$, we define $L_A f \in L^2(\SU(n))$ by $L_A f(X) = f(AX)$. Since $P \in W_{\rho}\setminus \{0\}$, the set $\{L_A P: A \in \SU(n)\}$ is contained in a left submodule $V$ of $L^2(\SU(n))$ which is isomorphic to the representation $\rho$. In particular, for any even permutation $\sigma$, $L_{A(\sigma)}P \in V$, where $A(\sigma)$ is the permutation matrix corresponding to $\sigma$; explicitly, $(A_{\sigma})_{i,j} = 1_{\{\sigma(i)=j\}}$ for each $i,j \in [n]$. Note that $L_{A(\sigma)}P$ is the polynomial obtained from $P$ by replacing the variable $X_{i,j}$ with the variable $X_{\sigma(i),j}$, for all $i,j \in [d]$; write $\sigma P: = L_{A(\sigma)}P$, for brevity. For each subset $S\in \binom{\lfloor n/2 \rfloor }{d}$, choose an even permutation $\sigma_S$ sending $[d]$ to $S$. The polynomials $\sigma_S P$ are linearly independent as polynomials (as for any two distinct sets $S \neq S'$, the set of monomials appearing in $\sigma_S P$ is disjoint from the set of monomials appearing in $\sigma_{S'}P$); moreover, each polynomial $\sigma_S P$ depends only upon variables in the top left $[\lfloor n/2 \rfloor]$ by $[\lfloor n/2 \rfloor]$ minor. It follows from the previous lemma, applied with $k = \lfloor n/2\rfloor$, that the polynomials $\sigma_S P$ are linearly independent as elements of $L^2(G)$, and therefore $\dim(V) \geq {\lfloor n/2\rfloor \choose d}$, as required.  

\remove{The dimensions of the irreducible representations of $\SU(n)$ were first given by Weyl \cite{weyl}, but it is convenient for us to use the following reformulation in \cite{sternberg}. For each partition $\lambda$ with less than $n$ parts, draw the Young diagram $T = T_{\lambda}$ of $\lambda$ and place the integer $n$ in the top left cell (i.e.\ the $(1,1)$-cell) of $T$. Then fill in the remaining cells by increasing by 1 when moving down a cell and decreasing by 1 when moving right a cell, so that the $(i,j)$-cell contains the integer $m_{i,j} = n+i-j$. For each cell $(i,j)$, let $h_{i,j}$ denote its {\em hook length}, namely the number of cells of the Young diagram directly below it or to the right of it, plus one. Then the dimension of the irreducible representation $\rho_{\lambda}$ corresponding to $\lambda$ is given by
\begin{equation}\label{eq:hook-formula}\dim(\rho_{\lambda}) = \frac{\prod_{(i,j) \in T}m_{i,j}}{\prod_{(i,j) \in T} h_{i,j}}.
\end{equation}

Let $\lambda$ be a partition with less than $n$ parts, and of level $D \leq n/2$. Let
$$E := \sum_{i \leq n/2} i (\lambda_i - \lambda_{i+1}),\quad F := \sum_{i > n/2} (n-i)(\lambda_i - \lambda_{i+1});$$
then $E+F = D$. Note that $E$ is the number of cells in the Young diagram of $\lambda$ in columns of depth at most $n/2$, and $F$ is the number of cells `missing' from columns of depth more than $n/2$, where we say a cell is `missing' from a column if it is not in that column but has depth at most $n$. For each $i \leq n/2$, let $E_i$ be the number of cells of $\mathcal{E}$ of depth $i$, and for each column $j$ of depth greater than $n/2$, let $F_j$ be the number of cells missing from column $j$; clearly, we have $E = \sum_i E_i$ and $F = \sum_j F_j$.

Let $\mathcal{E}$ be the collection of cells in the Young diagram of $\lambda$ in columns of depth at most $n/2$, and let $\mathcal{F}$ be the collection of cells missing from columns of depth more than $n/2$; then $|\mathcal{E}|=E$ and $|\mathcal{F}|=F$. Clearly, we have $m_{i,j} \geq n+1-j \geq n+1-E$ for all $(i,j) \in \mathcal{E}$. It is also easy to see that
$$\prod_{(i,j) \in \mathcal{E}} h_{i,j} \leq E!.$$
Hence, we have
$$\frac{\prod_{(i,j) \in \mathcal{E}}m_{i,j}}{\prod_{(i,j) \in \mathcal{E}} h_{i,j}} \geq \frac{(n+1-E)^E}{E!}.$$

We now produce a new partition $\tilde{\lambda}$ from $\lambda$ by deleting from (the Young diagram of) $\lambda$ all the cells in $\mathcal{E}$. We wish to relate the hook lengths $(\tilde{h}_{i,j})$ of $\tilde{\lambda}$ to the hook lengths $(h_{i,j})$ of $\lambda$ (the numbers $m_{i,j}$ are unchanged, for the surviving cells, i.e., $\tilde{m}_{i,j} = m_{i,j}$ for all $(i,j) \in \tilde{\lambda}$). In going from $\lambda$ to $\tilde{\lambda}$, cells can only be deleted from the first $E$ rows, and therefore
$$\tilde{h}_{i,j} = h_{i,j}\quad \text{for all } (i,j) \in \tilde{\lambda} \text{ such that }i > E.$$
For each $i \leq E$, exactly $E_i$ cells were deleted from the $i$th row, and therefore
$$h_{i,j} \leq \tilde{h}_{i,j}+E_i \quad \text{for all } (i,j) \in \tilde{\lambda} \text{ such that }i \leq E.$$
For each $(i,j) \in \tilde{\lambda}$, the $j$th column of $\lambda$ has depth at least $n-F$, and therefore
$$\tilde{h}_{i,j} \geq n+1-i-F \geq n+1-E-F\quad \text{for all } (i,j) \in \tilde{\lambda}.$$
It follows that
\begin{align*} \frac{\prod_{(i,j) \in \tilde{\lambda}}\tilde{h}_{i,j}}{\prod_{(i,j) \in \tilde{\lambda}}h_{i,j}}& \geq \prod_{(i,j)\in \tilde{\lambda}:\ E_i \geq 1}\frac{n+1-E-F}{n+1-E-F+E_i}\\
& \geq \prod_{(i,j)\in \tilde{\lambda}:\ E_i \geq 1}\left(1-\frac{E_i}{n+1-E-F}\right)\\
& \geq \prod_{(i,j)\in \tilde{\lambda}:\ E_i \geq 1} \exp\left(-\frac{E_i}{n+1-E-F}\right)\\
& \geq \exp\left(-F\sum_i\frac{E_i}{n+1-E-F}\right)\\
& = \exp(-EF/(n+1-E-F))\\
& \geq \exp(-D^2/(2n)).
\end{align*}
Here, the fourth inequality uses the fact that $\tilde{\lambda}$ has at most $F$ columns (since, by definition, every column of depth greater than $n/2$ has at least one cell `missing', namely, the cell at depth $n$).

Of course, we have
$$\frac{\prod_{(i,j) \in \tilde{\lambda}} (n-i+j)}{\prod_{(i,j) \in \tilde{\lambda}}\tilde{h}_{i,j}} = \dim(\rho_{\tilde{\lambda}}) = \dim(\rho_{\tilde{\lambda}}^{*}).$$
It is well-known (see e.g.\ \cite{fulton-harris}) that for any partition $\mu$ with $k$ columns and less than $n$ rows, the dual representation $\rho_{\mu}^{*}$ corresponds to the partition $\mu^{*}$ with $k$ columns of depths $n-c_k,n-c_{k-1},\ldots,n-c_1$, where $c_1,\ldots,c_k$ are the depths of the columns of $\mu$. But $\tilde{\lambda}^{*}$ is simply the partition with $i$th row of length $F_i$ for all $i$, and therefore
$$\dim(\rho_{\tilde{\lambda}}^{*}) = \dim(\rho_{\tilde{\lambda}^{*}}) \geq \frac{(n+1-F)^F}{F!},$$
by the argument above.

Putting everything together, we have
\begin{align*} \dim(\rho_{\lambda}) & = \frac{\prod_{(i,j) \in \mathcal{E}}m_{i,j}}{\prod_{(i,j) \in \mathcal{E}} h_{i,j}} \cdot \frac{\prod_{(i,j) \in \tilde{\lambda}}m_{i,j}}{\prod_{(i,j) \in \tilde{\lambda}} h_{i,j}}\\
& \geq \frac{(n-E+1)^E}{E!} \cdot \exp(-D^2/(2n))\ \frac{\prod_{(i,j) \in \tilde{\lambda}}m_{i,j}}{\prod_{(i,j) \in \tilde{\lambda}} \tilde{h}_{i,j}}\\
& = \exp(-D^2/(2n))\ \frac{(n-E+1)^E}{E!} \dim(\rho_{\tilde{\lambda}})\\
& \geq \exp(-D^2/(2n))\ \frac{(n-E+1)^E}{E!} \frac{(n-F+1)^F}{F!}\\
& \geq \exp(-D^2/(2n))\ \frac{(n-D+1)^D}{D!}\\
& \geq (\exp(-D/(2n))n/(2D))^D\\
& \geq (n/(4D))^D,
\end{align*}
as required.}
\end{proof}
We remark that above method for lower-bounding the dimensions of the irreducible representations of $\SU(n)$ works just as well for $\SO(n)$ and $\Sp(n)$, so we could have used it in place of Lemmas \ref{lemma:lb1son} and \ref{lemma:lb1spn} to give an alternative, self-contained proof of the $c$-strong-quasirandomness of $\SO(n)$ and of $\Sp(n)$.

The following lemma is analogous to Lemma~\ref{lemma:lb2son} and lower-bounds the dimensions of irreducible representations of $\SU(n)$ with high levels. We defer the proof till the Appendix. 
\begin{lemma}\label{lem:su2lb}
  There exists an absolute constant $c>0$ such that if $d\ge n/4$ and $\rho$ is an irreducible representation of $\SU(n)$ of level $d$, then
  ${\sf dim}(\rho)\geq 2^{cn}$.
\end{lemma}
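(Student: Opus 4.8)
The plan is to deduce the bound from the Weyl dimension formula, reducing it to an elementary but somewhat delicate discrete inequality. Since $\SU(n)$ is a maximal compact subgroup of $\mathrm{GL}(n,\mathbb C)$, its complex irreducible representations are exactly those of $\mathrm{GL}(n,\mathbb C)$, with the same dimensions, so if $\rho\cong\mathbb S_\lambda(V)$ with $\lambda$ normalized to have at most $n-1$ parts, then $\dim(\rho)=s_\lambda(1^n)$. Here "$\rho$ has level $d\ge n/2$" means that $W_\rho\subseteq V_{\ge n/2}$, which, by Lemma~\ref{The notions of level agree} together with the definition of the grading, is equivalent to saying that $\rho$ has total level $d$; thus the hypothesis is that, writing $a_k:=\lambda_k-\lambda_{k+1}\ge 0$ for the step vector, $\sum_{k=1}^{n-1}a_k\min\{k,n-k\}=d\ge n/2$. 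The Weyl dimension formula reads
\[
\dim(\rho)=\prod_{1\le i<j\le n}\frac{\lambda_i-\lambda_j+j-i}{j-i}=\prod_{1\le i<j\le n}\Bigl(1+\frac{\lambda_i-\lambda_j}{j-i}\Bigr),\qquad \lambda_i-\lambda_j=\sum_{k=i}^{j-1}a_k.
\]

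First I would apply the elementary inequality $1+x\ge 2^{\min\{x,1\}}$, valid for all $x\ge 0$, to turn the product into
\[
\log_2\dim(\rho)\;\ge\;\sum_{1\le i<j\le n}\min\Bigl\{\frac{1}{j-i}\sum_{k=i}^{j-1}a_k,\ 1\Bigr\}\;=\;\sum_{1\le i<j\le n}\min\bigl\{\mathrm{avg}_{[i,j-1]}(a),\ 1\bigr\}.
\]
It therefore suffices to prove that the right-hand side is at least $cn$ for an absolute constant $c>0$. This is a discrete maximal-function statement: one must show that a nonnegative vector $a$ with $\sum_k a_k\min\{k,n-k\}\ge n/2$ forces many sub-intervals of $[1,n-1]$ to have average bounded away from $0$.

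To prove this maximal-function inequality I would argue as follows. Fix any step $k_0$ with $a_{k_0}\ge 1$. Every pair $(i,j)$ with $1\le i\le k_0<j\le n$ satisfies $\sum_{k=i}^{j-1}a_k\ge a_{k_0}$, so these "straddling" pairs already contribute
\[
\sum_{s=0}^{k_0-1}\ \sum_{t=1}^{\,n-k_0}\min\Bigl\{\frac{a_{k_0}}{s+t},\ 1\Bigr\},
\]
and a routine estimate of this double sum shows that it is $\gtrsim a_{k_0}\min\{k_0,n-k_0\}$ whenever the latter is $\le n$, and $\gtrsim n$ otherwise; indeed, when $k_0$ lies in the middle third this double sum is already $\gtrsim n\log n$, even for $a_{k_0}=1$. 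One then sums these contributions over $k_0$, assigning each pair $(i,j)$ to a step in $[i,j-1]$ of maximal $a$-value and amortizing the overlaps along the resulting Cartesian-tree structure, and uses the hypothesis $\sum_k a_k\min\{k,n-k\}=d\ge n/2$ to conclude that the total is $\gtrsim\min\{d,n\}\ge n/2$. Dualizing $\rho$, which preserves both the dimension and the total level and reverses the step vector, lets one assume throughout that at least half of $d$ comes from indices $k\le n/2$, which mildly simplifies the bookkeeping.

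The main obstacle is exactly this last step: extracting from each step $k_0$ a contribution of order $a_{k_0}\min\{k_0,n-k_0\}$ while controlling the double-counting that arises because a single interval straddles many steps; this is where a careful choice of which step to charge each pair to, together with an amortization argument, are needed. By contrast, the Weyl formula, the inequality $1+x\ge 2^{\min\{x,1\}}$, and the reduction via duality are all routine.
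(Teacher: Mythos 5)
Your reduction is correct: the Weyl dimension formula, the inequality $1+x\ge 2^{\min\{x,1\}}$ for $x\ge 0$, and the translation of ``level $d$'' into $\sum_{k}a_k\min\{k,n-k\}=d$ are all fine, and they do reduce the problem to showing that a nonnegative vector $a=(a_1,\dots,a_{n-1})$ with $\sum_k a_k\min\{k,n-k\}\ge n/2$ satisfies $\sum_{i<j}\min\{\mathrm{avg}_{[i,j-1]}(a),1\}\gtrsim n$. However, the amortization step you sketch to prove this discrete inequality does not work as described, and this is a genuine gap rather than a routine detail.

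Take $a_k=1$ for $1\le k\le\lfloor\sqrt n\rfloor$ and $a_k=0$ otherwise, so $\sum_k a_k\min\{k,n-k\}\approx n/2$ and the hypothesis holds. If each pair $(i,j)$ is charged to a single step $k_0\in[i,j-1]$ of maximal $a$-value and its contribution is counted only as $\min\{a_{k_0}/(j-i),1\}$, then (since every nonzero $a_k$ equals $1$) the total charge is exactly
\[
\sum_{i\le\sqrt n}\ \sum_{j>i}\min\Bigl\{\tfrac{1}{j-i},1\Bigr\}=\Theta(\sqrt n\log n),
\]
which is $o(n)$, far short of the $\gtrsim\min\{d,n\}=\Theta(n)$ you claim. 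This is not an artefact of tie-breaking or of the Cartesian-tree structure: no matter how you assign pairs to steps, the per-pair charge can never exceed $(\max_k a_k)/(j-i)$, whereas for the roughly $n/2$ pairs with $i<j\le\sqrt n+1$ the true value of $\min\{\mathrm{avg}_{[i,j-1]}(a),1\}$ is exactly $1$, and those pairs alone contribute $\gtrsim n$. In short, lower-bounding the interval average by one term divided by the interval length is fatally lossy precisely when the step vector is flat. (A more minor point: the ``straddling'' double sum for a middle-third $k_0$ with $a_{k_0}=1$ is $\Theta(n)$, not $\Theta(n\log n)$; the contributions from $s+t=u$ sum to about $1$ per $u$, not $\log$.)

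The paper sidesteps the maximal-function lemma entirely by a dichotomy on $\lambda_1$. If $\lambda_1\ge n$, a further split on $\lambda_{\lfloor n/2\rfloor+1}$ lets one extract about $n/2$ Weyl factors (either the ones with $i=1,\ j>n/2$, or the ones with $j=n,\ i\le n/2+1$), each bounded below by an absolute constant $>1$, giving $\dim(\rho)\ge e^{\Theta(n)}$ directly. If $\lambda_1<n$, every ratio $\tfrac{\lambda_i-\lambda_j}{n-1}$ is $O(1)$, so the uncapped estimate $1+x\ge e^{cx}$ applies to all factors uniformly, and the identity $\sum_{i<j}(\lambda_i-\lambda_j)=\sum_k k(n-k)a_k\ge\tfrac{n-1}{2}\sum_k a_k\min\{k,n-k\}$ finishes in one line. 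If you want to salvage your uniform approach, you would at minimum need two regimes mirroring this dichotomy: for the ``flat'' part, replacing $a_k$ by $b_k:=\min\{a_k,1\}$ gives $\min\{\mathrm{avg}(a),1\}\ge\mathrm{avg}(b)$ and hence $\sum_{i<j}\min\{\mathrm{avg},1\}\ge\sum_k b_k\sum_{i\le k<j}\tfrac1{j-i}\gtrsim\sum_k b_k\min\{k,n-k\}$, but when this is small you must handle the case where a few very large steps carry most of the weight, and that is essentially the paper's $\lambda_1\ge n$ case in disguise.
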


\remove{
\ection{Bounds on Product-free Sets via Level-$d$ Inequalities}\label{sec:curvature}
In this section, we present the basic spectral approach to proving upper bounds on the measure of product-free sets in compact groups. 
We then refine this method and argue that analogs of the level-$d$-inequality to $\O(n)$ imply stronger upper bounds on the measure of product 
free sets, and hence the task is reduced to proving level-$d$ inequalities over $L^2(\O(n),\gamma)$. 

\remove{We show two approaches for this: the first approach (presented in Section~\ref{sec:hyp_from_curv}) is rather general, and thus can be applied in a wide range of settings, and relies on hypercontractivity
of an operator coming from the theory of Ricci curvature; it is stronger enough to yield exponentially small bounds, namely $2^{-\Omega(n^{c})}$ for 
absolute constant $c>0$. The second approach we show is the coupling approach of Filmus et. al. \cite{filmus2020hypercontractivity}. They came up with a way to deduce a hypercontractive inequality on a given space whenever it is coupled to a space equipped with a hypercontractive operator. This approach is effective when the two spaces have the same local behavior. We apply it to the groups $\O(n)$ as well as $\U(n)$, 
and we focus on $\O(n)$ for simplicity. Our approach crucially relies on the local closeness between $\O(n)$ and $n\times n$ matrices of independent Gaussian random variables. 

In fact, level-$d$-inequalities are statements about low-degree polynomials, 
and a low-degree polynomial $f\colon \O(n)\to\mathbb{R}$ can be though of as combination of monomials that depend only on minors of size $d\times d$.
The intuition is now that for $d$ which is not too large, we expect a minor of size $d\times d$ in a matrix $X\in \O(n)$ sampled according to the Haar 
measure to behave like a collection of $d^2$ independent Gaussians with appropriate variance. Hence, we expect to be able to apply tools from Gaussian space 
(such as hypercontractivity for the Ornstein--Uhlenbeck Operator) to deduce a level $d$ inequality over $\O(n)$.
}
\ubsection{On the irreducible representations of a compact group $G$}
We now need some classical facts from the representation theory of compact groups. The Peter-Weyl theorem states that if $G$ is a compact group, equipped with the Haar measure, then $L^2(G)$ has the following decomposition as a Hilbert sum:
$$L^2(G) = \widehat{\bigoplus}_{\rho \in \hat{G}}W_{\rho},$$
where $\hat{G}$ denotes a complete set of irreducible unitary representations of $G$ (complete, meaning, with one irreducible representation from each equivalence class), and $W_{\rho}$ is the subspace of $L^2(G)$ spanned by the functions of the form $g \mapsto v^t(\rho(g))u$. These functions are called the \emph{matrix coefficients} of $\rho$. We call the subspaces $W_{\rho}$ the {\em Peter-Weyl ideals} of $G$ (this is non-standard terminology, but there does not seem to be an agreed term in the literature). It is well-known that if $G$ is equipped with a bi-invariant Riemannian metric $g$, then each $W_{\rho}$ is an eigenspace of the Laplace--Beltrami operator $\Delta$ corresponding to the metric $g$.

\remove{
We will use the following (very crude) bound on the number of irreducible representations of $\SO(n)$ of level $D$.

\begin{lemma}\label{lemma:no-partitions}
The number of irreducible representations of $\SO(n)$ of level $D$ is at most $2^D$.
\end{lemma}
\begin{proof}
Very crudely, the number of irreducible representations of $\SO(n)$ of level $D$ is at most twice the number $p(D)$ of partitions of the integer $D$, and it is well-known that $p(D) \leq 2^{D-1}$ for all $D \in \mathbb{N}$ (indeed, the number of compositions of $D$ is equal to $2^{D-1}$).
{\begin{frame}{Frame Title}
\end{frame}Do we need this?}
\end{proof}
}
\remove{
\ubsection{Deducing Bounds on Product-free Sets}
In this section, we present the spectral approach to product-free sets, and show that proving strong enough level $d$ inequalities implies strong bounds on the size
of product-free sets in $\SO(n)$. Let $\mathcal{A}\subseteq \SO(n)$ be Haar-measurable, and let $f = 1_{\mathcal{A}}$. We consider the standard convolution operator,
\[
(f*g)(A) = \Expect{X\in \SO(n)}{f(AX)g(X)},
\]
so that if $\mathcal{A}$ is product-free, then 
\[
\inner{f*f}{f}
=\Expect{A,X}{f(AX)f(X)f(A)} = 0.
\]
On the other hand, denoting the measure of $\mathcal{A}$ by $\alpha$, we have that 
\begin{equation}\label{eq0}
\inner{f*f}{f}
=\alpha^3 + \sum\limits_{\lambda\neq \vec{1}} \inner{f_{\rho_\lambda}*f_{\rho_\lambda}}{f_{\rho_\lambda}}
\end{equation}
where the term $\alpha^3$ corresponds to the trivial representation $\lambda=\vec{1}$ for which $f_{\rho_\lambda} = \alpha$. We show that the sum on the right 
hand side is negligible, thereby reach a contradiction. To do that, we use the following lemma of Babai, Nikolov and Pyber \cite{bnp} (their statement is for
finite groups, however it is easy to observe that the same holds for compact groups).
\begin{lemma}
Let $G$ be a compact group, and let $f,g \in L^2(G)$ such that $g$ is orthogonal to $W_{\rho}$ for all $\rho \in \hat{G}$ with $\dim(\rho) < K$. Then the convolution $(f*g)(x) := \int_{G} f(y)g(y^{-1}x) d\mu(y)$ satisfies
$$\|f * g\|_2 \leq \frac{1}{\sqrt{K}} \|f\|_2 \|g\|_2.$$
\end{lemma}
Thus, we bound
\[
\card{\sum\limits_{\lambda\neq \vec{1}} \inner{f_{\rho_\lambda}*f_{\rho_\lambda}}{f_{\rho_\lambda}}}
\leq \sum\limits_{\lambda\neq \vec{1}} \card{\inner{f_{\rho_\lambda}*f_{\rho_\lambda}}{f_{\rho_\lambda}}}
\leq \sum\limits_{\lambda\neq \vec{1}} \norm{f_{\rho_\lambda}*f_{\rho_\lambda}}_{2}\norm{f_{\rho_\lambda}}_2
\leq \sum\limits_{\lambda\neq \vec{1}} \frac{1}{\sqrt{{\sf dim}(\rho_\lambda)}}\norm{f_{\rho_\lambda}}_2^3,
\]
where we used Cauchy-Schwarz and Lemma~\ref{lemma:bnp}. Using Lemma~\ref{lemma:lb2son}, we may bound the contribution form $\lambda$ such that the degree of 
$\rho_{\lambda}$ exceeds $n/100$ by 
\[
e^{-\Omega(\sqrt{n})} \sum\limits_{\lambda\neq \vec{1}} \norm{f_{\rho_\lambda}}_2^2
\leq e^{-\Omega(\sqrt{n})},
\]
where we used Parseval. Fix $D$ to be determined later; the contribution from $\lambda$ such that $\rho_{\lambda}$ has degree between $D$ and $n/100$ 
may be bounded, using Lemma~\ref{lemma:lb1son}, by
\[
50^{-D}\sum\limits_{\lambda\neq \vec{1}} \norm{f_{\rho_\lambda}}_2^2
\leq 50^{-D}.
\]
Finally, the contribution from $\lambda$ such that $\rho_{\lambda}$ can be bounded, Lemma~\ref{lemma:lb1son}, by
\[
\sum\limits_{k=1}^{D}\left(\frac{k}{n}\right)^{k/2} \sum\limits_{\lambda: {\sf deg}(\rho_{\lambda}) = k}\norm{f_{\rho_{\lambda}}}_2^3
\leq 
\sum\limits_{k=1}^{D}\left(\frac{k}{n}\right)^{k/2} \norm{f^{=k}}_2\sum\limits_{\lambda: {\sf deg}(\rho_{\lambda}) = k}\norm{f_{\rho_{\lambda}}}_2^2
=\sum\limits_{k=1}^{D}\left(\frac{k}{n}\right)^{k/2} \norm{f^{=k}}_2^3.
\]
Combining everything, we get that
\begin{equation}\label{eq1}
\card{\sum\limits_{\lambda\neq \vec{1}} \inner{f_{\rho_\lambda}*f_{\rho_\lambda}}{f_{\rho_\lambda}}}
\leq \sum\limits_{k=1}^{D}\left(\frac{k}{n}\right)^{k/2} \norm{f^{=k}}_2^3 + 50^{-D} + e^{-\Omega(\sqrt{n})}.
\end{equation}
If we ignore the first term on the right hand side in~\eqref{eq1}, we could choose $D$ to be $\sqrt{n}$ and thus get that the magnitude of the left hand side 
is at most $e^{-\Omega(\sqrt{n})}$, which would be negligible compared to $\alpha^3$ provided that $\alpha\geq 2^{-c\sqrt{n}}$ for small enough absolute constant $c$. 
Plugging this into~\eqref{eq0} then shows that $\mathcal{A}$ is not product-free. Can we indeed argue that the first term on the right hand side of~\ref{eq1} is indeed
small?

This is where our analogs of the level-$d$ inequalities kick in, and we refer to the quantity $\norm{f^{=d}}_2^2$ as the level $d$ weight of a function $f$.
We note that for any function $f$, the weight of it on level $0$ is $\alpha^2$, and in general by Parseval one has that the level $d$ weight is always at most 
$d$. The level-$d$ inequality is an improvement upon this last statement, asserting that the level $d$ weight of a function $f$ is in fact much closer to $\alpha^2$, 
at least when $\alpha$ is small. A prominent example along these lines is the following result over Gaussian space, wherein the degree decomposition is the usual
 decomposition:
\begin{thm}\label{thm:Gaussian level-d inequality}
Let $g\colon \mathbb{R}^n\to \{0,1\}$, write $\alpha =\Pr [f=1]$, and let $d\le \log(1/\alpha)/100$. Then
\[
\|g^{= d}\|_{L^2(\gamma)}^2\le \left(\frac{100}{d}\right)^d\alpha^2 \log^d(1/\alpha).
\]
\end{thm}
 }

\ubsubsection{Level $d$ Inequalities from Ricci Curvature}
In Section~\ref{sec:hyp_from_curv}, we show the following result using Ricci Curvature based approach:
\begin{thm}\label{thm:level-d inequality_from_curv}
There is an absolute constant $C>0$ such that the following holds. 
Let $f\colon \SO(n)\to \{0,1\}$, write $\alpha =\Pr [f=1]$, and let $d<\log(1/\alpha)/100$. Then
\[
\|f^{= d}\|_{2}^2\le \alpha^2\left( \frac{\log(1/\alpha)}{d}\right)^{C\max(d,d^2/n)}.
\]
\end{thm}
Theorem~\ref{thm:level-d inequality_from_curv} already implies an exponential bound on $\alpha$; indeed, we can bound the sum on the right hand side of~\eqref{eq1} by
\[
\sum\limits_{k=1}^{D}\left(\frac{k}{n}\right)^{k/2} C^{3k/2} \alpha^3 \log^{3Ck/2}(1/\alpha)
\leq \alpha^3\sum\limits_{k=1}^{D}\left(\frac{k C^{3} \log^{3C}(1/\alpha)}{n}\right)^{k/2}
\]
Hence, if $\alpha\leq 2^{-n^{\frac{1}{6C}}}$ and $D\leq n^{1/3}$, this sum is at most
\[
\alpha^3\sum\limits_{k=1}^{D}\left(\frac{C^3}{n^{1/6}}\right)^{k/2}
=o(\alpha^3),
\]
and plugging in to~\eqref{eq0} shows $\mathcal{A}$ cannot be product-free. This gives Theorem~\ref{thm:son} for $\SO(n)$, and the proof for other groups is similar, 
where proper analogs of Lemmas~\ref{lemma:lb1son},~\ref{lemma:lb2son} and Theorem~\ref{thm:level-d inequality_from_curv} need to be established.

\ubsubsection{Level $d$ Inequalities from Couplings}\label{sec:coupling}
In Sections~\ref{sec:coupling_introduce}-~\ref{sec:contiguity} we show the following result using a coupling approach to Gaussian space:
\begin{thm}
     There exist absolute constants $C>0$ and $\delta>0$, such that for any
     $f\colon \SO(n) \to \{0, 1\}$ with expectation $\alpha$, and $d\le \min \left(\delta n^{1/2}, \frac{\log (1/\alpha)}{100}\right)$, it holds that
     \[
     \|f^{\le d}\|_{L^2(\mu)}^2 \le \left(\frac{C}{d}\right)^d\alpha^2\log^{d}(1/\alpha ).
     \]
\end{thm}
In the rest of this section we show that Theorem~\ref{thm:son_stronger} follows from Theorem~\ref{thm: level-d in SO_n}. Take $\delta>0$ from Theorem~\ref{thm: level-d in SO_n} and
take $D = \delta n^{1/3}$ in the above argument, and assume that $\alpha>2^{-D/100}$. Then we get that the sum on the right hand side of~\eqref{eq1} 
is at most
\[
\sum\limits_{k=1}^{D}\left(\frac{k}{n}\right)^{k/2}\left(\frac{C}{k} \alpha^2 \log^{k}(1/\alpha)\right)^{3/2}
\leq 
\alpha^3 \sum\limits_{k=1}^{D}\left(\frac{C^3 \log^{3}(1/\alpha)}{n}\right)^{k/2}
\leq \alpha^3 \sum\limits_{k=1}^{D}\left(\frac{C\cdot D^3}{n}\right)^{k/2}
=o(\alpha^3),
\]
and plugging this into~\eqref{eq0} contradicts the fact that $\mathcal{A}$ is product-free. Thus, $\alpha\leq 2^{-D/100}$, and Theorem~\ref{thm:son_stronger} is proved.

\ubsection{Deducing Diameter Bounds from Level $d$ Inequalities: Proof of Theorem~\ref{thm:growth}}
To finish this section, we show that Theorem~\ref{thm:growth} can be derived from the level $d$ inequalities stated above using similar 
ideas. We begin by showing that if a Haar measureable set $\mathcal{A}\subseteq \SO(n)$ has measure which is not too small, then $\mathcal{A}^2$ has much larger 
measure:
\begin{claim}\label{claim:basic_increase}
There exist $K,C>1$ and $c>0$ such that if $\mathcal{A}\subseteq \SO(n)$ is Haar measurable and 
has $\mu(\mathcal{A})\geq 2^{-c n}$, then
\[
\mu(\mathcal{A}^2)\geq 
\min\left(2^{-K \frac{\log(1/\mu(\mathcal{A}))^{C/(C-1)}}
{n^{1/(C-1)}}}, \frac{2}{3}\right).
\]
\end{claim}
\begin{proof}
   Denote $f = 1_{\mathcal{A}}$, $\alpha = \E[f]$ and $g = 1_{\mathcal{A}^2}$, $\beta = \E[g]$. We 
   assume $\beta<2/3$, otherwise the claim is trivial. We also assume that $\alpha < 2^{-\eps n^{1/3}}$ where $\eps>0$ is from Theorem~\ref{thm:son_stronger}, otherwise the result follows from the remark after that theorem.
   
   Note that 
   any $B$ such that $(f*f)(B) > 0$ is in $\mathcal{A}^2$, so $\inner{f*f}{1-g} = 0$ 
   and expanding as in~\eqref{eq0} we get that
   \begin{equation}\label{eq:diam}
   0
   =
   \inner{f*f}{1-g}
   =\alpha^2(1-\beta) + \sum\limits_{\lambda\neq \vec{1}} \inner{f_{\rho_\lambda}*f_{\rho_\lambda}}{
   (1-g)_{\rho_\lambda}}.
   \end{equation}
   Next, we bound the sum. By Cauchy-Schwarz and Lemma~\ref{lemma:bnp}, for $\rho_{\lambda}\neq \vec{1}$
   \[
   \card{\inner{f_{\rho_\lambda}*f_{\rho_\lambda}}{
   (1-g)_{\rho_\lambda}}}
   \leq \norm{f_{\rho_\lambda}*f_{\rho_\lambda}}_2
   \norm{g_{\rho_{\lambda}}}_2
   \leq 
   \frac{1}{\sqrt{{\sf dim}(\rho_{\lambda})}}\norm{f_{\rho_\lambda}}_2^2\norm{g_{\rho_{\lambda}}}_2
   \leq \frac{1}{\sqrt{{\sf dim}(\rho_{\lambda})}}\norm{f_{\rho_\lambda}}_2^2
   \sqrt{\beta}.
   \]
   Set $D_1=\frac{n}{2}$ and 
   $D_2 = \frac{\log(1/\alpha)}{T}$ where $T>0$ is a constant to be determined. Then
   \begin{align*}
    \card{
    \sum\limits_{\lambda\neq \vec{1}} \inner{f_{\rho_\lambda}*f_{\rho_\lambda}}{
   (1-g)_{\rho_\lambda}}}
   &\leq 
   \sqrt{\beta}\sum\limits_{k=1}^{D_2}
   \max_{\rho_{\lambda}: {\sf deg}(\rho_{\lambda}) = k}\frac{1}{\sqrt{{\sf dim}(\rho_{\lambda})}} \norm{f^{=k}}_2^2\\
   &+
   \sqrt{\beta}\sum\limits_{k=D_2}^{D_1}
   \max_{\rho_{\lambda}: 
   {\sf deg}(\rho_{\lambda}) = k}
   \frac{1}{\sqrt{{\sf dim}(\rho_{\lambda})}} \norm{f^{=k}}_2^2\\
   &+\max_{\rho_{\lambda}: {\sf deg}(\rho_{\lambda}) >D_1}\frac{1}{\sqrt{{\sf dim}(\rho_{\lambda})}} \norm{f}_2^2.
   \end{align*}
   \aragraph{Bounding the third term.}
   Using Lemmas~\ref{lemma:lb1son},~\ref{lemma:lb2son}, the third term 
   on the right hand side is at most ${\sf exp}(-\Omega_{T_1,T_2}(n))\alpha < \frac{1}{10}\alpha^2$ by
   assumption on the measure of $\mathcal{A}$. 
   
   \aragraph{Bounding the second term.}
    For the second term we use Lemma~\ref{lemma:lb1son} to conclude that
   \begin{align*}
   \sum\limits_{k=D_2}^{D_1}
   \max_{\rho_{\lambda}: {\sf deg}(\rho_{\lambda}) = k}\frac{1}{\sqrt{{\sf dim}(\rho_{\lambda})}} \norm{f^{=k}}_2^2
   &\leq 
   \alpha
   \sum\limits_{k=D_2}^{D_1}
   \sqrt{\frac{k!}{(n-k)^k}}
   \end{align*}
   We note that the inner summand is monotonically decreasing in the interval 
   $k\in [1,n/2]$, hence the last sum is 
   \[
   O\left(\sqrt{\frac{D_2!}{(n-D_2)^{D_2}}}\right)
   =
   O\left(D_2^{1/4}\left(\frac{D_2}{e(n-D_2)}\right)^{D_2/2}\right)
   \leq O(D_2^{1/4})2^{-\frac{D_2}{2}\left(1+\log\frac{n-D_2}{D_2}\right)}
   \]
   Note that by the assumption on $\alpha$, $D_2\leq \frac{c n}{T}\leq cn$, 
   and so the last expression is at most
   \[
   O(D_2^{1/4})2^{-\frac{D_2}{2}\left(1+\log\frac{2}{c}\right)}
   \leq O(D_2^{1/4})2^{-2T D_2}
   \leq \frac{\alpha}{10}
   \]
   provided that $c$ is sufficiently small compared to $T$. Overall, we see that that second term is at most $\frac{\alpha^2}{10}$.
   
   \aragraph{Bounding the first term.}
   Using Theorem~\ref{thm:level-d inequality_from_curv} in conjunction with Lemma~\ref{lemma:lb1son}, we obtain
   \begin{align*}
   \sum\limits_{k=1}^{D_2}
   \max_{\rho_{\lambda}: {\sf deg}(\rho_{\lambda}) = k}\frac{1}{\sqrt{{\sf dim}(\rho_{\lambda})}} \norm{f^{=k}}_2^2
   &\leq
   \alpha^2
   \sum\limits_{k=1}^{D_2}
   \max_{\rho_{\lambda}: {\sf deg}(\rho_{\lambda}) = k}\sqrt{\frac{k!}{(n-k)^k}} 
   \left(\frac{\log(1/\alpha)}{k}\right)^{C k}\\
   &\leq
   \alpha^2
   \sum\limits_{k=1}^{D_2}
   \left(\frac{2^{1/C}\log(1/\alpha)}{n^{1/2C}k^{1-1/2C}}\right)^{Ck}
   \end{align*}
   We note that in the interval $[1,D_2]$, 
   the inner summand is first monotone increasing (in $k$) until some point $k=D^{\star}$, 
   and thereafter it is monotone decreasing (or non-increasing). The point 
   $D^{\star}$ satisfies
   \[
   D^{\star} = \Theta\left(n^{-\frac{1}{2C-1}}
   \log(1/\alpha)^{\frac{2C}{2C-1}}\right).
   \]
   (More generally, for $A,B>0$, the function
$$f:\mathbb{R}_{>0} \to \mathbb{R}_{>0};\quad f(x)= (A/x)^{Bx}$$
is concave, its maximum occurs at $x_0 = A/e$, and $f(mx_0)/f(x_0) = (e^{m-1}/m^m)^{Bx_0}$ for all $m \in \mathbb{N}$.)
   It is easy to check from this that the first term is at most 
   $\alpha^2 2^{O(D^{\star})}$.

   \aragraph{Concluding the Proof.}
   Plugging all of our estimates into~\eqref{eq:diam} yields that
   \[
   0\geq \frac{1}{3}\alpha^2 - \frac{1}{10}\alpha^2- \frac{1}{10}\alpha^2 - \alpha^2\sqrt{\beta}2^{O(D^{\star})},
   \]
   hence, we obtain $\beta\geq 2^{-O(D^{\star})}$.
\end{proof}

We are now ready to prove Theorem~\ref{thm:growth}.
\begin{proof}[Proof of Theorem~\ref{thm:growth}]
By Claim~\ref{claim:basic_increase}, writing 
$\mu(\mathcal{A}) = 2^{-n^{1-\delta}}$, we obtain
\[
\mu(\mathcal{A}^2) \geq 
2^{-K n^{(1-\delta)\frac{C}{C-1} - \frac{1}{C-1}}}
=2^{-K n^{1-\delta\frac{C}{C-1}}}
\geq 2^{-n^{1-\delta(1+\eta)}}
\]
where $\eta>0$ is an absolute constant. Thus, 
after $r=O(\log(1/\delta))$ invocations 
of Claim~\ref{claim:basic_increase}, we
obtain $\mu(\mathcal{A}^{2^r})\geq \frac{2}{3}$. At this point, the following 
claim finishes the proof.
\begin{claim}
Suppose that $\mathcal{A}\subseteq \SO(n)$ 
is Haar measureable and that
$\mu(\mathcal{A}) > 1/2$. Then
$\mathcal{A}^2 = \SO(n)$.
\end{claim}
\begin{proof}
Fixing any $A\in \SO(n)$ and sampling   $B$ 
according to the Haar measure, the distribution of $A B^{-1}$ is also the Haar measure, and so $\Pr[B \in \mathcal{A},\ A B^{-1} \in \mathcal{A}] \geq \Pr[B \in \mathcal{A}] + \Pr[A B^{-1} \in \mathcal{A}]-1 > 1/2+1/2-1=0$.  Therefore, there exists $B \in \mathcal{A}$ such that $A B^{-1} \in \mathcal{A}$ and $B \in \mathcal{A}$, so $A = (AB^{-1})B \in \mathcal{A}^2$. It follows that $\mathcal{A}^2 = \SO(n)$, as required.
\end{proof}
\end{proof}
}

Lemmas \ref{lemma:lb1sun} and \ref{lem:su2lb} immediately yield the strong quasirandomness of $\SU(n)$. 
\begin{lemma}
For each $n \geq 2$, the group $\SU(n)$ is $c$-strongly-quasirandom as an $n$-graded group, where $c>0$ is an absolute constant.
\end{lemma}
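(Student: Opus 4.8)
The statement follows by combining the two lemmas immediately preceding it in the text, namely Lemma~\ref{lemma:lb1sun} (for representations of level $d<n/2$) and Lemma~\ref{lem:su2lb} (for representations of level $d\geq n/2$), together with the definition of $c$-strong-quasirandomness for an $n$-graded group. Recall that being $c$-strongly-quasirandom means: for each $1\leq d<cn/(1+c)$ every subrepresentation of $V_{=d}$ (as a left $\SU(n)$-module) has dimension at least $Q_d=(cn/d)^d$, and for $d\geq cn/(1+c)$ (together with the tail space $V_{\geq n/2}$) every subrepresentation has dimension at least $(1+c)^{cn/(1+c)}$. Since $V_{=d}$ decomposes into the Peter--Weyl ideals $W_\rho$ with $\rho$ of level $d$, and each such $W_\rho$ is a direct sum of copies of the irreducible left-module $\rho$, it suffices to bound the dimension of each irreducible representation of a given level from below by the appropriate quantity.

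First I would fix a small absolute constant $c>0$ to be chosen at the end, small enough that both $c\leq 1/2$ and the estimates below go through. For the range $1\leq d< cn/(1+c)$ (so in particular $d<n/2$ and $d<n/3$ say, with $c$ small), Lemma~\ref{lemma:lb1sun} gives that any irreducible representation $\rho$ of $\SU(n)$ of level $d$ satisfies $\dim(\rho)\geq \binom{\lfloor n/2\rfloor}{d}$. Using the standard bound $\binom{m}{d}\geq (m/d)^d$ with $m=\lfloor n/2\rfloor\geq n/2 - 1$, one gets $\binom{\lfloor n/2\rfloor}{d}\geq \left(\frac{n/2-1}{d}\right)^d\geq\left(\frac{cn}{d}\right)^d=Q_d$, provided $c$ is chosen so that $cn\leq n/2-1$ for all relevant $n$ (i.e.\ $n$ at least an absolute constant, which we may assume, or more carefully $c\leq 1/4$ and $n\geq 4$). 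This handles $Q_d$ in the stated range.

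For the range $d\geq cn/(1+c)$, I split into two sub-cases according to whether $d<n/2$ or $d\geq n/2$. If $cn/(1+c)\leq d<n/2$, apply Lemma~\ref{lemma:lb1sun} again: $\dim(\rho)\geq\binom{\lfloor n/2\rfloor}{d}$, and since the binomial coefficient $\binom{\lfloor n/2\rfloor}{d}$ is increasing for $d\leq\lfloor n/2\rfloor/2$ — here one should be slightly careful, since $d$ can be close to $n/2$; instead it is cleaner to note $\binom{\lfloor n/2\rfloor}{d}\geq\binom{\lfloor n/2\rfloor}{\lfloor cn/(1+c)\rfloor}$ when $\lfloor cn/(1+c)\rfloor\leq \lfloor n/2\rfloor/2$, which holds for $c$ small — and this lower bound is at least $(1+c)^{cn/(1+c)}$ for $c$ small enough, by the same $(m/d)^d$ estimate with $d=cn/(1+c)$ giving $\left(\frac{n/2}{cn/(1+c)}\right)^{cn/(1+c)}=\left(\frac{1+c}{2c}\right)^{cn/(1+c)}\geq (1+c)^{cn/(1+c)}$ once $1/(2c)\geq 1$. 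If instead $d\geq n/2$, then $\rho$ contributes to $V_{\geq n/2}$, and Lemma~\ref{lem:su2lb} gives $\dim(\rho)\geq 2^{c_0 n}$ for an absolute constant $c_0>0$; choosing $c$ small enough that $(1+c)^{cn/(1+c)}\leq 2^{c_0 n}$ (which holds since $\frac{c}{1+c}\log(1+c)\leq c^2\leq c_0\log 2$ for $c$ small) gives the required bound both for the $V_{=d}$ with $cn/(1+c)\leq d<n/2$ and for $V_{\geq n/2}$. Assembling these cases and choosing $c$ to be the minimum of the finitely many constraints imposed above yields the lemma; the only mild subtlety, and the point to be careful about, is the monotonicity/edge behaviour of $\binom{\lfloor n/2\rfloor}{d}$ when $d$ approaches $n/2$, which is why I route that range through the value $d=\lfloor cn/(1+c)\rfloor$ rather than through $d$ directly. (For the finitely many small $n$ not covered, one absorbs them by shrinking $c$ further, or notes the statement is vacuous there.)
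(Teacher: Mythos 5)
Your overall strategy matches the paper's: combine Lemma~\ref{lemma:lb1sun} with Lemma~\ref{lem:su2lb} and check the definition of $c$-strong-quasirandomness case by case. However, your handling of the middle range $cn/(1+c)\leq d<n/2$ contains a genuine error. You invoke the inequality
\[
\binom{\lfloor n/2\rfloor}{d}\;\geq\;\binom{\lfloor n/2\rfloor}{\lfloor cn/(1+c)\rfloor}
\]
for all such $d$, justified by the condition $\lfloor cn/(1+c)\rfloor\leq\lfloor n/2\rfloor/2$. But $\binom{m}{d}$ (with $m=\lfloor n/2\rfloor$) is unimodal, not monotone, on $[0,m]$: it increases up to $d\approx m/2$ and then decreases back to $\binom{m}{m}=1$. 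Since $d$ is permitted to run all the way up to $\lceil n/2\rceil-1\approx m$, the left-hand side can be as small as $O(n)$, while the right-hand side is exponential in $n$ for any fixed $c$. So the claimed inequality fails precisely at the upper end of this range, and Lemma~\ref{lemma:lb1sun} alone is simply too weak there (for $d$ near $n/2$ it only gives a polynomial-in-$n$ lower bound on $\dim(\rho)$, whereas you need $(1+c)^{cn/(1+c)}=2^{\Omega(n)}$).

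The paper gets around this by using a stronger form of Lemma~\ref{lem:su2lb} than its main-text statement suggests: the proof in the Appendix (see the restated version there) actually establishes $\dim(\rho)\geq 2^{\Omega(\min(d,n))}$ for \emph{every} level $d$, not just $d\geq n/2$ (in the case $\lambda_1<n$ the computation yields $\exp(\Theta(d))$, and in the case $\lambda_1\geq n$ it yields $\exp(\Theta(n))$). Once you have this, the range $cn/(1+c)\leq d<n/2$ is covered directly, since then $\min(d,n)\geq cn/(1+c)=\Omega(n)$ and the exponential bound dominates $(1+c)^{cn/(1+c)}$ for $c$ sufficiently small. Replacing your appeal to Lemma~\ref{lemma:lb1sun} in that range with this stronger dimension bound fixes the argument; the remaining cases ($1\leq d<cn/(1+c)$ via $\binom{\lfloor n/2\rfloor}{d}\geq(cn/d)^d$, and $V_{\geq n/2}$ via the $2^{\Omega(n)}$ bound) are handled correctly as you wrote them.
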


\section{Simply connected compact Lie groups are fine}\label{sec:hyp_from_curv}
In this section, we prove Theorem~\ref{thm:Every simple groups is fine}. The proof has two parts. In the first part, we identify a natural noise operator $U_{\delta}$ on $L^2(G)$ for the groups $G=\SU(n),\Sp(n),\Spin(n)$ which is guaranteed to satisfy a certain hypercontractive inequality, thanks to the fact that the Ricci curvature of these groups is bounded from below. (We note that this noise operator $U_{\delta}$ is not quite the same as the Beckner operator $T_{\delta, r}$ that we defined earlier.) The second part consists of inferring the weak hypercontractivity of the operator $T_{\delta, r}$ from the hypercontractive inequality for $U_{\delta}$. We accomplish that by analyzing the eigenvalues of $U_\delta$ and showing that they are all larger than the eigenvalues of the operator $T_{\delta^C,r}$ for some absolute constant $C>1$. This will allow us to write $T_{\delta^C,r}=U_{\delta}S$ for a linear operator $S$ on $L^2(G)$ satisfying $\|S\|_{2\to 2}\le 1.$ We will thus have 
\[
\|T_{\delta^C,r}\|_{2\to q}\le \|S\|_{2\to 2}\cdot \|U_{\delta}\|_{2\to q} \le 1,
\]
as needed.

\subsubsection*{The hypercontractive inequality}
Here we rely on concepts from differential geometry, such as a Riemannian metric, the Laplace--Beltrami operator, and the Ricci curvature/tensor. We use the notation of 
Anderson, Guionnet and Zeitouni \cite[Sections E and F]{zeitouni-book}, and we refer the reader to that work for more details.
\remove{ is a compact, connected, $N$-dimensional Riemann manifold (equipped with Riemannian metric $g$, which is simply an inner product $g_p$ on the tangent space to $M$ at $p$, for each $p \in M$), we let $\Delta := \nabla \cdot \nabla$ denote the Laplace--Beltrami operator on the space of twice-differentiable complex-valued functions on $M$. The Laplace--Beltrami operator $\Delta$ is, as in the case of $\mathbb{R}^n$ with the Euclidean metric, the divergence of the gradient; this is given, in local coordinates, by
$$\Delta : = \nabla \cdot \nabla = \sum_{i,j=1}^{N}g^{ij} \nabla_i \nabla_j = \sum_{i,j=1}^{N} g^{ij}\left( \frac{\partial^2}{\partial x^i \partial x^j} - \sum_{k=1}^{N} \Gamma_{ij}^{k}\frac{\partial}{\partial x^k}\right),$$
where $g_{ij} = g(\partial / \partial x^i,\partial/\partial x^j)$, $(g^{ij})$ is the matrix inverse to $(g_{ij})$, $\Gamma_{ij}^{k}$ denotes the {\em Cristoffel symbol}, defined by
$$\nabla_i (\partial / \partial x^j) = \sum_{k=1}^{N} \Gamma_{ij}^{k}( \partial / \partial x^k),$$
and $\nabla_i := \nabla_{\partial/\partial x^i}$ denotes the {\em Levi-Civita connection} (or {\em covariant derivative}).
}

The compact Lie groups are equipped with the structure of a bi-invariant Riemannian manifold $(M,g)$; this is unique up to normalization if the compact Lie group is simple. Once a normalization is set, and denoting by $\Delta$ the Laplace--Beltrami operator, it is also known that  the Hilbert space $L^2(M)$ has an orthonormal basis of eigenvectors of $\Delta$, that $\Delta$ is self-adjoint and negative semidefinite, and that $0$ is a simple eigenvalue of $\Delta$ (with the constant functions as corresponding eigenvectors). For a given compact Lie group $G$ with a Riemann manifold structure, we let $u_0,u_1,u_2,\ldots$ be such a basis, with $0 = \lambda_0 > \lambda_1 \geq \lambda_2 \geq \ldots$ being the corresponding eigenvalues, so that $\lambda_i <0$ for all $i \geq 1$, and with $u_0$ being the constant function with value 1. For any $f \in L^2(M)$, we may write $f$ uniquely in the form
$$f=\sum_{i=0}^{\infty} c_iu_i,$$
where $c_i \in \mathbb{R}$ for each $i \geq 0$ (we have $c_i = \langle f,u_i \rangle$ for each $i \geq 0$). For $\delta >0$, we define the {\em noise operator} $U_{\delta}$ by
$$U_\delta:\ L^2(M) \to L^2(M);\quad U_\delta(f) = \sum_{i = 0}^{\infty} c_i \delta^{-\lambda_i} u_i,$$
for $f=\sum_{i=0}^{\infty} c_iu_i$.
We note that $U_{e^{-t}}$ is, in fact, the heat kernel corresponding to $\Delta$, which is the averaging operator with respect to the Brownian motion on the corresponding manifold.

For a Riemaniann manifold $(M,g)$ and  $C>0$, we say that $(M,g)$ has {\em Ricci curvature bounded from below by $C$} if for all points $p \in M$, the Ricci tensor $\textrm{Ric}_p(\cdot,\cdot)$ at $p$ satisfies
$$\text{Ric}_p(X,X) \geq C g_p(X,X)$$
for all tangent vectors $X$ at $p$.

The hypercontractive inequality we need is the following.
\begin{thm}\label{thm:hyp-general}
Let $C>0$ and let $(M,g)$ be a compact, connected Riemann manifold whose Ricci curvature is bounded from below by $C$, let $2 \leq p \leq q$ and let $f \in L^p(M)$. Then
$$\|U_{\delta}(f)\|_q \leq \|f\|_p\quad \forall\ 0 \leq \delta \leq \left(\frac{p-1}{q-1}\right)^{1/C}.$$
\end{thm}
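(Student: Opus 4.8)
The plan is to deduce the hypercontractive estimate from the equivalence, due to Gross, between log-Sobolev inequalities and hypercontractivity for a Markov diffusion semigroup, combined with the Bakry--\'Emery curvature-dimension criterion. First I would observe that the noise operator $U_{e^{-t}}$ is precisely the heat semigroup $P_t := e^{t\Delta}$ associated to the Laplace--Beltrami operator $\Delta$ on the compact connected Riemannian manifold $(M,g)$: since $-\Delta$ has the orthonormal eigenbasis $u_0,u_1,\ldots$ with eigenvalues $0=\lambda_0<\lambda_1\le\lambda_2\le\cdots$, we have $P_t f=\sum_i c_i e^{-t\lambda_i}u_i = U_{e^{-t}}(f)$. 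This $(P_t)_{t\ge0}$ is a symmetric Markov semigroup (conservative, positivity-preserving, with invariant measure the normalized Riemannian volume, which coincides with Haar measure in our group cases), and its Dirichlet form is $\mathcal{E}(f,f)=\int_M |\nabla f|^2\,d\mu$.

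Next I would invoke the Bakry--\'Emery criterion: because $(M,g)$ has Ricci curvature bounded below by $C>0$ (and the associated generator is a diffusion with $\Gamma_2(f,f)\ge \mathrm{Ric}(\nabla f,\nabla f)\ge C\,\Gamma(f,f)$, the extra dimensional term only helping), the semigroup satisfies the curvature condition $CD(C,\infty)$, and hence (e.g.\ via \cite{bakry1985diffusions}, or the exposition in \cite{regev-klartag}) $(M,\mu)$ satisfies the logarithmic Sobolev inequality
\[
\mathrm{Ent}_\mu(f^2)\le \frac{2}{C}\int_M |\nabla f|^2\,d\mu
\]
for all smooth $f$, i.e.\ a log-Sobolev inequality with constant $\rho_{\mathrm{LS}}=2/C$. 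Then I would apply Gross's theorem \cite{gross} (the integrated equivalence between the log-Sobolev inequality and hypercontractivity of the associated semigroup): a log-Sobolev inequality with constant $2/C$ is equivalent to the statement that for $2\le p\le q$,
\[
\|P_t f\|_q\le \|f\|_p\qquad\text{whenever}\qquad e^{2t/(2/C)}=e^{Ct}\ge \frac{q-1}{p-1},
\]
that is, whenever $t\ge \frac{1}{C}\log\frac{q-1}{p-1}$. Rewriting with $\delta=e^{-t}$, the condition $t\ge \frac1C\log\frac{q-1}{p-1}$ becomes $\delta\le\big(\frac{p-1}{q-1}\big)^{1/C}$, which is exactly the claimed range. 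Since $\|P_t f\|_q$ is nonincreasing in $t$ (the semigroup is an $L^q$-contraction), the bound for the threshold $t$ gives the bound for all larger $t$, hence for all $\delta$ in the stated interval; the case $p=q$ or $\delta=0$ is trivial. A density argument extends the inequality from smooth $f$ to all $f\in L^p(M)$.

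The main obstacle — really the only substantive point beyond bookkeeping — is making the passage from the geometric curvature bound to the analytic log-Sobolev constant fully rigorous in the generality needed: one must check that $\Delta$ (with the bi-invariant metric on the simple compact Lie groups, suitably normalized) is a genuine diffusion operator to which the Bakry--\'Emery $\Gamma_2$-calculus applies, that the Bochner--Weitzenb\"ock formula yields $\Gamma_2(f,f)=\|\nabla^2 f\|_{\mathrm{HS}}^2+\mathrm{Ric}(\nabla f,\nabla f)\ge C\,|\nabla f|^2$, and that Gross's semigroup characterization is being quoted in the correct (time-dependent, $p\to q$) form. I would handle this by citing \cite{bakry1985diffusions}, \cite{gross}, and the treatment in \cite{zeitouni-book} and \cite{regev-klartag} rather than reproving the $\Gamma_2$ computation, and simply remark that compactness guarantees all the regularity and spectral-gap hypotheses are automatically satisfied.
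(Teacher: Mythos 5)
Your proposal takes the same route as the paper: Bakry--\'Emery for Ricci curvature $\geq C$ gives a log-Sobolev inequality with constant $2/C$, and Gross's theorem then yields the hypercontractive bound for the heat semigroup $U_{e^{-t}}$ with the threshold $e^{Ct}\geq(q-1)/(p-1)$, i.e.\ $\delta\leq((p-1)/(q-1))^{1/C}$. The paper states this only as a one-sentence citation to the Bakry--\'Emery criterion, \cite[Cor.~4.4.25]{zeitouni-book} with $\Phi=0$, and \cite[Thm.~6]{gross}, so your write-up is a faithful and correctly detailed unpacking of the same argument.
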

As explained in Klartag and Regev \cite{regev-klartag} the Bakry-Emery criterion yields a log-Sobolev inequality (as given for example in  \cite[Corollary 4.4.25]{zeitouni-book} applied with $\Phi=0$), which implies a hypercontractive inequality by a theorem of Gross \cite[Theorem 6]{gross}). 

\remove{An intuitive explanation is as follows: as in the Euclidean case, by the Divergence Theorem, the Laplace--Beltrami operator, applied to a function $f$ and evaluated at a point $p$, measures how much the value of $f$ at a point $p$ differs from the `average' value of $f$ on small metric spheres centred at $p$. The higher the eigenvalue, the greater the local variation of the eigenfunction, hence the need to smooth by higher powers of $\delta$ (as indeed the operator $U_{\delta}$ does), in order to control the $q$-norm in terms of the $p$-norm.
}

\paragraph{Utilizing the Riemannian structure.} As mentioned above, the compact simple Lie groups have a unique (up to normalization) bi-invariant Riemannian manifold structure. In order to set it up one needs to assign an inner-product on the tangent space at the identity $\text{Id}$ of $G$, i.e., on the Lie algebra of $G$. (The inner product on all the other tangent spaces is then determined by using a push-forward with respect to left multiplication by the appropriate group element.)

The tangent space of $\Spin(n)$ at the identity is the Lie algebra of $\SO(n)$. As in \cite{zeitouni-book}, we equip it with the usual Euclidean / Hilbert-Schmidt norm, i.e. the norm of a matrix is the sum of the squares of its entries. This norm gives rise to a bi-invariant metric when applying push-forward maps to define the norm on all the other tangent spaces. The norm on the Lie algebras of $\SU(n)$ and $\Sp(n)$ is defined similarly by taking the sum of squares of the components of each entry.  

It is well-known (see~\cite[4.4.30]{zeitouni-book}), that the Ricci curvature of the simply connected compact Lie groups $\Spin(n),\SU(n), \Sp(n)$ is bounded from below by $(n-2)/4, \frac{n}{2},n+1$ respectively.  

\remove{
It is easy to check, and well-known, that the Ricci curvature of $\SO(n)$, equipped with the standard (bi-invariant) metric inherited from Euclidean space (and where the tangent space is identified with the Lie algebra of $\SO(n)$, i.e.\ the space of real skew-symmetric $n$ by $n$ matrices, equipped with the usual Euclidean norm $\|A\|_2^2 := \sum_{i,j=1}^{n} A_{ij}^2$ on $\mathbb{R}^{n\times n}$), is bounded from below by $(n-2)/4$.
}

Using Theorem~\ref{thm:hyp-general}, we can prove Theorem~\ref{thm:Every simple groups is fine}, assuming the following lemma. 
\begin{lemma}\label{lem: eigenvalues for the Laplacian}
There exists an absolute constant $C$, such that the following holds. Let $G$ be either $\SU(n), \Sp(n)$ or $\SO(n)$. Let $d \in \mathbb{N}$ with $0 \leq d<n/2 $ and let $\rho \in \mathcal{L}_d$ be a representation of level $d$. Then the corresponding eigenvalue of the Laplace--Beltrami operator for $G$ satisfies $\lambda_{\rho} \ge -C n d$.
\end{lemma}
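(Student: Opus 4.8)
The plan is to exploit the fact that the Laplace--Beltrami operator $\Delta$ attached to a bi-invariant metric commutes with both the left and the right translation actions of $G$; hence, by the Peter--Weyl theorem and Schur's lemma, it acts as a scalar $-\lambda_\rho$ on each Peter--Weyl ideal $W_\rho$, and this scalar equals a fixed normalising constant $\kappa_G$ times the quadratic Casimir eigenvalue $c_\rho = \langle \Lambda, \Lambda + 2\varrho\rangle$, where $\Lambda$ is the highest weight of $\rho$, $\varrho$ is half the sum of the positive roots, and $\langle\cdot,\cdot\rangle$ is the form on the weight lattice dual to the inner product on the Lie algebra fixed in Section~\ref{sec:hyp_from_curv}. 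First I would record the classical explicit evaluation of this scalar for the three families (see e.g.\ Berti--Procesi~\cite{berti-procesi}, or \cite{fulton-harris} for the general formula): for $\SO(n)$ --- and hence $\Spin(n)$, whose metric and whose level-$d$ representations with $d<n/2$ coincide with those of $\SO(n)$ --- one has $c_\rho = \sum_i \lambda_i(\lambda_i + n - 2i)$, and for $\Sp(n)$ one has $c_\rho = \sum_i \lambda_i(\lambda_i + 2n + 2 - 2i)$, where $\lambda=(\lambda_1,\lambda_2,\dots)$ is the partition associated to $\rho$. The constant $\kappa_G$ is bounded above and below by absolute constants: this is checkable directly on the standard representation, whose Casimir is $\Theta(n)$ while the Laplacian eigenvalue of the coordinate functions $X\mapsto X_{ij}$ (in the metric of Section~\ref{sec:hyp_from_curv}) is also $\Theta(n)$. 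So it suffices to show $c_\rho \le O(nd)$ for a level-$d$ representation.

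For $\SO(n)$ and $\Sp(n)$ this is immediate. By Lemma~\ref{lem: degrees of So_n in terms of reps} (resp.\ Lemma~\ref{lem:symplectic degree decomposition works}) a representation of level $d$ corresponds to a partition $\lambda$ with $\sum_i \lambda_i = d$; as each nonzero part is at least $1$, the number of nonzero parts is at most $d < n/2$, so in the formulae above every index $i$ occurring satisfies $n-2i>0$ (resp.\ $2n+2-2i>0$) and all terms are nonnegative. Hence $c_\rho \le \sum_i \lambda_i^2 + (2n+2)\sum_i \lambda_i \le \big(\sum_i \lambda_i\big)^2 + (2n+2)d \le d^2 + 3nd \le 4nd$, using $d<n$. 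This proves the claim for $\SO(n)$, $\Spin(n)$ and $\Sp(n)$.

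For $\SU(n)$ the extra ingredient is the translation between the combinatorial level and the Casimir. By Lemma~\ref{The notions of level agree} a representation of level $d<n/2$ has total level $d$, i.e.\ its step vector $a=(a_1,\dots,a_{n-1})$ satisfies $\sum_i a_i\min(i,n-i) = d$. Writing the highest weight in the basis of fundamental weights, $\Lambda = \sum_i a_i\omega_i$, we have $c_\rho = \sum_{i,j} a_ia_j\langle\omega_i,\omega_j\rangle + \sum_i a_i\langle\omega_i,2\varrho\rangle$, together with the classical identities $\langle\omega_i,\omega_j\rangle = \min(i,j)(n-\max(i,j))/n$ and $\langle\omega_i,2\varrho\rangle = i(n-i)$ (in the same normalisation as above, since $\varrho=\sum_i\omega_i$ for type $A_{n-1}$). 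I would then bound the two pieces separately: a short case analysis gives $\langle\omega_i,\omega_j\rangle \le \sqrt{m_im_j}$ with $m_i:=\min(i,n-i)$, so the first piece is at most $\big(\sum_i a_i\sqrt{m_i}\big)^2 \le \big(\sum_i a_i\big)\big(\sum_i a_im_i\big) \le d^2$ by Cauchy--Schwarz (using $m_i\ge 1$, hence $\sum_i a_i \le \sum_i a_im_i = d$), while the second piece is at most $n\sum_i a_im_i = nd$ since $i(n-i)\le n\,m_i$. Thus $c_\rho \le d^2 + nd \le 2nd$. (One can also bypass the step-vector computation: by Lemma~\ref{lemma:index} the representation embeds in $V^{\otimes E}\otimes(V^*)^{\otimes F}$ with $E+F=d$, so its highest weight is $\preceq E\omega_1 + F\omega_{n-1}$ in the dominance order; since $\Lambda\mapsto\langle\Lambda,\Lambda+2\varrho\rangle$ is monotone for the dominance order on dominant weights, $c_\rho \le c_{E\omega_1+F\omega_{n-1}}$, which one evaluates directly as $\le d^2+nd$.)

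The main obstacle is the $\SU(n)$ bookkeeping: one must confirm that the quantity entering the explicit Laplacian formula is exactly the ``total level'' used elsewhere in the paper (which is why Lemmas~\ref{The notions of level agree} and~\ref{lemma:index} are invoked), and that the metric normalisation fixed in Section~\ref{sec:hyp_from_curv} contributes only an $n$-independent factor, so that the final bound is genuinely of the form $Cnd$ with $C$ absolute. Once those two points are pinned down, the remaining estimates are elementary.
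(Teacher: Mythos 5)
Your proof is correct and rests on the same foundation as the paper's: the Berti--Procesi formula $\lambda_v = -\langle v, v+2\varrho\rangle$ for the Laplacian eigenvalue (equivalently, the quadratic Casimir), evaluated via the fundamental-weight inner products, together with the paper's dictionary between level $d$ and partition/step-vector data (Lemmas~\ref{lem: degrees of So_n in terms of reps}, \ref{lem:symplectic degree decomposition works}, \ref{The notions of level agree}). The difference is in the bookkeeping. For $\SO(n)$ and $\Sp(n)$, the paper writes $v$ explicitly in the $u_j$ (or $e_j$) basis via the step vector $r_i = \lambda_i - \lambda_{i+1}$ and estimates $\|v\|_2^2$ and $\langle v,\rho\rangle$ directly (with a separate subcase analysis for $n$ even), whereas you invoke the standard partition form of the Casimir, $c_\rho = \sum_i \lambda_i(\lambda_i + n - 2i)$ (resp.\ $\sum_i \lambda_i(\lambda_i + 2n+2-2i)$), observe that $d<n/2$ forces every coefficient to be positive and $\le O(n)$, and read off $c_\rho \le O(nd)$ immediately. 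For $\SU(n)$, both you and the paper use $\langle\omega_i,\omega_j\rangle = \min(i,j)(n-\max(i,j))/n$; the paper then expands $\langle v,\sigma\rangle$ and $\|v\|_2^2$ termwise, whereas you insert the cleaner intermediate bound $\langle\omega_i,\omega_j\rangle \le \sqrt{m_im_j}$ with $m_i = \min(i,n-i)$ and close via Cauchy--Schwarz. Your alternative remark (embedding into $V^{\otimes E}\otimes (V^*)^{\otimes F}$ with $E+F=d$ and dominance-order monotonicity of the Casimir) is a third route the paper does not take, and is perhaps the most conceptual. All three arguments yield the bound up to the unspecified absolute constant $C$, and your concern about the metric normalisation is correctly resolved by noting that the paper's bi-invariant metric (sum-of-squares on the Lie algebra) matches the one Berti--Procesi use, so no rescaling arises beyond an $n$-independent factor.
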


The proof of this lemma uses a formula for the eigenvalue $\lambda_{\rho}$ in terms of the fundamental weight corresponding to $\rho$, which is well-known and appears e.g.\ in Berti and Procesi \cite{berti-procesi}. We defer the proof to the Appendix.

\begin{proof}[Proof of Theorem~\ref{thm:Every simple groups is fine}]
We have already established the strong quasirandomness for all the groups $G$ of the form $\Spin(n), \Sp(n),$ and $\SU(n)$. It remains to establish their weak hypercontractivity. By Lemma \ref{lem:quotient-inverse-hyper}, it suffices to prove that $\SO(n),\Sp(n)$ and $\SU(n)$ are weakly hypercontractive.

By Theorem \ref{thm:hyp-general} there exists an absolute constant $C_1$, such that   setting $\delta = \left(\frac{1}{q-1}\right)^{\frac{1}{nC_1}},$ we have  $\|U_{\delta}\|_{2\to q}\le 1.$ Let $\rho \in \mathcal{L}_d$ be of level $d$. Then by Lemma \ref{lem: eigenvalues for the Laplacian} we obtain that the eigenvalues of $U_\delta$ given by $\delta^{-\lambda_\rho}$ are $\ge q^{-Cd}$ for some absolute constant $C$. This implies that we may write $T_{q^{-C},r} = U_{\delta} \circ S$, where all the eigenvalues of $S$ are $\le 1.$ Hence, 
\[
\left\|T_{q^{-C},r}\right \|_{2\to q}\le \left \|U_{\delta}\right \|_{2\to q}\left\|S\right\|_{2\to 2}\le 1.
\]
This completes the proof of the theorem.
\end{proof}

\remove{
Indeed, in our case 
if $f\colon \SO(n)\to\mathbb{R}$ is a function and $\rho$ is a representation of $\SO(n)$, then $U_{\delta} f_{\rho} = \delta^{\lambda_{\rho}} f_{\rho}$ and 
so
\[
\norm{f_{\rho}}_2^2
=\inner{f}{f_{\rho}}
=\delta^{-\lambda_{\rho}}\inner{f}{U_{\delta} f_{\rho}}
\leq
\delta^{-\lambda_{\rho}}\norm{f}_{q/(q-1)}\norm{U_{\delta} f_{\rho}}_q,
\]
where we used H\"{o}lder's inequality. Taking $\delta = \left(\frac{1}{q-1}\right)^{1/C}$ we conclude, using Theorem~\ref{thm:hyp-general}, that
\[
\norm{f_{\rho}}_2^2\leq (q-1)^{\frac{\lambda_{\rho}}{C}}\norm{f}_{q/(q-1)}\norm{f_{\rho}}_2,
\]
and re-arranging gives
\[
\norm{f_{\rho}}_2^2 \leq (q-1)^{\frac{2\lambda_{\rho}}{C}}\norm{f}_{q/(q-1)}^2.
\]
Choosing $q = \log(1/\alpha)$ for $\alpha = \E[f]$ gives that 
\[
\norm{f_{\rho}}_2^2
\leq \alpha^2 \log^{\frac{2\lambda_{\rho}}{C}}(\alpha)
\leq \alpha^2 \log^{\frac{8\lambda_{\rho}}{n-2}}(\alpha),
\]
where we used the lower bound on $C$ the Ricci curvature of $\SO(n)$.

Therefore, Theorem~\ref{thm:level-d inequality_from_curv} would follow
once we establish an upper bound on $\lambda_{\rho}$.  
}

\section{Showing that \boldmath\texorpdfstring{$\Sp(n), \Spin(n)$}{Sp(n), Spin(n)} and \texorpdfstring{$\SU(n)$}{SU(n)} are good}\label{sec:coupling_introduce}

We now outline our coupling-based approach to showing that $\SO(n),\Sp(n)$ and $\SU(n)$ are good. (By Lemma \ref{lem:quotient-inverse-hyper}, the goodness of $\Spin(n)$ will follow from that of $\SO(n)$.) We focus on the case of $\SO(n)$, and discuss the necessary adaptations for $\SU(n)$ and $\Sp(n)$ in the Appendix.

\subsubsection*{Our coupling approach for proving hypercontractivity} 
Our approach is based on constructing a coupling between matrices $X$ sampled according to the Haar measure on $\SO(n)$ and matrices $Y\in\mathbb{R}^{n\times n}$
whose entries are independent standard Gaussians, with the intuition that the distributions of $\sqrt{n} X$ and $Y$ are `locally' close to one another.

We use this coupling to define a noise operator on $L^2(\SO(n),\mu)$: first we use the coupling to move a function to Gaussian space, then we apply the well-known Gaussian noise operator (the Ornstein--Uhlenbeck operator), and then we go back to $\SO(n)$ via the coupling. It turns out that a noise operator defined this way inherits the hypercontractive property
from the Gaussian noise operator (this is easy to see), so we get hypercontractivity `for free'. The real work of the proof is to show
that the eigenvalues of our noise operator on $L^2(\SO(n),\mu)$ that correspond to functions in $V_{=d}$ are not too small. We show 
these eigenvalues are at least $2^{-O(d)}$, provided $d\leq \delta\cdot n^{1/2}$, for a sufficiently small absolute constant $\delta>0$.

\subsection{The Gaussian noise operator, a.k.a.\ the Ornstein--Uhlenbeck operator}

In this section, we recall the definition of the Gaussian noise operator and several of its properties. For simplicity of notation, we present this theory for functions
in $L^2(\mathbb{R}^n,\gamma)$, however everything applies more generally to functions in $L^2(\mathbb{R}^{n\times m}, \gamma)$. (Here and elsewhere, we abuse
notation slightly and denote by $\gamma$ a Gaussian distribution, where the domain is clear from context.)
\begin{definition}
  For $\rho\in [0,1]$, we define $U_{\rho}\colon L^2(\mathbb{R}^{n},\gamma)\to L^2(\mathbb{R}^{n},\gamma)$ by
\[
U_\rho f (X)=\mathbb{E}_{Y\sim \gamma}[f(\rho X +\sqrt{1-\rho^2})Y].
\]
\end{definition}
It is a well known fact that $U_{\rho}$ is hypercontractive~\cite{Nelson}:
\begin{thm}\label{thm:Gaussian_noise_hypercontract}
 Let $f\colon \mathbb{R}^n \to \mathbb{R}$ and let $0\leq \rho \le \frac{1}{\sqrt{q-1}}$. Then $\|U_{\rho}f\|_{L^q(\gamma)} \le \|f\|_{L^2(\gamma)}$.
\end{thm}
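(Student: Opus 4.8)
The plan is to recognise Theorem~\ref{thm:Gaussian_noise_hypercontract} as the classical Gaussian hypercontractivity theorem of Nelson, and to prove it via the same log-Sobolev plus Gross mechanism already used for Theorem~\ref{thm:hyp-general}, now applied to the (non-compact) weighted manifold $(\mathbb{R}^n, g_{\mathrm{eucl}}, \gamma)$. First I would identify $U_\rho$ with the Ornstein--Uhlenbeck semigroup run for time $t=\log(1/\rho)$: both $U_{e^{-t}}$ and $P_t:=e^{tL}$, with $L=\Delta-x\cdot\nabla$ the OU generator, act diagonally on the Hermite basis of $L^2(\mathbb{R}^n,\gamma)$ with $U_\rho H_\alpha=\rho^{|\alpha|}H_\alpha$ and $P_tH_\alpha=e^{-|\alpha|t}H_\alpha$, so $U_{e^{-t}}=P_t$. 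In particular $(U_\rho)$ is a semigroup with $U_\rho U_{\rho'}=U_{\rho\rho'}$, $U_1=\mathrm{id}$, $U_0=\mathbb{E}_\gamma$, each $U_\rho$ is an $L^r(\gamma)$-contraction, and $L$ is symmetric in $L^2(\gamma)$ with $\inner{g}{Lg}=-\mathbb{E}_\gamma[|\nabla g|^2]$. By density of polynomials it suffices to prove the bound for smooth $f$ of polynomial growth, and replacing $f$ by $|f|$ we may assume $f\ge 0$.

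The one genuine input is the Gaussian log-Sobolev inequality $\mathrm{Ent}_\gamma(h^2)\le 2\,\mathbb{E}_\gamma[|\nabla h|^2]$, where $\mathrm{Ent}_\gamma(\phi)=\mathbb{E}_\gamma[\phi\log\phi]-\mathbb{E}_\gamma[\phi]\log\mathbb{E}_\gamma[\phi]$. I would obtain it exactly as in the Ricci-curvature argument of the paper: the Bakry--Émery tensor of $(\mathbb{R}^n,g_{\mathrm{eucl}},\gamma)$ is $\mathrm{Ric}+\nabla^2(|x|^2/2)=I$, so the Bakry--Émery criterion~\cite{bakry1985diffusions} yields the log-Sobolev inequality with constant $2$. (Alternatively one proves the one-dimensional case — itself a consequence of the elementary two-point log-Sobolev inequality on $\{-1,1\}$ together with the central limit theorem — and tensorises, using sub-additivity of entropy and additivity of the Dirichlet form over product measures.)

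Given the log-Sobolev inequality, hypercontractivity follows by Gross's differentiation argument~\cite{gross}. Fix $q\ge 2$, set $q(t)=1+e^{2t}$ (so $q(0)=2$), put $g=g_t:=P_tf\ge 0$ and $F(t):=\norm{g_t}_{q(t)}$. Differentiating $q(t)\log F(t)=\log\mathbb{E}_\gamma[g_t^{q(t)}]$ in $t$, using $\partial_t g=Lg$, the integration by parts $\mathbb{E}[g^{q-1}Lg]=-\tfrac{4(q-1)}{q^2}\mathbb{E}[|\nabla g^{q/2}|^2]$, and the identity $\mathbb{E}[g^q\log g]-\mathbb{E}[g^q]\log F=\tfrac1q\,\mathrm{Ent}_\gamma(g^q)$, one finds that $F'(t)\le 0$ is equivalent to $q'(t)\,\mathrm{Ent}_\gamma(h^2)\le 4\bigl(q(t)-1\bigr)\mathbb{E}[|\nabla h|^2]$ with $h=g^{q/2}$; since $q'=2(q-1)$ for this choice of $q(\cdot)$, this is precisely the log-Sobolev inequality. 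Hence $F(t)\le F(0)=\norm{f}_2$, i.e. $\norm{U_{e^{-t}}f}_{1+e^{2t}}\le\norm{f}_2$, which is the claimed bound at the boundary value $\rho=(q-1)^{-1/2}$. For general $\rho\le(q-1)^{-1/2}$, write $\rho=\rho_0\cdot(\rho/\rho_0)$ with $\rho_0=(q-1)^{-1/2}$ and use the semigroup law together with the $L^2(\gamma)$-contractivity of $U_{\rho/\rho_0}$: $\norm{U_\rho f}_q=\norm{U_{\rho_0}(U_{\rho/\rho_0}f)}_q\le\norm{U_{\rho/\rho_0}f}_2\le\norm{f}_2$.

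The hard part is the one-dimensional Gaussian log-Sobolev inequality (equivalently the two-point inequality and its passage to the limit); everything else is soft semigroup calculus and density/regularity bookkeeping — justifying the differentiation of $F$, the smoothness of $P_tf$, the reduction to $f\ge 0$, and the extension from smooth functions to all of $L^2(\mathbb{R}^n,\gamma)$ (and, as remarked, the trivial passage from $\mathbb{R}^n$ to $\mathbb{R}^{n\times m}$). If one takes the central-limit-theorem route to the log-Sobolev inequality, the step needing the most care is checking that the rescaled discrete gradient on $\{-1,1\}^N$ converges to $|g'|^2$ as $N\to\infty$; this is where I would expect the written argument to be longest.
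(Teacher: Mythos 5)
Your proof is correct, but there is no proof in the paper to compare it against: the paper simply states the theorem as ``a well known fact'' and cites it to Nelson's work, without giving an argument. Your approach --- identifying $U_\rho$ with the Ornstein--Uhlenbeck semigroup, invoking the Gaussian log-Sobolev inequality (here obtained via Bakry--\'Emery, consistently with how the paper handles the compact Lie group case in Theorem~\ref{thm:hyp-general}), and running Gross's differentiation argument with $q(t)=1+e^{2t}$ --- is the classical route, and all the intermediate computations you indicate (the Dirichlet-form identity $\mathbb{E}[g^{q-1}Lg]=-\tfrac{4(q-1)}{q^2}\mathbb{E}[|\nabla g^{q/2}|^2]$, the entropy identity, the boundary value $\rho=(q-1)^{-1/2}$, and the final semigroup/contractivity step for general $\rho$) check out. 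The only thing worth flagging is that your remark that ``the hard part is the one-dimensional log-Sobolev inequality'' applies to the tensorisation/CLT route you mention as an alternative; on the Bakry--\'Emery route the log-Sobolev inequality comes out directly from the curvature bound, and the genuinely delicate point is the regularity bookkeeping needed to justify the differentiation of $t\mapsto\norm{P_tf}_{q(t)}$ on the non-compact domain $\mathbb{R}^n$, which you rightly acknowledge but do not carry out.
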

Below we use $U_\rho$ to construct an operator $\mathrm{T}_{\rho}$ over $L^2(\sqrt{n}\SO(n))$ which is hypercontractive,
and on which we have lower bounds on the eigenvalues corresponding to low-degrees, thereby showing that the group $\SO(n)$ is good.

\subsection{Constructing the noise operator $\mathrm{T}_\rho$}
\label{sec:gs}
In this section, we design our noise operator $\mathrm{T}_{\rho}$ on $L^2(\SO(n))$. 
En route, we define auxiliary operators that act on both $L^2(\mathbb{R}^{n\times n}, \gamma^{n\times n})$ and $L^2(\sqrt{n}\SO(n), \mu)$.

\subsubsection*{Left and right multiplication by matrices from $\SO(n)$}
\begin{definition}
  For a matrix $U\in\SO(n)$, we define the operator $L_U$ acting both on $L^2(\mathbb{R}^{n\times n}, \gamma^{n\times n})$ and $L^2(\sqrt{n}\SO(n), \mu)$, as follows.
  For a function $f\colon \mathbb{R}^{n\times n}\to\mathbb{R}$, the function $L_U f\colon \mathbb{R}^{n\times n}\to\mathbb{R}$ is defined by
  \[
  L_U f(X) = f(UX).
  \]
  For a function $f\colon \sqrt{n}\SO(n)\to\mathbb{R}$, we similarly define $L_U f(X) = f(UX)$.
\end{definition}
We similarly define the operator $R_V$ corresponding to right multiplication.
\begin{definition}
  For a matrix $V\in\SO(n)$, we define the operator $R_V$ acting both on $L^2(\mathbb{R}^{n\times n}, \gamma)$ and $L^2(\sqrt{n}\SO(n), \mu)$, as follows.
  For a function $f\colon \mathbb{R}^{n\times n}\to\mathbb{R}$, the function $R_V f\colon \mathbb{R}^{n\times n}\to\mathbb{R}$ is defined by
  \[
  R_V f(X) = f(XV).
  \]
  For a function $f\colon \sqrt{n}\SO(n)\to\mathbb{R}$, we similarly define $R_V f(X) = f(XV)$.
\end{definition}

\subsubsection*{The Gram--Schmidt operators.}
Next, we define the operators $\Trow, \Tcol$ that capture our coupling and map $L^2(\SO(n),\mu)$ to $L^2(\mathbb{R}^{n\times n}, \gamma)$, as well as
their adjoint operators that go in the reverse direction. To do so, we use the Gram-Schmidt process.

Fix a matrix $X\in\mathbb{R}^{n\times n}$ and let $c_1,\ldots,c_n$ be its columns. Provided $\det(X) \neq 0$ (which for a Gaussian matrix happens with probability one), we may apply the Gram-Schmidt process on $(c_1,\ldots,c_n)$
to get an orthonormal set of vectors $\tilde{c}_1,\ldots,\tilde{c}_n$. Abusing notation slightly, we define the matrix ${\sf GS}_{{\sf col}}(X)\in \sqrt{n} \SO(n)$ as the matrix whose $i^{\text{th}}$
column is $\sqrt{n}\tilde{c}_i$ for all $i < n$ and whose $n$th column is either $\sqrt{n}\tilde{c}_n$ (if $\det(X)>0$) or $-\sqrt{n}\tilde{c}_n$ (if $\det(X)<0$); this is of course a (column-) dilation of the (column-) Gram-Schmidt matrix corresponding to $X$. Since the Gram-Schmidt process preserves the sign of the determinant, this matrix ${\sf GS}_{{\sf col}}(X)$ is indeed in $\sqrt{n}\SO(n)$. 

Similarly, letting $r_1,\ldots,r_n$ be the rows of $X$, we let $\tilde{r}_1,\ldots,\tilde{r}_n$ be the resulting set of vectors
by applying the Gram-Schmidt process on $(r_1,\ldots,r_n)$ and define the matrix ${\sf GS}_{{\sf row}}(X)$ as the matrix whose $i^{\text{th}}$
row is $\sqrt{n}\tilde{r}_i$ for all $i < n$, and whose $n$th row is either $\sqrt{n}\tilde{r}_n$ (if $\det(X)>0$) or $-\sqrt{n}\tilde{r}_n$ (if $\det(X)<0$). 

The dilated Gram--Schmidt processes above define couplings $(X,{\sf GS}_{{\sf col}}(X))$ and $(X,{\sf GS}_{{\sf row}}(X))$
between $\gamma$ and $\mu$, and we use these to define the operators $\Trow$ and $\Tcol$:
\begin{definition}
  We define $\Trow\colon L^2(\sqrt{n}\SO(n), \mu)\to L^2(\mathbb{R}^{n\times n}, \gamma)$ as follows.
  For a function $f\colon \sqrt{n}\SO(n)\to\mathbb{R}$, we define $\Trow f\colon \mathbb{R}^{n\times n}\to\mathbb{R}$ by
  \[
  \Trow f(X) = f({\sf GS}_{{\sf row}}(X)).
  \]
\end{definition}
\begin{definition}
  We define $\Tcol\colon L^2(\sqrt{n}\SO(n), \mu)\to L^2(\mathbb{R}^{n\times n}, \gamma)$ as follows.
  For a function $f\colon \sqrt{n}\SO(n)\to\mathbb{R}$, we define $\Tcol f\colon \mathbb{R}^{n\times n}\to\mathbb{R}$ by
  \[
  \Tcol f(X) = f({\sf GS}_{{\sf col}}(X)).
  \]
\end{definition}

\subsubsection*{The operator $\mathrm{T}_{\rho}$.}
The operators $\Tcol$ and $\Trow$ allow us to move from $L^2(\mu)$ to $L^2(\gamma)$. We can also go in the reverse direction, using their adjoints. It is easy to see, using Jensen's inequality, that $\Tcol^* U_\rho \Tcol$ has the same hypercontractive properties as $U_\rho$. Thus, it is natural 
to consider the operator $\Tcol^* U_\rho \Tcol$ as an analogue of the Gaussian noise operator, for $\sqrt{n}\SO(n)$. We do not know, however, how to bound from below the eigenvalues of $\Tcol^* U_\rho \Tcol$ so as to deduce
Theorem~\ref{thm:Super Bonami in O_n}. The reason is that to bound its eigenvalues, 
we (naturally) need some information about the eigenvectors corresponding to them, and we only know how
to obtain such information from classical
representation facts about $\SO(n)$. To use these facts (so as to ensure the eigenspaces are `nice', and easy to analyse), it is necessary that our 
operator commutes with the action of $\SO(n)$ {\em from both sides}. For the operator $\Tcol^* U_\rho \Tcol$ above, one can show that it commutes with multiplication from the left, i.e.\ with the operators $L_U$, but unfortunately, it does not commute with multiplication from the right. To overcome this, we obtain commutation with the action of $\SO(n)$ from the right with an averaging trick, which is an analogue of the famous Weyl unitary trick.
\begin{definition}
  We set $\mathrm{T}_\rho = \Expect{V\sim \SO(n)}{R_V^*\Tcol^*U_{\rho}\Tcol R_V}$.
\end{definition}

The following result asserts that $\mathrm{T}_{\rho}$ is hypercontractive, and it also gives lower bounds on its eigenvalues (which are required for deducing
our level $d$ inequalities).
\begin{thm}\label{thm:hypercontractivity in O_n}
    For each $\rho \in (0,1)$, the operator $\mathrm T_{\rho}$ is self adjoint on $L^2(\sqrt{n}\SO(n),\mu)$ and has the following properties:
    \begin{enumerate}
        \item $\mathrm{T}_\rho$ commutes with both left and right multiplication by matrices from $\SO(n)$.
        \item If $\rho \le \frac{1}{\sqrt{q-1}}$, and $f\in L^2(\sqrt{n}\SO(n),\mu)$, then $\| \mathrm{T}_{\rho} f\|_{L^q(\mu)} \le \|f\|_{L^2(\mu)}$.
        \item There exist absolute constants $\delta>0, C>1$, such that if $d\le \delta n^{1/2}$ and  $f\in V_d$, then
        \[
        \|\mathrm{T}_{\rho}f\|_{L^2(\mu)}\ge C^{-d}\rho^{d}\|f\|_{L^2(\mu)}.
        \]
    \end{enumerate}
    \end{thm}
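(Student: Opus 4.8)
The plan is to establish the three properties of $\mathrm{T}_{\rho} = \mathbb{E}_{V\sim \SO(n)}[R_V^*\Tcol^*U_{\rho}\Tcol R_V]$ in turn, with the third being by far the hardest. Self-adjointness is immediate once we observe that $\Tcol^*U_{\rho}\Tcol$ is self-adjoint (since $U_\rho$ is self-adjoint on Gaussian space and $\Tcol^*$ is by definition the adjoint of $\Tcol$) and that $R_V^* = R_{V^{-1}}$, so averaging over the (inversion-invariant) Haar measure on $\SO(n)$ preserves self-adjointness.

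For property (1), the key point is that $\Tcol$ intertwines left multiplication, i.e.\ $\Tcol L_U = L_U \Tcol$ for $U \in \SO(n)$: this is because applying Gram--Schmidt to the columns of $UX$ gives $U$ times the Gram--Schmidt matrix of $X$ (left multiplication by an orthogonal matrix commutes with the column Gram--Schmidt process, including the sign fix since $\det(U)=1$), and the coupling distribution is unchanged as $\gamma$ is left-$\O(n)$-invariant. The Gaussian noise operator $U_\rho$ also commutes with $L_U$ (again by rotation-invariance of $\gamma$ and linearity of $X \mapsto UX$). Hence $\Tcol^*U_\rho\Tcol$ commutes with all $L_U$, and since the $R_V$'s commute with the $L_U$'s, so does the averaged operator $\mathrm{T}_\rho$. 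Commutation with right multiplication $R_W$ is the whole point of the symmetrization: $R_W^* \mathrm{T}_\rho R_W = \mathbb{E}_V[R_{VW}^*\Tcol^*U_\rho\Tcol R_{VW}] = \mathrm{T}_\rho$ by the translation-invariance of Haar measure (substituting $V \mapsto VW^{-1}$). For property (2), hypercontractivity is inherited `for free': $\Tcol$ is a norm-$1$ operator from every $L^p(\mu)$ to $L^p(\gamma)$ (it is a conditional-expectation-type operator arising from a coupling, hence an $L^p$ contraction by Jensen), $\Tcol^*$ likewise contracts all $L^p$ norms, $R_V$ and $R_V^*$ are isometries of every $L^p$, and $U_\rho$ is a contraction from $L^2(\gamma)$ to $L^q(\gamma)$ when $\rho \le 1/\sqrt{q-1}$ by Theorem~\ref{thm:Gaussian_noise_hypercontract}; composing and then averaging (using the triangle inequality in $L^q$ and $\mathbb{E}_V[\|\cdot\|]\ge \|\mathbb{E}_V[\cdot]\|$) gives $\|\mathrm{T}_\rho f\|_{L^q(\mu)}\le \|f\|_{L^2(\mu)}$.

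The real work, and the main obstacle, is property (3): the lower bound $\|\mathrm{T}_\rho f\|_2 \ge C^{-d}\rho^d\|f\|_2$ for $f \in V_{=d}$ with $d \le \delta\sqrt{n}$. Since $\mathrm{T}_\rho$ commutes with $\SO(n)$ from both sides, by Peter--Weyl each Peter--Weyl ideal $W_\rho \subseteq V_{=d}$ is an eigenspace, so it suffices to exhibit, in each such $W_\rho$, a single nonzero test function on which $\langle \mathrm{T}_\rho f, f\rangle / \|f\|_2^2$ is at least $(c\rho)^{2d}$; here I would take $f$ to be the comfortable $d$-junta guaranteed by Fact~\ref{fact:weyl-useful} (it depends only on the top-left $d \times d$ minor and is a linear combination of the monomials $m_\sigma$). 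The quantity to control is $\langle \mathrm{T}_\rho f, f\rangle = \mathbb{E}_V[\langle U_\rho \Tcol R_V f, \Tcol R_V f\rangle_\gamma] = \mathbb{E}_V \|U_{\sqrt\rho}\Tcol R_V f\|_{L^2(\gamma)}^2 \ge \rho^d\, \mathbb{E}_V \|(\Tcol R_V f)^{\le d}\|_{L^2(\gamma)}^2$, using that $\Tcol R_V f$ has degree $\le d$ on the columns it depends on so $U_{\sqrt\rho}$ contracts it by at most $\rho^{d/2}$ in $L^2$ on its low-degree part. So everything reduces to showing $\mathbb{E}_V\|(\Tcol R_V f)^{\le d}\|_{L^2(\gamma)}^2 \ge C^{-2d}\|f\|_2^2$, i.e.\ that pushing a comfortable $d$-junta through a random right-rotation and then through the column Gram--Schmidt coupling does not destroy more than a factor $C^{-d}$ of its $L^2$ mass onto degree $\le d$. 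The strategy here (following the Filmus et al.\ coupling philosophy and the random-matrix heuristic that the top $d\times d$ minor of a Haar matrix looks Gaussian) is: choose the distribution of $V$ so that $R_V f$ depends only on the first, say, $n/2$ columns; exploit that comfortable monomials are eigenvectors of $\Tcol^*$ with eigenvalues $\gtrsim 2^{-d}$ when supported on the first $n/2$ columns (seen via the reverse view of the coupling, where one starts from $X \sim \SO(n)$ and multiplies by an upper-triangular matrix, so that column $j$ of the Gaussian is $\approx (1-j/n)$-correlated with column $j$ of the orthogonal matrix); and carefully track how the right-rotation $R_V$ interacts with comfortability and the minor structure, estimating the relevant Jacobian/normalization factors. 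The technical heart is proving the needed distributional closeness between the $d\times d$ minor of $\sqrt n\,\SO(n)$ and a $d\times d$ Gaussian block \emph{uniformly over all comfortable $d$-juntas with $d$ up to $\delta\sqrt n$} — this is exactly where the $\sqrt n$ threshold on $d$ comes from, and where the bulk of the Appendix estimates will be needed.
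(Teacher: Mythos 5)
Your proposal follows the paper's own route in all essential respects: self-adjointness of $\mathrm T_\rho$ via the positivity of $\Tcol^*U_\rho\Tcol$ and the inversion-invariance of Haar averaging (cf.\ Lemma~\ref{lem:commuting}); commutation from the left via $\Tcol L_U = L_U\Tcol$ (which follows because column Gram--Schmidt intertwines left multiplication) and from the right via a change of variables in the $V$-average; hypercontractivity inherited from the Ornstein--Uhlenbeck operator through the coupling using Jensen plus the triangle inequality (Lemma~\ref{lem:hypercontractivity}); and for the eigenvalue bound, reduction to comfortable $d$-juntas via Claim~\ref{claim:reduce_to_vddd}, the identity $\langle \mathrm T_\rho f, f\rangle = \mathbb{E}_V\|U_{\sqrt\rho}\Tcol R_V f\|_{L^2(\gamma)}^2$, projection onto degree $\le d$, and the estimate $\mathbb{E}_V\|(\Tcol R_V f)^{\le d}\|_{L^2(\gamma)}^2 \gtrsim C^{-d}\|f\|_{L^2(\mu)}^2$, which is exactly the paper's Lemma~\ref{lem:main_lemma} and which you correctly identify as the technical heart.

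One small slip: you state that the target is $\langle \mathrm T_\rho f,f\rangle/\|f\|_2^2 \ge (c\rho)^{2d}$ and accordingly ask for $C^{-2d}\|f\|^2$ from the key lemma. Since $\mathrm T_\rho$ acts as a nonnegative scalar $\lambda$ on each Peter--Weyl ideal (so that $\|\mathrm T_\rho f\| = \lambda\|f\|$ and $\langle \mathrm T_\rho f,f\rangle = \lambda\|f\|^2$), the correct target is $\langle \mathrm T_\rho f,f\rangle \ge (c\rho)^{d}\|f\|^2$; this is also what Cauchy--Schwarz (as invoked in the paper) yields. Your chain of inequalities in fact produces $\langle \mathrm T_\rho f,f\rangle \ge \rho^d C^{-2d}\|f\|^2 = (\rho/C^2)^d\|f\|^2$, which is already of the form $(c\rho)^d\|f\|^2$, so the slip is only in the stated target and does not create a gap. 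A minor bibliographic point: the proof of Lemma~\ref{lem:main_lemma} and the attendant distributional-closeness estimates live in Sections~\ref{sec:ingredients} and~\ref{sec:contiguity}, not the appendix.
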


    Let use show how Theorem \ref{thm:hypercontractivity in O_n} immediately implies that $\SO(n)$ and $\Spin(n)$ are good.

\begin{theorem}
    There exist absolute constants $c,C>0$ such that the $n$-graded groups $\SO(n)$ and $\Spin(n)$ are $(C,c)$-good.
\end{theorem}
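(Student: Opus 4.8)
The plan is to read off the statement from results already established: strong quasirandomness of $\SO(n)$ and $\Spin(n)$ (Theorems~\ref{thm:son is quasirandom} and~\ref{thm:Spin is quasirandom}), the three properties of the operator $\mathrm{T}_\rho$ (Theorem~\ref{thm:hypercontractivity in O_n}), and the ``covering'' lemma for hypercontractivity (Lemma~\ref{lem:quotient-inverse-hyper}). Since a $(C,c)$-good $n$-graded group is by definition one that is both $(cn^{1/2},C)$-hypercontractive and $c$-strongly-quasirandom, it suffices to verify these two properties separately, and then at the end shrink $c$ to a common value (which only weakens both conditions). Strong quasirandomness is immediate from Theorems~\ref{thm:son is quasirandom} and~\ref{thm:Spin is quasirandom}, so the work is to upgrade Theorem~\ref{thm:hypercontractivity in O_n} into hypercontractivity of the Beckner operators. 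Throughout, I identify $L^2(\sqrt n\,\SO(n),\mu)$ with $L^2(\SO(n),\mu)$ in the obvious way, since dilating the matrix entries changes neither the group, the Haar measure, nor the spaces $V_{=d}$.

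First I would treat $\SO(n)$. Let $\delta, C>0$ be the absolute constants from part~(3) of Theorem~\ref{thm:hypercontractivity in O_n}, and set $r=\lfloor \delta n^{1/2}\rfloor$ (for the finitely many small $n$ with $cn^{1/2}<1$ the hypercontractivity requirement is vacuous, so assume $n$ large). Fix $q\ge 2$; for $q=2$ every Beckner operator $T_{\delta',r}$ with $\delta'\le 1$ is an $L^2$-contraction, so assume $q>2$ and put $\rho=1/\sqrt{q-1}\in(0,1)$. By part~(1), $\mathrm{T}_\rho$ commutes with the two-sided action of $\SO(n)$, hence by the Peter--Weyl theorem it has each Peter--Weyl ideal $W$ as an eigenspace, say with eigenvalue $\lambda_W\ge 0$ (nonnegativity because $\mathrm{T}_\rho$ is a positive operator, being an average of operators $R_V^*\,\Tcol^* U_\rho \Tcol\, R_V$ with $U_\rho$ positive); in particular $\mathrm{T}_\rho$ preserves every $V_{=d}$, and part~(3) forces $\lambda_W\ge C^{-d}\rho^{d}=(\rho/C)^d$ for every $W$ of level $d\le r$. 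Now fix $0\le \delta'\le 1/(C\sqrt q)$ and define $S\colon L^2(\SO(n))\to L^2(\SO(n))$ to act on each level-$d$ ideal $W$ with $d\le r$ by the scalar $\delta'^{\,d}/\lambda_W$ and to vanish on $V_{>r}$. Since $\sqrt{q-1}\le\sqrt q$ we have $\delta'\le 1/(C\sqrt q)\le 1/(C\sqrt{q-1})=\rho/C$, so $\delta'^{\,d}\le (\rho/C)^d\le \lambda_W$ for all $d\le r$, whence $\|S\|_{2\to 2}\le 1$. By construction $\mathrm{T}_\rho\circ S=T_{\delta',r}$ (both multiply a level-$d$ ideal with $d\le r$ by $\delta'^{\,d}$, and both annihilate $V_{>r}$), so by part~(2),
\[
\|T_{\delta',r}\|_{2\to q}=\|\mathrm{T}_\rho S\|_{2\to q}\le \|\mathrm{T}_\rho\|_{2\to q}\,\|S\|_{2\to 2}\le 1 .
\]
Hence $\SO(n)$ is $(r,C)$-hypercontractive, i.e.\ $(cn^{1/2},C)$-hypercontractive with $c=\delta$, using also that $(r,C)$-hypercontractivity is monotone in $r$ (as $T_{\delta',r'}=T_{\delta',r}\circ P_{\le r'}$ for $r'\le r$). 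Combined with Theorem~\ref{thm:son is quasirandom}, $\SO(n)$ is $(C,c')$-good with $c'=\min\{\delta,c_0\}$, $c_0$ the quasirandomness constant.

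For $\Spin(n)$ ($n\ge 3$), I would apply Lemma~\ref{lem:quotient-inverse-hyper} with $G=\Spin(n)$ and $H=\ker(\pi)=\{\pm 1\}$, so that $G/H\cong\SO(n)$ and the map $i$ of that lemma is $f\mapsto f\circ\pi$. The quotient grading induced on $\SO(n)$ from $\Spin(n)$ coincides with our standard polynomial grading on $\SO(n)$ (this was noted when the quotient grading was defined), and by the very definition of the $n$-grading on $\Spin(n)$ we have $i(V_{=d}^{\SO(n)})=V_{=d}^{\Spin(n)}$ for all $d<n/2$, in particular for all $d\le r$. Since $\SO(n)$ is $(r,C)$-hypercontractive, Lemma~\ref{lem:quotient-inverse-hyper} yields that $\Spin(n)$ is $(r,C)$-hypercontractive; together with the $c$-strong-quasirandomness of $\Spin(n)$ (Theorem~\ref{thm:Spin is quasirandom}), $\Spin(n)$ is $(C,c')$-good for a suitable absolute $c'>0$. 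Taking $C$ and $c'$ to work simultaneously for both groups finishes the proof.

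The only genuinely substantive input is Theorem~\ref{thm:hypercontractivity in O_n}, and within it the eigenvalue lower bound of part~(3), whose proof occupies the remainder of the section and the appendix; the argument above is the routine ``divide by the eigenvalues'' manipulation plus the quotient-grading bookkeeping for $\Spin(n)$. The one point that deserves a moment's care is the verification that the constraint $\delta'\le 1/(C\sqrt q)$ (rather than $\le \rho/C=1/(C\sqrt{q-1})$) still suffices to make $S$ a contraction, which is exactly where the inequality $\sqrt{q-1}\le\sqrt q$ is used; everything else is bookkeeping.
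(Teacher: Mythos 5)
Your proof is correct and follows the same strategy as the paper's: combine the already-established strong quasirandomness of $\SO(n)$ and $\Spin(n)$ with Theorem~\ref{thm:hypercontractivity in O_n}, factor the Beckner operator $T_{\delta',r}$ through $\mathrm{T}_\rho$ by dividing out the eigenvalues on each Peter--Weyl ideal (so that the residual operator $S$ is an $L^2$-contraction), and then lift from $\SO(n)$ to $\Spin(n)$ via Lemma~\ref{lem:quotient-inverse-hyper}. One small point where you go beyond the paper: the paper implicitly treats the eigenvalues of $\mathrm{T}_\rho$ on $V_{=d}$ as nonnegative, while part~(3) of Theorem~\ref{thm:hypercontractivity in O_n} a priori only bounds their absolute value; your observation that $\mathrm{T}_\rho$ is positive (being an average of operators of the form $(U_{\sqrt\rho}\Tcol R_V)^*(U_{\sqrt\rho}\Tcol R_V)$) cleanly closes that gap and is worth keeping. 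The rest is the same argument, just spelled out in more detail.
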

\begin{proof}[Proof, given Theorem \ref{thm:hypercontractivity in O_n}]
    The strong quasirandomness of $\Spin(n)$ and $\SO(n)$ was already established in Section \ref{section: compacts are quasi}. It remains to show that they are $(cn^{1/2},C)$-hypercontractive for absolute constants $C,c>0$. Let $C$ be the constant in the statement of Theorem \ref{thm:hypercontractivity in O_n} and let $c=\delta$ where $\delta$ is the constant in the statement of Theorem \ref{thm:hypercontractivity in O_n}. By Lemma \ref{lem:quotient-inverse-hyper}, it suffices to show that $\SO(n)$ is $(cn^{1/2},C)$-hypercontractive. Let $T= T_{1/(C\sqrt{q}),\ cn^{1/2}}$ be the Beckner operator. By the monotonicity (in $\delta$) of $\|T_{\delta,r}\|_{2 \to q}$, it suffices to show that $\|T\|_{2\to q}\le 1$. Let us show (using Theorem \ref{thm:hypercontractivity in O_n}) that we may write $T= T_{1/\sqrt{q-1}}\circ S$, where $\|S\|_{2\to 2}\le 1$. Indeed, as $T_{1/\sqrt{q-1}}$ is self-adjoint and commutes with the action of $\SO(n)$ from both sides, it has the Peter-Weyl ideals $W_\rho$ as its eigenspaces.     
    By Part 3 of Theorem \ref{thm:hypercontractivity in O_n}, the eigenvalues of $T_{1/\sqrt{q-1}}$ corresponding to a representations $\rho$ of level $d$ are at least $(C\sqrt{q})^{-d}$, and thus are at least the corresponding eigenvalue of $T$ on $W_\rho$. This shows that the desired operator $S$ exists. We may now apply Part 2 of Theorem \ref{thm:hypercontractivity in O_n} to obtain: 
    \[
    \left\|T\right\|_{2\to q}\le \left\|T_{\frac{1}{\sqrt{q-1}}}\right\|_{2\to q}\left\|S\right\|_{2\to 2}\le 1.
    \]
\end{proof}
\remove{
From Theorem~\ref{thm:hypercontractivity in O_n}, we easily deduce a version of the hypercontractive inequality over $\SO(n)$ for low-degree functions, namely
Theorem~\ref{thm:Super Bonami in O_n} from the introduction restated below:
    \begin{reptheorem}{thm:Super Bonami in O_n}
    There exist absolute constants $C>0$ and $\delta>0$, such that the following holds. Let $d\le \delta n^{1/2}$, let $f\in V_d$, and let $q>2$. Then $\|f\|_{L^q(\mu)} \le (C\sqrt{q})^d\|f\|_{L^2(\mu)}$.
    \end{reptheorem}
    \begin{proof}
    Let $\rho =\frac{1}{\sqrt{q-1}}$. By the third item in Theorem~\ref{thm:hypercontractivity in O_n}, there exists a function $g$ such that $f=\mathrm{T}_{\rho}g$ and
    \[
    \norm{g}_{L^2(\mu )} \le (C\sqrt{q})^{d}\norm{f}_{L^2(\mu)}.
    \]
    Indeed, this follows by orthogonally diagonalizing $\mathrm {T}_{\rho}$ inside the invariant space $V_{d}$ and using the fact the each eigenvalue of $\mathrm{T}_\rho$ there is $\ge (C\rho)^{-d} $. Applying the second item in Theorem~\ref{thm:hypercontractivity in O_n} get
    $\|f\|_{L^q(\mu)}\le \|g\|_{L^2(\mu )}$, which together with the previous inequality completes the proof.
    \end{proof}

    From Theorem~\ref{thm:Super Bonami in O_n} we deduce an improved form of the level-$d$ inequality over $\SO(n)$, namely Theorem~\ref{thm:second }
    from the introduction restated below:
    \begin{reptheorem}{thm: level-d in O_n}
     There exist absolute constants $C>0$ and $\delta>0$ such that for any
     $f\colon \sqrt{n}\SO(n) \to \{0, 1\}$ with expectation $\alpha$, and $d\le \min\left(\delta n^{1/2}, \frac{\log (1/\alpha)}{100}\right)$, it holds that
     \[
     \|f^{\le d}\|_{L^2(\mu)}^2 \le \left(\frac{C}{d}\right)^d\alpha^2\log^{d}(1/\alpha ).
     \]
    \end{reptheorem}
    \begin{proof}
    Let $q = \log(1/\alpha)/d$. Then By H\"{o}lder and Theorem \ref{thm:Super Bonami in O_n} for some absolute constant $C'$ we have
    \[
    \|f^{\le d}\|_2^2 =\langle f^{\le d}, f \rangle \le \|f^{\le d}\|_{L^q(\mu)} \|f\|_{L^{\frac{1}{1-1/q}}(\mu)}\le \sqrt{q}^d e^d \alpha (C\sqrt{q})^d\|f^{\le d}\|_{L^2(\mu)},
    \]
    and the result follows by rearranging.
    \end{proof}
    From Theorem~\ref{thm: level-d in O_n}, we can get the level-$d$ inequality over $\SO(n)$, namely Theorem~\ref{thm: level-d in SO_n} restated below.
    \begin{reptheorem}{thm: level-d in SO_n}
     There exist absolute constants $C>0$ and $\delta>0$, such that for any
     $f\colon \SO(n) \to \{0, 1\}$ with expectation $\alpha$, and $d\le \min\left(\delta n^{1/2}, \frac{\log (1/\alpha)}{100}\right)$, it holds that
     \[
     \|f^{\le d}\|_{L^2(\mu)}^2 \le \left(\frac{C}{d}\right)^d\alpha^2\log^{d}(1/\alpha ).
     \]
    \end{reptheorem}
    \begin{proof}
      Let $f:\SO(n) \to \{0,1\}$ with $\mathbb{E}[f]=\alpha$, and let $d \in \mathbb{N}$ be as above. Extend $f$ to a function $g:\SO(n) \to \{0,1\}$ by setting $g(X) = 0$ for all $X \in \SO(n) \setminus \SO(n)$. We then have $\mathbb{E}[g] = \alpha/2$. By the previous theorem, we have $\|g^{\leq d}\|_{L^2(\SO(n))} \leq \epsilon$, where
      $$\epsilon = \left(\frac{C}{d}\right)^{d/2}(\alpha/2)\log^{d/2}(2/\alpha),$$
      or equivalently,
      $$\langle g,h \rangle_{\SO(n)} \leq \epsilon \|f\|_{L^2(\SO(n))} \|g\|_{L^2(\SO(n))}$$
      for all $h\in L^2(\SO(n))$ of degree at most $d$. Now let $h_1 \in L^2(\SO(n))$ of degree at most $d$. Take any polynomial $P \in \mathbb{R}[X_{11},X_{12},\ldots,X_{nn}]$ of degree at most $d$ that represents $h_1$. Extend $h_1$ to a polynomial $h_2 \in L^2(\SO(n))$ by setting $h_2(X)=P(X)$ for all $X \in \SO(n) \setminus \SO(n)$. Now we have
      \begin{align*}
          \langle f,h_1\rangle_{\SO(n)} & =2\langle g,h_2 \rangle_{\SO(n)}\\
          &\leq 2\epsilon \|g\|_{L^2(\SO(n))} \|h_2\|_{L^2(\SO(n))}\\
          & = \sqrt{2}\epsilon \|f\|_{L^2(\SO(n))} \|h_2\|_{L^2(\SO(n))}\\
          & = \sqrt{2}\epsilon \|f\|_{L^2(\SO(n))} \|h_2\|_{L^2(\SO(n))},
      \end{align*}
      the last equality following from the fact that $h_2^2 \in L^2(\SO(n))$, being a polynomial of degree at most $2d < n$, is orthogonal to the determinant function, so that $\mathbb{E}_{X \sim \SO(n)}h_2(X)^2 = \mathbb{E}_{X \sim \SO(n) \setminus \SO(n)} h_2(X)^2$. It follows that $\|f^{\leq d}\|_{L^2(\SO(n))} \leq \sqrt{2}\epsilon$, proving the theorem. 
    \end{proof}
    }
\subsubsection*{The noise operator $\mathrm{T}_{\rho}$ commutes with the action of $\SO(n)$ from both sides}
In this section, we establish Part 1 of Theorem \ref{thm:hypercontractivity in O_n}. We phrase it as a lemma.
\begin{lemma}\label{lem:commuting}
  The operator $\mathrm{T}_{\rho}$ commutes with the action of $\SO(n)$ from both sides.
\end{lemma}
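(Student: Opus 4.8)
The plan is to reduce the two-sided commutation of $\mathrm{T}_\rho=\mathbb{E}_{V\sim\SO(n)}\big[R_V^*\Tcol^*U_\rho\Tcol R_V\big]$ to a few elementary intertwining relations for the building blocks $\Tcol$, $\Tcol^*$, $U_\rho$, $L_U$, $R_V$, together with the translation-invariance of the Haar measure on $\SO(n)$. I would first record that $V\mapsto R_V$ and $U\mapsto L_U$ are unitary representations of $\SO(n)$ on each of $L^2(\mathbb{R}^{n\times n},\gamma)$ and $L^2(\sqrt n\,\SO(n),\mu)$, with $R_VR_W=R_{VW}$, $R_V^*=R_{V^{-1}}$, $L_U^*=L_{U^{-1}}$, and that $L_U$ commutes with every $R_V$ (and hence with $R_V^*$). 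I would also note that $\mathrm{T}_\rho$ is a norm-$\le 1$ average of contractions, hence a well-defined bounded operator, so no convergence subtlety arises below.

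The heart of the argument is three intertwining facts. (i) For $U\in\SO(n)$ the dilated column Gram--Schmidt map is left-equivariant: ${\sf GS}_{{\sf col}}(UX)=U\,{\sf GS}_{{\sf col}}(X)$ for every invertible $X$, because orthonormalizing $(Uc_1,\dots,Uc_n)$ produces $(U\tilde c_1,\dots,U\tilde c_n)$ when $U$ is orthogonal, the dilation by $\sqrt n$ commutes with $U$, and the sign correction on the last column is unchanged since $\det(UX)=\det(X)$ for $U\in\SO(n)$. Hence $\Tcol L_U=L_U\Tcol$, and taking adjoints (using $L_U$ unitary) gives $\Tcol^*L_U=L_U\Tcol^*$. (ii) Since $U_\rho f(X)=\mathbb{E}_{Y\sim\gamma}[f(\rho X+\sqrt{1-\rho^2}\,Y)]$ and the distribution $\gamma$ is invariant under $Y\mapsto UY$ and under $Y\mapsto YV$ (both orthogonal transformations of $\mathbb{R}^{n\times n}\cong\mathbb{R}^{n^2}$), a change of variables in the Gaussian average yields $U_\rho L_U=L_UU_\rho$ and $U_\rho R_V=R_VU_\rho$. (iii) $L_U$ commutes with $R_V$ and $R_V^*$, already noted.

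For the left commutation I would simply slide $L_U$ through the chain factor by factor, using (i)--(iii):
\[
L_U\,\mathrm{T}_\rho=\mathbb{E}_V\big[L_U R_V^*\Tcol^*U_\rho\Tcol R_V\big]=\mathbb{E}_V\big[R_V^*\Tcol^*U_\rho\Tcol R_V L_U\big]=\mathrm{T}_\rho\,L_U,
\]
for every $U\in\SO(n)$. For the right commutation — the step that genuinely uses the symmetrization over $V$ — I would compute, using $R_VR_W=R_{VW}$,
\[
\mathrm{T}_\rho R_W=\mathbb{E}_{V}\big[R_V^*\Tcol^*U_\rho\Tcol R_{VW}\big],
\]
and then substitute $S=VW$; as $V$ ranges over $\SO(n)$ with Haar measure so does $S$, and $R_V^*=R_{SW^{-1}}^*=(R_SR_{W^{-1}})^*=R_{W^{-1}}^*R_S^*=R_WR_S^*$, whence
\[
\mathrm{T}_\rho R_W=\mathbb{E}_{S}\big[R_WR_S^*\Tcol^*U_\rho\Tcol R_S\big]=R_W\,\mathbb{E}_{S}\big[R_S^*\Tcol^*U_\rho\Tcol R_S\big]=R_W\,\mathrm{T}_\rho,
\]
for every $W\in\SO(n)$. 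Together these say exactly that $\mathrm{T}_\rho$ commutes with the action of $\SO(n)$ from both sides, as required.

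I do not expect a real obstacle: the content of the lemma is concentrated in the three intertwining facts, and the only points requiring care are fact (i) — specifically the determinant-sign bookkeeping in the definition of ${\sf GS}_{{\sf col}}$, which is precisely why we work with $\SO(n)$ rather than $\O(n)$ — and the observation that $\Tcol$ by itself is only left-equivariant (right multiplication by $V$ reorders the columns and breaks the Gram--Schmidt ordering), so the averaging $\mathbb{E}_V R_V^*(\cdot)R_V$ is indispensable for obtaining commutation with the right action.
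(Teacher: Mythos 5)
Your proof is correct and takes essentially the same route as the paper's: left-commutation is established by sliding $L_U$ through each factor of $R_V^*\Tcol^*U_\rho\Tcol R_V$, using the left-equivariance ${\sf GS}_{\sf col}(UX)=U\,{\sf GS}_{\sf col}(X)$ (paper's proof deduces commutation of $L_U$ with $\Tcol^*$ directly and then passes to $\Tcol$ by adjointness; you state both symmetrically, same content), and right-commutation comes from the Haar-invariant change of variables in the symmetrization — the paper substitutes $V\leftarrow VV'^t$ in $R_{V'}\mathrm{T}_\rho$, you substitute $S=VW$ in $\mathrm{T}_\rho R_W$, which is the identical manipulation read from the other side. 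The one small cosmetic difference is that you also record $U_\rho R_V = R_V U_\rho$, which is true but not actually used in either proof.
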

\begin{proof}
  We show the commuting from the left and the right separately.
  \paragraph{Commuting from the left.}
  For this, it suffices to show that for each $U, V\in\SO(n)$, the operators $L_U$
  and $R_V^*\Tcol^*\T_{\rho}\Tcol R_V$ commute. It is easy to see that $L_U$ and $R_V$ commute. Hence,
  it suffices to show that $L_U$ and $\Tcol^*\T_{\rho}\Tcol$ commute.

  First note  that if $X\in\mathbb{R}^{n\times n}$
  is a matrix, it holds that  ${\sf GS}_{{\sf col}}(UX) = U{\sf GS}_{{\sf col}}(X)$. It follows that 
  \[
  L_U \Tcol^*f(X)=
  \Tcol^*f(UX)
  =\Expect{Y: {\sf GS}_{{\sf col}}(Y) = UX}{f(Y)}
  =\Expect{Z: {\sf GS}_{{\sf col}}(Z) = X}{f(UZ)}
  =\Tcol^* L_U f(X),
  \]
  so $L_U$ commutes with $\Tcol^{*}$. The adjointness immediately implies that $L_U$ also commutes with the operator $\Tcol.$ It is easy to see directly that $L_U$ commutes with the operator $U_{\rho}$, and so it commutes with the composition  $\Tcol^*\T_{\rho}\Tcol$.

  \paragraph{Commuting from the right.} Fix $V'\in \SO(n)$, then
  \[
   R_{V'} \mathrm{T}_\rho
   =
   \Expect{V\sim \SO(n)}{R_{V'} R_V^*\Tcol^*\T_{\rho}\Tcol R_V}
   =
   \Expect{V\sim \SO(n)}{R_{V V'^{t}}^*\Tcol^*\T_{\rho}\Tcol R_V}
  \]
  Making the change of variables $V\leftarrow V V'^{t}$, we obtain
  \[
  R_{V'} \mathrm{T}_\rho
  =\Expect{V\sim \SO(n)}{R_{V}^*\Tcol^*\T_{\rho}\Tcol R_{VV'}}
  =\Expect{V\sim \SO(n)}{R_{V}^*\Tcol^*\T_{\rho}\Tcol R_{V}}R_{V'}
  =\mathrm{T}_{\rho} R_{V'}.
  \qedhere
  \]
\end{proof}

\subsection{The noise operator $\mathrm{T}_{\rho}$ is hypercontractive}

In this section, we prove part 2 of Theorem \ref{thm:hypercontractivity in O_n}. To do so, we first
adopt a different point of view of the couplings defined by $\Trow$ and $\Tcol$ that will often be easier for us to work with.

\subsubsection*{The `Gaussian maker distribution'}
Rather than going from $Y\sim \gamma$ to $X\sim \mu$ by applying the Gram--Schmidt process on its columns and dilating by $\sqrt{n}$ (and flipping the sign of the last column if necessary), we can go the other way and
construct $Y$ from $X$. This is accomplished as follows. We define a pair of independent random variables $(X,G)$ such that $XG$ is distributed according to $\gamma$ and $X\sim \mu$. We call the distribution of $G$ the \emph{Gaussian maker distribution} and we abbreviate it to GMD.

\begin{definition}
  We define the {\em Gaussian maker distribution} to be the distribution of the upper-triangular matrix $G=(g_{ij})$ constructed as follows. First, independently choose one-dimensional Gaussians $g_{ij}\sim N(0,\frac{1}{n})$ of expectation zero and variance $1/n$, for each $i<j$. For each $i<n$, we independently choose $g_{ii}$ to be $1/\sqrt{n}$ times the (Euclidean) length of an $(n-i+1)$-dimensional Gaussian $z\sim (\mathbb{R}^{n-i+1},\gamma)$. We also independently choose $g_{nn}$ to be a standard Gaussian random variable, $z\sim N(0,1)$. Finally, we set $g_{ij}=0$ for all $j<i$.
\end{definition}

It is clear that when we sample a matrix $Y\sim \gamma$ and apply the (dilated) Gram-Schmidt process in Section \ref{sec:gs}, we get a matrix $X$ which is $\sqrt{n}$ times a matrix sampled from the Haar measure on $\SO(n)$ (provided we condition on the probability-one event that $\det(Y) \neq 0$, of course). We would like to show that $X^{-1}Y\sim \text{GMD}$, independently of $X$.
Indeed, this follows by choosing the columns of $X$ and $Y$ one after another. The first column of $Y$ is uniformly distributed according to $(\mathbb{R}^n,\gamma)$.
By rotational symmetry, its length and its normalization are independent, and therefore $X G e_1 = g_{11} X e_1$ is indeed distributed as $Y e_1$.
Note that $X e_2$ is independent of $g_{11}$, as it is a uniformly random unit vector orthogonal to $X_1$.
Thus, completing $X_1$ to a basis arbitrarily we obtain, by rotational invariance of the Gaussian distribution, that the correlation of $Y e_2$ with $X e_1$ is normally distributed.
After we present $Ye_2$ with respect to an extension of $\frac{1}{\sqrt{n}} Xe_1$ to an orthonormal basis, we see that the last $n-1$ coordinates are distributed as a random $n-1$ dimensional Gaussian.
This shows that indeed $Ye_2$ is distributed as $g_{22} X e_2 +g_{12} X e_1$. Continuing in this fashion column by column (being a little careful with the last column), we see that indeed $X^{-1}Y$ is distributed according to $\text{GMD}$, independently of $X$.
We thus have the following formulae for the adjoint operators $\Trow^{*},\Tcol^{*}$:
\begin{lemma}\label{lem:formula_for_adjoint}
For each $f \in L^2(\mathbb{R}^{n\times n},\gamma)$, we have
\begin{align*}
\Tcol^*f(X)&=\Expect{G\sim \text{GMD}}{f(XG)}& \forall X \in \SO(n),\\
\Trow^*f(X)&=\Expect{G\sim \text{GMD}}{f(G^t X)}& \forall X \in \SO(n).
\end{align*}
\end{lemma}

In addition to Lemma~\ref{lem:formula_for_adjoint} being useful on its own, it allows us to extend the domain of $\Tcol^*f$ to $\mathbb{R}^{n\times n}$.
Indeed, abusing notation, we shall often view $\Tcol^*f$ as a function from $\mathbb{R}^{n\times n}$ to $\mathbb{R}$, by using the above formula:
\[
\Tcol^*f(X)=\Expect{G\sim \text{GMD}}{f(XG)}.
\]
This allows us to view $\Tcol^{*}$ as an operator on Gaussian space $L^2(\mathbb{R}^{n \times n},\gamma)$, and we note that if $f\colon\mathbb{R}^{n\times n}\to\mathbb{R}$
has degree at most $d$, then $\Tcol^{*} f$ also has degree at most $d$, and thus $V_d$ is an invariant subspace of $\Tcol^{*}$.

Another consequence of Lemma~\ref{lem:formula_for_adjoint} is that the operators $\Tcol^{*}, \Trow^{*}$ do not increase $q$-norms:
\begin{fact}\label{fact:tcol_preserve}
  The following hold for all $q\geq 1$:
  \begin{enumerate}
    \item The operators $\Tcol$ and $\Trow$ preserve $q$-norms.
    \item The operators $\Tcol^{*}$ and $\Trow^{*}$ (from $L^q(\mu)$ to $L^q(\gamma)$) cannot increase $q$-norms.
  \end{enumerate}
\end{fact}
\begin{proof}
We prove both items for $\Tcol$, as the proof for $\Trow$ is exactly analogous.

For the first item, we note that
\[
\norm{\Tcol f}_{L^q(\gamma)}^q
=\Expect{Y\sim \gamma}{\card{\Tcol f(Y)}^q}
=\Expect{Y\sim \gamma}{\card{f({GS}_{{\sf col}}(Y))}^q}
=\Expect{X\sim \mu}{\card{f(X)}^q}
=\norm{f}_{L^q(\mu)}^q.
\]
For the second item, we use Jensen's inequality:
\[
\|\Tcol^*f\|_{L^q(\mu)}^q=\Expect{X\sim \mu}{\left|\Expect{G\sim \text{GMD}}{f(XG)}\right|^q}\leq \Expect{X,G}{|f(XG)|^q}
=\Expect{Y\sim \gamma}{\card{\Tcol f(Y)}^q}=\|f\|_{L^q(\gamma)}^q.\qedhere
\]
\end{proof}

\subsubsection*{Deducing hypercontractivity.}
We now show that the operator $\mathrm{T}_{\rho}$ is hypercontractive, proving Part~3 of Theorem~\ref{thm:hypercontractivity in O_n}.

\begin{lemma}\label{lem:hypercontractivity}
  For all $0\leq \rho\leq \frac{1}{\sqrt{q-1}}$ and $f\colon\SO(n)\to\mathbb{R}$ we have $\norm{\mathrm{T}_{\rho}f}_{L^q(\mu)}\le \norm{f}_{L^2(\mu)}$.
\end{lemma}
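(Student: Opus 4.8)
The plan is to prove the inequality factor by factor, reducing everything to the hypercontractivity of the Gaussian noise operator $U_\rho$ (Theorem~\ref{thm:Gaussian_noise_hypercontract}). First I would apply Minkowski's integral inequality to move the average over $V$ outside the $L^q$-norm: since $\mathrm{T}_\rho f = \mathbb{E}_{V\sim\SO(n)}[R_V^*\Tcol^*U_\rho\Tcol R_V f]$, we have $\|\mathrm{T}_\rho f\|_{L^q(\mu)}\le \mathbb{E}_{V\sim\SO(n)}\|R_V^*\Tcol^*U_\rho\Tcol R_V f\|_{L^q(\mu)}$, so it suffices to show $\|R_V^*\Tcol^*U_\rho\Tcol R_V f\|_{L^q(\mu)}\le \|f\|_{L^2(\mu)}$ for every fixed $V\in\SO(n)$.

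For a fixed $V$, I would peel off the operators from the outside in. Right multiplication by $V$ is a measure-preserving bijection of $\sqrt n\,\SO(n)$ (by right-invariance of the Haar measure, which is unaffected by the $\sqrt n$ scaling), so $R_V$ — and hence its $L^2$-adjoint $R_V^* = R_{V^t}$ — is an isometry of $L^p(\mu)$ for every $p\ge 1$; thus $R_V^*$ can be discarded without changing the $L^q$-norm, and the innermost $R_V$ can be discarded at the cost of replacing $\|R_V f\|_{L^2(\mu)}$ by $\|f\|_{L^2(\mu)}$. In between, I would apply, in order: Fact~\ref{fact:tcol_preserve}(2), which says $\Tcol^*$ does not increase $q$-norms, to pass from $\|\Tcol^*(\cdot)\|_{L^q(\mu)}$ to $\|\cdot\|_{L^q(\gamma)}$; then Theorem~\ref{thm:Gaussian_noise_hypercontract}, applicable precisely because $0\le\rho\le 1/\sqrt{q-1}$, to pass from $\|U_\rho(\cdot)\|_{L^q(\gamma)}$ to $\|\cdot\|_{L^2(\gamma)}$; and finally Fact~\ref{fact:tcol_preserve}(1), which says $\Tcol$ preserves $q$-norms, to pass from $\|\Tcol R_V f\|_{L^2(\gamma)}$ back to $\|R_V f\|_{L^2(\mu)}$. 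Chaining these estimates gives $\|R_V^*\Tcol^*U_\rho\Tcol R_V f\|_{L^q(\mu)}\le\|f\|_{L^2(\mu)}$, and averaging over $V$ (as in the first step) completes the proof.

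There is essentially no obstacle here: this is the `free' direction of the construction, in which hypercontractivity is inherited from Gaussian space through the coupling, and the symmetrization over $V$ costs nothing because each $R_V$ is an $L^p$-isometry. The only points worth stating carefully are (i) that $R_V$ and $R_V^*$ really are isometries of every $L^p(\mu)$, which follows from bi-invariance of the Haar measure, and (ii) that the hypothesis $0\le\rho\le 1/\sqrt{q-1}$ is exactly the threshold required by Theorem~\ref{thm:Gaussian_noise_hypercontract} with $p=2$. All of the genuine difficulty in Theorem~\ref{thm:hypercontractivity in O_n} lies in its third part — the lower bound $\|\mathrm{T}_\rho f\|_{L^2(\mu)}\ge C^{-d}\rho^d\|f\|_{L^2(\mu)}$ on $V_d$ — and not in this hypercontractivity statement.
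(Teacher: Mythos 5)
Your proposal is correct and matches the paper's proof essentially verbatim: the paper likewise uses the triangle inequality (your Minkowski step) together with the $L^r$-isometry of $R_V$ to reduce to bounding $\|\Tcol^*U_\rho\Tcol f\|_{L^q(\mu)}$, and then chains Fact~\ref{fact:tcol_preserve} and Theorem~\ref{thm:Gaussian_noise_hypercontract} in exactly the order you describe. Your version just spells out the peeling of $R_V^*$ and $R_V$ more explicitly than the paper's (rather terse, and in fact typographically garbled) displayed chain does.
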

\begin{proof}
By the triangle inequality and the fact that $R_V$ preserves the $L^r$ norm for all $r$, it suffices to show that
\[
\norm{\Tcol^* U_\rho \Tcol f}_{L^q(\mu)}\leq \norm{f}_{L^2(\mu)}.
\]
To see that this holds, we apply Fact~\ref{fact:tcol_preserve} and Theorem~\ref{thm:Gaussian_noise_hypercontract}:
\[
\|\Tcol^* U_\rho \Tcol f\|_{L^q(\gamma)}
\leq
\|U_\rho \Tcol f\|_{L^q(\gamma)}
\leq
\|\Tcol f\|_{L^2(\gamma)}
=\norm{f}_{L^2(\gamma)}.
\qedhere
\]
\end{proof}
\section{Comfortable polynomials on $\SO(n)$, and their properties.}

Recall that in Section~\ref{section: compacts are quasi} we defined  the {\em comfortable $d$-juntas} on $\SO(n)$ to be the multilinear polynomials of the form $X\mapsto \sum_{\sigma \in S_d}a_\sigma \prod_{i=1}^{d} x_{i,\sigma(i)}$, for $a_\sigma \in \mathbb{R}$. We also showed that for any irreducible representation $\rho$ of level $d$, where $0 \leq d < n/2$, the Peter-Weyl ideal $W_\rho$ contains a comfortable $d$-junta. In this section we define comfortable polynomials in general (the comfortable $d$-juntas are a special case). We then show that some of them are eigenfunctions of $\Tcol^*$ (or of $\Trow^*$).  

We use comfortable $d$-juntas as they are both easy to work with, and  each low degree eigenspace of $\mathrm{T}_\rho$ contains one; the latter is guaranteed by the following, since $\mathrm{T}_\rho$ commutes with the action of $\SO(n)$ from both sides.
\begin{claim}\label{claim:reduce_to_vddd}
  Let $T$ be a linear operator on $L^2(\SO(n))$ that commutes with the action of $\SO(n)$ from both sides, and let $0 \leq d< n/2.$ Then the space $V_{=d}$ is $T$-invariant, and each eigenspace of $T$ inside $V_{=d}$ contains a comfortable $d$-junta.
\end{claim}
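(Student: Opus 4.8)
The plan is to reduce the whole statement to the Peter--Weyl decomposition of $V_{=d}$ together with Schur's lemma, and then to quote Fact~\ref{fact:weyl-useful}.

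First I would record that, for $0 \le d < n/2$, we have the orthogonal decomposition $V_{=d} = \bigoplus_{\rho \in \mathcal{L}_d} W_\rho$ into finitely many Peter--Weyl ideals, each $W_\rho$ being a finite-dimensional irreducible $G \times G$-module — with the first $G$-factor acting by left translation and the second by right translation — and with the $W_\rho$ pairwise non-isomorphic as $G \times G$-modules for distinct $\rho$ (a standard fact from Peter--Weyl theory, since the $W_\rho$ are exactly the isotypical components of $L^2(\SO(n))$ as a $G\times G$-module). The hypothesis that $T$ commutes with both the left and the right action of $G$ says precisely that $T$ is an endomorphism of $L^2(\SO(n))$ as a $G\times G$-module. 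For each pair $\rho,\rho'$, the composite of $T|_{W_\rho}$ with the orthogonal projection onto $W_{\rho'}$ (itself a $G\times G$-morphism, as $W_{\rho'}$ is two-sided invariant) is a $G\times G$-morphism $W_\rho \to W_{\rho'}$; by Schur's lemma it vanishes when $\rho' \not\cong \rho$, and equals a scalar $\lambda_\rho$ times the identity when $\rho' = \rho$. Summing over $\rho'$ (legitimate since, for $v \in W_\rho$, $Tv$ decomposes in $L^2$ along $\bigoplus_{\rho'} W_{\rho'}$) gives $Tv = \lambda_\rho v$ for all $v \in W_\rho$. Hence $T$ preserves each $W_\rho$ and acts on it by the scalar $\lambda_\rho$; in particular $V_{=d}$ is $T$-invariant and $T|_{V_{=d}}$ is diagonalizable, which settles the first assertion.

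Next I would identify the eigenspaces: by the previous step, the eigenspace of $T$ inside $V_{=d}$ for an eigenvalue $\lambda$ is exactly $\bigoplus_{\rho \in \mathcal{L}_d :\ \lambda_\rho = \lambda} W_\rho$, and this is nonzero precisely when there is some $\rho \in \mathcal{L}_d$ with $\lambda_\rho = \lambda$. Fixing such a $\rho$, which has level $d < n/2$, Fact~\ref{fact:weyl-useful} supplies a nonzero comfortable $d$-junta lying in $W_\rho$, hence in the eigenspace, as required.

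I do not expect a genuine obstacle here: once the decomposition $V_{=d}=\bigoplus_{\rho\in\mathcal{L}_d}W_\rho$ and Fact~\ref{fact:weyl-useful} are granted, the argument is pure bookkeeping via Schur's lemma. The one point worth stating carefully is that the $W_\rho$ are pairwise non-isomorphic as $G\times G$-modules, which is what upgrades ``$T$ preserves each isotypical block'' to ``$T$ acts as a scalar on each $W_\rho$'' and thereby makes the eigenspace description clean; but this is standard Peter--Weyl theory, already recalled in the preliminaries, and the substantive input (comfortable $d$-juntas exist in each $W_\rho$ of level $d$) has already been proved.
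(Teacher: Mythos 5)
Your proposal is correct and takes essentially the same route as the paper: Peter--Weyl decomposition of $V_{=d}$ into pairwise non-isomorphic irreducible $G\times G$-modules $W_\rho$, Schur's lemma to conclude $T$ acts as a scalar on each $W_\rho$, and Fact~\ref{fact:weyl-useful} to produce a comfortable $d$-junta in each constituent. The only difference is that you spell out the Schur step more explicitly (via the projections onto each $W_{\rho'}$), whereas the paper states the scalar-action conclusion directly; the substance is identical.
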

\begin{proof}
A linear map from $L^2(\SO(n))$ to itself that commutes with the action of $\SO(n)$ from both sides is precisely an $\SO(n)\times \SO(n)$-homomorphism. As each $W_{\rho}$ is an irreducible $\SO(n)\times \SO(n)$-module and the $W_{\rho}$ are pairwise non-isomorphic, Schur's lemma implies that any $\SO(n)\times \SO(n)$-homomorphism from $L^2(\SO(n))$ to itself acts as a scalar multiple of the identity when restricted to $W_{\rho}$. Since the linear operator $T$ commutes with the action of $\SO(n)$ from both sides, each $W_{\rho}$ is contained in an eigenspace of $T$. Since the Peter-Weyl ideas span $L^2(\SO(n))$, each eigenspace of $T$ is a direct sum of some of the $W_{\rho}$'s. 

As $V_{=d}$ is a direct sum of finitely many Peter-Weyl ideals (each of which is $T$-invariant), $V_{=d}$ itself is $T$-invariant. The claim now follows from Fact \ref{fact:weyl-useful}, which implies that each of the Peter-Weyl constituents of $V_{=d}$ contains a comfortable $d$-junta. 
\end{proof}

 \remove{
\subsection{Decompositions in the special orthogonal group $\SO(n)$}
Since the operator $\mathrm{T}_{\rho}$ is self adjoint, we may discuss its eigenvalues. To study them though, we will need to show
that for any eigenvalue $\lambda$ corresponding to an eigenvector $f$ which is a low-degree polynomial, there is (possibly different)
eigenvector $f'$ with the eigenvalue $\lambda$, which furthermore has additional junta-like structure. In this section, we define this
junta-like structure and argue that studying the eigenvalues of $\mathrm{T}_{\rho}$ can be reduced to studying such functions.
\remove{
\subsubsection*{Minor Juntas and the Spaces $V_{I,J}$}
\gnote{I don't think that 9.4.1 and 9.4.2 are neccesary}
\begin{definition}
For any $I,J \subseteq [n]$, we define the space
$V_{I,J}$ to be the space of all functions $f\colon \SO(n)\to\mathbb{R}$ that depend only on the entries of the $I\times J$ minor.
That is, a function $f$ is in $V_{I,J}$ if there is some $g\colon \mathbb{R}^{I\times J}\to\mathbb{R}$ such that $f(X) = g(X|_{I\times J})$.
\end{definition}
It is easy to see that any operator $\mathrm{T}$ that commutes with any $L_U$ and $R_V$ (and in particular $\mathrm{T}_{\rho}$)
has $V_{I,J}$ as an invariant space. Indeed, this follows as looking at $\mathrm{T} f$, applying any $L_U$ such that $U$ preserves
the rows $I$ (that is, has $U_{i,i} = 1$ for $i\in I$) we get that $L_U(\mathrm{T} f) = \mathrm{T} L_U f = \mathrm{T} f$, hence
the value of $\mathrm{T} f$ only depends on the entries on the rows of $I$. Similarly, applying any $R_V$ preserving the columns of
$J$, one sees that $\mathrm{T} f$ depends only on the entries on the columns of $J$. Summarizing, we get:
\begin{fact}\label{fact:sym_to_inv}
  If $\mathrm{T}\colon L^2(\SO(n),\mu)\to L^2(\SO(n),\mu)$ commutes with the action of $\SO(n)$ on both sides, then
  $\mathrm{T} V_{I,J} \subseteq V_{I,J}$.
\end{fact}

Of special important to us will be the space of functions depending only on the first $i$ rows and $j$ columns, which we denote by
$V_{i,j} = V_{[i], [j]}$.

\subsubsection*{Degrees and the Spaces $V_{d,i,j}$}
We define the space $V_d(\SO(n))$ to consist of all functions $f\colon\SO(n)\to\mathbb{R}$ that can be written as
polynomials of degree at most $d$ in the entries of their input.
We define the space $V_{d,d,d}$, and more generally $V_{d,i,j}$ to be
\[
V_{d,i,j}(\SO(n)) = {\sf Span}(\sett{f\colon \SO(n)\to\mathbb{R}}{\exists g\colon \mathbb{R}^{[i]\times [j]} \text{ of degree at most $d$ such that } f(X) = g(X|_{[i]\times [j]})}.
\]
Using the representation theory of $\SO(n)$ we have:
\begin{claim}\label{fact:deg_inv}
  Let $\mathrm{T}$ be an operator that commutes with the action of $\SO(n)$ from both sides. Then
  $V_d(\SO(n))$ is an invariant space of $\mathrm{T}$.
\end{claim}
\begin{proof}
Recall from Section \ref{sec:weyl} that for each of the Peter-Weyl ideals $W_{\rho}$, there exists $d \in \mathbb{N} \cup \{0\}$ such that the matrix-coefficients of $\rho$ (which span $W_{\rho}$) are homogeneous polynomials of degree $d$. It follows that $V_d(\SO(n))$ is a direct sum of Peter-Weyl ideals. Since each Peter-Weyl ideal is invariant under the action of $\mathrm{T}$, the claim follows.
\end{proof} 
}
}
\subsection{Comfortable polynomials}
Claim~\ref{claim:reduce_to_vddd} is important for us as 
it says that if we want to understand the 
eigenvalues of an operator $T$ that commutes
with the action of $\SO(n)$, it suffices 
to understand its action on low-degree polynomials.
 One can already carry out
some non-trivial analysis of our hypercontractive 
operator using this observation, however to push 
our analysis all the way to $d = \Theta(n^{1/2})$ 
we need to work 
with a restricted class of polynomials, 
which we call `comfortable polynomials'. 

\begin{definition}[Comfortable polynomial] We say a multilinear monomial in the matrix entries of $X\in \SO(n)$ is  
\emph{comfortable} if it only contains variables from the top left $\lfloor\frac n2\rfloor \times \lfloor\frac n2\rfloor$ minor of $X$, and it contains at most one variable from each row and at most one variable from each column. A polynomial in the matrix entries is said to be {\em comfortable} if
it is a linear combination of (multilinear) comfortable monomials.
\end{definition}

\label{sec:def_comf}

We may naturally index monomials in $L^2(\mathbb{R}^{n\times n},\gamma)$ by multisets of elements of $[n]\times [n]$.
\remove{
The first of which is $\{\ell_1 \cdot (i_1,j_1) \ldots \ell_r\cdot (i_r,j_r)\}$, by which we mean that the
where the pairs $(i_k,j_k)$ are distinct and $(i_k,j_k)$ appear in the multi-set $\ell_k$ times.

The first notation is
}
Given such a multiset $S=\{(i_1,j_1),\ldots,(i_r,j_r)\}$, where each pair $(i_k,j_k)$ may appear multiple times,
the corresponding monomial is
$H_{S}(X) := \prod\limits_{r=1}^{k}x_{i_r,j_r}$.

We now define the notion of comfortable polynomial.
\begin{definition}
  Let $S=\{(i_1,j_1),\ldots,(i_d,j_d)\}$ be a multiset of elements of $[n] \times [n]$; we define its {\em transpose} by $S^t:= \{(j_1,i_1),\ldots ,(j_d,i_d)\}$.
  \begin{enumerate}
    \item We say $H_S$  is \emph{row comfortable} if $i_1,\ldots ,i_d$ are distinct and all $\leq n/2$.
    \item We say $H_S$ is \emph{column comfortable} if $H_{S^t}$ is row comfortable, i.e.\ if  $j_1,\ldots,j_d$ are distinct and all $\leq n/2$.
    \item Finally, we say $H_S$ is {\em comfortable} if it is both row comfortable and column comfortable.
  \end{enumerate}
\end{definition}
\begin{definition}
  We say a polynomial is {\em row comfortable} if it lies in the span of the row comfortable monomials; similarly, we say it is {\em column comfortable} if it lies in the span of the column comfortable monomials, and that it is  {\em comfortable} if it is both row comfortable and column comfortable.
\end{definition}

We also need to define row- and column- comfortable $d$-juntas, generalizing the notion of comfortable $d$-juntas.
\begin{definition}
A {\em row- (respectively\ column-) comfortable $d$-junta} is a row (respectively column) comfortable polynomial whose monomials contain variables only from the top left $d\times d$ minor.
\end{definition}

Finally, we need to define row comfortable $d$-row-juntas and column comfortable $d$-column-juntas, which are slight weakenings of the notion of a junta that will be useful.

\begin{definition}
A {\em row comfortable $d$-row-junta} is a row comfortable polynomial whose monomials contain variables only from the first $d$ rows. A {column comfortable $d$-column-junta} is a column comfortable polynomial whose monomials contain variables only from the first $d$ columns.
\end{definition}


\subsubsection*{Comfortable monomials are eigenvectors of $\Tcol^{*}$/$\Trow^{*}$}

The following claim shows that the column/row comfortable monomials are eigenvectors of $\Tcol^{*}$/$\Trow^{*}$.
\begin{claim}\label{claim:comf_ev}
  There exists an absolute constant $C\geq 1$ such that the following holds.
  For each multiset $S=\{(i_1,j_1),\ldots, (i_d,j_d)\}$ of elements of $[n]\times [n]$, there exists $\lambda_S>0$ such that:
  \begin{enumerate}
    \item If $H_S$ is column comfortable (i.e.\ the $j_k$ are distinct), then $\Tcol^{*} H_{S} = \lambda_{S} H_{S}$.
    \item If $H_S$ is row comfortable, then $\Trow^{*} H_{S^t} = \lambda_{S} H_{S^t}$.
    \item $C^{-d} \leq \lambda_S \leq 1$ for all $S$.
    \item If $S$ is supported on $[n]\times [d]$, then $\card{\lambda_S-1}= O(d^2/n)$.
  \end{enumerate}
\end{claim}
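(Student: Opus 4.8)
The plan is to compute $\Tcol^{*}H_S$ directly from the formula $\Tcol^{*}f(X)=\mathbb{E}_{G\sim\mathrm{GMD}}[f(XG)]$ of Lemma~\ref{lem:formula_for_adjoint}, exploiting two structural features of the Gaussian maker distribution that are immediate from its explicit construction: (i) the entries $g_{ij}$ are mutually independent, so in particular the columns of $G$ are independent; and (ii) every off-diagonal entry $g_{\ell j}$ (with $\ell<j$) has mean zero, while for $j<n$ the diagonal entry has positive mean $\mathbb{E}[g_{jj}]=\tfrac1{\sqrt n}\mathbb{E}[\chi_{n-j+1}]$, where $\chi_m$ denotes the Euclidean length of an $m$-dimensional standard Gaussian. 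Fix a multiset $S=\{(i_1,j_1),\dots,(i_d,j_d)\}$ of elements of $[\lfloor n/2\rfloor]\times[\lfloor n/2\rfloor]$ indexing a monomial, and set $\lambda_S:=\prod_{k=1}^{d}\mathbb{E}[g_{j_k j_k}]$; since $j_k\le\lfloor n/2\rfloor<n$ for all $k$, each factor is positive, so $\lambda_S>0$ is well defined for every such $S$.

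For part (1), note that $(XG)_{i,j}=\sum_{\ell\le j}X_{i\ell}\,g_{\ell j}$ because $G$ is upper triangular, so each factor $(XG)_{i_k,j_k}$ is an affine function of the single column $g_{\bullet,j_k}$ of $G$, with coefficients that are polynomials in the entries of $X$. When $H_S$ is column comfortable the $j_1,\dots,j_d$ are distinct, so the factors $(XG)_{i_1,j_1},\dots,(XG)_{i_d,j_d}$ depend on pairwise disjoint, hence independent, blocks of coordinates of $G$, giving
\[
\Tcol^{*}H_S(X)=\mathbb{E}_G\Big[\prod_{k=1}^{d}(XG)_{i_k,j_k}\Big]=\prod_{k=1}^{d}\mathbb{E}_G\big[(XG)_{i_k,j_k}\big]=\prod_{k=1}^{d}\Big(\sum_{\ell\le j_k}X_{i_k\ell}\,\mathbb{E}[g_{\ell j_k}]\Big).
\]
As $\mathbb{E}[g_{\ell j_k}]=0$ for $\ell<j_k$, only the $\ell=j_k$ term survives in each factor, so $\Tcol^{*}H_S(X)=\big(\prod_k\mathbb{E}[g_{j_kj_k}]\big)\prod_kX_{i_kj_k}=\lambda_S\,H_S(X)$; both sides are polynomials in the entries of $X$, so the identity persists on $\sqrt n\,\SO(n)$, proving (1). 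Part (2) is obtained from (1) by the matrix-transpose symmetry between the two couplings: writing $\sigma$ for the measure-preserving involution $(\sigma f)(X)=f(X^{t})$ on both $L^2(\sqrt n\,\SO(n))$ and $L^2(\mathbb{R}^{n\times n},\gamma)$, one checks from $\mathrm{GS}_{\sf row}(X)=(\mathrm{GS}_{\sf col}(X^{t}))^{t}$ and Lemma~\ref{lem:formula_for_adjoint} that $\Trow^{*}=\sigma\,\Tcol^{*}\,\sigma$, and $\sigma H_{S^{t}}=H_S$; applying $\sigma$ to the eigen-identity of (1) then yields the stated eigen-identity $\Trow^{*}H_{S^{t}}=\lambda_S H_{S^{t}}$ (after transposing $S$ to land on the column-comfortable case handled in (1)).

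For parts (3) and (4) it remains to control the factors $\mathbb{E}[g_{jj}]=\tfrac1{\sqrt n}\mathbb{E}[\chi_{n-j+1}]$. Using the standard two-sided bound $\sqrt{m-1}\le\mathbb{E}[\chi_m]\le\sqrt m$, for $j\le\lfloor n/2\rfloor<n$ we get $\sqrt{1-j/n}\le\mathbb{E}[g_{jj}]\le1$, hence $0\le 1-\mathbb{E}[g_{jj}]\le 1-\sqrt{1-j/n}\le j/n$. For (3): $j_k\le\lfloor n/2\rfloor$ gives $\mathbb{E}[g_{j_kj_k}]\ge\sqrt{1/2}=2^{-1/2}$, so $\lambda_S\ge 2^{-d/2}$, i.e. (3) holds with $C=\sqrt2$. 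For (4): if $S$ is supported on $[d]\times[d]$ then $j_k\le d$ for all $k$, so $\lambda_S\ge\prod_{k}(1-j_k/n)\ge 1-\sum_k j_k/n\ge 1-d^2/n$, while $\lambda_S\le1$; thus $|\lambda_S-1|\le d^2/n=O(d^2/n)$.

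The work here is organisational rather than deep: the one thing to get exactly right is that the expectation over the GMD genuinely factors column-by-column (resting on the mutual independence of the entries of $G$ in the explicit construction) and that the mean-zero off-diagonal entries annihilate every non-principal term in each factor; the only analytic input is the elementary estimate for $\mathbb{E}[\chi_m]$, and nothing more delicate is needed. No serious obstacle is anticipated — the role of this claim is precisely to extract the two facts about the coupling (comfortable monomials are exact eigenvectors, with eigenvalues that are $\Theta(1)^d$ in general and $1-O(d^2/n)$ near the top-left corner) that the eigenvalue analysis behind Theorem~\ref{thm:hypercontractivity in O_n} consumes.
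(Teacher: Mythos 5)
Your proof is correct and follows essentially the same route as the paper's: apply the formula $\Tcol^{*}f(X)=\mathbb{E}_{G\sim\mathrm{GMD}}[f(XG)]$ of Lemma~\ref{lem:formula_for_adjoint}, factor the expectation column-by-column using independence of the entries of $G$ and the distinctness of the $j_k$, kill the off-diagonal terms because they have zero mean, and read off $\lambda_S=\prod_k\mathbb{E}[g_{j_kj_k}]$ (which is the paper's $n^{-d/2}\prod_k\mathbb{E}\|N(0,I_{n-j_k+1})\|$). The only stylistic differences are the slightly weaker but equally serviceable lower bound $\sqrt{m-1}\le\mathbb{E}\|N(0,I_m)\|$ (the paper shows $\sqrt m-\tfrac1{2\sqrt m}$, from which yours follows), and the way you write out the $1-O(d^2/n)$ estimate. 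One small remark: the statement of part~(2) in the paper appears to have a typo --- as you implicitly noticed, the transpose argument $\Trow^{*}=\sigma\Tcol^{*}\sigma$, $\sigma H_{S^t}=H_S$ applied to the identity of part~(1) proves ``if $H_S$ is \emph{column} comfortable then $\Trow^{*}H_{S^t}=\lambda_S H_{S^t}$'' (equivalently, ``if $H_S$ is row comfortable then $\Trow^{*}H_S=\lambda_{S^t}H_S$''), which is also the form the paper actually uses later (e.g.\ in Lemma~\ref{lem:swap}). Your parenthetical ``after transposing $S$'' is doing the right thing, but the final equation you display still reproduces the typo'd indexing; it would be cleaner to state the transposed version explicitly.
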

\begin{proof}
  It suffices to prove the first, third and fourth items, as the second follows from the first by taking transposes. By Lemma~\ref{lem:formula_for_adjoint}
  we have $(\Tcol^* H_S)(X)=\Expect{G\sim\text{GMD}}{H_S(XG)}$.
Using the fact that the entries of $XG$ corresponding to different columns are independent and that each $j_k$ appears at most once, we obtain
  \begin{align*}
  (\Tcol^*H_S)(X)&=\Expect{G\sim\text{GMD}}{H_S(XG)}\\
  &= \Expect{G\sim\text{GMD}}{\prod_{k=1}^{d}(XG)_{i_k,j_k}}\\
  & =\prod_{k=1}^d \Expect{G\sim\text{GMD}}{(XG)_{i_k,j_k}}\\
  & =\prod_{k=1}^d \left(X\Expect{G\sim\text{GMD}}{G}\right)_{i_k,j_k}.
  \end{align*}
  Observe that $\mathbb{E}_{G \sim \text{GMD}}[G]$ is a diagonal matrix with $(\mathbb{E}[G])_{j,j}$ being equal to $1/\sqrt{n}$ times the expectation of the length (= Euclidean norm) of an $(n-j+1)$-dimensional standard Gaussian random vector, for each $j < n$. For $m \in \mathbb{N}$, let $N(0,I_m)$ denote an $m$-dimensional standard Gaussian random vector, and let $\|N(0,I_m)\|_{\ell^2}$ denote its Euclidean norm. We have
   \[
  (\Tcol^*H_S)(X) = \prod_{k=1}^d X_{i_k,j_k}\left(\Expect{G\sim\text{GMD}}{G}\right)_{j_k,j_k} = H_S(X) \prod_{k=1}^d \left(\Expect{G\sim\text{GMD}}{G}\right)_{j_k,j_k} = \lambda_S H_S(X),\]
  where
  $$\lambda_S = n^{-d/2} \prod_{k=1}^{d} \mathbb{E} [\|N(0,I_{n-j_k+1})\|_{\ell^2}].$$
  To estimate the eigenvalues $\lambda_S$ we  need the following fact.

  \begin{fact}\label{fact:too_lazy_to_chernoff} For any $m \in \mathbb{N}$, we have
  $\sqrt{m} - \frac{1}{2\sqrt{m}} \leq \Expect{}{\norm{N(0, I_m)}_{\ell^2}}\leq \sqrt{m}$.
\end{fact}
\begin{proof}
  Let $Z_1,\ldots,Z_m\sim N(0,1)$ be independent and $Z = \sum\limits_{i=1}^{m} Z_i^2$ so that $\norm{N(0, I_m)}_{\ell^2} = \sqrt{Z}$.
  Then $\Expect{}{Z} = m$, so $\mathbb{E}[\sqrt{Z}] \leq \sqrt{\mathbb{E}[Z]} = \sqrt{m}$ by Cauchy-Schwarz, proving the upper bound. Secondly ${\sf var}(Z) = \sum\limits_{i=1}^{m} {\sf var}(Z_i^2) =m$,
  which implies that
  \[
  \Expect{}{(\sqrt{Z} - \sqrt{m})^2}
  =
  \Expect{}{\frac{(Z - m)^2}{(\sqrt{Z} + \sqrt{m})^2}}
  \leq
  \frac{1}{m}{\sf var}(Z)
  =1.
  \]
  Thus,
  \[
  2m - 2\sqrt{m}\Expect{}{\sqrt{Z}} = \Expect{}{Z} + m - 2\sqrt{m}\Expect{}{\sqrt{Z}}\leq 1,
  \]
  implying that
  \[
  \Expect{}{\sqrt{Z}}\geq \sqrt{m} - \frac{1}{2\sqrt{m}},
  \]
 proving the lower bound.
\end{proof}
Continuing the proof of the claim, the above fact yields $C^{-d} \leq \lambda_S \leq 1$ for all $S$. If $S$ is supported upon $[n]\times [d]$, then it yields
$$ \lambda_S \geq n^{-d/2}\left(\sqrt{n-d+1}-\frac{1}{2\sqrt{n-d+1}}\right)^d \geq 1-O(d^2/n),$$
completing the proof of Claim~\ref{claim:comf_ev}.
\end{proof}

\subsubsection*{Projections onto comfortable subspaces}

Define the operator $\Pi_{{\sf comf}}\colon L^2(\mathbb{R}^{n\times n},\gamma)\to L^2(\mathbb{R}^{n\times n},\gamma)$ to be orthogonal projection onto the linear subspace of comfortable polynomials. This projection has a neat Fourier expression:
\[
(\Pi_{{\sf comf}} f)(X) = \sum\limits_{H_\alpha\text{ a comfortable monomial}}\widehat{f}(\alpha) H_{\alpha}(X),
\]
where $\hat{f}(\alpha):=\langle f,H_{\alpha}\rangle_{L^2(\gamma)}$ for each Hermite polynomial $H_{\alpha}$. We also define the operators $\Pi_{{\sf comf}, d}$, $\Pi_{{\sf comf}, {\sf col}, d}$ and 
$\Pi_{{\sf comf}, {\sf row}, d}$ to be the orthogonal projections onto
the space of comfortable, row comfortable and column comfortable polynomials of degree at most $d$, respectively. Finally, if $S \subset [n]$ is a set of rows, we define $\Pi_{{\sf comf}, =S}$ to be the projection onto the subspace spanned by the comfortable homogeneous monomials of degree $|S|$ which depend only upon variables from the rows in $S$.

\remove{We will also be interested in the
projection operators onto column comfortable and
row comfortable functions, denoted by 
$\Pi_{{\sf comf}, {\sf col}}$ and 
$\Pi_{{\sf comf}, {\sf row}}$ respectively,
as well as their low-degree counterparts 
$\Pi_{{\sf comf}, {\sf col}, d}$ and 
$\Pi_{{\sf comf}, {\sf row}, d}$.
}

\subsection{Reducing Theorem~\ref{thm:hypercontractivity in O_n} to a statement about low-degree truncations of $\Tcol$.}
In this section, we prove Theorem \ref{thm:hypercontractivity in O_n} modulo the following lemma, asserting that on
the space of comfortable $d$-juntas, the operator $\Tcol R_V$ (for a typical $V$) preserves some of the mass of a function even after projection onto $V_d$.
\begin{lemma}\label{lem:main_lemma}
  There exist absolute constants $C>0$ and $\delta>0$ such that the following holds for all $d\leq \delta n^{1/2}$. For all comfortable $d$-juntas $f:\mathbb{R}^{n \times n} \to \mathbb{R}$, we have
  \[
  \Expect{V\sim\SO(n)}{\norm{(\Tcol R_V f)^{\leq d}}_{L^2(\gamma)}^2}
  \geq C^{-d}\norm{f}_{L^2(\mu)}^2.
  \]
\end{lemma}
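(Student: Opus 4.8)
\textbf{Proof proposal for Lemma~\ref{lem:main_lemma}.} The plan is to lower-bound $\norm{(\Tcol R_V f)^{\le d}}_{L^2(\gamma)}$ for each fixed $V$ by testing $\Tcol R_V f$ against a carefully chosen \emph{comfortable} polynomial, and then to average over $V$ via a Cauchy--Schwarz manoeuvre that is insensitive to the exceptional $V$'s. Write $f=\sum_{\sigma\in S_d}a_\sigma m_\sigma$ with $m_\sigma(X)=\prod_{i=1}^{d}X_{i,\sigma(i)}$, put $\norm a_2^2:=\sum_\sigma a_\sigma^2$, and let $\widetilde{R_V f}$ be the genuine polynomial $Y\mapsto \sum_\sigma a_\sigma\prod_{i=1}^{d}(YV)_{i,\sigma(i)}$ on $\mathbb{R}^{n\times n}$; it has degree $\le d$ and restricts to $R_V f$ on $\sqrt n\,\SO(n)$. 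Set $g_V:=\Pi_{{\sf comf},d}\,\widetilde{R_V f}$, a comfortable polynomial of degree $\le d$. Since $g_V$ has degree $\le d$ we have $\norm{(\Tcol R_V f)^{\le d}}_{L^2(\gamma)}\ge \langle \Tcol R_V f,g_V\rangle_{\gamma}/\norm{g_V}_{L^2(\gamma)}$, and since $g_V$ lies in the range of the projection onto degree $\le d$, a Cauchy--Schwarz inequality in $V$ gives
\[
\mathbb{E}_V\!\left[\norm{(\Tcol R_V f)^{\le d}}_{L^2(\gamma)}^2\right]\ \ge\ \frac{\bigl(\mathbb{E}_V[\langle \Tcol R_V f,\,g_V\rangle_{\gamma}]\bigr)^2}{\mathbb{E}_V\!\left[\norm{g_V}_{L^2(\gamma)}^2\right]}.
\]
Since a comparison of the joint moments of the entries of a Haar-random $\SO(n)$ matrix with the corresponding Gaussian moments shows the $m_\sigma$ are $C^{O(d)}$-nearly orthonormal in $L^2(\mu)$ for $d\le\delta\sqrt n$, the quantities $\norm f_{L^2(\mu)}^2$ and $\norm a_2^2$ agree up to a factor $C^{O(d)}$, so it suffices to prove (i) $\mathbb{E}_V[\norm{g_V}_{L^2(\gamma)}^2]\le C^{O(d)}\norm a_2^2$ together with a matching lower bound, and (ii) $\mathbb{E}_V[\langle \Tcol R_V f,g_V\rangle_\gamma]\ge C^{-O(d)}\norm a_2^2$.

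For (i): since $\Pi_{{\sf comf},d}$ is an orthogonal projection, $\norm{g_V}_{L^2(\gamma)}\le\norm{\widetilde{R_V f}}_{L^2(\gamma)}$, and a direct Gaussian computation using that the rows of $Y\sim\gamma$ are i.i.d.\ $N(0,I_n)$ while the columns $Ve_1,\dots,Ve_d$ are orthonormal yields $\norm{\widetilde{R_V f}}_{L^2(\gamma)}^2=\norm a_2^2$ for \emph{every} orthogonal $V$; in particular $\mathbb{E}_V[\norm{g_V}_{L^2(\gamma)}^2]\le\norm a_2^2$. For the lower bound, index the degree-$d$ comfortable monomials by injections $\tau\colon[d]\hookrightarrow[\lfloor n/2\rfloor]$ (the rows are automatically $1,\dots,d$); integrating over $Y$ gives $\widehat{g_V}(\tau):=\langle\widetilde{R_V f},\ \prod_i Y_{i,\tau(i)}\rangle_\gamma=\sum_\sigma a_\sigma\prod_i V_{\tau(i),\sigma(i)}$, so $\norm{g_V}_{L^2(\gamma)}^2=\sum_{\tau\colon[d]\hookrightarrow[\lfloor n/2\rfloor]}\widehat{g_V}(\tau)^2$ while $\norm a_2^2=\sum_{\tau\colon[d]\to[n]}\widehat{g_V}(\tau)^2$. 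By invariance of the Haar measure on the Stiefel manifold of orthonormal $d$-frames under coordinate permutations, $\mathbb{E}_V[\widehat{g_V}(\tau)^2]$ is a single number $p$ for all injective $\tau\colon[d]\to[n]$; summing the identity $\norm a_2^2=\sum_\tau\mathbb{E}_V[\widehat{g_V}(\tau)^2]$ and bounding the contribution of the non-injective tuples by $O(d^2/n)\norm a_2^2\le\tfrac12\norm a_2^2$ (each column collision costs a factor $O(d^2/n)$, which is where $d\le\delta\sqrt n$ is used) gives $p\ge \tfrac12\norm a_2^2/\bigl(n(n-1)\cdots(n-d+1)\bigr)$, hence $\mathbb{E}_V[\norm{g_V}_{L^2(\gamma)}^2]=\bigl(\lfloor n/2\rfloor\cdots(\lfloor n/2\rfloor-d+1)\bigr)p\ge 4^{-d}\norm a_2^2$.

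For (ii): by adjointness $\langle\Tcol R_V f,g_V\rangle_\gamma=\langle R_V f,\Tcol^* g_V\rangle_{L^2(\mu)}$. The crucial point is that $g_V$ is comfortable, so by Claim~\ref{claim:comf_ev} each comfortable monomial occurring in it is an eigenvector of $\Tcol^*$ with eigenvalue $\lambda_\tau\ge C^{-d}$; hence $\Tcol^* g_V=\sum_{\tau}\lambda_\tau\widehat{g_V}(\tau)\prod_i X_{i,\tau(i)}$ and $\langle\Tcol R_V f,g_V\rangle_\gamma=\sum_{\tau}\lambda_\tau\widehat{g_V}(\tau)\bigl\langle R_V f,\ \prod_i X_{i,\tau(i)}\bigr\rangle_{L^2(\mu)}$, the sums over $\tau\colon[d]\hookrightarrow[\lfloor n/2\rfloor]$. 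Expanding $\langle R_V f,\prod_i X_{i,\tau(i)}\rangle_{L^2(\mu)}=\mathbb{E}_{X\sim\mu}[f(XV)\prod_i X_{i,\tau(i)}]$ and replacing the (weakly dependent) rows of $X$ by independent uniform vectors on the sphere via the random-matrix comparison estimates, this inner product equals $\widehat{g_V}(\tau)$ up to an error $e_\tau(V)$. Substituting and averaging over $V$,
\[
\mathbb{E}_V[\langle\Tcol R_V f,g_V\rangle_\gamma]=\sum_{\tau}\lambda_\tau\,\mathbb{E}_V[\widehat{g_V}(\tau)^2]+\sum_{\tau}\lambda_\tau\,\mathbb{E}_V[\widehat{g_V}(\tau)\,e_\tau(V)],
\]
where the first sum is $\ge C^{-d}\sum_{\tau}\mathbb{E}_V[\widehat{g_V}(\tau)^2]=C^{-d}\,\mathbb{E}_V[\norm{g_V}_{L^2(\gamma)}^2]\ge C^{-d}4^{-d}\norm a_2^2$ by (i), and the second sum --- bounded by Cauchy--Schwarz in $\tau$ and in $V$ using a quantitative bound on $\mathbb{E}_V[\sum_\tau e_\tau(V)^2]$ from the random-matrix estimates and again $d\le\delta\sqrt n$ --- is $o(1)\cdot C^{-d}4^{-d}\norm a_2^2$. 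This yields (ii), and combining (i) and (ii) in the displayed Cauchy--Schwarz bound gives $\mathbb{E}_V[\norm{(\Tcol R_V f)^{\le d}}_{L^2(\gamma)}^2]\ge C^{-O(d)}\norm a_2^2\ge C^{-O(d)}\norm f_{L^2(\mu)}^2$, as required.

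The \textbf{main obstacle} is exactly the family of random-matrix comparison estimates used throughout: one needs that the joint moments of the entries of a Haar-random $\SO(n)$ matrix in its top $O(\sqrt n)\times O(\sqrt n)$ block --- and the analogous moments of the rows of a uniformly random orthonormal $d$-frame in $\mathbb{R}^n$ --- match the corresponding Gaussian moments up to a multiplicative factor $C^{O(d)}$, with each ``collision'' costing only $O(d^2/n)$, uniformly for $d$ up to $\delta\sqrt n$. The naive error bounds for such comparisons degrade like $e^{O(d^3/n)}$, which is worthless at $d\asymp\sqrt n$; obtaining $C^{O(d)}$-type control requires a careful combinatorial analysis of the Gram--Schmidt / Gaussian-maker coupling (via the formula of Lemma~\ref{lem:formula_for_adjoint}) rather than any single soft estimate, and this is the technical heart of the argument.
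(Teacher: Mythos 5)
Your blueprint shares the central ideas of the paper's proof — use comfortable test functions, exploit that comfortable monomials are $\Tcol^*$-eigenvectors with eigenvalues $\ge C^{-d}$, average over $V$ via permutation invariance of Haar measure, and ultimately rest everything on the $L^2(\mu)$-versus-$L^2(\gamma)$ comparison for (row-)comfortable $d$-juntas. But the route you take has two specific gaps, and in both cases the paper's actual argument sidesteps the estimate you are missing rather than supplying it.

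First, in step (i) you need $\sum_{\tau\text{ non-inj}}\mathbb{E}_V[\widehat{g_V}(\tau)^2]\le \tfrac12\|a\|_2^2$, and you assert ``each column collision costs $O(d^2/n)$'' without proof. This is true for a single $\sigma$ by a direct moment count, but for a general linear combination $\sum_\sigma a_\sigma\prod_i V_{\tau(i),\sigma(i)}$ the cross terms make it a genuine estimate. The paper never needs it. Instead it uses the Plancherel-type identity $\langle R_V f, H_{(1,1),\dots,(d,d)}\rangle_\gamma = n^{-d/2}f(\sqrt n V)$, which gives the \emph{exact} formula $\mathbb{E}_V[\widehat{g_V}(\tau)^2]=n^{-d}\|f\|_{L^2(\mu)}^2$ for every injective $\tau$ and hence $\mathbb{E}_V\|\Pi_{{\sf comf},d}R_Vf\|_\gamma^2 = \tfrac{(n/2)!}{n^d((n/2)-d)!}\|f\|_{L^2(\mu)}^2$ (Claim~\ref{claim: L_U works properly}); this is exact for any row-comfortable $d$-junta and requires no injective/non-injective split. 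You actually computed most of this identity yourself ($\widehat{g_V}(\tau)=\sum_\sigma a_\sigma\prod_iV_{\tau(i),\sigma(i)}$, which at $\tau=\mathrm{id}$ is $n^{-d/2}f(\sqrt n V)$), but then took the lossier route of estimating the partition-indexed sum.

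Second, in step (ii) you need $\langle R_V f, H_\tau\rangle_{L^2(\mu)} = \widehat{g_V}(\tau)+e_\tau(V)$ with $\mathbb{E}_V[\sum_\tau e_\tau(V)^2]\ll C^{-2d}\|a\|_2^2$. This is a term-by-term comparison of Fourier coefficients taken with respect to two different measures, for the (non-comfortable, only row-comfortable) function $R_V f$. That is a qualitatively different, and stronger, requirement than the norm comparisons the paper proves (Lemma~\ref{lem:one sided bound} gives $\|g\|_{L^2(\mu)}\le(1+\eps)\|g\|_{L^2(\gamma)}$ for row-comfortable $g$; Lemma~\ref{lem:the other side} gives the reverse for comfortable $g$). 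You would need significant cancellation between $\langle R_V f,H_\tau\rangle_\mu$ and $\widehat{g_V}(\tau)$ before summing over $\tau$, and it is not clear how to extract that from the norm bounds alone. The paper avoids the issue by first swapping $\Tcol$ for $\Trow$ (Lemma~\ref{lem:swap}, pure algebra on eigenvalues), then using $R_V\Trow = \Trow R_V$ to write $\Pi_{{\sf comf},d}\Trow R_Vf = \Pi_{{\sf comf},d}R_V(\Trow f)$, and finally showing $\|\Trow f - f\|_\gamma\le\eps\|f\|_\gamma$ (Claim~\ref{claim:ccomf_ev2}) — a norm comparison that follows cleanly from Lemmas~\ref{lem:one sided bound} and~\ref{lem:the other side}, and no inner-product-level control is ever needed.

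So while your identification of the random-matrix comparison as the technical core is exactly right, your specific formulation demands comparison estimates (non-injective moment sums, Fourier-coefficient agreement in $\ell^2$) that are not obviously implied by Lemma~\ref{lem:contiguity} and its corollaries, and it misses the two structural devices — the exact Plancherel identity and the $\Trow$-swap — that make the paper's version of the argument close without them.
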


We now show how to complete the proof of Theorem~\ref{thm:hypercontractivity in O_n}, assuming Lemma \ref{lem:main_lemma}.
\begin{proof}[Proof of Theorem~\ref{thm:hypercontractivity in O_n} (assuming Lemma~\ref{lem:main_lemma})]
Lemmas \ref{lem:commuting} and \ref{lem:hypercontractivity} give the first two items, so it remains to prove the third item. Namely, letting $f\in V_d$ we want to show that $\| \mathrm{T}_{\rho} f\|_{L^2(\mu)} \ge (c\rho)^{d}\| f\|_{L^2(\mu )}$. By  Claim~\ref{claim:reduce_to_vddd} we may assume that $f$ is a comfortable $d$-junta. By Cauchy--Schwarz, it is enough to show that
$\langle \mathrm{T}_{\rho} f, f \rangle_{L^2(\mu)} \ge (c\rho)^{d}\|f\|_2^2$, and we next show that the last assertion follows by Lemma~\ref{lem:main_lemma}.

Using the fact that $U_{\rho}=U_{\sqrt{\rho}}^*U_{\sqrt{\rho}}$, we have
\begin{align*}
\langle \mathrm{T}_\rho f,f\rangle_{L^2(\mu)}
= \Expect{V\sim \SO(n)}{\| U_{\sqrt{\rho}} \Tcol R_V f \|_{L^2(\gamma)}^2}
&\geq
\Expect{V\sim \SO(n)}{\| (U_{\sqrt{\rho}} \Tcol R_V f)^{\leq d} \|_{L^2(\gamma)}^2}\\
&\geq
\rho^{d}\Expect{V\sim \SO(n)}{\| (\Tcol R_V f)^{\leq d} \|_{L^2(\gamma)}^2}\\
&\geq
\rho^d C^{-d}\norm{f}_{L^2(\mu)}^2,
\end{align*}
 where we used Lemma~\ref{lem:main_lemma} and the fact that for each function $g$ of degree $\le d$ it holds that  \[\| U_{\sqrt{\rho}}g\|_{L^2(\gamma)}\ge \rho^{d/2}\|g\|_{L^2(\gamma)}.\]
\end{proof}



 \section{Low-degree truncations of $\Tcol$: Proof of Lemma~\ref{lem:main_lemma}.}\label{sec:ingredients}
 In this section, we prove 
 Lemma~\ref{lem:main_lemma}. Our main idea is to show that one can approximate the $L^2$-norms of (row) comfortable $d$-juntas with respect to $(\sqrt{n}\SO(n),\mu)$ by their $L^2$-norms with respect to $(\mathbb{R}^{n\times n}, \gamma)$.  

\subsection{Comparing $L^2(\mu)$ and $L^2(\gamma)$.}
The following lemmas assert that the $2$-norm in $L^2(\sqrt{n} \SO(n))$ of a row comfortable $d$-junta is roughly bounded by its $2$-norm in Gaussian space. We defer the proofs of the lemmas to Sections~\ref{sec:closeness1} and \ref{sec:closeness2}. 
\begin{lemma}\label{lem:one sided bound}
  For all $\eps>0$, there exists $\delta>0$ such that if $d\leq \delta n^{1/2}$, then for any function $f:\mathbb{R}^{n \times n} \to \mathbb{R}$ that is either (i) a column comfortable, $d$-column-junta or (ii) a row comfortable, $d$-row-junta, we have \[\|f\|_{L^{2}\left(\mu\right)}\le (1+\eps)\|f\|_{L^{2}\left(\gamma\right)}.\]
\end{lemma}

In the case where the function $f$ is comfortable, we show that in fact the same inequality with the measures swapped.
\begin{lemma}\label{lem:the other side}
  For all $\eps>0$, there exists $\delta>0$ such that if $d\le \delta n^{1/2}$ and $f:\mathbb{R}^{n \times n} \to \mathbb{R}$ is a comfortable $d$-junta, then
  \[
  \|f\|_{L^2(\gamma)}\le (1+\eps)\|f\|_{L^2(\mu)}.
  \]
\end{lemma}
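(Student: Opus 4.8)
Proof plan for Lemma~\ref{lem:the other side}.

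The plan is to reduce the statement to a bound on the Gram matrix of the comfortable monomials inside $L^2(\mu)$, and then to control that Gram matrix by combining Lemma~\ref{lem:one sided bound} (which handles the ``top'' of the spectrum) with the adjoint-operator computation of Claim~\ref{claim:comf_ev} (which pins the spectrum near the identity once one passes through Gaussian space). Write $f=\sum_{\sigma}a_\sigma m_\sigma$, where the sum runs over the comfortable monomials $m_\sigma$ supported on the top-left $d\times d$ minor and $a=(a_\sigma)$ is the coefficient vector. Since distinct rows of a standard Gaussian matrix are independent and each $m_\sigma$ uses at most one variable from each row, the $m_\sigma$ are orthonormal in $L^2(\gamma)$; hence $\|f\|_{L^2(\gamma)}^2=\|a\|_2^2$. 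Letting $M=\big(\langle m_\sigma,m_\tau\rangle_{L^2(\mu)}\big)$ be the Gram matrix in $L^2(\mu)$, the lemma is equivalent to $\langle Ma,a\rangle\ge (1-\eps)^2\|a\|_2^2$. Applying Lemma~\ref{lem:one sided bound} (with parameter $\eps$, yielding a threshold $\delta_1$) to arbitrary polynomials in the span of the $m_\sigma$ -- these are row comfortable -- already gives the companion bound $\|M\|_{\mathrm{op}}\le (1+\eps)^2$; what remains is the lower bound on $\lambda_{\min}(M)$.

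Next I would bring in $\Tcol^*$. By Claim~\ref{claim:comf_ev}, each $m_\sigma$, being column comfortable and supported on $[d]\times[d]$, is an eigenvector of $\Tcol^*$ with eigenvalue $\lambda_{S_\sigma}=1+O(d^2/n)$, so $\Tcol^* f=\sum_\sigma a_\sigma\lambda_{S_\sigma}m_\sigma$, and with $\Lambda=\mathrm{diag}(\lambda_{S_\sigma})$ we get $\|\Tcol^* f\|_{L^2(\mu)}^2=\langle \Lambda M\Lambda a,a\rangle$. On the other hand, $\Tcol$ is an $L^2$-isometry (Fact~\ref{fact:tcol_preserve}), so $\Tcol^*\Tcol=I$ on $L^2(\mu)$ and $\Tcol\Tcol^*$ is the orthogonal projection of $L^2(\gamma)$ onto $\mathrm{range}(\Tcol)$. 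Consequently, viewing $f$ as a polynomial on Gaussian space and using that the distance of $f$ to $\mathrm{range}(\Tcol)$ is at most $\|f-\Tcol g\|_{L^2(\gamma)}$ for the particular choice $g=f|_{\sqrt n\SO(n)}$, together with the description of the coupling from Lemma~\ref{lem:formula_for_adjoint} (namely ${\sf GS}_{{\sf col}}(XG)=X$ when $X\sim\mu$ and $G\sim\mathrm{GMD}$ are independent),
\begin{align*}
\|f\|_{L^2(\gamma)}^2-\|\Tcol^* f\|_{L^2(\mu)}^2
&=\big\|(I-\Tcol\Tcol^*)f\big\|_{L^2(\gamma)}^2\\
&\le \big\|f-\Tcol(f|_{\sqrt n\SO(n)})\big\|_{L^2(\gamma)}^2
=\Expect{X\sim\mu,\,G\sim\mathrm{GMD}}{\big|f(XG)-f(X)\big|^2}.
\end{align*}

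The technical heart -- and the main obstacle -- is to show that this ``Gram--Schmidt defect'' is $O(d^2/n)\,\|f\|_{L^2(\gamma)}^2$, hence at most $\eps'\|f\|_{L^2(\gamma)}^2$ once $d\le\delta n^{1/2}$ with $\delta$ small. I would expand $(XG)_{i\sigma(i)}=X_{i\sigma(i)}G_{\sigma(i)\sigma(i)}+\sum_{k<\sigma(i)}X_{ik}G_{k\sigma(i)}$; multiplying over $i$ and using that $\sigma$ has image contained in $[d]$, the leading term of $f(XG)$ is $\big(\prod_{j=1}^d G_{jj}\big)f(X)$, while every other term carries at least one off-diagonal correction factor $G_{k\ell}$ ($k<\ell$), each of variance $1/n$ in the GMD. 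Using the independence structure of the GMD (off-diagonal entries mutually independent, independent of the diagonal, and all independent of $X$), the fact that comfortable monomials use distinct rows and distinct columns, and the moment estimates of Fact~\ref{fact:too_lazy_to_chernoff} to get $\mathbb{E}\big[(\prod_j G_{jj}-1)^2\big]=O(d^2/n)$, one bounds $\mathbb{E}\,|f(XG)-f(X)|^2$ by $O(d^2/n)\big(\mathbb{E}\,f(X)^2+\|a\|_2^2\big)$, which is $O(d^2/n)\,\|f\|_{L^2(\gamma)}^2$ after invoking the already-established $\|M\|_{\mathrm{op}}\le(1+\eps)^2$. Finally I would assemble: the defect bound gives $\langle\Lambda M\Lambda a,a\rangle\ge\big(1-O(d^2/n)\big)\|a\|_2^2$; since $\|\Lambda-I\|_{\mathrm{op}}=O(d^2/n)$ and $\|M\|_{\mathrm{op}}\le(1+\eps)^2$, replacing $\Lambda M\Lambda$ by $M$ costs only $O(d^2/n)\|a\|_2^2$, so $\langle Ma,a\rangle\ge\big(1-O(\delta^2)\big)\|a\|_2^2$, which is $\ge(1-\eps)^2\|a\|_2^2$ for $\delta$ small (and after replacing $\eps$ by $\eps/2$ at the outset, and setting $\delta\le\delta_1$). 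This is exactly $\|f\|_{L^2(\mu)}\ge(1-\eps)\|f\|_{L^2(\gamma)}$, which rearranges to the claimed bound.
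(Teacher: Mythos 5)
Your steps 1--5 give a valid, and genuinely different, reduction from the one the paper uses: instead of the triangle inequality, you invoke the projection-defect identity $\|f\|_{L^2(\gamma)}^2-\|\Tcol^*f\|_{L^2(\mu)}^2=\|(I-\Tcol\Tcol^*)f\|_{L^2(\gamma)}^2$ (valid because $\Tcol$ is an isometry), together with the eigenvalue control from Claim~\ref{claim:comf_ev}(4) to pass between $\Tcol^*f$ and $f$. Both routes end up at the same target: bounding $\mathbb{E}_{X\sim\mu,\,G\sim\mathrm{GMD}}\big|f(XG)-f(X)\big|^2$ by a small multiple of $\|f\|_{L^2(\gamma)}^2$. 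Up to there the proposal is fine and in places cleaner.

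The gap is in step 6. You propose to expand $f(XG)$ in terms of the entries of $G$, isolate the leading term $\big(\prod_j G_{jj}\big)f(X)$, and use the independence structure of the GMD (and of $G$ from $X$) plus Fact~\ref{fact:too_lazy_to_chernoff} to control the remainder. But after taking $\mathbb{E}_G$, you are left with expectations, over $X\sim\mu$, of degree-$\le 2d$ polynomials in the matrix entries of $X$, and the entries of a Haar-random orthogonal matrix are \emph{not} independent. The ``each off-diagonal factor costs $n^{-1/2}$ and everything factorizes'' heuristic simply does not hold over $\mu$; you would need a Weingarten-style calculation which you have not carried out and which is precisely what the machinery of the paper is designed to avoid. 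The paper first applies Lemma~\ref{lem:one sided bound} to the row-comfortable $d$-junta $X\mapsto f(X)-f(XG)$ (for each fixed $G$) to transfer the whole estimate to Gaussian space, and only then expands; the expansion then reduces to $\|f\|_{L^2(\nu)}^2+\|f\|_{L^2(\gamma)}^2-2\langle f,\Tcol^*f\rangle_{L^2(\gamma)}$, where $\nu$ is the over-Gaussian distribution, and the real work is the contiguity estimate $\|f\|_{L^2(\nu)}\le(1+\eps)\|f\|_{L^2(\gamma)}$, i.e.\ Lemma~\ref{lem:contiguity}, proved in Section~\ref{sec:contiguity}. Your proposal neither makes that transfer nor proves an analogue of Lemma~\ref{lem:contiguity}, so step 6 is a genuine gap rather than an omitted routine calculation.

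A secondary issue: the claimed error $O(d^2/n)\|f\|_{L^2(\gamma)}^2$ is too strong. If you track the off-diagonal corrections carefully (cf.\ the sum $\sum_{\ell\ge 1}\eps_\ell\,d^\ell$ with $\eps_\ell\approx n^{-\ell/2}$ in Section~\ref{sec:contiguity}), the dominant contribution is $\ell=1$ and the error is of order $d/\sqrt n$, not $(d/\sqrt n)^2$. This does not affect whether the lemma holds (for $d\le\delta\sqrt n$ either bound is $o(1)$), but it is a sign that the ``one correction factor of variance $1/n$'' heuristic is not accounting for the number of such terms.
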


\subsection{The main argument for proving Lemma~\ref{lem:main_lemma}.}
We now give the proof of 
Lemma~\ref{lem:main_lemma}, assuming Lemmas \ref{lem:one sided bound} and \ref{lem:the other side}.
\subsubsection*{Swapping $\Trow$ and $\Tcol$.}
Our first step is to show that on the left-hand side of Lemma~\ref{lem:main_lemma}, we can 
replace $\Tcol$ by $\Trow$. The benefit of this 
exchange is that $\Trow$ and $R_V$ commute.  

\begin{lemma}\label{lem:swap}
There exist absolute constants $C>0,\delta >0$ such that the following holds. Let  $d< \delta n$ and let $f:\mathbb{R}^{n \times n} \to \mathbb{R}$ be a comfortable $d$-junta.
Then 
\[
\Expect{V\sim \SO(n)}{\|(\Tcol R_V f)^{\le d}  \|_{L^2(\gamma)}^2 } \ge C^{-d} \Expect{V\sim \SO(n)}{ \|\Pi_{\sf comf, d}\Trow R_V f  \|_{L^2(\gamma)}^2}.
\]
\end{lemma}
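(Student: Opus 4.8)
Looking at Lemma~\ref{lem:swap}, the goal is to relate the expected mass of $(\Tcol R_V f)^{\le d}$ (the low-degree projection after Gram--Schmidt on \emph{columns}) to the expected mass of $\Pi_{\sf comf,d}\Trow R_V f$ (the comfortable, low-degree projection after Gram--Schmidt on \emph{rows}). The key structural advantage of the right-hand side is that $\Trow$ commutes with right-multiplication $R_V$, whereas $\Tcol$ does not; so being able to swap $\Tcol$ for $\Trow$ (at the cost of a $C^{-d}$ factor, which is affordable) is exactly what downstream arguments need.

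\begin{proof}[Proof sketch of Lemma~\ref{lem:swap}]
The plan is to pass from columns to rows via transposition, and to control the damage using the $L^2$-comparison lemmas (Lemmas~\ref{lem:one sided bound} and~\ref{lem:the other side}) together with the fact that comfortable monomials are eigenvectors of $\Tcol^*$ and $\Trow^*$ (Claim~\ref{claim:comf_ev}). First I would lower-bound $\|(\Tcol R_V f)^{\le d}\|_{L^2(\gamma)}^2$ by $\|\Pi_{\sf comf,d}\,\Tcol R_V f\|_{L^2(\gamma)}^2$, since the comfortable degree-$\le d$ polynomials form a subspace of the degree-$\le d$ polynomials (orthogonal projection onto a subspace can only decrease the norm — here it goes the right way because $\Pi_{\sf comf,d}$ factors through the degree-$\le d$ projection). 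Next, I would use adjointness: $\langle \Tcol R_V f, H_S\rangle_{L^2(\gamma)} = \langle R_V f, \Tcol^* H_S\rangle_{L^2(\mu)}$, and when $H_S$ is a comfortable degree-$\le d$ monomial, Claim~\ref{claim:comf_ev} gives $\Tcol^* H_S = \lambda_S H_S$ with $\lambda_S \ge C^{-d}$ (in fact $\lambda_S = 1-O(d^2/n)$ when supported on $[d]\times[d]$, but the crude bound suffices). Hence $\Pi_{\sf comf,d}\,\Tcol R_V f$ has the same comfortable Fourier support as $\Pi_{\sf comf,d}\, R_V f$ with coefficients rescaled by $\lambda_S \in [C^{-d},1]$, so $\|\Pi_{\sf comf,d}\,\Tcol R_V f\|_{L^2(\gamma)}^2 \ge C^{-2d}\|\Pi_{\sf comf,d}\, R_V f\|_{L^2(\gamma)}^2$.

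The same argument applied to $\Trow^*$ instead — again using Claim~\ref{claim:comf_ev}, now in the form $\Trow^* H_{S^t} = \lambda_S H_{S^t}$ for row-comfortable $H_S$ — gives $\|\Pi_{\sf comf,d}\,\Trow R_V f\|_{L^2(\gamma)}^2 \le \|\Pi_{\sf comf,d}\, R_V f\|_{L^2(\gamma)}^2$ (here the inequality is the easy direction, since $\lambda_S \le 1$). Actually I would be slightly more careful: what I really want is to compare $\|\Pi_{\sf comf,d}\,R_V f\|_{L^2(\gamma)}$ to itself on both sides, so the two preceding displays chain to give the claimed inequality with a single $C^{-d}$ loss, \emph{provided} the relevant $L^2(\gamma)$-norms and $L^2(\mu)$-norms of the comfortable polynomials in question are comparable up to constant factors. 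This is where Lemmas~\ref{lem:one sided bound} and~\ref{lem:the other side} come in: for $d\le\delta n^{1/2}$, a comfortable $d$-junta has $\|g\|_{L^2(\mu)}$ and $\|g\|_{L^2(\gamma)}$ within a $(1\pm\eps)$ factor, so I can move freely between the two inner products when estimating the projections. One subtlety: $\Pi_{\sf comf,d}\, R_V f$ is built from comfortable degree-$\le d$ monomials, and $R_V f$ itself need not be comfortable even though $f$ is — but that is fine, because $\Pi_{\sf comf,d}$ strips away the non-comfortable part, and I only ever need the comparison lemmas applied to genuinely comfortable polynomials.

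The main obstacle I anticipate is handling the transposition cleanly. The natural identity is that $\Tcol$ on $f$ corresponds to $\Trow$ on the ``transposed'' function $f^t(X) := f(X^t)$, and transposition interchanges row-comfortable and column-comfortable monomials while preserving $L^2(\mu)$-norms (since $X\mapsto X^t$ preserves Haar measure on $\SO(n)$) but \emph{not} obviously $L^2(\gamma)$-norms in a way that interacts with $R_V$. So rather than literally transpose, I would keep the Fourier-coefficient bookkeeping explicit: write $\Pi_{\sf comf,d}\, R_V f = \sum_{S} c_S(V)\, H_S$ over comfortable $S$ supported in $[d]\times[d]$-type configurations, note $\Tcol^*$ acts diagonally on the column-comfortable $H_S$ and $\Trow^*$ acts diagonally on the row-comfortable $H_{S^t}$ with the \emph{same} eigenvalue $\lambda_S$, and then the inequality follows term-by-term after using the $\mu$--$\gamma$ comparison to control the ambient norms uniformly in $V$ (the comparison constants in Lemmas~\ref{lem:one sided bound},~\ref{lem:the other side} do not depend on $V$). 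Collecting the constants — $C^{-d}$ from the $\Tcol^*$ eigenvalues, $(1+\eps)^{\pm d}$ from the norm comparisons — yields an overall $C^{-d}$ for a possibly larger absolute constant $C$, and choosing $\delta$ small enough (depending only on the $\eps$ needed to make the comparison factors at most, say, $2$) completes the proof.
\end{proof}
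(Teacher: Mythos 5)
Your first step — lower-bounding $\|(\Tcol R_V f)^{\le d}\|_{L^2(\gamma)}^2$ by the sum of squares of its inner products with comfortable degree-$\le d$ monomials, and then using adjointness plus Claim~\ref{claim:comf_ev} — is exactly what the paper does. But the middle of your argument introduces a spurious intermediate quantity, $\|\Pi_{\sf comf,d}R_V f\|_{L^2(\gamma)}^2$, and this creates a genuine gap.

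The problem is a conflation of $\mu$-Fourier coefficients with $\gamma$-Fourier coefficients. Adjointness gives $\langle \Tcol R_V f, H_S\rangle_{L^2(\gamma)} = \lambda_S\langle R_V f, H_S\rangle_{L^2(\mu)}$ — note the inner product on the right is in $L^2(\mu)$, not $L^2(\gamma)$. So the statement ``$\Pi_{\sf comf,d}\Tcol R_V f$ has the same comfortable Fourier support as $\Pi_{\sf comf,d}R_V f$ with coefficients rescaled by $\lambda_S$'' is false as written: the coefficients of the former are $\lambda_S\langle R_V f, H_S\rangle_{\mu}$, while the coefficients of the latter are $\langle R_V f, H_S\rangle_{\gamma}$, and these differ by more than a factor of $\lambda_S$. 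You partially anticipate the issue and propose to patch it with Lemmas~\ref{lem:one sided bound} and~\ref{lem:the other side}, but those compare \emph{norms} of a single comfortable polynomial, not individual Fourier coefficients against two different measures, so they do not directly licence a term-by-term rescaling argument. They also only hold for $d\le\delta n^{1/2}$, whereas the lemma is stated (and used) for $d<\delta n$; any route through them gives a strictly weaker statement.

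The paper sidesteps all of this by never touching $\gamma$-Fourier coefficients of $R_V f$. After getting $\sum_S \lambda_S^2\langle R_V f, H_S\rangle_\mu^2$ via $\Tcol^*$, it \emph{stays in $L^2(\mu)$} and rewrites $H_S = \lambda_{S^t}^{-1}\Trow^* H_S$, using Claim~\ref{claim:comf_ev} item 2 (the $\Trow^*$-eigenvalue on the comfortable $H_S$ is $\lambda_{S^t}$, not $\lambda_S$ — a second small slip in your write-up). Applying adjointness again turns $\langle R_V f, \Trow^* H_S\rangle_\mu$ into $\langle \Trow R_V f, H_S\rangle_\gamma$, and the resulting ratio $\lambda_S^2/\lambda_{S^t}^2\ge C^{-2d}$ (via Claim~\ref{claim:comf_ev} item 3, which holds for all $d$) produces the desired constant with no dependence on the $\mu$--$\gamma$ comparison lemmas. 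Your proof becomes correct and complete once you drop the detour through $\|\Pi_{\sf comf,d}R_V f\|_\gamma$ and instead identify $\|\Pi_{\sf comf,d}\Trow R_V f\|_\gamma^2 = \sum_S \lambda_{S^t}^2\langle R_V f, H_S\rangle_\mu^2$ directly by the same adjointness step; the entire lemma is then a single telescoping chain of equalities sandwiched by two inequalities, with no appeal to Lemmas~\ref{lem:one sided bound} or~\ref{lem:the other side}.
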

\begin{proof}
\remove{By using the adjointness and} Applying Claim \ref{claim:comf_ev}, we have 
\begin{align*}
\|(\Tcol R_V f)^{\le d}  \|_{L^2(\gamma)}^2 
&\ge \sum_{H_S \text{ comfortable of degree } d} \langle \Tcol R_Vf, H_S \rangle_{L^2(\gamma)}^2 \\
&= \sum_{H_S \text{ comfortable of degree } d} \langle R_Vf, \Tcol^{*} H_S \rangle_{L^2(\mu)}^2\\
&=\lambda_S^2\sum_{H_S \text{ comfortable of degree } d} \langle R_Vf, H_S \rangle_{L^2(\mu)}^2\\
&=\frac{\lambda_S^2}{\lambda_{S^{t}}^2}
\sum_{H_S \text{ comfortable of degree } d} \langle R_Vf, \Trow^{*} H_S \rangle_{L^2(\mu)}^2\\
&\ge C^{-d} \sum_{H_S \text{ comfortable of degree } d} \langle \Trow R_Vf, H_S \rangle_{L^2(\gamma)}^2. 
\end{align*}
The lemma follows by plugging in the definition of $\Pi_{{\sf comf}, d}$.
\end{proof}
\subsubsection*{$\Trow f$ is close to $f$.}
We have now reduced our task to understanding 
the average of the square of the $2$-norm of $\Pi_{\sf comf, d}\Trow R_V f = \Pi_{\sf comf, d}R_V \Trow f$ (this equality holds because $\Trow$ and 
$R_V$ commute). The following claim further simplifies
our task and shows that $\Trow f$ is close to $f$, 
thereby effectively reducing our task to estimating the 
$2$-norm of $\Pi_{\sf comf, d} R_V f$. (Though some care is required to make this precise, as 
we are applying a projection operator on top, 
which may decrease norms considerably.)
\begin{claim}\label{claim:ccomf_ev2}
For any $\epsilon>0$, there exists $\delta>0$ such that the following holds. Let $d\leq\delta n^{1/2}$, and let $f:\mathbb{R}^{n \times n} \to \mathbb{R}$ be a comfortable $d$-junta. Then 
  \[
  \norm{\Trow f - f}_{L^2(\gamma)}^2
  \leq   \epsilon \norm{f}_{L^2(\gamma )}^2.
  \]
\end{claim}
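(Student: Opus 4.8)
The plan is to pass to the Gaussian‑maker picture and carry out an exact first‑moment computation over the GMD, after which the bound reduces to the $L^2(\mu)$‑versus‑$L^2(\gamma)$ comparisons for comfortable juntas, Lemmas~\ref{lem:one sided bound} and~\ref{lem:the other side}. First I would record the coupling identity. If $X$ is Haar on $\sqrt n\SO(n)$ and $G\sim\text{GMD}$ is independent of $X$, then, exactly as in the discussion preceding Lemma~\ref{lem:formula_for_adjoint}, $Y:=G^tX$ has law $\gamma$ and ${\sf GS}_{\sf row}(Y)=X$ almost surely, so $\Trow f(Y)=f(X)$; hence
\[
\norm{\Trow f-f}_{L^2(\gamma)}^2=\Expect{X\sim\mu,\,G\sim\text{GMD}}{\big(f(X)-f(G^tX)\big)^2}.
\]
Since $f$ is a comfortable $d$‑junta, we may write $f(Z)=F\big(Z|_{[d]\times[d]}\big)$ for a multilinear polynomial $F=\sum_\sigma a_\sigma m_\sigma$ on $d\times d$ matrices, $\sigma$ running over partial matchings in $[d]\times[d]$ and $m_\sigma(N)=\prod_{(i,j)\in\sigma}N_{ij}$. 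Because $(G^t)_{il}=0$ for $l>i$, the top‑left $d\times d$ minor of $G^tX$ equals $AM$, where $M:=X|_{[d]\times[d]}$ and $A:=(G^t)|_{[d]\times[d]}$ is a $d\times d$ lower‑triangular matrix; writing $A=I+E$, one has $E_{ii}=g_{ii}-1$, $E_{il}=g_{li}$ for $l<i$, and $E_{il}=0$ for $l>i$, the $g_{li}$ ($l<i$) being i.i.d.\ $N(0,1/n)$ independent of the $g_{ii}$, and $g_{ii}$ being $n^{-1/2}$ times the Euclidean norm of an $(n-i+1)$‑dimensional standard Gaussian. Fact~\ref{fact:too_lazy_to_chernoff} gives, for $i\le d$, that $\mu_i:=\E[g_{ii}]-1$ lies in $[-(d+1)/n,0]$, so $\card{\mu_i}=O(d/n)$.

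The heart of the matter is an exact computation of the conditional expectation over $E$. Expanding $F(AM)=F(M+EM)=\sum_\sigma a_\sigma\prod_{(i,j)\in\sigma}\big(M_{ij}+(EM)_{ij}\big)$ and using that $(EM)_{ij}=(g_{ii}-1)M_{ij}+\sum_{l<i}g_{li}M_{lj}$ depends only on the $i$‑th column of $G$ — so that, $\sigma$ being a matching, the factors $\{(EM)_{ij}:(i,j)\in\sigma\}$ are independent given $M$, with $\E_E[(EM)_{ij}]=\mu_iM_{ij}$ — together with $m_{\sigma\setminus T}(M)\prod_{(i,j)\in T}M_{ij}=m_\sigma(M)$, I would obtain
\[
\Expect{E}{F(AM)-F(M)}=\sum_\sigma a_\sigma m_\sigma(M)\Big(\prod_{(i,j)\in\sigma}(1+\mu_i)-1\Big)=\sum_\sigma a_\sigma c_\sigma m_\sigma(M)=:g(M),
\]
where $c_\sigma:=\prod_{(i,j)\in\sigma}(1+\mu_i)-1$ satisfies $\card{c_\sigma}\le(1+O(d/n))^d-1=O(d^2/n)$ (using $d^2/n\le\delta^2$); note $g$ is again a comfortable $d$‑junta, with coefficients $a_\sigma c_\sigma$. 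Since the monomials $m_\sigma$ are orthonormal in $L^2(\gamma)$, we have $\norm{f}_{L^2(\gamma)}^2=\sum_\sigma a_\sigma^2$ and $\norm{g}_{L^2(\gamma)}^2=\sum_\sigma a_\sigma^2c_\sigma^2\le\big(O(d^2/n)\big)^2\norm{f}_{L^2(\gamma)}^2$.

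Finally I would expand $\norm{\Trow f-f}_{L^2(\gamma)}^2=\norm{\Trow f}_{L^2(\gamma)}^2-2\inner{\Trow f}{f}_{L^2(\gamma)}+\norm{f}_{L^2(\gamma)}^2$. By Fact~\ref{fact:tcol_preserve}, $\norm{\Trow f}_{L^2(\gamma)}=\norm{f}_{L^2(\mu)}$, and by the previous steps $\inner{\Trow f}{f}_{L^2(\gamma)}=\E_{X,G}\big[f(X)f(G^tX)\big]=\norm{f}_{L^2(\mu)}^2+\inner{f}{g}_{L^2(\mu)}$. Given the target $\eps>0$, I would fix a small parameter $\eta>0$ and apply Lemma~\ref{lem:one sided bound} to both $f$ and $g$ (both are comfortable, hence row‑comfortable, $d$‑juntas) and Lemma~\ref{lem:the other side} to $f$; this gives $\norm{f}_{L^2(\mu)}^2=(1\pm O(\eta))\norm{f}_{L^2(\gamma)}^2$ and $\norm{g}_{L^2(\mu)}\le(1+\eta)\norm{g}_{L^2(\gamma)}=O(d^2/n)\norm{f}_{L^2(\gamma)}$, whence $\card{\inner{f}{g}_{L^2(\mu)}}\le O(d^2/n)\norm{f}_{L^2(\gamma)}^2$ by Cauchy--Schwarz. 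Substituting,
\[
\norm{\Trow f-f}_{L^2(\gamma)}^2\le\Big[\big(1+O(\eta)\big)-2\big(1-O(\eta)-O(d^2/n)\big)+1\Big]\norm{f}_{L^2(\gamma)}^2=\big(O(\eta)+O(\delta^2)\big)\norm{f}_{L^2(\gamma)}^2,
\]
and choosing $\eta$ and $\delta$ small enough that the right‑hand constant is at most $\eps$ finishes the proof.

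The main obstacle is not this computation, which is short; it is the two comparison lemmas it invokes, Lemmas~\ref{lem:one sided bound} and~\ref{lem:the other side}, which are the technical core and are established separately in Sections~\ref{sec:closeness1}--\ref{sec:closeness2}. Within the present argument the only point needing care is the first‑moment computation over the GMD: the independence of the factors $(EM)_{ij}$ over $(i,j)\in\sigma$ (resting on $\sigma$ being a matching, so that distinct factors involve disjoint columns of $G$), and the consequent fact that the residual comfortable junta $g$ carries coefficients of size only $O(d^2/n)$.
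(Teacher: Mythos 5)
Your proof is correct and follows essentially the same strategy as the paper's. Both arguments hinge on the same three inputs: the fact that $\norm{\Trow f}_{L^2(\gamma)}=\norm{f}_{L^2(\mu)}$ (Fact~\ref{fact:tcol_preserve}), the fact that comfortable monomials supported on $[d]\times[d]$ are eigenvectors of $\Trow^*$ with eigenvalues $1+O(d^2/n)$ (Claim~\ref{claim:comf_ev}), and the two comparison lemmas~\ref{lem:one sided bound} and~\ref{lem:the other side}; and both reduce the target to the algebraic identity $\norm{\Trow f-f}_{L^2(\gamma)}^2=\norm{f}_{L^2(\gamma)}^2+\norm{f}_{L^2(\mu)}^2-2\inner{\Trow f}{f}_{L^2(\gamma)}$.

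The one genuine (if minor) difference is in how you handle the cross term. The paper takes $g:=(\Trow^*)^{-1}f$, uses Claim~\ref{claim:comf_ev} to get $\norm{g-f}_{L^2(\gamma)}$ small, and then bounds $\inner{\Trow f}{f}=\norm{f}_{L^2(\mu)}^2-\inner{\Trow f}{g-f}$ from below by Cauchy--Schwarz. You instead compute $\Trow^*f-f$ directly from the GMD picture, re-deriving by hand the eigenvalue estimate $\lambda_{S^t}=\prod_i(1+\mu_i)=1+O(d^2/n)$ rather than citing Claim~\ref{claim:comf_ev}(4), and then write $\inner{\Trow f}{f}_{L^2(\gamma)}=\norm{f}_{L^2(\mu)}^2+\inner{f}{\Trow^*f-f}_{L^2(\mu)}$. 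This is a dual choice ($\Trow^*$ applied forward versus its inverse), giving a residual junta whose coefficients are $\prod(1+\mu_i)-1$ rather than $\prod(1+\mu_i)^{-1}-1$; both are $O(d^2/n)$, so the two routes are interchangeable. Your explicit computation (using that in a matching the factors $(EM)_{ij}$ involve disjoint columns of $G$ and are therefore conditionally independent given $M$) is essentially a re-proof of the first and fourth items of Claim~\ref{claim:comf_ev}, which you could have invoked directly to shorten the argument. Everything checks out, including the small points you flag: the bound $\mu_i\in[-(d+1)/n,0]$ from Fact~\ref{fact:too_lazy_to_chernoff}, the orthonormality of comfortable monomials under $\gamma$, and the coupling identity ${\sf GS}_{\sf row}(G^tX)=X$ a.s.
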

\begin{proof}
Let $g$ be a comfortable $d$-junta  satisfying $\Trow^*g=f$, i.e.\ writing $f = \sum\limits_{S} \alpha_S H_S$, we take $g=\sum \lambda_{S^t}^{-1}\alpha_S H_S$, where $\lambda_S$ is as in Claim \ref{claim:comf_ev}. Then by Parseval, we have
\[\|g-f\|_{L^2(\gamma)} \le \eps \|f\|_{L^2(\gamma)},\]
provided $\delta$ is sufficiently small. Hence, using Cauchy--Schwarz, we have
\begin{align*}
\|f\|_{L^2(\mu)}^2 = \langle f, \trow ^* g\rangle 
&= \langle \Trow f,g \rangle = \langle \Trow f, f \rangle + \langle \Trow f, g-f \rangle \\
&\le \langle \Trow f, f \rangle + \| \Trow f \|_{L^2(\gamma)} \| g-f\|_{L^2(\gamma)},
\end{align*}
implying that
\[
\langle \Trow f,f \rangle \ge \|f\|_{L^2(\mu)}^2-\eps \|\trow f\|_{L^2(\gamma)}\|f\|_{L^2(\gamma)} =  \|f\|_{L^2(\mu)}^2-\eps \|f\|_{L^2(\mu)}\|f\|_{L^2(\gamma)}.
\]
Thus, we obtain
\begin{align*}
 \|\Trow f-f \|_{L^2(\gamma)}^2 &=  \|f\|_{L^2(\gamma)}^2 + \|\trow f\|_{L^2(\gamma)}^2 - 2 \langle \Trow f,f \rangle 
 \\ &\le  \|f\|_{L^2(\gamma)}^2 -\|f\|_{L^2(\mu)}^2 + 2\eps \|f\|_{L^2(\mu)}\|f\|_{L^2(\gamma)}\\
 & \leq 3\epsilon \|f\|_{L^2(\gamma)}^2,
\end{align*}
using Lemmas \ref{lem:one sided bound} and \ref{lem:the other side} (again provided $\delta$ is sufficiently small depending on $\epsilon$). This completes the proof.
 \end{proof}

\subsubsection*{The projection of $R_V\Trow f$ onto 
the subspace of comfortable polynomials}
At this point, it would seem 
that to finish the proof of Lemma~\ref{lem:main_lemma} it suffices to 
estimate the typical $2$-norm of $\Pi_{{\sf comf}, d} R_V f$. While this is indeed the case, 
some care is needed, as one cannot 
switch particularly smoothly from $\Trow f$ to $f$ in the previous statement. 
To address this, we must be able to estimate 
the $2$-norm of $\Pi_{{\sf comf}, d} R_V f$ under
the weaker hypothesis that $f$ is a 
 {\em row-}comfortable $d$-junta. This is the content of the following claim.
\begin{claim} \label{claim: L_U works properly}
  Let $d\le n/2$ and let $f:\mathbb{R}^{n \times n} \to \mathbb{R}$ be a row-comfortable $d$-row-junta. Then $$\mathbb{E}_{V\sim \SO(n)} \|\Pi_{{\sf comf}, d}R_V f\|_{L^2(\gamma)}^2\geq \frac{(n/2)!}{2^d n^d((n/2)-d)!}\|f\|_{L^2(\mu)}^2.$$
  \end{claim}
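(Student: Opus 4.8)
The plan is a direct computation that exploits the rigid structure of a row-comfortable $d$-junta. Since the monomials of such an $f$ all lie in the top-left $d\times d$ minor and contain exactly one variable from each row, their row-sets are all equal to $[d]$, so $f$ can be written as $f=\sum_{\phi}\alpha_\phi\prod_{i=1}^{d}x_{i,\phi(i)}$ with $\phi$ ranging over all functions $[d]\to[d]$ and $\alpha_\phi\in\mathbb{R}$. First I would expand $R_V f$: using $(XV)_{i,\phi(i)}=\sum_{\ell=1}^{n}x_{i,\ell}V_{\ell,\phi(i)}$ and multiplying out,
\[
R_V f(X)=\sum_{\phi}\alpha_\phi\sum_{\psi\colon[d]\to[n]}\Big(\prod_{i=1}^{d}V_{\psi(i),\phi(i)}\Big)\prod_{i=1}^{d}x_{i,\psi(i)}.
\]
Each $\prod_{i=1}^{d}x_{i,\psi(i)}$ is multilinear (distinct rows), hence equals $H_{S_\psi}$ for $S_\psi=\{(i,\psi(i)):i\in[d]\}$, and distinct $\psi$ give distinct $S_\psi$. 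As $d\le n/2$, the rows $1,\dots,d$ already lie in $[\lfloor n/2\rfloor]$, so $H_{S_\psi}$ is comfortable exactly when $\psi$ is injective with image inside $[\lfloor n/2\rfloor]$; moreover each such monomial has degree exactly $d$, so the degree cutoff in $\Pi_{{\sf comf},d}$ is vacuous here. Invoking the Fourier formula for $\Pi_{{\sf comf},d}$ and the orthonormality of the multilinear monomials $\{H_S\}$ in $L^2(\gamma)$ (the entries under $\gamma$ being i.i.d.\ standard Gaussians), and collecting the surviving terms by $\psi$, I obtain
\[
\|\Pi_{{\sf comf},d}R_V f\|_{L^2(\gamma)}^2=\sum_{\psi\colon[d]\hookrightarrow[\lfloor n/2\rfloor]}\Big|\sum_{\phi}\alpha_\phi\prod_{i=1}^{d}V_{\psi(i),\phi(i)}\Big|^2.
\]

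Next I would take $\Expect{V\sim\SO(n)}{\cdot}$ term by term. The key observation is that this expectation is the same for every injection $\psi$: extending $\psi$ to a permutation $\pi$ of $[n]$ and left-multiplying $V$ by the permutation matrix of $\pi$ preserves the Haar measure, so the tuple $\big(\prod_{i=1}^{d}V_{\psi(i),\phi(i)}\big)_{\phi}$ has the same law as $\big(\prod_{i=1}^{d}V_{i,\phi(i)}\big)_{\phi}$. Since $\prod_{i=1}^{d}V_{i,\phi(i)}=n^{-d/2}\prod_{i=1}^{d}(\sqrt{n}\,V)_{i,\phi(i)}$, this last tuple is $n^{-d/2}$ times the tuple of monomial values of $f$ at the matrix $\sqrt{n}\,V$, so for every such $\psi$,
\[
\Expect{V\sim\SO(n)}{\Big|\sum_{\phi}\alpha_\phi\prod_{i=1}^{d}V_{\psi(i),\phi(i)}\Big|^2}=n^{-d}\,\Expect{V\sim\SO(n)}{|f(\sqrt{n}\,V)|^2}=n^{-d}\,\|f\|_{L^2(\mu)}^2,
\]
using that $\mu$ on $\sqrt{n}\,\SO(n)$ is the pushforward of the Haar measure on $\SO(n)$. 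Summing over the $\frac{(n/2)!}{((n/2)-d)!}$ injections $[d]\hookrightarrow[\lfloor n/2\rfloor]$ gives
\[
\Expect{V\sim\SO(n)}{\|\Pi_{{\sf comf},d}R_V f\|_{L^2(\gamma)}^2}=\frac{(n/2)!}{n^d\,((n/2)-d)!}\,\|f\|_{L^2(\mu)}^2,
\]
which is the claim.

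I do not expect a genuine obstacle here; the argument is essentially careful bookkeeping. The two delicate points are: (i) identifying precisely which monomials of $R_V f$ survive $\Pi_{{\sf comf},d}$ — this is exactly where the hypotheses that $f$ is row comfortable and $d\le n/2$ enter, and it matters that for such $f$ every monomial uses all $d$ rows, so that no lower-degree or row-repeated monomials occur after projection; and (ii) the reduction of an arbitrary injection $\psi$ to the identity via left-invariance of the Haar measure under permutation matrices, which is what turns the per-term expectation into the single quantity $n^{-d}\|f\|_{L^2(\mu)}^2$ rather than an injection-dependent Weingarten-type expression.
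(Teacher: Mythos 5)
Your proof is correct, and it is essentially the same argument as the paper's. Both proofs hinge on exactly two ingredients: the orthonormality of multilinear monomials in $L^2(\gamma)$, which reduces $\|\Pi_{{\sf comf},d}R_V f\|_{L^2(\gamma)}^2$ to a sum of squared Fourier coefficients over the $\frac{(\lfloor n/2\rfloor)!}{(\lfloor n/2\rfloor-d)!}$ comfortable degree-$d$ monomials supported on the rows $[d]$; and the observation that the Fourier coefficient against the diagonal monomial $H_{(1,1),\ldots,(d,d)}$ equals $n^{-d/2}f(\sqrt{n}V)$, so that its expected square is $n^{-d}\|f\|_{L^2(\mu)}^2$. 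The paper packages the symmetry step as ``write $V$ as the product of a Haar $\SO(n)$ matrix and a random permutation matrix,'' while you do the same thing term-by-term, showing for each injection $\psi$ that left-multiplication by the permutation matrix extending $\psi$ carries the $\psi$-term to the diagonal one; these are the same reduction.

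Two small remarks, neither of which is a real gap. First, you (like the paper) implicitly restrict to the pure-degree-$d$ case, i.e.\ every monomial of $f$ uses all of the rows $1,\dots,d$. This is indeed the intended reading: the stated constant $\frac{(n/2)!}{n^d((n/2)-d)!}$ is false otherwise (e.g.\ $f=x_{11}$ with $d=2$, $n=4$ gives $\mathbb{E}_V\|\Pi_{{\sf comf},2}R_Vf\|^2=1/2$ but the formula predicts $1/8$), and the paper's own proof begins by writing $f=\sum_{\alpha=((1,i_1),\dots,(d,i_d))}\hat f(\alpha)H_\alpha$, which only makes sense under that restriction. Second, the permutation used to carry $\psi$ to the identity must be \emph{even} so that the corresponding permutation matrix lies in $\SO(n)$; since $d\le n/2$ leaves at least two indices free when $n\ge 3$ (and the case $n\le2$ is trivial), one can always pick an even extension, so this is only a cosmetic omission which the paper shares.
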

\begin{proof}
We first note that if $H_{\alpha}$ and $H_{\beta}$ are row-comfortable monomials such that the set of rows appearing in $\alpha$ is different from the set of rows appearing in $\beta$, then we have $\langle H_{\alpha},H_{\beta} \rangle_{L^2(\gamma)} = \langle H_{\alpha},H_{\beta} \rangle_{L^2(\mu)}=0$. Indeed, the first equality is immediate from the pairwise orthogonality of the Hermite polynomials with respect to $L^2(\gamma)$, and the second equality follows from the fact that, if $H_{\alpha}$ contains a variable from the $i$th row and $H_{\beta}$ does not, then flipping the sign of both the $i$th row and the $(\lfloor n/2\rfloor+1)$th row of a matrix $X \in \SO(n)$ yields a (Haar-)measure-preserving map on $\SO(n)$ that sends $H_{\alpha}(X)H_{\beta}(X)$ to its negative, and therefore $\langle H_{\alpha},H_{\beta} \rangle_{L^2(\mu)}=\mathbb{E}_{X \sim \mu} [H_{\alpha}(X)H_{\beta}(X)]=0$.

For $S \subset [n]$, let $W_{S}$ denote the linear span of the row-comfortable homogeneous monomials of degree $|S|$ which depend only upon variables from the rows in $S$ (and therefore depend upon precisely one variable from each of the rows in $S$). It is clear that the $W_S$ are pairwise orthogonal with respect to $L^2(\gamma)$, and by the above observation they are also pairwise orthogonal with respect to $L^2(\mu)$. 

By the above remarks, for any row-comfortable $d$-row junta $g:\mathbb{R}^{n \times n} \to \mathbb{R}$, we may write $g$ as an orthogonal direct sum,
$$g = \sum_{S \subset [d]}g_{(=S)},$$
where $g_{(=S)}$ denotes the orthogonal projection of $g$ onto $W_S$ (with respect to $L^2(\gamma)$). It is clear that $(R_Vg)_{(=S)} = R_V(g_{(=S)})$ for any $V \in \SO(n)$ and any row-comfortable $d$-row-junta $g$.

Now let $f:\mathbb{R}^{n \times n} \to \mathbb{R}$ be a row-comfortable $d$-row-junta. Since the $f_{(=S)}$ are pairwise orthogonal with respect to $L^2(\mu)$ as well as with respect to $L^2(\gamma)$, we have
$$\|f\|_{L^2(\mu)}^2 = \sum_{S \subset [d]}\|f_{(=S)}\|_{L^2(\mu)}^2,$$
and therefore by averaging, there exists $S \subset [d]$ such that 
\begin{equation}\label{eq:avg} \|f_{(=S)}\|_{L^2(\mu)}^2 \geq \frac{1}{2^d}\|f\|_{L^2(\mu)}^2.\end{equation}

For brevity, write $h = f_{(=S)}$, $d':=|S|$ and $S = \{i_1,\ldots,i_{d'}\}$. Our next aim is to show that
\begin{equation}\label{eq:h-eqn} \mathbb{E}_{V\sim \SO(n)} \|\Pi_{{\sf comf}, =S}R_V h\|_{L^2(\gamma)}^2\geq \frac{(n/2)!}{n^{d'}((n/2)-d')!}\|h\|_{L^2(\mu)}^2.
\end{equation}
In proving (\ref{eq:h-eqn}), we may and shall assume (without loss of generality) that $S = \{1,2,\ldots,d'\}$. We first assert that, in this case, 
$$\mathbb{E}_{V\sim \SO(n)} \|\Pi_{{\sf comf},=S}R_V h\|_{L^2(\gamma)}^2= \frac{(n/2)!}{((n/2)-d')!}\mathbb{E}_V\langle R_V h,H_{\{(1,1),\ldots,(d',d')\}}\rangle_{L^2(\gamma)}^2.$$
Indeed, this follows from the fact that we may write $V$ as the product of a random $\SO(n)$ matrix $V'$ and a random permutation matrix $P_{\sigma}$, $V = P_{\sigma}V'$ say. Now the $S_n$-orbit $\{R_{P_{\sigma}}m:\ \sigma \in S_n\}$ of a monomial $m=m(X)$ of the form $\prod_{i=1}^{d'} X_{i, v_i}$ with the $v_i$ all distinct, consists of all the monomials of the form $\prod_{i=1}^{d'} X_{i, w_i}$, with the $w_i$ all distinct.

Now write $h=\sum_{\alpha=((1,i_1),\ldots ,(d', i_{d'}))}  \hat{h}(\alpha) H_\alpha$. Then for each $\alpha$ of the form $\{(1,j_1),\ldots ,(d',j_{d'})\}$ (for $j_1,\ldots,j_{d'}$ all distinct), we have
$$R_V H_\alpha(X) = H_\alpha (XV) = \prod_{k=1}^{d} [XV]_{k,j_k}=\sum_{r_1,\ldots, r_{d'}} \prod_{k=1}^{d'}X_{k,r_k}V_{r_k,j_k}.$$
Therefore,
$$\langle R_V H_\alpha(X), H_{\{(1,1),\ldots,(d',d')\}}\rangle_{L^2(\gamma)} =  H_\alpha(V)=n^{-d'/2}H_\alpha(\sqrt{n} V).$$
Expanding $h=\sum_\alpha \hat{h}(\alpha)H_\alpha$ and taking 2-norms with respect to $\sqrt{n}V\sim \mu$, we see that indeed,
$$\mathbb{E}_{V\sim \SO(n)} \|\Pi_{{\sf comf}, =S}R_V h\|_{L^2(\gamma)}^2\geq \frac{(n/2)!}{n^{d'}((n/2)-d')!}\|h\|_{L^2(\mu)}^2,$$
proving (\ref{eq:h-eqn}).

Combining this with (\ref{eq:avg}) and using the fact that $d' \leq d$, yields
$$\mathbb{E}_{V\sim \SO(n)} \|\Pi_{{\sf comf}, =S}R_V f_{(=S)}\|_{L^2(\gamma)}^2 \geq \frac{(n/2)!}{2^d n^d((n/2)-d)!}\|f\|_{L^2(\mu)}^2.$$

Finally, since 
\begin{align*}\Pi_{{\sf comf},d}R_Vf &= \Pi_{{\sf comf},d}R_V\left(\sum_{S \subset [d]}f_{(=S)}\right) \\
&= \Pi_{{\sf comf},d} \left(\sum_{S \subset [d]}(R_Vf)_{(=S)}\right)\\
&= \sum_{S \subset [d]} \Pi_{{\sf comf},=S}\left((R_Vf)_{(=S)}\right),
\end{align*}
and since the last sum is an orthogonal direct sum (again using the pairwise orthogonality of the $W_S$'s with respect to $L^2(\gamma)$), we have
\begin{align*}\mathbb{E}_{V\sim \SO(n)} \|\Pi_{{\sf comf}, d}R_V f\|_{L^2(\gamma)}^2
& \geq \mathbb{E}_{V\sim \SO(n)} \|\Pi_{{\sf comf}, =S}\left((R_V f)_{(=S)}\right)\|_{L^2(\gamma)}^2\\
&= \mathbb{E}_{V\sim \SO(n)} \|\Pi_{{\sf comf}, =S}R_V (f_{(=S)})\|_{L^2(\gamma)}^2 \\
& \geq \frac{(n/2)!}{2^d n^d((n/2)-d)!}\|f\|_{L^2(\mu)}^2,\end{align*}
proving the claim.
\end{proof}

We now show how to lower-bound the right-hand side of the expression in the statement of Lemma~\ref{lem:swap}.
\begin{lemma}\label{lem:randomize__comf}
  There exist $\delta>0$ and $C>0$ such that the following holds. Let $d\le \delta n^{1/2}$. Then for all comfortable $d$-juntas $f:\mathbb{R}^{n \times n} \to \mathbb{R}$, we have 
  \[
  \Expect{V\sim \SO(n)} 
  {{\norm{\Pi_{{\sf comf}, d}  R_V \Trow f}}_{L^2(\gamma)}^2}
  \ge C^{-d} \norm{f}_{L^2(\mu)}^2.
  \]
\end{lemma}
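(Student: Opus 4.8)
The plan is to combine the three ingredients just assembled — Lemma~\ref{lem:swap} (swapping $\Tcol$ for $\Trow$ at the cost of a factor $C^{-d}$), Claim~\ref{claim:ccomf_ev2} ($\Trow f$ is $L^2(\gamma)$-close to $f$ when $f$ is a comfortable $d$-junta), and Claim~\ref{claim: L_U works properly} (an exact formula for $\mathbb{E}_V\|\Pi_{{\sf comf},d}R_V f\|_{L^2(\gamma)}^2$ when $f$ is row comfortable) — together with the norm-comparison Lemmas~\ref{lem:one sided bound} and~\ref{lem:the other side}, and the technical statement Lemma~\ref{lem:contiguity} that the operator $R_V$ typically does not inflate the $L^2(\gamma)$-norm of a low-degree function too much. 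First I would note that it suffices to prove the bound in the form stated, since $\Trow$ and $R_V$ commute, so $R_V \Trow f = \Trow R_V f$; the point of introducing $\Trow$ in place of $\Tcol$ (which we have already paid for in Lemma~\ref{lem:swap}) is precisely this commutation, which lets us first apply $\Trow$ to the fixed comfortable junta $f$ and only afterwards randomize over $V$.

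The core of the argument is a triangle-inequality split. Write $\Trow f = f + (\Trow f - f)$, so that
\[
\Pi_{{\sf comf},d} R_V \Trow f = \Pi_{{\sf comf},d} R_V f + \Pi_{{\sf comf},d} R_V(\Trow f - f).
\]
By the reverse triangle inequality in $L^2(\gamma)$,
\[
\|\Pi_{{\sf comf},d} R_V \Trow f\|_{L^2(\gamma)} \ge \|\Pi_{{\sf comf},d} R_V f\|_{L^2(\gamma)} - \|\Pi_{{\sf comf},d} R_V(\Trow f - f)\|_{L^2(\gamma)}.
\]
For the main term, $f$ is a comfortable $d$-junta, hence in particular a row comfortable $d$-junta, so Claim~\ref{claim: L_U works properly} gives
\[
\Expect{V\sim\SO(n)}{\|\Pi_{{\sf comf},d} R_V f\|_{L^2(\gamma)}^2} = \frac{(n/2)!}{n^d\,((n/2)-d)!}\,\|f\|_{L^2(\mu)}^2,
\]
and since $d \le \delta n^{1/2}$ this prefactor is $\ge \big(\tfrac{1}{2} - \tfrac{d}{n}\big)^d \ge 3^{-d}$ for $\delta$ small enough. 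For the error term, since $\Pi_{{\sf comf},d}$ is a projection it does not increase $L^2(\gamma)$-norms, so $\|\Pi_{{\sf comf},d} R_V(\Trow f - f)\|_{L^2(\gamma)} \le \|R_V(\Trow f - f)\|_{L^2(\gamma)}$; here $\Trow f - f$ is a polynomial of degree $\le d$ (indeed $\Trow$ preserves degree on juntas, or one simply notes $\Trow f$ has degree $\le d$ as a polynomial in the entries of $\sqrt n\,\SO(n)$ extended to Gaussian space), so by Lemma~\ref{lem:contiguity} its expected squared $L^2(\gamma)$-norm under $R_V$ is at most $C_1^{d}$ times $\|\Trow f - f\|_{L^2(\gamma)}^2$, and by Claim~\ref{claim:ccomf_ev2} the latter is at most $\epsilon'\|f\|_{L^2(\gamma)}^2$, which by Lemma~\ref{lem:the other side} is at most $\epsilon'(1+\epsilon)^2\|f\|_{L^2(\mu)}^2$. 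Choosing $\epsilon'$ small enough (depending on $C_1$, which is absolute) makes $C_1^d\epsilon'(1+\epsilon)^2$ as small as we like relative to $3^{-d}$ — say at most $\tfrac{1}{100}3^{-d}$. Combining via $\mathbb{E}[(A-B)^2] \ge \tfrac12\mathbb{E}[A^2] - \mathbb{E}[B^2]$ (or a weighted version), we obtain
\[
\Expect{V}{\|\Pi_{{\sf comf},d} R_V \Trow f\|_{L^2(\gamma)}^2} \ge \tfrac12\cdot 3^{-d}\|f\|_{L^2(\mu)}^2 - \tfrac{1}{100}3^{-d}\|f\|_{L^2(\mu)}^2 \ge C^{-d}\|f\|_{L^2(\mu)}^2
\]
for a suitable absolute constant $C$.

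The step I expect to be the main obstacle is controlling the error term, i.e.\ making precise the claim that $R_V$ does not blow up the $L^2(\gamma)$-norm of a degree-$\le d$ polynomial by more than an exponential factor $C_1^d$ on average over $V$ — this is the content of Lemma~\ref{lem:contiguity}, which is not elementary: $R_V$ is an isometry on $L^2(\mu)$ but we are measuring in $L^2(\gamma)$, and the two norms are only comparable on the restricted class of (row/column) comfortable juntas, not on arbitrary low-degree polynomials, which is exactly why the bare bound $\|R_V g\|_{L^2(\gamma)} \le \|R_V g\|_{L^2(\mu)}$-type reasoning fails and one needs the finer contiguity estimate between the distribution of $\sqrt n\,\SO(n)$ and that of a Gaussian matrix, restricted to a minor of size $\Theta(\sqrt n)$. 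Modulo Lemma~\ref{lem:contiguity} (whose proof is deferred), the remaining steps are bookkeeping with the triangle inequality and the already-established norm comparisons.
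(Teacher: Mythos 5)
Your plan correctly assembles the right ingredients (Claim~\ref{claim: L_U works properly}, Claim~\ref{claim:ccomf_ev2}, and the norm comparisons), and the overall architecture — apply the exact formula to a row-comfortable surrogate of $\Trow f$, control the discrepancy — is the right shape. But there is a genuine quantitative gap in the way you decompose, and it is fatal for $d$ near the top of the allowed range.

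You split $\Trow f = f + (\Trow f - f)$ \emph{outside} the projection and the randomization over $V$, so that the main term $\mathbb{E}_V\|\Pi_{{\sf comf},d}R_V f\|_{L^2(\gamma)}^2$ and the error term $\mathbb{E}_V\|\Pi_{{\sf comf},d}R_V(\Trow f - f)\|_{L^2(\gamma)}^2$ are compared \emph{additively}. The main term is $\Theta(C^{-d})\|f\|_{L^2(\mu)}^2$ by Claim~\ref{claim: L_U works properly}, which decays geometrically in $d$. Your error term is at most $\|\Trow f - f\|_{L^2(\gamma)}^2 \le (\eps')^2\|f\|_{L^2(\gamma)}^2 \lesssim (\eps')^2\|f\|_{L^2(\mu)}^2$, where $\eps'$ is a fixed small constant (it depends only on $\delta$, not on $d$). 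For $d$ close to $\delta n^{1/2}$ the main term is smaller than $(\eps')^2$ by a huge margin, so your inequality $\frac12\cdot 3^{-d} - O((\eps')^2) \ge C^{-d}$ simply fails: you would need $\eps' \lesssim C^{-d/2}$, which no fixed choice of $\delta$ can deliver uniformly over all $d \le \delta\sqrt n$. The paper avoids this by making the error \emph{multiplicative} instead: it observes the exact identity $\Pi_{{\sf comf},d}R_V\Trow f = \Pi_{{\sf comf},d}R_V g$ with $g := \Pi_{{\sf comf,row},d}\Trow f$ (using $\Pi_{{\sf comf},d} = \Pi_{{\sf comf},d}\Pi_{{\sf comf,row},d}$ and the commutation $\Pi_{{\sf comf,row},d}R_V = R_V\Pi_{{\sf comf,row},d}$), then applies Claim~\ref{claim: L_U works properly} directly to the row-comfortable $d$-junta $g$, giving $\mathbb{E}_V\|\Pi_{{\sf comf},d}R_V\Trow f\|_{L^2(\gamma)}^2 = \frac{(n/2)!}{n^d((n/2)-d)!}\|g\|_{L^2(\mu)}^2$, and finally shows $\|g\|_{L^2(\mu)} \ge (1-\eps/2)\|f\|_{L^2(\mu)}$ via Claim~\ref{claim:ccomf_ev2} and Lemmas~\ref{lem:one sided bound}, \ref{lem:the other side}. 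A $(1-\eps/2)$ multiplicative loss survives harmlessly inside a $C^{-d}$ bound; an additive $O((\eps')^2)$ loss does not.

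Two smaller points. First, you invoke Lemma~\ref{lem:contiguity} to control $\mathbb{E}_V\|R_V(\Trow f - f)\|_{L^2(\gamma)}^2$, but that lemma compares $L^2(\nu)$ to $L^2(\gamma)$ for comfortable juntas and has nothing to do with the action of $R_V$ in $L^2(\gamma)$; the bound you want is in fact an exact equality, $\|R_V h\|_{L^2(\gamma)} = \|h\|_{L^2(\gamma)}$ for every $V \in \SO(n)$, because the Gaussian matrix measure is invariant under right multiplication by orthogonal matrices. Second, $\Trow f$ is not a polynomial of degree $\le d$ (the row Gram--Schmidt map involves normalizations), so the statement that $\Trow f - f$ is a degree-$\le d$ polynomial is false; it is $\Trow^*$, not $\Trow$, that preserves the degree filtration. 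These slips are not what sinks the argument (the rotation-invariance fix is trivial), but they reflect a misunderstanding of the roles played by the several operators, and the additive-vs-multiplicative issue above is the real obstruction to the proposal as written.
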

\begin{proof}
  We write $\Pi_{{\sf comf}, d} = \Pi_{{\sf comf ,}d} \Pi_{{\sf comf,row ,}d}$, and set $g=\Pi_{{\sf comf, row}, d} \trow f$. Note that $g$ is a row-comfortable $d$-row-junta. We have
  \[
  \Pi_{{\sf comf}, d}  R_V \Trow f = 
  \Pi_{{\sf comf}, d}  R_V g,
  \]
  where we used the fact that $\Pi_{{\sf comf}, {\sf row}, d}$ commutes with $R_V$. Taking the squares of the 2-norms and expectations over $V$ we may apply Claim~\ref{claim: L_U works properly} to $g$, obtaining
  \begin{equation}\label{eq:randomize_comf}
  \Expect{V}{\|\Pi_{{\sf comf}, d}\trow  R_V f\|_{L^2(\gamma)}^2} = \frac{(n/2)!}{n^d ((n/2)-d)!}\|g\|_{L^2(\mu)}^2
  \geq C^{-d}\|g\|_{L^2(\mu)}^2.
  \end{equation}
   By Lemma \ref{lem:one sided bound}, as $g-f$ is a row comfortable, $d$-row-junta, we have
   \[ 
   \|g\|_{L^2(\mu)} \ge \|f\|_{L^2(\mu)}-\|g-f\|_{L^2(\mu)} \ge \|f\|_{L^2(\mu)}-2\|g-f\|_{L^2(\gamma)}
   \]
   As $f$ is comfortable, we can apply Claim \ref{claim:ccomf_ev2} to it to obtain 
   \[
   \|g-f\|_{L^2(\gamma)}
   =\|\Pi_{{\sf comf, row}, d}
   (\Trow f -f)\|_{L^2(\gamma)}
   \le \| \Trow f - f \|_{L^2(\gamma)}\le 
   \epsilon \|f\|_{L^2(\gamma)}.
   \]
   On the other hand Lemma \ref{lem:the other side} shows that $\|f\|_{L^2(\gamma)}\le 2\|f\|_{L^2(\mu)}$. Putting these three facts together, we obtain $\|g\|_{L^2(\mu)}> ( 1-2\epsilon) \|f\|_{L^2(\mu)}$. Plugging this into~\eqref{eq:randomize_comf}, and adjusting the value of $C$, completes the proof.
  \end{proof}
\subsubsection*{Finishing the proof of Lemma~\ref{lem:main_lemma}.}
 Using Lemma~\ref{lem:swap}, the left-hand side of Lemma~\ref{lem:main_lemma} is at least 
 \[
 C^{-d} \Expect{V\sim \SO(n)}{ \|\Pi_{\sf comf, d}\Trow R_V f  \|_{L^2(\gamma)}^2}
 =C^{-d} \Expect{V\sim \SO(n)}{ \|\Pi_{\sf comf, d}
 R_V \Trow  f  \|_{L^2(\gamma)}^2},
 \]
 and using Lemma~\ref{lem:randomize__comf}
 the last quantity is at least $C'^{-d}\norm{f}_{L^2(\mu)}^2$, as required.\hfill\qedsymbol

\subsection{$L^2(\mu)$ is dominated by $L^2(\gamma)$ on column or row comfortable juntas: Proof of Lemma~\ref{lem:one sided bound}.}\label{sec:closeness1}
Our remaining tasks are to prove Lemmas \ref{lem:one sided bound} and \ref{lem:the other side}. First, we show that the operator $\Tcol^{*}$ is close to the identity on column comfortable $d$-column-juntas.

\begin{claim} \label{claim: T_col star is close to the identity}
  For all $\eps>0$ there exists $\delta>0$ such that if $d\leq \delta n^{1/2}$ and $f$ is a column comfortable $d$-column-junta, then
   \[
   \|\Tcol^{*}f-f\|_{L^{2}\left(\gamma\right)}\le \eps\|f\|_{L^{2}\left(\gamma\right)}.
   \]
\end{claim}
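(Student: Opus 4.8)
The plan is to exploit the eigenvalue formula for $\Tcol^*$ on column comfortable monomials that we have already obtained in Claim~\ref{claim:comf_ev}, together with Parseval's identity in Gaussian space. First I would write a column comfortable $d$-junta $f$ in terms of its monomial expansion, $f=\sum_{S}\alpha_S H_S$, where the sum ranges over multisets $S$ supported on $[\lfloor n/2\rfloor]\times[\lfloor n/2\rfloor]$ indexing column comfortable monomials of degree exactly $d$ (since $f$ is a $d$-junta, all $S$ actually live in $[d]\times[d]$, which is the crucial point). By Claim~\ref{claim:comf_ev}, each such $H_S$ is an eigenvector of $\Tcol^*$ with eigenvalue $\lambda_S$, and by the fourth item of that claim, since $S\subseteq[d]\times[d]$, we have $|\lambda_S-1|=O(d^2/n)$.

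Next I would apply $\Tcol^*$ termwise to get $\Tcol^*f-f=\sum_S(\lambda_S-1)\alpha_S H_S$. Since the comfortable monomials $H_S$ need not be orthonormal in $L^2(\gamma)$, but they do form an orthogonal system when we restrict to monomials indexed by distinct multisets (the $H_\alpha$ for distinct multisets $\alpha$ are orthogonal in Gaussian space, with $\|H_S\|_{L^2(\gamma)}^2 = \prod$ of factorials of multiplicities), I would use Parseval/orthogonality to write
\[
\|\Tcol^*f-f\|_{L^2(\gamma)}^2 = \sum_S (\lambda_S-1)^2\alpha_S^2\|H_S\|_{L^2(\gamma)}^2 \le \Big(\max_S|\lambda_S-1|^2\Big)\sum_S \alpha_S^2\|H_S\|_{L^2(\gamma)}^2 = \Big(\max_S|\lambda_S-1|^2\Big)\|f\|_{L^2(\gamma)}^2.
\]
Then, given $\eps>0$, choosing $\delta>0$ small enough that $d\le\delta n^{1/2}$ forces $d^2/n\le\delta^2$ and hence $\max_S|\lambda_S-1|\le C'\delta^2<\eps$ (for a suitable absolute constant $C'$ coming from the $O(\cdot)$ in Claim~\ref{claim:comf_ev}), we obtain $\|\Tcol^*f-f\|_{L^2(\gamma)}\le\eps\|f\|_{L^2(\gamma)}$, as required.

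The only mild subtlety I anticipate is bookkeeping around the normalization of the $H_S$ when $S$ has repeated entries: strictly speaking one should either pass to the renormalized Hermite-type monomials on which Parseval is cleanest, or simply observe directly that distinct multisets give orthogonal $H_S$'s in $L^2(\gamma)$ and that the inequality above only needs an upper bound on the contribution of each orthogonal block. Neither causes real difficulty — the key input, that the eigenvalues $\lambda_S$ are uniformly within $O(d^2/n)$ of $1$ when $S\subseteq[d]\times[d]$, is exactly what Claim~\ref{claim:comf_ev}(4) gives us, and that is the one place where the junta hypothesis (as opposed to mere comfortability, which would allow $S$ to spread over the whole $\lfloor n/2\rfloor\times\lfloor n/2\rfloor$ minor and only give $\lambda_S\ge C^{-d}$) is genuinely used.
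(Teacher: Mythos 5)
Your proposal is correct and matches the paper's proof exactly: expand $f$ over column comfortable monomials, note by Claim~\ref{claim:comf_ev}(1) that these are eigenvectors of $\Tcol^*$, use Claim~\ref{claim:comf_ev}(4) to get $|\lambda_S-1|=O(d^2/n)$ uniformly (since the junta hypothesis forces $S\subseteq[d]\times[d]$), and conclude by Parseval. One small simplification you could make: the normalization worry at the end is vacuous here, because a column comfortable multiset has distinct column indices $j_k$, hence distinct pairs $(i_k,j_k)$, so $H_S$ is automatically a product of distinct standard Gaussian variables; the family $\{H_S\}$ over column comfortable $S$ is therefore genuinely orthonormal in $L^2(\gamma)$ and Parseval applies without any Hermite renormalization.
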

\begin{proof}
The lemma follows immediately from Parseval and Claim \ref{claim:comf_ev} as $|\lambda_S - 1|< \epsilon$ for each $S$ as above, provided $\delta$ is sufficiently small (similarly to in the proof of Claim~\ref{claim:ccomf_ev2}).
\end{proof}
Claim~\ref{claim: T_col star is close to the identity} is particularly useful  as it
implies that $\Tcol^{*}$ is invertible on 
the space spanned by column-comfortable $d$-column-juntas, and thus gives us a natural way of going from $L^2(\mu)$ to $L^2(\gamma)$.
We are now ready to prove Lemma~\ref{lem:one sided bound}, restated below.
\begin{replemma}{lem:one sided bound}
For all $\eps>0$, there exists $\delta>0$ such that if $d\leq \delta n^{1/2}$, then for any function $f:\mathbb{R}^{n \times n} \to \mathbb{R}$ that is either (i) a column comfortable, $d$-column-junta or (ii) a row comfortable, $d$-row-junta, we have \[\|f\|_{L^{2}\left(\mu\right)}\le (1+\eps)\|f\|_{L^{2}\left(\gamma\right)}.\]
\end{replemma}
\begin{proof}
Let $\delta = \delta(\epsilon)>0$ be as in the statement of Claim \ref{claim: T_col star is close to the identity}. Without loss of generality, we assume that $f$ is a column comfortable $d$-column-junta, i.e., is contained in the subspace
$$W:=\text{Span}\{H_S:\ H_S \text{ is column comfortable and }S\text{ is supported on }[n] \times [d]\},$$
otherwise we may 
consider $f'(X) = f(X^t)$. Let $g:\mathbb{R}^{n \times n} \to \mathbb{R}$ be a function such that  $\Tcol^{*}g=f.$ Then as $\Tcol^*$ is a contraction, we have $\|f\|_{L^{2}\left(
\mu\right)}\le\|g\|_{L^{2}\left(\gamma\right)}.$
In order to complete the proof, we note (using Claim~\ref{claim: T_col star is close to the identity}) that $\Tcol^*(W)=W$, $\Tcol^*$ is invertible on 
the subspace $W$, and that
\begin{equation}\label{eq:norm-ineq} \|(\Tcol^*|_W)^{-1}\|_{L^2(\gamma)\to L^2(\gamma)} \leq 1+2\eps.\end{equation}
Indeed, the facts that $\Tcol^*(W)=W$ and that $\Tcol^*$ is invertible on 
the subspace $W$, follow immediately from Claim~\ref{claim:comf_ev}. Moreover, if (\ref{eq:norm-ineq}) fails, then there exists $h \in W$ with $\|h\|_{L^2(\gamma)}<1/(1+2\epsilon)$ and $\norm{(\Tcol^{*}|_W)^{-1} h}_{L^2(\gamma)}=1$, and then for $h' := (\Tcol^{*}|_W)^{-1} h$ (which lies in $W$), we obtain
\begin{align*}
\norm{\Tcol^{*} h' - h'}_{L^2(\gamma)}& =\norm{h - (\Tcol^{*}|_W)^{-1} h}_{L^2(\gamma)}\\
&
\geq \norm{(\Tcol^{*}|_W)^{-1} h}_{L^2(\gamma)} - \norm{h}_{L^2(\gamma)}\\
& > 1-1/(1+2\epsilon)\\
&\geq \eps\\
& = \eps \| h'\|_{L^2(\gamma)}
\end{align*}
provided $\epsilon\leq 1/2$, contradicting Claim~\ref{claim: T_col star is close to the identity}.

We have $g=(\Tcol^{*}|_{W})^{-1} f$, 
and so
$$\|f\|_{L^2(\mu)} \leq \norm{g}_{L^2(\gamma)} = \norm{(\Tcol^{*}|_W)^{-1} f}_{L^2(\gamma)}\leq (1+2\eps)\norm{f}_{L^2(\gamma)},$$
completing the proof.
\end{proof}

\subsection{$L^2(\gamma)$ is dominated by $L^2(\mu)$ on comfortable juntas: Proof of Lemma \ref{lem:the other side}.}\label{sec:closeness2}
To prove Lemma \ref{lem:the other side}, we define 
an auxiliary distribution on $\mathbb{R}^{n\times n}$, which we refer to as the `over-Gaussian' distribution.
\begin{definition}
  Let $G\sim \text{GMD}$, and choose $Y\sim \gamma$ independently.
  We define the distribution $\nu$ to be the distribution of $YG$, and call it the {\em over-Gaussian distribution}.
\end{definition}
We refer to $\nu$ by this name since it can be produced taking $X\sim \mu$ (i.e., at random according to the Haar measure on $\SO(n)$), and then multiplying $X$ by two independent copies of $\text{GMD}$, thereby `overshooting' the Gaussian distribution.

In the following section, we show that the distribution
$\nu$ is close to $\gamma$ in the sense that the expectation of a certain kind of test function is roughly the same under both measures. In fact, the relevant test functions are the squares of the comfortable $d$-juntas, and their expectations are roughly the same even if we allow the degree $d$ to be as large as $\Theta(\sqrt{n})$. The following lemma asserts that if $f$ is a comfortable $d$-junta, then its over-Gaussian $2$-norm
cannot be much larger than its Gaussian $2$-norm.
\begin{lemma}\label{lem:contiguity}
  For all $\eps>0$ there exists $\delta>0$ such that if $d \leq \delta n^{1/2}$, then for all comfortable $d$-juntas $f:\mathbb{R}^{n \times n} \to \mathbb{R}$, we have
  \[
  \|f\|_{L^2(\nu)} \leq (1+\eps)\|f\|_{L^2(\gamma)}.
  \]
\end{lemma}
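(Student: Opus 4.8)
\textbf{Proof proposal for Lemma~\ref{lem:contiguity}.}

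The plan is to compare $\mathbb{E}_{\nu}[f^2]$ with $\mathbb{E}_{\gamma}[f^2]$ by unpacking the definition $\nu = $ law of $YG$ with $Y\sim\gamma$ and $G\sim\text{GMD}$ independent. Writing $f = \sum_{S}\alpha_S H_S$ as a linear combination of comfortable monomials (each $S$ supported on $[\lfloor n/2\rfloor]\times[\lfloor n/2\rfloor]$, with distinct rows and distinct columns, $|S| = d$), we have
\[
\|f\|_{L^2(\nu)}^2 = \sum_{S,S'}\alpha_S\alpha_{S'}\,\mathbb{E}_{Y,G}\big[H_S(YG)\,H_{S'}(YG)\big],
\]
whereas $\|f\|_{L^2(\gamma)}^2 = \sum_{S,S'}\alpha_S\alpha_{S'}\,\mathbb{E}_{Y}[H_S(Y)H_{S'}(Y)]$, and by orthogonality of the Hermite-type monomials over $\gamma$ the latter is diagonal: only the terms with $S = S'$ survive, each contributing $\prod_{(i,j)\in S} n^{-1} = n^{-d}$ (with multiplicity for repeated entries). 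So the strategy reduces to two estimates: first, that the diagonal terms of $\|f\|_{L^2(\nu)}^2$ are at most $(1+\eps/3)$ times the corresponding diagonal terms of $\|f\|_{L^2(\gamma)}^2$; and second, that the off-diagonal terms contribute at most $\eps/3 \cdot \|f\|_{L^2(\gamma)}^2$ in absolute value (using Cauchy--Schwarz to pass from the bilinear form back to $\|f\|_{L^2(\gamma)}^2$ once the individual coefficient estimates are in hand).

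For the diagonal estimate, fix a comfortable monomial $H_S$ with $S = \{(i_1,j_1),\dots,(i_d,j_d)\}$, the $i_k$ distinct and the $j_k$ distinct. Since $G$ is upper-triangular and, conditioning on $G$, the columns of $YG$ are independent with $(YG)_{\cdot,j} = \sum_{\ell\le j} G_{\ell j} Y_{\cdot,\ell}$, I would compute $\mathbb{E}_{Y}\big[\prod_k (YG)_{i_k,j_k}^2 \,\big|\, G\big]$ by expanding and using that within a fixed column the entries $(YG)_{i,j}$ for distinct $i$ are i.i.d.\ Gaussians of variance $\sum_{\ell\le j} G_{\ell j}^2$. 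The key point is that $\mathbb{E}_G\big[\sum_{\ell\le j} G_{\ell j}^2\big] = \frac{1}{n}\big(\mathbb{E}\|N(0,I_{n-j+1})\|^2 + (j-1)\big) = \frac{1}{n}\big((n-j+1) + (j-1)\big) = 1$ exactly (this is precisely the identity behind $YG\sim\gamma$ marginally), and its variance is $O(1/n)$ by a routine $\chi^2$-concentration bound. Taking the product over the $d$ distinct columns appearing in $S$ and using $d\le\delta n^{1/2}$, the expected product of these column-variances is $(1 + O(d/n))^{?}$; more carefully, one gets a factor of the form $\prod$ of $d$ nearly-independent, mean-$1$, variance-$O(1/n)$ random variables, whose product has expectation $1 + O(d/n) + O(d^2/n) \le 1+\eps/3$ once $\delta$ is small. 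This is clean because comfortability guarantees each column index is used at most once, so the $d$ relevant column-variances genuinely multiply.

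For the off-diagonal terms, the same column-by-column expansion shows $\mathbb{E}_{Y,G}[H_S(YG)H_{S'}(YG)]$ vanishes unless $S$ and $S'$ agree column-by-column as far as their row-multisets in each column are concerned (since odd moments of Gaussians in a given column kill mismatched terms), and when it is nonzero it is bounded in absolute value by the geometric mean of the two diagonal terms times $(1+\eps/3)$, by Cauchy--Schwarz applied within the conditional-on-$G$ Gaussian expectation followed by the diagonal estimate. Summing, $\sum_{S\ne S'}|\alpha_S\alpha_{S'}|\cdot(\text{bound}) \le (1+\eps/3)\sum_{S,S'}|\alpha_S||\alpha_{S'}| n^{-d}[\![S\sim S']\!]$; since for a fixed $S$ the number of $S'$ with a nonzero cross term is bounded (only row-permutations within columns matter, and comfortability makes this a bounded combinatorial count — at most $d!$-ish, but crucially the cross terms are controlled), one more Cauchy--Schwarz returns $\eps/3\cdot\|f\|_{L^2(\gamma)}^2$. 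I expect the main obstacle to be getting the off-diagonal bookkeeping tight enough that the combinatorial multiplicity (how many $S'$ interact with a given $S$, and with what weights) does not blow up faster than the $n^{-d}$ smallness can absorb — this is exactly where the restriction $d = O(\sqrt n)$ and the comfortable structure (distinct rows \emph{and} distinct columns, confined to the top-left $n/2$ minor) must be used decisively, presumably via the same kind of careful moment expansion used in Section~\ref{sec:contiguity}.
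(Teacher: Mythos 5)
Your high-level strategy — expand $\|f\|_{L^2(\nu)}^2$ in the orthonormal basis of comfortable monomials and separate diagonal from off-diagonal terms — matches the paper's approach exactly (Claims~\ref{claim:diagonal_terms_compute} and \ref{claim:off_diagonal_terms_compute}). But there are two substantive problems in the execution.

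On the diagonal, your argument is more complicated than it needs to be and slightly misdiagnoses what is going on. You write that the diagonal terms acquire a factor $(1+O(d/n)+O(d^2/n))$ because you are treating the conditional column variances $\sum_{\ell\le j_k}G_{\ell,j_k}^2$ as only \emph{nearly} independent, mean-$1$, variance-$O(1/n)$ random variables and then estimating the expectation of their product. In fact, because comfortability forces the column indices $j_1,\dots,j_d$ to be distinct, these conditional variances involve disjoint entries of $G$, hence are \emph{exactly} independent; the expectation of the product is exactly $1^d=1$ with no error term. The paper's Claim~\ref{claim:diagonal_terms_compute} shows $\|x_I\|_{L^2(\nu)}^2=1$ identically, matching $\|x_I\|_{L^2(\gamma)}^2$. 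Your looser bound does not invalidate the proof, but it is a symptom of not tracking where the comfortability structure actually enters.

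The genuine gap is in the off-diagonal step. Your key structural claim — that $\mathbb{E}_{Y,G}[H_S(YG)H_{S'}(YG)]$ vanishes unless $S$ and $S'$ agree column-by-column in their row-multisets — is false. Take $d=2$, $S=\{(1,1),(2,2)\}$, $S'=\{(1,2),(2,1)\}$. Then $H_S(YG)=Y_{1,1}G_{1,1}\cdot(Y_{2,1}G_{1,2}+Y_{2,2}G_{2,2})$ and $H_{S'}(YG)=(Y_{1,1}G_{1,2}+Y_{1,2}G_{2,2})\cdot Y_{2,1}G_{1,1}$; taking expectations over $Y$ first leaves the term $Y_{1,1}^2Y_{2,1}^2G_{1,1}^2G_{1,2}^2$, whose expectation is $\mathbb{E}[G_{1,1}^2]\,\mathbb{E}[G_{1,2}^2]=1/n\ne 0$. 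So cross terms are generically \emph{nonzero}, just small, and your subsequent claim that the combinatorial multiplicity of nonzero cross terms is "bounded" is therefore not available. The actual mechanism — which the paper carries out via the GMD-moment bound in Claim~\ref{claim:gmd lemma} and a classification of the index $K$ by parameters $(m_1,m_2,m_3)$ — is that a Hamming-distance-$\ell$ mismatch between $I$ and $J$ forces every surviving term in the expansion of $\langle x_I,x_J\rangle_\nu$ to involve at least $\ell$ off-diagonal entries of $G$, each contributing a factor of roughly $n^{-1/2}$, giving $|\langle x_I,x_J\rangle_\nu|\lesssim n^{-\ell/2}$ up to factors $2^{O(\ell+d\ell/\sqrt{n})}$. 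This decay is then played off against the count $\binom{d}{\ell}\ell!\le d^\ell$ of tuples at Hamming distance $\ell$, and the geometric sum $\sum_\ell(d/\sqrt{n})^\ell$ converges precisely under the hypothesis $d\le\delta\sqrt n$. Your sketch identifies the right constraint ($d=O(\sqrt n)$) and the right division of labor, but the part you call "the main obstacle" and defer to bookkeeping is in fact the entire content of the lemma, and your proposed route through it (vanishing cross terms plus a bounded count) does not work.
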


To prove Lemma~\ref{lem:contiguity}, we need some more notation, and two technical claims. For a permutation $\sigma\in S_d$, we write 
 $x_\sigma: = x_{1,\sigma (1)}\cdots x_{d,\sigma(d)}$; this is a function on $\mathbb{R}^{d\times d}$. 
Fix a comfortable $d$-junta $f:\mathbb{R}^{d \times d} \to \mathbb{R}$ and write $f=\sum_{I=\left(i_{1},\ldots,i_{d}\right)}a_{I}x_{I}$ 
where $x_{I}:=x_{1,i_{1}}x_{2,i_{2}}\cdots x_{d,i_{d}}$, the sum ranging over all $I$ such that $i_1,\ldots, i_d\in [d]$ are distinct. (As usual, we write $(S)_d$ for the set of ordered $d$-tuples of distinct elements of the set $S$; using this notation we may write $I \in ([d])_d$.) We have
$$\|f\|_{L^{2}\left(\nu\right)}^{2}  =\sum_{I}a_{I}^{2}\|x_{I}\|_{L^{2}\left(\nu\right)}^{2}+\sum_{I\ne J}a_{I}a_{J}\left\langle x_{I},x_{J}\right\rangle _{\nu}.$$
We now need two technical claims, which handle respectively the diagonal terms
and the off-diagonal terms in the above expansion of $\norm{f}_{L^2(\nu)}^2$. To handle the diagonal terms, we will use the following claim.
\begin{claim}\label{claim:diagonal_terms_compute}
  If $i_1,\ldots,i_d \in [n]$ are distinct, then $x_I := x_{1,i_{1}}x_{2,i_{2}}\cdots x_{d,i_{d}}$ satisfies $$\|x_{I}\|_{L^{2}\left(\nu\right)}^{2} = 1.$$
\end{claim}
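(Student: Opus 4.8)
The plan is to evaluate $\|x_I\|_{L^2(\nu)}^2$ directly. Recall that the over-Gaussian distribution $\nu$ is the law of $YG$ with $Y\sim\gamma$ and $G\sim\text{GMD}$ independent, and that $G$ is upper triangular, so $x_I(YG)=\prod_{k=1}^d(YG)_{k,i_k}$ with $(YG)_{k,i_k}=\sum_{m\le i_k}Y_{k,m}\,G_{m,i_k}$. First I would condition on $G$. The $k$-th factor above is a function of the $k$-th row of $Y$ only; since the rows $1,\dots,d$ are disjoint and the entries of $Y$ are i.i.d.\ standard Gaussians, the factors $(YG)_{1,i_1},\dots,(YG)_{d,i_d}$ are conditionally independent given $G$, so
\[
\mathbb{E}_Y\big[x_I(YG)^2 \mid G\big]=\prod_{k=1}^d\mathbb{E}_Y\big[(YG)_{k,i_k}^2 \mid G\big]=\prod_{k=1}^d\|G^{(i_k)}\|^2,
\]
where $G^{(j)}$ denotes the $j$-th column of $G$ and the last equality is the elementary identity $\mathbb{E}_Y[(\sum_m Y_{k,m}G_{m,i_k})^2 \mid G]=\sum_m G_{m,i_k}^2$.

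Next I would integrate over $G\sim\text{GMD}$. By construction the entries $\{g_{ij}\}_{i\le j}$ of a GMD matrix are chosen mutually independently, hence distinct columns $G^{(i_1)},\dots,G^{(i_d)}$ are independent random vectors (this is where the hypothesis that $i_1,\dots,i_d$ are distinct enters), and therefore $\mathbb{E}_G\big[\prod_k\|G^{(i_k)}\|^2\big]=\prod_k\mathbb{E}_G\big[\|G^{(i_k)}\|^2\big]$. It then remains to show $\mathbb{E}_G\big[\|G^{(j)}\|^2\big]=1$ for every $j\in[n]$. Rather than chasing the explicit GMD variances, I would invoke the defining property of the GMD (see the discussion around Lemma~\ref{lem:formula_for_adjoint}): if $X\sim\mu$ is Haar on $\sqrt n\,\SO(n)$ and $G\sim\text{GMD}$ is independent, then $XG\sim\gamma$. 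Since $X^tX=nI$, the $j$-th column of $XG$ equals $XG^{(j)}$ and satisfies $\|XG^{(j)}\|^2=n\|G^{(j)}\|^2$; taking expectations gives $n\,\mathbb{E}_G\big[\|G^{(j)}\|^2\big]=\mathbb{E}\big[\|(XG)e_j\|^2\big]=n$, because the $j$-th column of a $\gamma$-matrix is a standard $n$-dimensional Gaussian. Hence $\mathbb{E}_G\big[\|G^{(j)}\|^2\big]=1$, and combining this with the previous display yields $\|x_I\|_{L^2(\nu)}^2=\prod_{k=1}^d 1=1$.

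I do not expect a genuine obstacle here: once the over-Gaussian law is written as that of $YG$, the argument is pure bookkeeping, built on independence of the rows of $Y$, independence of the columns of $G$, and the normalization $XG\sim\gamma$. The only point to state carefully is the conditional-independence-over-rows step, which works precisely because the monomial $x_I$ takes its $k$-th variable from row $k$, so the rows involved are exactly $1,\dots,d$ and are automatically distinct regardless of the column indices $i_k$; the distinctness of the $i_k$ is needed only afterwards, to factor the expectation over the columns of $G$.
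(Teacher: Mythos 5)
Your proof is correct, and it follows the same skeleton as the paper's: reduce $\|x_I\|^2_{L^2(\nu)}$, via independence, to a product of $d$ terms each equal to $\mathbb{E}_G[\|G^{(i_k)}\|^2]$, then show this expectation is $1$. The organizational difference — you condition on $G$ first (using only independence of the rows of $Y$) and then factor over columns of $G$, whereas the paper invokes both independences at once on the random variables $(YG)_{h,i_h}^2$ — is cosmetic.

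The genuine difference, and the real improvement, is in the last step. The paper computes $\mathbb{E}_G[\|G^{(j)}\|^2]$ by summing the explicit GMD variances, $\sum_{k<j}\mathbb{E}[G_{k,j}^2]+\mathbb{E}[G_{j,j}^2]=(j-1)/n+(n-j+1)/n=1$, where the diagonal variance $(n-j+1)/n$ requires the reader to track the precise normalization in the GMD definition. You instead derive $\mathbb{E}_G[\|G^{(j)}\|^2]=1$ directly from the defining property of the coupling, namely that $XG\sim\gamma$ for $X\sim\mu$ independent of $G$, together with the deterministic identity $X^tX=nI$. This is cleaner: it replaces a variance bookkeeping computation by a one-line consequence of the construction, and it is uniformly valid for all $j\in[n]$ without having to separate the case $j=n$ (where the paper's stated GMD definition gives $\mathbb{E}[G_{n,n}^2]=1$ rather than the $(n-n+1)/n=1/n$ needed for the displayed arithmetic; in the paper's actual application one has $j\le d\le n/2$, so the issue never bites, but your route sidesteps it entirely). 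One small caveat: your argument uses that $X$ and $G$ are independent with $X\sim\mu$ and $XG\sim\gamma$; this is exactly what the paper asserts when introducing the GMD, so the invocation is legitimate, though it is worth being explicit that this is the ingredient being used rather than any formula for the GMD entries.
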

For $I,J \in [n]^d$ we let 
$d(I,J):=\card{\sett{r}{i_r\neq j_r}}$ denote the Hamming distance from $I$ to $J$. To handle the off-diagonal terms, we need the following claim. 
\begin{claim}\label{claim:off_diagonal_terms_compute}
For any $I,J$ such that $d(I,J) = \ell$, we have $\card{\left\langle x_{I},x_{J}\right\rangle} _{L^{2}\left(\nu\right)}\le \varepsilon_{\ell}$, where
$$\varepsilon_{\ell} := 2^{\ell+4}n^{-\ell/2} 2^{d\ell/\sqrt{n}}.$$
\end{claim}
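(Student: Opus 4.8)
The plan is to compute $\langle x_I,x_J\rangle_{L^2(\nu)}$ exactly enough to see the cancellation, by conditioning on the $\text{GMD}$ factor and using independence of the columns of a $\text{GMD}$-matrix. Write a sample from $\nu$ as $Z=YG$ with $Y\sim\gamma$ and $G\sim\text{GMD}$ independent, so that $x_I(Z)x_J(Z)=\prod_{r=1}^{d}Z_{r,i_r}Z_{r,j_r}$ where $Z_{r,c}=\langle Y_r,Ge_c\rangle$, $Y_r$ denoting the $r$-th row of $Y$ (a Gaussian vector, the rows being independent) and $Ge_c$ the $c$-th column of $G$. Since each row index $r\in[d]$ occurs in exactly one factor of $x_I$ and one of $x_J$, and $\mathbb{E}_{Y_r}[\langle Y_r,a\rangle\langle Y_r,b\rangle]=\langle a,b\rangle$, conditioning on $G$ and integrating out $Y$ gives
\[
\langle x_I,x_J\rangle_{L^2(\nu)}=\mathbb{E}_{G\sim\text{GMD}}\Big[\prod_{r=1}^{d}\langle Ge_{i_r},Ge_{j_r}\rangle\Big].
\]
From the explicit definition of $\text{GMD}$ one records the facts actually used: the columns $Ge_1,\dots,Ge_n$ are mutually independent (column $\ell$ involves only the entries $G_{1\ell},\dots,G_{\ell\ell}$, and these sets are disjoint for distinct $\ell$); $\mathbb{E}[Ge_\ell]=\mathbb{E}[G_{\ell\ell}]\,e_\ell$ (off-diagonal entries are centred and entries below the diagonal vanish); and the second-moment matrix $\mathbb{E}[(Ge_\ell)(Ge_\ell)^{t}]$ is diagonal, with $k$-th diagonal entry $1/n$ for $k<\ell$, $(n-\ell+1)/n$ for $k=\ell$, and $0$ for $k>\ell$. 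In particular $\mathbb{E}[\|Ge_\ell\|_2^2]=1$ for every $\ell$, which already yields Claim~\ref{claim:diagonal_terms_compute}.

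Next I would split the indices into the \emph{agreeing} set $A=\{r:i_r=j_r\}$ and the \emph{disagreeing} set $D=\{r:i_r\neq j_r\}$ with $|D|=\ell$. Because $I$ and $J$ each have distinct entries, each agreeing column $i_r$ ($r\in A$) is distinct from the other agreeing columns and does not occur among $\{i_{r'},j_{r'}:r'\in D\}$; hence, by column independence, the expectation factorises, the agreeing part contributing $\prod_{r\in A}\mathbb{E}[\|Ge_{i_r}\|_2^2]=1$ and leaving $\langle x_I,x_J\rangle_{L^2(\nu)}=\mathbb{E}_{G}[\prod_{r\in D}\langle Ge_{i_r},Ge_{j_r}\rangle]$. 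I would then form the multigraph $H$ on $[d]$ with one edge $\{i_r,j_r\}$ for each $r\in D$. Distinctness of the entries of $I$ and of $J$ forces every vertex of $H$ to have degree at most $2$, and there are no self-loops (disagreeing indices are genuinely off-diagonal), so $H$ is a disjoint union of simple paths, simple cycles, and double edges; since distinct components use disjoint vertex (hence column) sets, the expectation factorises over the components of $H$.

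It remains to bound each component's contribution. Expanding $\prod_{r\in E(K)}\langle Ge_{i_r},Ge_{j_r}\rangle=\sum_{\vec k}\prod_{c\in V(K)}\prod_{e\ni c}(Ge_c)_{k_e}$ and applying $\mathbb{E}_G$ with column independence, a term survives only if (i) at every degree-$2$ vertex the two incident coordinates coincide (diagonality of $\mathbb{E}[(Ge_c)(Ge_c)^t]$), and (ii) at every leaf $c$ the incident coordinate equals $c$ (since $\mathbb{E}[(Ge_c)_k]=0$ unless $k=c$). On a path these constraints force the two \emph{distinct} endpoints to be equal, so \emph{every path component contributes $0$}; thus if $H$ has a path component the claim is trivial. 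Otherwise $H$ is a union of cycles, and a cycle $K$ with $m_K\ge 2$ edges on vertices $v_1,\dots,v_{m_K}\in[d]$ contributes $\sum_{k}\prod_{t}\mathbb{E}[(Ge_{v_t})_k^2]$; this sum is supported on $k\le\min_t v_t\le d$, the terms with $k<\min_t v_t$ each equal $n^{-m_K}$, and the single term $k=\min_t v_t$ is at most $n^{-(m_K-1)}$, for a total of at most $(d-1)n^{-m_K}+n^{-(m_K-1)}\le 2n^{-(m_K-1)}$. Multiplying over the at most $\ell/2$ cycle components and using $\sum_K m_K=\ell$ yields $|\langle x_I,x_J\rangle_{L^2(\nu)}|\le 2^{\ell/2}n^{-\ell/2}$, which is comfortably below $\varepsilon_\ell=2^{\ell+4}n^{-\ell/2}2^{d\ell/\sqrt n}$.

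The genuinely nontrivial part is the combinatorial bookkeeping around $H$: checking that the agreeing columns are isolated from $D$, that $H$ has maximum degree $2$ (so only paths, cycles and double edges occur), and that the per-component index expansion vanishes on paths. Everything else is a direct moment computation. Note that the bound obtained is stronger than the stated $\varepsilon_\ell$; the extra factor $2^{d\ell/\sqrt n}$ in the statement is harmless, and would in particular absorb a cruder treatment of the diagonal moments $\mathbb{E}[(Ge_\ell)_\ell^2]$ and $\mathbb{E}[G_{\ell\ell}]$ via Fact~\ref{fact:too_lazy_to_chernoff}.
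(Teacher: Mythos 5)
Your proof is correct, and it takes a genuinely different route from the paper's at the counting stage. Both arguments begin by integrating out $Y$ (using independence of the rows of $Y$ and column-independence of the $\mathrm{GMD}$ matrix) to arrive at the same intermediate expression — your clean $\mathbb{E}_G\big[\prod_{r\in D}\langle Ge_{i_r},Ge_{j_r}\rangle\big]$ is exactly the paper's $\sum_{K\in[d]^d}\mathbb{E}_G\big[G_{k_1 i_1}G_{k_1 j_1}\cdots G_{k_d i_d}G_{k_d j_d}\big]$ written without expanding the inner products. The divergence is in how the remaining $G$-expectation is bounded. The paper classifies the index tuples $K$ by the parameters $(m_1,m_2,m_3)$, applies the crude moment bound of Claim~\ref{claim:gmd lemma} term by term, and then sums over the types; this makes no use of cancellation, hence the somewhat larger constants. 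You instead organise the disagreeing columns into a multigraph of maximum degree~$2$, factor the expectation over components, observe that every path component vanishes identically (the two leaves force contradictory column constraints), and bound only the cycle components. The key extra insight you use — that paths contribute exactly zero rather than merely something small — is the reason you get the stronger bound $2^{\ell/2}n^{-\ell/2}$, with no factor $2^{d\ell/\sqrt n}$ needed. Your argument does require the degree-$\le 2$ observation (which hinges on $I$ and $J$ having distinct entries) and the diagonality of $\mathbb{E}[(Ge_c)(Ge_c)^t]$, both of which you verify correctly. All the bookkeeping checks out: agreeing columns are disjoint from disagreeing ones, there are no self-loops, the per-cycle bound $\le 2n^{-(m_K-1)}$ is right (using $d\le n$), and the worst case for $\prod_K 2n^{-(m_K-1)}$ occurs when every cycle is a double edge. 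Your stronger bound certainly implies the stated $\varepsilon_\ell$.
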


\subsubsection*{Claims~\ref{claim:diagonal_terms_compute} and~\ref{claim:off_diagonal_terms_compute} imply 
Lemma~\ref{lem:contiguity}}
We first show how to deduce Lemma~\ref{lem:contiguity} from Claims~\ref{claim:diagonal_terms_compute} and~\ref{claim:off_diagonal_terms_compute}. Writing $f = \sum_I a_I x_I$, we have
\begin{align*}
\|f\|_{L^{2}\left(\nu\right)}^{2} & =\sum_{I}a_{I}^{2}\|x_{I}\|_{L^{2}\left(\nu\right)}^{2}+\sum_{I\ne J}a_{I}a_{J}\left\langle x_{I},x_{J}\right\rangle _{\nu}\\
 & \le\|f\|_{L^{2}\left(\gamma\right)}^{2}+\sum_{I\ne J}\frac{a_{I}^{2}+a_{J}^{2}}{2}\card{\left\langle x_{I},x_{J}\right\rangle _{\nu}}\\
 & \le\|f\|_{L^{2}\left(\gamma\right)}^{2}+
 \sum\limits_{\ell=1}^{d}\sum_{I}a_{I}^{2}\card{\left\{ J:d\left(J,I\right)=\ell\right\}} \cdot\epsilon_{\ell}\\
 & \le\|f\|_{L^{2}\left(\gamma\right)}^{2}+\|f\|_{L^{2}\left(\gamma\right)}^{2}\sum_{\ell=1}^{d}\epsilon_{\ell}\binom{d}{\ell} \ell !\\
 & =\|f\|_{L^{2}\left(\gamma\right)}^{2}\left(1+\sum_{\ell=1}^{d}\epsilon_{\ell} d^{\ell}\right).
\end{align*}
Using the upper bound on $\eps_{\ell}$, we obtain
\[
\sum_{\ell=1}^{d}\epsilon_{\ell} d^{\ell}
\leqslant
\sum_{\ell=1}^{d}2^{\ell+4}n^{-\ell/2} 2^{d\ell/\sqrt{n}} d^{\ell}
\leqslant
16\sum_{\ell=1}^{\infty}\left(\frac{2d\cdot 2^{d/\sqrt{n}}}{\sqrt{n}} \right)^{\ell} \leq \frac{\eps}{2},
\]
where we used $d\leq \delta n^{1/2}$ and the fact that $\delta$ is sufficiently small compared to $\eps$.

\subsubsection*{Proof of Claims~\ref{claim:diagonal_terms_compute} and \ref{claim:off_diagonal_terms_compute}.}
To prove the two claims, we need the following simple fact about the Gaussian maker distribution.
\begin{claim}
\label{claim:gmd lemma} Let $I=(i_{1},\ldots,i_{d}) \in [n]^d$ and $J=(j_1,\ldots,j_d) \in [n]^d$ be such that $d(I,J) = \ell$, and such that in the product
$$G_{i_{1}j_{1}}G_{i_{2}j_{2}}\cdots G_{i_{d}j_{d}},$$
no matrix entry of $G$ appears more than twice. Then
\[
\card{\mathbb{E}_{G\sim\text{GMD}}\left[G_{i_{1}j_{1}}G_{i_{2}j_{2}}\cdots G_{j_{d}i_{d}}\right]}\le\left(\frac{1}{n}\right)^{\ell/2}.
\]
\end{claim}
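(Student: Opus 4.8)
The plan is to reduce the claim to an elementary moment computation, exploiting the two structural features of the Gaussian maker distribution: all entries of $G$ are mutually independent, and each individual entry has well-controlled first and second moments. Concretely, for $a<b$ the entry $G_{ab}\sim N(0,1/n)$ is centred with $\mathbb{E}[G_{ab}^2]=1/n$; for $a<n$ the entry $G_{aa}=\tfrac1{\sqrt n}\|N(0,I_{n-a+1})\|_{\ell^2}$ is nonnegative with $\mathbb{E}[G_{aa}]\in[0,1]$ and $\mathbb{E}[G_{aa}^2]=\tfrac{n-a+1}{n}\in[0,1]$ (the bound on $\mathbb{E}[G_{aa}]$ being exactly Fact~\ref{fact:too_lazy_to_chernoff}); the entry $G_{nn}\sim N(0,1)$ is centred with $\mathbb{E}[G_{nn}^2]=1$; and $G_{ab}\equiv 0$ whenever $a>b$.

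First I would dispose of the degenerate cases. If some factor $G_{i_k j_k}$ has $i_k>j_k$, that factor is identically $0$, so the expectation vanishes and the bound is trivial. Hence I may assume $i_k\le j_k$ for every $k$; then the positions $k$ with $i_k\ne j_k$ are precisely the positions with $i_k<j_k$, and there are exactly $\ell$ of them, while the remaining $d-\ell$ positions are diagonal. Grouping equal matrix entries and using independence,
\[
\mathbb{E}_{G\sim\text{GMD}}\!\left[\prod_{k=1}^d G_{i_k j_k}\right]=\prod_{(a,b)}\mathbb{E}\big[G_{ab}^{\,m_{ab}}\big],
\]
where, by the hypothesis that no entry appears more than twice, each multiplicity $m_{ab}$ is $1$ or $2$.

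Next I would make the parity observation. If some above-diagonal entry, or $G_{nn}$, occurs with multiplicity $1$, then the corresponding factor $\mathbb{E}[G_{ab}]$ equals $0$ and the whole product vanishes; so I may assume every above-diagonal entry occurring in the product has multiplicity exactly $2$. Since there are $\ell$ above-diagonal factors in total (counting multiplicity), this forces $\ell$ to be even and partitions these factors into $\ell/2$ distinct entries, each contributing $\mathbb{E}[G_{ab}^2]=1/n$. Every remaining factor is of the form $\mathbb{E}[G_{aa}]$ or $\mathbb{E}[G_{aa}^2]$ with $a<n$, or $\mathbb{E}[G_{nn}^2]$, hence lies in $[0,1]$. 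Multiplying the factors gives $\bigl|\mathbb{E}[\prod_k G_{i_k j_k}]\bigr|\le (1/n)^{\ell/2}$, as claimed.

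There is essentially no obstacle: the statement is a bookkeeping exercise once one records the independence of the entries of $G$ and their moment bounds, the only mildly delicate point being the parity argument forcing each surviving centred entry to be squared. (The real combinatorial work lies downstream, in Claims~\ref{claim:diagonal_terms_compute} and~\ref{claim:off_diagonal_terms_compute}, where one expands $\|x_I\|_{L^2(\nu)}^2$ and $\langle x_I,x_J\rangle_{L^2(\nu)}$ by writing $\nu$ as $YG$ and averaging also over the Gaussian matrix $Y$; the present claim is exactly the input needed to control the $G$-average there, the hypothesis on multiplicities being guaranteed in those applications by the comfortability of the juntas involved.)
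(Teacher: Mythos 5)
Your proposal is correct and takes essentially the same approach as the paper: expand the expectation into a product over distinct entries of $G$ by independence, observe that any centred entry appearing with multiplicity one kills the product, and then bound each surviving factor by $1/n$ (off-diagonal squares) or $1$ (diagonal moments). If anything you are slightly more careful than the paper's proof, which tacitly assumes the diagonal entries also pair up into $(d-\ell)/2$ squares and never discusses the centred entry $G_{nn}$ or the vanishing factors with $i_k>j_k$; your version handles these edge cases explicitly without changing the argument.
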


\begin{proof}
If, in the product
$$G_{i_{1}j_{1}}G_{i_{2}j_{2}}\cdots G_{i_{d}j_{d}},$$
some off-diagonal matrix entry of $G$ appears exactly once, then the expectation of the product is zero. We may therefore assume that every off-diagonal matrix entry of $G$ appears either exactly twice, or not at all, in the above product. If there are exactly $\ell$ values of $r$ such that $i_r \neq j_r$, then the above expectation factorises into a product of the expectations of the squares of $\ell/2$ off-diagonal and of the squares of $(d-\ell)/2$ diagonal entries: 
$$\prod_{k \in \mathcal{D}}\mathbb{E}[G_{k,k}^2]\prod_{(i,j) \in \mathcal{E}}\mathbb{E}[G_{i,j}^2],$$
where $\mathcal{E} \subset [n]^2\setminus \{(k,k):\ k \in [n]\}$, $|\mathcal{D}| = (d-\ell)/2$ and $|\mathcal{E}| = \ell/2$.
We have $\mathbb{E}[G_{i,j}^2] = 1/n$ for all $(i,j) \in \mathcal{E}$ and $\mathbb{E}[G_{k,k}^2] = (n-k+1)/n \leq 1$ for all $k \in \mathcal{D}$, proving the claim.
\end{proof}

We are now ready to prove Claim~\ref{claim:diagonal_terms_compute}.
\begin{proof}[Proof of Claim~\ref{claim:diagonal_terms_compute}]

Let $x_I = x_{1,i_1}x_{2,i_2}\cdots x_{d,i_d}$, where $i_1,\ldots,i_d \in [n]$ are distinct. We have
$$\|x_{I}\|_{L^2(\nu)}^2 = \mathbb{E}_{G \sim \text{GMD}}\mathbb{E}_{Y \sim \gamma}[(YG)_{1,i_1}^2(YG)_{2,i_2}^2\cdots (YG)_{d,i_d}^2].$$
Since for each $h \in [d]$, $(YG)_{h,i_h} = \sum_{k=1}^{i_h} Y_{h,k}G_{k,i_h}$ involves only entries of $Y$ in row $h$ and entries of $G$ in column $i_h$ (and the $i_h$ are distinct), the random variables $((YG)_{h,i_h}^2:\ h \in [d])$ form a system of independent random variables, and therefore
$$\|x_{I}\|_{L^2(\nu)}^2 = \mathbb{E}_{G \sim \text{GMD}}\mathbb{E}_{Y \sim \gamma}[(YG)_{1,i_1}^2] \mathbb{E}_{G \sim \text{GMD}}\mathbb{E}_{Y \sim \gamma}[(YG)_{2,i_2}^2]\cdots \mathbb{E}_{G \sim \text{GMD}}\mathbb{E}_{Y \sim \gamma}[(YG)_{d,i_d}^2].$$
For each $h \in [d]$, we have
\begin{align*} \mathbb{E}_{G \sim \text{GMD}}\mathbb{E}_{Y \sim \gamma}[(YG)_{h,i_h}^2] & = \mathbb{E}_{G \sim \text{GMD}}\mathbb{E}_{Y \sim \gamma}\left[\left(\sum_{k=1}^{i_h} Y_{h,k}G_{k,i_h}\right)^2\right]\\
& = 2\sum_{1\leq k < k' \leq i_h} \mathbb{E}_{G \sim \text{GMD}}\mathbb{E}_{Y \sim \gamma}[Y_{h,k}Y_{h,k'}G_{k,i_h}G_{k',i_h}]\\
&+ \sum_{k=1}^{i_h} \mathbb{E}_{G \sim \text{GMD}}\mathbb{E}_{Y \sim \gamma} [Y_{h,k}^2G_{k,i_h}^2]\\
& = 0 + \sum_{k=1}^{i_h} \mathbb{E}_{G \sim \text{GMD}}[G_{k,i_h}^2]\mathbb{E}_{Y \sim \gamma} [Y_{h,k}^2]\\
& = \sum_{k=1}^{i_h} \mathbb{E}_{G \sim \text{GMD}}[G_{k,i_h}^2]\cdot 1\\
& = (i_h-1)(1/n)+(n-i_h+1)/n\\
& = 1.
\end{align*}
(Here, for the third equality we use the independence of $Y_{h,k},Y_{h,k'},G_{k,i_h},G_{k',i_h}$ and the fact that $Y_{h,k}$ and $Y_{h,k'}$ both have zero expectation.) Hence, $\|x_{I}\|_{L^2(\nu)}^2 = 1$, as required.
\end{proof}

We now move on to the proof of Claim~\ref{claim:off_diagonal_terms_compute}.
\begin{proof}[Proof of Claim~\ref{claim:off_diagonal_terms_compute}.]
Let $\ell\geq 1$ and fix $I,J \in ([d])_d$ such that $d(I,J)=\ell \geq 1$. Since $G$ is upper-triangular and $i_h,j_h \leq d$ for all $h \in [d]$, we have
$$(YG)_{h,i_h} = \sum_{k=1}^{i_h}Y_{h,k}G_{k,i_h} = \sum_{k=1}^{d} Y_{h,k}G_{k,i_h}$$
and
$$(YG)_{h,j_h} = \sum_{k=1}^{j_h}Y_{h,k}G_{k,j_h} = \sum_{k=1}^{d} Y_{h,k}G_{k,j_h}$$
for all $h \in [d]$. Hence,
\begin{align*}
x_{I}\left(YG\right) & =\prod_{h=1}^{d}(YG)_{h,i_h}= \sum_{K=\left(k_{1},\ldots,k_{d}\right) \in [d]^d}Y_{1,k_{1}}\cdots Y_{d,k_{d}}G_{k_{1},i_{1}}\cdots G_{k_{d},i_{d}}
\end{align*}
 and  
\[
x_{J}\left(YG\right)=\sum_{K=\left(k_{1},\ldots,k_{d}\right) \in [d]^d}Y_{1,k_{1}}\cdots Y_{d,k_{d}}G_{k_{1},j_{1}}\cdots G_{k_{d},j_{d}},
\]
 so, using the fact that, under $\nu$, the $(Y_{i,j}:i,j \in [n])$ are independent and of expectation zero (and are independent of the $G_{i,j}$), we obtain 
\[
\left\langle x_{I},x_{J}\right\rangle _{\nu}=\sum_{K \in [d]^d}\mathbb{E}_{G \sim \text{GMD}}\left[G_{k_{1}i_{1}}G_{k_{1}j_{1}}\cdots G_{k_{d}i_{d}}G_{k_{d}j_{d}}\right].
\]
For a $d$-tuple $K = (k_1,\ldots,k_d)\in [d]^d$ we write $m_{1}=m_1(K) := |\{ r:j_{r}=i_{r},k_{r}\ne i_{r}\}| ,$
$m_{2}=m_2(K) := |\{ r:j_{r}\ne i_{r},k_{r}\notin\{ i_{r},j_{r}\}\}|$
and $m_{3}=m_3(K) :=|\{ r:j_{r}\ne i_{r},k_{r}\in\{ i_{r},j_{r}\}\}|$, and we let $\mathcal{K}\left(m_{1},m_{2},m_{3}\right)$ denote the set of $d$-tuples $K$ with parameters $m_1,m_2$ and $m_3$.
For $K\in\mathcal{K}\left(K_{1},K_{2},K_{3}\right)$, by
Claim~\ref{claim:gmd lemma} we have
\[
|\mathbb{E}_{G \sim \text{GMD}}\left[G_{k_{1}i_{1}}G_{k_{1}j_{1}}\cdots G_{k_{d}i_{d}}G_{k_{d}j_{d}}\right]|\le n^{-\frac{2m_{1}+2m_{2}+m_{3}}{2}}.
\]
Summing over all $K$, we see that $|\langle x_{I},x_{J}\rangle|$ is at most
\begin{align*}
&\sum_{m_{1},m_{2},m_{3}}\sum_{K\in\mathcal{K}\left(m_{1},m_{2},m_{3}\right)}n^{-\frac{2m_{1}+2m_{2}+m_{3}}{2}}\\
 & \le\sum_{m_{1},m_{2},m_{3}}n^{-\frac{2m_{1}+2m_{2}+m_{3}}{2}}\left|\mathcal{K}\left(m_{1},m_{2},m_{3}\right)\right|.
\end{align*}
 Now 
\[
\left|\mathcal{K}\left(m_{1},m_{2},m_{3}\right)\right|\le\binom{d}{m_{1}}d^{m_{1}}\binom{\ell}{m_{2}}d^{m_{2}}2^{m_{3}}\le\frac{d^{2m_{1}+m_{2}}\ell^{m_{2}}2^{m_3}}{m_{1}!m_{2}!}.
\]
Summing over all $m_{1},m_{2},m_{3}$ with $m_{2}+m_{3}=\ell$ completes
the proof. Indeed, 
\begin{align*}
\sum_{m_{1},m_{2},m_{3}}n^{-\frac{2m_{1}+2m_{2}+m_{3}}{2}}\left|\mathcal{K}\left(m_{1},m_{2},m_{3}\right)\right|&\le
\sum_{m_1,m_{2},m_{3}}n^{-\frac{2m_{1}+2m_{2}+m_{3}}{2}} \frac{d^{2m_{1}+m_{2}}\ell^{m_{2}}2^{m_3}}{m_{1}!m_{2}!}\\
&= \sum_{m_{2}+m_{3}=\ell} n^{-\frac{2m_{2}+m_{3}}{2}}\frac{d^{m_{2}}\ell^{m_{2}}2^{m_3}}{m_{2}!} \sum_{m_{1}=0}^{d-\ell}\frac{1}{m_1!} (d^2/n)^{m_1} \\
 & \le2\sum_{m_{2}+m_{3}=\ell} n^{-\frac{2m_{2}+m_{3}}{2}}\frac{d^{m_{2}}\ell^{m_{2}}2^{m_3}}{m_{2}!}\\
 & = 2^{\ell+1}n^{-\ell/2}\sum_{m_2=0}^{\ell} n^{-m_2/2} \frac{d^{m_{2}}\ell^{m_{2}}}{2^{m_2} m_{2}!}\\
 & \leq 2^{\ell+1}n^{-\ell/2}\left(1+\sum_{m_2=1}^{\ell} \left(\frac{ e d \ell}{2\sqrt{n} m_2}\right)^{m_2}\right)\\
 & \leq 2^{\ell+4}n^{-\ell/2} 2^{d\ell/\sqrt{n}},
\end{align*}
as required.
\end{proof}

With Lemma~\ref{lem:contiguity} in hand, the proof of Lemma~\ref{lem:the other side} (restated below) is easy.

\begin{replemma}{lem:the other side}
For all $\eps>0$, there exists $\delta>0$ such that if $d\le \delta n^{1/2}$ and $f:\mathbb{R}^{n \times n} \to \mathbb{R}$ is a comfortable $d$-junta, then $\|f\|_{L^2(\gamma)}\le (1+\eps)\|f\|_{L^2(\mu)}$.
\end{replemma}
\begin{proof}
We may assume that $\epsilon \leq 1/2$. By the triangle inequality, it suffices to show that $\|f(X)-f(XG)\|_{L^2(\mu,\text{GMD})}<\tfrac{\eps}{2}\norm{f}_{L^2(\gamma)}$, where $X\sim \mu$ and $G\sim \text{GMD}$. Since for any fixed, upper triangular $G \in \mathbb{R}^{n \times n}$, the function $g_G$ defined by
$$g_G: X \mapsto f(X)-f(XG)$$
is a row-comfortable $d$-row-junta, by Lemma \ref{lem:one sided bound} we have
\begin{align*}
\|f(X)-f(XG)\|_{L^2(\mu,\text{GMD})}^2 & = \mathbb{E}_{G \sim \text{GMD}} \mathbb{E}_{X \sim \mu}[(f(X)-f(XG))^2]\\
& = \mathbb{E}_{G \sim \text{GMD}} \mathbb{E}_{X \sim \mu}[g_G(X)^2]\\
& = \mathbb{E}_{G \sim \text{GMD}}[\|g_G\|_{L^2(\mu)}^2]\\
& \leq (1+\epsilon)^2\, \mathbb{E}_{G \sim \text{GMD}} \|g_G\|_{L^2(\gamma)}^2\\
& = (1+\epsilon)^2\, \mathbb{E}_{G \sim \text{GMD}} \mathbb{E}_{Y \sim \gamma} [(f(Y)-f(YG))^2]\\
& = (1+\epsilon)^2\, \|f(Y)-f(YG)\|_{L^2(\gamma,\text{GMD})}^2,\end{align*}
and therefore it suffices to show that 
\[
\|f(Y)-f(YG)\|_{L^2(\gamma,\text{GMD})}<\frac{\eps}{4}\|f\|_{L^2(\gamma)},
\] where $Y\sim \gamma$
and $G\sim \text{GMD}$ is independent of $Y$.
Expanding, we note that \[\|f(Y)-f(YG)\|_{L^2(\gamma,\text{GMD})}^2=\|f\|_{L^2(\gamma)}^{2}+\|f\|_{L^2(\nu)}^2 - 2 \langle f, \Tcol^* f\rangle_{L^2(\gamma)}.\]
To handle the cross term, we note that
by Cauchy-Schwarz and Claim~\ref{claim: T_col star is close to the identity}, we have
\[
\card{\langle f, \Tcol^* f-f\rangle_{L^2(\gamma)}}
\leq \norm{f}_{L^2(\gamma)}\norm{\Tcol^* f-f}_{L^2(\gamma)}
\leq \frac{\eps}{10}\norm{f}_{L^2(\gamma)}^2,
\]
so $\langle f, \Tcol^* f\rangle_{L^2(\gamma)} \geq (1-\epsilon/10)\|f\|_{L^2(\gamma)}^2$. Using
Lemma~\ref{lem:contiguity}, we have \[\|f\|_{L^2(\nu)}^2 \leq (1+\epsilon/20)\|f\|_{L^2(\gamma)}^2\] provided $\delta$ is sufficiently small, and so
$$ \|f(Y)-f(YG)\|_{L^2(\gamma,\text{GMD})}^2 
\leq \|f\|_{L^2(\gamma)}^{2}+\|f\|_{L^2(\nu)}^2 - 2 (1-\epsilon/10)\|f\|_{L^2(\gamma)}^{2} \leq(\eps/4)\|f\|_{L^2(\gamma)}^2,$$
completing the proof.
\end{proof}

\remove{
\subsection{The Eigenvalues of the Laplace Beltrami Operator in $\SU(n)$}
The following lemma is analogous to Lemma~\ref{lem:lb_eigenval_curv} and gives a bound on the eigenvalues of the Laplace--Beltrami operator of $\SU(n)$.
\begin{lemma}\label{lem:lb_eigenval_curv_sun}
  For $\rho \in \widehat{\SU(n)}$ of level $D$, the corresponding eigenvalue $-\lambda_{\rho}$ of $\Delta$ satisfies
  \[
  \lambda_{\rho} \leq C(nD+D^2),
  \]
  where $C$ is an absolute constant.
\end{lemma}
Similarly to in the case of $\SO(n)$, using the formulae in \cite{berti-procesi} (multiplying by a factor of $2n$, similarly to before) and the systems of fundamental weights in \cite{procesi}, one can show that for all $\rho \in \widehat{SU(n)}$ of level $D$, the corresponding eigenvalue $-\lambda_{\rho}$ of $\Delta$ satisfies
$$\lambda_{\rho} \leq C(nD+D^2),$$
where $C>0$ is an absolute constant. We use the system of fundamental weights
$$w_i = \sum_{j=1}^{i} e_j - \tfrac{i}{n}\sum_{j=1}^{n}e_j\ (1 \leq i \leq n-1),$$
where $\{e_i\}_{i=1}^{n}$ is the standard basis of $\mathbb{R}^n$; here, $e_i$ corresponds to
\begin{equation}\label{eq:cartan} \mathbf{i}E_{i,i},\end{equation}
where $\mathbf{i} = \sqrt{-1}$ and $E_{i,j}$ is the matrix with a 1 in the $(i,j)$th-entry and zeros elsewhere. For each $1 \leq k \leq l \leq n-1$ we have
$$\langle w_k,w_l \rangle = k(n-l)/n.$$
Set $\sigma := \sum_{i=1}^{n-1}w_i$. For a partition $\lambda$ whose Young diagram has less than $n$ rows, the corresponding weight vector is
$$v = \sum_{i=1}^{n-1}a_iw_i,$$
where $a_i = \lambda_i - \lambda_{i+1}$ for all $i \in [n-1]$ and $\lambda_{n}:=0$; the level $D$ of the corresponding representation is given by
$$D = \sum_{i=1}^{n-1}a_i\min\{i,n-i\}.$$

It follows that, if $\mathbf{v} = \sum_{k=1}^{n-1}a_k w_k$, then 
\begin{align*} \langle \mathbf{v},\mathbf{\sigma} \rangle & = \left\langle \sum_{k=1}^{n-1}a_k w_k, \sum_{k=1}^{n-1}w_k \right\rangle \\
& = \sum_{1 \leq k \leq l \leq n-1} a_k k(n-l)/n + \sum_{1 \leq l < k \leq n-1} a_k l(n-k)/n\\
& = \sum_{k=1}^{n-1} (n-k)(n-k+1)ka_k/(2n) + \sum_{k=1}^{n-1} k(k+1)l(n-k)a_k/(2n)\\
& \leq nD,
\end{align*}
and
\begin{align*} \langle \mathbf{v},\mathbf{v} \rangle & = \left\langle \sum_{k=1}^{n-1}a_k w_k, \sum_{k=1}^{n-1}a_k w_k \right\rangle \\
& = \sum_{1 \leq k \leq l \leq n-1} a_k a_l k(n-l)/n + \sum_{1 \leq l < k \leq n-1} a_k a_l l(n-k)/n\\
& \leq 2D^2.
\end{align*}
Hence, by Theorem 2.4 in \cite{berti-procesi}, the corresponding eigenvalue $-\lambda$ of $\Delta$ satisfies
$$\lambda = 2\langle \mathbf{v},\sigma \rangle + \langle \sigma,\sigma \rangle \leq 2nD + 2D^2.$$

\begin{proof}
The proof proceeds by a similar computation to the proof of Lemma~\ref{lem:lb_eigenval_curv}. We use Theorem~\ref{thm:ev_formula_rep} for $\SU(n)$ and pick a system of fundamental weights from~\cite{berti-procesi}

\end{proof}

The Ricci curvature of $SU(n)$ is bounded from below by $n/2$ (see \cite{zeitouni-book}, Lemma F.27).

We now need the following.

\begin{definition}
A function $f \in L^2(\mathbb{C}^{n \times n})$ (i.e.\ with domain consisting of the space of complex $n$ by $n$ matrices) is said to be a {\em generalised polynomial of degree $d$} if it a polynomial of total degree $d$ in the matrix entries and their complex conjugates. So, for example, $X_{11}\overline{X_{22}}$ is a generalised polynomial of degree two. Such a generalised polynomial is said to be {\em homogeneous of degree $d$} if all its `generalised monomials' have total degree exactly $d$ in the matrix entries and their complex conjugates. So, for example, $X_{11} \overline{X_{22}}+X_{12}\overline{X_{21}}$ is a homogeneous generalised polynomial of degree two (its `generalised monomials' are $X_{11}\overline{X_{22}}$ and $X_{12}\overline{X_{21}}$).

A generalised polynomial is said to be {\em comfortable} if it is a complex linear combination of monomials of the form
$$\prod_{k=1}^{\ell} X_{i_k,j_k} \prod_{k=\ell+1}^{L} \overline{X_{i_k,j_k}},$$
where $i_1,\ldots,i_L$ are distinct and $j_1,\ldots,j_L$ are distinct. (Note that the condition of having total degree $d$ means that $L \leq d$.)
\end{definition}

\begin{lemma}
\label{lem:suncomf}
Let $\rho$ be an irreducible representation of $\SU(n)$ with level $D \leq n/2$. Then there exists a non-zero (homogeneous) comfortable generalised polynomial $P=P(X)$ of degree $D$ (in the matrix entries of $X \in SU(n)$) such that $P$ is contained in the Peter-Weyl ideal $W_{\rho}$ (spanned by the matrix coefficients of $\rho$).
\end{lemma}

To prove this we first recall the following standard facts.

\begin{lemma}
If $\rho_1$ and $\rho_2$ are representations of a compact group $G$, and $f_i \in W_{\rho_i}$ for each $i \in \{1,2\}$ (i.e.\ $f_i$ is spanned by the matrix coefficients of $\rho_i$, for each $i$), then the pointwise product $f_1f_2$ is spanned by the matrix coefficients of the tensor product representation $\rho_1 \otimes \rho_2$.
\end{lemma}

\begin{lemma}
If $\rho$ is an irreducible representation of $\SU(n)$ and $f \in W_{\rho}$, then the function $\bar{f}$ defined by $\bar{f}(X) =f(\bar{X})$ satisfies $\bar{f} \in W_{\rho^*}$, where $\rho^*$ denotes the irreducible representation dual to $\rho$.
\end{lemma}

We now recall Weyl's construction for $\SU(n)$; it is very similar indeed to that for $\mathrm{O}_n$.

\remove{
\subsubsection*{Lifting bounds on product-free sets}
We sketch here the (straightforward) argument for deducing $\exp(-n^{\Omega(1)})$-upper bounds on the measure of product-free sets in compact, connected, simple real Lie groups of dimension $n$, from the same upper bound in the real, compact, simple and simply-connected case.

A Lie algebra is said to be {\em simple} if it is non-Abelian and has no ideals other than zero and itself. A connected Lie group is said to be {\em simple} if its Lie algebra is simple, or equivalently, if it is non-Abelian and does not contain any nontrivial closed, connected normal subgroup. Given a connected, compact, simple, real Lie group $G$, one may consider its universal cover $G_0$, which is itself a real Lie group with Lie algebra isomorphic to that of $G$ (so in particular, $G_0$ is also simple); by definition $G_0$ is simply connected, and by a theorem of Weyl, it is compact and has finite centre, so $G$ is Lie-isomorphic to $G_0/K$ where $K$ is a finite central subgroup of $G_0$ (so in particular, $G$ and $G_0$ have the same Lie rank). The simply-connected, compact simple real Lie groups have been classified up to Lie-isomorphism; apart from finitely many exceptions (which therefore have dimension at most an absolute constant), there are three families of such, viz., the special unitary groups $\SU(n)$ (type $A_n$), the spin groups $\Spin(n)$ for $n > 2$ (type $B_n$ and $D_n$), and the compact symplectic groups $\Sp(n)$ (type $C_n$). To go from upper-bounds on the sizes of product-free sets in the latter three families, to corresponding bounds in arbitrary connected, compact, simple real Lie groups of the same dimension, it suffices to note the following.

\begin{lemma}
Let $G$ be a group, let $K \lhd G$ and let $\pi:G \to G/K$ be the natural quotient map from $G$ to $G/K$. If $S \subset G/K$ is product-free, then so is $\pi^{-1}(S)$. Moreover, if $G$ is compact, $\mu_G$ denotes the Haar probability measure on $G$ and $\mu_{G/K}$ denotes the Haar probability measure on $G/K$, then for any measurable subset $S \subset G/K$, we have $\mu_{G}(\pi^{-1}(S)) = \mu_{G/K}(S)$.
\end{lemma}
\begin{proof}
All parts of the lemma are clear except possibly for the last; this follows from the fact that the probability measure $\nu$ on $G/K$ defined by $\nu(S) = \mu_G(\pi^{-1}(S))$ is left- and right- invariant, hence must coincide with the Haar measure $\mu_{G/K}$.
\end{proof}

\subsection{Curvature of the spin group}
Just as for $\SO(n)$ (and for exactly the same reason, since they have the same Lie algebra), the Ricci curvature of $\textup{Spin}(n)$ is bounded from below by $(n-2)/4$.

\subsubsection*{Curvature of the compact symplectic groups}

The Ricci curvature of $\textup{Sp}(n)$ is bounded from below by $(n+1)/2$; this follows from Lemma F.27 in \cite{zeitouni-book} (noting that we must halve the relevant formula (F.6) in \cite{zeitouni-book}, since \cite{zeitouni-book} uses a metric on the Lie algebra of $\Sp(n)$ which is half of our metric).
}
\subsection{The Curvature Approach for $\SU(n)$}

\subsection{Comfortable Functions over $\U(n)$}
The missing ingredient is the following lemma.
\begin{lemma}
Let $f\in L^2(SU_n)$ be a generalised polynomial, of (generalised) degree $d$. Let $P$ be a generalised polynomial representation of $f$; extend $P$ algebraically to all $U_n$. If $kd<n$ and $k$ is even, then $\mathbb{E}_{U_n}[|P|^k]=\mathbb{E}_{SU_n}[|P|^k]$.
\end{lemma}
\begin{proof}
Let $\mathbb{T} := \{z \in \mathbb{C}:\ |z|=1\}$ denote the unit circle in the complex plane. The function $|P|^k = P^{k/2}(\overline{P})^{k/2}$ is an element of the vector space of generalised polynomials (on $U_n$) of degree $\leq kd < n$. By the Peter-Weyl theorem, and the above discussion of the irreducible representations of $U_n$, this vector space is orthogonal to each nonzero power of the determinant. This implies that for any $\epsilon>0$, the expectation of $|P|^k$ given that the determinant lies in the interval $I$ is the same, for any interval $I$ (of the unit circle $\mathbb{T}$) of length $\epsilon$. Indeed, taking any two such intervals $I$ and $I'$, consider the function $H = H(z) \in L^2(\mathbb{T})$ on the unit circle, defined by $H := 1_{I} - 1_{I'}$; then $H$ has Fourier expansion 
$$H(z) = \sum_{n \in \mathbb{Z}} a_n z^{n},$$
where $a_0 = 0$ (since $H$ has expectation zero), and therefore
$$\epsilon \mathbb{E}[|P(X)|^k | \det(X) \in I] - \epsilon \mathbb{E}[|P(X)|^k | \det(X) \in I'] = \mathbb{E}[|P(X)|^k \cdot H(\det(X))] = 0.$$
Therefore, by averaging, for any such interval $I$, we have
$$\mathbb{E}[|P(X)|^k | \det(X) \in I]= \mathbb{E}[|P(X)|^k].$$
In particular, this holds for intervals of length $\epsilon$ centred on 1. The lemma now follows by letting $\epsilon$ tend to 0 (using the fact that the algebraic set $\{X \in U_n:\ \det(X) = z\}$, and therefore the expected value of $|P(X)|^k$ thereon, varies continuously with $z$). 
\end{proof}

\subsection{The Coupling Approach for $\U(n)$}
}

\bibliographystyle{plain}
	\bibliography{ref}
	

    \appendix
    \section{Bounding the dimensions of high level representations}\label{sec:High dimensions}
    \subsection{The special orthogonal group $\SO(n)$}\label{subsec: quasirandomness of SO_n}
    \begin{replemma}{lemma:lb2son}
Let $n \geq 5$. If $\rho$ is an irreducible representation of $\SO(n)$ of level $d\geq n/2$, then
$$\dim(\rho) \geq \exp(n/32).$$
\end{replemma}
\begin{proof}
This follows from Weyl's original dimension formulae, together with a short computation. First suppose that $n=2k+1$ is odd. As mentioned above, the equivalence classes of irreducible representations of $\SO(2k+1,\mathbb{R})$ are in an explicit one-to-one correspondence with the partitions $\lambda$ (of non-negative integers) whose Young diagrams have at most $k$ rows. Weyl's dimension formula states that for any such partition $\lambda$, the corresponding irreducible representation $\rho_{\lambda}$ of $\SO(2k+1,\mathbb{R})$ has
$$\dim(\rho_{\lambda}) = \prod_{1 \leq i < j \leq k} \frac{\lambda_i-\lambda_j+j-i}{j-i} \prod_{1 \leq i \leq j \leq k}\frac{\lambda_i + \lambda_j+2k+1-i-j}{2k+1-i-j}.$$
Fix $\lambda$ a partition of degree $d\in\mathbb{N}$; trivially,
$$\dim(\rho_{\lambda}) \geq 
\prod_{1 \leq i \leq j \leq k} 
\frac{\lambda_i + 2k+1-i-j}{2k+1-i-j}
\geq \prod_{\substack{1 \leq i\leq k/2\\  k/2 \leq j \leq k}} 
\left(1+\frac{\lambda_i}{2k}\right)
=\prod_{1 \leq i\leq k/2} 
\left(1+\frac{\lambda_i}{2k}\right)^{k/2}.$$
If $\lambda_1\geq k$, then the previous product is at least 
$\left(1+1/2\right)^{k/2}\geq {\sf exp}(n/16)$ and we are done, so 
assume that $\lambda_1<k$ and hence all $\lambda_i$'s are 
smaller than $k$. For all $0\leq \delta<1/2$ we have 
$1+\delta\geq e^{\delta/2}$, so the previous product is at least
\[
\prod_{1 \leq i\leq k/2} 
e^{\frac{\lambda_i}{4k}\cdot \frac{k}{2}}
=e^{\frac{1}{8}\sum\limits_{i=1}^{k/2}\lambda_i}.
\]
Since $\lambda_1\geq \lambda_2\geq\ldots\geq \lambda_k$, we get 
that $\sum\limits_{i=1}^{k/2}\lambda_i\geq \frac{1}{2}\sum\limits_{i=1}^{k}\lambda_i=\frac{d}{2}$, 
so the last expression is at least $e^{d/16}$ and we are done.

The case of even $n$ is very similar. Let $n=2k$; then the equivalence classes of irreducible representations of $\SO(2k,\mathbb{R})$ are in an explicit correspondence with the partitions $\lambda$ (of non-negative integers) whose Young diagrams have at most $k$ rows: this correspondence is one-to-one when the number of rows is less than $k$, but when the number of rows is equal to $k$, the correspondence is two-to-one (each partition $\lambda$ with $k$ rows corresponds to two irreducible representations $\rho_{\lambda}$ and $\tilde{\rho}_{\lambda}$ of the same dimension). Writing $c = c(\lambda) = 1$ if $\lambda$ has less than $k$ rows and $c = c(\lambda) = 1/2$ if $\lambda$ has exactly $k$ rows, the dimensions of the corresponding irreducible representations are given by the formula
$$\dim(\rho_{\lambda}) = c \prod_{1 \leq i < j \leq k} \left(\frac{\lambda_i-\lambda_j+j-i}{j-i}\right) \left(\frac{\lambda_i + \lambda_j+2k-i-j}{2k-i-j}\right).$$
From now on we can apply exactly the same argument as in the case of $n$ odd; we omit the details.
\end{proof}
\subsection{The spin group $\Spin(n)$}\label{subsec:Quasirandomness of spin representations}
\begin{lemma}
Let $\rho_{\lambda}$ be as in Theorem \ref{thm:Spin is quasirandom}. Then $$\dim(\rho_{\lambda}) = \prod_{1\leq i < j \leq k} \frac{\lambda_i-\lambda_j-i+j}{j-i}\prod_{1 \leq i \leq j \leq k}\frac{\lambda_i+\lambda_j+2k+1-i-j}{2k+1-i-j}\ge 2^{-\Omega(n)}.$$
\end{lemma}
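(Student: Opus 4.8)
The plan is to read the bound straight off the displayed Weyl dimension formula, exploiting that in the ``new spin'' case the $\lambda_i$ are half-integers uniformly bounded away from zero. Recall $n=2k+1$ and that here $\boldsymbol{\lambda}=(\lambda_1,\dots,\lambda_k)$ is given by $\lambda_i=a_i+a_{i+1}+\dots+a_{k-1}+\tfrac12 a_k$ with $a_1,\dots,a_k\in\mathbb{N}\cup\{0\}$ and $a_k$ \emph{odd}. First I would record two elementary facts: since $a_k$ is odd we have $a_k\ge 1$, so $\lambda_i\ge\tfrac12 a_k\ge\tfrac12$ for every $i\in[k]$; and since every $a_i\ge 0$, the sequence is non-increasing, i.e.\ $\lambda_1\ge\lambda_2\ge\dots\ge\lambda_k\ge\tfrac12$.

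Next I would bound the two products separately. In the first product, every factor equals $\frac{\lambda_i-\lambda_j-i+j}{j-i}=1+\frac{\lambda_i-\lambda_j}{j-i}\ge 1$, because $\lambda_i\ge\lambda_j$ whenever $i<j$; hence $\prod_{1\le i<j\le k}\frac{\lambda_i-\lambda_j-i+j}{j-i}\ge 1$. In the second product, for each pair $1\le i\le j\le k$ we have $1\le 2k+1-i-j\le 2k-1$ and $\lambda_i+\lambda_j\ge 1$, so
\[
\frac{\lambda_i+\lambda_j+2k+1-i-j}{2k+1-i-j}=1+\frac{\lambda_i+\lambda_j}{2k+1-i-j}\ge 1+\frac{1}{2k},
\]
and there are $\binom{k+1}{2}=\tfrac{k(k+1)}{2}$ such pairs. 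Multiplying the two estimates gives $\dim(\rho_\lambda)\ge\bigl(1+\tfrac1{2k}\bigr)^{k(k+1)/2}$.

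To finish, I would use $\log(1+x)\ge x/2$ for $0<x\le 1$ (valid since $\log(1+x)-x/2$ vanishes at $0$ and is increasing on $[0,1]$) with $x=\tfrac1{2k}$, obtaining $\dim(\rho_\lambda)\ge\exp\!\bigl(\tfrac{1}{4k}\cdot\tfrac{k(k+1)}{2}\bigr)=\exp\!\bigl(\tfrac{k+1}{8}\bigr)$; since $k=(n-1)/2$ this is $\exp(\Omega(n))=2^{\Omega(n)}$, which is the claim (the inequality in the lemma should of course read $\dim(\rho_\lambda)\ge 2^{\Omega(n)}$).

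I do not anticipate a genuine obstacle: the only point worth care is that this is precisely the family of irreducible representations of $\Spin(n)$ that is \emph{not} covered by the $\SO(n)$ dimension estimates of Lemmas~\ref{lemma:lb1son} and~\ref{lemma:lb2son}, and that it is exactly the parity constraint ``$a_k$ odd'' that forces $a_k\ge 1$, hence the uniform bound $\lambda_i\ge\tfrac12$ that makes the $\Theta(n^2)$-fold product over $1\le i\le j\le k$ grow exponentially. The remaining case of even $n$ (spin representations of $D_k$-type), if one wished to treat it, goes through verbatim using the analogous formula (24.41) of \cite{fulton-harris} with the same choice of indices.
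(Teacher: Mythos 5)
Your proof is correct, and it rests on precisely the same key observation as the paper's: the parity constraint $a_k$ odd forces $a_k\geq 1$, hence $\lambda_i\geq\tfrac12$ uniformly, which makes $\Theta(k^2)$ factors in the second Weyl product each at least $1+\Omega(1/k)$. The paper's bookkeeping is slightly different — it restricts to the roughly $k^2/4$ pairs with $1\leq i\leq k/2$, $k/2\leq j\leq k$, discards $\lambda_j$, and inserts a $\min\{\lambda_i,2k\}$ truncation so that the inequality $1+x\geq e^{x/2}$ for $x\leq 1$ applies — whereas you use all $\binom{k+1}{2}$ pairs with the uniform bound $\lambda_i+\lambda_j\geq 1$ and $2k+1-i-j\leq 2k$, then apply $\log(1+x)\geq x/2$. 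Your version is a little cleaner because in the spin case the truncation is never needed (one always has $\lambda_i+\lambda_j\geq 1$), but the two arguments are in substance the same and yield comparable constants in the exponent.
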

\begin{proof}
Indeed, if $a_k$ is odd then we must have $a_k \geq 1$ and therefore $\lambda_i \geq 1/2$ for all $i \in [k]$. Hence, rather crudely, we have
\begin{align*}
\dim(\rho_{\lambda}) & \geq 
\prod_{1 \leq i \leq j \leq k}\frac{\lambda_i+\lambda_j+2k+1-i-j}{2k+1-i-j}\\
& = \prod_{1 \leq i \leq j \leq k}\left( 1+ \frac{\lambda_i+\lambda_j}{2k+1-i-j}\right)\\
& \geq \prod_{1 \leq i \leq k/2,\atop k/2 \leq j \leq k}\left( 1+ \frac{\lambda_i}{2k}\right)\\
& \geq \prod_{1 \leq i \leq k/2}\left( 1+ \frac{\lambda_i}{2k}\right)^{k/2}\\
&\geq \prod_{1 \leq i \leq k/2}\left( 1+ \frac{\min\{\lambda_i,2k\}}{2k}\right)^{k/2}\\
& \geq \prod_{1 \leq i \leq k/2}\exp(\min\{\lambda_i,2k\}/8)\\
& = \exp\left(\tfrac{1}{8}\sum_{1 \leq i \leq k/2}\min\{\lambda_i,2k\}\right)\\
& \geq \exp(k/32)\\
& = \exp((n-1)/64),
\end{align*}
as required, using the fact that $1+x \geq e^{x/2}$ for all $x \leq 1$.

For all $n=2k \geq 6$ even, we have 
$$\dim(\rho_{\lambda}) = \prod_{1 \leq i < j \leq k} \frac{\lambda_i-\lambda_j-i+j}{j-i}\frac{\lambda_i+\lambda_j+2k-i-j}{2k-i-j},$$
where the $k$-tuple $\boldsymbol{\lambda} = (\lambda_1, \lambda_2,\ldots \lambda_k)$ ranges over all $k$-tuples defined by
$$\lambda_i = a_i+a_{i+1}+\ldots+a_{k-2}+\tfrac{1}{2}(a_{k-1}+a_k) \ \forall\ i \leq k-2,\quad \lambda_{k-1} = \tfrac{1}{2}(a_{k-1}+a_k),\quad \lambda_k = \tfrac{1}{2}(a_k-a_{k-1}),$$
for some $(a_i)_{i=1}^{k} \in (\mathbb{N} \cup \{0\})^k$. The case of $a_{k-1}+a_k$ even corresponds to irreducible representations of $\text{Spin}(n)$ that are also irreducible representations of $\SO(n,\mathbb{R})$; the dimensions of these were bounded in the previous subsection. The case of $a_{k-1}+a_k$ odd corresponds to `new' irreducible representations of $\text{Spin}(n)$, but the above equation quickly implies that any such has dimension at least $2^{\Omega(n)}$. Indeed, if $a_{k-1}+a_k$ is odd then we must have $a_{k-1}+a_k \geq 1$ and therefore $\lambda_i \geq 1/2$ for all $i \leq k-1$. Hence, again rather crudely, we have
\begin{align*}\dim(\rho_{\lambda}) & \geq 
\prod_{1 \leq i < j \leq k}\frac{\lambda_i+\lambda_j+2k-i-j}{2k-i-j}\\
& = \prod_{1 \leq i < j \leq k}\left( 1+ \frac{\lambda_i+\lambda_j}{2k-i-j}\right)\\
& \geq \prod_{1 \leq i \leq k/2,\atop k/2 < j \leq k}\left( 1+ \frac{\lambda_i}{2k}\right)\\
& \geq \prod_{1 \leq i \leq k/2}\left( 1+ \frac{\lambda_i}{2k}\right)^{(k-1)/2}\\
&\geq \prod_{1 \leq i \leq k/2}\left( 1+ \frac{\min\{\lambda_i,2k\}}{2k}\right)^{k/4}\\
& \geq \prod_{1 \leq i \leq k/2}\exp(\min\{\lambda_i,2k\}/16)\\
& = \exp\left(\tfrac{1}{16}\sum_{1 \leq i \leq k/2}\min\{\lambda_i,2k\}\right)\\
& \geq \exp(k/64)\\
& = \exp(n/128),
\end{align*}
as required, again using the fact that $1+x \geq e^{x/2}$ for all $x \leq 1$.
\end{proof}

\subsection{The special unitary group $\SU(n)$}\label{quasirandomness of SU_n}
\begin{replemma}{lem:su2lb}
  If $\rho$ is an irreducible representation of $\SU(n)$ of level $D$, then
  ${\sf dim}(\rho)\geq 2^{\Omega(\min(D,n))}$.
\end{replemma}
\begin{proof}
Weyl's dimension formula states that the dimension of the irreducible representation of $\SU(n)$ corresponding to the partition $\lambda$ (with at most $n-1$ parts) is
$$\prod_{1 \leq i < j \leq n}\frac{\lambda_i-\lambda_j+j-i}{j-i}.$$
Assume first that $\lambda_1 \geq n$. If $\lambda_{\lfloor n/2 \rfloor+1} \leq n/2$ then, considering all the terms in the above product corresponding to $i=1$ and $j\geq \lfloor n/2\rfloor +1$, we see that the above product satisfies

\begin{align*}\prod_{1 \leq i < j \leq n}\frac{\lambda_i-\lambda_j+j-i}{j-i} & \geq \prod_{j \geq \lfloor n/2 \rfloor +1}\frac{\lambda_1-\lambda_j+j-1}{j-1}\\
& \geq \prod_{j \geq \lfloor n/2 \rfloor +1}\frac{\lambda_1-\lambda_j+n-1}{n-1}\\
&\geq \prod_{j \geq \lfloor n/2 \rfloor +1}\frac{n-n/2+n-1}{n-1}\\
&= \exp(\Theta(n)).
\end{align*}

If, on the other hand, we have $\lambda_{\lfloor n/2 \rfloor+1} > n/2$, then considering all the terms in the above product corresponding to $j=n$ and $i \leq \lfloor n/2 \rfloor +1$, we obtain

\begin{align*}\prod_{1 \leq i < j \leq n}\frac{\lambda_i-\lambda_j+j-i}{j-i} & \geq \prod_{i \leq \lfloor n/2 \rfloor +1}\frac{\lambda_i-\lambda_n+n-i}{n-i}\\
& \geq \prod_{i \leq \lfloor n/2 \rfloor +1}\frac{\lambda_i-\lambda_n+n-1}{n-1}\\
&\geq \prod_{i \leq \lfloor n/2 \rfloor +1}\frac{n/2-0+n-1}{n-1}\\
&= \exp(\Theta(n)).
\end{align*}

We may henceforth assume that $\lambda_1 < n$. In this case we have $\lambda_i - \lambda_j < n$ for all $i,j$. Hence, the above product satisfies

\begin{align*}\prod_{1 \leq i < j \leq n}\frac{\lambda_i-\lambda_j+j-i}{j-i} & \geq \prod_{1 \leq i < j \leq n}\frac{\lambda_i-\lambda_j+n-1}{n-1}\\
& \geq \prod_{1 \leq i < j \leq n}\left(1+\frac{\lambda_i-\lambda_j}{n-1}\right)\\
& \geq \prod_{1 \leq i < j \leq n} \exp\left(\frac{\lambda_i-\lambda_j}{2(n-1)}\right)\\
& = \exp\left(\frac{1}{2(n-1)}\sum_{1 \leq i < j \leq n}(\lambda_i-\lambda_j) \right)\\
& = \exp\left(\frac{1}{2(n-1)} \sum_{1 \leq i < j \leq n}(a_i+\ldots+a_{j-1}) \right)\\
& = \exp\left(\frac{(n-1)a_1+2(n-2)a_2+\ldots+2(n-2)a_{n-2}+(n-1)a_{n-1}}{2(n-1)}\right)\\
& \geq \exp(\Theta(D)).
\end{align*}
\end{proof}

\section{The required adaptations  for showing that $\Sp(n)$ and $\SU(n)$ are good} 

Here we complete the proof that $\Sp(n)$ and $\SU(n)$ are good. In fact, we will only explain how to adapt the proof for $\Sp(n)$, as $\SU(n)$ is only simpler (one just works with complex matrices rather than quaternionic ones).   

To construct our noise operator $\T_{\rho}$ on $\textup{Sp}(n)$ we use the identification
$$\textup{Sp}(n) = \{X \in \mathbb{H}^{n \times n}:\ XX^{h} = I\} = \{X \in \mathbb{H}^{n \times n}:\ X^hX = I\}$$
of $\textup{Sp}(n)$ with the group of unitary quaternionic matrices. (Here, $X^{h}$ denotes the quaternionic conjugate of $X$, i.e.\ $(X^h)_{p,q} = \overline{X_{q,p}}$, where $\overline{a+b\mathbf{i}+c\mathbf{j}+d\mathbf{k}} = a-b\mathbf{i}-c\mathbf{j}-d\mathbf{k}$ for $a,b,c,d \in \mathbb{R}$.) We couple $\textup{Sp}(n)$ with the space $(\mathbb{H}^{n \times n},\gamma)$ of quaternionic normal random matrices, where the real part, the $\mathbf{i}$-part (= coefficient of $\mathbf{i}$), the $\mathbf{j}$-part and the $\mathbf{k}$-part of each entry are independent normal (real-valued) random variables with mean zero and variance $1/4$, and all the entries are independent. (The following terminology will be useful in the sequel. Recall that a {\em standard normal quaternionic} or $\mathcal{QN}$ random variable is a quaternion-valued random variable where the real part, the $\mathbf{i}$-part, the $\mathbf{j}$-part and the $\mathbf{k}$-part of the value of the random variable are independent normal (real-valued) random variables with mean zero and variance $1/4$; so a quaternionic normal random matrix $\sim (\mathbb{H}^{n \times n},\gamma)$ is simply a matrix where each entry is an independent $\mathcal{QN}$ random variable. A {\em $\mathcal{QN}$ random vector in $n$ dimensions} is a vector of $n$ independent $\mathcal{QN}$ random variables.)

To make this coupling work, we need to define a `Gram-Schmidt' type process (on the columns, and also on the rows) which, when applied to a matrix in $(\mathbb{H}^{n \times n},\gamma)$, yields an element of $\textup{Sp}(n)$ with probability one. We define two inner products on $\mathbb{H}^n$:
$$\langle x,y \rangle:=  \sum_{i=1}^{n} \overline{x_i} y_i, \quad \langle x,y\rangle' : = \sum_{i=1}^{n} x_i \overline{y_i}.$$
It is easy to check that, for $H \in \mathbb{H}^{n \times n}$, we have $\langle Hx,Hy \rangle = \langle x,y \rangle$ for all $x,y \in \mathbb{H}^n$ if and only if $H \in \textup{Sp}(n)$. 

Our Gram-Schmidt process on the columns, ${\sf GS}_{{\sf col}}(X)$ for $X \in \mathbb{H}^{n \times n}$, is defined as follows. If $c_1,c_2,\ldots,c_n$ denote the columns of $X$, then we (inductively) define
$$\gamma_k := c_k - \sum_{\ell < k}\tilde{c}_{\ell} \langle \tilde{c}_{\ell},c_k \rangle$$
and (if $\gamma_k \neq 0$)
$$\tilde{c}_k := \gamma_k/\sqrt{\langle \gamma_k,\gamma_k \rangle},$$
for $1 \leq k \leq n$. If any $\gamma_k = 0$ then ${\sf GS}_{{\sf col}}(X)$ is undefined; otherwise we define ${\sf GS}_{{\sf col}}(X)$ to be $\sqrt{n}$ times the matrix $\tilde{X}$ in $\mathbb{H}^{n \times n}$ with columns $\tilde{c}_1,\tilde{c}_2,\ldots,\tilde{c}_n$; it is easy to see that $\tilde{X}\in \textup{Sp}(n)$. (One checks, by induction on $k$, that $\langle \tilde{c}_{\ell},\tilde{c}_k \rangle = \delta_{j,k}$ for all $1 \leq j \leq k \leq n$; taking conjugates this implies that $\langle \tilde{c}_{k},\tilde{c}_\ell \rangle = \delta_{j,k}$ for all $1 \leq j \leq k \leq n$, and these two statements together imply that $\tilde{X} \in \textup{Sp}(n)$. It is clear that, if $X$ is sampled according to $\gamma$, then all the $\gamma_k$ are non-zero with probability one, so ${\sf GS}_{{\sf col}}(X)$ is defined outside a set of zero probability measure.

We define the Gaussian noise operator $U_{\rho}:L^2(\mathbb{H}^{n \times n},\gamma) \to L^2(\mathbb{H}^{n \times n},\gamma)$ in the obvious way: for $f \in L^2(\mathbb{H}^{n \times n},\gamma)$ we define
$$U_\rho f(X) = \mathbb{E}_{Y \sim \gamma} [f(\rho X + \sqrt{1-\rho^2}Y)].$$
It is easy to check that $U_\rho$ is self-adjoint; indeed,
$$\mathbb{E}_{X \sim \gamma}[\overline{f(X)}U_\rho g(X)] = \mathbb{E}_{X,Z \sim \gamma,\ \rho\text{-correlated}} \overline{f(X)}{g(Z)} = \mathbb{E}_{X \sim \gamma}[\overline{U_\rho f(X)}g(X)].$$

As in the case of $\SO(n)$, we let $\mu$ denote the measure on $\sqrt{n}\Sp(n)$ induced (under dilation by $\sqrt{n}$) from the Haar probability measure on $\Sp(n)$. For $f \in L^2(\textup{Sp}(n))$ we define $\T_{{\sf col}}:L^2(\sqrt{n} \textup{Sp}(n),\mu) \to L^2(\mathbb{H}^{n \times n},\gamma)$ by
$$\T_{{\sf col}}f(X) = f({\sf GS}_{{\sf col}}(X)).$$
We similarly let $T_{{\sf col}}^*$ denote its (Hilbert-space) adjoint.

Again, as in the case of $\SO(n)$, we define
$$\T_{\rho} = \mathbb{E}_{V \sim \textup{Sp}(n)} [R_{V}^*\T_{{\sf col}}^*U_\rho \T_{{\sf col}}R_V].$$

Since $U_\rho$ is self-adjoint, so is $\T_{\rho}$. The fact that $\T_{{\sf col}}$ commutes with $L_{V}$ for all $V \in \Sp(n)$ follows from the fact that ${\sf GS}_{{\sf col}}(VX) = V {\sf GS}_{{\sf col}}(X)$ for all $X \in \mathbb{H}^{n \times n}$ and all $V \in \Sp(n)$, which in turn follows from the fact that $ \langle Vx,Vy \rangle = \langle x,y\rangle$ for all $x,y \in \mathbb{H}^n$ and all $V \in \Sp(n)$. The fact that $L_V$ and $R_V$ both commute with $U_{\rho}$ follows from the fact that, if $X \sim (\mathbb{H}^{n \times n},\gamma)$ and $V \in \Sp(n)$, then $VX \sim (\mathbb{H}^{n \times n},\gamma)$ and $XV \sim (\mathbb{H}^{n \times n},\gamma)$, as in the case of $\SO(n)$ and $(\mathbb{R}^n,\gamma)$ (the proof is very similar). For any $V \in \Sp(n)$, we have $L_V^* = L_{V^{-1}}$ and $R_V^* = R_{V^{-1}}$, and therefore, taking the adjoints of
$$L_V \T_{{\sf col}} = \T_{{\sf col}} L_V,\quad R_V \T_{{\sf col}} = \T_{{\sf col}} R_V$$
yields
$$\T_{{\sf col}}^*L_{V^{-1}} = L_{V^{-1}} \T_{{\sf col}}^*,\quad \T_{{\sf col}}^*R_{V^{-1}} = R_{V^{-1}}\T_{{\sf col}}^*,$$
and therefore $L_V$ and $R_V$ commute with $\T_{{\sf col}}^*$, for all $V \in \Sp(n)$. It follows that $L_V$ commutes with $\T_\rho$. The fact that for any $W \in \Sp(n)$, $R_{W}$ commutes with $\T_\rho$, follows from a simple change of variables:
\begin{align*} R_W \T_\rho & = R_{W} \mathbb{E}_{V \sim \textup{Sp}(n)} [R_{V}^*\T_{{\sf col}}^*U_\rho \T_{{\sf col}}R_V]\\
& = \mathbb{E}_{V \sim \textup{Sp}(n)} [R_{W^{-1}}^*R_{V}^*\T_{{\sf col}}^*U_\rho \T_{{\sf col}}R_V]\\
& = \mathbb{E}_{V \sim \textup{Sp}(n)}[(R_V R_{W^{-1}})^* \T_{{\sf col}}^*U_\rho \T_{{\sf col}}R_V]\\
& = \mathbb{E}_{V \sim \textup{Sp}(n)}[R_{VW^{-1}}\T_{{\sf col}}^*U_\rho \T_{{\sf col}}R_V]\\
& = \mathbb{E}_{V \sim \textup{Sp}(n)}[R_{V}\T_{{\sf col}}^*U_\rho \T_{{\sf col}}R_{VW}]\\
& = \mathbb{E}_{V \sim \textup{Sp}(n)}[R_{V}\T_{{\sf col}}^*U_\rho \T_{{\sf col}}R_{V}R_{W}]\\
& = \T_{\rho}R_{W}.
\end{align*}
Hence, $\T_\rho$ commutes with the action of $\Sp(n)$ from both left and right, as in the $\SO(n)$ case.

We also need to define an analogue of $\T_{{\sf row}}$. However, this is a little different to in the $\SO(n)$ case. For $V \in \mathbb{H}^{n \times n}$, $X \in \Sp(n)$ does not imply that $\langle e_i X, e_j X \rangle = \delta_{i,j}$ for each $i,j \in [n]$ (the latter would be the analogue of `orthonormal rows'). The condition $X \in \Sp(n)$ is, however, equivalent to the condition $\langle e_i X, e_j X \rangle' = \delta_{i,j}$ for each $i,j \in [n]$ (which is in turn equivalent to the condition $\langle e_i \overline{X},e_j \overline{X} \rangle = \delta_{i,j}$ for each $i,j \in [n]$). We therefore define our Gram-Schmidt process on the rows, ${\sf GS}_{{\sf row}}(X)$ for $X \in \mathbb{H}^{n \times n}$, as follows. If $r_1,r_2,\ldots,r_n$ denote the rows of $X$, then we (inductively) define 
$$\delta_k: = r_k - \sum_{\ell < k} \overline{\langle \tilde{r}_{\ell},r_k\rangle'} \tilde{r}_{\ell}$$
and (if $\delta_k \neq 0)$
$$\tilde{r}_k: = \delta_k/\sqrt{\langle \delta_k,\delta_k\rangle'},$$
for $1 \leq k \leq n$. If any $\delta_k = 0$, then ${\sf GS}_{{\sf row}}(X)$ is undefined; otherwise we define ${\sf GS}_{{\sf row}}(X)$ to be $\sqrt{n}$ times the matrix $\tilde{X} \in \mathbb{H}^{n \times n}$ with rows $\tilde{r}_1,\tilde{r}_2,\ldots,\tilde{r}_n$; it is easy to see that $\tilde{X} \in \Sp(n)$, similarly to in the case of ${\sf GS}_{{\sf col}}(X)$. Again, as with ${\sf GS}_{{\sf col}}(X)$, if $X$ is sampled according to $\gamma$, then all the $\delta_k$ are non-zero with probability one, so ${\sf GS}_{{\sf row}}(X)$ is defined outside a set of zero probability measure.

For $f \in L^2(\textup{Sp}(n))$, we define $\T_{{\sf row}}:L^2(\sqrt{n} \textup{Sp}(n),\mu) \to L^2(\mathbb{H}^{n \times n},\gamma)$ by
$$\T_{{\sf row}}f(X) = f({\sf GS}_{{\sf row}}(X)),$$
and of course, we let $T_{{\sf row}}^*$ denote its (Hilbert-space) adjoint.

As in the $\SO(n)$ case, we observe that $\T_{{\sf row}}$ commutes with $R_{V}$ for all $V \in \Sp(n)$; this follows from the fact that ${\sf GS}_{{\sf row}}(XV) =  {\sf GS}_{{\sf row}}(X)V$ for all $X \in \mathbb{H}^{n \times n}$ and all $V \in \Sp(n)$, which in turn follows from the fact that $ \langle xV,yV \rangle' = \langle x,y\rangle'$ for all $x,y \in \mathbb{H}^n$ and all $V \in \Sp(n)$.

Similarly to as in the $\SO(n)$ case, if $Y \sim (\mathbb{H}^{n \times n},\gamma)$, and $X = {\sf GS}_{{\sf col}}(Y)$, we obtain $Y = XG$, where $g_{i,i}$ is $1/\sqrt{n}$ times the (Euclidean) length of a $\mathcal{QN}$ random vector in $n-i+1$ dimensions, $g_{i,j} = 0$ for all $i > j$, $g_{i,j}$ is $1/\sqrt{n}$ times a $\mathcal{QN}$ random variable, the entries of $G$ are independent, and independent of all the entries of $X$. This (distribution over) quaternionic upper-triangular matrices $G$ is our Gaussian Maker Distribution (or GMD, for short) in the $\Sp(n)$ case. To generalise the $\SO(n)$ proof, the only (important) facts we need are the fact that $\Sp(n)$ acts transitively (from either the left or the right) on the set $\mathcal{S} = \{v \in \mathbb{H}^n:\ \langle v,v \rangle = 1\}$ of quaternionic vectors of unit norm, and that $(\mathbb{H}^{n \times n},\gamma)$ is invariant under both left and right actions of $\Sp(n)$. 

We can therefore write
$$\T_{{\sf col}}^*f(X) = \mathbb{E}_{G \sim \text{GMD}} f(XG)\quad \forall X \in \sqrt{n}\Sp(n).$$
Similarly, we can write
$$\T_{{\sf row}}^*f(X) = \mathbb{E}_{G \sim \text{GMD}} f(G^{T}X)\quad \forall X \in \sqrt{n}\Sp(n).$$

Our `nice' functions on $L^2(\mathbb{H}^{n \times n},\gamma)$ are polynomials (with complex coefficients) where each variable is a real-part, an $\mathbf{i}$-part, a $\mathbf{j}$-part or a $\mathbf{k}$-part of one of the matrix entries. We say a monomial in these variables is {\em comfortable} if no two of its variables come from the same row, no two of its variables come from the same column, and all variables come from the first $\lfloor n/2\rfloor$ rows and the first $\lfloor n/2 \rfloor$ columns. Similarly, we say it is {\em row comfortable} (respectively {\em column comfortable} if no two of its variables come from the same row (respectively column), and all of its variables come from the first $\lfloor n/2\rfloor$ rows (respectively columns). A comfortable polynomial (respectively row-comfortable polynomial or column-comfortable polynomial) is a complex linear combination of comfortable (respectively row-comfortable or column-comfortable) monomials. Such a polynomial is said to be a {\em $d$-junta} if it is homogeneous of degree $d$ and depends only upon the top-left $d$ by $d$ minor, it is said to be a {\em $d$-row-junta} if it depends only upon the first $d$ rows, and it is said to be a {\em $d$-column-junta} if it depends only upon the first $d$ columns.

As in the $\SO(n)$-case, we define $\Pi_{{\sf comf}}\colon L^2(\mathbb{H}^{n\times n},\gamma)\to L^2(\mathbb{H}^{n\times n},\gamma)$ to be orthogonal projection onto the linear subspace of comfortable polynomials. Similarly, we define the operators $\Pi_{{\sf comf}, d}$, $\Pi_{{\sf comf}, {\sf col}, d}$ and 
$\Pi_{{\sf comf}, {\sf row}, d}$ to be the orthogonal projections onto
the space of comfortable, row comfortable and column comfortable polynomials of degree at most $d$, respectively. Finally, if $S \subset [n]$ is a set of rows, we define $\Pi_{{\sf comf}, =S}$ to be the projection onto the subspace spanned by the comfortable homogeneous monomials of degree $|S|$ which depend only upon variables from the rows in $S$.

Since the real-part, the $\mathbf{i}$-part, the $\mathbf{j}$-part and the $\mathbf{k}$-part of each matrix entry of $X \sim (\mathbb{H}^{n \times n},\gamma)$ is $N(0,1/4)$-distributed rather than $N(0,1)$-distributed, to guarantee orthonormality we must multiply by a factor of $2^d$, so a `generic' row-comfortable monomial of degree $d$ is of the form
$$2^d (X_{i_1,j_1})_{q_1\text{-part}} \cdot (X_{i_2,j_2})_{q_2\text{-part}} \cdot \ldots \cdot (X_{i_d,j_d})_{q_d\text{-part}},$$
where $q_k \in \{\text{real},\mathbf{i},\mathbf{j},\mathbf{k}\}$ for all $k \in [d]$ and $i_1,\ldots,i_d$ are distinct integers between $1$ and $n/2$; for brevity we denote this by $H_{\alpha}$ where $\alpha = \{(i_1,j_1;q_1),(i_2,j_2;q_2),\ldots,(i_d,j_d;q_d)\}$.

The proof (and statement) of Claim \ref{claim:comf_ev} is readily adapted. If
$$S =\{(i_1,j_1;q_1),(i_2,j_2;q_2),\ldots,(i_d,j_d;q_d)\}$$
is such that the $j_k$ are all distinct, then we have
\begin{align*}
    \T_{{\sf col}}^*f(X) &= \mathbb{E}_{G \sim \text{GMD}} H_S(XG)\\
    & = 2^d\mathbb{E}_{G \sim \text{GMD}}\left[ \prod_{k=1}^{d}((XG)_{i_k,j_k})_{q_k\text{-part}}\right]\\
    & = 2^d\mathbb{E}_{G \sim \text{GMD}}\left[ \prod_{k=1}^{d}\left(\sum_{\ell=1}^{j_k} (X_{i_k,\ell} G_{\ell,j_k})_{q_k\text{-part}}\right)\right]\\
    & = 2^d \prod_{k=1}^{d}\mathbb{E}_{G \sim \text{GMD}}\left[\sum_{\ell=1}^{j_k} (X_{i_k,\ell} G_{\ell,j_k})_{q_k\text{-part}}\right]\\
    & = 2^d \prod_{k=1}^{d}\left(\sum_{\ell=1}^{j_k} (X_{i_k,\ell} \mathbb{E}_G[G_{\ell,j_k}])_{q_k\text{-part}}\right)\\
    & = 2^d \prod_{k=1}^{d} (X_{i_k,j_k} \mathbb{E}_G[G_{j_k,j_k}])_{q_k\text{-part}}\\
    & = 2^d \prod_{k=1}^{d} (X_{i_k,j_k})_{q_k\text{-part}} \mathbb{E}_G[G_{j_k,j_k}]\\
    & = \lambda_S H_S(X),
\end{align*}
where 
$$\lambda_S: = \prod_{k=1}^{d} \mathbb{E}_G[G_{j_k,j_k}].$$
(Note that we used the fact that $\mathbb{E}_G[G_{\ell,j_k}] = 0$ for all $\ell \neq j_k$, and that $\mathbb{E}_G[G_{j_k,j_k}] \in \mathbb{R}$.) The rest of the proof is almost exactly the same as before.

The analogue of Claim \ref{claim: L_U works properly} is that, for any row-comfortable $d$-row-junta $f:\mathbb{H}^{n \times n} \to \mathbb{C}$, we have
$$\mathbb{E}_{V\sim \Sp(n)} \|\Pi_{{\sf comf}, d}R_V f\|_{L^2(\gamma)}^2 \geq  \frac{(n/2)!}{2^{d} n^d((n/2)-d)!}\|f\|_{L^2(\mu)}^2.$$

The proof of Claim \ref{claim: L_U works properly} is readily adapted. We first note that, as before, if $H_{\alpha}$ and $H_{\beta}$ are row-comfortable monomials such that the set of rows appearing in $\alpha$ is different from the set of rows appearing in $\beta$, then we have $\langle H_{\alpha},H_{\beta} \rangle_{L^2(\gamma)} = \langle H_{\alpha},H_{\beta} \rangle_{L^2(\mu)}=0$; the same proof works as before.

For $S \subset [n]$, let $W_{S}$ denote the (complex) linear span of the row-comfortable monomials of degree exactly $|S|$ which depend only upon variables from the rows in $S$. As before, the $W_S$ are pairwise orthogonal with respect to both $L^2(\gamma)$ and $L^2(\mu)$. 

Again as before, for any row-comfortable $d$-row junta $g:\mathbb{H}^{n \times n} \to \mathbb{C}$, we may write $g$ as an orthogonal direct sum,
$$g = \sum_{S \subset [d]}g_{(=S)},$$
where $g_{(=S)}$ denotes the orthogonal projection of $g$ onto $W_S$ (with respect to $L^2(\gamma)$). It is clear that $(R_Vg)_{(=S)} = R_V(g_{(=S)})$ for any $V \in \Sp(n)$ and any row-comfortable $d$-row-junta $g$.

Now let $f:\mathbb{H}^{n \times n} \to \mathbb{C}$ be a row-comfortable $d$-row-junta. Since the $f_{(=S)}$ are pairwise orthogonal with respect to $L^2(\mu)$ as well as with respect to $L^2(\gamma)$, we have
$$\|f\|_{L^2(\mu)}^2 = \sum_{S \subset [d]}\|f_{(=S)}\|_{L^2(\mu)}^2,$$
so there exists $S \subset [d]$ such that $\|f_{(=S)}\|_{L^2(\mu)}^2 \geq \frac{1}{2^d}\|f\|_{L^2(\mu)}^2$.

For brevity, write $h = f_{(=S)}$, $d':=|S|$ and $S = \{i_1,\ldots,i_{d'}\}$. We need to show that
\begin{equation}\label{eq:h-eqn} \mathbb{E}_{V\sim \Sp(n)} \|\Pi_{{\sf comf}, =S}R_V h\|_{L^2(\gamma)}^2\geq \frac{(n/2)!}{n^{d'}((n/2)-d')!}\|h\|_{L^2(\mu)}^2.
\end{equation}
In proving this, we may assume without loss of generality that $S = \{1,2,\ldots,d'\}$. In this case, we may write
\begin{equation}\label{eq:h-expansion} h = \sum_{\alpha = \{(1,j_1;q_1),\ldots,(d,j_d;q_d)\}} \hat{f}(\alpha)H_{\alpha}.\end{equation}
For brevity, write $H_{\alpha_0}: = H_{\{(1,1;\mathbb{R}),(2,2;\mathbb{R}),\ldots,(d',d';\mathbb{R})\}}$, i.e., $H_{\alpha_0}$ denotes the degree-$d'$ monomial
$$X \mapsto 2^d (X_{1,1})_{\text{real-part}} \cdot (X_{2,2})_{\text{real-part}} \cdot \ldots \cdot (X_{d',d'})_{\text{real-part}}.$$

If $\alpha = \{(1,j_1;q_1),(2,j_2;q_2),\ldots,(d',j_{d'};q_{d'})\}$ is such that the $j_k$ are all distinct, then $H_\alpha = R_{V_\alpha} H_{\alpha_0}$, where $V_\alpha  = \Sigma D$, $\Sigma$ is the permutation matrix corresponding to some permutation $\sigma \in S_n$ satisfying $\sigma^{-1}(i)=j_i$ for all $i \in [d']$ ($\Sigma_{i,j} := \delta_{\{j = \sigma(i)\}}$ for all $i,j \in [n]$), and $D$ is a diagonal matrix with $D_{i,i} = \overline{q_i}$ for all $i \in [d']$. It follows that $H_{\alpha_0} = R_{V_\alpha^{-1}} H_\alpha$. Since $V_{\alpha} \in \Sp(n)$ for any such $V_\alpha$, and since $V_\alpha V \sim \Sp(n)$ for $V \sim \Sp(n)$,
we have
\begin{align*} \mathbb{E}_{V\sim \Sp(n)} \|\Pi_{{\sf comf}, d}R_V f\|_{L^2(\gamma)}^2 &= \sum_{\alpha:\ H_\alpha \text{ comfortable}} \mathbb{E}_{V \sim \Sp(n)} |\langle R_{V} f,H_{\alpha}\rangle|^2\\
&= \sum_{\alpha:\ H_\alpha \text{ comfortable}} \mathbb{E}_{ V \sim \Sp(n)} |\langle R_{V_\alpha V} f,H_{\alpha}\rangle|^2\\
& = \sum_{\alpha:\ H_\alpha \text{ comfortable}} \mathbb{E}_{ V \sim \Sp(n)} |\langle R_{V_\alpha}R_{V} f,H_{\alpha}\rangle|^2\\
& = \sum_{\alpha:\ H_\alpha \text{ comfortable}} \mathbb{E}_{V_{\alpha} V \sim \Sp(n)} |\langle R_V f,R_{V_\alpha^{-1}} H_{\alpha}\rangle|^2\\
& = 4^{d'}\frac{(n/2)!}{((n/2)-d')!}\mathbb{E}_{V \sim \Sp(n)} |\langle R_V f,H_{\alpha_0}\rangle|^2.
\end{align*}
Now observe that
$$R_V H_\alpha (X) = H_\alpha(XV) = 2^{d'} \prod_{k=1}^{d'} ((XV)_{k,j_k})_{q_k\text{-part}} = 2^{d'} \prod_{k=1}^{d'}\left(\sum_{i=1}^{n}X_{k,i}V_{i,j_k}\right)_{q_k\text{-part}},$$
and therefore
$$\langle R_V H_\alpha(X),H_{\alpha_0}(X)\rangle = \prod_{k=1}^{d'}(V_{k,j_k})_{q_k\text{-part}} = 2^{-{d'}}H_{\alpha}(V).$$
Using the expansion (\ref{eq:h-expansion}) of $h$, it follows that
$$\langle R_V h,H_{\alpha_0}\rangle = 2^{-d'} \sum_{\alpha}\hat{f}(\alpha)H_\alpha(V) = 2^{-d'}f(V),$$
and therefore
\begin{align*}\mathbb{E}_{V\sim \Sp(n)} \|\Pi_{{\sf comf}, =S}R_V h\|_{L^2(\gamma)}^2 & = 4^{d'}\frac{(n/2)!}{((n/2)-d')!}\mathbb{E}_{V \sim \Sp(n)} 4^{-d'}|h(V)|^2\\
&= \frac{(n/2)!}{n^{d'}((n/2)-d')!}\|h\|_{L^2(\mu)}^2.
\end{align*}
The rest of the proof is exactly as before.

\remove{The proof (and statement) of Claim \ref{claim:comf_ev} is readily adapted. If
$$S =\{(i_1,j_1;q_1),(i_2,j_2;q_2),\ldots,(i_d,j_d;q_d)\}$$
is such that the $j_k$ are all distinct, then we have
\begin{align*}
    \T_{{\sf col}}^*f(X) &= \mathbb{E}_{G \sim \text{GMD}} H_S(XG)\\
    & = 2^d\mathbb{E}_{G \sim \text{GMD}}\left[ \prod_{k=1}^{d}((XG)_{i_k,j_k})_{q_k\text{-part}}\right]\\
    & = 2^d\mathbb{E}_{G \sim \text{GMD}}\left[ \prod_{k=1}^{d}\left(\sum_{\ell=1}^{j_k} (X_{i_k,\ell} G_{\ell,j_k})_{q_k\text{-part}}\right)\right]\\
    & = 2^d \prod_{k=1}^{d}\mathbb{E}_{G \sim \text{GMD}}\left[\sum_{\ell=1}^{j_k} (X_{i_k,\ell} G_{\ell,j_k})_{q_k\text{-part}}\right]\\
    & = 2^d \prod_{k=1}^{d}\left(\sum_{\ell=1}^{j_k} (X_{i_k,\ell} \mathbb{E}_G[G_{\ell,j_k}])_{q_k\text{-part}}\right)\\
    & = 2^d \prod_{k=1}^{d} (X_{i_k,j_k} \mathbb{E}_G[G_{j_k,j_k}])_{q_k\text{-part}}\\
    & = 2^d \prod_{k=1}^{d} (X_{i_k,j_k})_{q_k\text{-part}} \mathbb{E}_G[G_{j_k,j_k}]\\
    & = \lambda_S H_S(X),
\end{align*}
where 
$$\lambda_S: = \prod_{k=1}^{d} \mathbb{E}_G[G_{j_k,j_k}].$$
(Note that we used the fact that $\mathbb{E}_G[G_{\ell,j_k}] = 0$ for all $\ell \neq j_k$, and that $\mathbb{E}_G[G_{j_k,j_k}] \in \mathbb{R}$.) The rest of the proof is almost exactly the same as before.}

We now need analogues of some of the results of Section \ref{sec:ingredients}. (We omit those whose statements and proofs are trivial to adapt.) As in the $\SO(n)$ case, we define the `over-Gaussian' distribution $\nu$ to be the distribution of $YG$, where $Y \sim \gamma$ and $G \sim \text{GMD}$. Fix a comfortable $d$-junta  $f$ and write
$$f = \sum_{S} a_I (2^d x_S),$$
where for $S=\{(1,i_1;q_1),\ldots,(d,i_d;q_d)\}$ (with $i_1,\ldots,i_d \in [n]$ distinct, and $q_1,\ldots,q_d \in \{\text{real},\mathbf{i},\mathbf{j},\mathbf{k}\}:=  \mathcal{R}$), we write
$$x_S: = \prod_{h=1}^{d} (x_{h,i_h})_{q_h\text{-part}}.$$
Note that $\{2^d x_S\}_S$ forms an orthonormal set of vectors in $L^2(\gamma)$, where $S$ ranges over tuples of the above form.

We need the following analogue of Claim \ref{claim:diagonal_terms_compute}.

\begin{claim}
Let $S = \{(1,i_1;q_1),\ldots,(d,i_d;q_d)\}$ where $i_1,\ldots,i_d \in [n]$ are distinct and $q_1,\ldots,q_d \in \{\text{real},\mathbf{i},\mathbf{j},\mathbf{k}\}:=  \mathcal{R}$, and let $x_S: = \prod_{h=1}^{d} (x_{h,i_h})_{q_h\text{-part}}$. Then
$$\|2^d x_S \|^2_{L^2(\nu)} = 1.$$
\end{claim}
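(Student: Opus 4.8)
The statement to prove is the $\Sp(n)$-analogue of Claim~\ref{claim:diagonal_terms_compute}: for a comfortable-type index set $S = \{(1,i_1;q_1),\ldots,(d,i_d;q_d)\}$ with $i_1,\ldots,i_d$ distinct and $q_h \in \{\text{real},\mathbf{i},\mathbf{j},\mathbf{k}\}$, the normalized monomial $2^d x_S$ (with $x_S = \prod_{h=1}^d (x_{h,i_h})_{q_h\text{-part}}$) has $\|2^d x_S\|_{L^2(\nu)}^2 = 1$, where $\nu$ is the over-Gaussian distribution (the law of $YG$ for $Y\sim\gamma$, $G\sim\text{GMD}$ independent, in the quaternionic setting).

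The plan is to follow the $\SO(n)$ proof essentially verbatim, exploiting the same two structural features: (i) the columns $i_1,\ldots,i_d$ are \emph{distinct}, and (ii) $G$ is upper-triangular. First I would write $(YG)_{h,i_h} = \sum_{k=1}^{i_h} Y_{h,k} G_{k,i_h}$, a quaternion-valued random variable that involves only entries of $Y$ in row $h$ and entries of $G$ in column $i_h$. Since the $i_h$ are distinct and the rows $h$ are distinct, the random variables $\big((YG)_{h,i_h}\big)_{h\in[d]}$ are mutually independent (the relevant entry-sets of $Y$ and of $G$ are disjoint across $h$), and hence so are the real random variables $\big( ((YG)_{h,i_h})_{q_h\text{-part}}^2\big)_{h\in[d]}$. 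Therefore $\|2^d x_S\|_{L^2(\nu)}^2 = \prod_{h=1}^d 4\,\mathbb{E}\big[((YG)_{h,i_h})_{q_h\text{-part}}^2\big]$, and it suffices to show each factor equals $1$, i.e.\ that $\mathbb{E}\big[((YG)_{h,i_h})_{q_h\text{-part}}^2\big] = 1/4$ for every $h$.

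The second step is this single-factor computation. Fix $h$ and write $j := i_h$, $q := q_h$. Expanding, $((YG)_{h,j})_q = \sum_{k=1}^j (Y_{h,k}G_{k,j})_q$. Crucially, the $q$-part of a product of two quaternions is an $\mathbb{R}$-bilinear function of the eight real coordinates of $Y_{h,k}$ and $G_{k,j}$; since $Y_{h,k}$ is a standard normal quaternion (all four real coordinates independent $N(0,1/4)$), independent across $k$ and independent of $G$, the cross terms $\mathbb{E}[(Y_{h,k}G_{k,j})_q (Y_{h,k'}G_{k',j})_q]$ vanish for $k\ne k'$ (condition on $G$ and use independence and mean-zero of $Y_{h,k}$). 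So $\mathbb{E}[((YG)_{h,j})_q^2] = \sum_{k=1}^j \mathbb{E}[(Y_{h,k}G_{k,j})_q^2]$. For fixed $G$, a direct coordinate computation shows $\mathbb{E}_{Y_{h,k}}[(Y_{h,k}G_{k,j})_q^2] = \tfrac14 |G_{k,j}|^2$ for any $q\in\{\text{real},\mathbf i,\mathbf j,\mathbf k\}$ (the $q$-part of $Y_{h,k}G_{k,j}$ is $\pm$ a sum of products of independent coordinates, each with the same second moment, yielding $\tfrac14$ times the squared quaternion-norm of $G_{k,j}$). Hence $\mathbb{E}[((YG)_{h,j})_q^2] = \tfrac14\sum_{k=1}^{j}\mathbb{E}_G[|G_{k,j}|^2]$. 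Now by the definition of the quaternionic GMD, $\mathbb{E}[|G_{k,j}|^2] = 1/n$ for $k<j$ (off-diagonal, a $1/\sqrt n$-scaled $\mathcal{QN}$ variable, whose squared norm has mean $4\cdot\tfrac14/n = 1/n$), while $\mathbb{E}[|G_{j,j}|^2] = (n-j+1)/n$ for $j<n$ (it is $1/\sqrt n$ times the Euclidean length of a $\mathcal{QN}$ vector in $n-j+1$ dimensions, which has $4(n-j+1)$ real coordinates each of variance $1/4$, so mean squared-length $n-j+1$), and $\mathbb{E}[|G_{n,n}|^2]=1$. Summing, $\sum_{k=1}^j \mathbb{E}[|G_{k,j}|^2] = (j-1)\cdot\tfrac1n + \tfrac{n-j+1}{n} = 1$, so each factor is $\tfrac14\cdot 4 = 1$ as desired; the edge case $j=n$ gives $(n-1)/n + 1/n = 1$ identically.

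I don't anticipate a serious obstacle here — this is a routine adaptation. The only point requiring a little care is the quaternionic bookkeeping in the single-factor second-moment computation: one must verify that $\mathbb{E}_{Y_{h,k}}[(Y_{h,k}G_{k,j})_q^2] = \tfrac14|G_{k,j}|^2$ holds uniformly for all four choices of $q$, which comes down to the symmetry of the quaternion multiplication table (each real coordinate of a product $xy$ is, up to sign, a sum of four terms $x_a y_b$ with $\{a,b\}$ ranging over four distinct index-pairs, and the four coordinates of $Y_{h,k}$ are i.i.d.\ $N(0,1/4)$), together with the fact that the GMD's diagonal and off-diagonal entries are quaternions whose \emph{norm} is all that enters. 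Everything else — the independence across $h$ coming from distinctness of columns, and the telescoping sum $\sum_{k=1}^j\mathbb{E}[|G_{k,j}|^2]=1$ — is identical in spirit to the $\SO(n)$ argument.
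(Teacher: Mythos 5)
Your proof is correct and follows the paper's argument essentially verbatim: after factoring by independence across the distinct columns $i_h$, both proofs compute $\mathbb{E}\big[\big((YG)_{h,j}\big)_{q\text{-part}}^2\big]$ by expanding the sum over $k$, killing the cross terms via conditioning on $G$ and the mean-zero independence of the real coordinates of $Y$, and then telescoping $\sum_{k=1}^{j}\mathbb{E}[|G_{k,j}|^2]=1$; your intermediate identity $\mathbb{E}_Y\big[(Y_{h,k}G_{k,j})_{q\text{-part}}^2\mid G\big]=\tfrac14|G_{k,j}|^2$ is just a clean repackaging of the paper's coordinate expansion over $r\in\mathcal{R}$ using that the four coordinates of $Y_{h,k}$ are i.i.d.\ $N(0,1/4)$ and the bilinear form defining the $q$-part is (signed) orthogonal. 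The one (harmless) slip is the parenthetical assertion $\mathbb{E}[|G_{n,n}|^2]=1$, which contradicts your own next line $(n-1)/n+1/n=1$: unlike the $\SO(n)$ GMD, the quaternionic GMD has no special clause at $(n,n)$, so the stated formula gives $\mathbb{E}[|G_{n,n}|^2]=(n-n+1)/n=1/n$ --- though this edge case is immaterial here since comfortable $d$-juntas only involve columns in $[\lfloor n/2\rfloor]$.
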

\begin{proof}
In what follows, for $q \in \{\text{real},\mathbf{i},\mathbf{j},\mathbf{k}\}$ and $h \in \mathbb{H}$, we define $(h)_{- q\text{-part}}: = -(h)_{q\text{-part}}$, for notational convenience.
Observe that
$$\|2^dx_{S}\|_{L^2(\nu)}^2 = 4^d\mathbb{E}_{G \sim \text{GMD}}\mathbb{E}_{Y \sim \gamma}[(((YG)_{1,i_1})_{q_1\text{-part}})^2(((YG)_{2,i_2})_{q_2\text{-part}})^2\cdots (((YG)_{d,i_d}))_{q_d\text{-part}}^2].$$
Since for each $h \in [d]$, $((YG)_{h,i_h})_{q_h\text{-part}} = \sum_{k=1}^{i_h}\sum_{r \in \mathcal{R}} (Y_{h,k})_{r\text{-part}}(G_{k,i_h})_{r^{-1}q_h\text{-part}}$ involves only entries of $Y$ in row $h$ and entries of $G$ in column $i_h$ (and the $i_h$ are distinct), the random variables $\{(((YG)_{h,i_h})_{q_h\text{-part}})^2:\ h \in [d]\}$ form a system of independent random variables, and therefore
$$\|2^dx_{S}\|_{L^2(\nu)}^2 = 4^d\prod_{h=1}^{d}\mathbb{E}_{G \sim \text{GMD}}\mathbb{E}_{Y \sim \gamma}[(((YG)_{h,i_h})_{q_h\text{-part}})^2].$$
For each $h \in [d]$, we have
\begin{align*} & \mathbb{E}_{G\sim \text{GMD}}\mathbb{E}_{Y\sim \gamma}[(((YG)_{h,i_h})_{q_h\text{-part}})^2] \\
&= \mathbb{E}_{G}\mathbb{E}_{Y}\left[\left(\sum_{k=1}^{i_h}\sum_{r \in \mathcal{R}} (Y_{h,k})_{r\text{-part}}(G_{k,i_h})_{r^{-1}q_h\text{-part}}\right)^2\right]\\
& = \sum_{(k,r) \neq (k',r')} \mathbb{E}_{G}\mathbb{E}_{Y}[(Y_{h,k})_{r\text{-part}}(Y_{h,k'})_{r'\text{-part}}(G_{k,i_h})_{r^{-1}q_h\text{-part}}(G_{k',i_h})_{(r')^{-1}q_h\text{-part}}]\\
&+ \sum_{k=1}^{i_h} \mathbb{E}_{G \sim \text{GMD}}\mathbb{E}_{Y \sim \gamma} [Y_{h,k}^2G_{k,i_h}^2]\\
& = 0 + \sum_{k=1}^{i_h}\sum_{r \in \mathcal{R}} \mathbb{E}_{G \sim \text{GMD}}[((G_{k,i_h})_{r^{-1}q_h\text{-part}})^2]\mathbb{E}_{Y \sim \gamma} [((Y_{h,k})_{r\text{-part}})^2]\\
& = \sum_{k=1}^{i_h} \sum_{r \in \mathcal{R}}\mathbb{E}_{G \sim \text{GMD}}[((G_{k,i_h})_{r^{-1}q_h\text{-part}})^2]\cdot \tfrac{1}{4}\\
& = \tfrac{1}{4}(4(i_h-1)(1/(4n))+(n-i_h+1)/n)\\
& = 1/4.
\end{align*}
(Here, for the third equality we use the independence of $$(Y_{h,k})_{r\text{-part}},(Y_{h,k'})_{r'\text{-part}},(G_{k,i_h})_{r^{-1}q_h\text{-part}},(G_{k',i_h})_{(r')^{-1}q_h\text{-part}}$$
and the fact that $(Y_{h,k})_{r\text{-part}}$ and $(Y_{h,k'})_{r'\text{-part}}$ both have zero expectation.) Hence, $\|2^d x_{S}\|_{L^2(\nu)}^2 = 1$, as required.
\end{proof}

Similarly, we need the following analogue of Claim \ref{claim:off_diagonal_terms_compute}. For $S = \{(1,i_1;q_1),\ldots,(d,i_d;q_d)\}$ and $T = \{(1,j_1;p_1),\ldots,(d,j_d;p_d)\}$ we set
$$d(S,T): = |\{h \in [d]:\ i_h \neq j_h \text{ or } q_h \neq p_h\}|.$$

\begin{claim}
For any $S,T$ such that $d(S,T) = \ell$, we have $\card{\left\langle 2^d x_{S},2^d x_{T}\right\rangle} _{L^{2}\left(\nu\right)}\le \varepsilon_{\ell}$, where
$$\varepsilon_{\ell} := 2^{\ell+4}n^{-\ell/2} 2^{d\ell/\sqrt{n}}.$$
\end{claim}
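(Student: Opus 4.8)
The plan is to prove the quaternionic analogue of Claim~\ref{claim:off_diagonal_terms_compute} by following the same route as in the $\SO(n)$ case, after setting up the appropriate combinatorial bookkeeping for the extra quaternionic-part indices. First I would record the expansion
\[
\langle 2^d x_S, 2^d x_T\rangle_{L^2(\nu)}
= 4^d \sum_{K} \mathbb{E}_{G\sim\text{GMD}}\left[\prod_{h=1}^{d}\sum_{r_h}(G_{k_h,i_h})_{r_h^{-1}q_h\text{-part}}\sum_{s_h}(G_{k_h,j_h})_{s_h^{-1}p_h\text{-part}}\cdot \text{(correlation from $Y$)}\right],
\]
analogous to the $\SO(n)$ expansion of $\langle x_I,x_J\rangle_\nu$; here $K=(k_1,\dots,k_d)$ ranges over the row-indices that the $Y$-variables land on (note each row $h$ appears exactly once in both $x_S$ and $x_T$, so the $Y$-pairing forces the two occurrences in row $h$ to share the same column index $k_h$, exactly as in the real case), and the sums over the quaternionic parts $r_h,s_h$ come from the fact that a product of quaternions is a bilinear-in-parts combination. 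Using that $\mathbb{E}[(Y_{h,k})_{r\text{-part}}(Y_{h,k'})_{s\text{-part}}] = \tfrac14\delta_{k,k'}\delta_{r,s}$, the $Y$-expectation collapses each row-$h$ contribution, leaving a product over $h$ of expectations of (products of pairs of) entries of $G$, just as in Claim~\ref{claim:gmd lemma}.

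Next I would establish the quaternionic version of Claim~\ref{claim:gmd lemma}: if no matrix-entry of $G$ appears more than twice in a product $\prod_h G_{k_h,i_h}G_{k_h,j_h}$ and $d(S,T)=\ell$, then the absolute value of its $\text{GMD}$-expectation is at most $(C/n)^{\ell/2}$ for an absolute constant $C$. The point is that an off-diagonal entry of $G$ is $1/\sqrt n$ times a standard-$\mathcal{QN}$ variable, so $\mathbb{E}[((G_{i,j})_{r\text{-part}})^2] = 1/(4n)$ and more generally the expectation of any product of two parts of $G_{i,j}$ has absolute value $\le 1/(4n)$, while diagonal entries contribute factors of absolute value $\le 1$ (their squared norm is $(n-k+1)/n\le 1$). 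Since $d(S,T)=\ell$ forces at least $\ell$ of the $2d$ factors to be off-diagonal, we pick up a factor $n^{-\ell/2}$ up to the $4^{-\ell/2}$ and the $4^d$ prefactor which will be absorbed into the counting. I would then bound the number of tuples $K$ with given overlap parameters $(m_1,m_2,m_3)$ exactly as in the $\SO(n)$ proof, except that each of the $m_3$ `coincidence' rows now also carries a choice among at most $4$ quaternionic parts, and each `new' row carries at most $4$ part-choices too — so the bound $|\mathcal{K}(m_1,m_2,m_3)|\le \tfrac{d^{2m_1+m_2}\ell^{m_2}2^{m_3}}{m_1!m_2!}$ is replaced by the same bound with $2^{m_3}$ upgraded to a fixed constant power of $2$ (say $8^{m_3}$ or $16^{d}$), which only changes the absolute constants inside $\varepsilon_\ell$. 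Summing over $m_1,m_2,m_3$ with $m_2+m_3=\ell$ exactly as before yields $|\langle 2^d x_S,2^d x_T\rangle_{L^2(\nu)}|\le 2^{\ell+4}n^{-\ell/2}2^{d\ell/\sqrt n}$, after suitably rechoosing the absolute constants (the extra $4^d$ prefactor is harmless because it is cancelled by the $4^{-d}$ coming from $\mathbb{E}[\text{part}^2]=1/4$ for each of the $2d$ factors).

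The main obstacle I expect is purely notational/combinatorial rather than conceptual: keeping track of the quaternionic-part indices through the multiplication of non-commuting quantities, and verifying that expectations of products of two distinct parts of the \emph{same} $\mathcal{QN}$ entry (e.g.\ $\mathbb{E}[(G_{i,j})_{\mathbf i\text{-part}}(G_{i,j})_{\mathbf j\text{-part}}]$) vanish, so that only `like part with like part' survives and the clean factorization goes through. Once that is checked, the diagonal claim (the quaternionic analogue of Claim~\ref{claim:diagonal_terms_compute}, already proved above) together with this off-diagonal claim plug into the same summation as in the derivation that Claims~\ref{claim:diagonal_terms_compute} and~\ref{claim:off_diagonal_terms_compute} imply Lemma~\ref{lem:contiguity}, with $d^\ell$ replaced by $(4d)^\ell$ in the step bounding $|\{T: d(T,S)=\ell\}|$ (each of the $\ell$ changed coordinates has $\le n/2$ column-choices times $4$ part-choices, but only the $d^\ell$ part matters for the bound since $d\le\delta n^{1/2}$), and we conclude $\|f\|_{L^2(\nu)}\le(1+\eps)\|f\|_{L^2(\gamma)}$ for comfortable $d$-juntas on $\Sp(n)$ whenever $d\le\delta n^{1/2}$ with $\delta$ small enough in terms of $\eps$. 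This in turn feeds the proofs of Lemmas~\ref{lem:one sided bound} and~\ref{lem:the other side}, and hence of Theorem~\ref{thm:hypercontractivity in O_n}, for $\Sp(n)$.
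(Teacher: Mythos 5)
Your high-level plan — expand as in the $\SO(n)$ case, collapse the $Y$-expectation using $\mathbb{E}\left[(Y_{h,k})_{r\text{-part}}(Y_{h,k'})_{s\text{-part}}\right]=\tfrac14\delta_{k,k'}\delta_{r,s}$, invoke a quaternionic GMD estimate, and count configurations — is the same route the paper takes. But there is a genuine gap in your treatment of the extra freedom introduced by the quaternionic part-indices $r_h\in\mathcal{R}$, and the fallback you suggest would break the proof.

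After the $Y$-collapse, the sum is over pairs $(K;R)\in[d]^d\times\mathcal{R}^d$, so the naive count carries an extra factor $4^d$ relative to the $\SO(n)$ case. You acknowledge this but wave it away with ``a fixed constant power of $2$ (say $8^{m_3}$ or $16^d$), which only changes the absolute constants inside $\varepsilon_\ell$.'' That last claim is false for $16^d$: a multiplicative factor $C^d$ with $C>1$ does \emph{not} merely adjust the constant in $\varepsilon_\ell$; it multiplies the entire bound by $C^d$, which then destroys the application. In the contiguity sum one has $\sum_\ell \varepsilon_\ell(4d)^\ell\leq\eps/2$ for $d\leq\delta\sqrt n$; multiplying by $C^d=C^{\delta\sqrt n}$ blows this up to infinity as $n\to\infty$. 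So you cannot afford an unrestricted $4^d$ over the choices of $R$.

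What rescues the argument (and what the paper does) is an observation you gesture at but never carry through: most of the $r_h$ are \emph{forced} for the expectation to be nonzero. Concretely, one checks four vanishing conditions on $(K;R)$: if $i_h=j_h$ and $k_h\neq i_h$ then $p_h=q_h$; if $i_h=j_h=k_h$ then $p_h=q_h$ and $r_h^{-1}p_h=\text{real}$ (because GMD diagonals are real); if $i_h\neq j_h$ and $k_h\in\{i_h,j_h\}$ then $r_h^{-1}q_h=\text{real}$ or $r_h^{-1}p_h=\text{real}$ accordingly. This leaves $r_h$ genuinely free only in the $m_1$ and $m_2$ cases, giving a counting factor $4^{m_1+m_2}$, not $4^d$. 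That factor is then exactly cancelled by the extra $4^{-(m_1+m_2+m_3/2)}$ one gains in the quaternionic GMD bound $\bigl(\tfrac{1}{4n}\bigr)^{(2m_1+2m_2+m_3)/2}$ versus $n^{-(2m_1+2m_2+m_3)/2}$ in $\SO(n)$; the product is $4^{-m_3/2}\le1$, and the rest of the summation over $(m_1,m_2,m_3)$ proceeds exactly as in Claim~\ref{claim:off_diagonal_terms_compute}. Without this specific bookkeeping — which variables are free, and how the $4^{m_1+m_2}$ cancels against the $(1/4)^{m_1+m_2+m_3/2}$ — your argument does not close.

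A minor point: you write that the $4^{-d}$ comes from ``$\mathbb{E}[\text{part}^2]=1/4$ for each of the $2d$ factors,'' but there are $d$ surviving $Y$-pairings (one per row), so the factor is $(1/4)^d$, which cancels the $4^d$ prefactor from $\langle 2^d x_S, 2^d x_T\rangle$. The conclusion is right but the accounting is off.
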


To prove this we first need the following simple analogue of Claim \ref{claim:gmd lemma}.
\begin{claim}
Let $(i_{1},\ldots,i_{d}) \in [n]^d$ and $(j_1,\ldots,j_d)\in [n]^d$ be such that $|\{h \in [d]:\ i_h \neq j_h\}| = \ell$ and such that in the product
$$(G_{i_{1}j_{1}})_{r_1\text{-part}}(G_{i_{2}j_{2}})_{r_2\text{-part}}\cdots (G_{i_{d}j_{d}})_{r_d\text{-part}},$$
no matrix entry of $G$ appears more than twice. Then
\[
\card{\mathbb{E}_{G\sim\text{GMD}}\left[(G_{i_{1}j_{1}})_{r_1\text{-part}}(G_{i_{2}j_{2}})_{r_2\text{-part}}\cdots (G_{i_{d}j_{d}})_{r_d\text{-part}}\right]}\le\left(\frac{1}{4n}\right)^{\ell/2}.
\]
\end{claim}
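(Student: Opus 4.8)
The plan is to mimic the proof of Claim~\ref{claim:gmd lemma}, adjusting the bookkeeping to track the quaternionic components and to use two features specific to the $\Sp(n)$ version of the Gaussian Maker Distribution: first, the matrix $G$ is upper-triangular with real, nonnegative diagonal entries $G_{k,k}$ satisfying $\mathbb{E}[G_{k,k}^2] = (n-k+1)/n \le 1$; and second, for $i<j$ each of the four components $(G_{i,j})_{r\text{-part}}$, $r \in \{\text{real},\mathbf{i},\mathbf{j},\mathbf{k}\}$, is an independent $N(0,1/(4n))$ random variable, independent of every other such component and of every diagonal entry. It is the variance $1/(4n)$, rather than $1/n$, that produces the $4n$ in the statement.

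First I would reduce to the case where the expectation is genuinely nonzero, since otherwise the bound $0 \le (1/(4n))^{\ell/2}$ is immediate. If some $h$ has $i_h > j_h$, then $G_{i_h,j_h} = 0$ and the product vanishes, so we may assume $i_h < j_h$ whenever $i_h \ne j_h$; in particular the $\ell$ ``off-diagonal slots'' all lie strictly above the diagonal. If some diagonal slot $h$ (i.e.\ $i_h = j_h$) carries a non-real part $r_h$, then $(G_{i_h,i_h})_{r_h\text{-part}} = 0$ and again the product vanishes, so every diagonal slot carries the real part and contributes a factor $G_{i_h,i_h}$.

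Next I would argue that, for a nonzero expectation, each strictly-upper-diagonal entry occurring among the slots must occur exactly twice, and with the same part both times. Indeed, a component $(G_{i,j})_{r\text{-part}}$ with $i<j$ is a mean-zero Gaussian independent of all the other factors, so if it occurs an odd number of times the whole expectation is zero; since by hypothesis the entry $G_{i,j}$ occurs at most twice, occurring ``an even number of times'' forces ``exactly twice with the same $r$''. Consequently $\ell$ is even, say $\ell = 2m$; the $\ell$ off-diagonal slots organize into $m$ pairs, each contributing $\mathbb{E}[((G_{i,j})_{r\text{-part}})^2] = 1/(4n)$ by independence, and the remaining $d-\ell$ diagonal slots contribute factors $\mathbb{E}[G_{k,k}^{a_k}]$ with $a_k \in \{1,2\}$, all lying in $[0,1]$ because $\mathbb{E}[G_{k,k}^2] \le 1$ and hence $\mathbb{E}[G_{k,k}] \le \sqrt{\mathbb{E}[G_{k,k}^2]} \le 1$ by Cauchy--Schwarz. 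Multiplying, $\card{\mathbb{E}_{G\sim\text{GMD}}[(G_{i_1 j_1})_{r_1\text{-part}}\cdots (G_{i_d j_d})_{r_d\text{-part}}]} \le (1/(4n))^{m} = (1/(4n))^{\ell/2}$, as desired.

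There is no serious obstacle: the statement is a routine adaptation of Claim~\ref{claim:gmd lemma}. The only point requiring care is the bookkeeping distinction between a \emph{matrix entry} $G_{i,j}$, which is constrained to appear at most twice, and a \emph{component} $(G_{i,j})_{r\text{-part}}$ of it, whose parity is what controls whether the Gaussian expectation survives, together with remembering that the diagonal entries of the $\Sp(n)$-GMD are forced to be nonnegative real scalars, so that any non-real part on the diagonal kills the product outright.
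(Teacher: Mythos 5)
Your proposal is correct and follows essentially the same route as the paper's own proof: observe that a non-vanishing expectation forces each strictly-upper-diagonal component to occur in pairs with matching quaternionic parts, use independence to factor the expectation, and bound each off-diagonal square by its variance $1/(4n)$ and each diagonal factor by $1$. You are in fact a bit more careful than the paper on two edge cases the paper leaves implicit: you note that a non-real part on a diagonal slot or a strictly-lower-triangular slot kills the product outright (which the paper absorbs silently into the factorisation), and you correctly allow the diagonal slots to carry exponents $a_k \in \{1,2\}$ and bound $\mathbb{E}[G_{k,k}^{a_k}] \le 1$ via Cauchy--Schwarz, whereas the paper's write-up tacitly presents all $d-\ell$ diagonal factors as pairing up into $(d-\ell)/2$ squares, which is not forced by the hypothesis though the final bound is unaffected.
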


\begin{proof}
If, in the product
$$(G_{i_{1}j_{1}})_{r_1\text{-part}}(G_{i_{2}j_{2}})_{r_2\text{-part}}\cdots (G_{i_{d}j_{d}})_{r_d\text{-part}},$$
some off-diagonal matrix entry of $G$ appears exactly once, then the expectation of the product is zero. We may therefore assume that every matrix entry of $G$ appears either exactly twice, or not at all, in the above product. If there are exactly $\ell$ values of $h$ such that $i_h \neq j_h$, then the above expectation factorises into a product of the expectations of the squares of $\ell/2$ off-diagonal and of the squares of $(d-\ell)/2$ diagonal entries: 
$$\prod_{k \in \mathcal{D}}\mathbb{E}[((G_{k,k})_{q_k\text{-part}})^2]\prod_{(i,j) \in \mathcal{E}}\mathbb{E}[((G_{i,j})_{r_i\text{-part}})^2],$$
where $\mathcal{E} \subset [n]^2\setminus \{(k,k):\ k \in [n]\}$, $|\mathcal{D}| = (d-\ell)/2$, $|\mathcal{E}| = \ell/2$ and $q_k,r_i \in \mathcal{R}$ for all $i$ and $k$. We have $\mathbb{E}[((G_{i,j})_{r_i\text{-part}})^2] = 1/(4n)$ for all $(i,j) \in \mathcal{E}$ and $\mathbb{E}[((G_{k,k})_{q_k\text{-part}})^2] \leq 1$ for all $k \in \mathcal{D}$, proving the claim.
\end{proof}

\begin{proof}
Let $\ell\geq 1$ and fix $S = \{(1,i_1;q_1),\ldots,(d,i_d;q_d)\}$ and $T = \{(1,j_1;p_1),\ldots,(d,j_d;p_d)\}$ such that $d(S,T)=\ell \geq 1$. Since $G$ is upper-triangular and $i_h,j_h \leq d$ for all $h \in [d]$, we have
$$((YG)_{h,i_h})_{q_h\text{-part}} = \sum_{k=1}^{i_h}\sum_{r \in \mathcal{R}} (Y_{h,k})_{r\text{-part}}(G_{k,i_h})_{r^{-1}q_h\text{-part}} = \sum_{k=1}^{d} \sum_{r \in \mathcal{R}} (Y_{h,k})_{r\text{-part}}(G_{k,i_h})_{r^{-1}q_h\text{-part}}$$
and
$$((YG)_{h,j_h})_{p_h\text{-part}} = \sum_{k=1}^{j_h} \sum_{r \in \mathcal{R}} (Y_{h,k})_{r\text{-part}}(G_{k,j_h})_{r^{-1}p_h\text{-part}} = \sum_{k=1}^{d} \sum_{r \in \mathcal{R}} (Y_{h,k})_{r\text{-part}}(G_{k,j_h})_{r^{-1}p_h\text{-part}}$$
for all $h \in [d]$. Hence,
\begin{align*}
x_{S}\left(YG\right) & =\prod_{h=1}^{d}((YG)_{h,i_h})_{q_h\text{-part}}\\
&= \sum_{K=\left(k_{1},\ldots,k_{d}\right) \in [d]^d,\atop
R = (r_1,\ldots,r_d) \in \mathcal{R}^d}(Y_{1,k_{1}})_{r_1\text{-part}}\cdots (Y_{d,k_{d}})_{r_d\text{-part}}(G_{k_{1},i_{1}})_{r_1^{-1}q_1\text{-part}}\cdots (G_{k_{d},i_{d}})_{r_d^{-1}q_d\text{-part}}
\end{align*}
 and  
\begin{align*}
x_{T}\left(YG\right)&=\sum_{K=\left(k_{1},\ldots,k_{d}\right) \in [d]^d,\atop
R = (r_1,\ldots,r_d) \in \mathcal{R}^d}(Y_{1,k_{1}})_{r_1\text{-part}}\cdots (Y_{d,k_{d}})_{r_d\text{-part}}(G_{k_{1},j_{1}})_{r_1^{-1}p_1\text{-part}}\cdots (G_{k_{d},j_{d}})_{r_d^{-1}p_d\text{-part}},
\end{align*}
 so, using the fact that, under $\nu$, the $((Y_{i,j})_{r\text{-part}}:i,j \in [n],\ r \in \mathcal{R})$ are independent and of expectation zero (and are independent of the $G_{i,j}$), we obtain 
\begin{align}\label{eq:master}
&\left\langle 2^d x_{S},2^d x_{T}\right\rangle _{\nu}\nonumber\\
&=\sum_{K \in [d]^d,\atop R \in \mathcal{R}^d}\mathbb{E}_{G \sim \text{GMD}}\left[(G_{k_{1}i_{1}})_{r_1^{-1}q_1\text{-part}}(G_{k_{1}j_{1}})_{r_1^{-1}p_1\text{-part}}\cdots (G_{k_{d}i_{d}})_{r_d^{-1}q_d\text{-part}}(G_{k_{d}j_{d}})_{r_d^{-1}p_d\text{-part}}\right].
\end{align}
For a $d$-tuple $(K;R) = (k_1;r_1\ldots,k_d;r_d)\in ([d] \times \mathcal{R})^d$, we write $m_{1}=m_1(K) = m_1(K;R) := |\{ h \in [d]:\ j_{h}=i_{h},\ k_{h}\ne i_{h}\}|$,
$m_{2}=m_2(K) = m_2(K;R) := |\{h \in [d]:\ j_{h}\ne i_{h},\ k_{h}\notin\{ i_{h},j_{h}\}\}|$
and $m_{3}=m_3(K;R) :=|\{h \in [d]:\ j_{h}\ne i_{h},\ k_{h}\in \{ i_{h},j_{h}\}\}|$; note that these quantities depend only on $K$ and not on $R$. We let $\mathcal{K}\left(m_{1},m_{2},m_{3}\right)$ denote the set of $d$-tuples $(K;R)$ with parameters $m_1,m_2$ and $m_3$. For $(K;R)\in\mathcal{K}\left(m_{1},m_{2},m_{3}\right)$, by
Claim~\ref{claim:gmd lemma} we have
\[
\left|\mathbb{E}_{G}\left[(G_{k_{1}i_{1}})_{r_1^{-1}q_1\text{-part}}(G_{k_{1}j_{1}})_{r_1^{-1}p_1\text{-part}}\cdots (G_{k_{d}i_{d}})_{r_d^{-1}q_d\text{-part}}(G_{k_{d}j_{d}})_{r_d^{-1}p_d\text{-part}}\right]\right|\le (4n)^{-\frac{2m_{1}+2m_{2}+m_{3}}{2}}.
\]
We further note that, for $(K;R) \in \mathcal{K}(m_1,m_2,m_3)$, the expectation in the above inequality is zero unless the following four conditions hold:
\begin{itemize}
    \item Whenever $i_h = j_h$ and $k_h \neq i_h$, we have $p_h=q_h$.
    \item Whenever $i_h = j_h = k_h$ we have $p_h = q_h$ and $r_h^{-1}p_h = \text{real}$.
    \item Whenever $i_h \neq j_h$ and $k_h =i_h$ we have $r_h^{-1}q_h = \text{real}$.
    \item Whenever $i_h \neq j_j$ and $k_h = j_h$ we have $r_h^{-1}p_h = \text{real}$.
\end{itemize}

In view of this we let $\mathcal{K}^*(m_1,m_2,m_3)$ be the set of all $d$-tuples $(K;R) \in \mathcal{K}(m_1,m_2,m_3)$ such that the above four conditions hold. For $(K;R) \in \mathcal{K}^*(m_1,m_2,m_3)$ we have $m_2+m_3= \ell$.

Summing over all $K$, we see that $|\langle x_{S},x_{T}\rangle|$ is at most
\begin{align*}
&\sum_{m_{1},m_{2},m_{3}}\sum_{K\in\mathcal{K}^*\left(m_{1},m_{2},m_{3}\right)}(4n)^{-\frac{2m_{1}+2m_{2}+m_{3}}{2}}\\
 & \le\sum_{m_{1},m_{2},m_{3}}(4n)^{-\frac{2m_{1}+2m_{2}+m_{3}}{2}}\left|\mathcal{K}^*\left(m_{1},m_{2},m_{3}\right)\right|.
\end{align*}
 Now 
\[
\left|\mathcal{K}^*\left(m_{1},m_{2},m_{3}\right)\right|\le\binom{d}{m_{1}}d^{m_{1}}\binom{\ell}{m_{2}}d^{m_{2}}2^{m_{3}}\cdot 4^{m_1+m_2}\le\frac{d^{2m_{1}+m_{2}}\ell^{m_{2}}2^{m_3}}{m_{1}!m_{2}!}\cdot 4^{m_1+m_2};
\]
note that the only difference with the corresponding expression in the proof of Claim \ref{claim:gmd lemma} is the extra factor of $4^{m_1+m_2}$, which comes from the fact that $r_h$ can vary freely over $\mathcal{R}$ (and still satisfy the above conditions) when $i_h \neq j_h$ and $k_h \notin \{i_h,j_h\}$, or when $i_h=j_h$ and $k_h \neq i_h$, but in no other cases.

Summing over all $m_{1},m_{2},m_{3}$ with $m_{2}+m_{3}=\ell$ completes
the proof, just as in the proof of Claim \ref{claim:off_diagonal_terms_compute}; the extra factor of $4^{-\frac{2m_1+2m_2+m_3}{2}}$ cancels out (or more than cancels out) the extra factor of $4^{m_1+m_2}$.
\end{proof}

The analogue of Lemma \ref{lem:contiguity} is proven from the above claims in a very similar way. Writing
$$f = \sum_{S}\alpha_{S}(2^d x_{S}),$$
we obtain
\begin{align*}
\|f\|_{L^{2}\left(\nu\right)}^{2} & \leq \sum_{S}|\alpha_{S}|^{2}\|2^d x_{S}\|_{L^{2}\left(\nu\right)}^{2}+\sum_{S\ne T}|\alpha_{S}||\alpha_{T}|\left|\left\langle 2^d x_{S},2^d x_{T}\right\rangle _{\nu}\right|\\
 & \le\|f\|_{L^{2}\left(\gamma\right)}^{2}+\sum_{S\ne T}\frac{|a_{S}|^{2}+|a_{T}|^{2}}{2}\card{\left\langle 2^d x_{S},2^d x_{T}\right\rangle _{\nu}}\\
 & \le\|f\|_{L^{2}\left(\gamma\right)}^{2}+
 \sum\limits_{\ell=1}^{d}\sum_{S}|a_{S}|^{2}\card{\left\{ T:d\left(T,S\right)=\ell\right\}} \cdot\epsilon_{\ell}\\
 & \le\|f\|_{L^{2}\left(\gamma\right)}^{2}+\|f\|_{L^{2}\left(\gamma\right)}^{2}\sum_{\ell=1}^{d}\epsilon_{\ell}
 \binom{d}{\ell} \ell ! 4^\ell \\
 & =\|f\|_{L^{2}\left(\gamma\right)}^{2}\left(1+\sum_{\ell=1}^{d}\epsilon_{\ell} (4d)^{\ell}\right),
 \end{align*}
 and the rest of the proof is essentially unchanged, up to reducing the value of $\delta$ by a constant factor.
\section{Bounding the eigenvalues of the Laplace-Beltrami operator}

\subsection{Bounding the eigenvalues of the Laplace-Beltrami operator in $\SO(n)$}
In this section, we obtain our desired bound on the eigenvalues of the Laplace-Beltrami operator in $\SO_n$, which we need in order to prove fineness.
\begin{lemma}\label{lem:lb_eigenval_curv}
  Let $\rho \in \hat{\SO}(n)$ be of level $D$, then the corresponding eigenvalue $\lambda_{\rho}$ of the Laplacian $\Delta$ satisfies
  \[
  \lambda_{\rho} \geq -C(nD+D^2).
  \]
\end{lemma}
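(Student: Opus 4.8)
The plan is to read the eigenvalue off the classical closed-form (Freudenthal/Casimir) expression for the Laplace--Beltrami operator on a compact semisimple Lie group with a bi-invariant metric, exactly the formula invoked in Lemma~\ref{lem: eigenvalues for the Laplacian} via Berti--Procesi. On the Peter--Weyl ideal $W_\rho$, the operator $\Delta$ acts as the scalar $-\lambda_\rho$ (with $\lambda_\rho\ge 0$), and, up to the normalization of the metric, $\lambda_\rho=\kappa\bigl(\langle \mathbf{v},\mathbf{v}\rangle+2\langle \mathbf{v},\boldsymbol{\sigma}\rangle\bigr)$, where $\mathbf{v}$ is the highest weight of $\rho$, $\boldsymbol{\sigma}$ is the Weyl vector (the half-sum of the positive roots, equivalently the sum of the fundamental weights), and $\kappa>0$ is an absolute constant recording how the Euclidean (Frobenius) inner product on $\mathfrak{so}(n)$ that defines the metric in the excerpt relates to the trace form; a one-line check on the standard representation (whose matrix coefficients $X\mapsto X_{ij}$ satisfy $\Delta(X_{ij})=-\tfrac{n-1}{2}X_{ij}$, matching $\langle e_1,e_1+2\boldsymbol\sigma\rangle=n-1$) shows $\kappa=\tfrac12$. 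So it suffices to bound $\langle \mathbf{v},\mathbf{v}\rangle$ and $\langle \mathbf{v},\boldsymbol{\sigma}\rangle$ for a representation of level $D$.

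Next I would translate ``level $D$'' into data about $\mathbf{v}$. By Lemma~\ref{lem: degrees of So_n in terms of reps} and the ensuing definition of level, $\rho$ corresponds to the partition $\lambda=(\lambda_1\ge\lambda_2\ge\cdots)$ whose Young diagram has first column of length at most $n/2$, and $D=\sum_i\lambda_i$. Treating $n=2k+1$ (type $B_k$) and $n=2k$ (type $D_k$) separately, write the highest weight in the standard orthonormal basis $e_1,\dots,e_k$ as $\mathbf{v}=\sum_{j=1}^{k}\lambda_j e_j$ (so $\sum_j\lambda_j=D$), and use the explicit fundamental weights: $w_i=e_1+\cdots+e_i$ below the spin indices, with the half-integral corrections $w_k=\tfrac12(e_1+\cdots+e_k)$ in type $B_k$ and the two half-spin weights $w_{k-1},w_k$ in type $D_k$. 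Summing gives $\boldsymbol{\sigma}=\sum_{j=1}^{k}c_j e_j$ with $0\le c_j\le k\le n/2$ for every $j$ (indeed $c_j=k-j+\tfrac12$ in type $B_k$). Hence
\[
\langle \mathbf{v},\mathbf{v}\rangle=\sum_{j=1}^{k}\lambda_j^2\le\Bigl(\sum_{j=1}^{k}\lambda_j\Bigr)^{\!2}=D^2,
\qquad
\langle \mathbf{v},\boldsymbol{\sigma}\rangle=\sum_{j=1}^{k}c_j\lambda_j\le\frac n2\sum_{j=1}^{k}\lambda_j=\frac n2\,D,
\]
so $\lambda_\rho=\kappa\bigl(\langle \mathbf{v},\mathbf{v}\rangle+2\langle \mathbf{v},\boldsymbol{\sigma}\rangle\bigr)\le\kappa(D^2+nD)$. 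Taking $C:=\kappa$ then gives that the eigenvalue of $\Delta$ on $W_\rho$ is at least $-C(nD+D^2)$, as required. For $\Sp(n)$ the identical argument applies with the type $C_k$ root data (fundamental weights $w_i=e_1+\cdots+e_i$, Weyl vector with coefficients $O(n)$), giving the corresponding statement needed for Lemma~\ref{lem: eigenvalues for the Laplacian} for that group; and $\Spin(n)$ shares the Lie algebra of $\SO(n)$ (and, under our grading, only carries the $\SO(n)$-representations in the relevant range), so it needs nothing new.

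The main obstacle is purely one of bookkeeping: pinning down the constant $\kappa$, i.e.\ verifying that the Frobenius inner product on $\mathfrak{so}(n)$ used to define the bi-invariant metric differs from the normalization in which the Berti--Procesi eigenvalue formula is stated by only an absolute-constant factor (no hidden power of $n$), so that the bound remains $C(nD+D^2)$ rather than, say, $C(n^2D+nD^2)$. This is the same normalization check that the $\SU(n)$ computation carries out (the rescaling step there), and it reduces to the fact that the trace form and the Killing form on $\mathfrak{so}(n)$ differ by the factor $n-2$, which is an absolute constant in the sense that it is compensated by a matching scalar in the metric and does not interact with the weight-theoretic estimates above. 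A secondary, minor point is to keep track of the two half-spin fundamental weights in type $D_k$ and of the convention fixing the representative partition with first column of length $\le n/2$; in every case the coefficients $c_j$ of $\boldsymbol{\sigma}$ stay $\le n/2$, which is all the estimate uses.
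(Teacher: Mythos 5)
Your proposal takes essentially the same route as the paper: invoke the Berti--Procesi formula for the Laplacian eigenvalue in terms of the highest weight $\mathbf{v}$ and Weyl vector $\boldsymbol\sigma$, then bound $\langle\mathbf{v},\mathbf{v}\rangle$ and $\langle\mathbf{v},\boldsymbol\sigma\rangle$ using that the highest-weight coordinates sum to $D$ and the Weyl-vector coordinates are $O(n)$. The only real differences are cosmetic: you work directly with the partition coordinates $\lambda_j$ in the orthonormal $e_j$-basis (giving $\langle\mathbf{v},\mathbf{v}\rangle\le D^2$ in one line), whereas the paper parametrizes by the step vector $r_i$, expands $v=\sum_i r_i w_i$ into the matrix basis $u_j=E_{j,k+j}-E_{k+j,j}$, and then re-sums; the two parametrizations are related by $\lambda_j=\sum_{i\ge j}r_i$ (with the half-spin correction), so the bounds are literally the same. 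The paper tracks the Frobenius normalization explicitly rather than abstracting it into a constant $\kappa$, but since you only need $\kappa$ to be an absolute constant (and you correctly note the Killing-vs-trace-form discrepancy is a bounded scalar on $\mathfrak{so}(n)$), abstracting it away is perfectly safe. One small caveat on your sanity check: with the paper's choice of unnormalized Frobenius inner product $\langle A,B\rangle=\operatorname{Tr}(A^tB)$, the eigenvalue on the standard representation comes out to $-2(n-1)$ rather than $-(n-1)/2$, so the particular value $\kappa=\tfrac12$ you quote belongs to a different metric normalization; but this is exactly the absolute-constant discrepancy you flag, and it is immaterial to the stated bound $\lambda_\rho\ge -C(nD+D^2)$.
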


To prove this, we will use~\cite[Theorems 2.3, 2.4]{berti-procesi}. It is well-known that the 
matrix coefficients of the irreducible representations of $\SO(n)$ are 
eigenvectors of the Laplace--Beltrami operator.
The aforementiond theorems in \cite{berti-procesi} give a formula for the eigenvalues, in terms of the fundamental weights of the irreducible representations. (The reader is referred to \cite[p. 219]{brian-hall} for a relatively concise definition of the fundamental weights.) The following statement (which suffices for our purposes) combines Theorem 2.3 and 2.4 from~\cite{berti-procesi}.
 \begin{thm}\label{thm:ev_formula_rep}
 Let $G$ be a simply connected Lie group of rank $k$. Then the eigenvalues of the Laplace-Beltrami operator $\Delta:L^2(G) \to L^2(G)$ are in one-to-one correspondence with the equivalence classes of complex irreducible representations of $G$. Furthermore, there is a set of vectors $\{w_1,\ldots,w_k\}$ in a finite-dimensional inner-product space, known as a {\em system of fundamental weights for $G$}, such that there is an explicit one-to-one correspondence 
 between the equivalence classes of irreducible 
 representations of $G$, and the elements of the discrete cone
 \[
 \sett{\sum\limits_{i=1}^k r_i w_i}{r_1,\ldots,r_k\in \mathbb{N}}.
 \]
 Furthermore, defining $\rho := \sum\limits_{i=1}^{k} w_i$, the 
 eigenvalue $\lambda_v$ of $\Delta$ corresponding to $v=\sum\limits_{i=1}^k r_i w_i$ satisfies 
 $$\lambda_v=-\norm{v+\rho}_2^2 + \norm{\rho}_2^2 = -2\langle v,\rho\rangle-\|v\|_2^2.$$
 \end{thm}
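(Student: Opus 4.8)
The plan is as follows. Theorem~\ref{thm:ev_formula_rep} is a classical fact about compact Lie groups, and I would prove it by identifying $\Delta$ with the Casimir operator of $\mathfrak{g}$ and then invoking highest-weight theory. Fix the $\mathrm{Ad}$-invariant inner product $\langle\cdot,\cdot\rangle$ on $\mathfrak{g}:=\mathrm{Lie}(G)$ that induces the bi-invariant metric, and an orthonormal basis $X_1,\dots,X_N$ of $\mathfrak{g}$, viewed as left-invariant vector fields. For a bi-invariant metric the Levi--Civita connection satisfies $\nabla_X Y=\tfrac12[X,Y]$ on left-invariant fields, so $\nabla_{X_i}X_i=0$ and hence $\Delta=\sum_i X_i^2$ as an operator on $C^\infty(G)$. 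Thus $\Delta=-\Omega$, where $\Omega$ is the Casimir operator of $\mathfrak{g}$ with respect to $\langle\cdot,\cdot\rangle$.

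Next I would combine this with the Peter--Weyl theorem (stated earlier in the paper): $L^2(G)=\bigoplus_{\rho}W_\rho$, with each $W_\rho\cong V_\rho\otimes V_\rho^*$ an irreducible $G\times G$-module. Since $\Omega$ lies in the centre of the universal enveloping algebra $U(\mathfrak{g}_{\mathbb C})$, Schur's lemma gives that $\Omega$ acts on the irreducible $V_\rho$ by a scalar $c_\rho$, so each $W_\rho$ lies in an eigenspace of $\Delta$ with eigenvalue $-c_\rho$; as $\Delta$ is negative semidefinite, $c_\rho\ge 0$, with $c_\rho=0$ only for the trivial representation. This is the asserted correspondence between eigenvalues of $\Delta$ and classes of irreducible representations (strictly, distinct eigenvalues, with possible coincidences $c_\rho=c_{\rho'}$). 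The standard formula for the Casimir eigenvalue of an irreducible representation in terms of its highest weight then gives $c_\rho=\langle\lambda_\rho,\lambda_\rho+2\sigma\rangle=\|\lambda_\rho+\sigma\|^2-\|\sigma\|^2$, where $\lambda_\rho$ is the highest weight of $\rho$, $\sigma$ is the half-sum of positive roots, which equals the sum $w_1+\cdots+w_k$ of the fundamental weights, and the inner product on $\mathfrak h^*$ is the one dual to the restriction of $\langle\cdot,\cdot\rangle$ to a Cartan subalgebra. Finally, the theorem of the highest weight (here using that $G$ is simply connected) says $\rho\mapsto\lambda_\rho$ is a bijection from equivalence classes of complex irreducible representations onto the dominant integral weights, i.e.\ onto the non-negative integer combinations $\sum_i r_i w_i$. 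Writing $v=\lambda_\rho$ and $\sigma=\sum_i w_i$ yields $\lambda_v=-c_\rho=-\|v+\sigma\|^2+\|\sigma\|^2=-2\langle v,\sigma\rangle-\|v\|^2$, as claimed.

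The one genuinely delicate point — and the main thing to watch — is the normalization of the metric: the formula depends on which $\mathrm{Ad}$-invariant inner product one picks (these differ by a positive scalar since $\mathfrak g$ is simple), and \cite{berti-procesi} fix a specific one, under which the fundamental weights are expressed in standard Euclidean coordinates. To apply the formula with the metric used elsewhere in the paper — the one from \cite{zeitouni-book}, under which the Ricci curvature of $\SU(n),\Sp(n),\Spin(n)$ is $\Theta(n)$ and Theorem~\ref{thm:hyp-general} applies — one must rescale by the corresponding scalar, which has order $n$. There is no real obstacle to the theorem itself, which is textbook; the work is in this bookkeeping, after which the quantitative bounds ($\lambda_\rho\le C(nD+D^2)$ in Lemma~\ref{lem:lb_eigenval_curv}, and $\lambda_\rho\le Cnd$ in Lemma~\ref{lem: eigenvalues for the Laplacian}) follow routinely: plug the explicit fundamental weights of $\mathfrak{so}(n),\mathfrak{su}(n),\mathfrak{sp}(n)$ into $\lambda_v=2\langle v,\sigma\rangle+\|v\|^2$ (for $\mathfrak{so}(n)$ these are $w_i=e_1+\cdots+e_i$ away from the spin node(s) and $\tfrac12(e_1+\cdots+e_{k-1}\pm e_k)$ at the spin node(s)), use the elementary estimates $\langle w_i,w_j\rangle=O(\min(i,j))$ and $\langle w_i,\sigma\rangle=O(in)$, and invoke the identification of level with $\sum_i a_i\min\{i,n-i\}$ (established earlier via the step-vector/Young-diagram analysis, valid for $d<n/2$) to conclude that a level-$d$ representation has $v=\sum_i a_i w_i$ with $\langle v,\sigma\rangle=O(nd)$ and $\|v\|^2=O(d^2)$, so $\lambda_\rho=O(nd+d^2)=O(nd)$ since $d<n/2$.
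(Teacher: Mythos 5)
Your proposal is correct, but there is nothing in the paper to compare it against: the paper does not prove Theorem~\ref{thm:ev_formula_rep}, it explicitly treats it as a known fact and cites Theorems 2.3 and 2.4 of Berti--Procesi \cite{berti-procesi}, noting only the ``well-known'' facts that the matrix coefficients of irreducible representations are eigenvectors of $\Delta$. You supply the standard argument for what the paper merely cites: identify $\Delta$ with minus the Casimir operator on a compact group with bi-invariant metric (via $\nabla_X Y=\tfrac12[X,Y]$ for left-invariant fields, so $\Delta=\sum_i X_i^2$); combine Peter--Weyl with Schur's lemma to see $\Omega$ acts as a scalar $c_\rho$ on each $W_\rho$; use Freudenthal's formula $c_\rho=\langle\lambda_\rho,\lambda_\rho+2\sigma\rangle=\|\lambda_\rho+\sigma\|^2-\|\sigma\|^2$; and invoke the theorem of the highest weight and the identity $\sigma=\sum_i w_i$. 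That is the correct and canonical route. Two useful observations you make along the way: first, you rightly note the paper's statement is slightly imprecise --- the genuine bijection is between irreducible representations and eigen\emph{spaces} (Peter--Weyl blocks), since distinct $\rho$ may yield coincidental $c_\rho$; the paper's phrase ``one-to-one correspondence'' should be read in that sense. Second, your warning about the choice of bi-invariant metric is the one genuinely delicate point in applying the theorem downstream: in Lemma~\ref{lem:lb_eigenval_curv} and Lemma~\ref{lem: eigenvalues for the Laplacian} the paper combines the eigenvalue bound $\lambda_\rho = O(nd)$ with the Ricci bound $\Theta(n)$ from \cite{zeitouni-book}, and the conclusion $\lambda_\rho / C_{\mathrm{Ric}} = O(d)$ needed for Theorem~\ref{thm:Every simple groups is fine} is only scale-invariant if both quantities are computed in the same metric. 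The paper's own appendix computations pin down a specific normalization (taking $\langle A,B\rangle = \mathrm{Tr}(A^t B)$ on the Lie algebra), so this does work out, but tracking it is exactly the ``bookkeeping'' you flag, and is worth making explicit.
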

 
 We will choose a known system of fundamental weights for $\SO(n)$ as in~\cite[Section 5.1]{procesi}. We denote by 
 $E_{i,j}\in\mathbb{R}^{n\times n}$ the $n$ by $n$ matrix with a one in the $(i,j)$-th entry and zeros elsewhere. (We omit $n$ from
  notation, as it will always be clear from context.) The system of fundamental weights depends on whether $n$ is even or odd, and we deal with the two cases separately.
 \subsubsection*{The case of odd $n$}
 Let $n=2k+1$.  In this case, 
 the rank of $\SO(n)$ is $k$, and a system 
 of fundamental weights $\{w_1,w_2,\ldots,w_k\}$ is given by $w_i = \sum\limits_{j=1}^i u_j$ for $1\leq i\leq k-1$ and $w_{k} = \frac{1}{2}\sum\limits_{j=1}^{k} u_j$, 
 where $u_j := E_{j,k+j} - E_{k+j,j}$ for each $1 \leq j \leq k$.
 
 The equivalence class of representations corresponding to 
 $v=\sum\limits_{i=1}^{k} r_i w_i$ 
 is indexed by Young diagrams $\lambda = (\lambda_1,\ldots,\lambda_k)$, where $r_i = \lambda_i - \lambda_{i+1}$
 for each $1 \leq i\leq k-1$ and $r_k = 2\lambda_k$. The corresponding degree is therefore
 \begin{equation}\label{eq:degree_eq}
 D
 =\sum\limits_{i=1}^{k}\lambda_i
 =\sum\limits_{i=1}^{k-1}
 \left(\sum\limits_{i\leq j\leq k-1} r_j + \frac{r_k}{2}\right)
 +\frac{r_k}{2}
 =\sum\limits_{1\leq j\leq k-1}jr_j
 +\frac{k}{2} r_k.
 \end{equation}
We will use this equality to estimate the 
eigenvalue given in Theorem~\ref{thm:ev_formula_rep}. 
Using the formula therein, the corresponding 
eigenvalue is
\[
\lambda_v = -\norm{v+\rho}_2^2+\norm{\rho}_2^2,
\]
where here and henceforth we view the matrices $v$ and $\rho$ as vectors of length $n^2$, using the the natural flattening, and we use the unnormalized inner product on $\mathbb{R}^{n^2}$ (equivalently, 
$\inner{A}{B} = {\sf Tr}(A^tB)$). Then
\[
\lambda_v = -2\inner{v}{\rho}-\norm{v}_2^2.
\]
It remains to estimate the norm of $v$ and the inner
product of $v$ and $\rho$. To estimate
the norm of $v$, we write 
\[
v 
= \sum\limits_{i=1}^{k}w_i
= \sum\limits_{i=1}^{k-1}r_i
\sum\limits_{j=1}^{i}u_j
+\frac{r_k}{2}\sum\limits_{j=1}^{k}u_j
=\sum\limits_{j=1}^{k}
\left(\frac{r_k}{2}+\sum\limits_{i=j}^{k-1}r_i\right)u_j,
\]
and since the $u_j$'s are mutually orthogonal and each has $\|u_j\|_2^2=2$, 
we obtain
\[
\norm{v}_2^2
=2\sum\limits_{j=1}^{k}
\left(\frac{r_k}{2}+\sum\limits_{i=j}^{k-1}r_i\right)^2
\leq
2
\left(\sum\limits_{j=1}^{k}
\left(\frac{r_k}{2}+\sum\limits_{i=j}^{k-1}r_i\right)\right)^2
=2
\left(\sum\limits_{j=1}^{k}
\frac{kr_k}{2}+\sum\limits_{i=1}^{k-1}ir_i\right)^2,
\]
which is at most $2D^2$ by~\eqref{eq:degree_eq}.

To bound the inner product of $v$ and $\rho$, note that $\rho$ is the vector $v$ 
in which we take all the $r_i$'s to be $1$, hence 
by the computation above, we have
\[
\rho = \sum\limits_{i=1}^{k}w_i
=\sum\limits_{j=1}^{k}\left(k-j+\frac{1}{2}\right)u_j,
\]
and so
\[
\inner{\rho}{v}
=2\sum\limits_{j=1}^k
\left(k-j+\frac{1}{2}\right)
\left(\frac{r_k}{2}+\sum\limits_{i=j}^{k-1}r_i\right)
\leq 2k
\sum\limits_{j=1}^k
\left(\frac{r_k}{2}+\sum\limits_{i=j}^{k-1}r_i\right)
\leq 2kD.
\]
where in the last inequality we used~\eqref{eq:degree_eq}.
Overall, we see that the eigenvalue 
$\lambda_v$ satisfies $\lambda_v\geq -2D^2-nD$, as required.

\subsubsection*{The case of even $n$}
 Let $n=2k$. In this case, 
 the rank of $\SO(n)$ is $k$, and a system 
 of fundamental weights $\{w_1,\ldots,w_k\}$ is given by $w_i = \sum\limits_{j=1}^i u_j$ for $1 \leq i\leq k-2$, $w_{k-1} = \frac{1}{2}\sum\limits_{j=1}^{k} u_j$
 and $w_k = w_{k-1}-u_k$, 
 where again we define $u_j := E_{j,k+j} - E_{k+j,j}$ for $1 \leq j \leq k$. 
 
 Consider the the equivalence class 
 of representations corresponding to 
 $v=\sum\limits_{i=1}^{k} r_i w_i$.
 We now need to inspect the corresponding Young diagram to relate the $r_i$'s to 
 the degree of the representation, and we
 recall that the corresponding Young diagram 
 $\lambda = (\lambda_1,\ldots,\lambda_k)$ maybe either have $k$ non-zero 
 rows or at most $k-1$ non-zero rows. 
 
 \paragraph{Young diagrams with at most $k-1$ rows.}
 In this case we have $r_i = \lambda_{i}-\lambda_{i+1}$ for each $1\leq i \leq k$, and so the degree is 
 \[
 D = 
 \sum\limits_{i=1}^{k}\lambda_i
 =\sum\limits_{i=1}^{k}\sum\limits_{j=i}^{k-1}
 r_j
 =\sum\limits_{j=1}^{k-1}jr_j.
 \]
 Using the same estimates as before, 
 we obtain 
 \[
 v=
 \sum\limits_{i=1}^{k-2}r_i\sum\limits_{j=1}^{i}u_j
 +\sum\limits_{j=1}^{k}\frac{r_{k-1}}{2} u_j
 =\sum\limits_{j=1}^{k}
 \left(\sum\limits_{i=j}^{k-2}r_i+\frac{r_{k-1}}{2}\right)u_j,
 \]
 so
 \[
 \norm{v}_2^2
 \leq 2\sum\limits_{j=1}^{k}\left(\sum\limits_{i=j}^{k-2}r_i+\frac{r_{k-1}}{2}\right)^2
 \leq 
 2\left(\sum\limits_{i=j}^{k-2}ir_i
 +\frac{kr_{k-1}}{2}\right)^2
 \leq 2D^2.
 \]
 We also obtain $\rho = \sum\limits_{j=1}^{k-2} (k-\frac{1}{2}-j)u_j+\frac{1}{2}u_{k-1}+\frac{1}{2}u_{k}$, and so
 \[
 \inner{v}{\rho}
 =2\sum\limits_{j=1}^{k-2}
 \left(k-\frac{1}{2}-j\right)
 \left(\sum\limits_{i=j}^{k-2}r_i+\frac{r_{k-1}}{2}\right)
 +r_{k-1}
 \leq 2k\sum\limits_{j=1}^{k-2}
 \left(\sum\limits_{i=j}^{k-2}r_i+\frac{r_{k-1}}{2}\right)
 +r_{k-1}
 \leq 2k D + D,
 \]
 which is at most $2nD$, as desired.
 
 \paragraph{Young diagrams with $k$ rows.}
 In this case, we have $r_i = \lambda_{i}-\lambda_{i+1}$ for each $1 \leq i\leq k-2$, 
 and $(r_{k-1}, r_{k})$ 
 is either $(\lambda_{k-1}-\lambda_k, \lambda_{k-1}+\lambda_{k})$
 or $(\lambda_{k-1}+\lambda_k, \lambda_{k-1}-\lambda_{k})$. 
 The computation in both cases is similar and goes along the same lines as the computations so far, hence we deal only with the case where
 $(r_{k-1}, r_k) = (\lambda_{k-1}-\lambda_k, \lambda_{k-1}+\lambda_{k})$.
 
 The degree here is 
 \[
 D = 
 \sum\limits_{i=1}^{k}\lambda_i
 =
 \sum\limits_{i=1}^{k-2}
 \left(\sum\limits_{j=i}^{k-2}r_j+\frac{1}{2}(r_{k-1}+r_k)\right)
 =\sum\limits_{j=1}^{k-2}jr_j
 +\frac{k-2}{2}r_{k-1}
 +\frac{k-2}{2}r_{k}.
 \]
 We have the same formulae for $v$ and $\rho$ as before, and so
 \[
 \norm{v}_2^2
 \leq 2\sum\limits_{j=1}^{k}\left(\sum\limits_{i=j}^{k-2}r_i+\frac{r_{k-1}}{2}\right)^2
 \leq 
 2\left(\sum\limits_{i=j}^{k-2}ir_i
 +\frac{kr_{k-1}}{2}\right)^2
 \leq 2D^2,
 \]
 and
 \[
 \inner{v}{\rho}
 =2\sum\limits_{j=1}^{k-2}
 \left(k-\frac{1}{2}-j\right)
 \left(\sum\limits_{i=j}^{k-2}r_i+\frac{r_{k-1}}{2}\right)
 +r_{k-1}
 \leq 2k\sum\limits_{j=1}^{k-2}
 \left(\sum\limits_{i=j}^{k-2}r_i+\frac{r_{k-1}}{2}\right)
 +r_{k-1}
 \leq 2k D + D,
 \]
 which is at most $2nD$, as desired.

\subsection{Bounding the eigenvalues of the Laplace--Beltrami operator in $\SU(n)$}
The following lemma, analogous to Lemma~\ref{lem:lb_eigenval_curv}, gives our desired bound on the eigenvalues of the Laplace--Beltrami operator in $\SU(n)$.
\begin{lemma}\label{lem:lb_eigenval_curv_sun}
  For $\rho \in \widehat{\SU(n)}$ of level $D$, the corresponding eigenvalue $\lambda_{\rho}$ of $\Delta$ satisfies
  \[
  \lambda_{\rho} \geq -C(nD+D^2),
  \]
  where $C$ is an absolute constant.
\end{lemma}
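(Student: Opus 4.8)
The plan is to run the argument used for $\SO(n)$ in the proof of Lemma~\ref{lem:lb_eigenval_curv}, now with $G=\SU(n)$. Since $\SU(n)$ is compact, connected and simply connected of rank $n-1$, Theorem~\ref{thm:ev_formula_rep} applies: fixing a system of fundamental weights $\{w_1,\dots,w_{n-1}\}$, the irreducible representations of $\SU(n)$ are indexed by highest weights $v=\sum_{k=1}^{n-1}a_k w_k$ with each $a_k$ a non-negative integer, and the eigenvalue $\lambda_\rho$ of the Laplace--Beltrami operator attached to such a $v$ equals $\lambda_\rho=-\|v+\sigma\|_2^2+\|\sigma\|_2^2=-2\langle v,\sigma\rangle-\|v\|_2^2$, where $\sigma:=\sum_{k=1}^{n-1}w_k$ and $\langle\cdot,\cdot\rangle$, $\|\cdot\|_2$ denote the inner product and norm on the weight lattice induced by our chosen bi-invariant metric on $\SU(n)$; throughout we keep the same normalization conventions as in the $\SO(n)$ case, so that the eigenvalue formula and the Ricci-curvature lower bound $n/2$ refer to the same metric.

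First I would record the standard system of fundamental weights for $\SU(n)$: $w_i=\sum_{j=1}^i e_j-\tfrac{i}{n}\sum_{j=1}^n e_j$ for $1\le i\le n-1$, where $e_j$ is the diagonal Cartan functional dual to $\mathbf{i}E_{j,j}$ and the $e_j$ form an orthonormal set in our metric. A short computation then gives $\langle w_k,w_l\rangle=\min(k,l)\bigl(n-\max(k,l)\bigr)/n$ for all $k,l$. Under this parametrization the representation with highest weight $v=\sum_k a_k w_k$ corresponds to the partition $\lambda$ with $\lambda_k-\lambda_{k+1}=a_k$, and (by the definition of total level) its level is $D=\sum_{k=1}^{n-1}a_k\min(k,n-k)$.

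The heart of the proof is then two one-line estimates. For the cross term, an Abel-summation computation, parallel to the one for $\SO(n)$, gives the exact identity $\langle v,\sigma\rangle=\tfrac12\sum_{k=1}^{n-1}a_k\,k(n-k)$; since $a_k\ge0$ and $k(n-k)=\min(k,n-k)\cdot\max(k,n-k)\le n\min(k,n-k)$, this yields $0\le\langle v,\sigma\rangle\le\tfrac{n}{2}\sum_k a_k\min(k,n-k)=\tfrac{nD}{2}$. For the quadratic term, Cauchy--Schwarz gives $\langle w_k,w_l\rangle\le\|w_k\|\,\|w_l\|$, and $\|w_k\|^2=k(n-k)/n\le\min(k,n-k)$; since $\min(k,n-k)\ge1$ for $1\le k\le n-1$ we also have $\|w_k\|\le\min(k,n-k)$, whence $\|v\|_2^2=\sum_{k,l}a_k a_l\langle w_k,w_l\rangle\le\bigl(\sum_k a_k\min(k,n-k)\bigr)^2=D^2$. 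Plugging both into the eigenvalue formula gives $-\lambda_\rho=2\langle v,\sigma\rangle+\|v\|_2^2\le nD+D^2$, i.e.\ $\lambda_\rho\ge-(nD+D^2)$, which is the assertion (with $C=1$, or any constant absorbing the normalization bookkeeping).

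The one genuine subtlety, and the step I would check most carefully, is the normalization bookkeeping: one must confirm that the metric for which Theorem~\ref{thm:ev_formula_rep} outputs Laplace--Beltrami eigenvalues is, up to an absolute constant, the ``sum of squares of components'' metric on $\mathfrak{su}(n)$ --- the metric for which the Ricci curvature is at least $n/2$, equal to $\tfrac{1}{2n}$ times the Killing form. As in the $\SO(n)$ computation, the factor $2n$ relating these normalizations cancels against the dual factor $\tfrac{1}{2n}$ on the weight lattice, so the clean formula above is the correct one. Everything else --- the closed form for $\langle w_k,w_l\rangle$, the Abel summation, and the two estimates --- is routine, and the identical scheme, now with the fundamental weights of $\Sp(n)$ and the factor $2$ coming from $D(\Sp(n))=2n$, also yields the corresponding bound for $\Sp(n)$ needed in Lemma~\ref{lem: eigenvalues for the Laplacian}.
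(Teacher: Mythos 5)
Your proposal is correct and follows essentially the same route as the paper: the same system of fundamental weights for $\SU(n)$, the same inner-product formula $\langle w_k,w_l\rangle=\min(k,l)(n-\max(k,l))/n$, the same identification of the level with $\sum_k a_k\min(k,n-k)$, and the same two estimates plugged into the eigenvalue formula from Theorem~\ref{thm:ev_formula_rep}. The only (minor, cosmetic) differences are that you carry out the cross-term sum exactly to get $\langle v,\sigma\rangle=\tfrac12\sum_k a_k\,k(n-k)$ (the paper's displayed computation has a couple of typos but reaches the same $O(nD)$ bound), and you handle $\|v\|_2^2$ by Cauchy--Schwarz on the weights to obtain $D^2$ rather than the paper's cruder $2D^2$; both variations only improve the absolute constant, which the lemma absorbs anyway.
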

\begin{proof}
The proof proceeds by a similar computation to the proof of Lemma~\ref{lem:lb_eigenval_curv}. We apply Theorem~\ref{thm:ev_formula_rep} in the case of $G=\SU(n)$, which has rank $k=n-1$, and use a system of fundamental weights from~\cite{berti-procesi}. 
The system of fundamental weights $\{w_1,\ldots,w_{n-1}\}$ is defined by
$$w_i = \sum_{j=1}^{i} e_j - \tfrac{i}{n}\sum_{j=1}^{n}e_j\ (1 \leq i \leq n-1),$$
where $\{e_i\}_{i=1}^{n}$ is the standard basis of $\mathbb{R}^n$; here, $e_i$ corresponds to
\begin{equation}\label{eq:cartan} \mathbf{i}E_{i,i},\end{equation}
where $\mathbf{i} = \sqrt{-1}$ and $E_{i,j}$ is the matrix with a one in the $(i,j)$th-entry and zeros elsewhere, as before. For each $1 \leq k \leq l \leq n-1$, we have
$$\langle w_k,w_l \rangle = k(n-l)/n.$$
Set $\sigma := \sum_{i=1}^{n-1}w_i$. For a partition $\lambda$ whose Young diagram has less than $n$ rows, the corresponding weight vector is
$$v = \sum_{i=1}^{n-1}a_iw_i,$$
where $a_i = \lambda_i - \lambda_{i+1}$ for all $i \in [n-1]$ and $\lambda_{n}:=0$; the level $D$ of the corresponding representation is given by
$$D = \sum_{i=1}^{n-1}a_i\min\{i,n-i\}.$$

It follows that, if $v = \sum_{k=1}^{n-1}a_k w_k$, then 
\begin{align*} \langle v,\mathbf{\sigma} \rangle & = \left\langle \sum_{k=1}^{n-1}a_k w_k, \sum_{k=1}^{n-1}w_k \right\rangle \\
& = \sum_{1 \leq k \leq l \leq n-1} a_k k(n-l)/n + \sum_{1 \leq l < k \leq n-1} a_k l(n-k)/n\\
& = \sum_{k=1}^{n-1} (n-k)(n-k+1)ka_k/(2n) + \sum_{k=1}^{n-1} k(k+1)l(n-k)a_k/(2n)\\
& \leq nD,
\end{align*}
and
\begin{align*} \langle v,v \rangle & = \left\langle \sum_{k=1}^{n-1}a_k w_k, \sum_{k=1}^{n-1}a_k w_k \right\rangle \\
& = \sum_{1 \leq k \leq l \leq n-1} a_k a_l k(n-l)/n + \sum_{1 \leq l < k \leq n-1} a_k a_l l(n-k)/n\\
& \leq 2D^2.
\end{align*}
Hence, by Theorem \label{thm:ev_formula_rep}, the corresponding eigenvalue $\lambda_v$ of $\Delta$ satisfies
$$\lambda_v = -2\langle v,\sigma \rangle - \langle \sigma,\sigma \rangle \geq -2nD - 2D^2.$$

\end{proof}



   \subsection{Bounding the eigenvalues of the Laplace-Beltrami operator in $\Sp(n)$}

    For bounds on the eigenvalues of the Laplace-Beltrami operator of $\textup{Sp}(n)$, we use the following system of fundamental weights (which are a scalar multiple of those in \cite{fulton-harris}). Setting $u_j = \mathbf{i}(E_{j,j} - E_{n+j,n+j})$ for each $j \in [n]$, we let 
$$w_i = \sum_{j=1}^{i}u_i$$
be the $i$th fundamental weight, for each $i \in [n]$. The irreducible representation corresponding to $v = \sum_{i=1}^{n}r_i w_i$ also corresponds to the Young diagram $\lambda = (\lambda_1,\ldots,\lambda_n)$, where $r_i = \lambda_i - \lambda_{i+1}$ for all $1 \leq i \leq n$ and $\lambda_{n+1}:=0$; the corresponding degree is
\begin{equation}\label{eq:degree_eq_symp}
 D
 =\sum\limits_{i=1}^{n}\lambda_i
 = \sum\limits_{j=1}^{n}jr_j.
 \end{equation}
From here on, the analysis is very similar to that for $\SO(n)$ (for $n$ odd). Using the formula in Theorem~\ref{thm:ev_formula_rep}, the 
eigenvalue of $\Delta$ corresponding to $v = \sum_{i=1}^{n}r_i w_i$ is
\[
\lambda_v = -\norm{v+\rho}_2^2+\norm{\rho}_2^2,
\]
where here we view the matrices $v$ and $\rho$ as vectors of length $(2n)^2$, using the natural flattening, and we use the unnormalized inner product on $\mathbb{R}^{(2n)^2}$ (equivalently, 
$\inner{A}{B} = {\sf Tr}(A^tB)$). Then
\[
\lambda_v = -2\inner{v}{\rho}-\norm{v}_2^2.
\]
It remains to estimate the norm of $v$ and the inner
product of $v$ and $\rho$. To compute
the norm of $v$, we write 
\[
v 
= \sum\limits_{i=1}^{n}w_i
= \sum\limits_{i=1}^{n}r_i
\sum\limits_{j=1}^{i}u_j
=\sum\limits_{j=1}^{n}
\left(\sum\limits_{i=j}^{n}r_i\right)u_j,
\]
and since the $u_j$'s are mutually orthogonal and each has $2$-norm-squared equal to $2$, 
we get that
\[
\norm{v}_2^2
=2\sum\limits_{j=1}^{n}
\left(\sum\limits_{i=j}^{n}r_i\right)^2
\leq
2
\left(\sum\limits_{j=1}^{n}
\left(\sum\limits_{i=j}^{n}r_i\right)\right)^2
=2
\left(\sum\limits_{j=1}^{n}
\sum\limits_{i=1}^{n}ir_i\right)^2,
\]
which is at most $2D^2$ by~\eqref{eq:degree_eq_symp}.

To bound the inner product of $v$ and $\rho$, note that
\[
\rho = \sum\limits_{i=1}^{n}w_i
=\sum\limits_{j=1}^{n}\left(n-j+1\right)u_j,
\]
and so
\[
\inner{\rho}{v}
=2\sum\limits_{j=1}^n
\left(n-j+1\right)
\left(\sum\limits_{i=j}^{n}r_i\right)
\leq 2n
\sum\limits_{j=1}^n
\left(\sum\limits_{i=j}^{n}r_i\right)
\leq 2nD.
\]
where in the last inequality we used~\eqref{eq:degree_eq_symp}.
Overall, we see that the eigenvalue 
$\lambda_v$ satisfies $\lambda_v\geq -2D^2-nD$, as desired.

\end{document}